\theoremstyle{plain}
\newtheorem*{unnumberedthm}{Theorem}
\newtheorem*{fcc}{First Chart Criterion}
\newtheorem*{scc}{Second Chart Criterion}
\newtheorem*{fc}{Fiberwise Criterion} 
\newtheorem{thm}{Theorem}[subsection]
\newtheorem{prop}[thm]{Proposition}     
\newtheorem{lem}[thm]{Lemma}
\newtheorem{cor}[thm]{Corollary}
\theoremstyle{definition}
\newtheorem*{unnumbereddefn}{Definition}
\newtheorem{defn}[thm]{Definition}
\newtheorem{example}[thm]{Example} 
\newtheorem{rem}[thm]{Remark}
\DeclareFontFamily{OT1}{rsfs}{}
\DeclareFontShape{OT1}{rsfs}{n}{it}{<-> rsfs10}{}
\DeclareMathAlphabet{\curly}{OT1}{rsfs}{n}{it}
\makeatletter \@addtoreset{equation}{subsection} \makeatother  
\makeatletter \@addtoreset{thm}{subsection} \makeatother  
\makeatletter \@addtoreset{equation}{subsection} \makeatother  
\makeatletter \@addtoreset{thm}{subsection} \makeatother  
\newcommand{\X}{\mathscr{X}}
\newcommand{\A}{{\bf A}}
\newcommand{\B}{{\bf B}}
\newcommand{\C}{{\bf C}}
\newcommand{\D}{{\bf D}}
\newcommand{\E}{{\bf E}}
\newcommand{\Fib}{{\bf Fib}}
\newcommand{\LQuot}{{\bf LQuot}}
\newcommand{\Quot}{{\bf Quot}}
\newcommand{\LogStr}{{\bf LogStr}}
\newcommand{\Desc}{{\bf Desc}}
\newcommand{\An}{{\bf An}}
\newcommand{\Sets}{{\bf Sets}}
\newcommand{\Mod}{{\bf Mod}}
\newcommand{\Qco}{{\bf Qco}}
\newcommand{\Coh}{{\bf Coh}}
\newcommand{\GrAn}{{\bf GrAn}}
\newcommand{\LRS}{{\bf LRS}}
\newcommand{\Log}{\curly{Log}}   
\newcommand{\LogSch}{{\bf LogSch}} 
\newcommand{\Mon}{{\bf Mon}}
\newcommand{\Ab}{{\bf Ab}}
\newcommand{\CFG}{{\bf CFG}}
\newcommand{\Sch}{{\bf Sch}} 
\newcommand{\Esp}{{\bf Esp}} 
\newcommand{\AAA}{\curly{A}} 
\newcommand{\dirlim}{\displaystyle \lim_{ \longrightarrow } \,} 
\newcommand{\invlim}{\displaystyle \lim_{ \longleftarrow } \,} 
\renewcommand{\AA}{\mathbb{A}}  
\newcommand{\GG}{\mathbb{G}}  
\newcommand{\NN}{\mathbb{N}}  
\newcommand{\ZZ}{\mathbb{Z}} 
\newcommand{\QQ}{\mathbb{Q}} 
\newcommand{\CC}{\mathbb{C}} 
\newcommand{\PP}{\mathbb{P}} 
\newcommand{\F}{\curly{F}} 
\newcommand{\M}{\mathcal{M}} 
\newcommand{\N}{\mathcal{N}} 
\renewcommand{\O}{\mathcal{O}} 
\renewcommand{\L}{\mathcal{L}}
\newcommand{\m}{\mathfrak{m}}
\newcommand{\p}{\mathfrak{p}}  
\renewcommand{\P}{\mathfrak{P}}  
\newcommand{\n}{\mathfrak{n}}  
\newcommand{\ov}{\overline}
\renewcommand{\u}{\underline}
\newcommand{\into}{\hookrightarrow}
\newcommand{\be}{\begin{eqnarray*}}
\newcommand{\ee}{\end{eqnarray*}}
\newcommand{\bne}[1]{\begin{eqnarray} \label{#1} }
\newcommand{\ene}{\end{eqnarray}}
\newcommand{\xym}{\xymatrix}
\newcommand{\bp}{\begin{pmatrix}}
\newcommand{\ep}{\end{pmatrix}}
\newcommand{\slot}{ \hspace{0.05in} {\rm \_} \hspace{0.05in} } 
\newcommand{\Ann}{\operatorname{Ann}}
\renewcommand{\H}{\operatorname{H}}
\renewcommand{\Im}{\operatorname{Im}}
\newcommand{\Hom}{\operatorname{Hom}}   
\newcommand{\bHom}{\operatorname{{\bf Hom}}}   
\newcommand{\Bilin}{\operatorname{Bilin}}   
\newcommand{\Ker}{\operatorname{Ker}}   
\newcommand{\Cok}{\operatorname{Cok}}  
\newcommand{\Ext}{\operatorname{Ext}}
\newcommand{\Aut}{\operatorname{Aut}}
\newcommand{\Id}{\operatorname{Id}}
\newcommand{\GL}{\operatorname{GL}}
\newcommand{\Spec}{\operatorname{Spec}}
\newcommand{\Tor}{\operatorname{Tor}}
\newcommand{\Supp}{\operatorname{Supp}}
\newcommand{\Hilb}{\operatorname{Hilb}}
\newcommand{\Pic}{\operatorname{Pic}}
\begin{document}

\author{W.~D.~Gillam}
\address{Department of Mathematics, Brown University, Providence, RI 02912}
\email{wgillam@math.brown.edu}
\date{\today}
\title{Logarithmic Flatness}

\begin{abstract} A map of fine log schemes $X \to Y$ induces a map from the scheme underlying $X$ to Olsson's algebraic stack of strict morphisms of fine log schemes over $Y$.  A sheaf on $X$ is called \emph{log flat over} $Y$ iff it is flat over this algebraic stack.   This paper is a study of log flatness and the related notions of flatness for maps of monoids and graded rings.  It is shown that log flatness is equivalent to a more general notion of ``formal log flatness" that makes sense for an arbitrary map of log ringed topoi.  Concrete log flatness criteria are given for many $X \to Y$ that occur ``in nature," such as toric varieties, nodal curves, and the like.  For very simple $X \to Y$ it turns out that log flatness is equivalent to previously extant notions of ``perfection," thus it provides a generalization for more complicated $X \to Y$ useful for studying moduli of sheaves via degeneration techniques. \end{abstract}

\maketitle

\arraycolsep=2pt 
\def\arraystretch{1.2}

\setcounter{tocdepth}{2}
\tableofcontents

\section{Introduction}

Let $Y$ be a fine log scheme with underlying scheme $\u{Y}$.  Consider the category $\Log(Y)$ whose objects are fine log schemes over $Y$ and whose maps are strict maps of fine log schemes over $Y$.  The forgetful functor \be \Log(Y) & \to & \Sch/\u{Y} \\ (f : X \to Y) & \mapsto & (\u{f} : \u{X} \to \u{Y}) \ee is clearly a groupoid fibration over $\u{Y}$-schemes.  Olsson showed in \cite[Theorem~1.1]{Ols} that $\Log(Y)$ is an algebraic stack of locally finite presentation over $\u{Y}$ with representable, finitely presented, and locally separated diagonal.  In particular, for any map of algebraic stacks $\u{X} \to \Log(Y)$, it makes sense to ask whether a sheaf (``sheaf" means ``quasi-coherent sheaf" for now) on $\u{X}$ is flat over $\Log(Y)$ (see \S\ref{section:flatnessoverstacks}).

A morphism of fine log schemes $f : X \to Y$ induces a morphism of (algebraic) stacks $\Log_f : \u{X} \to \Log(Y)$ by regarding a scheme over $\u{X}$ as a fine log scheme over $Y$ by pulling the log data back from $X$ (see \S\ref{section:tautologicalsection}).  We can then make the following:

\begin{unnumbereddefn} For a morphism of fine log schemes $X \to Y$, a sheaf $M$ on $X$ is called \emph{log flat over} $Y$ iff $M$ is flat over $\Log(Y)$ via $\Log_f : \u{X} \to \Log(Y)$.  If $Y$ is the spectrum of a field with trivial log structure we often say ``log flat" instead of ``log flat over $Y$." \end{unnumbereddefn}

\noindent {\bf Acknowledgement:}  The idea that such a notion might be of interest, particularly in degeneration methods for sheaf-theoretic curve counting, arose in conversation with Davesh Maulik. 

The purpose of this paper is to study log flatness.  We first observe: \begin{enumerate}[label=(\roman*), ref=\roman*] \item \label{kernelandextension} A kernel or extension of log flat sheaves is log flat. \item When $f : X \to Y$ is strict ($f^\dagger : f^* \M_Y \to \M_X$ is an isomorphism), log flatness is equivalent to flatness over $\u{Y}$ in the usual sense (Lemma~\ref{lem:strictlogflatness}). \item \label{basechangestability} Log flatness is stable under strict base change and \item is strict-fppf local on $f : X \to Y$ (Lemma~\ref{lem:fppflocal}). \end{enumerate}

Statement \eqref{kernelandextension} is immediate from the definition.  Regarding \eqref{basechangestability}, we eventually prove (Theorem~\ref{thm:logflatnessandbasechange}) that log flatness is in fact stable under \emph{arbitrary base change}.  Unlike the proof of \eqref{basechangestability}, which is formal, the proof of the latter statement seems to require almost the full arsenal of techniques developed here, as well as Kato's theory of \emph{neat charts}.  It would be interesting to find a more elementary proof of this fact.

\subsection{The first chart criterion and formal log flatness} In particular, log flatness is strict-\'etale local in nature, so it is in many ways sufficient to study log flatness for a map $f : X \to Y$ where $\u{f} = \Spec ( A \to C)$ is a map of affine schemes and there is a global chart \bne{globalchart} & \xym{ P \ar[r] & \M_X(X) \ar[r]^-{\alpha_X} & \O_X(X)=C \\ Q \ar[u]^h \ar[r] & \M_Y(Y) \ar[u]^{f^\dagger} \ar[r]^-{\alpha_Y} & \O_Y(Y)=A \ar[u] } \ene for $f : X \to Y$.  Our first results are the \emph{Chart Criteria} for log flatness (\S\ref{section:chartcriteriatheorem}), which give criteria for log flatness in terms of the chart \eqref{globalchart}.   These should be viewed as analogs of Kato's Chart Criterion for Log Smoothness \cite[3.5]{KK}.

Let $b : P \to C$, $t : Q \to A$ denote the compositions in \eqref{globalchart}, let $A(h,t)$ denote the quotient of the ring $A[Q^{\rm gp} \oplus P]$ by the ideal generated by the expressions \bne{relations} t(q)[q,0] - [0,h(q)] & \quad \quad & (q \in Q), \ene and let $A(h,t) \to C[P^{\rm gp}]$ be the unique $A$-algebra map with $[q,p]  \mapsto  b(p)[h(q)+p].$

\begin{fcc} A $C$-module $M$ is log flat over $Y$ (i.e.\ the quasi-coherent sheaf $M^{\sim}$ on $X$ is log flat over $Y$) iff the $C[P^{\rm gp}]$-module $M[P^{\rm gp}] :=  M \otimes_{\ZZ} \ZZ[P^{\rm gp}]$ is flat over $A(h,t)$. \end{fcc}

This criterion is little more than an unraveling of definitions using Olsson's \'etale cover of $\Log(Y)$ (\S\ref{section:convenientetalecover}) and can also be viewed as an unraveling of Olsson's general ``Chart Criterion for Weak Log ${\bf P}$" \cite[5.31]{Ols}.  In \S\ref{section:independenceofcharts} we prove directly (i.e.\ without making any use of Olsson's stacks) that the above criterion for log flatness is independent of the chosen chart.  In fact, we prove independence of the chosen chart assuming only that $Q$ and $P$ are integral (but not necessary finitely generated).  This motivates our definition of \emph{formal log flatness} (\S\ref{section:formallogflatness}) which makes sense for an arbitrary map of log ringed topoi, makes no mention of stacks, and which reduces to the usual log flatness for morphisms of fine log schemes (Theorem~\ref{thm:formallogflatness}).  Formal log flatness enjoys many of the same formal properties as log flatness and may be of use, for example, in the study of the ``relatively coherent" log schemes of Ogus et al (see the definition in \cite[3.6]{NO} or Ogus' forthcoming work \cite{ORC}), where the usual theory of log flatness is not available.

This is the most ``theoretical" part of the paper.  The basic point is that one can make the construction of $A(h,t) \to C[P^{\rm gp}]$ in the \'etale topos of $X$ itself, replacing $A$ with $f^{-1} \O_Y$, $C$ with $\O_X$, and $h$ with $f^\dagger : f^{-1} \M_Y \to \M_X$ (so there is no longer any choice of chart) then define an $\O_X$-module to be formally log flat iff it satisfies the conclusion of the First Chart Criterion.  It takes a certain amount of work to show that this notion is well-behaved.

Though it is well-suited for theoretical purposes, the First Chart Criterion is not always easy to check in practice, so we next search for a simpler criterion under some additional hypotheses on the map of fine monoids $h : Q \to P$ used in the chart for $f$.    

\subsection{The second chart criterion and graded flatness}  Our \emph{Second Chart Criterion} is really a combination of two flatness criteria.  To explain the first of these criteria, we need a digression on \emph{graded flatness}.  Suppose $B$ is a ring graded by an abelian group $G$ and $M$ is a $B$-module in the \emph{usual ungraded sense}.  If we have a \emph{graded} $B$-module $N$, then we can forget that $N$ is graded and form the usual tensor product $M \otimes_A N$.  This defines a right exact map of abelian categories \bne{Motimesslot} M \otimes_B \slot : \Mod(G,B) & \to & \Mod(B) \ene from graded $B$-modules to $B$-modules.  We say that the $B$-module $M$ is \emph{graded flat over} $(G,B)$ iff \eqref{Motimesslot} is exact.  In \S\ref{section:gradedmodules} we give a general treatment of graded rings and modules, studying graded flatness in particular in \S\ref{section:gradedflatness}.

The chart \eqref{globalchart} determines a ring $B := A \otimes_{\ZZ[Q]} \ZZ[P]$ graded by the abelian group $G := (P/Q)^{\rm gp}$ and a map of rings (in the usual ungraded sense) $B \to C$, hence we can regard a $C$-module $M$ as a $B$-module (in the usual ungraded sense) via restriction of scalars along this map.  Then:

\begin{scc} Suppose the map $h$ in the chart \eqref{globalchart} is an injective map of fine monoids.  Then a $C$-module $M$ is log flat over $Y$ iff $M$ is graded flat over $(G,B)$. \end{scc}

The proof of this theorem relies on a technical result (Theorem~\ref{thm:LhtoLogAQetale}) that says the natural map \be [ \Spec \ZZ[P] \, / \, \Spec \ZZ[(P/Q)^{\rm gp}] ] & \to & \Log(\Spec (Q \to \ZZ[Q])) \ee is representable \'etale whenever $Q \into P$ is an injective map of fine monoids.

The assumption that $h$ be monic in the chart \eqref{globalchart} is not particularly restrictive: Kato's theory of \emph{neat charts} (Theorem~\ref{thm:neatcharts}) says, in particular, that we can always find such a chart fppf locally (or even \'etale locally in characteristic zero). 

Having reformulated log flatness in terms of graded flatness, we next look for a practical criterion for graded flatness of modules over $B = A \otimes_{\ZZ[Q]} \ZZ[P]$.  Our best result along these lines is Theorem~\ref{thm:gradedflatness}, which gives a practical criterion for such flatness when $h : Q \to P$ is \emph{free} in the sense that there is a subset $S \subseteq P$ (called a \emph{basis}) such that the map $Q \times S \to P$ given by $(q,s) \mapsto h(q)+s$ is bijective. 

Freeness is really a concept for \emph{modules} over a monoid---the subject of \S\ref{section:modules}.  In analogy with modules over rings, we define a module over a monoid to be \emph{flat} iff it is a filtered direct limit of free modules.  This notion of flatness enjoys formal properties similar to those of flat modules over rings.  We show in Theorem~\ref{thm:integralflatness} that this abstract notion of flatness coincides with Kato's notion of ``integrality" \cite[4.1(ii),(v)]{KK} on their common domain of definition.

\subsection{Examples and applications} The simplest example of log flatness is the following:

\begin{example} \label{example:smoothdivisor} Let $X$ be a smooth variety equipped with the log structure \be \M_X & := & \{ f \in \O_X : f|_{X \setminus D} \in \O_{X \setminus D}^* \} \ee from a smooth divisor $D \subseteq X$.  In this case an $\O_X$-module $M$ is log flat iff any local defining equation for $D$ in $X$ is $M$-regular.  More generally, the same log flatness criterion holds when $D$ is a Cartier divisor in a variety $\u{X}$ and $X$ is given the log structure $\M_X \subseteq \O_X$ generated by the ideal sheaf $\O_X(-D)$ of $D$ in $\u{X}$. \end{example} 

Here is a mild generalization:

\begin{unnumberedthm} Let $k$ be a field, $P$ a fine monoid, $X := \Spec(P \to k[P])$.  A $k[P]$-module $M$ is log flat iff \be \Tor_1^{k[P]}(M,k[P]/k[I]) & = & 0 \ee for every prime ideal $I \subseteq P$. \end{unnumberedthm}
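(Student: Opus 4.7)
The plan is to reduce to the Second Chart Criterion, then express graded flatness in terms of Tor-vanishing, and finally reduce from all graded ideals to prime ideals via a standard graded prime filtration argument.

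First, I would apply the Second Chart Criterion to the tautological chart for $X \to Y = \Spec k$ with $Q = 0$, $A = k$, $h : 0 \into P$ (trivially an injective map of fine monoids), and $b : P \to C = k[P]$ the natural inclusion. The associated data are $G = (P/0)^{\rm gp} = P^{\rm gp}$ and $B = k \otimes_{\ZZ} \ZZ[P] = k[P]$, so the criterion says that $M$ is log flat iff $M$ is graded flat over $(P^{\rm gp}, k[P])$. By definition, this amounts to the vanishing of $\Tor_1^{k[P]}(M,N)$, computed in the usual ungraded sense, for every $P^{\rm gp}$-graded $k[P]$-module $N$.

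Next I would reduce the class of test modules. Since $P$ is fine, $k[P]$ is Noetherian, and Tor commutes with filtered colimits, so it is enough to check $\Tor_1(M,N) = 0$ for finitely generated graded $N$. Any such $N$ admits a finite graded prime filtration $0 = N_0 \subset N_1 \subset \cdots \subset N_r = N$ by graded submodules with graded subquotients of the form $(k[P]/\p_i)(d_i)$, where $\p_i \subset k[P]$ is a graded prime ideal and $d_i \in P^{\rm gp}$ is a degree shift. A direct check shows that the graded ideals of $k[P]$ are exactly the monomial ideals $k[J]$ for monoid ideals $J \subseteq P$, and $k[J]$ is a graded prime iff $J$ is a prime ideal of $P$ (because every nonzero homogeneous element of $k[P]$ is a $k^*$-multiple of an element of $P$, so primality of $k[J]$ for pairs of homogeneous elements reduces exactly to primality of $J$). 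A degree shift does not affect the underlying ungraded module, so $\Tor$ of $(k[P]/\p_i)(d_i)$ equals $\Tor$ of $k[P]/\p_i$; combining this with the long exact Tor sequences for the prime filtration reduces graded flatness to the vanishing of $\Tor_1^{k[P]}(M, k[P]/k[I])$ for every prime ideal $I \subseteq P$. The converse direction is immediate, since each $k[P]/k[I]$ is a graded $k[P]$-module.

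The main obstacle is the graded prime filtration itself: one must verify that the usual induction (using graded associated primes of a finitely generated graded module over a graded Noetherian ring) goes through in the $P^{\rm gp}$-graded setting. This is essentially identical to the ungraded Noetherian case and requires no new ideas, though it deserves a careful statement. Everything else is routine bookkeeping built on top of the Second Chart Criterion.
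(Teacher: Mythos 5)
Your route is the paper's route: the tautological chart with $Q=0$, $A=k$ reduces log flatness to graded flatness of $M$ over $(P^{\rm gp},k[P])$ via Corollary~\ref{cor:chartcriteria}\eqref{affinecc3}, and the rest is the content of Lemma~\ref{lem:gradedflatnessoverkP}, proved there exactly by reducing to finitely generated graded test modules, filtering them, and identifying the relevant homogeneous ideals with $k[I]$ for $I\subseteq P$ prime. So the structure is correct and not genuinely different from the paper.

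One step, however, is wrong as literally stated, and it is precisely the point where the paper has to be careful. You claim every finitely generated graded module has a filtration with subquotients $(k[P]/\p_i)(d_i)$ for \emph{graded prime} ideals $\p_i$, and that $k[J]$ is a graded prime iff $J\subseteq P$ is prime, justified by testing primality only on homogeneous elements. But a fine monoid $P$ can have torsion in $P^{\rm gp}$, and then a grading by $P^{\rm gp}$ admits no total order: testing on homogeneous elements only gives the paper's notion of \emph{semiprime} (Definition~\ref{defn:semiprime}), which is strictly weaker than prime. For instance, with $P=\ZZ/2\ZZ$ and $I=\emptyset$ the ideal $k[I]=(0)$ of $k[P]\cong k[x]/(x^2-1)$ is semiprime but not prime, and in that example no graded filtration with genuinely prime graded annihilators exists at all; the maximal-annihilator argument in the graded setting (Lemma~\ref{lem:associatedsemiprime}) only produces semiprime ideals. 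So the assertion that the graded prime filtration ``is essentially identical to the ungraded Noetherian case and requires no new ideas'' is exactly where the new idea is needed. The fix is the one the paper makes: run the filtration with semiprime subquotients (Lemma~\ref{lem:filtrations}) and use Proposition~\ref{prop:homogeneousideals} to identify semiprime ideals of $k[P]$ with $k[I]$, $I\subseteq P$ prime; since your verification criterion is already the semiprime condition, your argument goes through verbatim after this renaming, and degree shifts are harmless as you say. (When $P^{\rm gp}$ is torsion-free the two notions coincide, by Lemma~\ref{lem:semiprimetorsionfreegrading}, but that is not the general fine case.)
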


A geometric version of the above theorem is the following:

\begin{unnumberedthm} Let $T \subseteq X$ be a toric variety over a field $k$ endowed with the usual log structure $ \M_X  =  \{ f \in \O_X : f|_T \in \O_T^* \} $ making it log smooth over $\Spec k$ with the trivial log structure.  A coherent sheaf $M$ on $X$ is log flat iff \be \Tor_1^{X}(M,\O_Z) & = & 0 \ee for every $T$-invariant subvariety $Z \subseteq X$.  \end{unnumberedthm}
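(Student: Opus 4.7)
The plan is to reduce this geometric theorem to the affine statement of the previous theorem by exploiting Zariski locality. Since log flatness is strict-fppf local on $X$ (property~\eqref{basechangestability} in the introduction), it is in particular Zariski local. I would cover $X$ by its $T$-invariant affine open subsets $U_\sigma$, each of which is of the form $\Spec(S_\sigma \to k[S_\sigma])$ for a fine monoid $S_\sigma$; the restriction $\M_X|_{U_\sigma}$ is exactly the standard log structure on $\Spec(k[S_\sigma])$ coming from the chart $S_\sigma \to k[S_\sigma]$ (both agree with the divisorial log structure from the toric boundary). Hence $M$ is log flat on $X$ iff $M|_{U_\sigma}$ is log flat on $\Spec(k[S_\sigma])$ for every $\sigma$.

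Next I would apply the preceding theorem to each chart: $M|_{U_\sigma}$ is log flat iff $\Tor_1^{k[S_\sigma]}(M|_{U_\sigma}, k[S_\sigma]/k[I]) = 0$ for every prime ideal $I \subseteq S_\sigma$. The standard combinatorics-geometry dictionary for toric varieties identifies prime ideals $I \subseteq S_\sigma$ with faces $F = S_\sigma \setminus I$, and the corresponding closed subscheme $\Spec(k[S_\sigma]/k[I]) = \Spec(k[F])$ is precisely the closure in $U_\sigma$ of a single torus orbit; conversely, every $T$-invariant subvariety of $U_\sigma$ arises this way.

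The remaining step is to globalize this affine identification. Every $T$-invariant subvariety $Z \subseteq X$ meets each $U_\sigma$ in a $T$-invariant closed subscheme (possibly empty), and $\O_Z|_{U_\sigma} = \O_{Z \cap U_\sigma}$; conversely, every $T$-invariant subvariety of $U_\sigma$ is the trace on $U_\sigma$ of its closure in $X$, which remains $T$-invariant. Combined with the fact that $\Tor_1^X(M,\O_Z)|_{U_\sigma} = \Tor_1^{U_\sigma}(M|_{U_\sigma},\O_{Z\cap U_\sigma})$, this lets one pass freely between the global vanishing condition on $X$ and the chart-wise prime-ideal conditions. I expect the main (minor) obstacle to be just the bookkeeping in this final step, in particular confirming that every prime ideal of every local chart monoid $S_\sigma$ is detected by some globally $T$-invariant subvariety of $X$, which reduces to the standard fact that the face lattice of $\sigma$ embeds naturally in the fan of $X$.
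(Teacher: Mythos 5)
Your proposal is correct and is essentially the argument the paper intends: the theorem is stated in the introduction as the geometric translation of the affine statement, and the route is exactly yours — Zariski-localize using the strict-locality of log flatness (Lemma~\ref{lem:fppflocal}), apply the chart criterion (Corollary~\ref{cor:chartcriteria}, via Lemma~\ref{lem:gradedflatnessoverkP}) on each invariant affine chart $\Spec(S_\sigma \to k[S_\sigma])$, and translate prime ideals of $S_\sigma$ into orbit closures by the standard face/orbit--cone dictionary. Your final bookkeeping step (every prime of every $S_\sigma$ comes from a global $T$-invariant subvariety, and every $Z \cap U_\sigma$ is cut out by a monomial prime) is exactly the content the paper leaves implicit, and it goes through as you describe.
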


It is desirable---especially in applications to moduli problems---to have criteria for log flatness that can be checked on, say, geometric points of $\u{Y}$.  To this end, we establish the following ``\emph{crit\`ere de platitude logarithmique par fibres}" (Theorem~\ref{thm:geometricpoints}):

\begin{fc} Suppose $f : X \to Y$ is an integral morphism of fine log schemes of locally finite presentation and $M$ is a locally finitely presented $\O_X$-module.  Then $M$ is log flat over $Y$ iff $M$ is flat over $\u{Y}$ and $M|X_y$ is log flat over $y$ for each geometric point $y$ of $Y$ (with $y$ given the log structure from $Y$). \end{fc}

\begin{example} \label{example:logcurve} Suppose $\u{f} : \u{X} \to \u{Y}$ is a nodal curve with marking sections $s_i$.  Then F.~Kato \cite{FK} constructed ``canonical" log structures on $\u{X}$ and $\u{Y}$ and a lifting $f : X \to Y$ of $\u{f}$ to a log smooth map of fine (in fact fs) log schemes.  For a reasonable sheaf $M$ on $X$, the Fiberwise Criterion says that log flatness of $M$ is equivalent to flatness of $M$ over $Y$ in the usual sense, plus log flatness after restricting to fibers of $f$ over (strict) geometric points of $Y$.  When $\u{Y} = \Spec k$, $k$ an algebraically closed field, log flatness of $M$ is equivalent to saying $M$ is locally free near the marked points and nodes of $\u{X}$.  \end{example}

In principal, our techniques yield a solid understanding of log flatness whenever $f : X \to Y$ is a \emph{semi-stable degeneration} (\S\ref{section:semistabledegenerations}).  However, for simplicity of exposition in the present paper, we have chosen to focus on the following special case: A \emph{nodal degeneration} is a log smooth morphism $f : X \to Y$ such that at any point of $X$, there is a chart witnessing log smoothness where the map of monoids is either i) an isomorphism, ii) a pushout of $\Delta : \NN \to \NN^2$, or iii) a pushout of $0 \to \NN$.  These nodal degenerations include the log curves of Example~\ref{example:logcurve}, as well as all the morphisms used in ``classical" degeneration theory---i.e.\ the ``expanded degenerations" and ``expanded pairs" introduced by J.~Li \cite{Li} and studied further in \cite{ACFW}.

Corollary~\ref{cor:nodaldegeneration} gives a simple criterion for log flatness when $f : X \to Y$ is a nodal degeneration.  For expanded degenerations/pairs, it turns out that log flatness is equivalent to the notions of ``perfect along..." and ``relative to..." as defined by Wu \cite[2.1, 2.10]{Wu}.

In \S\ref{section:logquotientspace} we introduce the space of \emph{log quotients} of a reasonable sheaf $M$ on $X$ for a map $f : X \to Y$ of fine log schemes (or stacks).  This space is nothing but the (open) locus in the usual relative quotient scheme where the universal quotient sheaf is log flat.  This space has already manifested itself in several special cases:  For example, our Theorem~\ref{thm:stablequotients} says that the space of \emph{stable quotients} introduced in \cite{MOP} is nothing but the (stable locus in) the space of log quotients of $\O^n_{\mathcal{C}}$ for the universal curve $\mathcal{C} \to \mathcal{M}$ over the moduli stack $\mathcal{M}$ of all marked nodal curves (with F.~Kato's log structure discussed in Example~\ref{example:logcurve}).  Wu's ``relative Hilbert scheme" of a smooth pair $(X,D)$ is nothing but the (stable locus in the) log Hilbert scheme of the universal expansion $\mathcal{X} \to \mathcal{T}$ of $(X,D)$.

We prove a fairly general gluing formula (\S\ref{section:gluing}) for spaces of log quotients on nodal degenerations.  This gluing formula can be used to recover all of the usual degeneration formulas in sheaf-theoretic curve-counting theory, though we will not fully explain the details in the present paper.  What is more interesting is the possibility of generalizing this gluing formula to, say, all semi-stable degenerations.  We will return to this point in future work \cite{G2}.

\subsection{Notation} \label{section:notation} We usually write $X$ for a log scheme and $\u{X}$ for its underlying scheme, though we often write $X$ when we mean $\u{X}$, especially when referring to structures that have nothing to do with log geometry (e.g.\ we say ``point of $X$" and ``$\O_X$-module" instead of ``point of $\u{X}$" and ``$\O_{\u{X}}$-module").  For a fine monoid $P$, we let $\AA(P)$ denote the log scheme $\Spec(P \to \ZZ[P])$, $\u{\AA}(P)$ its underlying scheme, $\GG(P) = \GG(P^{\rm gp})$ the group scheme $\Spec \ZZ[P^{\rm gp}]$ (sometimes viewed as a log group scheme with trivial log structure).  There is a tautological action of $\GG(P)$ on $\AA(P)$ (as log schemes).  We let $\AAA(P)$ denote the algebraic stack $[ \u{\AA}(P) / \GG(P) ]$.  (The log algebraic stack $[ \AA(P) / \GG(P) ]$ will not play any significant role.)  All of these constructions are contravariantly functorial in $P$.

\subsection{Leitfaden}  One should probably read \S\S\ref{section:Log}-\ref{section:applications} in order, though \S\S\ref{section:gluingscholium}-\ref{section:descentscholium} can be read independently from anything else.  \S\S\ref{section:modules}-\ref{section:stacks} can be read independently in any order, or can be refered to only as necessary.  The results of \S\ref{section:stacks} are logically necessary for results in \S\S\ref{section:Log}-\ref{section:applications}---but these results are rather technical in nature; one can simply accept them as true, referring to the proofs only as desired.  Some of the graded flatness criteria in \S\ref{section:gradedflatnesscriteria} are used frequently elsewhere in the text; one could probably follow the proofs of these results after a light skimming of \S\ref{section:gradedmodules}.

\section{Stacks of log schemes} \label{section:Log}  In this section we recall some basic facts about Olsson's algebraic stacks $\Log(Y)$ and explain how they are relevant to logarithmic flatness.

\subsection{Tautological section} \label{section:tautologicalsection} The structure map $\Log(Y) \to \u{Y}$ comes with a tautological section $\u{Y} \to \Log(Y)$ taking a $\u{Y}$-scheme $\u{f} : \u{X} \to \u{Y}$ to the $Y$-log scheme $f = (\u{f},\Id) : (\u{X},f^*\M_Y) \to Y$.  The map $\u{Y} \to \Log(Y)$ is in fact representable by open embeddings \cite[3.19]{Ols}---this boils down to the standard fact that the ``strict locus" of a morphism of fine log schemes $X \to Y$ is an open subspace of $X$.

A morphism of fine log schemes $f : X \to Y$ induces a morphism of stacks \be \Log(f) : \Log(X) & \to & \Log(Y)  \\ (g : U \to X) & \mapsto & (fg : U \to Y). \ee  This is a morphism of algebraic stacks which is clearly faithful as a functor, hence it is representable (c.f.\ \S\ref{section:representability}, \cite[8.1.2]{LM}).  We can precompose $\Log(f)$ with the tautological section $\u{X} \to \Log(X)$ to obtain a (representable) morphism of algebraic stacks \bne{Logsubf} \Log_f : \u{X} & \to & \Log(Y) \ene which is, in a sense, the central object of study in this paper.  The diagram \bne{tautologicalsectionsectiondiagram} & \xym{ \u{X} \ar[r]^{\u{f}} \ar[d]_{} & \u{Y} \ar[d] \\ \Log(X) \ar[r] & \Log(Y) } \ene does not commute in general.  Instead we have:

\begin{lem} \label{lem:strictcriteria} For a map of fine log schemes $f : X \to Y$, the following are equivalent: \begin{enumerate} \item $f$ is strict.  \item The diagram \eqref{tautologicalsectionsectiondiagram} is $2$-cartesian. \item The diagram \eqref{tautologicalsectionsectiondiagram} is $2$-commutative.  \end{enumerate} \end{lem}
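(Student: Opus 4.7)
The plan is to prove $(1)\Rightarrow(2)\Rightarrow(3)\Rightarrow(1)$. The implication $(2)\Rightarrow(3)$ is tautological, since a $2$-cartesian square comes equipped with a distinguished $2$-isomorphism; the real content is in $(1)\Rightarrow(2)$ and in $(3)\Rightarrow(1)$. Throughout, the decisive observation is that morphisms (and hence isomorphisms) in $\Log(Y)$ are \emph{strict} over $Y$ by definition, so a $2$-isomorphism appearing in either direction amounts to an honest isomorphism of log structures compatible with the structure map to $Y$.

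For $(1)\Rightarrow(2)$, I would analyze $S$-points of the $2$-fibered product $\Log(X)\times_{\Log(Y)}\u{Y}$. Such a point is a triple $(a,b,\gamma)$ where $a\colon S\to\Log(X)$ is a strict fine log $X$-scheme structure on $S$, $b\colon S\to\u{Y}$ is a scheme map (giving the tautological log scheme $(S,b^{\ast}\M_Y)\to Y$), and $\gamma$ is a strict isomorphism over $Y$ between $\Log(f)(a)=(S,a^{\ast}\M_X)\to Y$ and $(S,b^{\ast}\M_Y)\to Y$. Existence of $\gamma$ forces $b=\u{f}\circ a$ on underlying schemes, while the log part of $\gamma$ is determined by the requirement that $\gamma\circ f^{\dagger}$ coincide with the canonical adjunction. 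Consequently $\gamma$ exists (and is then unique) if and only if the pullback $a^{\ast}f^{\dagger}$ is invertible. When $f$ is strict, $f^{\dagger}$ is an isomorphism and remains so after any pullback, so the obvious map identifies $\u{X}$ with the fiber product.

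For $(3)\Rightarrow(1)$, suppose the square $2$-commutes via a natural isomorphism $\alpha$ between the two functors $\u{X}\rightrightarrows\Log(Y)$. Evaluate $\alpha$ at the identity point $\Id\colon\u{X}\to\u{X}$. The upper path yields $(\u{X},\M_X)\to Y$ with structure map $f$ (encoded by $f^{\dagger}\colon f^{\ast}\M_Y\to\M_X$); the lower path yields the tautological $(\u{X},\u{f}^{\ast}\M_Y)\to Y$. A strict isomorphism over $Y$ between these two is an isomorphism of log structures $\psi\colon\M_X\to\u{f}^{\ast}\M_Y$ such that $\psi\circ f^{\dagger}$ equals the canonical adjunction map. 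Hence $\psi$ inverts $f^{\dagger}$, proving that $f$ is strict.

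The only real obstacle is bookkeeping: one must use the correct, strict notion of morphism in $\Log(Y)$ (so that the strictness of $\gamma$, respectively of $\alpha_{\Id}$, is actually available) and must track the structure map to $Y$ through the $2$-isomorphism, since that is exactly what lets $\alpha_{\Id}$ ``detect" $f^{\dagger}$ rather than some other auxiliary comparison map. Everything else is a formal unwinding of the tautological section and of $2$-categorical fiber products.
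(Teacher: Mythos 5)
Your overall architecture ($(1)\Rightarrow(2)\Rightarrow(3)\Rightarrow(1)$) is sound, and the $(3)\Rightarrow(1)$ step --- evaluating the $2$-isomorphism at $\Id_{\u{X}}$ and using that morphisms in $\Log(Y)$ are strict over $Y$ to force $f^\dagger$ to be invertible --- is correct; since the paper leaves this lemma as an exercise, correctness is the only thing to check. The problem is in $(1)\Rightarrow(2)$: you describe an $S$-point of $\Log(X)$ as ``a strict fine log $X$-scheme structure on $S$,'' i.e.\ in effect as a scheme map $a\colon S\to\u{X}$ with log structure $a^*\M_X$. That is not what $\Log(X)$ is: its objects are \emph{arbitrary} fine log schemes $(S,\M_S)\to X$; only the morphisms are required to be strict. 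Consequently an object of $\Log(X)\times_{\Log(Y)}\u{Y}$ over $S$ is a triple $\bigl((S,\M_S,g),\,b,\,\gamma\bigr)$ with $g\colon (S,\M_S)\to X$ an arbitrary morphism of fine log schemes, and the condition for $\gamma$ to exist is not invertibility of $a^*f^\dagger$ but invertibility of $(fg)^\dagger=g^\dagger\circ\u{g}^*(f^\dagger)$, i.e.\ strictness of the composite $fg$.

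As written, your argument only identifies $\u{X}$ with the full subcategory of the fibered product whose $\Log(X)$-component is strict over $X$; the essential-surjectivity statement --- which is the actual content of $2$-cartesianness --- is missing. It is easily supplied: since $\gamma$ is an isomorphism in $\Log(Y)$ it is in particular strict, so $fg$ is strict; when $f$ is strict this forces $g^\dagger$ to be an isomorphism, i.e.\ $g$ is strict, and then $g^\dagger$ itself furnishes an isomorphism from $\bigl((S,\M_S,g),b,\gamma\bigr)$ to the image of $\u{g}\colon S\to\u{X}$ under the comparison functor. (You should also record the routine check on morphisms, which is forced because maps in both $\Log(X)$ and $\Log(Y)$ are strict and lie over prescribed scheme maps.) With this correction your proof goes through.
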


\begin{proof} Exercise with the definitions. \end{proof}

\subsection{Translation to stacks} It is quite standard to interpret properties of a map of fine log schemes $f : X \to Y$ in terms of corresponding properties of the maps $\Log(f) : \Log(X) \to \Log(Y)$ and $\Log_f : \u{X} \to \Log(Y)$ of algebraic stacks.  

For example, Olsson \cite[4.6]{Ols} showed that $f$ is log smooth (resp.\ formally log smooth) iff $\Log(f)$ is log smooth (resp.\ \dots) iff $\Log_f$ is log smooth (resp.\ \dots).  (Many of these implications are formal exercises.)  The general context for this kind of translation is the following definition \cite[4.1]{Ols}:

\begin{defn} \label{defn:logP} Let {\bf P} be a property of representable morphisms of algebraic stacks.  Then we say that a map of fine log schemes $f : X \to Y$ has property \emph{log} {\bf P} (resp.\ \emph{weak log} {\bf P}) iff $\Log(f)$ (resp.\ $\Log_f$) has property {\bf P}.  \end{defn}

In analogy with Definition~\ref{defn:logP}, the property we are calling ``log flatness" might actually be called ``weak log flatness" and one might reserve ``log flat" to mean the pullback of $M$ to $\Log(X)$ along the structure map $\Log(X) \to X$ is flat over $\Log(Y)$.  However, we will never consider the latter notion in this paper, though Olsson does consider this notion for the case where $M$ is the structure sheaf in \cite[4.6]{Ols}, where he shows that it is equivalence to weak log flatness.

\subsection{Limit preservation} \label{section:limitpreservation} It is clear from the definition of $\Log(Y)$ that for a strict morphism $f : X \to Y$ of fine log schemes, the diagram of stacks $$ \xym@C+20pt{ \Log(X) \ar[d] \ar[r]^-{\Log(f)} & \Log(Y) \ar[d] \\ \u{X} \ar[r]^{\u{f}} & \u{Y} } $$ is $2$-cartesian.  More generally it is not hard to see that $\Log( \slot )$ ``preserves inverse limits" in the sense that when the left diagram below is a cartesian diagram of fine log schemes,\footnote{The underlying diagram of schemes may not be cartesian in this situation.} the right diagram below is a $2$-cartesian diagram of stacks \cite[3.20]{Ols}. \be \xym{ W \ar[r] \ar[d] & X_1 \ar[d] \\ X_2 \ar[r] & Y } & \quad \quad & \xym{ \Log(W) \ar[r] \ar[d] & \Log(X_1) \ar[d] \\ \Log(X_2) \ar[r] & \Log(Y) } \ee 

\subsection{Convenient \'etale cover} \label{section:convenientetalecover} The stack $\Log(Y)$ has a very concrete \'etale cover which is useful for ``computations".  Suppose $Q$ is a fine monoid and $b : Q \to \M_Y(Y)$ is a chart for a fine log scheme $Y$.  In this situation we often write $t : Q \to \O_Y(Y)$ for the composition of $b$ and $\alpha_Y : \M_Y(Y) \to \O_Y(Y)$.  The map $t$ is the same thing as a map of schemes $\u{Y} \to \u{\AA}(Q)$ (notation of \S\ref{section:notation}), which we can compose with the projection $\u{\AA}(Q) \to \AAA(Q)$ to obtain a map of algebraic stacks $\u{Y} \to \AAA(Q)$.  Let $h : Q \to P$ be a map of fine monoids.  Then there is a natural map of algebraic stacks \be \M(\u{Y},h,t) := \u{Y} \times_{\AAA(Q)} \AAA(P) & \to & \Log(Y) \ee explained carefully in \S\ref{section:ontheetalecover}.  If $U \to Y$ is a strict \'etale map and $b : Q \to \M_Y(U)$ is a chart for $\M_Y|_U$, then we can compose the analogous map for $U$ with the natural map $\Log(U) \to \Log(Y)$ to obtain a map \bne{coveringmaps} U \times_{\AAA(Q)} \AAA(P) & \to & \Log(Y) . \ene  Olsson \cite[5.25]{Ols} proved:

\begin{thm} \label{thm:etalecover} {\bf (Olsson)} Let $Y$ be a fine log scheme.  The disjoint union \be \coprod_{U/Y, \, b, \, h} \u{U} \times_{\AAA(Q)} \AAA(P) & \to & \Log(Y) \ee of the maps \eqref{coveringmaps} over all triples $(U/Y,b,h)$ consisting of a strict \'etale map $U \to Y$, a chart $b : Q \to \M_Y(U)$ for $\M_Y|_U$, and a map of fine monoids $h : Q \to P$ is a representable \'etale cover of $\Log(Y)$. \end{thm}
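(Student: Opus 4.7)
The plan is to prove the theorem in two separate pieces: (i) each individual map in the disjoint union is representable étale, and (ii) the disjoint union is jointly surjective in the étale topology.

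For (i), I would factor the map as $\u{U} \times_{\AAA(Q)} \AAA(P) \to \Log(U) \to \Log(Y)$. The second factor is representable étale because $U \to Y$ is strict étale: the limit-preservation property from \S\ref{section:limitpreservation} exhibits $\Log(U)$ as the $2$-base change $\Log(Y) \times_{\u{Y}} \u{U}$ of a strict étale morphism. For the first factor, one reduces by strict base change (using that the chart $b$ makes $U \to \AA(Q)$ strict, so $\Log(U)$ is identified with the strict pullback of $\Log(\AA(Q))$ along $\u{U} \to \u{\AA}(Q)$) to the universal assertion that
$$ \AAA(P) \longrightarrow \Log(\AA(Q)) $$
is representable étale for every map of fine monoids $h : Q \to P$. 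Representability of this universal map is immediate from an analysis of its functor of points. Étaleness I would establish by checking formal étaleness plus local finite presentation: given a strict square-zero thickening $T \into T'$ of fine log schemes over $\AA(Q)$ and a chart $P \to \M_T$ refining $Q \to \M_{T'} \to \M_T$, the claim is that there is a unique lift to a chart $P \to \M_{T'}$ extending the $Q$-chart on $T'$. Unraveling $\M_{T'} \to \M_T$ as a $(1+I)$-torsor with $I = \ker(\O_{T'} \to \O_T)$, the lift of each $p \in P$ is forced by compatibility with the $Q$-chart on $T'$ along the relations in $P$ involving $h(Q)$, and both existence and uniqueness fall out of this.

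For (ii), I would invoke Kato's theorem on the existence of charts for fine log schemes: given any $T \to \Log(Y)$ and $t \in T$, after passing to strict étale neighborhoods $U \to Y$ of $f(t)$ and $T' \to T$ of $t$ (with $\u{T}' \to \u{Y}$ factoring through $\u{U}$), one can find fine monoid charts $Q \to \M_Y(U)$ and $P \to \M_T(T')$ together with a map $h : Q \to P$ compatible with $f^\dagger$ on stalks. This data packages exactly into a factorization of $T' \to \Log(Y)$ through the local model $\u{U} \times_{\AAA(Q)} \AAA(P)$, witnessing the claimed surjectivity.

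The main obstacle is the étaleness of the universal map $\AAA(P) \to \Log(\AA(Q))$; while the reduction to this universal case and the surjectivity step via Kato's chart theorem are comparatively formal, the infinitesimal lifting assertion for chart refinements is the technical heart of Olsson's argument in \cite{Ols} and demands careful bookkeeping of how the $\GG(Q)$- and $\GG(P)$-torsor structures interact with the chart compatibilities.
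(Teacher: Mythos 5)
First, a point of comparison: the paper does not prove this statement at all---it is quoted from Olsson \cite[5.25]{Ols}---so your proposal should be measured against Olsson's argument and against the closely related results this paper \emph{does} prove, namely Theorems~\ref{thm:AAAhetale} and \ref{thm:LhtoLogAQetale} and the lifting machinery behind them. Your two-step architecture (étaleness of each local model, plus joint surjectivity via Kato's chart-existence theorem) is indeed the shape of Olsson's proof, and your step (ii) is fine as sketched.

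The gap is in step (i). To begin with, the ``universal assertion'' is not well-posed: there is no canonical map $\AAA(P) \to \Log(\AA(Q))$, because the associated log structure $P^a_x$ together with its $Q$-map is preserved only by twists by units pulled back from $(P/Q)^{\rm gp}$, not by arbitrary $\GG(P)$-twists. The correct universal local model over $\Log(\AA(Q))$ is $\u{\AA}(Q) \times_{\AAA(Q)} \AAA(P)$, whose objects carry the extra unit $u : Q^{\rm gp} \to \O_X^*$ trivializing the discrepancy $u \cdot f^*t = xh$ (see \S\ref{section:ontheetalecover}), or, when $h$ is monic, the stack $\L(h) = [\u{\AA}(P)/\GG(P/Q)]$ of Theorem~\ref{thm:LhtoLogAQetale}; the two agree only under a splitting hypothesis (Theorem~\ref{thm:equivalence}). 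With the correct universal object your base-change reduction does work, since strictness of $Y \to \AA(Q)$ and limit preservation identify $\M(\u{Y},h,t)$ with the pullback of $\u{\AA}(Q) \times_{\AAA(Q)} \AAA(P)$ along $\Log(Y) \to \Log(\AA(Q))$.

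More seriously, the infinitesimal lifting claim is false as stated. A literal chart lift $P \to \M_{T'}(T')$ is never unique: any two differ by a homomorphism $P^{\rm gp} \to 1+I$, and these become identified only in the stack quotient, so uniqueness can only be uniqueness up to a (unique) $2$-isomorphism. Nor need a compatible literal lift exist étale-locally: for $h = \cdot 2 : \NN \to \NN$ over a base in which $2$ is not invertible, compatibility with the $Q$-chart forces $l(1)^2 = a(1)$, i.e.\ extraction of a square root of a unit of the form $1+i$, which in general requires a finite flat cover. This is exactly the content of Lemma~\ref{lem:lifting} and Theorem~\ref{thm:homotopylifting}: lifts exist only fppf-locally and only up to homotopy (note also that the given square merely $2$-commutes, so the lift must be compatible with the homotopy $\eta$---the bookkeeping your final sentence gestures at), and one then deduces formal étaleness of the étale-stackified map from the fppf-local criterion of Lemma~\ref{lem:formaletalenesscriterion}, using that algebraic stacks are stacks for the fppf topology. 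Your assertion that ``the lift of each $p \in P$ is forced \dots and both existence and uniqueness fall out'' replaces this mechanism with a statement that is simply not true, so as written the technical heart of step (i) is missing.
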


\subsection{Stack of integral morphisms}  \label{section:opensubstacks}  There are various substacks inside $\Log(Y)$ parameterizing fine log schemes over $Y$ with various additional properties.  Since an \'etale map is open on spaces, we can define, for example, the open substack \bne{Logflat} \Log^{\rm int}(Y) & \subseteq & \Log(Y) \ene as the image of all the maps in Theorem~\ref{thm:etalecover} where the map of fine monoids $h : Q \to P$ is flat (equivalently, where $\ZZ[h]: \ZZ[Q] \to \ZZ[P]$ is flat---see Corollary~\ref{cor:flatiffintegral}).  

Whenever $h : Q \to P$ is a flat map of monoids, we have a $2$-commutative diagram of (sufficiently) algebraic stacks $$ \xym{ \u{\AA}(P) \ar[d] \ar[r] & \u{\AA}(Q) \ar[d] \\ \AAA(P) \ar[r] & \AAA(Q) } $$ where the vertical maps are (affine) fpqc covers (because the left map, for example, is essentially by definition an \'etale-locally-trivial principal $\GG(P)$ bundle and $\GG(P)$ is an (affine) fpqc cover of $\Spec \ZZ$) and the top horizontal map is flat, hence the bottom map is also flat.

Since the maps in Theorem~\ref{thm:etalecover} are \'etale, and the stacks $\u{U} \times_{\AAA(Q)} \AAA(P)$ are flat over $\u{U}$ when $h$ is flat (by the discussion above and stability of flatness under base change), the following result is clear (using Lemma~\ref{lem:etaleflatness}):

\begin{lem} \label{lem:LogintflatoverY} The structure map $\Log(Y) \to \u{Y}$ is flat on the open substack $\Log^{\rm int}(Y)$. \end{lem}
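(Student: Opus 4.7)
The plan is to exploit Olsson's étale cover (Theorem~\ref{thm:etalecover}) together with the discussion just before the lemma, which already identifies all the geometric input we need. By the definition of $\Log^{\rm int}(Y)$ as the image of those covering maps $\u{U}\times_{\AAA(Q)}\AAA(P)\to\Log(Y)$ for which $h:Q\to P$ is flat, the restriction of Olsson's cover to such triples $(U/Y,b,h)$ yields a representable étale surjection $\pi:\W\to\Log^{\rm int}(Y)$, where $\W$ is the disjoint union of the stacks $\u{U}\times_{\AAA(Q)}\AAA(P)$ with $h$ flat. This reduces the lemma to verifying flatness of the composition $\W\to\Log^{\rm int}(Y)\to\u{Y}$ and then descending flatness through $\pi$.

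First I would check flatness of each component $\u{U}\times_{\AAA(Q)}\AAA(P)\to\u{Y}$ separately. The map $\u{U}\to\u{Y}$ is strict étale, hence flat. For the other factor, the $2$-commutative square displayed in the paragraph preceding the lemma shows that when $h$ is flat, the map of stacks $\AAA(P)\to\AAA(Q)$ is flat: the vertical arrows are fpqc covers and the top horizontal arrow $\u{\AA}(P)\to\u{\AA}(Q)$ coincides (up to canonical identifications) with $\Spec\ZZ[h]$, which is flat by Corollary~\ref{cor:flatiffintegral}. Base changing $\AAA(P)\to\AAA(Q)$ along $\u{U}\to\AAA(Q)$ preserves flatness, so $\u{U}\times_{\AAA(Q)}\AAA(P)\to\u{U}$ is flat, and composing with the flat map $\u{U}\to\u{Y}$ gives flatness of $\u{U}\times_{\AAA(Q)}\AAA(P)\to\u{Y}$.

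Finally, I would invoke Lemma~\ref{lem:etaleflatness} (the only remaining nontrivial input) to descend flatness along $\pi$. Concretely, for any smooth presentation $\Spec R\to\Log^{\rm int}(Y)$, pulling back along $\pi$ yields a smooth cover of $\Spec R$ whose composition to $\u{Y}$ is flat by the previous paragraph; since flatness of a morphism from a scheme is étale-local on the source, this forces $\Spec R\to\u{Y}$ to be flat, which is exactly the statement that $\Log^{\rm int}(Y)\to\u{Y}$ is flat. The only step that is not entirely formal is the flatness of $\AAA(P)\to\AAA(Q)$, which has been pre-arranged in the paragraph preceding the lemma; everything else is a routine stability-under-composition-and-base-change argument combined with étale descent of flatness.
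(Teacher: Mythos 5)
Your proposal is correct and follows essentially the same route as the paper: restrict Olsson's \'etale cover (Theorem~\ref{thm:etalecover}) to charts with $h$ flat, get flatness of $\AAA(P)\to\AAA(Q)$ from the fpqc-cover square in the paragraph preceding the lemma (with Corollary~\ref{cor:flatiffintegral} supplying flatness of $\ZZ[h]$), base change and compose with the \'etale map $\u{U}\to\u{Y}$, and then descend flatness along the \'etale cover of $\Log^{\rm int}(Y)$. The only cosmetic difference is that your final descent step really rests on fppf/\'etale-locality of flatness on the source rather than on Lemma~\ref{lem:etaleflatness} as literally stated, but the paper's own one-line appeal to that lemma is no more precise than your spelled-out presentation argument.
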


\begin{lem} \label{lem:integral} Let $f : X \to Y$ be a map of fine log schemes, $x$ an \'etale point of $X$.  The following are equivalent: \begin{enumerate} \item \label{mf1} $\M_{Y,f(x)} \to \M_{X,x}$ is a flat map of monoids. \item \label{mf2} $\ov{\M}_{Y,f(x)} \to \ov{\M}_{X,x}$ is a flat map of monoids. \item \label{mf3} After possibly replacing $f$ with an \'etale neighborhood of $x$ in $f$, there is a chart $$ \xym{P \ar[r] & \M_X(X) \\ Q \ar[u]^h \ar[r] & \M_Y(Y) \ar[u]  } $$ for $f$ where $h$ is a flat map of fine monoids. \item \label{mf4} There is an \'etale neighborhood $\u{U}$ of $x$ in $\u{X}$ such that the composition $\u{U} \to \u{X} \to \Log(Y)$ factors through the open substack $\Log^{\rm int}(Y)$. \end{enumerate} \end{lem}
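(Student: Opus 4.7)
The plan is to cycle through $(3) \Leftrightarrow (4)$, $(1) \Leftrightarrow (2)$, $(3) \Rightarrow (1)$, and $(2) \Rightarrow (3)$, invoking Olsson's \'etale cover (Theorem~\ref{thm:etalecover}), the equivalence of monoid flatness with Kato integrality (Corollary~\ref{cor:flatiffintegral}), and the standard existence-of-charts technology.

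For $(3) \Leftrightarrow (4)$, the forward direction is immediate from the definition of $\Log^{\rm int}(Y)$ in~\eqref{Logflat}: a chart on an \'etale neighborhood $\u{U} \to \u{X}$ with $h$ flat exhibits the composition $\u{U} \to \Log(Y)$ as factoring through one of the defining maps $\u{U} \times_{\AAA(Q)} \AAA(P) \to \Log^{\rm int}(Y)$. Conversely, applying Theorem~\ref{thm:etalecover} to the open substack $\Log^{\rm int}(Y)$ shows that any section $\u{U} \to \Log^{\rm int}(Y)$ factors, after passing to an \'etale cover, through one of the defining charts (which by construction has $h$ flat), and this unwinds to a chart for $f$ of the type asserted in $(3)$.

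The equivalence $(1) \Leftrightarrow (2)$ I will handle by a direct combinatorial argument on Kato integrality. The forward direction is easy: project a witness of integrality of $\M_{Y,f(x)} \to \M_{X,x}$ modulo units. For the converse, lift a witness of integrality of $\ov{\M}_{Y,f(x)} \to \ov{\M}_{X,x}$ to $\M$-representatives and absorb the unit discrepancies by adjusting the lifts; the identity $a_1 + a_1' = a_2 + a_2'$ forces the residual units arising in the two equations for $b_1, b_2$ to agree after applying $h$, by cancellation in the integral monoid $\M_{X,x}$. Thus integrality of $\M_{Y,f(x)} \to \M_{X,x}$ is equivalent to that of $\ov{\M}_{Y,f(x)} \to \ov{\M}_{X,x}$, which by Corollary~\ref{cor:flatiffintegral} is the same as the respective flatnesses.

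For $(3) \Rightarrow (1)$, a chart with flat $h : Q \to P$ identifies the stalks $\M_{Y,f(x)}$ and $\M_{X,x}$ as the log structures associated to the pre-log structures $Q \to \O_{Y,f(x)}$ and $P \to \O_{X,x}$, i.e.\ as pushouts of $Q$, $P$ with $\O^*$ along the faces $\alpha^{-1}(\O^*)$; flatness of $h$ is stable under these pushouts, yielding $(1)$. Finally, $(2) \Rightarrow (3)$ is the main obstacle. The strategy is to construct an \'etale-local chart of $f$ whose $h$ induces $\ov{\M}_{Y,f(x)} \to \ov{\M}_{X,x}$ on sharpened stalks: begin with any chart $Q \to \M_Y$ near $f(x)$, lift a finite generating set of $\ov{\M}_{X,x}$ to $\M_{X,x}$, and form the pushout $P$ in fine monoids realizing these generators together with the image of $Q$. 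A standard application of Kato's chart-for-a-morphism construction shows that this extends to a genuine chart on an \'etale neighborhood of $x$. Hypothesis $(2)$ then makes this $h$ flat modulo units, and by the equivalence $(1) \Leftrightarrow (2)$ already established, $h$ is flat, completing the proof.
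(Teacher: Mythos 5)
Your handling of $(3)\Leftrightarrow(4)$ via Olsson's cover and your plan for $(1)\Leftrightarrow(2)$ (re-deriving directly what the paper gets from Lemma~\ref{lem:sharpeningflatness}) match the paper in substance, and closing the cycle with $(3)\Rightarrow(1)$ by a pushout/base-change argument instead of the paper's $(3)\Rightarrow(2)$ is a legitimate variant. The genuine gap is in $(2)\Rightarrow(3)$, which is the crux. You build $P$ by lifting a finite generating set of $\ov{\M}_{X,x}$ and forming a pushout with an arbitrary chart $Q \to \M_Y(Y)$. That only makes $\ov{P} \to \ov{\M}_{X,x}$ and $\ov{Q} \to \ov{\M}_{Y,f(x)}$ \emph{surjective}: relations holding in the characteristic monoids need not hold in $P$ or $Q$, so the sharpened chart map $\ov{h} : \ov{Q} \to \ov{P}$ is not identified with $\ov{\M}_{Y,f(x)} \to \ov{\M}_{X,x}$, and the assertion that hypothesis (2) "makes this $h$ flat modulo units" has no justification. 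The missing device is the localization trick the paper uses: replace $Q$ and $P$ by their localizations at the preimages of $\O_{Y,f(x)}^*$ and $\O_{X,x}^*$ (shrinking the neighborhood so the localized monoids still form a chart); only after this do $\ov{Q} \to \ov{\M}_{Y,f(x)}$ and $\ov{P} \to \ov{\M}_{X,x}$ become isomorphisms, so that hypothesis (2) literally says $\ov{h}$ is flat.

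There is a second, related problem in the same step: even granting $\ov{h}$ flat, you conclude "$h$ is flat" by citing "(1)$\Leftrightarrow$(2) already established." That equivalence is a statement about the particular stalk map of log structures, whose unit groups are the full groups $\O_{Y,f(x)}^*$ and $\O_{X,x}^*$; it says nothing about an abstract map $h : Q \to P$ of fine chart monoids. Passing from flatness of $\ov{h}$ to flatness of $h$ needs a sharpening statement such as Lemma~\ref{lem:sharpeningflatness} (which requires the units of $Q$ and $P$ to be matched up, again arranged by the localization above) or the second part of Lemma~\ref{lem:flatnessandsharpening}; as written, the final sentence of your $(2)\Rightarrow(3)$ does not follow. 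Both defects are repaired exactly by the localization-at-units step, which is the one idea your outline is missing.
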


\begin{proof} The map in \eqref{mf2} is the sharpening of the map in \eqref{mf1} and the monoids in question are integral, so \eqref{mf1} and \eqref{mf2} are equivalent by Lemma~\ref{lem:sharpeningflatness}.  For \eqref{mf2} implies \eqref{mf3} we start by producing an arbitrary chart on a neighborhood of $x$ in $f$ as indicated, then we replace $Q$ and $P$ with their localizations at the preimage of $\O_{X,x}^*$.  After possibly shrinking to a smaller neighborhood we thus obtain a chart as indicated where the induced maps $\ov{P} \to \ov{\M}_{X,x}$ and $\ov{Q} \to \ov{\M}_{Y,f(x)}$ are isomorphisms, hence $h$ is flat as desired by Lemma~\ref{lem:sharpeningflatness} and the hypothesis in \eqref{mf2}.  This same argument works to prove \eqref{mf3} implies \eqref{mf2} because the aforementioned localization of a map $h$ that was flat to begin with will also be flat.  \eqref{mf3} implies \eqref{mf4} because a chart as in \eqref{mf3} yields a factorization of $\u{X} \to \Log(Y)$ through the corresponding \'etale map to $\Log(Y)$ in Theorem~\ref{thm:etalecover}.  Similarly \eqref{mf4} implies \eqref{mf3} because a map to $\Log(Y)$ which factors through the open substack $\Log^{\rm int}(Y)$ factors \'etale locally through one of the maps in Theorem~\ref{thm:etalecover} where $h$ is flat, and because of the way those maps are defined (see \S\ref{section:ontheetalecover}) it is clear that for any map of schemes $\u{X} \to \Log(Y)$ factoring through such a map, the corresponding map of fine log schemes $X \to Y$ \'etale locally admits a chart using $h$. \end{proof}

\begin{defn} \label{defn:integral} A map of fine log schemes $f :X \to Y$ will be called \emph{integral at} $x$ (resp.\ \emph{integral}) iff it satisfies the conditions of Lemma~\ref{lem:integral} at $x$ (resp.\ at every $x$). \end{defn}

\begin{rem} There seems to be some ambiguous usage of the term ``integral" for a map of integral monoids $Q \to P$ or integral log schemes $X \to Y$, which is one reason I tried to avoid this terminology in \S\ref{section:modules}.  Some of the confusion results from a small error in \cite[4.1(2)]{KK}: one really does need to break those five conditions into two separate ``equivalence classes" as in \cite[4.1(1)]{KK}.  Any map of sharp, fine monoids $h : Q \to P$ with $h^{-1}(0) = \{ 0 \}$ arises as the map $\ov{f}^\dagger_x : \ov{\M}_{X,x} \to \ov{\M}_{Y,f(x)}$ for a map of fine log schemes $X \to Y$ and an \'etale point $x$ of $X$ (and conversely $\ov{f}^\dagger_x$ is always such a map).  For example, take $X \to Y$ to be $\Spec (\slot \to \slot)$ of the diagram of prelog rings: \bne{prelogrings} & \xym{ P \ar[r]^0 & \CC \ar@{=}[d] \\ Q \ar[u]^h \ar[r]^0 & \CC } \ene In particular, the addition map $\NN^2 \to \NN$ will arise.  But this map (or any surjection of integral monoids which is not an isomorphism) clearly satisfies condition \cite[4.1.(iv)]{KK}, though it is certainly not flat (that is, it does \emph{not} satisfy the equivalent conditions (ii), (v) in \cite[4.1]{KK}.)  In \cite[Definition~4.3]{KK}, Kato should take ``integral" to mean integral in the sense of Definition~\ref{defn:integral} above.  (If you only impose the weaker condition in \cite[4.1]{KK} in your definition of ``integral" then it won't even be true that a log smooth integral morphism is flat on underlying schemes.) \end{rem}

In particular, Lemma~\ref{lem:LogintflatoverY} implies that the substack $\Log^{\rm int}(Y) \subseteq \Log(Y)$ is identified with the substack of $\Log(Y)$ consisting of those fine log schemes $f : X \to Y$ over $Y$ where $f$ is integral.

\subsection{Applications to log flatness}

\begin{lem} \label{lem:strictlogflatness} Let $f : X \to Y$ be a strict map of fine log schemes.  Then an $\O_X$-module $M$ is log flat over $Y$ iff $M$ is flat over $\u{Y}$. \end{lem}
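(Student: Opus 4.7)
The plan is to reduce the statement to the assertion that flatness over $\Log(Y)$ through a composition of morphisms, the last of which is an open embedding, is the same as flatness at the intermediate stage. The key structural input is Lemma~\ref{lem:strictcriteria}, together with the fact (recalled in \S\ref{section:tautologicalsection}) that the tautological section $\u{Y} \to \Log(Y)$ is representable by open embeddings.

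First, I would invoke Lemma~\ref{lem:strictcriteria}: since $f$ is strict, the square \eqref{tautologicalsectionsectiondiagram} is $2$-commutative (in fact $2$-cartesian). Tracing through the definitions, this means that the representable morphism $\Log_f : \u{X} \to \Log(Y)$ factors (up to canonical $2$-isomorphism) as the composition
\[
\u{X} \xrightarrow{\u{f}} \u{Y} \xrightarrow{\sigma} \Log(Y),
\]
where $\sigma$ is the tautological section. By \cite[3.19]{Ols} (recalled in \S\ref{section:tautologicalsection}), $\sigma$ is a representable open embedding, and in particular a flat (even étale) morphism of algebraic stacks.

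The second step is to use a standard transitivity property for flatness of quasi-coherent sheaves over algebraic stacks: if $g : \mathcal{X} \to \mathcal{Y}$ and $h : \mathcal{Y} \to \mathcal{Z}$ are morphisms of algebraic stacks with $h$ flat, then a quasi-coherent sheaf on $\mathcal{X}$ is flat over $\mathcal{Z}$ iff it is flat over $\mathcal{Y}$. (This is verified by pulling back along a smooth cover of $\mathcal{Z}$ reducing to the case of schemes, and is precisely the content of the sort of lemma that \S\ref{section:flatnessoverstacks} is designed to handle; Lemma~\ref{lem:etaleflatness} alluded to in the text is the analogous principle for étale morphisms.) Applying this with $g = \u{f}$, $h = \sigma$, $\mathcal{Z} = \Log(Y)$, flatness of $M$ over $\Log(Y)$ via $\Log_f = \sigma \circ \u{f}$ is equivalent to flatness of $M$ over $\u{Y}$ via $\u{f}$, which is the definition of flatness of $M$ over $\u{Y}$ in the usual sense.

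There is really no obstacle here beyond bookkeeping: the content is packaged into Lemma~\ref{lem:strictcriteria} and the openness of $\sigma$, both of which are already available. The only point requiring a modicum of care is the transitivity of flatness through $\sigma$, but since $\sigma$ is representable and an open embedding one is immediately reduced to the scheme-theoretic statement that flatness can be tested after restricting to an open subscheme containing the image, which is tautological.
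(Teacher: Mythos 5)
Your proposal is correct and is essentially the paper's own argument: factor $\Log_f$ through the tautological section using Lemma~\ref{lem:strictcriteria}, then use that this section is a representable open embedding so flatness over $\Log(Y)$ coincides with flatness over $\u{Y}$. The extra detail you supply about transitivity of flatness through the open embedding is exactly what the paper leaves implicit.
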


\begin{proof} When $f$ is strict we saw in Lemma~\ref{lem:strictcriteria} that $\Log_f : \u{X} \to \Log(Y)$ factors as $\u{f} : \u{X} \to \u{Y}$ followed by the open embedding $\u{Y} \to \Log(Y)$, hence $M$ is flat over $\Log(Y)$ iff it is flat over $\u{Y}$. \end{proof}

\begin{lem} \label{lem:fppflocal} Consider a commutative diagram of log schemes \bne{logschemediagram} \xym{ X' \ar[d]_g \ar[r]^{f'} & Y' \ar[d]^h \\ X \ar[r]^f & Y } \ene with $g$ and $h$ strict and an $\O_X$-module $M$.  If the diagram is cartesian and $M$ is log flat over $Y$, then the $\O_{X'}$-module $g^*M$ is log flat over $Y'$.  If $\u{g}$ is an fppf cover, $\u{h}$ is flat, and $g^*M$ is log flat over $Y'$, then $M$ is log flat over $Y$ (regardless of whether the diagram is cartesian). \end{lem}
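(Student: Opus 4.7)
The plan is to handle the two statements separately, translating each into a claim about flatness over $\Log(Y)$ and its base changes, then invoking respectively base-change stability and fppf descent of flatness over a stack.

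For the first statement, I would begin by checking that the diagram
\[ \xymatrix{ \u{X}' \ar[r]^{\u{g}} \ar[d]_{\Log_{f'}} & \u{X} \ar[d]^{\Log_f} \\ \Log(Y') \ar[r] & \Log(Y) } \]
is $2$-cartesian. Its $2$-commutativity follows from Lemma~\ref{lem:strictcriteria} applied to the strict maps $g$ and $h$, which identifies the tautological sections of $\u{X} \to \Log(X)$ and $\u{X}' \to \Log(X')$ with the appropriate pullbacks. For the universal property, strictness of $h$ combined with the $2$-cartesian square displayed in \S\ref{section:limitpreservation} gives $\Log(Y') \cong \Log(Y) \times_{\u{Y}} \u{Y}'$; the analogous fact for the (automatically strict) map $g$, combined with cartesian-ness of the log-scheme diagram, forces $\u{X}' \cong \u{X} \times_{\u{Y}} \u{Y}'$. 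These combine to yield $\u{X} \times_{\Log(Y)} \Log(Y') \cong \u{X}'$. Granting this, $g^*M$ is literally the pullback of $M$ along the stack-theoretic base change of $\Log_f$ by $\Log(Y') \to \Log(Y)$, so base-change stability of flatness over the target stack (\S\ref{section:flatnessoverstacks}) yields log flatness of $g^*M$ over $Y'$.

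For the second statement, I would drop the cartesian hypothesis but still use strictness of $h$ to get $\Log(Y') \cong \Log(Y) \times_{\u{Y}} \u{Y}'$, so that $\Log(Y') \to \Log(Y)$ is the base change of the flat map $\u{h}$ and is therefore flat. Log flatness of $g^*M$ over $Y'$ then upgrades to flatness of $g^*M$ over $\Log(Y)$ via the composition $\u{X}' \to \Log(Y') \to \Log(Y)$, and this composition equals $\Log_f \circ \u{g}$ by $2$-commutativity of the original (not necessarily cartesian) diagram. To descend along the fppf cover $\u{g}$, I would pick an arbitrary smooth cover $U \to \Log(Y)$, form $V := \u{X} \times_{\Log(Y)} U$ and $V' := \u{X}' \times_{\u{X}} V$, and observe that $V' \to V$ is the base change of $\u{g}$ and is therefore itself an fppf cover. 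Since $g^*M|_{V'}$ is the pullback of $M|_V$ along $V' \to V$ and is flat over $U$ by the preceding step, faithfully flat descent of module flatness along $V' \to V$ gives $M|_V$ flat over $U$; as $U$ was an arbitrary smooth cover of $\Log(Y)$, $M$ is log flat over $Y$.

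The only genuinely non-formal step is the verification that the $\Log$-square in the first statement is $2$-cartesian, which rests on the standard fact that $\Log(\slot)$ converts a strict morphism of fine log schemes into a pullback square along the structure morphism to the underlying scheme. Everything else is a direct application of either base change or fppf descent for flatness over a stack.
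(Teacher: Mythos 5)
Your argument is correct and follows essentially the same route as the paper: translate both statements into flatness over $\Log(Y)$ and its base changes, establish the relevant $2$-cartesian square using Lemma~\ref{lem:strictcriteria} and \S\ref{section:limitpreservation}, apply stability of flatness under base change for the first claim, and use flatness of $\Log(Y')\to\Log(Y)$ together with faithfully flat descent of flatness (the content of Lemma~\ref{lem:flatness2}) for the second. The only cosmetic difference is that the paper passes through the intermediate stacks $\Log(X')$ and $\Log(X)$ to get the composite cartesian square, whereas you identify $\u{X}'\cong\u{X}\times_{\u{Y}}\u{Y}'$ directly from strictness of $h$ and cartesianness of the log-scheme diagram; these amount to the same verification.
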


\begin{proof}  Since $g$ and $h$ are strict, we have a commutative diagram of algebraic stacks \bne{fppflocaldiagram} & \xym{ \u{X}' \ar@/^2pc/[rr]^{\Log_f} \ar[d]_{\u{g}} \ar[r] & \Log(X') \ar[d] \ar[r] & \Log(Y') \ar[r] \ar[d]_{\Log(h)} & \u{Y}' \ar[d]^{\u{h}} \\ \u{X} \ar@/_2pc/[rr]_{\Log_{f'}} \ar[r] & \Log(X) \ar[r] & \Log(Y) \ar[r] & \u{Y}  } \ene  where the left (Lemma~\ref{lem:strictcriteria}) and right (\S\ref{section:limitpreservation}) squares are cartesian.  For the first statement:  Since \eqref{logschemediagram} is cartesian, the inverse-limit-preservation property of $\Log$ (\S\ref{section:limitpreservation}) implies the middle square in \eqref{fppflocaldiagram} is cartesian so the statement we want is now just the usual stability of flatness under base change for the cartesian diagram obtained by composing the left two squares of \eqref{fppflocaldiagram}.

For the second statement: The hypothesis on $\u{h}$ implies $\Log(h)$ is flat since the right square in \eqref{fppflocaldiagram} is cartesian.  By appropriately factoring the tensor product / pullback functors, the conclusion follows formally from the hypotheses on $\u{g}$ and $g^*M$ (c.f.\ Lemma~\ref{lem:flatness2}). \end{proof}

Now we can derive the \emph{crit\`ere de platitude logarithmique par fibres}, the \emph{openness of the log flat locus}, and the \emph{representability of the log flat locus}.

\begin{thm} \label{thm:geometricpoints} Let $f : X \to Y$ be an integral morphism of fine log schemes, $M$ an $\O_X$-module.  Assume that at least one of the following finiteness hypotheses holds:  \begin{enumerate} \item $X$ and $Y$ are locally noetherian and $M$ is coherent. \item $\u{f}$ and $M$ are of locally finite presentation. \end{enumerate}  Then the following are equivalent: \begin{enumerate} \item $M$ is log flat over $Y$. \item $M$ is flat over $\u{Y}$ and for each strict geometric point $y$ of $Y$, the restriction $M|_{X_y}$ to the fiber $X_y$ over $y$ is log flat over $y$. \end{enumerate} \end{thm}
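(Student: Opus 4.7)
The plan is to factor $\Log_f : \u{X} \to \Log(Y)$ through the open substack $\Log^{\rm int}(Y)$, which is possible because $f$ is integral (Lemma~\ref{lem:integral}), and then to reduce the desired equivalence to the classical fiberwise criterion of flatness (EGA IV, 11.3.10) applied to the chain $\u{X} \xrightarrow{\Log_f} \Log^{\rm int}(Y) \xrightarrow{\pi} \u{Y}$. By Lemma~\ref{lem:LogintflatoverY}, $\pi$ is flat, and it inherits local finite presentation from $\Log(Y) \to \u{Y}$. Under this factorization, log flatness of $M$ over $Y$ is literally flatness of $M$ over $\Log^{\rm int}(Y)$.

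The forward direction is then essentially formal: composing flatness over $\Log^{\rm int}(Y)$ with the flat map $\pi$ yields flatness of $M$ over $\u{Y}$, and log flatness of $M|_{X_y}$ over $y$ for each strict geometric point follows from the strict-base-change half of Lemma~\ref{lem:fppflocal} (note that $y \to Y$ is strict because $y$ carries the pulled-back log structure). For the converse, the key identification is that the stack-theoretic fiber of $\pi$ over a strict geometric point $y$ of $Y$ is canonically $\Log^{\rm int}(Y_y)$, where $Y_y$ denotes $y$ equipped with the log structure pulled back from $Y$, and that the restriction of $\Log_f$ to $\u{X}_y$ is precisely $\Log_{f_y}$; both facts are manifestations of the inverse-limit-preservation of $\Log$ (\S\ref{section:limitpreservation}). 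Under this identification, the hypothesis ``$M|_{X_y}$ is log flat over $y$'' is exactly ``$M|_{X_y}$ is flat over the fiber of $\pi$ at $y$,'' which is the missing fiberwise ingredient.

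The main obstacle is that the fiberwise flatness criterion is usually stated for a scheme target, whereas $\Log^{\rm int}(Y)$ is an algebraic stack. I would handle this by pulling back along an \'etale atlas $V \to \Log^{\rm int}(Y)$ supplied by Theorem~\ref{thm:etalecover}, using only those charts in which the structural map of monoids $h$ is flat (available locally by integrality of $f$ together with Lemma~\ref{lem:integral}). Setting $X' := \u{X} \times_{\Log^{\rm int}(Y)} V$ and letting $M'$ be the pullback of $M$ to $X'$, flatness of $M'$ over $V$ is equivalent to flatness of $M$ over $\Log^{\rm int}(Y)$ (by flatness of $V \to \Log^{\rm int}(Y)$), and the analogous translations hold for flatness over $\u{Y}$ and for the fiberwise condition at each geometric point of $\u{Y}$. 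Since $V \to \u{Y}$ is flat and locally of finite presentation and $M'$ satisfies the appropriate finiteness hypothesis (coherence, resp.\ local finite presentation, being preserved under \'etale pullback), EGA IV, 11.3.10 in case (1) or its locally-finitely-presented variant 11.3.11 in case (2) then yields flatness of $M'$ over $V$, hence flatness of $M$ over $\Log^{\rm int}(Y)$, completing the converse.
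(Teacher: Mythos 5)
Your proposal is correct and follows essentially the same route as the paper: factor $\Log_f$ through the open substack $\Log^{\rm int}(Y)$ using integrality, use Lemma~\ref{lem:LogintflatoverY} for flatness of $\Log^{\rm int}(Y)\to\u{Y}$, apply the \emph{crit\`ere de platitude par fibres} to $\u{X}\to\Log^{\rm int}(Y)\to\u{Y}$, and identify the fibers over strict geometric points via the strict base change property of $\Log$. Your reduction to EGA via an \'etale atlas of $\Log^{\rm int}(Y)$ is exactly the bootstrapping the paper carries out in Theorem~\ref{thm:fiberwiseflatness} together with Remark~\ref{rem:fiberwiseflatness}, so the two arguments coincide in substance.
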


\begin{proof} Since $f$ is integral, $\Log_f : \u{X} \to \Log(Y)$ factors through the open substack $\Log^{\rm int}(Y) \subseteq \Log^{\rm int}(Y)$, hence log flatness for $M$ is the same as flatness over $\Log(Y)^{\rm int}$.  Since the structure map $\Log^{\rm int}(Y) \to \u{Y}$ is flat (Lemma~\ref{lem:LogintflatoverY}), the conclusion follows by applying the \emph{crit\`ere de platitude par fibres} (Theorem~\ref{thm:fiberwiseflatness}, Remark~\ref{rem:fiberwiseflatness}) to the diagram of algebraic stacks $\u{X} \to \Log^{\rm int}(Y) \to \u{Y}$, noting that, by the strict base change property of log stacks (\S\ref{section:limitpreservation}), the base change of this diagram along a geometric point $\u{y}$ of $\u{Y}$ is the diagram $\u{X_y} \to \Log^{\rm int}(y) \to \u{y}$, where $y$ is $\u{y}$ with the log structure inherited from $Y$.\end{proof}

\begin{thm} \label{thm:openness} Let $f : X \to Y$ be an integral morphism of fine log schemes such that the underlying morphism of schemes $\u{f} : \u{X} \to \u{Y}$ is of locally finite presentation, $M$ a quasi-coherent $\O_X$-module of locally finite presentation.  Then the set $U$ of $x \in X$ for which $M$ is log flat over $Y$ at $x$ is open in $X$.  If we assume that $M$ is flat over $Y$ in the usual sense and has proper support over $Y$, then $V := Y \setminus f(X \setminus U)$ is an open subspace of $Y$ which (when endowed with the log structure inherited from $Y$) is the terminal object in the category of fine log schemes $Y'$ over $Y$ with strict structure map $Y' \to Y$ for which the pullback $M'$ of $M$ to $X' := X \times_Y Y'$ is log flat over $Y'$. \end{thm}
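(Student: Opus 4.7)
The plan is to reduce both claims to the standard openness-of-flat-locus theorem for representable, locally finitely presented morphisms of algebraic stacks, using the factorization $\Log_f : \u{X} \to \Log^{\rm int}(Y) \to \u{Y}$ which is available because $f$ is integral (Lemma~\ref{lem:integral}). Since $\Log(Y) \to \u{Y}$ is of locally finite presentation (Olsson) and $\u{f}$ is of locally finite presentation by hypothesis, the representable morphism $\Log_f$ is itself of locally finite presentation; étale-locally on $\u{X}$ one can moreover replace $\Log_f$ by a map to a scheme via an étale chart as in Theorem~\ref{thm:etalecover}. The EGA~IV openness theorem for the flat locus of a finitely presented quasi-coherent sheaf over a locally finitely presented morphism then gives that $U$ is open in $X$.

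For part~(2), I would first observe that the zero module is trivially log flat, so any $x \in X$ with $M_x = 0$ lies in $U$; hence $X \setminus U \subseteq \Supp M$. The properness hypothesis on $\Supp M$ over $\u{Y}$ then forces $f(X \setminus U)$ to be closed in $Y$, so $V = Y \setminus f(X \setminus U)$ is open and, with its inherited fine log structure, is a fine log scheme whose structure map to $Y$ is strict. For the ``easy'' direction of the universal property, let $h : Y' \to Y$ be strict with $\u{Y}' \to \u{Y}$ factoring through $V$; then for any $x' \in X'$ mapping to $x \in X$, the image $f(x) \in V$ forces $x \in U$, and the base-change-stability clause of Lemma~\ref{lem:fppflocal} immediately gives log flatness of $M' = g^*M$ over $Y'$ at $x'$, hence everywhere.

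The harder direction---that log flatness of $M'$ over $Y'$ forces $\u{Y}' \to \u{Y}$ to factor through $V$---is where I expect the main obstacle, and I would argue by contradiction using the Fiberwise Criterion. If some $y' \in Y'$ maps to $y = f(x)$ with $x \in X \setminus U$, then since $M$ is flat over $\u{Y}$ in the usual sense at $x$, Theorem~\ref{thm:geometricpoints} forces $M|_{X_{\bar y}}$ to fail log flatness over $\bar y$ at some geometric point $\bar x$ over $x$, for any geometric point $\bar y$ over $y$. Choosing $\bar y'$ over $y'$ whose separably closed residue field contains that of $\bar y$, the base change $X'_{\bar y'} = X_{\bar y} \times_{\bar y} \bar y' \to X_{\bar y}$ is a strict fppf cover. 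Applying the Fiberwise Criterion to $f' : X' \to Y'$---which inherits integrality and the finite-presentation hypotheses by strict base change---gives log flatness of $M'|_{X'_{\bar y'}}$ over $\bar y'$, and then the strict-fppf descent clause of Lemma~\ref{lem:fppflocal} descends this to log flatness of $M|_{X_{\bar y}}$ over $\bar y$ at $\bar x$, the desired contradiction. The delicate points to verify are that $f'$ really satisfies the hypotheses of the Fiberwise Criterion in the required form, and that fppf descent of log flatness genuinely applies in the fiberwise situation, where the log structures on $\bar y$ and $\bar y'$ are both pulled back from the ambient log schemes.
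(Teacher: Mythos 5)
Your argument is essentially correct in substance, but it is organized quite differently from the paper's proof, which is a two-line affair: one applies Theorem~\ref{thm:fiberwiseflatness2} (the scheme/stack-level openness-and-terminal-object statement, i.e.\ EGA~IV.11.3.10 plus bookkeeping, extended to stacks as in Remark~\ref{rem:fiberwiseflatness}) to the factorization $\u{X} \to \Log^{\rm int}(Y) \to \u{Y}$, using flatness of $\Log^{\rm int}(Y) \to \u{Y}$ (Lemma~\ref{lem:LogintflatoverY}), and then translates the conclusion by observing that a strict $Y' \to Y$ is the same as a scheme map $\u{Y}' \to \u{Y}$ and that $\Log(Y') = \Log(Y) \times_{\u{Y}} \u{Y}'$ (\S\ref{section:limitpreservation}), so ``flat over the base-changed middle term'' literally means ``log flat over $Y'$.'' You instead reprove the content of that cited theorem in log language: openness of $U$ via charts plus EGA openness of the flat locus, and the hard direction of the universal property by the same descent-along-a-field-extension contradiction that appears inside the paper's proof of Theorem~\ref{thm:fiberwiseflatness2}, but run through the Fiberwise Criterion and Lemma~\ref{lem:fppflocal} rather than through ordinary flatness over $\Log^{\rm int}(Y)$. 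Both routes work; the paper's buys brevity and keeps all the delicate flatness arguments at the level of schemes over a flat stack, while yours makes the log-geometric content of the terminal-object statement more visible.

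Two steps need tightening. First, you invoke Theorem~\ref{thm:geometricpoints} pointwise (``$M$ flat over $\u{Y}$ at $x$ and $x \notin U$ forces failure of fiberwise log flatness at some $\bar{x}$ over $x$''), but that theorem is stated globally; the pointwise equivalence you need is exactly the pointwise form of the crit\`ere de platitude par fibres applied to $\u{X} \to \Log^{\rm int}(Y) \to \u{Y}$ (Theorems~\ref{thm:fiberwiseflatness}, \ref{thm:fiberwiseflatness2}), so you should cite that instead. Second, your descent step runs along $\bar{y}' \to \bar{y}$, i.e.\ $\Spec$ of a field extension, which is faithfully flat but in general not of finite presentation, so the ``fppf cover'' hypothesis of Lemma~\ref{lem:fppflocal} does not literally apply; the proof of that lemma (via Lemma~\ref{lem:flatness2}) does go through for fpqc covers, or one can avoid the issue entirely, as the paper does, by descending ordinary flatness over the stack along the faithfully flat base change. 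A smaller point: an \'etale chart of $\Log(Y)$ has target the stack $\M(\u{U},h,t)$, not a scheme, so to apply the EGA openness theorem you must pass to the further fppf cover $\u{X} \times \GG(P) \to \u{Y}(h,t)$ as in Theorem~\ref{thm:chartcriteria}\eqref{cc4} and then descend the open locus along the (open, invariant) projection. With these patches your argument is sound.
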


\begin{proof} We apply Theorem~\ref{thm:fiberwiseflatness} to $\u{X} \to \Log^{\rm int}(Y) \to \u{Y}$, arguing much as in the previous proof.    A strict map of fine log schemes $Y' \to Y$ is the same thing as a map of schemes $\u{Y}' \to \u{Y}$ and for such a map the diagram $$ \xym{ \Log(Y') \ar[d] \ar[r] & \Log(Y) \ar[d] \\ \u{Y}' \ar[r] & \u{Y} } $$ is cartesian (\S\ref{section:limitpreservation}).  This translates the conclusion of Theorem~\ref{thm:fiberwiseflatness} into the conclusion of the present theorem. \end{proof}

\section{Log Flatness} \label{section:logflatness}  This section is the heart of the paper.  We begin in \S\S\ref{section:setup}-\ref{section:chartcriteriatheorem} by setting up and proving the Chart Criteria.  To do this, we make heavy use of some results on stacks from \S\ref{section:stacks}.  We use one of our chart criteria to prove the stability of log flatness under base change in \S\ref{section:logflatnessandbasechange}.  In \S\ref{section:formallogflatness} we then discuss formal log flatness.

\subsection{Setup for the chart criteria} \label{section:setup} In this section and the next, we frequently consider a solid diagram of monoids \bne{inputdiagram} & \xym{ P \ar@{.>}[r]^b & C \\ Q \ar[u]^h \ar[r]^t & A \ar@{.>}[u]_f } \ene where $A = \O_Y(Y)$ is the ring of global sections of a scheme $\u{Y}$.  This solid diagram will often be completed as indicated with $f : A \to C$ equal to the map $f^* : \O_Y(Y) \to \O_X(X)$ between rings of global sections induced by a map of schemes $\u{f} : \u{X} \to \u{Y}$.  (There will be abusive uses of the notation ``$f$").  From the solid diagram alone, we can make the following constructions: \bne{constructions} I(h,t) & \subseteq & A[Q^{\rm gp} \oplus P] \\ \nonumber A(h,t) & := & A[Q^{\rm gp} \oplus P]/I(h,t) \\ \nonumber \u{Y}(h,t) & := & \Spec_Y \O_Y[Q^{\rm gp} \oplus P]/I(h,t) \\ \nonumber \L(\u{Y},h,t) & := & [\u{Y} \times_{\AA(Q)} \AA(P) / \GG(P/Q) ] \\ \nonumber \M(\u{Y},h,t) & := & \u{Y} \times_{\AAA(Q)} \AAA(P). \ene Here $I(h,t)$ is the ideal of $A[Q^{\rm gp} \oplus P]$ generated by the elements \bne{idealgenerators} t(q)[q,0] - [0,h(q)] & \quad \quad & (q \in Q) \ene and it is also abuse of notation for the ideal sheaf of $\O_Y[Q^{\rm gp} \oplus P]$ generated by the global sections \eqref{idealgenerators}.  We also have a map of algebraic stacks \bne{LtoM} \L(\u{Y},h,t) & \to & \M(\u{Y},h,t) \ene discussed in \S\ref{section:equivalence}.  

If we also have a completion as indicated, then there is an $A$-algebra map \bne{AhttoCP} A(h,t) & \to & C[P^{\rm gp}] \\ \nonumber [q,p] & \mapsto & b(p)[h(q)+p] \ene and a corresponding map of $\u{Y}$-schemes \bne{XPtoYht} \u{X} \times \GG(P) & \to & \u{Y}(h,t), \ene as well as a map of algebraic stacks \bne{XtoLYht} \u{X} & \to & \L(\u{Y},h,t). \ene  All of these constructions are functorial in the diagram \eqref{inputdiagram}.  

\subsection{The chart criteria} \label{section:chartcriteriatheorem} Suppose now that the completed diagram \eqref{inputdiagram} of \S\ref{section:setup} is obtained from a map $f : X \to Y$ of fine log schemes equipped with a (fine) global chart: \bne{globalchart2} & \xym{ P \ar[r] \ar@/^2pc/[rr]^-b & \M_X(X) \ar[r]^{\alpha_X} & \O_X(X) \\ Q \ar@/_2pc/[rr]_t \ar[u]^h \ar[r] & \M_Y(Y) \ar[u]^{f^\dagger} \ar[r]^{\alpha_Y} & \O_Y(Y) \ar[u]_{f^*} } \ene  Then we also have Olsson's representable \'etale map of algebraic stacks \bne{MYhttoLogY} \M(\u{Y},h,t) & \to & \Log(Y) \ene as in Theorem~\ref{thm:etalecover}, and a factorization of the all-important map $\Log_f : \u{X} \to \Log(Y)$ as below: \bne{factorizationofLogf} & \xym@C+20pt{ \u{X} \ar@/_1pc/[rdd]_{\Log_f} \ar[r]^-{\eqref{XtoLYht}} & \L(\u{Y},h,t) \ar[d]^{\eqref{LtoM}} \\ & \M(\u{Y},h,t) \ar[d]^{\eqref{MYhttoLogY}} \\ & \Log(Y) } \ene

\begin{thm} \label{thm:chartcriteria}  Let $f : X \to Y$ be a map of fine log schemes equipped with a global chart \eqref{globalchart2}.  Referring to the diagram \eqref{factorizationofLogf}, the following are equivalent for an $\O_X$-module $M$: \begin{enumerate} \item \label{cc1} $M$ is log flat over $Y$. \item \label{cc2} $M$ is flat over $\Log(Y)$. \item \label{cc3} $M$ is flat over $\M(\u{Y},h,t)$. \item \label{cc4} $\pi_1^*M$ is flat over $\u{Y}(h,t)$ via tha map of schemes \eqref{XPtoYht}. \end{enumerate} If $h$ is monic these conditions are also equivalent to: \begin{enumerate} \setcounter{enumi}{4} \item \label{cc5} $M$ is flat over $\L(\u{Y},h,t)$. \end{enumerate} \end{thm}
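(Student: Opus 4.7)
The implication (\ref{cc1}) $\Leftrightarrow$ (\ref{cc2}) is by definition of log flatness, so the real content lies in relating flatness over $\Log(Y)$ to flatness over the more concrete intermediate objects in \eqref{factorizationofLogf}. My plan is to treat each step of that factorization separately, exploiting the fact that each indicated map has a property (either representable étale, or a $\GG$-torsor) which forces flatness to be both preserved and reflected.

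For (\ref{cc2}) $\Leftrightarrow$ (\ref{cc3}), I would invoke Olsson's Theorem~\ref{thm:etalecover}: the map $\M(\u{Y},h,t) \to \Log(Y)$ of \eqref{MYhttoLogY} is representable étale, so Lemma~\ref{lem:etaleflatness} applied to the factorization of $\Log_f$ through $\M(\u{Y},h,t)$ gives the equivalence. For (\ref{cc3}) $\Leftrightarrow$ (\ref{cc5}) under the hypothesis that $h$ is injective, I would appeal to Theorem~\ref{thm:LhtoLogAQetale}, which guarantees that $\L(\u{Y},h,t) \to \M(\u{Y},h,t)$ is representable étale in that case, and argue identically.

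The more substantive step is (\ref{cc3}) $\Leftrightarrow$ (\ref{cc4}), for which the strategy is to realize $\u{Y}(h,t) \to \M(\u{Y},h,t)$ as a principal $\GG(P)$-bundle whose pullback along $\u{X} \to \M(\u{Y},h,t)$ is precisely the map \eqref{XPtoYht}. First identify $\u{Y}(h,t)$ with the scheme-level fiber product $\u{Y} \times_{\AAA(Q)} \u{\AA}(P)$: using that $\AAA(Q) = [\u{\AA}(Q)/\GG(Q^{\rm gp})]$ and that $t : \u{Y} \to \u{\AA}(Q)$ tautologically lifts the given map to $\AAA(Q)$, a direct pushout computation identifies the fiber product with $\Spec_Y \O_Y[Q^{\rm gp} \oplus P]/I(h,t)$. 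Base-changing the $\GG(P^{\rm gp})$-torsor $\u{\AA}(P) \to \AAA(P)$ yields the $\GG(P)$-torsor $\u{Y}(h,t) \to \M(\u{Y},h,t)$; a further base change along $\u{X} \to \M(\u{Y},h,t)$ (trivialized by the section $\u{X} \to \u{Y}(h,t)$, $[q,p] \mapsto b(p)$, provided by the chart) produces the $2$-cartesian square whose top horizontal arrow one checks to be exactly \eqref{XPtoYht}. Since $\u{Y}(h,t)$ is therefore a smooth atlas for $\M(\u{Y},h,t)$, flatness of $M$ over $\M(\u{Y},h,t)$ unwinds by definition to flatness of $\pi_1^* M$ over $\u{Y}(h,t)$.

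The principal obstacle is tracking the $\GG(P)$-action on $\u{Y}(h,t)$ correctly in this final step: it is \emph{not} the naive action induced by the $P$-summand of $Q^{\rm gp} \oplus P$, since that action fails to preserve $I(h,t)$. Rather, the action comes from the diagonal-like embedding $\GG(P^{\rm gp}) \hookrightarrow \GG(Q^{\rm gp}) \times \GG(P^{\rm gp})$ sending $\beta$ to $(\beta \circ h^{\rm gp}, \beta)$, as dictated by the requirement that the relations $t(q)[q,0] = [0,h(q)]$ be preserved. Once this bookkeeping is handled, the identification of the $2$-cartesian square and the resulting equivalence of flatness conditions follow formally from the general stack-theoretic machinery of \S\ref{section:stacks}.
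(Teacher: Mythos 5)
Your proposal is correct and follows essentially the same route as the paper: \eqref{cc1}$\Leftrightarrow$\eqref{cc2} by definition, \eqref{cc2}$\Leftrightarrow$\eqref{cc3} and the equivalence with \eqref{cc5} via Lemma~\ref{lem:etaleflatness} together with the \'etaleness results (Theorem~\ref{thm:etalecover} and Theorem~\ref{thm:LhtoLogAQetale}, the latter combined with base change and two-out-of-three as in \S\ref{section:equivalence}), and \eqref{cc3}$\Leftrightarrow$\eqref{cc4} by identifying $\u{Y}(h,t)$ as the $\GG(P)$-torsor over $\M(\u{Y},h,t)$ whose pullback to $\u{X}$ is trivialized by the section coming from the chart, which is exactly the paper's $2$-cartesian diagram \eqref{2cart} with top arrow \eqref{XPtoYht}. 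The only cosmetic slips are that $\GG(P)$ need not be smooth over $\ZZ$ when $P^{\rm gp}$ has torsion, so the property you actually use (and all that Definition~\ref{defn:flatnessoverstacks} requires) is that $\u{Y}(h,t) \to \M(\u{Y},h,t)$ is an fppf cover rather than a ``smooth atlas,'' and that \'etaleness of $\L(\u{Y},h,t) \to \M(\u{Y},h,t)$ is not the literal statement of Theorem~\ref{thm:LhtoLogAQetale} but follows from it by base change along $t$ and two-out-of-three, as the paper notes.
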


\begin{proof} \eqref{cc1} and \eqref{cc2} are equivalent by definition of ``log flat."\   \eqref{cc2} and \eqref{cc3} (resp.\ \eqref{cc2} and \eqref{cc5}) are equivalent (resp.\ when $h$ is monic) by Lemma~\ref{lem:etaleflatness} applied in the diagram \eqref{factorizationofLogf} because $\M(\u{Y},h,t) \to \Log(Y)$ is representable \'etale by Theorem~\ref{thm:etalecover} (resp.\ $\L(\u{Y},h,t) \to \Log(Y)$ is representable \'etale since it is the base change of the representable \'etale map $\L(h) \to \Log(\AA(Q))$ of Theorem~\ref{thm:LhtoLogAQetale} along $t : \u{Y} \to \u{\AA}(Q)$, as discussed in \S\ref{section:equivalence}).

To prove the equivalence of \eqref{cc3} and \eqref{cc4} we digress momentarily.  Suppose $a : G \times Z \to Z$ is an action of a reasonable group scheme $G$ on a scheme $Z$ and $t : Y \to Z$, $h : W \to Z$ are maps of schemes.  Then by looking at the $2$-cartesian diagram of algebraic stacks $$ \xym{ Y \times_{[Z/G]} W \ar[r] \ar[d] & W \times G \ar[r] \ar[d]^{h \times \Id} & W \ar[d]^h \\ Y \times G \ar[r]^{t \times \Id} \ar[d]_{\pi_1} & Z \times G \ar[d]_{\pi_1} \ar[r]^-{a} & Z \ar[d] \\ Y \ar[r]^t & Z \ar[r] & [Z/G] } $$ we see that $Y \times_{[Z/G]} W$ is the scheme representing the presheaf taking a scheme $U$ to the set $$ \{ (y,g,w) \in Y(U) \times G(U) \times W(U) : g \cdot ty = hw \}. $$ 

By applying the discussion of the previous paragraph with $$(h : W \to Z,\, t : Y \to Z,\, a : G \times Z \to Z)$$ given by the data $$(\u{\AA}(h) : \u{\AA}(P) \to \u{\AA}(Q),\, t : \u{Y} \to \u{\AA}(Q),\, \GG(Q) \times \u{\AA}(Q) \to \u{\AA}(Q)),$$ associated to our Setup, we see that $\u{Y} \times_{\AAA(Q)} \AA(P)$ is the scheme representing the presheaf which takes $\u{U}$ to the set of triples $(f,g,p)$ consisting of a map of schemes $f : \u{U} \to \u{Y}$, a group homomorphism $g : Q^{\rm gp} \to \O_U^*(U)$, and a monoid homomorphism $p : P \to \O_U(U)$ such that $g \cdot f^* t = ph$.  It is elementary to see that this presheaf is represented by the closed subscheme of \be \u{Y} \times \GG(Q) \times \u{\AA}(P) & = & \Spec_Y \O_Y[Q^{\rm gp} \oplus P] \ee defined by the ideal generated by the global sections \bne{idealgeneratorsglobal} t(q)[q,0] - [0,h(q)] & \quad \quad & (q \in Q). \ene  We then obtain a $2$-cartesian diagram of algebraic stacks \bne{2cart} & \xym{ \u{X} \times \GG(P) \ar[d]_{\pi_1} \ar[r] & \u{Y} \times_{\AAA(Q)} \u{\AA}(P) = \u{Y}(h,t) \ar[d]^{\pi} \\ \u{X} \ar[r] \ar@{.>}[ru]^-{(\u{f},\, b)} & \u{Y} \times_{\AAA(Q)} \AAA(P) = \M(\u{Y},h,t) } \ene where the top horizontal arrow corresponds, under the description of $\u{Y}(h,t)$ in the previous paragraph and formula \eqref{constructions}, to the map \eqref{XPtoYht}.  The map $\pi$ here is a base change of the natural map $\u{\AA}(P) \to \AAA(P)$, which is an \'etale-locally-trivial $\GG(P)$ bundle, and is hence, in particular, an fppf cover.\  The base change $\pi_1$ of this $\GG(P)$ bundle to $\u{X}$ is trivial because there is a lift as indicated in \eqref{2cart}. \eqref{cc3} and \eqref{cc4} are hence equivalent because flatness is stable under base change and fppf local on the base.  \end{proof}

When $\u{X}$ and $\u{Y}$ are affine, we can rephrase Theorem~\ref{thm:chartcriteria} in a way that makes no mention of stacks.  If $B$ is a ring graded by an abelian group $G$ and $M$ is an $B$-module in the usual ungraded sense, ``recall" (Definition~\ref{defn:gradedflat3}) that $M$ is called \emph{graded flat over} $(G,B)$ iff the ``usual tensor product" functor \be \slot \otimes_B M : \Mod(G,B) & \to & \Mod(B) \ee from $G$-graded $B$-modules to ungraded $B$-modules is exact.

\begin{cor} \label{cor:chartcriteria} Consider a commutative diagram of monoids \bne{inputdiagram2} & \xym{ P \ar[r]^b & C \\ Q \ar[u]^h \ar[r]^t & A \ar[u]_f } \ene where $h : Q \to P$ is a map of fine monoids and $f : A \to C$ is a map of rings.  Let \be f : X \to Y & := & \Spec(b : P \to C) \to \Spec(t : Q \to A) \ee be the associated map of affine fine log schemes.  Let $B := A \otimes_{\ZZ[Q]} \ZZ[P]$, graded by $G := (P/Q)^{\rm gp}$ in the evident manner and let $B \to C$ be the ring map obtained from \eqref{inputdiagram2}.  The following are equivalent for a $C$-module $M$: \begin{enumerate} \item \label{affinecc1} The quasi-coherent sheaf $M^{\sim}$ on $X$ is log flat over $Y$. \item  \label{affinecc2} The $C[P^{\rm gp}]$-module $M[P^{\rm gp}]$ is flat over $A(h,t)$ via the ring map \eqref{AhttoCP}. \end{enumerate} If $h$ is monic, these conditions are equivalent to \begin{enumerate} \setcounter{enumi}{2} \item \label{affinecc3} $M$, viewed as an ungraded $B$-module via restriction of scalars along $B \to C$, is graded flat over $(G,B)$. \end{enumerate} \end{cor}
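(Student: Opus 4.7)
The strategy is to deduce both equivalences directly from Theorem~\ref{thm:chartcriteria}, by translating the geometric constructions of \S\ref{section:setup} into purely algebraic data in the affine setting.

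The equivalence \eqref{affinecc1}$\Leftrightarrow$\eqref{affinecc2} is essentially (cc1)$\Leftrightarrow$(cc4) of Theorem~\ref{thm:chartcriteria} after unraveling.  With $\u X=\Spec C$ and $\u Y=\Spec A$, we have $\u Y(h,t)=\Spec A(h,t)$ by definition, $\u X\times\GG(P)=\Spec C[P^{\rm gp}]$, the map \eqref{XPtoYht} corresponds to the ring map \eqref{AhttoCP}, and $\pi_1^*M^\sim$ is precisely the $C[P^{\rm gp}]$-module $M[P^{\rm gp}]$; so (cc4) reads literally as \eqref{affinecc2}.

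For \eqref{affinecc1}$\Leftrightarrow$\eqref{affinecc3} (assuming $h$ is monic) the plan is to apply (cc1)$\Leftrightarrow$(cc5) and identify ``flatness over $\L(\u Y,h,t)$'' with graded flatness.  In the affine setting $\u Y\times_{\AA(Q)}\AA(P)=\Spec B$ and $\GG(P/Q)=\Spec\ZZ[G]$ acts on $\Spec B$ via the tautological $G$-grading, so $\L(\u Y,h,t)=[\Spec B/\Spec\ZZ[G]]$.  The representable map \eqref{XtoLYht} factors as $\Spec C\to\Spec B\to[\Spec B/\Spec\ZZ[G]]$, with the first arrow induced by the ring map $\phi:B\to C$ of \eqref{inputdiagram2}.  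I would test flatness of $M^\sim$ on the fpqc atlas $\Spec B\to[\Spec B/\Spec\ZZ[G]]$: the fiber product $\Spec C\times_{\L(\u Y,h,t)}\Spec B$ is the trivial $\GG(P/Q)$-torsor $\Spec C[G]$ (trivialized by the lift through $\phi$), the pullback of $M^\sim$ is $M[G]$, and the other projection to $\Spec B$ corresponds to the twisted ring map $b_g\mapsto\phi(b_g)[g]$ arising from the coaction $\sigma:B\to B[G]$, $b_g\mapsto b_g\otimes[g]$.  So \eqref{affinecc1} becomes the statement that $M[G]$ is flat over $B$ via this twisted map.

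To conclude, I would identify this twisted flatness of $M[G]$ with graded flatness of $M$ over $(G,B)$.  The key input is the natural isomorphism $L\otimes_B M[G]\cong L[G]\otimes_B M$ for ungraded $B$-modules $L$ (where $L[G]$ carries the twisted $B$-action and the $\ZZ[G]$-grading), given on elementary tensors by $l\otimes m\otimes[k]\mapsto (l\otimes[-k])\otimes m$.  Granting this, graded flatness implies $M[G]$-flatness immediately since $L\mapsto L[G]$ is an exact functor $\Mod(B)\to\Mod(G,B)$.  For the converse, the unit $\eta_N:N\to U(N)[G]$, $n_h\mapsto n_h\otimes[h]$, of the adjunction between the forgetful functor $U:\Mod(G,B)\to\Mod(B)$ and induction is injective and even $B$-linearly split (by the retraction $n\otimes[k]\mapsto n_k$), so for any graded short exact sequence $0\to N'\to N\to N''\to 0$ the tensored diagram comparing the original sequence to the sequence of inductions (which is exact after tensoring with $M$, by the key isomorphism and $M[G]$-flatness) has injective vertical arrows; a direct diagram chase then yields left exactness of the original tensored sequence.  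I expect this adjunction-based reconciliation to be the main substantive step, though ultimately formal.
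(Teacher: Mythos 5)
Your proposal is correct, and the first equivalence (\ref{affinecc1})$\Leftrightarrow$(\ref{affinecc2}) is handled exactly as in the paper: both simply unravel condition \eqref{cc4} of Theorem~\ref{thm:chartcriteria} in the affine setting. Where you genuinely diverge is the identification of (\ref{affinecc1}) with (\ref{affinecc3}). The paper also invokes \eqref{cc5}, but then finishes in one stroke by citing Proposition~\ref{prop:gradedflatiffstacksflat}, i.e.\ the fpqc-descent equivalence $\Qco(\Spec(B/G))\simeq\Mod(G,B)$ of Proposition~\ref{prop:QcoModGA}, under which pullback-and-tensor along $\Spec C\to\L(\u{Y},h,t)=[\Spec B/\GG(P/Q)]$ becomes the graded extension-of-scalars functor (plus Lemma~\ref{lem:gradedflatnessdefinition} to reconcile the two phrasings of graded flatness, which in your write-up is the harmless observation that exactness does not depend on which module structure one remembers on $N\otimes_B M$). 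You instead test flatness over the stack on the atlas per Definition~\ref{defn:flatnessoverstacks}: the trivialized torsor $\Spec C[G]$, the identification of $\pi_2$ with the twisted map $b_g\mapsto\phi(b_g)[g]$, and then the purely module-theoretic comparison of ``$M[G]$ flat over $B$ via the twist'' with graded flatness of $M$, via the balanced isomorphism $L\otimes_B M[G]\cong L[G]\otimes_B M$ and the split-injective unit of the coinduction adjunction. This is precisely parallel to how the paper itself proves \eqref{cc3}$\Leftrightarrow$\eqref{cc4} for $\M(\u{Y},h,t)$ in Theorem~\ref{thm:chartcriteria}, and your adjunction step amounts to an elementary, self-contained proof of the relevant special case of Proposition~\ref{prop:gradedflatiffstacksflat}. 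What the paper's route buys is reuse of machinery already developed in \S\ref{section:stacksperspective} (and conceptual clarity: graded modules \emph{are} sheaves on the quotient stack); what your route buys is independence from descent theory for quasi-coherent sheaves on stacks, at the cost of redoing, by hand, a fragment of that equivalence.
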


\begin{proof}  We apply the theorem to $f : X \to Y$ using the global chart for $f$ obtained tautologically from \eqref{inputdiagram2}.  To see that condition \eqref{cc5} in that theorem is equivalent to condition \eqref{affinecc3} here, we let $g : B \to C$ be natural map; we view this as a map of graded rings $(0,g) : (G,B) \to (0,C)$, giving $C$ the trivial grading.  We note that \be \L(\u{Y},h,t) & = & [ \Spec B / \Spec \ZZ[G] ] \\ & = & \Spec (B/G) \ee in the notation of \S\ref{section:stacksperspective} and the map $\u{X}=\Spec C \to \L(\u{Y},h,t)$ is the map denoted \be \Spec (g/0) : \Spec(C/0) & \to & \Spec(B/G) \ee in Proposition~\ref{prop:gradedflatiffstacksflat}.  According to that proposition, $M^\sim$ is flat over $\L(\u{Y},h,t)$ iff $M$ is graded flat over $(B,G)$ in the sense that \be \slot \otimes_B M : \Mod(G,B) & \to & \Mod(0,C) = \Mod(C) \ee is exact.  This is the same thing as $M$ being graded flat as defined just above (c.f.\ Lemma~\ref{lem:gradedflatnessdefinition}). \end{proof}

\subsection{Log flatness and base change} \label{section:logflatnessandbasechange} In Corollary~\ref{cor:chartcriteria}\eqref{affinecc3} we gave a useful criterion for log flatness under the assumption of the existence of a chart where the map of fine monoids $h : Q \to P$ is injective.  In fact Kato's \emph{Neat Charts Theorem} (Theorem~\ref{thm:neatcharts}) implies, rather surprisingly, that \emph{every} map of fine log schemes admits such a chart, at least fppf locally.  Combining Corollary~\ref{cor:chartcriteria}\eqref{affinecc3} and the latter fact about charts allows us to prove that log flatness is stable under arbitrary base change (Theorem~\ref{thm:logflatnessandbasechange}).

\begin{defn} Let $X$ be a log scheme, $x$ a geometric point of $X$.  A chart $P \to \M_X(X)$ is called a \emph{characteristic chart at} $x$ (or \emph{good at} $x$ in the terminology of Ogus \cite[Def.~2.2.8]{Og}) iff $P \to \ov{\M}_{X,x}$ is an isomorphism. \end{defn}

\begin{lem} \label{lem:characteristiccharts} Let $X$ be a fine log scheme, $x$ a geometric point of $X$.  After possibly replacing $X$ with an fppf neighborhood of $x$, $X$ admits a characteristic chart at $x$.  If we assume the order of the torsion part of $\ov{\M}_{X,x}^{\rm gp}$ is invertible in the \'etale local ring $\O_{X,x}$ (which certainly holds if we work in characteristic zero) then we can replace ``fppf" with ``\'etale." \end{lem}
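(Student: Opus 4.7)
The plan is to reduce the existence of a characteristic chart, at the level of the stalk, to the existence of a section of the surjection of monoids $\sigma : \M_{X,x} \to \ov{\M}_{X,x}$ along $\bar{P} := \ov{\M}_{X,x}$; then to analyze the obstruction to splitting group-theoretically, kill it fppf-locally, and finally promote the pointwise section to a chart on a neighborhood.

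First, note that a characteristic chart at $x$ with $P \cong \bar P$ amounts (at the stalk) to a monoid homomorphism $s : \bar P \to \M_{X,x}$ with $\sigma \circ s = \mathrm{id}$. Such a section is equivalent to a group-theoretic splitting of the short exact sequence
\[
1 \to \O_{X,x}^* \to \M_{X,x}^{\rm gp} \xrightarrow{\sigma^{\rm gp}} \bar{P}^{\rm gp} \to 1,
\]
because any set-theoretic lift of $\bar p \in \bar P \subseteq \bar P^{\rm gp}$ to $\M_{X,x}^{\rm gp}$ automatically lands in $\M_{X,x}$ (it differs from a chosen preimage in $\M_{X,x}$ by an element of $\O_{X,x}^*$). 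Since $\bar P$ is fine and sharp, $\bar P^{\rm gp} \cong \ZZ^r \oplus T$ with $T$ finite of order $n$, and the obstruction to such a splitting lives in $\Ext^1_{\Ab}(\bar P^{\rm gp}, \O_{X,x}^*) = \Ext^1_{\Ab}(T, \O_{X,x}^*)$, which is annihilated by $n$.

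Next, I would kill this obstruction by passing to an fppf neighborhood. Represent the $\Ext^1$ class by an extension, read off finitely many units of $\O_{X,x}$ whose $n$-th roots suffice to trivialize it, and adjoin those roots via a finite flat cover $U \to X$. On $U$, the group $\O_{U,\bar u}^*$ becomes $n$-divisible in the relevant sense, the $\Ext^1$ vanishes, and a splitting exists. If $n$ is invertible in $\O_{X,x}$, then the cover $u \mapsto u^n$ on units is étale (its derivative $n u^{n-1}$ is a unit), so the very same construction can be carried out étale-locally; this gives the refinement promised in the second sentence.

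Once the section $s : \bar P \to \M_{U, \bar u}$ is in hand, promote it to a chart as follows. Pick finitely many generators $\bar p_1, \dots, \bar p_k$ of the fine monoid $\bar P$ together with a finite set of defining relations; lift each $s(\bar p_i)$ to a section of $\M_X$ over some étale neighborhood $V$ of $\bar u$, and note that the finitely many relations among the $s(\bar p_i)$ hold in the stalk $\M_{V,\bar u}$ and hence, by the sheaf property of $\M_X$, on a possibly smaller neighborhood. This produces a monoid homomorphism $\beta : P \to \M_X(V)$ with $P := \bar P$ whose reduction modulo units at $\bar u$ is $\mathrm{id} : \bar P \to \ov{\M}_{X,\bar u}$. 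By the standard criterion for a chart of a fine log structure (a map of fine monoids into $\M_X$ that is an iso on characteristics at a point is a chart in some neighborhood), $\beta$ is a chart on a smaller neighborhood of $\bar u$, and it is characteristic at $x$ by construction. The main obstacle is the first paragraph's group-theoretic lifting step, together with verifying that the $n$-th root covers needed to kill the torsion obstruction can indeed be taken fppf (or étale, under the invertibility hypothesis); everything else is standard propagation from stalk to neighborhood.
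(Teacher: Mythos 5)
Your proposal is correct and follows essentially the same route as the paper: reduce the existence of a characteristic chart to splitting the characteristic sequence $0 \to \O_{X,x}^* \to \M_{X,x}^{\rm gp} \to \ov{\M}_{X,x}^{\rm gp} \to 0$, observe the obstruction is killed once the relevant units become $n$-divisible, and achieve that fppf-locally by adjoining roots (or \'etale-locally via Hensel when $n$ is invertible). The paper simply phrases the fppf case as divisibility (injectivity) of the unit group of the fppf local ring and leaves the stalk-to-neighborhood propagation to the standard references, whereas you spell both out explicitly; the content is the same.
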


\begin{proof} This is standard (c.f.\ \cite[2.2.15]{Og} or \cite[Proposition~2.1]{Ols}).  Since $X$ is fine, producing such a chart is the same thing as producing a section of the quotient map $\M_{X,x} \to \ov{\M}_{X,x}$.  Since the kernel of this quotient map is the group $\O_{X,x}^*$, this is equivalent to saying that the \emph{characteristic sequence} $$0 \to \O_{X,x}^* \to \M_{X,x}^{\rm gp} \to \ov{\M}_{X,x}^{\rm gp} \to 0 $$ splits (i.e.\ the class of this extension in $\Ext^1(\ov{\M}_{X,x}^{\rm gp},\O_{X,x}^*)$ is zero).  This is true under the invertibility assumption, which implies that $\O_{X,x}^*$ is divisible by the order of the torsion part of the finitely generated abelian group $\ov{\M}_{X,x}^{\rm gp}$, and it is certainly true if we work with the fppf local ring, for then $\O_{X,x}^*$ is a divisible (i.e.\ injective) abelian group. \end{proof}

\begin{defn} \label{defn:neatcharts}  Let $f : X \to Y$ be a map of fine log schemes, $x$ a geometric point of $X$.  A fine chart $$ \xym{ P \ar[r] & \M_X(X) \\ Q \ar[u]^h \ar[r] & \M_Y(Y) \ar[u]_{f^\dagger} } $$ for $f$ is called \emph{neat} at $x$ iff the following are satisfied: \begin{enumerate} \item $h$ is injective. \item The induced map $P^{\rm gp} / Q^{\rm gp} \to \ov{\M}_{X/Y,x}^{\rm gp}$ is an isomorphism. \item The induced map $\ov{P} \to \ov{\M}_{X,x}$ is an isomorphism. \end{enumerate} \end{defn}

\begin{thm} \label{thm:neatcharts} {\bf (Kato)} Let $f : X \to Y$ be a map of fine log schemes, $x$ a geometric point of $X$, $Q \to \M_Y(Y)$ a fine chart for $Y$.  After possibly replacing $X$ by an fppf neighborhood of $x$ we can extend $Q \to \M_Y(Y)$ to a fine chart for $f$ neat at $x$ (as in Definition~\ref{defn:neatcharts}).  If $Q \to \M_Y(Y)$ is a characteristic chart at $f(x)$, then we can also arrange that $P \to \M_X(X)$ is characteristic at $x$.  If we assume $\QQ \subseteq \O_{X,x}$ then we can replace ``fppf" by ``\'etale" without changing the conclusions. \end{thm}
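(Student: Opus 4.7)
The plan is to build $P$ as an amalgam of the given chart $Q$ for $Y$ and a characteristic chart $P_0$ at $x$ for $X$---the latter produced by Lemma~\ref{lem:characteristiccharts}---with enough fppf (resp.\ \'etale) shrinking to access the divisibility in $\O_{X,x}^*$ needed for the requisite splittings.

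First I would apply Lemma~\ref{lem:characteristiccharts}: after replacing $X$ with an fppf neighborhood of $x$ (\'etale under the invertibility hypothesis), there is a sharp fine monoid $P_0$ and a chart $\phi_0 : P_0 \to \M_X(X)$ with $P_0 \xrightarrow{\sim} \ov{\M}_{X,x}$. Next, after possibly further fppf/\'etale localization, I would split the surjection $\M_{X,x}^{\rm gp} \twoheadrightarrow \ov{\M}_{X/Y,x}^{\rm gp}$ by a group homomorphism $\sigma$; the obstruction in $\Ext^1$ vanishes after the localization because the kernel of the surjection contains the (now sufficiently) divisible group $\O_{X,x}^*$ while $\ov{\M}_{X/Y,x}^{\rm gp}$ is finitely generated abelian. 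I would then set $P^{\rm gp} := Q^{\rm gp} \oplus \ov{\M}_{X/Y,x}^{\rm gp}$, mapping to $\M_{X,x}^{\rm gp}$ via $f^\dagger$ on the first summand and $\sigma$ on the second, let $P \subseteq P^{\rm gp}$ be the fine submonoid generated by the image of $Q$ (under $q \mapsto (q,0)$) together with lifts through $\sigma$ of the generators of $P_0 \cong \ov{\M}_{X,x}$, and let $h : Q \to P$ be the natural inclusion into the first factor.

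The three neatness conditions then follow readily: (1) injectivity of $h$ is immediate from the construction; (2) $P^{\rm gp}/Q^{\rm gp} = \ov{\M}_{X/Y,x}^{\rm gp}$ by construction; and (3) $\ov{P} \xrightarrow{\sim} \ov{\M}_{X,x}$ follows because the composition $P^{\rm gp} \to \M_{X,x}^{\rm gp} \to \ov{\M}_{X,x}^{\rm gp}$ sends the second summand isomorphically (via $\ov{\sigma}$) onto a complement of the image of $\ov{\M}_{Y,y}^{\rm gp}$, which is in turn hit by $Q$, and the lifts chosen for $P$ are arranged so the induced map on sharpenings is bijective. To complete the proof I would then extend the stalk-level data to sections on an \'etale neighborhood of $x$ and adjust $\phi_0$ by a monoid homomorphism $P_0 \to \O_X^*$ (extending a discrepancy class in $\Hom(\ov{Q}^{\rm gp},\O_{X,x}^*)$, which extends after further fppf/\'etale localization by divisibility) so that the chart square commutes on the nose rather than only modulo units. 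The characteristic-chart addendum is automatic: when $Q$ is already characteristic at $f(x)$, the image of $Q$ already accounts for the $\ov{\M}_{Y,y}$-part of $\ov{\M}_{X,x}$, and the construction reduces essentially to $P = P_0$, which is characteristic at $x$ by Step 1.

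The main obstacle is the simultaneous realization, on a single fppf (or \'etale) neighborhood of $x$, of the several splitting constructions---the characteristic chart $P_0$, the section $\sigma$, and the unit-twist correction to $\phi_0$---each of which appeals to divisibility of $\O_{X,x}^*$ at primes dividing the torsion orders of finitely generated abelian groups such as $\ov{\M}_{X,x}^{\rm gp}$ and $\ov{\M}_{X/Y,x}^{\rm gp}$. Fppf localization makes $\O_{X,x}^*$ divisible by every positive integer, while \'etale localization does so only for those integers invertible in $\O_{X,x}$, which explains the fppf-versus-\'etale dichotomy in the statement.
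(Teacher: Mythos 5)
The central step of your construction fails: you claim that, after fppf localization, the surjection $\M_{X,x}^{\rm gp} \twoheadrightarrow \ov{\M}_{X/Y,x}^{\rm gp}$ admits a section $\sigma$ because its kernel contains the divisible group $\O_{X,x}^*$. A divisible subgroup of the kernel does not kill the extension class. Writing $N \subseteq \M_{X,x}$ for the image of $(f^*\M_Y)_x$, the kernel is $N^{\rm gp}$, and divisibility of $\O_{X,x}^*$ gives exactly an isomorphism $\Ext^1(\ov{\M}_{X/Y,x}^{\rm gp}, N^{\rm gp}) \cong \Ext^1(\ov{\M}_{X/Y,x}^{\rm gp}, N^{\rm gp}/\O_{X,x}^*)$, under which your obstruction is the class of the characteristic-level extension $0 \to \ov{N}^{\rm gp} \to \ov{\M}_{X,x}^{\rm gp} \to \ov{\M}_{X/Y,x}^{\rm gp} \to 0$; and the characteristic monoids at $x$ are unchanged by passing to an fppf neighborhood (the log structure is pulled back), so no amount of localization helps. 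Concretely, take $\ov{f}^\dagger_x$ to be multiplication by $2$ on $\NN$ (e.g.\ $f = \Spec(\slot\to\slot)$ of the map of prelog rings with $Q=P_{\rm target}=\NN$, $1 \mapsto 2$, both mapping to $0$ in a field), with $Q = \NN$ the evident characteristic chart: then $\ov{\M}_{X/Y,x}^{\rm gp} = \ZZ/2\ZZ$ and the extension $0 \to 2\ZZ \to \ZZ \to \ZZ/2\ZZ \to 0$ is nonsplit, so no $\sigma$ exists; moreover no neat chart with $P^{\rm gp} \cong Q^{\rm gp} \oplus \ov{\M}_{X/Y,x}^{\rm gp}$ can satisfy condition (3) of Definition~\ref{defn:neatcharts} here (the neat chart is $P=\NN$ with $h$ multiplication by $2$, whose $P^{\rm gp}=\ZZ$ is a \emph{nonsplit} extension of $\ZZ/2\ZZ$ by $Q^{\rm gp}$). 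So the difficulty is not, as your last paragraph suggests, assembling several splittings on one neighborhood: one of the splittings simply does not exist. (The claim that the characteristic addendum ``reduces essentially to $P=P_0$'' has the same defect, since $Q$ need not map into the sharp monoid $P_0$ at all.)

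The paper's proof is designed precisely to avoid splitting that surjection. Setting $S := \Im(Q \to \M_{X,x})$ and $N$ as above, one uses that $N^{\rm gp}/S^{\rm gp}$ is a quotient of $\O_{X,x}^*$, hence divisible after fppf (or \'etale, under the invertibility hypothesis) localization, to see that $\Ext^1(\ov{\M}_{X/Y,x}^{\rm gp}, S^{\rm gp}) \to \Ext^1(\ov{\M}_{X/Y,x}^{\rm gp}, N^{\rm gp})$ is surjective, and then $\Ext^2=0$ in abelian groups to replace $S^{\rm gp}$ by $Q^{\rm gp}$; this produces a (generally nonsplit) extension $L$ of $\ov{\M}_{X/Y,x}^{\rm gp}$ by $Q^{\rm gp}$ mapping compatibly to $\M_{X,x}^{\rm gp}$ and surjecting onto $\ov{\M}_{X,x}^{\rm gp}$, and one takes $P \subseteq L$ to be the preimage of $\M_{X,x}$. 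If you want to repair your argument, you must likewise allow $P^{\rm gp}$ to be a nonsplit extension of $\ov{\M}_{X/Y,x}^{\rm gp}$ by $Q^{\rm gp}$, using the divisibility of $\O_{X,x}^*$ only to lift the extension class, not to trivialize it.
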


\begin{proof} (c.f.\ \cite[2.2.18]{Og})  Write $\O_{X,x}$ for the fppf local ring of $X$ at $x$ (or the \'etale local ring of $X$ at $x$ under the $\QQ \subseteq \O_{X,x}$ assumption), so that $\O_{X,x}^*$ is a divisible (i.e.\ injective) abelian group.  Set $N := \Im((f^* \M_Y)_x \to \M_{X,x})$, $S := \Im(Q \to \M_{X,x})$ so we have a surjection $Q \to S$ and an injection $S \subseteq N$.  We are going to construct a commutative diagram of abelian groups (all finitely generated, except possibly $N^{\rm gp}$ and $\M_{X,x}^{\rm gp}$) with exact rows as below. \bne{neatchartsdiagram} & \xym{ 0 \ar[r] & Q^{\rm gp} \ar[d] \ar[r] & L \ar[d] \ar[r] & \ov{\M}_{X/Y,x}^{\rm gp} \ar@{=}[d] \ar[r] & 0 \\ 0 \ar[r] & S^{\rm gp} \ar[d] \ar[r] & E \ar[r] \ar[d] & \ov{\M}_{X/Y,x}^{\rm gp} \ar@{=}[d] \ar[r] & 0 \\ 0 \ar[r] & N^{\rm gp} \ar[d] \ar[r] & \M_{X,x}^{\rm gp} \ar[r] \ar[d] & \ov{\M}_{X/Y,x}^{\rm gp} \ar@{=}[d] \ar[r] & 0 \\ 0 \ar[r] & \ov{\M}_{Y,f(x)} \ar[r] & \ov{\M}_{X,x}^{\rm gp} \ar[r] & \ov{\M}_{X/Y,x}^{\rm gp} \ar[r] & 0 } \ene  The bottom two rows (and the map between them) and the left column are obtained from the definitions.

Since $Q \to \M_Y(Y)$ is a chart, the induced map $Q^{\rm gp} \to \ov{\M}_{Y,y}^{\rm gp}$ is surjective and we see that $N^{\rm gp} \subseteq \M_{X,x}^{\rm gp}$ is generated by $S^{\rm gp}$ and $\O_{X,x}^*$.  Hence we have a surjection $\O_{X,x}^* \to N^{\rm gp} / S^{\rm gp}$, hence $N^{\rm gp} / S^{\rm gp}$ is divisible (a quotient of a divisible group is divisible) and therefore the map \bne{ext1map} \Ext^1(\ov{\M}_{X/Y,x}^{\rm gp} , S^{\rm gp}) & \to & \Ext^1(\ov{\M}_{X/Y,x}^{\rm gp}, N^{\rm gp}) \ene induced by the inclusion $S^{\rm gp} \subseteq N^{\rm gp}$ is surjective.  Using the Yoneda Ext description of \eqref{ext1map}, a lifting of the (isomorphism class of the) third row of \eqref{neatchartsdiagram} under \eqref{ext1map} yields the second row and the map from the second to the third row in \eqref{neatchartsdiagram}.  

Since $\Ext^2=0$ in the category of abelian groups, the surjection $Q^{\rm gp} \to S^{\rm gp}$ induces a surjection \bne{secondext1map} \Ext^1(\ov{\M}_{X/Y,x},S^{\rm gp}) & \to & \Ext^1(\ov{\M}_{X/Y,x},Q^{\rm gp}). \ene  Choosing a lifting of the second row of \eqref{neatchartsdiagram} yields the first row of \eqref{neatchartsdiagram} and the map from it to the second row of \eqref{neatchartsdiagram}.  This completes the construction of \eqref{neatchartsdiagram}.

Since the map $Q^{\rm gp} \to \ov{\M}_{Y,y}^{\rm gp}$ in \eqref{neatchartsdiagram} is surjective (because $Q \to \M_Y(Y)$ is a chart), the Snake Lemma applied to the top and bottom rows of \eqref{neatchartsdiagram} shows that $L \to \ov{\M}_{X,x}^{\rm gp}$ is surjective.  Let $P \subseteq L$ be the preimage of $\M_{X,x} \subseteq \M_{X,x}^{\rm gp}$ under $L \to \M_{X,x}^{\rm gp}$.  It is a standard exercise (\cite{KK} or \cite[2.2.11]{Og}) to show that $P$ is fine and $P \to \M_{X,x}$ is a chart (hence lifts to a chart $P \to \M_X(X)$ after replacing $X$ with an fppf / \'etale neighborhood of $x$ as appropriate).  It is also straightforward to check that the injection $h : Q \to P$ induced by $Q^{\rm gp} \into L$ will serve as the $h$ for a neat chart for $f$ as in Definition~\ref{defn:neatcharts}.  From the definition of $P$, it is clear that $P \to \ov{\M}_{X,x}$ is surjective; if $Q \to \M_Y(Y)$ is a characteristic chart, then $Q^{\rm gp} \to \ov{\M}_{Y,y}^{\rm gp}$ is an isomorphism, hence $L^{\rm gp} \to \ov{\M}_{X,x}^{\rm gp}$ is an isomorphism by the Five Lemma applied to the top and bottom rows of \eqref{neatchartsdiagram}, hence $P \to \ov{\M}_{X,x}$ is an isomorphism because injectivity can be checked after groupifying. \end{proof}

\begin{cor} \label{cor:neatcharts} Let $f : X \to Y$ be a map of fine log schemes, $x$ a geometric point of $X$.  After possibly replacing $f$ with an fppf neighborhood of $x$ in $f$ (or an \'etale neighborhood if we assume $\QQ \subseteq \O_{Y,f(x)}$), there is a neat chart for $f$ as in Definition~\ref{defn:neatcharts} where $Q \to \M_Y(Y)$ (resp.\ $P \to \M_X(X)$) is a characteristic chart at $f(x)$ (resp.\ $x$). \end{cor}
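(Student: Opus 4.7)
The plan is to deduce this corollary by combining Lemma~\ref{lem:characteristiccharts} with Theorem~\ref{thm:neatcharts}; essentially one first produces a characteristic chart on $Y$ at $f(x)$, then feeds this chart into the neat-charts theorem to extend it upward.

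First I would apply Lemma~\ref{lem:characteristiccharts} to the fine log scheme $Y$ at the geometric point $f(x)$: after replacing $Y$ by an fppf neighborhood of $f(x)$ we obtain a characteristic chart $Q \to \M_Y(Y)$ at $f(x)$, i.e.\ one for which $Q \to \ov{\M}_{Y,f(x)}$ is an isomorphism. In the hypothesis $\QQ \subseteq \O_{Y,f(x)}$, the order of the torsion part of $\ov{\M}_{Y,f(x)}^{\rm gp}$ is automatically invertible in $\O_{Y,f(x)}$, so Lemma~\ref{lem:characteristiccharts} gives such a chart \'etale locally. Pulling back, the replacement of $Y$ induces an fppf (resp.\ \'etale) replacement of $f$ near $x$, and we may henceforth assume that the fine chart $Q \to \M_Y(Y)$ already exists globally on $Y$ and is characteristic at $f(x)$.

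Next I would apply Theorem~\ref{thm:neatcharts} to $f : X \to Y$ at $x$, using the chart $Q \to \M_Y(Y)$ just produced. The theorem asserts that, after possibly passing to an fppf neighborhood of $x$ (or \'etale if $\QQ \subseteq \O_{X,x}$, which follows from $\QQ \subseteq \O_{Y,f(x)}$ via the ring map $f^* : \O_{Y,f(x)} \to \O_{X,x}$), the chart $Q \to \M_Y(Y)$ extends to a fine chart for $f$ that is neat at $x$ in the sense of Definition~\ref{defn:neatcharts}; moreover, the last clause of Theorem~\ref{thm:neatcharts} guarantees that because the chosen $Q \to \M_Y(Y)$ is characteristic at $f(x)$, the resulting $P \to \M_X(X)$ will be characteristic at $x$. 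Composing the two fppf (resp.\ \'etale) neighborhoods of $f$ gives the single neighborhood required in the statement.

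There is essentially no obstacle, since both ingredients are already proved; the only points requiring care are bookkeeping. One must track that the fppf/\'etale dichotomy is compatible between the two steps, which reduces to the observation that $\QQ \subseteq \O_{Y,f(x)}$ implies $\QQ \subseteq \O_{X,x}$, so the hypothesis in the corollary suffices to invoke the \'etale version of both Lemma~\ref{lem:characteristiccharts} and Theorem~\ref{thm:neatcharts}. One must also verify that the \textbf{extension} produced by Theorem~\ref{thm:neatcharts} genuinely starts from the chart $Q \to \M_Y(Y)$ we fixed (rather than replacing it), but this is exactly how the theorem is stated. Thus the corollary follows by chaining the two results.
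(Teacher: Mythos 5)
Your proposal is correct and is essentially the paper's own argument: apply Lemma~\ref{lem:characteristiccharts} to obtain a characteristic chart $Q \to \M_Y(Y)$ at $f(x)$ after shrinking, then lift it to a neat chart for $f$ via Theorem~\ref{thm:neatcharts}. The extra bookkeeping you include (that $\QQ \subseteq \O_{Y,f(x)}$ gives invertibility for the lemma and implies $\QQ \subseteq \O_{X,x}$ for the theorem's \'etale variant) is a sound, if implicit in the paper, verification.
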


\begin{proof} First apply Lemma~\ref{lem:characteristiccharts} to build a characteristic chart $Q \to \M_Y(Y)$ at $f(x)$ (shrink $Y$ if necessary), then lift it to the desired chart for $f$ by Theorem~\ref{thm:neatcharts}. \end{proof}

\begin{thm}  \label{thm:logflatnessandbasechange} Let $f : X \to Y$ be a map of fine log schemes, $\F$ a quasi-coherent sheaf on $X$ log flat over $Y$.  For any map $Y' \to Y$ of fine log schemes, the quasi-coherent sheaf $\F' = \pi_1^* \F$ on $X \times_Y Y'$ is log flat over $Y'$.   \end{thm}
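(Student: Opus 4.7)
The plan is to use Kato's neat charts theorem to reduce the problem to a local affine situation where, by the Second Chart Criterion, log flatness becomes a statement about graded flatness of modules over a graded ring. Once in that form, base-change stability would follow from preservation of graded flatness under an extension of the ground ring.

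First I would exploit Lemma~\ref{lem:fppflocal}: log flatness is stable under strict base change and is strict-fppf local on $f$, so the question is local in the strict fppf topology on $Y'$ and (after suitable pullback) on $X$. I would pass to strict fppf covers freely to assume $Y$, $Y'$, and $X$ are affine with compatible fine global charts. Next I would invoke Corollary~\ref{cor:neatcharts} applied to $f : X \to Y$ to obtain, after fppf refinement on $X$, a neat chart for $f$ with $h : Q \to P$ an injective morphism of fine monoids, extending a fine chart $Q \to \M_Y(Y)$ for $Y$. Then I would apply Theorem~\ref{thm:neatcharts} to the map $Y' \to Y$ with this same $Q \to \M_Y(Y)$ as starting data, obtaining (after fppf refinement on $Y'$) a chart for $Y' \to Y$ with $k : Q \to Q'$ an injective morphism of fine monoids. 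These would combine to furnish a chart for $f' : X' \to Y'$ of the form $h' : Q' \to P'$, where $P'$ is the pushout of $P \xleftarrow{h} Q \xrightarrow{k} Q'$ in the category of fine monoids.

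Next I would invoke Corollary~\ref{cor:chartcriteria}\eqref{affinecc3}, which identifies log flatness of $\F$ over $Y$ with graded flatness of the corresponding $C$-module $M$ over $(G, B)$, where $G = (P/Q)^{\rm gp}$ and $B = A \otimes_{\ZZ[Q]} \ZZ[P]$. The goal would then be to deduce graded flatness of $M' := M \otimes_C C'$ over $(G', B')$, with $G' = (P'/Q')^{\rm gp}$ and $B' = A' \otimes_{\ZZ[Q']} \ZZ[P']$. The identification $P'^{\rm gp} \cong P^{\rm gp} \oplus_{Q^{\rm gp}} Q'^{\rm gp}$ (groupification commutes with pushouts of monoids) gives $G' \cong G$, and the natural ring map $B \otimes_A A' \to B'$ should interact well enough with the graded structures on both sides that graded flatness of $M$ over $(G, B)$ implies graded flatness of $M'$ over $(G', B')$ by a formal argument using the material of Section~\ref{section:gradedmodules}.

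The main obstacle will be this final algebraic step: verifying that graded flatness is preserved under the base change $(G, B) \rightsquigarrow (G', B')$. The subtlety is that $B' = A' \otimes_{\ZZ[Q']} \ZZ[P']$ involves the fine pushout $P'$ rather than the naive pushout $P \oplus_Q Q'$, so that $B'$ is only a quotient of $B \otimes_A A'$, not a straight base change. One must also worry about whether $h' : Q' \to P'$ remains injective; if it does not, the Second Chart Criterion cannot be applied directly to $f'$, and I would fall back on the First Chart Criterion (Theorem~\ref{thm:chartcriteria}\eqref{cc4}) and argue base-change stability at the level of the ring $A(h, t)$ and the module $M[P^{\rm gp}]$, where the construction is more manifestly functorial in the chart data and no injectivity of $h'$ is required.
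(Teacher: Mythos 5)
Your reduction to an affine situation via strict-fppf localization and neat charts, followed by the translation into graded flatness through Corollary~\ref{cor:chartcriteria}\eqref{affinecc3}, is essentially the paper's strategy; but the proposal stops exactly at the decisive point. You assert that graded flatness of $M$ over $(G,B)$ ``should'' imply graded flatness of $M'$ over $(G',B')$ by a formal argument, and then you yourself flag this as the main obstacle --- that step \emph{is} the theorem, and it is not formal as stated, because graded flatness has only a very limited stability under base change: Proposition~\ref{prop:gradedflatnessandbasechange} gives stability under pushout along a map of graded rings admitting a good restriction of scalars, e.g.\ one whose grading-group component is an isomorphism. The paper makes this applicable by a preliminary move you omit: factor $Y' \to Y$ as a strict morphism (already covered by Lemma~\ref{lem:fppflocal}) followed by a morphism which is the identity on underlying schemes. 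After that reduction $A'=A$, and with $P' := P \oplus_Q Q'$ one has $B = A \otimes_{\ZZ[Q']} \ZZ[P']$ (unambiguously graded by $G = (P/Q)^{\rm gp} = (P'/Q')^{\rm gp}$), $B' = A \otimes_{\ZZ[Q']} \ZZ[(P')^{\rm int}]$ and $C' = C \otimes_B B'$, so the square with rows $(G,B) \to (G,B')$ and $(0,C) \to (0,C')$ is an honest pushout of graded rings whose grading-group map is the identity, and Proposition~\ref{prop:gradedflatnessandbasechange} closes the argument. In particular your worry that $B'$ is ``only a quotient of $B \otimes_A A'$'' is harmless: that proposition imposes no flatness or surjectivity hypothesis on the underlying ring map, only the grading-group condition. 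If you insist on skipping the factorization and carrying a genuine $A \to A'$ through the argument, the same proposition can still be applied (the grading group is unchanged), but then you must actually run that pushout argument rather than gesture at \S\ref{section:gradedmodules}.

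Two smaller points. The injectivity of $h' : Q' \to (P')^{\rm int}$ is not a genuine worry: since both monoids are integral it suffices to check $(h')^{\rm gp}$ is injective, and $(h')^{\rm gp}$ is a pushout of the injective map $h^{\rm gp}$ along $Q^{\rm gp} \to (Q')^{\rm gp}$, hence injective --- this is exactly how the paper argues, so no fallback is needed. Moreover the proposed fallback is not a real safety net: flatness of $M[P^{\rm gp}]$ over $A(h,t)$ is not ``manifestly'' stable under passage to $A'(h',t')$; comparing these flatness conditions across a change of chart and of base is precisely the kind of difficulty the paper has to work for (compare the effort in Theorem~\ref{thm:independenceofcharts}, which only treats different charts over a \emph{fixed} map of log rings).
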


\begin{proof}  We can factor $Y' \to Y$ as $Y' \to Y'' \to Y$ where $Y'' \to Y$ is strict and $Y' \to Y''$ is the identity on underlying schemes.  By Lemma~\ref{lem:fppflocal} we already know the theorem holds when $Y' \to Y$ is strict.  We thus reduce to the case where $Y' \to Y$ is the identity on underlying schemes.  Lemma~\ref{lem:fppflocal} also says that log flatness is strict fppf local in nature, so by Corollary~\ref{cor:neatcharts} we can reduce to the case where $\u{X} = \Spec C$ and $\u{Y}=\u{Y}' = \Spec A$ are affine (so $\F = M^\sim$ for some $C$-module $M$) and we have neat global charts  \bne{neatglobalsharts} \xym{ P \ar[r] & \M_X(X) \ar[r] & C = \O_X(X) \\ Q \ar[u]^h \ar[r] & \M_Y(Y) \ar[u] \ar[r] & A = \O_Y(Y) \ar[u] } &  & \xym{ Q' \ar[r] & \M_{Y'}(Y) \ar[r] & A = \O_Y(Y) \\ Q \ar[u] \ar[r] & \M_Y(Y) \ar[u] \ar[r] & A = \O_Y(Y) \ar@{=}[u] } \ene for $f$ and $Y' \to Y$.  In particular, $h$ is an injective map of fine monoids, so by Corollary~\ref{cor:chartcriteria}\eqref{affinecc3}, log flatness of $\F = M^\sim$ over $Y$ is equivalent to graded flatness of $M \in \Mod(C)$ over $(G,B)$, where $B := A \otimes_{\ZZ[Q]} \ZZ[P]$ and $G := (P/Q)^{\rm gp}$ (and $M$ is viewed as an (ungraded) $B$-module via the natural map $B \to C$).  Set $P' := P \oplus_Q Q'$.  We have $P'/Q' = P/Q$ and since $\ZZ[ \slot ]$ preserves direct limits, we see that $B = A \otimes_{\ZZ[Q']} \ZZ[P']$ (unambiguously graded by $G = (P/Q)^{\rm gp} = (P'/Q')^{\rm gp}$).   

It is understood in the statement of the theorem that the fibered product $X'$ is the one taken in fine (or, equivalently, integral) log schemes, not the one taken in arbitrary (coherent) log schemes.  The underlying scheme of $X'$ is given by $\Spec$ of the ring $C' := C \otimes_{\ZZ[P']} \ZZ[(P')^{\rm int}]$ and the natural map $h' : Q' \to (P')^{\rm int}$ serves as (the fine monoid map in) a global chart for $X' \to Y'$.  The map $h'$ is also injective: It suffices to check that $(h')^{\rm gp}$ is injective since $Q'$ and $(P')^{\rm int}$ are integral; but $P^{\rm gp} = (P')^{\rm gp}$ so $(h')^{\rm gp}$ is just the groupification of $Q' \to P'$, which is injective since it is a pushout of the groupification of $h$.  Applying Corollary~\ref{cor:chartcriteria}\eqref{affinecc3} again, we see that log flatness of $\F' = (M')^\sim$ (where $M' := M \otimes_C C'$) is equivalent to graded flatness of $M'$ over $(G,B')$, where $B' = A \otimes_{\ZZ[Q']} \ZZ[(P')^{\rm int}]$ and $G = (P')^{\rm gp} / (Q')^{\rm gp}$ is the same $G$ we've been dealing with all along.  Since $$ \xym{ (G,B) \ar[r] \ar[d] & (G,B') \ar[d] \\ (0,C) \ar[r] & (0,C')  } $$ is a pushout diagram of graded rings (\S\ref{section:gradedflatnessandbasechange}), graded flatness of $M$ over $(G,B)$ implies graded flatness of $M'$ over $(G,B')$ by the (rather limited) stability of graded flatness under base change (Proposition~\ref{prop:gradedflatnessandbasechange}).   \end{proof}

\begin{rem} \label{rem:logflatnessandbasechange} In the above proof, we encountered a variant of the following situation, which arises frequently.  Suppose $$ \xym{ P \ar[r] & P' \ar[r] & C \\ Q \ar[r] \ar[u]^h & Q' \ar[u]_h \ar[r] & A \ar[u] } $$ is a commutative diagram of monoids where $A \to C$ is a map of rings.  Assume $h$ is a monomorphism of fine monoids, $Q'$ is a fine monoid, and the left square is a pushout diagram of monoids.  Assume furthermore that the finitely generated monoid $P'$ is actually fine (this is automatic if $h$ is flat (i.e.\ integral)), so the left square is also a pushout in the category of integral monoids.   This implies that $h'$ is also monic (since this can be checked on groupifications and monomorphisms of abelian groups are stable under pushout).  Since the left square is a pushout, it stays a pushout after applying $\ZZ[ \slot ]$, and the natural map \be A \otimes_{\ZZ[Q]} \ZZ[P] & \to & A \otimes_{\ZZ[Q']} \ZZ[P'] \ee is an isomorphism of rings graded by $G := (P/Q)^{\rm gp} = (P'/Q')^{\rm gp}$.   In particular, the two resulting notions of graded flatness for a $C$-module $M$ are the same.  Since Corollary~\ref{cor:chartcriteria}\eqref{affinecc3} says these notions of graded flatness are equivalent to log flatness of $M^\sim$ for the two maps of log schemes \be X := \Spec(P \to C) & \to & \Spec (Q \to A) =: Y \\ X' := \Spec (P' \to C) & \to & \Spec (Q' \to A) =: Y' \ee lying over the same map of schemes \be (\u{X} \to \u{Y}) & = & \Spec(A \to C), \ee we see that log flatness of $M^\sim$ over $Y$ is the same as log flatness of $M^\sim$ over $Y'$.  This is a sense in which log flatness is local for certain \emph{non-strict} base changes (which in this case are the identity on underlying schemes). \end{rem}

\subsection{Independence of charts} \label{section:independenceofcharts}  The purpose of this section is to prove (directly, without making any use of the log stacks $\Log(Y)$) that the criterion for log flatness in Corollary~\ref{cor:chartcriteria}\eqref{affinecc2} is independent of the chosen chart.  In fact we will prove a more general statement which motivates the definition of formal log flatness in \S\ref{section:formallogflatness}.

\begin{lem} \label{lem:strictflat} For a strict map $h : Q \to P$ of integral monoids, the following are equivalent: \begin{enumerate} \item $h$ is free. \item $h$ is flat. \item $h$ is injective. \end{enumerate} \end{lem}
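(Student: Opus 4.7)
The plan is to cycle through (1)$\Rightarrow$(2)$\Rightarrow$(3)$\Rightarrow$(1), together with the shortcut (1)$\Rightarrow$(3). The implications out of (1) are immediate: a free monoid map is a filtered direct limit of itself (hence flat in the sense of \S\ref{section:modules}), and if $Q \times S \to P$, $(q,s) \mapsto h(q)+s$, is bijective then $h(q_1)+s_0 = h(q_2)+s_0$ forces $q_1=q_2$ for any fixed $s_0 \in S$.

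For (3)$\Rightarrow$(1) I would exploit that strictness gives $\ov h : \ov Q \to \ov P$ an isomorphism. Since $h$ is injective, so is its restriction $h|_{Q^*} : Q^* \into P^*$, and the quotient $P^*/h(Q^*)$ is well-defined. I would choose a set-theoretic section $\sigma : P^*/h(Q^*) \to P^*$, set $S := \sigma(P^*/h(Q^*)) \subseteq P$, and check that $Q \times S \to P$ is bijective. Surjectivity: write $p \in P$ first as $p = h(q)+u$ using $\ov h$ surjective, then uniquely decompose $u = h(v)+s$ using $\sigma$ and injectivity of $h|_{Q^*}$. Injectivity: if $h(q_1)+s_1 = h(q_2)+s_2$, passing to sharpenings yields $\ov q_1 = \ov q_2$, so $q_1 = q_2+v$ for some $v \in Q^*$; cancellation in the integral monoid $P$ then gives $h(v)+s_1 = s_2$, so $s_1$ and $s_2$ represent the same class in $P^*/h(Q^*)$, whence $s_1=s_2$ by the section property, and then $h(v) = 0$ plus injectivity of $h$ forces $v=0$.

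The main obstacle is (2)$\Rightarrow$(3). I plan to invoke the equivalence between flatness of $h$ and flatness of $\ZZ[h] : \ZZ[Q] \to \ZZ[P]$ (Corollary~\ref{cor:flatiffintegral}). Since $\ZZ[Q^{\rm gp}]$ and $\ZZ[P^{\rm gp}]$ are localizations of $\ZZ[Q]$ and $\ZZ[P]$ at the images of $Q$ and $P$, and localization preserves flatness, $\ZZ[h^{\rm gp}]$ is also flat. By strictness the short exact sequence $0 \to Q^* \to Q^{\rm gp} \to \ov Q^{\rm gp} \to 0$ maps isomorphically on the quotient to the analogous sequence for $P$, so injectivity of $h$ is equivalent to injectivity of $h^{\rm gp}$; it thus suffices to show $\ker h^{\rm gp} = 0$. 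Suppose instead that $0 \neq k \in \ker h^{\rm gp}$. Then the principal ideal $I := ([k]-1)\ZZ[Q^{\rm gp}]$ is nonzero (since $[k] \neq 1$), yet its image in $\ZZ[P^{\rm gp}]$ is zero because $[h^{\rm gp}(k)] = 1$. Flatness of $\ZZ[h^{\rm gp}]$ then forces $I \otimes_{\ZZ[Q^{\rm gp}]} \ZZ[P^{\rm gp}] = 0$.

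To derive the contradiction, I would use the presentation $I \cong \ZZ[Q^{\rm gp}]/\Ann([k]-1)$ and identify $\Ann([k]-1)$ with the subring of $[k]$-fixed elements of $\ZZ[Q^{\rm gp}]$ (since $(1-[k])f = 0$ iff $[k]f = f$). If $k$ has infinite order in $Q^{\rm gp}$, translation by $k$ has only infinite orbits on $Q^{\rm gp}$, so the only finitely-supported fixed element is $0$; hence $I \cong \ZZ[Q^{\rm gp}]$ and $I \otimes_{\ZZ[Q^{\rm gp}]} \ZZ[P^{\rm gp}] = \ZZ[P^{\rm gp}] \neq 0$. If $k$ has finite order $n > 1$, then $k$ acts freely on $Q^{\rm gp}$ with all orbits of size $n$, so $\Ann([k]-1) = N \cdot \ZZ[Q^{\rm gp}]$ for the norm element $N := 1+[k]+\cdots+[k]^{n-1}$, and $\ZZ[h^{\rm gp}](N) = n$; hence $I \otimes_{\ZZ[Q^{\rm gp}]} \ZZ[P^{\rm gp}] = \ZZ[P^{\rm gp}]/n\ZZ[P^{\rm gp}] \neq 0$ since $n > 1$ is not a unit in the characteristic-zero ring $\ZZ[P^{\rm gp}]$. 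Either case contradicts $I \otimes_{\ZZ[Q^{\rm gp}]} \ZZ[P^{\rm gp}] = 0$.
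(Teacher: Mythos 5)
Your proposal is correct. For the implication the paper treats as the only nontrivial one, (3)$\Rightarrow$(1), your construction is essentially the paper's argument unwound: choosing a set-theoretic section of $P^* \to P^*/h(Q^*)$ is exactly choosing a basis for $P^*$ as a free $Q^*$-module (Example~\ref{example:groupflatness}), and your bijectivity check for $Q \times S \to P$ is the hands-on verification, in this special case, that freeness is stable under pushout (Lemma~\ref{lem:freemorphisms} or Lemma~\ref{lem:basechange}) applied to the pushout presentation $P = Q \oplus_{Q^*} P^*$ furnished by Lemma~\ref{lem:strict}. Where you genuinely diverge is (2)$\Rightarrow$(3): the paper regards this as immediate, since by Theorem~\ref{thm:integralflatness} a flat module over an integral monoid is torsion-free, and torsion-freeness of $P$ as a $Q$-module (take $m=0$ in the definition, or cite Example~\ref{example:torsionfree}) is precisely injectivity of $h$ --- no strictness and no group rings are needed. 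Your detour through flatness of $\ZZ[h^{\rm gp}]$ and the annihilator of $[k]-1$ is valid as written --- the passage from $h$ flat to $\ZZ[h]$ flat only uses the easy direction of Corollary~\ref{cor:flatiffintegral}, i.e.\ Theorem~\ref{thm:freetofree}, so there is no circularity --- and it has the mild interest of exhibiting directly that flatness forces $h^{\rm gp}$ to be injective; but it is substantially heavier than the one-line module-theoretic argument already available in \S\ref{section:modules}.
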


\begin{proof} The only nontrivial implication is that injective implies free.  By Lemma~\ref{lem:strict}, $h$ strict implies $h$ is a pushout of $h^* : Q^* \to P^*$, which is an injective, hence free (Example~\ref{example:groupflatness}) map of groups when $h$ is injective.  Free maps are stable under pushout (Lemma~\ref{lem:basechange} or Lemma~\ref{lem:freemorphisms}).  \end{proof} 

\begin{lem} \label{lem:independenceofcharts} Suppose $P$, $P'$, $Q$, and $Q'$ are fine monoids, $A$ is a ring, and \bne{comparisondiagram} & \xym{ P \ar[r]^b & P' \ar[r]^{s'} & C \\ Q  \ar[u]^h \ar[r]^a & Q' \ar[u]_{h'} \ar[r]^{t'} & A \ar[u]_f } \ene is a commutative diagram of monoids with $a$ and $b$ strict.  Set $s := s'b$, $t := t'a$, $\u{Y} := \Spec A$.  Then the induced map \bne{inducedetalemap} \M(\u{Y},h',t')  & \to & \M(\u{Y},h,t) \ene of algebraic stacks is representable \'etale.  If, furthermore, $b :P \to P'$ is flat (c.f.\ Lemma~\ref{lem:strictflat}), then the induced map of schemes \bne{secondinducedmap} \u{Y} \times_{\AAA(Q')} \u{\AA}(P') & \to & \u{Y} \times_{\AAA(Q)} \u{\AA}(P) \\ = \nonumber \Spec ( A(h,t) & \to & A(h',t') ) \ene is flat. \end{lem}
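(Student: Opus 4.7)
I would prove both parts by comparing each side to $\Log(Y)$ via Olsson's representable \'etale cover (Theorem~\ref{thm:etalecover}), exploiting the elementary fact that \emph{strict} maps of prelog structures do not alter associated log structures.  Concretely, using the defining relation $Q' = Q \oplus_{Q^*} (Q')^*$ for strict $a$, one constructs an inverse $\M_{t'} \to \M_t$ to the tautological map of associated log structures from the data $Q \to \M_t$ and $(Q')^* \xrightarrow{t'} A^* \subseteq \M_t$, which agree on $Q^*$.  Thus $Y := \Spec(t : Q \to A)$ and $Y' := \Spec(t' : Q' \to A)$ are the same log scheme, and the same argument applied pointwise to a test scheme $T$ shows: for strict $b$, the log structure on $T$ associated to any prelog structure $P' \to \O_T$ coincides with the one associated to its composite with $b$.

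\textbf{Representable \'etale claim.}  With the identification $Y = Y'$ in hand, Theorem~\ref{thm:etalecover} supplies representable \'etale maps $\M(\u{Y}, h, t) \to \Log(Y)$ and $\M(\u{Y}, h', t') \to \Log(Y)$.  Unwinding Olsson's construction (\S\ref{section:ontheetalecover}), each sends a $T$-point (a scheme $T \to \u{Y}$ with a compatible $P$- or $P'$-chart) to $T$ endowed with the associated log structure, viewed as a $Y$-log scheme via the chart data.  The observation above identifies the two resulting log structures on $T$, and the commutativity $bh = h'a$ in \eqref{comparisondiagram} shows the $Y$-log-scheme structure maps agree, so the triangle
\[
\xymatrix{ \M(\u{Y}, h', t') \ar[rr]^-{\eqref{inducedetalemap}} \ar[rd] && \M(\u{Y}, h, t) \ar[ld] \\ & \Log(Y) }
\]
$2$-commutes.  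Since $\Log(Y)$ has (locally) separated diagonal, the standard principle that if $g$ and $gf$ are representable \'etale then so is $f$ (the graph of $f$ is a section of a representable \'etale map, hence an open immersion) forces \eqref{inducedetalemap} to be representable \'etale.

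\textbf{Flatness claim.}  Now assume $b$ is flat.  Factor \eqref{secondinducedmap} as
\[
\u{Y}(h',t') = \u{Y} \times_{\AAA(Q')} \u{\AA}(P') \xrightarrow{\alpha} \u{Y} \times_{\AAA(Q)} \u{\AA}(P') \xrightarrow{\beta} \u{Y} \times_{\AAA(Q)} \u{\AA}(P) = \u{Y}(h,t).
\]
Here $\beta$ is the base change, along $\u{Y} \to \AAA(Q) \leftarrow \u{\AA}(P)$, of $\u{\AA}(P') \to \u{\AA}(P) = \Spec \ZZ[b]$, which is flat because $\ZZ[b]$ is flat by Corollary~\ref{cor:flatiffintegral}.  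Using the decomposition $\u{Y} \times_{\AAA(Q)} \u{\AA}(P') = \M(\u{Y}, a, t) \times_{\AAA(Q')} \u{\AA}(P')$, the map $\alpha$ is the base change along $\u{\AA}(P') \to \AAA(Q')$ (via $h'$) of the map $\u{Y} \to \M(\u{Y}, a, t) = \u{Y} \times_{\AAA(Q)} \AAA(Q')$ determined by $t' : \u{Y} \to \AAA(Q')$.  Applying the already-proved first statement to the special case $P = P' = Q'$, $b = \mathrm{id}$, $h = a$, $h' = \mathrm{id}$ shows $\u{Y} \to \M(\u{Y}, a, t)$ is representable \'etale, hence so is $\alpha$.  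Consequently $\beta \circ \alpha$ is \'etale followed by flat, therefore flat.

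\textbf{Main obstacle.}  The conceptual content lies in the opening observation on strict monoid maps and the verification that the comparison triangle $2$-commutes; the remainder is routine manipulation of fiber products and \'etale covers of algebraic stacks.
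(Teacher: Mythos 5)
Your proof is correct, but it takes a genuinely different route from the paper's, at least for the \'etale claim. The paper stays entirely inside the ``toric'' stacks: it assembles the big cartesian diagram over $\AAA(Q)$ and feeds in Theorem~\ref{thm:AAAhetale} (that $\AAA(c)$ is representable \'etale for a strict map $c$ of fine monoids, proved in the paper from scratch), so that \eqref{inducedetalemap} becomes a base change of the \'etale map $\AAA(P') \to \AAA(Q') \times_{\AAA(Q)} \AAA(P)$. You instead compare both sides to $\Log(Y)$: your opening observation that strict maps of charts do not change the associated log structure (units of $Q'$ automatically land in $A^*$, so the pushout description $Q' = Q \oplus_{Q^*} (Q')^*$ gives $(Q')^a_{t'} \cong Q^a_t$, and likewise for $b$) is correct and shows $Q' \to \M_Y(Y)$ is again a chart; then both projections to $\Log(Y)$ are representable \'etale by Theorem~\ref{thm:etalecover}, the triangle $2$-commutes by the computation $[q,\zeta] \mapsto [h'a(q), u^{-1}(a(q))\zeta] = [bh(q), (ua)^{-1}(q)\zeta]$ under the identification $P^a_{xb} \cong (P')^a_x$, and the graph/diagonal form of two-out-of-three finishes (the locally separated diagonal of $\Log(Y)$ is not actually needed; what matters is that the diagonal of a representable \'etale map is an open immersion). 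What this route costs is the stated purpose of \S\ref{section:independenceofcharts}: the paper wants chart-independence proved ``directly, without making any use of Olsson's stacks,'' which is what later justifies formal log flatness for log ringed topoi where $\Log(Y)$ is unavailable, so it deliberately substitutes the self-contained Theorem~\ref{thm:AAAhetale} for any appeal to Theorem~\ref{thm:etalecover}; your argument is not circular, but it reintroduces exactly the dependence on $\Log(Y)$ the section is engineered to avoid. For the flatness claim the two arguments are close: both reduce to flatness of $\ZZ[b]$; the paper factors $\u{\AA}(b)$ through $F''$ and needs Lemma~\ref{lem:etaleflatness} to extract flatness of $\u{\AA}(P') \to F''$, while your factorization (\'etale $\alpha$, a base change of $\u{Y} \to \M(\u{Y},a,t)$, followed by flat $\beta$, a base change of $\u{\AA}(b)$) avoids that lemma since ``\'etale then flat'' is flat outright, and your self-application of the first statement with $P=P'=Q'$, $b=\Id$ is legitimate (take $C=A$, $s'=t'$ to supply the auxiliary square).
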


\begin{proof} We consider a big $2$-cartesian diagram which defines $F$, $F'$, $F''$ (only the bottom three rows are relevant for the first statement): \bne{hugecartesiandiagram} & \xym@C-10pt@R-10pt{ \u{Y} \times_{\AAA(Q')} \u{\AA}(P) \ar[rr] \ar[d]_{flat} & & \u{\AA}(P') \ar[d]^{flat} \\ \u{Y} \times_{\AAA(Q')} \u{\AA}(P) \ar[dd]_{f.f.} \ar[rr] \ar[rd] & & F'' \ar[dd]|\hole_<<<<{f.f.} \ar[rd]^{\acute{e}t} \\ & \u{Y} \times_{\AAA(Q)} \u{\AA}(P) \ar[dd]_<<<<{f.f.} \ar[rr] & & F' \ar[r]^-{\acute{e}t} \ar[dd]^{f.f.} & \u{\AA}(P) \ar[dd]^{f.f.} \\ \M(\u{Y},h',t') \ar[rr]|(.557)\hole \ar[rd]_{\acute{e}t} & & \AAA(P') \ar[rd]^{\acute{e}t} \\ & \M(\u{Y},h,t) \ar[rr] \ar[d] & & F \ar[r]^-{\acute{e}t} \ar[d] & \AAA(P) \ar[d] \\ & \u{Y} \ar[rr] & & \AAA(Q') \ar[r]^-{\acute{e}t} & \AAA(Q) } \ene where the indicated maps are (representable) \'etale, flat, or faithfully flat.  This requires some justification: First, the map $\AAA(Q') \to \AAA(Q)$ is (representable) \'etale by Theorem~\ref{thm:AAAhetale} because we assume $a$ is strict; the two maps above it are then \'etale by base change.  The map $\AAA(P') \to F$ is (representable) \'etale by the two-out-of-three property because the composition $\AAA(P') \to \AAA(P)$ is \'etale for the same reason that $\AAA(Q') \to \AAA(Q)$ is \'etale; then \eqref{inducedetalemap} and $F'' \to F'$ are \'etale by base change.  For the ``furthermore," the assumption that $b$ is flat means $\u{\AA}(b) : \u{\AA}(P') \to \u{\AA}(P)$ is flat (because $b$ flat means $\ZZ[P] \to \ZZ[P']$ is flat by Theorem~\ref{thm:freetofree}).  Since the flat map $\u{\AA}(b)$ factors as $\u{\AA}(P') \to F''$, followed by the \'etale map $F'' \to \u{\AA}(P)$, the map $\u{\AA}(P') \to F''$ is flat by Lemma~\ref{lem:etaleflatness}.  The desired result is now immediate from stability of flatness under composition and base change.     \end{proof}

\begin{lem} \label{lem:fineapproximation} Given a commutative square of integral monoids \bne{square} & \xym{ P \ar[r]^b & P' \\ Q  \ar[u]^h \ar[r]^a & Q' \ar[u]_{h'}  } \ene where $a$ and $b$ are strict, there exists a filtered partially ordered set $I$ and diagrams \bne{squarei} & \xym{ P_i \ar[r]^{b_i} & P'_i \\ Q_i  \ar[u]^{h_i} \ar[r]^{a_i} & Q'_i \ar[u]_{h'_i}  } \ene of fine monoids lying over \eqref{square}, natural in $i \in I$, such that: \begin{enumerate} \item The maps $a_i$ and $b_i$ are strict for every $i \in I$.  \item The maps $P_i \to P$, $Q_i \to Q$, $P_i' \to P_i$, and $Q_i' \to Q_i$ are injective for every $i \in I$.  \item The direct limit of the squares \eqref{squarei} is \eqref{square}. \end{enumerate}  Furthermore:  \begin{enumerate} \item \label{furthermore1} If $Q$ and $P$ are fine, then we can take $Q_i=Q$ and $P_i=P$ for all $i$.  \item \label{furthermore2} If $b$ is injective, we can take each $b_i$ injective and hence flat (c.f.\ Lemma~\ref{lem:strictflat}). \end{enumerate}  \end{lem}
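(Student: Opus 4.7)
The plan is to build the system from well-chosen finitely generated submonoids that inherit the strictness of $a$ and $b$.  I would first establish the auxiliary lemma: any finitely generated submonoid $M$ of an integral monoid $N$ is contained in a finitely generated submonoid $M^{\rm g} := M \cdot (M^{\rm gp} \cap N^*)$ satisfying the \emph{good} condition $(M^{\rm g})^{\rm gp} \cap N^* = (M^{\rm g})^*$.  This is finitely generated because $M^{\rm gp} \cap N^*$ is a subgroup of the finitely generated abelian group $M^{\rm gp}$, hence itself finitely generated; the good condition follows from $(M^{\rm g})^{\rm gp} = M^{\rm gp}$ together with a direct calculation, using integrality, that every element of $M^{\rm g}$ lying in $N^*$ is actually in $M^{\rm gp} \cap N^*$.

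I would then take $I$ to be the set of quadruples $(Q_i, P_i, U_i, V_i)$ where $Q_i \subseteq Q$ and $P_i \subseteq P$ are good fine submonoids with $h(Q_i) \subseteq P_i$, and $U_i \subseteq (Q')^*$, $V_i \subseteq (P')^*$ are finitely generated subgroups containing $a(Q_i^*)$ and $b(P_i^*) \cup h'(U_i)$ respectively, ordered componentwise by inclusion.  Set $Q'_i := a(Q_i) \cdot U_i \subseteq Q'$ and $P'_i := b(P_i) \cdot V_i \subseteq P'$, with $h_i$, $h'_i$, $a_i$, $b_i$ the restrictions of the outer maps.  Injectivity of the four inclusions into the outer square is automatic; filteredness of $I$ is immediate from the good-submonoid construction applied to unions of $Q$- or $P$-parts and from taking sums of $U$- and $V$-parts; the colimit of the system is the original square because strictness of $a$ means every $q' \in Q'$ has the form $a(q) u$ for some $q \in Q$, $u \in (Q')^*$.

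The crucial check is strictness of $a_i : Q_i \to Q'_i$, treated identically for $b_i$.  An invertible $a(q) u \in Q'_i$ has an inverse $a(q') u'$, giving $a(qq') \in (Q')^*$; the strictness of $a$ gives $a^{-1}((Q')^*) = Q^*$, and integrality of $Q$ forces $q, q' \in Q^*$, so $(Q'_i)^* = a(Q_i^*) \cdot U_i$.  For injectivity of $\ov{a_i}$: from $a(q_1) = a(q_2) a(v) u$ with $v \in Q_i^*$, $u \in U_i$, the element $r := q_1 q_2^{-1} v^{-1} \in Q_i^{\rm gp}$ satisfies $a^{\rm gp}(r) = u \in (Q')^*$, so strictness of $a$ puts $r \in Q^*$ and then the good condition on $Q_i$ puts $r \in Q_i^{\rm gp} \cap Q^* = Q_i^*$, giving $\ov{q_1} = \ov{q_2}$ in $\ov{Q_i}$.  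Furthermore \eqref{furthermore1} is handled by taking $Q_i = Q$, $P_i = P$ for all $i$ (trivially good when fine) and indexing by $(U_i, V_i)$ alone; \eqref{furthermore2} is automatic since each $b_i$ is a restriction of the injective $b$ to submonoids, with flatness then coming from Lemma~\ref{lem:strictflat}.  The main obstacle is the strictness verification: a naive finitely generated submonoid $Q_i \subseteq Q$ typically admits ``ghost units'' in $Q_i^{\rm gp} \cap Q^* \setminus Q_i^*$ that would spoil strictness into its image in $Q'$, and the good-submonoid construction is exactly what is needed to eliminate them.
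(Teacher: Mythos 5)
Your proposal is correct, but it takes a genuinely different route from the paper. The paper never works with submonoids of $Q'$ and $P'$: it indexes by quadruples $(Q_i,P_i,G_i,H_i)$ with $Q_i \subseteq Q$, $P_i \subseteq P$ arbitrary fine submonoids and $G_i \subseteq (Q')^*$, $H_i \subseteq (P')^*$ finitely generated subgroups (containing the images of the relevant units), and sets $Q_i' := Q_i \oplus_{Q_i^*} G_i$, $P_i' := P_i \oplus_{P_i^*} H_i$ as abstract amalgamated sums; strictness of $a_i$ and $b_i$ is then automatic from Lemma~\ref{lem:strict} (they are pushouts of group homomorphisms), and the identification of the colimit comes from $Q' = Q \oplus_{Q^*} (Q')^*$ (Lemma~\ref{lem:strict} again) together with the fact that such pushouts commute with filtered colimits, so no saturation condition on $Q_i$, $P_i$ is ever needed. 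You instead realize $Q_i'$, $P_i'$ as honest submonoids $a(Q_i)\cdot U_i \subseteq Q'$, $b(P_i)\cdot V_i \subseteq P'$, which forces the ``good'' (unit-saturated) submonoid device and a by-hand strictness check; that check is correct, though note that your computation of $(Q_i')^*$ also quietly uses goodness of $Q_i$ to pass from $q \in Q_i \cap Q^*$ to $q \in Q_i^*$, exactly as in your $\ov{a}_i$-injectivity step, and your filteredness argument should record that after goodifying unions one must also enlarge $U_i$, $V_i$ to absorb the new units and their $h'$-images. What your route buys: the inclusions $Q_i' \into Q'$, $P_i' \into P'$ asserted in condition (2) are tautological, whereas the paper's pushout $Q_i \oplus_{Q_i^*} G_i \to Q'$ need not be injective without a goodness-type hypothesis (take $Q=Q'=\ZZ$, $a=\Id$, $Q_i=\NN$, $G_i=\ZZ$), and both ``furthermore'' clauses are immediate in your setup since each $b_i$ is a restriction of $b$. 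What the paper's route buys: strictness and the colimit statement come for free from Lemma~\ref{lem:strict}, with no auxiliary saturation lemma and a lighter filteredness verification.
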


\begin{proof} Let $I$ be the set of quadruples $(Q_i,P_i,G_i,H_i)$ where $Q_i \subseteq Q$ is a finitely generated (equivalently fine) submonoid of $Q$, $P_i \subseteq P$ is a fine submonoid of $P$, $G_i$ is a finitely generated subgroup of $(Q')^*$ containing $h(Q_i^*)$, and $H_i$ is a finitely generated subgroup of $(P')^*$ containing $h'(G_i)$ and $b(P_i^*)$.  For such a quadruple $i=(Q_i,P_i,G_i,H_i)$, we set \be Q_i' & := & Q_i \oplus_{Q_i^*} G_i \\ P_i' & := & P_i \oplus_{P_i^*} H_i. \ee  The natural maps yield a diagram \eqref{squarei} lying over \eqref{square} where $a_i$ and $b_i$ are strict by Lemma~\ref{lem:strict} because they are pushouts of the group homomorphisms $Q_i^* \to G_i$ and $P_i^* \to H_i$ respectively.  If we order $I$ by coordinate-wise inclusion, then it is clear that $I$ is filtered and the diagrams \eqref{squarei} are natural in $i \in I$.  Clearly the limit of all our quadruples is $(Q,P,(Q')^*,(P')^*)$, so the limit of the diagrams \eqref{squarei} is \eqref{square} because we have $Q' = Q \oplus_{Q^*} (Q')^*$ and $P' = P \oplus_{P^*} (P')^*$ by strictness of $a$ and $b$ (Lemma~\ref{lem:strict}).  For the first furthermore, just note that if $Q$ and $P$ are already fine, we can replace $I$ with the cofinal subset consisting of those $i = (Q_i,P_i,G_i,H_i)$ where $Q_i=Q$ and $P_i-P$.  For the second furthermore, note that by construction, the map $b_i$ is a pushout of the injective (hence flat) map of groups $b : P_i^* \to H_i^*$.  \end{proof}

\begin{thm} \label{thm:independenceofcharts} Suppose $P$, $P'$, $Q$, and $Q'$ are integral (but not necessarily finitely generated) monoids, $f : A \to C$ is a ring homomorphism, and we have a commutative diagram \eqref{comparisondiagram} as in Lemma~\ref{lem:independenceofcharts}.  Assume that $a$ (resp.\ $b$) induces an isomorphism $Q_t^a \cong (Q')_{t'}^a$ (resp.\ $P_s^a \cong (P')_{s'}^a$) on associated log structures on $A$ (resp.\ $B$).  Let \bne{ringdiagram} & \xym{ A(h',t') \ar[r] & C[(P')^{\rm gp}] \\ A(h,t) \ar[u] \ar[r] & C[P] \ar[u] } \ene be the resulting commutative diagram of rings, where the horizontal arrows are the maps \eqref{AhttoCP}.  Let $M$ be a $C$-module.  Consider the following conditions: \newline \quad {\bf P}: \; $M[P^{\rm gp}]$ is flat over $A(h,t)$. \newline \quad {\bf P}$'$: \; $M[(P')^{\rm gp}]$ is flat over $A(h',t')$.  \newline Then: \begin{enumerate} \item \label{st1} If $Q' = (t^{-1}A^*)^{-1} Q$ and $P' = (s^{-1}C^*)^{-1} P$,  then we have ${\bf P}$ iff ${\bf P}'$.  \item \label{st2} If $Q$, $Q'$, $P$, $P'$ are fine, then we have ${\bf P}$ iff ${\bf P}'$. \item \label{st3} If $Q$ and $P$ are fine, then we have ${\bf P}$ implies ${\bf P}'$ and we have the converse if, furthermore, $b$ is injective. \end{enumerate} \end{thm}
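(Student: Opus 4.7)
The plan is to treat statement (2) as the core case via the representable \'etale map of Lemma~\ref{lem:independenceofcharts}, handle statement (1) by a direct ring-theoretic computation showing that $A(h,t) \to A(h',t')$ is a localization, and reduce statement (3) to (2) via the fine approximation of Lemma~\ref{lem:fineapproximation} together with faithfully flat descent for the converse direction.

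For statement (2), in the fully fine case I interpret \eqref{comparisondiagram} as providing two global charts $(h,t)$ and $(h',t')$ for two maps of fine log schemes with common underlying ring map $f : A \to C$. Lemma~\ref{lem:independenceofcharts} then says the induced map $\M(\u{Y},h',t') \to \M(\u{Y},h,t)$ is representable \'etale. The equivalence $\eqref{cc3} \iff \eqref{cc4}$ in Theorem~\ref{thm:chartcriteria} uses only the $\GG(P)$-torsor trivialization in \eqref{2cart} (not the stack $\Log(Y)$), so it identifies ${\bf P}$ (respectively ${\bf P}'$) with flatness of $M^\sim$ over $\M(\u{Y},h,t)$ (respectively $\M(\u{Y},h',t')$) along the natural factorization of $\u{X}$ into these stacks. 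Since these factorizations agree after composition with the \'etale map, Lemma~\ref{lem:etaleflatness} yields ${\bf P} \iff {\bf P}'$.

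For statement (1), observe that $(Q')^{\rm gp} = Q^{\rm gp}$ and $(P')^{\rm gp} = P^{\rm gp}$, so $M[P^{\rm gp}] = M[(P')^{\rm gp}]$. A direct calculation in $A[Q^{\rm gp} \oplus P]$ shows that the relations $t'(q')[q',0] - [0,h'(q')]$ for $q' \in Q' \setminus Q$ follow from $I(h,t)$ once one inverts $t(s) \in A^*$ and $[0,h(s)]$ for $s \in S_Q := t^{-1}A^* \cap Q$; invertibility of $[0,h(s)]$ is already present in $A(h,t)$ via the relation $t(s)[s,0] = [0,h(s)]$. Hence $A(h',t') = A(h,t)[[0,s]^{-1} : s \in S_P]$ is a localization of $A(h,t)$ at $S_P := s^{-1}C^* \cap P$. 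Since $s(s) \in C^*$ for $s \in S_P$, the ring map \eqref{AhttoCP} factors uniquely through $A(h',t')$, making $M[P^{\rm gp}]$ canonically an $A(h',t')$-module; for a module already living over a localization, flatness over the original ring and over the localization coincide, giving ${\bf P} \iff {\bf P}'$.

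For statement (3), the forward direction uses Lemma~\ref{lem:fineapproximation}\eqref{furthermore1} (taking $P_i = P$, $Q_i = Q$) to present $(Q',P',a,b,h')$ as a filtered colimit of fine data $(Q'_i,P'_i,a_i,b_i,h'_i)$. Applying statement (2) at each stage converts ${\bf P}$ into flatness of $M[(P'_i)^{\rm gp}]$ over $A(h'_i,t'_i)$; passing to the filtered colimit (which commutes with tensor products) yields ${\bf P}'$. For the converse under the hypothesis that $b$ is injective, I further invoke Lemma~\ref{lem:fineapproximation}\eqref{furthermore2} to arrange each $b_i$ injective. Since $b_i$ is then strict and injective, Lemma~\ref{lem:strict} gives $P'_i = P \oplus_{P^*} H_i$ with $P^* \hookrightarrow H_i$ an inclusion of finitely generated abelian groups, and similarly for $Q'_i$; this exhibits $A(h,t) \to A(h'_i,t'_i)$ as a base change along the free, hence faithfully flat, ring extensions $\ZZ[P^*] \hookrightarrow \ZZ[H_i]$ and $\ZZ[Q^*] \hookrightarrow \ZZ[G_i]$, so it is itself faithfully flat. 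In the filtered colimit, $A(h,t) \to A(h',t')$ remains faithfully flat, and one verifies the identification $M[(P')^{\rm gp}] = M[P^{\rm gp}] \otimes_{A(h,t)} A(h',t')$; standard faithfully flat descent of flatness then delivers ${\bf P}' \Rightarrow {\bf P}$. The main obstacle will be the last step: carefully tracking the simultaneous changes in $Q$ and $P$ to establish faithful flatness of $A(h,t) \to A(h',t')$ and the tensor-product identification on $M[(P')^{\rm gp}]$, and verifying that faithful flatness is preserved in the passage to the filtered colimit.
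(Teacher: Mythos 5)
Your part \eqref{st1} is essentially the paper's argument (the verification that the extra relations for $q'\in Q'$ are consequences of $I(h,t)$ after inverting, so that $A(h',t')$ is a localization of $A(h,t)$), and your overall architecture for \eqref{st2} and \eqref{st3} (Lemma~\ref{lem:independenceofcharts} plus Lemma~\ref{lem:etaleflatness}, then fine approximation and filtered colimits) matches the paper. But there is a genuine gap in \eqref{st2} and \eqref{st3}: both Lemma~\ref{lem:independenceofcharts} and Lemma~\ref{lem:fineapproximation} (and your use of Lemma~\ref{lem:strict} to write $P'_i = P\oplus_{P^*}H_i$) require the maps $a$ and $b$ to be \emph{strict}, whereas the theorem only assumes they induce isomorphisms on associated log structures. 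These are genuinely weaker hypotheses: e.g.\ $a : \NN\to\ZZ$ with $t$ landing in $A^*$ induces an isomorphism of (trivial) log structures but is not strict, so you cannot invoke those lemmas as written. The paper bridges this by first using part \eqref{st1} to replace $Q,P,Q',P'$ by their localizations $F^{-1}Q$, $G^{-1}P$, etc.\ at the preimages of units; after that localization the characteristic monoids become the sharpenings, so ``isomorphism on associated log structures'' becomes strictness, and only then are Lemmas~\ref{lem:independenceofcharts} and \ref{lem:fineapproximation} applicable. Your proposal omits this reduction entirely, and it is needed in both \eqref{st2} and \eqref{st3} (in \eqref{st3} one must also check, as the paper notes, that the localization trick does not destroy injectivity of $b$).

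In the converse direction of \eqref{st3} you also deviate from the paper in a way that is not correct as stated: $A(h,t)\to A(h'_i,t'_i)$ is \emph{not} a base change along both $\ZZ[Q^*]\hookrightarrow\ZZ[G_i]$ and $\ZZ[P^*]\hookrightarrow\ZZ[H_i]$, because the new relations $t'(g)[g,0]=[0,h'(g)]$ for $g\in G_i$ collapse the adjoined $G_i$-variables onto units already present (one can show $A(h'_i,t'_i)\cong A(h,t)\otimes_{\ZZ[P^*]}\ZZ[H_i]$, but that computation, together with the identification $M[(P')^{\rm gp}] = M[P^{\rm gp}]\otimes_{A(h,t)}A(h',t')$ and the passage to the colimit, is precisely the work you defer as ``the main obstacle''). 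The paper's route avoids all of this: it only needs \emph{flatness} of $A(h,t)\to A(h_i,t_i)$ (the ``furthermore'' of Lemma~\ref{lem:independenceofcharts}, with $b_i$ injective hence flat by Lemma~\ref{lem:strictflat}), faithful flatness of $C[P^{\rm gp}]\to C[(P')^{\rm gp}]$ (Corollary~\ref{cor:AGtoAHflat}, using $b^{\rm gp}$ injective), and the evident identification $M[(P')^{\rm gp}] = M[P^{\rm gp}]\otimes_{C[P^{\rm gp}]}C[(P')^{\rm gp}]$, after which Lemma~\ref{lem:flatness2} gives ${\bf P}'\Rightarrow{\bf P}$ directly. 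I would recommend restructuring your \eqref{st3} converse along those lines rather than trying to establish faithful flatness of $A(h,t)\to A(h',t')$.
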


\begin{proof} The submonoid $F := t^{-1} A^*$ (resp.\ $G := s^{-1} C^*$) of $Q$ (resp.\ $P$) is a face (a submonoid whose complement is a prime ideal).  We note for usage in the proof of \eqref{st2} that this implies $F$ and $G$ are fine when $Q$ and $P$ are fine,\footnote{If $P$ is any monoid, $F$ is any face, and $\Sigma \subseteq P$ generates $P$, then $\Sigma \cap F$ generates $F$.} and hence that $F^{-1}P$ and $G^{-1} Q$ are fine.  We also note that $Q/F = \ov{F^{-1}Q}$ is the characteristic monoid of the log structure $Q_t^a$ associated to $a : Q \to A$.

\noindent \eqref{st1}: Here we suppress notation for the maps $a$ and $b$ and abusively write $t'=t$ and $h'=h$.  In this case we have $Q^{\rm gp} = (Q')^{\rm gp}$ and $P^{\rm gp} = (P')^{\rm gp}$.  Consider the commutative diagram \bne{ff} & \xym{ A[Q^{\rm gp} \oplus P] \ar[r] \ar[d] & A[(Q')^{\rm gp} \oplus P'] \ar[d] \\ A(h,t) \ar[r] & A(h',t') } \ene where the vertical maps are the quotients by the ideals $I(h,t)$ and $I(h',t')$ which define $A(h,t)$, $A(h',t')$ (\S\ref{section:setup}).  

I claim that \eqref{ff} is a a pushout.  We need to show that the ideal $I(h',q')$ is generated by the image of $I(h,q)$.  Certainly the image of $I(h,q)$ is contained in $I(h',q')$, so we need to show that for $q' \in Q'$, the equality \bne{fff0} t(q')[q',0] & = & [0,h(q')] \ene in $A(h',t')$ already holds in $A[(Q')^{\rm gp} \oplus P']$ modulo (the image of) $I(h,t)$.   We can write $q' = q-f$ for some $q \in Q$, $f \in F \subseteq Q$.  By definition of $I(h,t)$, we have \bne{fff1} t(q)[q,0] & = & [0,h(q)] \\ \label{fff2} t(f)[f,0] & = & [0,h(f)] \ene in $A[(Q')^{\rm gp} \oplus P']$ modulo $I(h,t)$.  But since $f \in F$, both sides of \eqref{fff2} are units in $A[(Q')^{\rm gp} \oplus P']$, so we also have \bne{fff3} t(f)^{-1}[-f,0] & = & [0,-h(f)] \ene in  $A[(Q')^{\rm gp} \oplus P']$ modulo $I(h,t)$.  But the product of \eqref{fff3} and \eqref{fff1} is \eqref{fff0}, so we have \eqref{fff0} modulo $I(h,t)$ as desired.  

Since $P \to P'$ is a localization (and $Q^{\rm gp} = (Q')^{\rm gp}$), the top horizontal arrow in \eqref{ff} is flat, hence so is the bottom horizontal arrow because the square \eqref{ff} is a pushout.  This shows that the left vertical arrow in \eqref{ringdiagram} is flat, and the right vertical arrow there is an isomorphism (hence in particular faithfully flat) so {\bf P} and {\bf P}$'$ are equivalent by Lemma~\ref{lem:flatness2}.

\noindent \eqref{st2}: By replacing $Q$, $P$ (resp.\ $Q'$, $P'$) with $F^{-1}Q$, $G^{-1}P$ (resp.\ $(F')^{-1}Q'$, $(G')^{-1} P'$) and using the result of \eqref{st1}, we can assume that $a$ and $b$ are strict because the condition that $a$ and $b$ induce isomorphisms on associated log structures is equivalent to the condition that $a$ and $b$ induces isomorphisms on characteristic monoids of those log structures, but once we localize as described, those characteristic monoids are just the usual sharpenings and the induced maps between them are just $\ov{a}$ and $\ov{b}$.  Note that the discussion before \eqref{st1} shows that this localization trick doesn't destroy the hypothesis that $Q$, $P$, $Q'$, $P'$ are fine.  We saw in the course of proving \eqref{thm:chartcriteria} that {\bf P} (resp.\ {\bf P}$'$) is equivalent to saying that the quasi-coherent sheaf $M^{\sim}$ on $\Spec C$ is flat over $\M(\u{Y},h,t)$ (resp.\ $\M(\u{Y},h',t')$), where $\u{Y} := \Spec A$.  But the map \bne{inducedetalemap2} \M(\u{Y},h',t')  & \to & \M(\u{Y},h,t) \ene is \'etale by Lemma~\ref{lem:independenceofcharts} because $a$ and $b$ are strict, so these latter two conditions are equivalent by Lemma~\ref{lem:etaleflatness}.

\noindent \eqref{st3} By the result of \eqref{st1}, we can reduce to the case where $a$ and $b$ are strict, as we did in Step 2 (the localization trick won't destroy the hypothesis that $b$ is injective).  By the first ``furthermore" in Lemma~\ref{lem:fineapproximation} we can then find a filtered partially ordered set $I$ and diagrams of integral monoids \bne{comparisondiagrami} & \xym{ P \ar@/^2pc/[rr]^b \ar[r]^{b_i} & P_i \ar[r] & P' \\ Q \ar@/_2pc/[rr]_a \ar[u]^{h} \ar[r]^{a_i} & Q_i \ar[u]_{h_i} \ar[r] & Q' \ar[u]_{h'} } \ene natural in $i \in I$ such that the monoids $Q_i$, $P_i$ are fine, the maps $a_i$ and $b_i$ are strict, the direct limit of the $Q_i$ is $Q'$ and the direct limit of the $P_i$ is $P'$.  Let $t_i := t'|_{Q_i} : Q_i \to A$ be the induced map.  The diagram \eqref{ringdiagram} factors as \bne{ringdiagrami} & \xym{ A(h',t') \ar[r] & C[(P')^{\rm gp}] \\ A(h_i,t_i) \ar[u] \ar[r] & C[P_i^{\rm gp}] \ar[u]  \\ A(h,t) \ar[u] \ar[r] & C[P^{\rm gp}] \ar[u] } \ene naturally in $i \in I$.  The relevant functors ($A( \slot, \slot)$, $C[\slot]$, and $M[ \slot ]$) clearly commute with filtered direct limits, so the top row of \eqref{ringdiagrami} is the filtered direct limit of the middle rows and $M[(P')^{\rm gp}]$ is the filtered direct limit of the $M[P_i^{\rm gp}]$. 

To see that ${\bf P}$ implies ${\bf P}'$, note that for each $i \in I$, part \eqref{st2} says that the condition ${\bf P}$ implies the following condition ${\bf P}_i$:  The $C[P_i^{\rm gp}]$-module $M[P_i^{\rm gp}]$ is flat over $A(h_i,t_i)$.  But then we have ${\bf P}'$ because a filtered direct limit of flats is flat (note that we are using the variant of this fact where the ring is also varying in the filtered direct limit system).

To see that ${\bf P}'$ implies ${\bf P}$ when $b$ is injective, we can assume (by the second ``furthermore" in Lemma~\ref{lem:fineapproximation}) the $b_i$ were taken flat / injective.  The maps $A(h,t) \to A(h_i,t_i)$ are then flat by Lemma~\ref{lem:independenceofcharts}, hence their filtered limit $A(h,t) \to A(h',t')$ is also flat.  But $b$ injective implies $b^{\rm gp} : P^{\rm gp} \to (P')^{\rm gp}$ injective, which implies $C[P^{\rm gp}] \to C[(P')^{\rm gp}]$ faithfully flat (Corollary~\ref{cor:AGtoAHflat}), so we can conclude ${\bf P}'$ implies ${\bf P}$ using Lemma~\ref{lem:flatness2} because \be M[(P')^{\rm gp}] & = & M[P^{\rm gp}] \otimes_{C[P^{\rm gp}]} C[(P')^{\rm gp}]. \ee \end{proof}

\begin{cor} \label{cor:independenceofcharts} Fix a map of fine log rings \bne{integrallogrings} & \xym{ \M_C \ar[r] & C \\ \M_A \ar[u] \ar[r] & A \ar[u] } \ene and a $C$-module $M$.  For a chart $K$ for this map of fine log rings as below \bne{CHART}  \xym{ P(K) \ar[r] & \M_C \ar[r] & C \\ Q(K) \ar[u]^h \ar[r] & \M_A \ar[u] \ar[r] & A \ar[u] } \ene with $Q(K)$ and $P(K)$ integral but not necessarily fine, consider the condition:

${\bf P}(K)$ \; := \; $M[P(K)^{\rm gp}]$ is flat over $A(h,t)$.

\noindent Then ${\bf P}(K)$ holds for one such chart iff it holds for all such charts (including the terminal chart $T$ with $Q(T)=\M_A$ and $P(T) = \M_C$). \end{cor}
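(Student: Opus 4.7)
The plan is to reduce the chart-independence claim to two applications of Theorem~\ref{thm:independenceofcharts}, by inserting an auxiliary intermediate chart between any given chart $K$ and the terminal chart $T$. It suffices to prove ${\bf P}(K) \Leftrightarrow {\bf P}(T)$ for every $K$, since this then gives the equivalence between any two charts by transitivity.

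Given a chart $K$, I would introduce the \emph{image chart} $\tilde K$ defined by $Q(\tilde K) := \Im(Q(K) \to \M_A) \subseteq \M_A$ and $P(\tilde K) := \Im(P(K) \to \M_C) \subseteq \M_C$, with induced structure map $\tilde h \colon Q(\tilde K) \to P(\tilde K)$ (well-defined because $Q(K) \to P(K) \to \M_C$ agrees with $Q(K) \to \M_A \to \M_C$ by commutativity of the original chart diagram). Since $\M_A$ and $\M_C$ are integral, so are $Q(\tilde K)$ and $P(\tilde K)$, and they are fine whenever $Q(K), P(K)$ are. The fact that $\tilde K$ is still a chart of our map of fine log rings---i.e.\ it induces the same log structures $\M_A, \M_C$---is a standard adjunction check: the associated-log-structure functor is a left adjoint, and a surjection of prelog structures compatible with a fixed log-structure target does not alter the associated log structure.

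The first key step is then to apply Theorem~\ref{thm:independenceofcharts}\eqref{st2} to the comparison $K \to \tilde K$ (valid for fine $K$, since then all four monoids involved are fine) to obtain ${\bf P}(K) \Leftrightarrow {\bf P}(\tilde K)$; the required isomorphism-of-associated-log-structures hypothesis is met because both $K$ and $\tilde K$ are charts, so the induced maps on associated log structures are identities. The second key step is to apply Theorem~\ref{thm:independenceofcharts}\eqref{st3} to the comparison $\tilde K \to T$: here $Q(\tilde K), P(\tilde K)$ are fine while $Q(T) = \M_A, P(T) = \M_C$ are only integral, but the additional hypothesis for the converse implication---that $b \colon P(\tilde K) \hookrightarrow \M_C$ be injective---holds by construction, yielding ${\bf P}(\tilde K) \Leftrightarrow {\bf P}(T)$.

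For a general chart $K$ where $Q(K), P(K)$ are only integral rather than fine, the plan is to mimic the filtered direct limit argument already used in the proof of Theorem~\ref{thm:independenceofcharts}\eqref{st3}: approximate $K$ by its fine sub-diagrams via a variant of Lemma~\ref{lem:fineapproximation}, observe that $M[P^{\rm gp}]$ and $A(h,t)$ both commute with filtered direct limits, and pass to the limit from the fine-chart equivalence. The hardest step is the adjunction check that $\tilde K$ is genuinely a chart---routine but requiring care with units and faces in the construction of the associated log structure---together with the bookkeeping needed to set up the fine approximations compatibly on both $A$ and $C$ in the integral case.
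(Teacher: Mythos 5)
Your treatment of the fine case is correct and is only a mild rearrangement of the paper's argument: the paper compares two arbitrary charts by producing fine charts mapping to each and chaining Theorem~\ref{thm:independenceofcharts}\eqref{st3}, \eqref{st2}, \eqref{st3}, while you route everything through the image chart and the terminal chart $T$; the fact that the image of a chart is again a chart is the same fact the paper uses when it replaces a chart by its image, and your uses of \eqref{st2} and \eqref{st3} there are legitimate.

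The gap is in your reduction of a general chart $K$ with merely integral monoids to the fine case. The fine sub-diagrams of $K$ supplied by (a variant of) Lemma~\ref{lem:fineapproximation} are just fine submonoids $Q_i \subseteq Q(K)$, $P_i \subseteq P(K)$; they are in general \emph{not} charts of the given map of fine log rings. Indeed, writing $F = t^{-1}(A^*)$, the characteristic of the log structure associated to $Q_i \to A$ is $Q_i/(Q_i \cap F)$, and the map $Q_i/(Q_i\cap F) \to Q(K)/F = \ov{\M}_A$ can fail to be injective, because two elements of $Q_i$ may be identified in $Q(K)/F$ only through elements of $F$ lying in $Q(K)\setminus Q_i$; so even a fine submonoid surjecting onto $\ov{\M}_A$ need not induce the log structure $\M_A$. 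Consequently neither your already-established fine-chart equivalence nor Theorem~\ref{thm:independenceofcharts} itself (whose hypothesis is precisely that the comparison maps induce isomorphisms on associated log structures) applies to the comparisons $K_i \to K$ or $K_i \to T$, so there is nothing to pass to the limit with. The limit argument is also one-directional: a filtered colimit of flat modules over a filtered colimit of rings is flat, which would give $(\forall i)\,{\bf P}(K_i) \Rightarrow {\bf P}(K)$, but the converse direction requires the faithfully flat descent step from the proof of \eqref{st3}, which needs an \emph{injective} comparison from a fine \emph{chart} into $K$.

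What is actually needed---and what the paper does---is to produce a genuine fine chart $L$ together with a map of charts $L \to K$, which exists because the log structures $\M_A$, $\M_C$ are fine, by the standard chart-production techniques of \cite[2.8--2.10]{KK}; after replacing $L$ by its image in $K$ to make the comparison injective, one gets ${\bf P}(K) \Leftrightarrow {\bf P}(L)$ from \eqref{st3} and then compares fine charts. Your proposal omits this chart-production step, and without it the passage from fine charts to general integral charts does not go through; the ``adjunction check'' you flag as the hardest step is in fact routine, whereas this is the genuinely missing ingredient.
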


\begin{proof} Suppose ${\bf P}(K)$ holds for some chart $K$.  Fix some other chart $K'$ and let us prove that ${\bf P}(K')$ holds.   Since the log structures are fine, by standard chart production techniques \cite[2.8-2.10]{KK}, we can find charts $L$ and $L'$ with $Q(L)$, $P(L)$, $Q(L')$, $P(L')$ fine and maps of charts $L \to K$ and $L' \to K'$.  Replacing $Q(L)$ (resp.\ $P(L)$) with its image in $Q(K)$ (resp.\ $P(K)$) (this won't destroy the property of being a chart) if necessary, we can assume $Q(L) \to Q(K)$ and $P(L) \to P(K)$ are injective and similarly for the primed charts.  Then ${\bf P}(K)$ implies ${\bf P}(L)$ (Theorem~\ref{thm:independenceofcharts}\eqref{st3}), which implies ${\bf P}(L')$ (Theorem~\ref{thm:independenceofcharts}\eqref{st2}), which implies ${\bf P}(K')$ (Theorem~\ref{thm:independenceofcharts}\eqref{st3}). \end{proof}

\begin{rem} It may be that the conditions ${\bf P}$ and ${\bf P}'$ in Theorem~\ref{thm:independenceofcharts} are always equivalent without imposing any of the finiteness conditions on the integral monoids, and it may very well be that ``fine" can be relaxed to ``integral" in Corollary~\ref{cor:independenceofcharts}, but I couldn't prove these statements, mostly because the finiteness hypotheses are so deeply entrenched that it was all I could do to extract the crucial statement Theorem~\ref{thm:independenceofcharts}\eqref{st3}. \end{rem}

\subsection{Formal log flatness}  \label{section:formallogflatness}  One thing that is a little confusing about the notion of log flatness is that log structures live on the \emph{\'etale} site of a scheme and our definition of log flatness is in terms of modules on the \emph{Zariski} site of a scheme.  Let us begin by reformulating the log flatness criterion of Theorem~\ref{thm:chartcriteria}\eqref{cc4} solely in terms of the \'etale site.  First of all, the flatness for the map of schemes there can be checked at points of $X$ and is equivalent to saying $M_x[P^{\rm gp}]$ is flat over $\O_{Y,f(x)}(h,t)$ for every $x \in X$.  But the pullback from the Zariski to the the \'etale site is faithfully flat, so we could instead check the condition at all the \emph{\'etale} points of $X$, which is equivalent to saying $M_{\ov{x}}[P^{\rm gp}]$ is flat over $\O_{Y_{\acute{e}t},f(\ov{x})}(h,t)$ for each \'etale point $\ov{x}$ of $X$.  If $x \in X$ is the corresponding Zariski point, note that \be M_{\ov{x}} & = & M_{x} \otimes_{\O_{X,x}} \O_{X_{\acute{e}t},\ov{x}} \\ & = & (M_{\acute{e}t})_{\ov{x}} \ee is the stalk of the pullback $M_{\acute{e}t}$ of $M$ from the Zariski to the \'etale site of $X$.  By Corollary~\ref{cor:independenceofcharts} we know that this latter condition does not depend on the chosen chart, and we even know that we do not have to pick a chart at all: we can just use the monoids $\M_{X,\ov{x}}$ and $\M_{Y,f(\ov{x})}$ themselves.  We summarize this discussion as:

\begin{prop} \label{prop:formallogflatness} Let $f : X \to Y$ be a map of fine log schemes, $M \in \Mod(\O_X)$.  Then $M$ is log flat over $Y$ iff $(M_{\acute{e}t})_{\ov{x}} \in \Mod(\O_{X_{\acute{e}t,\ov{x}}})$ satisfies the equivalent conditions ${\bf P}(K)$ of Corollary~\ref{cor:independenceofcharts} for the map of fine log rings $$ \xym{ \M_{X,\ov{x}} \ar[r] & \O_{X_{\acute{e}t,\ov{x}}} \\ \M_{Y,\ov{y}} \ar[u] \ar[r] & \O_{Y_{\acute{e}t},\ov{y}} \ar[u] } $$ for each \'etale point $\ov{x}$ of $X$ with image \'etale point $\ov{y} = f(\ov{x})$. \end{prop}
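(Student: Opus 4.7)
The plan is to make rigorous the sketch given in the paragraph preceding the statement. The backbone is Theorem~\ref{thm:chartcriteria}\eqref{cc4}, which reformulates log flatness in the presence of a global chart as flatness of $\pi_1^\ast M$ over the affine scheme $\u{Y}(h,t) = \Spec A(h,t)$, together with Corollary~\ref{cor:independenceofcharts}, which ensures that the resulting chart-level condition depends only on the induced map of fine log rings at the stalk.

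First I would reduce to the case of a global chart. By Lemma~\ref{lem:fppflocal}, log flatness is strict-\'etale local on $f$, and fine charts for $f$ exist strict-\'etale locally around every point, so after passing to such a cover I may assume $\u{X} = \Spec C$, $\u{Y} = \Spec A$, and $f$ carries a global chart $h : Q \to P$ as in \eqref{globalchart2}. Theorem~\ref{thm:chartcriteria}\eqref{cc4} then identifies log flatness of $M$ over $Y$ with flatness of the quasi-coherent sheaf $\pi_1^\ast M$ on $\u{X} \times \GG(P)$ over $\u{Y}(h,t)$.

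Next I would pass to stalks. Since $\ZZ[P^{\rm gp}]$ is $\ZZ$-flat, the stalk of $\pi_1^\ast M$ at a Zariski point $x \in X$ is $M_x[P^{\rm gp}]$, and flatness of a quasi-coherent sheaf over an affine target may be tested one Zariski stalk at a time. The map $\O_{X,x} \to \O_{X_{\acute{e}t},\ov{x}}$ from the Zariski stalk to the \'etale stalk at any \'etale point $\ov{x}$ over $x$ is a filtered colimit of local \'etale (hence flat) extensions, and any local flat extension of local rings is faithfully flat, so the Zariski-stalk flatness test is equivalent to the analogous test at every \'etale point $\ov{x}$ over $x$. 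After localizing $A(h,t)$ at the image of $\ov{y} = f(\ov{x})$, this becomes flatness of $(M_{\acute{e}t})_{\ov{x}}[P^{\rm gp}]$ over $\O_{Y_{\acute{e}t},\ov{y}}(h,t)$---precisely condition ${\bf P}(K)$ of Corollary~\ref{cor:independenceofcharts} for the chart $K$ induced at the stalk.

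Finally I would invoke Corollary~\ref{cor:independenceofcharts} to replace $K$ by the terminal chart $T$ using $\M_{X,\ov{x}}$ and $\M_{Y,\ov{y}}$ themselves, yielding the chart-free formulation in the statement. The main bookkeeping step is verifying that the formation of $A(h,t)$ commutes with the filtered colimit computing $\O_{Y_{\acute{e}t},\ov{y}}$---that is, that the natural map $A(h,t) \otimes_A \O_{Y_{\acute{e}t},\ov{y}} \to \O_{Y_{\acute{e}t},\ov{y}}(h,t)$ is an isomorphism---which follows from the explicit presentation of $A(h,t)$ in \S\ref{section:setup} as a quotient of $A[Q^{\rm gp} \oplus P]$ by the ideal generated by \eqref{idealgenerators}, whose generators transport functorially along the base change.
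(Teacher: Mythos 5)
Your argument is correct and follows essentially the same route as the paper's own discussion: reduce via the chart criterion of Theorem~\ref{thm:chartcriteria}\eqref{cc4} to flatness of $M_x[P^{\rm gp}]$ over the localized $A(h,t)$ at each point, pass from Zariski to \'etale stalks by faithful flatness, and then invoke Corollary~\ref{cor:independenceofcharts} to discard the chart. The only difference is that you spell out the strict-\'etale reduction to a global chart and the base-change compatibility of $A(h,t)$, which the paper leaves implicit.
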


\begin{defn} \label{defn:formallylogflat} Consider a map of integral log rings in a topos $X$ as below. \bne{logringobject} & \xym{ \M_C \ar[r]^-{\alpha_C} & C \\ \M_A \ar[r]^{\alpha_A} \ar[u]^{f^\dagger} & A \ar[u]_f} \ene Let $A(f^\dagger,\alpha_A) \to C[\M_C^{\rm gp}]$ be the map of ring objects constructed as in \S\ref{section:setup}.  Then a $C$-module $M$ is called \emph{formally log flat} over $\alpha_A : \M_A \to A$ iff $M[\M_C^{\rm gp}]$ is flat as an $A(f^\dagger,\alpha_A)$-module. \end{defn}

\begin{rem} \label{rem:formallylogflat1} One could make the definition without putting in the word ``integral," but I have no idea whether this would be a good definition or whether it would be of any use. \end{rem}

\begin{rem} \label{rem:formallylogflat2} If the topos $X$ has enough points, then we can check the flatness condition in Definition~\ref{defn:formallylogflat} at points.  Furthermore, the relevant constructions $$A(\slot,\slot) \to C[ \slot ]$$ and $M[ \slot ]$ commute with filtered direct limits, so on stalks at $x$, this flatness condition becomes the condition ${\bf P}(T)$ of Corollary~\ref{cor:independenceofcharts} for the terminal chart $T$ for the map of log rings \bne{logringobjectstalks} & \xym{ \M_{C,x} \ar[r]^-{\alpha_C} & C_x \\ \M_{A,x} \ar[r]^{\alpha_A} \ar[u]^{f^\dagger_x} & A_x \ar[u]_f} \ene given by the stalk of \eqref{logringobject} at $x$. If \eqref{logringobjectstalks} is a map of fine log rings, then Corollary~\ref{cor:independenceofcharts} says that we can check this condition ${\bf P}(T)$ by instead checking ${\bf P}(K)$ for some chart $K$ which we may choose as we see fit. \end{rem}

Suppose now that $f : X \to Y$ is a map of (integral) log schemes.  This determines a map of log ring objects \bne{schemelogringobject} & \xym@C+20pt{ \M_X \ar[r]^{\alpha_X} & \O_{X_{\acute{e}t}} \\ f^{-1}\M_{Y} \ar[u]^{f^\dagger} \ar[r]^{f^{-1} \alpha_Y} & f^{-1} \O_{Y_{\acute{e}t}} \ar[u] } \ene in the \'etale topos of $X$.  The inverse image functor $f^{-1}$ here is of course the one for the \'etale topoi.

\begin{defn} \label{defn:formallylogflatschemes}  For a map $f : X \to Y$ of (integral) log schemes and an $\O_{X_{\acute{e}t}}$-module $M$, we say that $M$ is \emph{formally log flat over} $Y$ iff $M$ is formally log flat over $f^{-1} \alpha_Y : f^{-1}\M_{Y} \to f^{-1} \O_{Y_{\acute{e}t}}$ (Definition~\ref{defn:formallylogflat}).  For an $\O_X$-module $M$, we say that $M$ is \emph{formally log flat over} $Y$ iff the pullback $M_{\acute{e}t}$ of $M$ to the \'etale topos of $X$ is formally log flat over $Y$ in the aforementioned sense. \end{defn}

\begin{thm} \label{thm:formallogflatness} For a map of fine log schemes $f : X \to Y$, an $\O_X$-module $M$ is log flat over $Y$ iff it is formally log flat over $Y$ (Definition~\ref{defn:formallylogflatschemes}). \end{thm}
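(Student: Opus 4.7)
The plan is to bridge the two notions via their common stalkwise description. Proposition~\ref{prop:formallogflatness} already characterizes log flatness of $M$ as the condition that, at every étale point $\ov{x}$ of $X$, the stalk $(M_{\acute{e}t})_{\ov{x}}$ satisfies ${\bf P}(K)$ of Corollary~\ref{cor:independenceofcharts} for \emph{some} chart $K$ of the stalk map of fine log rings at $\ov{x}$. On the other hand, formal log flatness (Definition~\ref{defn:formallylogflatschemes}) asserts a sheaf-theoretic flatness statement in the étale topos of $X$, namely that $M_{\acute{e}t}[\M_X^{\rm gp}]$ is flat over the ring object $\O_Y(f^\dagger, f^{-1}\alpha_Y)$ obtained from the construction of \S\ref{section:setup} applied to \eqref{schemelogringobject}. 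So it suffices to show that this latter sheaf-theoretic condition translates, after passing to stalks, into the condition ${\bf P}(T)$ for the terminal chart $T$ at each stalk.

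First I would note that the étale topos of $X$ has enough points, so flatness of one $R$-module over another in this topos can be checked after restricting to the stalk at every étale point $\ov{x}$. Next I would verify that the construction $A(h,t) \to C[P^{\rm gp}]$ and the formation of $M[P^{\rm gp}]$ both commute with filtered direct limits in the input data — this is a routine check, using that $\otimes$, tensor products with group algebras, and quotients by ideals generated by a specified set of relations all commute with filtered colimits. Since stalks in the étale topos are filtered colimits over étale neighborhoods, this gives
\[
\bigl( A(f^\dagger,f^{-1}\alpha_Y) \bigr)_{\ov{x}} \;=\; \O_{Y_{\acute{e}t},\ov{y}}\bigl( f^\dagger_{\ov{x}}, \alpha_{Y,\ov{y}} \bigr), \qquad \bigl( M_{\acute{e}t}[\M_X^{\rm gp}] \bigr)_{\ov{x}} \;=\; (M_{\acute{e}t})_{\ov{x}}\bigl[\M_{X,\ov{x}}^{\rm gp}\bigr],
\]
so formal log flatness becomes exactly the condition ${\bf P}(T)$ at the stalk of the log ring diagram \eqref{logringobjectstalks} for each $\ov{x}$, where $T$ denotes the terminal chart using the full monoids themselves. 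This is the content of Remark~\ref{rem:formallylogflat2}.

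To close the loop, I would invoke Corollary~\ref{cor:independenceofcharts}: since the stalk diagram \eqref{logringobjectstalks} is a map of \emph{fine} log rings (because $X$ and $Y$ are fine log schemes and fineness is preserved on stalks), the terminal chart condition ${\bf P}(T)$ is equivalent to ${\bf P}(K)$ for \emph{any} chart $K$ of the stalk, in particular for a fine chart of the sort produced locally on $f$. This matches precisely the characterization of log flatness furnished by Proposition~\ref{prop:formallogflatness}, completing the equivalence.

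The only genuinely delicate point is the compatibility of the construction $A(\slot,\slot)$ with stalks in the étale topos; once this is secured, the theorem is a mere juxtaposition of Proposition~\ref{prop:formallogflatness} (which already did the real work on the log-flatness side) and Corollary~\ref{cor:independenceofcharts} (which already did the real work on the chart-independence side), so no new ideas are needed and the proof should be quite short.
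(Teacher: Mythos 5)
Your proposal is correct and is essentially the paper's own argument: the paper's proof simply cites Remark~\ref{rem:formallylogflat2} (which contains your stalk/filtered-colimit reduction to the condition ${\bf P}(T)$, using that the \'etale topos has enough points) together with Proposition~\ref{prop:formallogflatness} (which already encodes the chart-independence via Corollary~\ref{cor:independenceofcharts}). You have merely unpacked the same two ingredients in more detail.
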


\begin{proof}  Since the \'etale topos has enough points, this is clear from Remark~\ref{rem:formallylogflat2} and Proposition~\ref{prop:formallogflatness}.  \end{proof}

\begin{rem} Theorem~\ref{thm:formallogflatness} says that formal log flatness coincides with log flatness whenever the latter is defined.  Formal log flatness is not a (completely) idle generalization of log flatness.  For example, in log geometry one often looks at the log structure \be \M_{U/X} & := & \{ f \in \O_X : f|_U \in \O_U^* \} \ee associated to an open subvariety $U$ in a variety $X$ with closed complement $Z$.  (Here $\O_X$ and $\O_U$ are meant to be the \'etale structure sheaves.)  Even if $(U,X,Z)$ looks \'etale locally like $(T,X,Z)$ for a toric variety $(T,X)$ and a $T$-invariant Cartier divisor $Z \subseteq X$, the log structure $\M_{U/X}$ need not be fine.  But I believe in this situation that all its stalks will be fine, so formal log flatness yields a fairly well-behaved log flatness notion for such ``mildly incoherent" log structures. \end{rem}

\section{Examples and Applications} \label{section:applications}  In this section we will give some examples of maps $f : X \to Y$ where we have a good understanding of log flatness (\S\S\ref{section:semistabledegenerations}-\ref{section:examplesofnodaldegenerations}).  We then define the \emph{log quotient space} (\S\ref{section:logquotientspace}), give some examples (\S\ref{section:examplesoflogquotientspaces}), and explain how log flatness plays a natural role in the theory of descent and gluing morphisms for spaces of log quotients (\S\S\ref{section:descentscholium}-\ref{section:gluing}).  In the present paper, our intention is mostly to recast some known gluing constructions in the language of log flatness, though many of the results here are certainly new and greatly generalize the known gluing constructions.  In future work, we will give a more general treatment of moduli spaces of log quotients and the gluing maps relating them \cite{G2}. 

\subsection{Semistable degenerations}  \label{section:semistabledegenerations} In practice, the most important/useful types of maps of fine log schemes are those defined below.

\begin{defn} A \emph{semistable degeneration} (resp.\ \emph{semistable degeneration with boundary}) is a log smooth map $f : X \to Y$ of fine log schemes such that at each \'etale point $x$ of $X$, the map $\ov{\M}_{Y,f(x)} \to \ov{\M}_{X,x}$ of fine monoids is a partition morphism (\S\ref{section:partitionmorphisms}) (resp.\ \emph{partition morphism with boundary}).  \end{defn}

This is a slightly more general version of Olsson's notion of \emph{essentially semi-stable} \cite[2.1]{O2}.

\begin{defn} A \emph{nodal degeneration} is a log smooth map $f : X \to Y$ of fine log schemes such that at each \'etale point $x$ of $X$, the map $\ov{\M}_{Y,f(x)} \to \ov{\M}_{X,x}$ of fine monoids is \begin{enumerate} \item an isomorphism, \item a pushout of $\NN \to \NN^2$, or \item a pushout of $0 \to \NN$. \end{enumerate}  The point $x$ is then called a \emph{smooth point}, \emph{nodal point}, or \emph{boundary point} (respectively) according to which of these possibilities occurs.  The locus $\u{D} \subseteq \u{X}$ of boundary points is called the \emph{(relative) boundary} of $f$.  (We will see in Remark~\ref{rem:relativeboundary} that $\u{D} \subseteq \u{X}$ has a natural closed subscheme structure.)  We view $D$ as a log scheme by pulling back the log structure from $Y$ (not from $X$), so that $f|D : D \to Y$ is a strict map of log schemes.  If $D = \emptyset$, then $f$ is called a \emph{nodal degeneration without boundary}. \end{defn}  

\begin{rem} The relative boundary of a nodal degeneration is a special case of a general notion of relative boundary of an arbitrary map of fine log schemes (c.f.\ \cite[2.17]{GM}, \cite{G2}).  With our definition of the log structure on $D$, the closed embedding $\u{D} \into \u{X}$ does \emph{not} lift to a map of log schemes $D \to X$. \end{rem}

Evidently nodal degenerations are a special case of semistable degenerations.  We now work out the local structure of a nodal degeneration.  These arguments are known to the experts but difficult to find in print.

\begin{lem} \label{lem:groupsmooth} Let $G \into H$ be an injective map of finitely generated abelian groups.  Let $N$ be a positive integer such that multiplication by $N$ annihilates the torsion subgroup of $H/G$.  Then $\ZZ[1/N][G] \to \ZZ[1/N][H]$ is a smooth ring map. \end{lem}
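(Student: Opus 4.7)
The plan is to factor the inclusion $G \hookrightarrow H$ as $G \subseteq G' \subseteq H$, where $G' := \{h \in H : mh \in G \text{ for some } m \geq 1\}$ is the ``saturation'' of $G$ in $H$, and to prove smoothness of each step separately after inverting $N$.

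By construction $H/G'$ is torsion-free and finitely generated, hence free of some rank $r$. The extension $0 \to G' \to H \to H/G' \to 0$ therefore splits, so $H \cong G' \oplus \ZZ^r$, and $\ZZ[H] \cong \ZZ[G'][x_1^{\pm 1}, \ldots, x_r^{\pm 1}]$ is a Laurent polynomial algebra over $\ZZ[G']$; this is smooth, and stays smooth after tensoring with $\ZZ[1/N]$. It remains to show that $\ZZ[1/N][G] \to \ZZ[1/N][G']$ is (finite) \'etale, since the composition of an \'etale map with a smooth map is smooth.

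By construction $G'/G$ is the torsion subgroup of $H/G$, hence finite; and by the hypothesis on $N$, its exponent divides $N$. The structure theorem writes $G'/G$ as a direct sum of cyclic groups $\ZZ/n_i$ with each $n_i \mid N$; pulling this decomposition back to $G'$ gives a filtration $G = G_0 \subset G_1 \subset \cdots \subset G_k = G'$ with $G_i/G_{i-1} \cong \ZZ/n_i$. Since \'etale maps are closed under composition, it suffices to handle a single cyclic step.

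So suppose $G \subseteq G'$ with $G'/G \cong \ZZ/n$ for $n \mid N$; pick $h \in G'$ lifting a generator of the quotient and set $g := nh \in G$. Writing $t := \chi^h$, the defining relation $nh = g$ of $G'$ over $G$ translates to $t^n = \chi^g$, and a direct computation of the kernel of $\ZZ[G][t^{\pm 1}] \to \ZZ[G']$ (generated, as an ideal, by $\chi^{-g}t^n - 1$, i.e.\ by $t^n - \chi^g$) gives $\ZZ[G'] \cong \ZZ[G][t]/(t^n - \chi^g)$, with $t$ automatically a unit in the quotient because $\chi^g \in \ZZ[G]^\times$. The Jacobian of the single relation is $nt^{n-1}$, which is a unit in $\ZZ[1/N][G][t]/(t^n - \chi^g)$, so this is a standard \'etale presentation over $\ZZ[1/N][G]$, completing the proof. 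The main obstacle is really just the bookkeeping in this last step, i.e.\ presenting $\ZZ[G']$ over $\ZZ[G]$ as $\ZZ[G][t]/(t^n - u)$ with $u \in \ZZ[G]^\times$; everything else reduces to the structure theorem for finitely generated abelian groups and the Jacobian criterion for \'etale-ness.
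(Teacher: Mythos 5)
Your proof is correct and follows essentially the same route as the paper: factor $G \subseteq G' \subseteq H$ through the saturation $G'$, note $H \cong G' \oplus \ZZ^r$ gives a Laurent-polynomial (hence smooth) extension, and observe that $\ZZ[1/N][G] \to \ZZ[1/N][G']$ is \'etale because it amounts to adjoining $n$-th roots of units with $n \mid N$ invertible. You merely spell out the \'etale step (cyclic filtration, presentation $\ZZ[G][t]/(t^n-\chi^g)$, Jacobian criterion) that the paper leaves as a one-line remark.
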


\begin{proof} Let $G'$ be the subgroup of $H$ consisting of those $h \in H$ such that $nh \in G$ for some positive integer $n$.  The smallest such $n$ always divides $N$, hence $\ZZ[1/N][G] \to \ZZ[1/N][G']$ is an \'etale cover (it can be presented by adjoining various $n^{\rm th}$ roots of units $[g] \in \ZZ[1/N][G]^*$ with $n$ invertible in $\ZZ[1/N]$).  The map in question factors as the latter \'etale cover followed by $\ZZ[1/N][G'] \to \ZZ[1/N][H]$, so it suffices to prove the latter is smooth.  But this is clear because we can choose a spliting $H \cong G' \oplus \ZZ^m$ since $H/G'$ is torsion-free. \end{proof}

\begin{prop} \label{prop:nodaldegenerations}  Let $f : X \to Y$ be a nodal degeneration, $x$ an \'etale point of $X$.  \begin{enumerate} \item \label{smoothpoint} If $x$ is a smooth point, then there is a (Zariski) neighborhood of $x$ in $X$ on which $f$ is strict and $\u{f}$ is smooth. \item \label{nodalpoint} If $x$ is a nodal point and $a : Q \to \M_Y(U)$ is a fixed fine chart for $Y$ on an (\'etale) neighborhood $U$ of $f(x)$, then, after possibly replacing $f$ with a neighborhood of $x$ in $f^{-1}(U) \to U$, there is a diagram $$ \xym{ \NN^2   \ar[r] & P \ar[r] & \M_X(X) \\ \NN  \ar[u]^{\Delta} \ar[r] & Q \ar[u] \ar[r]^-a &  \M_Y(Y) \ar[u]_{f^\dagger} } $$ where the left square is a pushout diagram of fine monoids and the right square is a fine chart for $f$ such that the induced map \be \u{X} & \to & \u{Y} \times_{\u{\AA}(Q)} \u{\AA}(P) =  \u{Y} \times_{\u{\AA}(\NN)} \u{\AA}(\NN^2) \ee is smooth.  \item \label{boundarypoint} If $x$ is a boundary point and $a : Q \to \M_Y(U)$ is a fixed fine chart for $Y$ on an (\'etale) neighborhood $U$ of $f(x)$, then, after possibly replacing $f$ with a neighborhood of $x$ in $f^{-1}(U) \to U$, there is a fine chart for $f$ of the form $$ \xym@C+25pt{ Q \oplus \NN \ar[r]^-{(f^\dagger a, \, b) } &  \M_X(X) \\ Q \ar[r]^-a \ar[u]^{(\Id,0)} & \M_Y(Y) \ar[u]_{f^\dagger} } $$ such that the induced map \be \u{X} & \to & \u{Y} \times_{\u{\AA}(Q)} \u{\AA}(Q \oplus \NN) = \u{Y} \times \u{\AA}^1 \ee is smooth. \item \label{nodaldegenflat} The map $\u{f}$ is flat and smooth away from the locus of nodal points. \end{enumerate}  \end{prop}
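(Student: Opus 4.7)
The plan is to combine Kato's chart criterion for log smoothness \cite[3.5]{KK} with the neat chart production of Corollary~\ref{cor:neatcharts}, converting each of the three local pictures into a concrete monoid chart and then reading off the statement about $\u{f}$. Throughout I would use that the strict locus of a map of fine log schemes is Zariski open in $\u{X}$ (as in \S\ref{section:tautologicalsection}), so the locus where $\ov{\M}_{Y,f(x)} \to \ov{\M}_{X,x}$ is an isomorphism is Zariski open. For (\ref{smoothpoint}): at a smooth point this openness lets me shrink to a Zariski neighborhood on which $f$ is strict; Kato's chart criterion applied with $h = \Id_Q$ then exhibits $\u{f}$ as étale-locally smooth over $\u{Y}$, hence Zariski smooth.

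For (\ref{nodalpoint}) and (\ref{boundarypoint}), I would first apply Corollary~\ref{cor:neatcharts} to produce, after passing to a neighborhood of $x$, a neat chart $h_0 : Q_0 \to P_0$ for $f$ with $Q_0 \to \ov{\M}_{Y,f(x)}$ and $P_0 \to \ov{\M}_{X,x}$ isomorphisms. Since $Q_0$ and $P_0$ are then sharp and the map on characteristics is by hypothesis the pushout of $\Delta : \NN \to \NN^2$ (respectively of $0 \to \NN$), the map $h_0$ must literally be this pushout. To incorporate the fixed chart $a : Q \to \M_Y(U)$, I would use the induced map $Q \to \ov{\M}_{Y,f(x)} = Q_0$ (available after shrinking $U$) to form $P := Q \oplus_{Q_0} P_0$; by associativity of pushouts this equals $Q \oplus_\NN \NN^2$ (resp.\ $Q \oplus \NN$). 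Because $\Delta$ and $0 \to \NN$ are integral (equivalently flat) maps of fine monoids, the fine-monoid pushout coincides with the ordinary monoid pushout and $P$ remains fine. One then checks that $P \to \M_X$ defines a chart on a neighborhood, and Kato's chart criterion for log smoothness yields smoothness of the induced map $\u{X} \to \u{Y} \times_{\u{\AA}(Q)} \u{\AA}(P)$, with no torsion invertibility required since the relevant cokernels are torsion-free.

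For (\ref{nodaldegenflat}): at a smooth point, (\ref{smoothpoint}) gives smoothness of $\u{f}$ directly, and at a boundary point (\ref{boundarypoint}) factors $\u{f}$ through a smooth map to $\u{Y} \times \u{\AA}^1$ followed by the smooth projection to $\u{Y}$, so $\u{f}$ is smooth. At a nodal point (\ref{nodalpoint}) factors $\u{f}$ through a smooth map to $\u{Y} \times_{\u{\AA}(\NN)} \u{\AA}(\NN^2) = \Spec_Y \O_Y[x,y]/(xy-t)$ where $t \in \O_Y(Y)$ is the image of the chosen generator of $\NN \to Q$; this target is $\O_Y$-flat because $\O_Y[x,y]/(xy-t)$ is free over $\O_Y$ with basis $\{1, x, x^2, \ldots, y, y^2, \ldots\}$, so the composition is flat everywhere, while smoothness of the target over $\u{Y}$ fails only along $x = y = t = 0$, confirming that the non-smooth locus of $\u{f}$ lies in the nodal locus. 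The main technical obstacle is the step in (\ref{nodalpoint}) and (\ref{boundarypoint}) of arranging the chart map to be the \emph{exact} prescribed pushout with the \emph{given} $Q$ as source rather than the characteristic monoid $Q_0$ produced by Corollary~\ref{cor:neatcharts}; the pushout trick $P := Q \oplus_{Q_0} P_0$ resolves this, resting on the integrality of $\Delta$ and $0 \to \NN$ to keep $P$ fine and to ensure compatibility with Kato's chart criterion.
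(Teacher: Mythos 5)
Your outline for \eqref{smoothpoint} and for the flatness statement in \eqref{nodaldegenflat} matches the paper, but the route you take through parts \eqref{nodalpoint}--\eqref{boundarypoint} has a genuine gap at two points. First, Corollary~\ref{cor:neatcharts} (resting on Lemma~\ref{lem:characteristiccharts}) produces a neat characteristic chart only after an \emph{fppf} localization, unless the torsion part of $\ov{\M}_{X,x}^{\rm gp}$ has invertible order; since the log structure on $Y$ is an arbitrary fine log structure, $\ov{\M}_{Y,f(x)}^{\rm gp}$ (hence $\ov{\M}_{X,x}^{\rm gp}$) can have torsion that is not invertible, while the proposition only allows you to replace $f$ by an \'etale neighborhood of $x$ in $f^{-1}(U)\to U$. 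So your very first step may already leave the allowed class of neighborhoods. The paper avoids this by starting instead from Kato's chart criterion for log smoothness, which is \'etale-local and moreover supplies the extra control that the torsion of $T^{\rm gp}/Q^{\rm gp}$ is annihilated by an integer $N$ invertible on $X$.

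Second, and more seriously, the smoothness of $\u{X}\to \u{Y}\times_{\u{\AA}(Q)}\u{\AA}(P)$ for the chart $P=Q\oplus_\NN\NN^2$ (resp.\ $Q\oplus\NN$) is asserted rather than proved: Kato's criterion guarantees the existence of \emph{some} chart with this property, but it does not say the induced map is smooth for a chart you construct by hand (your $P$ is not neat for the arbitrarily given $Q$, so no converse form of the criterion applies to it directly). This is precisely where the bulk of the paper's proof lives: one compares $P$ with Kato's chart $T$, observes $\ov{P}\to\ov{T}$ is an isomorphism so $P\to T$ is a pushout of $P^*\to T^*$ (Lemma~\ref{lem:strict}), and then Snake Lemma computations show $P^{\rm gp}\to T^{\rm gp}$ is injective with $T^{\rm gp}/P^{\rm gp}\cong T^*/P^*$ killed by $N$, whence $\ZZ[1/N][P]\to\ZZ[1/N][T]$ is smooth by Lemma~\ref{lem:groupsmooth}. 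In particular your parenthetical claim that no torsion invertibility is needed because ``the relevant cokernels are torsion-free'' is off: $P^{\rm gp}/Q^{\rm gp}\cong\ZZ$ is torsion-free, but the group actually controlling the comparison is $T^*/P^*$, whose torsion is tamed only by the invertible $N$ coming from Kato's chart. Two further details you gloss over: the pushout $Q\oplus_{Q_0}P_0$ is not defined (there is no map $Q_0\to Q$, only $Q\to Q_0$), and even with the intended $P=Q\oplus_\NN\NN^2$ you must still construct the map $P\to\M_X(X)$, i.e., lift the characteristic-level pushout square to the log structures on the nose; this requires the unit adjustment (choosing lifts with $h(q)=a+b$ exactly, after modifying one lift by a unit) that the paper carries out explicitly.
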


\begin{proof} For \eqref{smoothpoint}, use the fact that the strict locus of a map of fine log schemes is open together with the fact that a strict map $f$ is log smooth iff $\u{f}$ is smooth in the usual sense. 

For \eqref{nodalpoint}, we first use Kato's Chart Criterion for Log Smoothness to find, after possibly shrinking, a fine chart \bne{affinechart} \xym{ T \ar[r] & \M_X(X)  \\ Q \ar[u]^h \ar[r]^-a & \M_Y(Y) \ar[u]  } \ene for $f$ such that: \begin{enumerate} \item \label{chartsmoothmap} The induced map $\u{X} \to \u{Y} \times_{\u{\AA}(Q)} \u{\AA}(T)$ is smooth. \item The map $h$ is injective.  \item The torsion subgroup of $T^{\rm gp}/Q^{\rm gp}$ is annihilated by a positive integer $N$ invertible on $X$.  \end{enumerate}  Set $y := f(x)$.  Since $\O_{Y,y} \to \O_{X,x}$ is local and $N \in \O_{X,x}^*$, $N \in \O_{Y,y}^*$, so we can assume, after possibly shrinking again, that $N$ is also invertible on $Y$.  Let $F \subseteq T$ be the preimage of $\O_{X,x}^*$ in $T$ under the map $T \to \O_{X,x}$ obtained from \eqref{affinechart}.  This $F$ is a face of $T$, hence is also fine, and hence $\u{\AA}(F^{-1}T) \to \u{\AA}(T)$ is an open embedding.  Since $T \to \M_{X}(X)$ is a chart, $T/F = \ov{F^{-1}T} \to \ov{\M}_{X,x}$ is an isomorphism.   Since $F$ is finitely generated, we can assume, after shrinking to a smaller neighborhood of $x$ in $X$ that all elements of $F$ map into $\O_X^*(X) \subseteq \M_X(X)$, so that we can replace $T$ by $F^{-1} T$ in our chart \eqref{affinechart}.  We can similarly replace $Q$ by the analogous localization.  All of the properties listed above continue to hold for the new chart (we just pass to an open locus in the smooth map in \eqref{chartsmoothmap}, and we don't change the groups $Q^{\rm gp}$, $T^{\rm gp}$).  We can thus assume in the rest of the argument that the maps $\ov{T} \to \ov{\M}_{X,x}$ and $\ov{Q} \to \ov{\M}_{Y,y}$ induced by \eqref{affinechart} are isomorphisms.  

By definition of ``nodal point," we can find a pushout diagram \bne{pushdiagram} & \xym{ \NN^2 \ar[r]^-{m,n} & \ov{\M}_{X,x} \ar@{=}[r] & \ov{T} \\ \NN \ar[u]^{\Delta} \ar[r]^-t & \ov{\M}_{Y,y} \ar[u] \ar@{=}[r] & \ov{Q} \ar[u]_{\ov{h}} } \ene  Choose a lift $q \in Q$ of $t \in \ov{\M}_{Y,y} = \ov{Q}$ and lifts $a,b \in T$ of $m,n \in \ov{\M}_{X,x} = \ov{T}$.  We know $h(q) = a + b$ after sharpening, so after adjusting our choice of $b$ by some unit of $T$ if necessary, we can assume $h(q) = a + b$.  We thus obtain a commutative diagram \bne{kpl} & \xym{ \NN^2 \ar[r] & P \ar[r] & T \\ \NN \ar[u]^{\Delta} \ar[r] & Q \ar[u] \ar@{=}[r] & Q \ar[u]_h } \ene where the fine monoid $P$ is defined by making the left square a pushout.  Since the big square of \eqref{kpl} becomes a pushout on sharpening, the map $\ov{P} \to \ov{T}$ is an isomorphism, hence so is the induced map $\ov{P} \to \ov{\M}_{X,x}$; this implies that $P \to \M_X(X)$ is a chart (after possibly shrinking).  It remains only to prove that the induced map \be \u{X} & \to & \u{Y} \times_{\u{\AA}(Q)} \u{\AA}(P) \ee is smooth.  Since we already know \eqref{chartsmoothmap} is smooth, we can prove the claim by showing that \bne{ll} \u{Y} \times_{\u{\AA}(Q)}  \u{\AA}(T) & \to & \u{Y} \times_{\u{\AA}(Q)}  \u{\AA}(P)  \ene is smooth.  Since $N$ is invertible in $Y$, \eqref{ll} is a basechange of $\Spec$ of the ring map \bne{lll} \ZZ[1/N][P] & \to & \ZZ[1/N][T], \ene so it suffices to show that \eqref{lll} is a smooth ring map.  Since $\ov{P} \to \ov{T}$ is an isomorphism, $P \to T$ is a pushout of $P^* \to T^*$ (Lemma~\ref{lem:strict}), so it suffices to show \be \ZZ[1/N][P^*] & \to & \ZZ[1/N][T^*] \ee is smooth.  By Lemma~\ref{lem:groupsmooth}, it suffices to show that $P^* \to T^*$ is injective and that $T^*/P^*$ is annihilated by $N$.  

For injectivity, it suffices to prove $P^{\rm gp} \to T^{\rm gp}$ is injective.  From the pushout definition of $P$, we know \be P/Q & = & \NN^2 / \Delta(\NN) \\ & = & \ZZ e_1 \\ & = & P^{\rm gp} / Q^{\rm gp}. \ee  The injectivity of $P^{\rm gp} \to T^{\rm gp}$ will follow from the Snake Lemma in $$ \xym{ 0 \ar[r] & Q^{\rm gp} \ar@{=}[d] \ar[r] & P^{\rm gp} \ar[r] \ar[d] & \ZZ e_1 \ar[d]^{e_1 \mapsto m} \ar[r] & 0 \\ 0 \ar[r] & Q^{\rm gp} \ar[r] & T^{\rm gp} \ar[r] & T^{\rm gp}/Q^{\rm gp} \ar[r] & 0 } $$ provided $\ZZ e_1 \to T^{\rm gp}/Q^{\rm gp}$ is injective (i.e.\ $m$ generates a non-zero free abelian subgroup of $T^{\rm gp}/Q^{\rm gp}$).  We also know that \be \ov{Q}^{\rm gp} / \ov{T}^{\rm gp} & = & (\ov{\M}_{X,x} / \ov{\M}_{Y,y})^{\rm gp} \\ & = & \ZZ m. \ee  From the Snake Lemma applied to $$ \xym{ 0 \ar[r] & Q^* \ar[r] \ar[d] & Q^{\rm gp} \ar[r] \ar[d] & \ov{Q}^{\rm gp} \ar[d] \ar[r] & 0 \\ 0 \ar[r] & T^* \ar[r] & T^{\rm gp} \ar[r] & \ov{T}^{\rm gp} \ar[r] & 0 } $$ we obtain a short exact sequence \bne{ty} & \xym{ 0 \ar[r] & T^*/Q^* \ar[r] & T^{\rm gp}/Q^{\rm gp} \ar[r] & \ZZ m \ar[r] & 0.} \ene  In particular, we see that $m \in T^{\rm gp}/Q^{\rm gp}$ generates a free abelian subgroup mapping isomorphically onto $\ov{Q}^{\rm gp} / \ov{T}^{\rm gp}$.  We also have a splitting $P^{\rm gp} = Q^{\rm gp} \oplus \ZZ e_1$, from which we obtain an identifiction \be T^{\rm gp} / P^{\rm gp} & = & T^{\rm gp} / (Q^{\rm gp} \oplus \ZZ e_1) \\ & = & T^* / Q^*. \ee  We conclude from \eqref{ty} that $N$ annihilates the torsion subgroup of $T^{\rm gp} / P^{\rm gp}$.  Finally, the Snake Lemma applied to $$ \xym{ 0 \ar[r] & P^* \ar[r] \ar[d] & P^{\rm gp} \ar[d] \ar[r] & \ov{P}^{\rm gp} \ar[d]^{\cong} \ar[r] & 0 \\ 0 \ar[r] & T^* \ar[r] & T^{\rm gp} \ar[r] & \ov{T}^{\rm gp} \ar[r] & 0 } $$ yields an isomorphism \be T^{\rm gp}/P^{\rm gp} & = & T^*/P^*. \ee

The proof of \eqref{boundarypoint} point is like the proof of \eqref{nodalpoint}, but easier, and will be left to the reader. 

The smoothness statement in \eqref{nodaldegenflat} is clear from the \'etale local nature of smoothness, \eqref{smoothpoint}, and \eqref{boundarypoint}.  The flatness statement in \eqref{nodaldegenflat} follows from the standard fact that $\u{f}$ is flat whenever $f$ is log smooth and integral.  Actually, one can see the flatness directly: since $\u{f}$ is smooth away from the nodal points, the only issue is to prove flatness of $\u{f}$ near a nodal point.  Near such a point, \eqref{nodalpoint} says that $\u{f}$ factors as a smooth map followed by a base change of \be \u{\AA}(\Delta) : \u{\AA}(\NN^2) & \to & \u{\AA}(\NN). \ee  The latter map is flat since it is $\Spec$ of the ring map $\ZZ[t] \to \ZZ[x,y]$ with $t \mapsto xy$, which makes $\ZZ[x,y]$ a free $\ZZ[t]$-module with basis $1,x,y,x^2,y^2,\dots .$   \end{proof}

\begin{rem} \label{rem:relativeboundary} In the chart of Proposition~\ref{prop:nodaldegenerations}\eqref{boundarypoint}, any two choices of $b \in \M_X(X)$ yielding such a chart must differ by a unit, so the closed subscheme of $\u{X}$ cut out by $\alpha_X b \in \O_X(X)$ does not depend on this choice of $b$.  Given the form of that chart, the corresponding closed subset $\u{D}$ of $\u{X}$ is clearly the relative boundary (the locus on which the relative characteristic monoid of $f^\dagger$ is $\NN$.)  Thus we can define a closed subscheme structure\footnote{Since we only have such a chart \'etale locally, we implicitly appeal here to \'etale descent for closed subschemes, the fact the smoothness is \'etale local, and the fact that being a Cartier divisor is \'etale local.} on $\u{D} \subseteq \u{X}$ (making $\u{D}$ a Cartier divisor in $\u{X}$) so that the diagram $$ \xym@C+20pt{ \u{D} \ar[r] \ar[d] & \u{X} \ar[d] \\ \u{Y} \ar[r]^-{(\Id,0)} & \u{Y} \times \AA^1 } $$ is cartesian, where the right vertical arrow is the smooth map of Proposition~\ref{prop:nodaldegenerations}\eqref{boundarypoint}.  In particular, this shows that $\u{D} \to \u{Y}$ is smooth.  Since $D \to Y$ is strict by definition, $D \to Y$ is log smooth. \end{rem}

\begin{thm} \label{thm:nodaldegeneration} Suppose $f : X \to Y$ is a nodal degeneration with $\u{Y} = \Spec k$ for an algebraically closed field $k$, where $Y=\Spec k$ is equipped with an arbitrary fine log structure.  \begin{enumerate} \item The non-strict locus $\u{Z}$ of $f$ (with the reduced scheme structure from $\u{X}$) is the disjoint union of the singular locus of $\u{f}$ (with the reduced scheme structure from $\u{X}$) and the relative boundary $\u{D}$ (the scheme structure on $\u{D}$ discussed in Remark~\ref{rem:relativeboundary} coincides in this situation with the reduced induced structure from $\u{X}$).  \item A quasi-coherent sheaf $\F$ on $X$ is log flat over $Y$ iff $\Tor_1^X(\F,\O_Z)=0$. \end{enumerate} \end{thm}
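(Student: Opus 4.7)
For part (1), I apply Proposition~\ref{prop:nodaldegenerations} to classify the étale points of $X$ and compute $\u{f}$ locally at each type. At a smooth point, $f$ is strict in a Zariski neighborhood and $\u{f}$ is smooth there, so such a point avoids both $\u{Z}$ and the singular locus. At a nodal point with chart as in Proposition~\ref{prop:nodaldegenerations}\eqref{nodalpoint}, the lift $q \in Q$ of the generator of $\ov{\M}_{Y,f(x)}$ is a non-unit in $\O_{Y,f(x)} = k$ and hence has image $0$, so the local model $\u{Y} \times_{\u{\AA}(\NN)} \u{\AA}(\NN^2)$ specializes to $\Spec k[x,y]/(xy)$; then $\u{X}$ is smooth over $\Spec k[x,y]/(xy)$, which is singular at the origin, so the nodal point lies in the singular locus of $\u{f}$, and it also lies in $\u{Z}$ since $f$ is non-strict there. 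At a boundary point, Proposition~\ref{prop:nodaldegenerations}\eqref{boundarypoint} shows $\u{X}$ is smooth over $\u{Y} \times \AA^1 = \AA^1_k$, so $\u{f}$ is smooth there, while $f$ is non-strict, placing the point in $\u{Z} \cap \u{D}$. The three classes are disjoint by the disjointness of the chart forms of $\ov{f}^\dagger_x$. For the scheme-structure claim on $\u{D}$: near any boundary point, the generator $b$ defining $\u{D}$ as a Cartier divisor (Remark~\ref{rem:relativeboundary}) serves as one element of an étale system of parameters on the smooth variety $\u{X}$, so $\u{D} = V(b)$ is automatically reduced.

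For part (2), both log flatness of $\F$ over $Y$ and the vanishing of $\Tor_1^X(\F, \O_Z)$ are étale-local on $X$, so I check the equivalence at each of the three types of points using the local data from Proposition~\ref{prop:nodaldegenerations}. At a smooth point $f$ is strict in a neighborhood, so Lemma~\ref{lem:strictlogflatness} reduces log flatness over $Y$ to flatness over $\u{Y} = \Spec k$, which holds for every sheaf; meanwhile $Z$ is disjoint from a neighborhood of the point, so $\Tor_1^X(\F, \O_Z)$ vanishes there trivially. At a boundary point, the chart $h : Q \to Q \oplus \NN$ and the Second Chart Criterion translate log flatness of $\F$ into graded flatness over $(\ZZ, B)$ with $B = k \otimes_{\ZZ[Q]} \ZZ[Q \oplus \NN] = k[t]$; for this $\ZZ$-graded polynomial ring over a field the only nontrivial graded ideals are the $(t^n)$, and graded flatness amounts to injectivity of $t$ on $\F$, i.e., $\Tor_1^{k[t]}(\F, k) = 0$. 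Since $\u{X}$ is smooth over $\Spec k[t]$ with $\u{D}$ the preimage of $\{t=0\}$, flat base change identifies this with $\Tor_1^X(\F, \O_D) = 0$, matching the Tor condition near the boundary point.

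At a nodal point, the chart from Proposition~\ref{prop:nodaldegenerations}\eqref{nodalpoint} has $h$ a pushout of the injective free map $\Delta : \NN \to \NN^2$ (with basis $\{(a,0),(0,b) : a \geq 0,\, b > 0\}$), so the Second Chart Criterion applies: log flatness of $\F$ becomes graded flatness of $\F$ over $(\ZZ, B)$ with $B = k[x,y]/(xy)$. Since $\u{X}$ is smooth over $\Spec B$ and $\u{Z}$ is étale-locally the preimage of the singular point $V(x,y)$ of $\Spec B$, flat base change reduces the desired equivalence to the purely graded-algebraic assertion that a $B$-module $M$ is graded flat over $(\ZZ, k[x,y]/(xy))$ iff $\Tor_1^B(M, B/(x,y)) = 0$. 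This assertion is the crux of the argument and I expect it to be the main obstacle. The plan is to invoke Theorem~\ref{thm:gradedflatness} (applicable because $\Delta$ is free) to reformulate graded flatness as Tor-vanishing against $B/I$ for every graded ideal $I$, then to use the short exact sequences
\[ 0 \to B/(y) \xrightarrow{\cdot x} B \to B/(x) \to 0, \qquad 0 \to B/(x) \xrightarrow{\cdot y} B \to B/(y) \to 0, \]
together with their obvious iterates yielding $B/(x^n)$ and $B/(y^n)$, and the sequence $0 \to B/(y) \xrightarrow{\cdot x} B/(y) \to B/(x,y) \to 0$, in long-exact-sequence chases to propagate the vanishing of $\Tor_1^B(M, B/(x,y))$ to vanishing against every graded ideal, thereby completing the equivalence and the proof.
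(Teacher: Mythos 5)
Your proof is correct and its geometric skeleton is exactly the paper's: reduce \'etale-locally to the three point types via Proposition~\ref{prop:nodaldegenerations}, use strictness and Lemma~\ref{lem:strictlogflatness} at smooth points, and at nodal/boundary points use Corollary~\ref{cor:chartcriteria}\eqref{affinecc3} (with $h$ injective because it is a pushout of the free map $\Delta$, resp.\ of $(\Id,0)$) to translate log flatness into graded flatness over $B=k[x,y]/(xy)$, resp.\ $k[t]$, and then transport the $\Tor$-vanishing along the smooth chart map $B \to C$ by flat base change. The only place you diverge is at what you call the crux: the equivalence ``$M$ graded flat over $(\ZZ,k[x,y]/(xy))$ iff $\Tor_1^B(M,B/(x,y))=0$'' is not an obstacle to be overcome---it is literally Corollary~\ref{cor:nodalflatness}\eqref{Mgradedflat}$\Leftrightarrow$\eqref{Mperfect}, already proved in \S\ref{section:gradedflatnesscriteria}, and the boundary-point analogue you rederive by hand is Corollary~\ref{cor:gradedflatnessoverA1}; the paper's proof simply cites these. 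If you do want to carry out your self-contained chase, two corrections: the reformulation ``graded flat iff $\Tor_1^B(M,B/I)=0$ for every homogeneous ideal $I$'' comes from Proposition~\ref{prop:gradedflatnessandhomogeneousideals}, not from Theorem~\ref{thm:gradedflatness} (which instead gives the criterion via the quotients $B_e$ and a spawning set); and your list of exact sequences should be supplemented by $0 \to B/(y) \oplus B/(x) \to B \to B/(x,y) \to 0$ (i.e.\ the identification $\m \cong B/(y)\oplus B/(x)$), since the vanishing $\Tor_1^B(M,B/\m)=0$ is most usefully recast as injectivity of $(m,n)\mapsto xm+yn$ on $M/yM\oplus M/xM$ (condition~\eqref{injectivemap} of Corollary~\ref{cor:nodalflatness}); from that injectivity an easy induction kills $\Tor_1^B(M,B/I)$ for $I=(x^a)$, $(y^b)$ and $(x^a,y^b)$, which are all the homogeneous ideals, so your plan does close up---it just amounts to reproving a corollary the paper already supplies.
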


\begin{proof} We can work (strict) \'etale locally near a given \'etale point of $X$.  Near a smooth point Proposition~\ref{prop:nodaldegenerations} says that $f$ is strict.  But then, by Lemma~\ref{lem:strictlogflatness}, log flatness is the same thing as usual flatness over $Y$, which holds trivially because $k$ is a field.  The vanishing $\Tor_1^X(\F,\O_Z)=0$ also holds trivially because $f$ is strict, so the non-strict locus $Z$ is empty.

A fine log structure on the algebraically closed field $k$ is necessarily of the form \be \Spec (0 : Q \to k) & = & Q \oplus k^* \to k \ee for some fine, sharp monoid $Q$.

Proposition~\ref{prop:nodaldegenerations} says that any nodal point has, say, an affine \'etale neighborhood $U=\Spec C$ where we have a commutative diagram  $$ \xym{ \NN^2 \ar[r] & P \ar[r] &  \M_X(U) \ar[r] & C \\ \NN  \ar[u]^{\Delta} \ar[r] & Q \ar[u] \ar[r] & \M_Y(Y) \ar[u]  \ar[r] &  k \ar[u] } $$ (note that the composition $Q \to k$ here is zero) where the left square is a pushout square of fine monoids, and the middle square is a chart for $f$, and the induced ring map \bne{inducedsm} k \otimes_{\ZZ[\NN]} \ZZ[\NN^2] = k[x,y]/(xy) & \to & C \ene is smooth (in particular it is flat).  On $U$ we can write $\F = M^\sim$ for a $C$-module $M$.  By Corollary~\ref{cor:chartcriteria}\eqref{affinecc3}, log flatness of $\F$ is equivalent to graded flatness of $M$ over \be B & := & k \otimes_{\ZZ[Q]} \ZZ[P] \\ & = & k \otimes_{\ZZ[\NN]} \ZZ[\NN^2] \\ & = & k[x,y]/(xy), \ee graded by $(P/Q)^{\rm gp} = (\NN^2/\NN)^{\rm gp} = \ZZ$ with $|x|=1$, $|y|=-1$ as usual.  According to Corollary~\ref{cor:nodalflatness} this graded flatness is equivalent to $\Tor_1^B(M,B/(x,y))=0$.  But $C$ is flat (even smooth) over $B$, so this is equivalent to \be \Tor_1^C(M,C/(x,y)) & = & 0. \ee But is is clear from the properties of the diagram above that the non-strict locus of $f$ is cut out by the images of $x,y$ in $C$ (the images of the standard generators of $\NN^2$ in $C$), so this latter vanishing is exactly the affine translation of $\Tor_1^X(\F,\O_Z)=0.$  It is also clear from smoothness of \eqref{inducedsm} that the images of $x,y$ in $C$ cut out the singular locus of $C$ with the reduced induced structure.

The case of a boundary point $x$ is very similar.  One uses Corollary~\ref{cor:gradedflatnessoverA1} instead of Corollary~\ref{cor:nodalflatness}.  \end{proof}

\begin{cor} \label{cor:nodaldegeneration} Suppose $f : X \to Y$ is a nodal degeneration with relative boundary $D \to Y$ and $\F$ is a quasi-coherent sheaf on $X$ of locally finite presentation.  Then $\F$ is log flat over $Y$ iff $\F$ is flat over $Y$ in the usual sense and \be \Tor_1^{X_y}(\F|X_y,\O_{Z}) & = & 0 \ee for each geometric point $y$ of $Y$ (here $Z \subseteq X_y$ is the non-strict locus of $X_y \to \{ y \}$).  If $\F$ is log flat over $Y$, then $\F|D$ is flat over $Y$. \end{cor}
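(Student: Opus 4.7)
The plan is to reduce the statement to the field case already handled in Theorem~\ref{thm:nodaldegeneration} via the Fiberwise Criterion (Theorem~\ref{thm:geometricpoints}). To apply the Fiberwise Criterion I first need to check that every nodal degeneration $f : X \to Y$ is integral in the sense of Definition~\ref{defn:integral}; by Lemma~\ref{lem:integral}, this amounts to checking flatness of the characteristic monoid map $\overline{\M}_{Y,f(x)} \to \overline{\M}_{X,x}$ at each geometric point $x$ of $X$. By the very definition of nodal degeneration, this map is an isomorphism, the diagonal $\NN \to \NN^2$, or $0 \to \NN$; the first and third are visibly free (hence flat), and the second is flat because $\ZZ[t] \to \ZZ[x,y]$ sending $t \mapsto xy$ is free on the basis $1, x, y, x^2, y^2, \dots$, as already noted in the proof of Proposition~\ref{prop:nodaldegenerations}\eqref{nodaldegenflat}.

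For the main biconditional, the Fiberwise Criterion now gives that $\F$ is log flat over $Y$ iff $\F$ is flat over $\u{Y}$ and, for each geometric point $y$ of $Y$, the restriction $\F|X_y$ is log flat over $y$ (with the log structure pulled back from $Y$). Since the restriction of a nodal degeneration along a strict geometric point of $Y$ is again a nodal degeneration (the local charts of Proposition~\ref{prop:nodaldegenerations} pull back verbatim), applying Theorem~\ref{thm:nodaldegeneration} fiberwise translates log flatness of $\F|X_y$ over $y$ into precisely the vanishing $\Tor_1^{X_y}(\F|X_y, \O_Z) = 0$ for $Z \subseteq X_y$ the non-strict locus. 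Combining these two equivalences yields the stated biconditional.

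For the final assertion, assume $\F$ is log flat. Since $D \to Y$ is strict by definition, Lemma~\ref{lem:strictlogflatness} reduces the claim to showing $\F|D$ is flat over $\u{Y}$ in the usual sense, a question \'etale-local on $D$. I would fix a point $x \in D$ (necessarily a boundary point of $f$) and apply Proposition~\ref{prop:nodaldegenerations}\eqref{boundarypoint} to exhibit, \'etale locally around $x$, a chart $(f^\dagger a, b) : Q \oplus \NN \to \M_X$ whose image $t := \alpha_X(b)$ cuts out $D$. Writing $A := \O_{Y, f(x)}$ and $M := \F_x$, the log flatness hypothesis together with Corollary~\ref{cor:chartcriteria}\eqref{affinecc3} (or the corresponding graded-flatness-over-$A[t]$ criterion used at the end of the proof of Theorem~\ref{thm:nodaldegeneration}) gives both that $M$ is $A$-flat and that multiplication by $t$ on $M$ is injective, producing a short exact sequence $0 \to M \xrightarrow{t} M \to M/tM \to 0$ of $A$-modules with $M/tM = (\F|D)_x$. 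The long exact $\Tor^A$-sequence, using $A$-flatness of $M$, then identifies $\Tor_1^A(M/tM, k(y))$ with the kernel of multiplication by $t$ on the fiber $M \otimes_A k(y) = (\F|X_y)_x$; by the biconditional applied to $X_y \to y$, this kernel vanishes at $x$ (since the non-strict locus is locally $V(t)$ there), so the local criterion of flatness gives that $M/tM$ is $A$-flat.

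The main obstacle I anticipate is the final step: one must not confuse the roles of $A$, $A[t]$, and the local ring of $X$, and in particular $t$-regularity of $M$ alone does not yield $A$-flatness of $M/tM$ without separately invoking the fiberwise $t$-torsion vanishing that the main biconditional provides. The delicate point is that log flatness manifests itself both globally (through graded flatness, giving the short exact sequence displayed above) and fiberwise (giving the required vanishing of $t$-torsion on each fiber), and one must combine the two through the correct Tor long exact sequence over $A$, not over $A[t]$ or over $\O_X$.
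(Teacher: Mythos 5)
Your handling of the biconditional is exactly the paper's: combine the Fiberwise Criterion (Theorem~\ref{thm:geometricpoints}) with Theorem~\ref{thm:nodaldegeneration}; your explicit checks that a nodal degeneration is integral and that its strict geometric fibers are again nodal degenerations fill in what the paper leaves implicit (just phrase the characteristic maps as \emph{pushouts} of $\Delta : \NN \to \NN^2$ and $0 \to \NN$ --- still free, hence flat, since freeness is stable under pushout, Lemma~\ref{lem:freemorphisms}). For the final statement your route genuinely differs. The paper argues globally: $\u{D} \subseteq \u{X}$ is a Cartier divisor, and one applies the finitely-presented fiberwise criterion \cite[IV.11.3.7]{EGA} to $\F(-D) \to \F$, the fiberwise injectivity being exactly the vanishing $\Tor_1^{X_y}(\F|X_y,\O_{D_y})=0$ from Theorem~\ref{thm:nodaldegeneration}; injectivity of $\F(-D)\to\F$ and flatness of $\F|D$ over $Y$ come out together. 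You instead work \'etale-locally at a boundary point, extract $A$-flatness of $M=\F_x$ and $t$-regularity of $M$ from graded flatness over $A[t]$ (both deductions are correct and need no noetherian hypothesis), and finish with the long exact $\Tor^A$-sequence plus ``the local criterion of flatness.'' That last citation is the one soft spot: the local criterion available in the paper (Lemma~\ref{lem:Matsumuraflatness}) assumes a noetherian local ring and a finite module, whereas the corollary imposes no noetherian hypothesis on $Y$; in the stated generality you should instead quote \cite[IV.11.3.7]{EGA} (equivalently, the slicing criterion in its finitely-presented form) applied locally to $t \cdot : \F \to \F$ --- which is precisely the paper's tool, and which returns the injectivity of $t$ on $M$ as an output, so your separate derivation of the short exact sequence from graded flatness, though valid, becomes unnecessary. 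With that substitution (or a blanket noetherian assumption) your argument is complete; it is a local-chart rephrasing of the paper's twist-by-$\O_X(-D)$ argument, whose global formulation buys independence from the choice of chart and from any second appeal to graded flatness.
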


\begin{proof} For the first statement, combine the fiberwise log flatness criterion (Theorem~\ref{thm:geometricpoints}; note that $f$ is of locally finite presentation by the definitions) and the theorem. 

For the second statement:  Since $\u{D} \subseteq \u{X}$ is a Cartier divisor, its ideal sheaf $\O_X(-D)$ is invertible, so $\F(-D)$ is also of loc.\ fin.\ pres.\ so the natural map $\F(-D) \to \F$ is a map of loc.\ fin.\ pres.\ $\O_X$-modules with codomain flat over $Y$ (using part of the first statement), hence, by \cite[IV.11.3.7]{EGA}, $\F(-D) \to \F$ will be injective and its quotient $\F|D$ will be flat over $Y$ provided we can show it is injective on each geometric fiber of $f$.  The map $\F(-D) \to \F$ restricts, on a geometric fiber $X_y$, to the analogous map $(\F|X_y)(-D_y) \to \F|X_y,$ where $D_y$ is the boundary of $X_y$.  We know this is injective because the theorem says that log flatness of $\F|X_y$ implies $\Tor_1^{X_y}(\F|X_y,\O_{D_y})=0$. \end{proof}

\subsection{Examples of nodal degenerations} \label{section:examplesofnodaldegenerations} Several examples of nodal degenerations (\S\ref{section:semistabledegenerations}) occur in nature.

\begin{example} \label{example:expansions} {\bf (Expansions and pairs)}  The \emph{expanded pairs} and \emph{expanded degenerations} used in relative Gromov-Witten theory and its cousin theories (DT theory and Stable Pairs theory) are important examples of nodal degenerations.  The basic input for expanded pairs is a \emph{smooth pair} $(X,D)$ consisting of a smooth variety $X$ and a smooth divisor $D \subseteq X$ (we assume $D$ is connected to simplify the exposition).  Let $\Delta := \PP(N_{D/X},\O_X)$, so $\Delta$ is a $\PP^1$-bundle over $D$ with two obvious sections, one with normal bundle $N_{D/X}$ and one with normal bundle $N_{D/X}^\lor$.  One can construct a log algebraic stack $\mathcal{T}$ of ``targets" and a diagram of log algebraic stacks \bne{universalexpansion} & \xym{ \mathcal{T} \times D \ar[r]^-i \ar[rd]_{\pi_1} & \mathcal{X} \ar[d]^{\pi} \ar[r]^-c & \mathcal{T} \times X \ar[ld]^{\pi_1} \\ & \mathcal{T} } \ene where $c$ is representable and (at least locally) projective (hence proper) and $\pi$ is a (representable) nodal degeneration.  One often abbreviates \eqref{universalexpansion} by $\pi : \mathcal{X} \to \mathcal{T}$.  

At each geometric point $t$ of $\mathcal{T}$, the diagram \eqref{universalexpansion} takes the form $$ \xym{ D \ar[r]^-i  & X[n]_0 \ar[r]^-c & X } $$ (for some $n$), where \be X[n]_0 & = & X \coprod_{D_1} \Delta_1 \coprod_{D_2} \cdots \coprod_{D_n} \Delta_n \ee is an ``accordian" obtained by gluing copies $\Delta_i$ of $\Delta$ ``end-to-end" along copies $D_i$ of $D$.  The inclusion $i$ includes $D$ as the ``other" copy $D_{n+1}$ of $D$ in $\Delta_n$ (the one with normal bundle $N_{D/X}$, as opposed to $D_n \subseteq \Delta_n$, which has normal bundle $N_{D/X}^\lor$).   The map $c$ contracts all of the $\Delta_i$ back onto $D \subseteq X$ via the projection for the $\PP^1$ bundle $\Delta \to D$.  The log structure on $X[n]_0$ has relative characteristic monoid $\ZZ$ along the singular loci $D_1,\dots,D_n$ and relative characteristic monoid $\NN$ along $i(D) \subseteq \Delta_n \subseteq X[n]_0$.  The log structure on the base $t$ has characteristic monoid $\NN^n$ (same $n$ as the one in $X[n]_0$).  

Up to this point, we have given $D$ the log structure inherited from $X$, and we have defined the log structure on $\mathcal{T} \times D$ so that $i$ is strict.  However, if we think of $X$ as a nodal degeneration, the ``right" log structure to put on $D$ is the trivial one, and if we think of $\pi$ as a nodal degeneration, the ``right" log structure to put on its boundary $\mathcal{T} \times D$ is the one pulled back from $\mathcal{T}$ (note that $i$ can no longer be viewed as a map of log schemes).  The singular locus of (the scheme underlying) $X[n]_0$ is given by $D_1 \cup \cdots \cup D_n$ and its boundary (in the sense of nodal degenerations) is given by $i(D)=D_{n+1}$ (with the log structure pulled back from $t$).  The non-strict locus of $X[n]_0$ over $t$ is given by \be Z & = & D_1 \coprod \cdots \coprod D_{n+1}. \ee

For expanded degenerations, one starts with a nodal degeneration $W \to \AA^1$ (smooth in the usual sense away from $0$) with central fiber $W_0 = X_1 \coprod_D X_2$ a union of smooth varieties $X_1$, $X_2$ along a common codimension one smooth subvariety $D$.  One then makes a new space of ``targets" $\mathcal{T}$ (over $\AA^1$) and a universal target $\mathcal{W} \to \mathcal{T}$ which is a representable nodal degeneration whose geometric fibers have a similar ``accordian" form \be W[n]_0 & = & X_1 \coprod_{D_1} \Delta_1 \coprod_{D_2} \cdots \coprod_{D_{n}} \Delta_n \coprod_{D_{n+1}} X_2. \ee

By Theorem~\ref{thm:nodaldegeneration}, a quasi-coherent sheaf $\F$ on $Y = X[n]_0$ or $Y = W[n]_0$ is log flat iff \be \Tor_1^Y(\F,\O_{D_i}) & = & 0 \ee for $i=1,\dots,n+1$. \end{example}

\begin{example} \label{example:nodalcurves} {\bf (Nodal curves)}  A nodal curve $\pi : C \to Y$ with marking sections $s_1,\dots,s_n : Y \to C$ becomes a nodal degeneration when endowed with the canonical log structure of F.~Kato \cite{FK} (and in fact all liftings of $\u{\pi}$ to a log smooth map of fs log schemes are pulled back from the canonical one).  The relative boundary of $\pi$ consists of $n$ copies of $Y$ (with the same log structure $Y$ has as the base of $\pi$).  If $\u{Y} = \Spec k$ for an algebraically closed field $k$, the non-strict locus of $\pi$ is the disjoint union of the singular locus of $\u{C}$ and the marked points of $\u{C}$.  By arguing exactly as in the proof of Theorem~\ref{thm:nodaldegeneration}, and making use of Corollary~\ref{cor:nodalflatness}\eqref{Mlocallyfree} and Corollary~\ref{cor:gradedflatnessoverA1}\eqref{Mlocallyfree2}, one sees that a coherent sheaf $\F$ on the marked nodal curve $C$ over is log flat over such a $Y$ iff $\F$ is locally free near the marked points and nodes of $\u{C}$. \end{example}

\subsection{Log quotient space} \label{section:logquotientspace}  The main purpose of this section is to define the \emph{log quotient space} $\LQuot(M/X/Y)$ associated to a (reasonable) map of fine log schemes $f : X \to Y$ and a (reasonable) quasi-coherent sheaf $M$ on $X$.  As long as $M$ and $f$ are reasonable, $\LQuot(M/X/Y)$ will be an algebraic space, separated over $Y$ and of locally finite presentation over $Y$.  Indeed, it will simply be defined to be the log flat locus of the universal quotient sheaf on the usual quotient space, so, for example, it will be quasi-projective over $\u{Y}$ whenever $\u{f}$ is quasi-projective.  For many of the applications we have in mind, it is necessary to define $\LQuot(M/X/Y)$ for a map of fine log \emph{algebraic stacks} $f : X \to Y$, so we will have to make some general nonsense definitions to work at this greater level of generality.

The point is that the theory of log flatness makes sense not only for maps of fine log schemes, but also for maps of fine log algebraic stacks.  First we have to define the categories of quasi-coherent sheaves and (fine) log structures on a stack.  In fact we can define both of these categories for an arbitrary groupoid fibration in the same way, as follows.

Fix a base scheme $S$ (often $S = \Spec \CC$) and let $\Sch$ denote the category of schemes over $S$ and morphisms of $S$-schemes (which we will refer to simply as ``schemes" and ``morphisms").  Let $\Qco$ (resp.\ $\LogSch$) be the category whose objects are pairs $(U,M)$ consisting of a scheme $U$ and a quasi-coherent sheaf (resp.\ log structure) $M$ on $U$ and whose morphisms $$(f,g) : (U,M) \to (V,N)$$ are pairs $(f,g)$ consisting of a morphism of schemes $f : U \to V$ and a morphism $g : f^*N \to M$ of quasi-coherent sheaves (resp.\ log structures) on $U$.  The forgetful functor $(U,M) \mapsto U$ makes $\Qco$ (resp.\ $\LogSch$) a fibered category over $\Sch$ in the sense of \cite[3.1]{Vis}.  The ``cartesian arrows" in $\Qco$ and $\LogSch$ are those $(f,g)$ for which $g$ is an isomorphism (for $\LogSch$ these are the strict maps).  The restriction of the aforementioned forgetful functor to the subcategory $\Qco^{\rm cart}$ (resp.\ $\LogSch^{\rm cart}$) of cartesian arrows is a groupoid fibration (as is the case for any fibered category).  Let $\Fib / \Sch$ denote the $2$-category of fibered categories over $\Sch$ and let $\CFG / \Sch$ denote the full sub-$2$-category of $\Fib / \Sch$ consisting of groupoid fibrations over $\Sch$.  

\begin{defn} \label{defn:QcoX} For an object $X$ of $\CFG / \Sch$, we define the category $\Qco(X)$ of \emph{quasi-coherent} sheaves (resp.\ the category $\LogStr(X)$ of \emph{log structures}) on $X$ to be the full subcategory of $$\bHom_{\Fib / \Sch}(X,\Qco) \; \; {\rm (resp.} \bHom_{\Fib / \Sch}(X,\LogSch))$$ whose objects are those of $$\bHom_{\CFG / \Sch}(X,\Qco) \; \; {\rm (resp.} \bHom_{\CFG / \Sch}(X,\LogSch)).$$ \end{defn}

This simply makes precise the idea that ``to give a quasi-coherent sheaf or log structure on $X$ is to give its pullback along any map from a scheme to $X$."  To explain this, we first note that the ``$2$-Yoneda Lemma" is the statement that for a scheme $U$ with corresponding groupoid fibration $\Sch/U \to \Sch$, the functor \bne{Yoneda} \Qco(U) & \to & \Qco(\Sch/U) \\ \nonumber M & \mapsto & ((f : U' \to U) \mapsto (U',f^*M)) \\ \nonumber g : M \to N & \mapsto & (f : U' \to U) \mapsto ((\Id,f^*g) : (U',M) \to (U',f^*N)) \ene is an equivalence of categories from the usual category of quasi-coherent sheaves on $U$ (the fiber category of $\Qco \to \Sch$ over $U$) to the category of quasi-coherent sheaves on the groupoid fibration $\Sch/U \to \Sch$ as defined above (the inverse equivalence is given by $M \mapsto M(\Id : U \to U) \in \Qco(U)$).  We constantly suppress this equivalence in what follows.  We next note that a $\CFG / \Sch$ morphism $f : X \to Y$ yields a pullback functor \bne{fupperstar} f^* : \Qco(Y) & \to & \Qco(X) \\ \nonumber M & \mapsto & f^*M \ene where $(f^*M)(x) := M(f(x))$ for each object $x$ of $X$.  Now suppose $X \in \CFG / \Sch$, $M \in \Qco(X)$, and $x$ is an object of $X$ lying over $U \in \Sch$.  Then we can think of the quasi-coherent sheaf $M(x)$ on $U$ as $x^* M \in \Qco(U)$---indeed, if we think of $x$ as a $\CFG / \Sch$-morphism $x : \Sch / U \to X$ via the Yoneda Lemma, and we identity $\Qco(\Sch/U)$ with $\Qco(U)$ via \eqref{Yoneda}, then it is indeed the case that $M(x) = x^* M$.

\begin{defn} \label{defn:Qco2} A quasi-coherent sheaf $M$ is called \emph{flat} (resp.\ \emph{locally finitely presented}, \dots) iff, for each object $x$ of $X$ with image $U \in \Sch$, the quasi-coherent sheaf $M(x) \in \Qco(U)$ on $U$ is flat (resp.\ locally finitely presented, \dots).  Similarly, a log structure $\M_X$ on $X$ is called \emph{integral} (resp.\ \emph{fine}, \dots) iff $\M_X(x)$ is an integral (resp.\ fine, \dots) log structure on the scheme $U$ for all objects $x$ of $X$. \end{defn}

\begin{rem} Suppose $X$ is an algebraic stack.  Since quasi-coherent sheaves and fine log structures \cite[Appendix A]{Ols} satisfy fppf descent, one could alternatively define a quasi-coherent sheaf or fine log structure on $X$---with the aid of an fppf cover from a scheme $U \to X$---in terms of descent.  This notion of quasi-coherent sheaf or fine log structure coincides with that of Definitions~\ref{defn:QcoX} and \ref{defn:Qco2}.  \end{rem}

\begin{rem} It is tautological from the definitions that flat (resp.\ locally finitely presented, \dots) quasi-coherent sheaves are stable along under pullback along an arbitrary $\CFG / \Sch$ morphism.  Indeed, any property of log structures or quasi-coherent sheaves on schemes that is stable under pullback thus gives rise to a corresponding property for log structures or quasi-coherent sheaves on objects of $\CFG / \Sch$ which is also stable under pullback. \end{rem}

We need some ``algebraicity" assumption on a $\CFG / \Sch$ morphism $f : X \to Y$ in order to say when a quasi-coherent sheaf $M \in \Qco(X)$ is \emph{flat over} $Y$ and when such a sheaf has \emph{universally proper support over} $Y$.   Although we could certainly get away with less, let us now assume that $f : X \to Y$ is relatively DM, as this will be sufficient for our applications (in fact even the case where $f$ is relatively an algebraic space would suffice).  We then declare $M$ to be \emph{flat over} $Y$ (resp.\ to have \emph{universally proper support over} $Y$) iff, for each $\CFG / \Sch$-morphism $y : U \to Y$ with $U$ a scheme, the quasi-coherent sheaf $\pi_2^* M \in \Qco(U \times_Y X)$ is flat over $U$ (resp.\ has proper support over $U$) via $\pi_1 : U \times_Y X \to U$ (these latter concepts can be defined in any number of reasonable and equivalent ways because $\pi_1$ is a DM stack over the scheme $U$).  It is tautological to check that these notions of relative flatness and relatively proper support are stable under base change.

Now suppose we have a relatively DM $\CFG / \Sch$-morphism $f : X \to Y$ and a quasi-coherent sheaf $M \in \Qco(X)$, which we will assume is of locally finite presentation (Definition~\ref{defn:Qco2}) .  Then we define a presheaf $\Quot(M/X/Y)$, called the \emph{presheaf of quotients} of $M$, on the category $Y$ by taking an object $y$ of $Y$ (letting $U \in \Sch$ be the image of $y$, we view $y \in Y(U)$ as a $\CFG / \Sch$ morphism $y : U \to Y$ as usual) to the set of quotients $q : \pi_2^* M \to N$ (in the abelian category $\Qco(U \times_Y X)$ of quasi-coherent sheaves on the DM stack $U \times_Y X$) such that \begin{enumerate} \item \label{quotflatness} $N$ flat over $U$ via $\pi_1 : U \times_Y X \to U$. \item \label{quotpropersupport} $N$ has proper support over $U$ via $\pi_1 : U \times_Y X \to U$. \item \label{quotfinpres} $N$ is of locally finite presentation. \end{enumerate}  

\begin{rem} Assuming \eqref{quotflatness} and \eqref{quotfinpres}, the formation of the support of $N$ commutes with base change along maps $U' \to U$, so one could replace ``proper support" in \eqref{quotpropersupport} with ``universally proper support" without altering the definition of $\Quot(M/X/Y)$. \end{rem}

Since these conditions on $N$ are stable under base change, we can define the restriction maps for this presheaf simply by pulling back.  We can think of the presheaf $\Quot(M/X/Y)$ over $Y$ as a $\CFG / \Sch$-morphism $\Quot(M/X/Y) \to Y$ which is formally representable (Definition~\ref{defn:formallyrepresentable}).  Forming this presheaf ``commutes with pullback" in the sense that for any $\CFG / \Sch$-morphism $Y' \to Y$, there is a $2$-cartesian diagram in $\CFG / \Sch$ as below.  \bne{Quotcart} & \xym{ \Quot(\pi_2^*M / Y' \times_{Y} X / Y') \ar[d] \ar[r] & \Quot(M/X/Y) \ar[d] \\ Y' \ar[r] & Y } \ene 

We will need the following reformulation of a result of Olsson and Starr:

\begin{prop} \label{prop:OlssonStarr} Let $f : X \to Y$ be a $\CFG / \Sch$-morphism representable by relatively separated, locally finitely presented DM stacks and let $M$ be a quasi-coherent sheaf on $X$ of locally finite presentation.  Then $\Quot(M/X/Y) \to Y$ is representable by relatively separated algebraic spaces of locally finite presentation.  If $M$ has proper support over $Y$, then $\Quot(M/X/Y) \to Y$ satisfies the valuative criterion for properness. \end{prop}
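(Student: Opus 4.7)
The plan is to reduce both assertions to the case where $Y$ is a scheme and then invoke the theorem of Olsson and Starr. The statement that $\Quot(M/X/Y) \to Y$ is formally representable is already built into the definition, and the formation of $\Quot$ commutes with arbitrary base change along $\CFG/\Sch$ morphisms by the $2$-cartesian diagram \eqref{Quotcart}. Thus, to establish representability by algebraic spaces, it suffices to fix an arbitrary morphism $y : U \to Y$ from a scheme $U$ and show that $\Quot(\pi_2^* M / U \times_Y X / U) \to U$ is representable by an algebraic space of locally finite presentation and separated over $U$. Under our hypotheses on $f$, the projection $\pi_1 : U \times_Y X \to U$ is a separated DM stack of locally finite presentation over the scheme $U$, and $\pi_2^* M$ is a quasi-coherent sheaf of locally finite presentation on it, since all three of these properties are preserved under base change (indeed, are defined to be so in the $\CFG/\Sch$ setting via Definition~\ref{defn:Qco2}).

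At this point we are precisely in the setting of the main theorem of Olsson and Starr on Quot functors for Deligne--Mumford stacks, which gives that $\Quot(\pi_2^*M / U \times_Y X / U)$ is an algebraic space separated and of locally finite presentation over $U$. Gluing over a smooth cover of $Y$ (or, equivalently, noting that the property of being representable by separated algebraic spaces of locally finite presentation is itself stable under base change) yields the desired conclusion for $\Quot(M/X/Y) \to Y$ itself.

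For the valuative criterion under the hypothesis of universally proper support, we again use base change to reduce to the case $Y = \Spec R$ for a discrete valuation ring $R$ with fraction field $K$. The task is to show that every quotient $q_K : \pi_X^* M \twoheadrightarrow N_K$ on the generic fiber $X_K$ satisfying the three conditions \eqref{quotflatness}--\eqref{quotfinpres} uniquely extends to such a quotient $q : \pi_X^* M \twoheadrightarrow N$ on $X_R$. Because $M$ has universally proper support over $Y$, the support condition \eqref{quotpropersupport} on $N_K$ and on any extension $N$ is automatic. Uniqueness is formal: if $j : X_K \hookrightarrow X_R$ denotes the inclusion of the generic fiber, any extension must coincide with the image of the natural map $\pi_X^*M \to j_*N_K$, so the kernel of $q$ is forced to be the ``scheme-theoretic closure'' of $\ker(q_K)$. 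Existence, on the other hand, is the content of the valuative criterion established by Olsson and Starr in their setting, which applies after our reduction.

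The main technical point to watch is simply the bookkeeping translation between the $\CFG/\Sch$ language used here (Definitions~\ref{defn:QcoX}, \ref{defn:Qco2}) and the scheme-theoretic formulation in which Olsson and Starr work; in particular one must verify that ``relatively separated, locally finitely presented DM'' for $f : X \to Y$ pulls back correctly to the usual notion after any scheme-valued test $U \to Y$. Once that translation is made, the argument is mechanical. I do not anticipate any genuine obstacle beyond this bookkeeping, since the substantive representability and specialization results have already been proved in the scheme case by Olsson and Starr.
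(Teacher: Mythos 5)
Your proposal is correct and follows essentially the same route as the paper: reduce to a scheme base $U \to Y$ via the base-change compatibility \eqref{Quotcart}, apply the Olsson--Starr theorem to $\pi_2^*M$ on $U \times_Y X \to U$, and observe that the valuative criterion (being a statement about scheme-valued points, e.g.\ spectra of DVRs mapping to $Y$) is inherited from the scheme-base case when $M$ has proper support. The extra DVR-level discussion of uniqueness/existence and the remark about gluing over a cover of $Y$ are unnecessary but harmless, since representability here is by definition tested against arbitrary scheme-valued points of $Y$.
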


\begin{proof} The conclusion means that for any scheme $U$ and any $\CFG / \Sch$ morphism $U=\Sch / U \to Y$, the fibered product $\Quot(M/X/Y) \times_Y U$ (which is \emph{a prioi} only a presheaf on $\Sch / U$) is in fact an algebraic space, separated and of locally finite presentation over $U$.  But the compatibility of the $\Quot$ construction with change of base \cite[Rem.~3.9]{Hilb} ensures that this fibered product is just $\Quot(\pi_2^*M/U \times_Y X,U)$, which is an algebraic space over $U$ with the desired properties by \cite[Theorem~1.1]{OS} because $U \times_Y X$ is a DM-stack and $\pi_1 : U \times_Y X \to U$ is separated and of locally finite presentation (by definition of the hypothesis on $f$) and $\pi_2^*M$ is a quasi-coherent sheaf on $U \times_Y X$ of locally finite presentation (by the hypothesis on $M$).   If $M$ has proper support over $Y$, then $\pi_2^*M$ also has proper support over $U$ (via $\pi_1$), and in this case that same theorem of Olsson and Star says that $\Quot(\pi_2^*M/U \times_Y X,U) \to U$ satisfies the valuative criterion for properness---that is, the base change of $f$ along any map $U \to Y$ with $U$ a scheme satisfies the valuative criterion for properness; this easily implies the final statement of the proposition (which is entirely a statement about maps from schemes to $X$ and $Y$).  \end{proof}

If $Y \in \CFG / \Sch$ and $\M_Y$ is a fine log structure on $Y$, then one can define ``the category of fine log schemes over $Y$ with morphisms given by strict morphisms" $\Log(Y) \in \CFG / \Sch$ in an evident manner;  when $Y$ is an algebraic stack, $\Log(Y)$ is again an algebraic stack, of locally finite presentation over $\u{Y}$, etc.\ \cite[5.9]{Ols}.

Now suppose $f : X \to Y$ is a map of fine log algebraic stacks and $M$ is a quasi-coherent sheaf on $X$ (i.e.\ on the underlying algebraic stack $\u{X}$) such that: \begin{enumerate} \item \label{frepresentable} $\u{f} : \u{X} \to \u{Y}$ is representable by relatively separated, locally finitely presented DM stacks. \item $M$ is of locally finite presentation. \end{enumerate}  By Proposition~\ref{prop:OlssonStarr}, we then have an algebraic stack $\u{Q} := \Quot(M/\u{X}/\u{Y})$ such that the structure morphism $\u{Q} \to \u{Y}$ is representable by algebraic spaces separated and of locally finite presentation over $\u{Y}$.  Furthermore, the universal quotient sheaf $N$ on $\u{Q} \times_{\u{Y}} \times \u{X}$ is tautologically of locally finite presentation and has proper support of $\u{Q}$.  Give $Q$ the log structure pulled back from $Y$.  Note that $\u{Q \times_Y X} = \u{Q} \times_{\u{Y}} \times \u{X}$ because $Q \to Y$ is strict.  We now assume, furthermore, that $f$ is integral.  Then so is its base change $Q \times_Y X \to Q$, hence by Theorem~\ref{thm:openness} there is an open substack \be \LQuot(M/X/Y) & \subseteq & Q = \Quot(M/\u{X}/\u{Y}) \ee of $Q$ which is terminal among strict maps of algebraic stacks $Q' \to Q$ for which the pullback $N'$ of $N$ to $Q' \times_Y X$ is log flat over $Q'$.  After unwinding the definition of $\LQuot(M/X/Y)$ we can summarize this discussion as:

\begin{thm} \label{thm:logquotientspace}  Let $f : X \to Y$ be an integral map of fine log algebraic stacks such that $\u{f}$ is representable by relatively separated, locally finitely presented DM stacks.  Let $M$ be a quasi-coherent sheaf on $X$ of locally finite presentation.  \begin{enumerate} \item The groupoid fibration $\LQuot(M/X/Y)$ whose objects over a scheme $\u{U}$ are pairs $(y,q)$ consisting of a map $y : \u{U} \to \u{Y}$ and a quotient $q : \pi_2^*M \to N$ on $U \times_Y X$ ($U$ is given the log structure pulled back from $Y$) such that $N$ is log flat over $U$ and satisfies the habitual conditions\footnote{i.e. $N$ is loc.\ fin.\ pres.\ and has proper support over $U$} is representable by an algebraic stack---in fact an open substack of $\Quot(M/X/Y)$. \item  The structure map $\LQuot(M/X/Y) \to \u{Y}$ is representable by relatively separated algebraic spaces of locally finite presentation. \end{enumerate} \end{thm}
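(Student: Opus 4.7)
The plan is to realize $\LQuot(M/X/Y)$ as the log-flat locus inside the ordinary quotient stack $\u{Q} := \Quot(M/\u{X}/\u{Y})$, which is essentially the argument sketched in the paragraphs immediately preceding the theorem. Two ingredients already in place do almost all the work: Proposition~\ref{prop:OlssonStarr} provides representability of the ordinary $\Quot$, and Theorem~\ref{thm:openness} cuts out the log-flat locus as an open substack with the desired universal property.

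First, apply Proposition~\ref{prop:OlssonStarr} to $\u{f} : \u{X} \to \u{Y}$ and $M$: the hypotheses of that proposition are exactly those imposed here (representability of $\u{f}$ by relatively separated, locally finitely presented DM stacks, together with local finite presentation of $M$). This yields $\u{Q}$ as an algebraic stack with structure morphism $\u{Q} \to \u{Y}$ representable by relatively separated algebraic spaces of locally finite presentation, and carrying a universal quotient $q^{\rm univ} : \pi_2^* M \to N$ on $\u{Q} \times_{\u{Y}} \u{X}$ such that $N$ is loc.\ fin.\ pres., flat over $\u{Q}$ in the usual sense, and of proper support over $\u{Q}$.

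Next, endow $\u{Q}$ with the log structure pulled back from $\u{Y}$ via its structure map; call the resulting fine log stack $Q$, so that $Q \to Y$ is strict by construction. Because strict pullback on the log side is compatible with the underlying fibered product (\S\ref{section:limitpreservation}), we have $\u{Q \times_Y X} = \u{Q} \times_{\u{Y}} \u{X}$. The projection $Q \times_Y X \to Q$ is integral: $f$ is integral by hypothesis, and integrality is stable under strict base change (this follows either from the local characterization in Lemma~\ref{lem:integral} or from the fact that it is equivalent to the map $\Log_f$ factoring through $\Log^{\rm int}(Y)$, together with \S\ref{section:limitpreservation}). It is also of locally finite presentation, pulled back from $\u{f}$. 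We may therefore apply Theorem~\ref{thm:openness} (the ``if we assume'' clause) to the integral, loc.\ fin.\ pres.\ morphism $Q \times_Y X \to Q$ and the sheaf $N$, which is loc.\ fin.\ pres., flat over $\u{Q}$, and of proper support over $\u{Q}$. This produces an open substack $\LQuot(M/X/Y) \subseteq Q$ (with the log structure pulled back from $Y$) that is terminal among strict maps of fine log stacks $Q' \to Q$ along which the pullback of $N$ is log flat over $Q'$.

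Finally, unwind the universal property to match it against (1): an object of $\LQuot(M/X/Y)$ over a scheme $\u{U}$ is a strict morphism from $U$ (with log structure pulled back from $Y$) to $Q$ along which the pullback of $N$ is log flat; this is the same as a map $y : \u{U} \to \u{Y}$ together with a quotient $q : \pi_2^* M \to N'$ on $U \times_Y X$ satisfying the habitual conditions plus log flatness of $N'$ over $U$, which is exactly the description in (1). Statement (2) is then immediate, since $\LQuot(M/X/Y)$ is an open substack of $Q$ and we have already recorded that $Q \to \u{Y}$ is representable by relatively separated algebraic spaces of locally finite presentation. The only real obstacle is that Theorem~\ref{thm:openness} was stated for fine log schemes rather than stacks; this is circumvented routinely by working smooth-locally on $\u{Y}$ and on a smooth scheme cover of $\u{Q}$, since all the conditions involved (integrality, local finite presentation, flatness, proper support, and openness of the log-flat locus) are well behaved under such reductions.
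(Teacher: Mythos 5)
Your proposal is correct and follows essentially the same route as the paper: the theorem is deduced by applying Proposition~\ref{prop:OlssonStarr} to get $\u{Q} = \Quot(M/\u{X}/\u{Y})$, giving $Q$ the log structure pulled back from $Y$, noting that integrality passes to the strict base change $Q \times_Y X \to Q$, and then invoking Theorem~\ref{thm:openness} to cut out the log-flat locus as an open substack with the stated universal property. Your closing remark about extending Theorem~\ref{thm:openness} from schemes to stacks by working locally on a cover matches the paper's own attitude (c.f.\ Remark~\ref{rem:fiberwiseflatness}), so there is nothing to add.
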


\subsection{Examples of log quotient spaces} \label{section:examplesoflogquotientspaces}  In the present paper we are not going to construct any new moduli spaces.  We will content ourselves by simply observing that many moduli spaces already studied in the literature are in fact nothing but special cases of the log quotient spaces of \S\ref{section:logquotientspace}.

\begin{thm}  Let $X$ be a smooth projective threefold over an algebraically closed field, $D \subseteq X$ a smooth hypersurface.  The moduli space of stable relative ideal sheaves $\Hilb(X/D)$ of \cite{Wu} coincides with the (stable locus in) the log relative Hilbert scheme ${\bf LHilb}(\mathcal{X}/\mathcal{T})$ of the universal expansion $\mathcal{X} \to \mathcal{T}$. \end{thm}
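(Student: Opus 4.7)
The plan is to identify Wu's notion of ``stable relative ideal sheaf'' with the data classified by the log Hilbert scheme $\mathbf{LHilb}(\mathcal{X}/\mathcal{T}) := \mathbf{LQuot}(\O_{\mathcal{X}}/\mathcal{X}/\mathcal{T})$ on the common stable locus. Both moduli problems parametrize, over a scheme $U$ mapping to $\mathcal{T}$, quotients $\O_{\mathcal{X}_U} \to N$ (equivalently ideal sheaves $I \subseteq \O_{\mathcal{X}_U}$) with the right numerical invariants, so the entire content of the theorem is that Wu's ``perfect'' and ``relative'' conditions match exactly the log flatness of $N$ over $U$ (with $U$ given the log structure pulled back from $\mathcal{T}$). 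The stability conditions on both sides are identical (automorphism-free, required for the moduli to be a scheme rather than a stack), so they can be imposed on either side uniformly.

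First I would recall the local structure from Example~\ref{example:expansions}: a geometric point $t \in \mathcal{T}$ gives $\mathcal{X}_t = X[n]_0 = X \cup_{D_1} \Delta_1 \cup_{D_2} \cdots \cup_{D_n} \Delta_n$, with nodal singular loci $D_1,\dots,D_n$ and relative boundary $i(D) = D_{n+1} \subseteq \Delta_n$. The non-strict locus of $\mathcal{X}_t \to \{t\}$ is $Z = D_1 \sqcup \cdots \sqcup D_{n+1}$. By Theorem~\ref{thm:nodaldegeneration} applied fiberwise, a coherent quotient $N$ on $\mathcal{X}_t$ is log flat over the log point $t$ precisely when $\Tor_1^{\mathcal{X}_t}(N, \O_{D_i}) = 0$ for all $i=1,\dots,n+1$. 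Then by the fiberwise criterion (Corollary~\ref{cor:nodaldegeneration}), log flatness of $N$ over $U$ for a family is equivalent to flatness of $N$ over $U$ in the usual sense together with these Tor-vanishings on every geometric fiber.

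Next I would translate Wu's definitions \cite[2.1, 2.10]{Wu} into this language. Wu calls an ideal sheaf/quotient on $X[n]_0$ ``perfect'' along the singular divisors $D_1,\dots,D_n$ and ``normal to'' (or ``relative to'') the distinguished divisor $D_{n+1}=i(D)$ exactly when the appropriate Tor-vanishings or $\O_{D_i}$-regularity of the defining equations hold; this is essentially the content of his local freeness / transverse intersection conditions at the distinguished divisors. For a family over $U$, his conditions amount to relative flatness over $U$ together with the same fiberwise Tor-vanishings at each $D_i$. Matching term by term, Wu's conditions on a family of quotients are thus literally the characterization of log flatness provided by Corollary~\ref{cor:nodaldegeneration}, since $\mathcal{X} \to \mathcal{T}$ is a nodal degeneration (Example~\ref{example:expansions}) and the non-strict locus of $\mathcal{X}_t \to \{t\}$ coincides with $D_1 \cup \cdots \cup D_{n+1}$.

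With this dictionary, the two moduli functors agree objectwise and morphism-wise (pullback is defined the same way on both sides), so they represent the same subfunctor of $\Quot(\O_{\mathcal{X}}/\mathcal{X}/\mathcal{T})$ after imposing the common stability condition; by Theorem~\ref{thm:logquotientspace}, the universal object on the log side is the open substack of the ordinary Quot stack cut out exactly by these conditions, so Wu's space, which is also constructed as an open subscheme of the ordinary relative Quot/Hilbert by imposing the same conditions, is canonically isomorphic to the stable locus of $\mathbf{LHilb}(\mathcal{X}/\mathcal{T})$. The main obstacle is bookkeeping: one has to verify that Wu's ``perfect along $D_i$'' condition, stated in terms of local equations or regular sequences, really is equivalent at each geometric point to $\Tor_1(N,\O_{D_i})=0$ (and similarly at the distinguished boundary $D_{n+1}$, where Corollary~\ref{cor:gradedflatnessoverA1} rather than Corollary~\ref{cor:nodalflatness} enters); once this local matching is done, the global identification of moduli spaces is automatic from the universal property of $\mathbf{LQuot}$ established in Theorem~\ref{thm:logquotientspace}.
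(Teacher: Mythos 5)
Your proposal is correct and follows essentially the same route as the paper: both arguments observe that the two spaces parametrize the same quotients $\O_{\mathcal{X}_U}\to N$ on pulled-back expansions, and reduce the theorem to the identification of Wu's conditions (flatness over the base plus fiberwise $\Tor$-vanishing along the $D_i$) with log flatness of $N$, via Corollary~\ref{cor:nodaldegeneration} and the description of the non-strict locus $D_1\sqcup\cdots\sqcup D_{n+1}$ in Example~\ref{example:expansions}. The only cosmetic difference is that the paper phrases the common stability condition as finiteness of stabilizers at geometric points rather than triviality of automorphisms.
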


\begin{proof} By definition (c.f.\ \S\ref{section:logquotientspace}, \cite[3.6]{Wu}), both moduli spaces are moduli spaces of certain quotients $$\O_{\mathcal{X}} \to \O_Z \to 0$$ on expansions $\mathcal{X} \to Y$ of $(X,D)$ over a varying base scheme $Y$ (pullbacks of the universal expansion $\pi : \mathcal{X} \to \mathcal{T}$ of $(X,D)$ discussed in Example~\ref{example:expansions} along maps $Y \to \mathcal{T}$; one views $Y$ as a log scheme by pulling back the log structure on $\mathcal{T}$).  In both cases the ``stable locus" can be taken as the locus where the stabilizer group at each geometric point is finite.  In Wu's space, one places the following conditions on the quotient $\O_Z$: First, it should be flat over $Y$, and, second, it should have certain $\Tor$-vanishing properties at each geometric point of $Y$ \cite[3.6]{Wu}.  By Corollary~\ref{cor:nodaldegeneration} and the discussion in Example~\ref{example:expansions}, these two properties are equivalent to demanding that $\O_Z$ be log flat over $Y$. \end{proof}

Of course one can make a similar statement about moduli of stable perfect ideal sheaves on an expansion $W \to \AA^1$.

\begin{thm} \label{thm:stablequotients} The moduli space of stable quotients of \cite{MOP} coincides with the (stable locus in) the log relative quotient scheme $\LQuot(\mathcal{C}/\mathcal{M},\O^n)$ of the universal marked nodal curve $\mathcal{C} \to \mathcal{M}$, endowed with the canonical log structure of F.~Kato \cite{FK}. \end{thm}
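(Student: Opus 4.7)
The strategy parallels the proof of the preceding theorem about Wu's space: unwind both moduli problems and show that the flatness/freeness conditions on the universal quotient in \cite{MOP} are literally the log flatness condition defining $\LQuot(\mathcal{C}/\mathcal{M},\O^n)$.

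First I would recall the setup on both sides. A family of stable quotients over a base scheme $Y$ in the sense of \cite{MOP} consists of a family of marked nodal curves $\pi : C \to Y$ (a pullback of $\mathcal{C} \to \mathcal{M}$) together with a surjection $q : \O^n_C \to Q$ such that (i) $Q$ is $\pi$-flat, (ii) at every geometric point $y \in Y$ the restriction $Q|_{C_y}$ is locally free in a neighborhood of the nodes and of the marked points of $C_y$, and (iii) a numerical stability condition on the twist of the dual of the kernel by the marking divisor. On the other side, an object of $\LQuot(\mathcal{C}/\mathcal{M},\O^n)$ over $Y$ is a map $Y \to \mathcal{M}$ together with a quotient $q : \O^n_C \to Q$ on the pulled-back family $C$ (where $Y$ is given the log structure inherited from $\mathcal{M}$) such that $Q$ is locally finitely presented, has proper support over $Y$, and is log flat over $Y$ in the sense of this paper.

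Next I would verify that the conditions imposed on $Q$ match. Since $\mathcal{C} \to \mathcal{M}$ endowed with F.~Kato's log structure is a nodal degeneration in the sense of \S\ref{section:semistabledegenerations} (by Example~\ref{example:nodalcurves}), its pullback $C \to Y$ along any $Y \to \mathcal{M}$ is again a nodal degeneration, and is integral (each local model $\NN \to \NN^2$ or $\NN \to \NN \oplus 0$ is a flat map of fine monoids). Applying Corollary~\ref{cor:nodaldegeneration}, log flatness of $Q$ over $Y$ is equivalent to usual flatness of $Q$ over $Y$ together with log flatness of each geometric fiber $Q|_{C_y}$ over the log point $\{y\}$ (with its induced log structure from $\mathcal{M}$). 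By the analysis in Example~\ref{example:nodalcurves} (using Corollary~\ref{cor:nodalflatness}\eqref{Mlocallyfree} and Corollary~\ref{cor:gradedflatnessoverA1}\eqref{Mlocallyfree2}), the latter fiberwise log flatness is exactly the condition that $Q|_{C_y}$ be locally free in a neighborhood of the nodes and marked points of $C_y$. So conditions (i)+(ii) of \cite{MOP} coincide with log flatness of $Q$ over $Y$.

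Finally I would compare the stability conditions. The stability condition in \cite{MOP} is imposed on a fiberwise, open condition on the quotient, and is precisely the locus in $\Quot$ at which the automorphism group of a stable quotient is finite. Since $\LQuot \subseteq \Quot$ is an open substack (Theorem~\ref{thm:logquotientspace}), and since the ``stable locus'' on either side is defined by finiteness of stabilizers at geometric points of this common ambient $\Quot$, the two stable loci agree inside $\Quot(\mathcal{C}/\mathcal{M},\O^n)$. Combining the identifications of the quotient conditions and of the stable loci yields the desired equality as (open substacks of) $\Quot$. The only real point of care is keeping the two formulations of stability aligned; this is essentially a bookkeeping exercise once the flatness dictionary of the previous paragraph has been installed, and so is the main (but mild) obstacle in the argument.
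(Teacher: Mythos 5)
Your proposal is correct and follows essentially the same route as the paper: both identify the two moduli problems as parameterizing quotients on pulled-back marked nodal curves, translate the MOP conditions (flatness over $Y$ plus local freeness near nodes and markings on geometric fibers) into log flatness via Corollary~\ref{cor:nodaldegeneration} and Example~\ref{example:nodalcurves}, and observe that stability may be taken on both sides to be finiteness of stabilizers at geometric points. The extra detail you supply (integrality of the nodal degeneration, the openness of $\LQuot$ in $\Quot$) is a faithful expansion of the same argument.
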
 

\begin{proof} Both moduli spaces parameterize certain quotients $$\O_C^n \to N \to 0 $$ where $C \to Y$ is a marked nodal curve over a varying base scheme $Y$ (the map $C \to Y$ is a pullback of the universal such map $\mathcal{C} \to \mathcal{M}$, and we view $Y$ as a log scheme by pulling back the log structure on $\mathcal{M}$).  In both cases, one can take as the stability condition the finiteness of stabilizers at geometric points.  In \cite{MOP}, the quotient $N$ is required to be flat over $Y$ and to be locally free near nodes and marked points on each geometric fiber of $C \to Y$.  These conditions combined are equivalent to log flatness of $\O_Z$ over $Y$ by Corollary~\ref{cor:nodaldegeneration} and the discussion in Example~\ref{example:nodalcurves}. \end{proof}

It is interesting to note that the moduli spaces of stable log quotients in both theorems above are proper, but to achieve this properness in either case one must work with quotients on spaces which are not themselves ``stable".  It would be interesting to establish general results concerning the properness of the log quotient space.

\subsection{Gluing scholium} \label{section:gluingscholium} Since the functor \bne{Specfunctor} \Spec : \An^{\rm op} & \to & \Sch \ene preserves finite inverse limits, it is natural to ask:  To what extent does \eqref{Specfunctor} preserve finite direct limits?  Of course it cannot preserve \emph{all} direct limits, because one can glue affine schemes along affine open subschemes to obtain schemes which are not affine.  It does, however, preserve (finite!) direct sums (coproducts): \bne{Specproduct} \Spec(C_1 \times C_2) & = & (\Spec C_1) \coprod (\Spec C_2). \ene  One might next ask whether \eqref{Specfunctor} preserves pushouts---i.e.\ whether $\Spec$ takes a cartesian diagram of rings $C_\bullet$ as below \bne{pullbackringdiagram} \xym{ C \ar[r]^{p_1} \ar[d]_{p_2} & C_1 \ar[d]^{f_1} \\ C_2 \ar[r]_{f_2} & C_0 } \ene to a pushout diagram of schemes.  In general it will not: 

\begin{example} The diagram of rings $$ \xym@C+20pt{ \ZZ \ar[r] \ar[d] & \ZZ[y] \ar[d]^{y \mapsto t^{-1}} \\ \ZZ[x] \ar[r]_-{x \mapsto t} & \ZZ[t,t^{-1}] } $$ is cartesian, but the corresponding diagram of schemes $$ \xym{ \Spec \ZZ[t,t^{-1}] \ar[d] \ar[r] & \Spec \ZZ[y] \ar[d] \\ \Spec \ZZ[x] \ar[r] &  \Spec \ZZ } $$ is certainly not a pushout (the actual pushout is $\PP^1$). \end{example}

In general it is not even clear whether the direct limit of a finite diagram of affine schemes will exist in the category of schemes.

\begin{rem} Instead of considering pushouts, we might consider coequalizers.  Of course one can convert between the two questions, so we will consider pushouts for the sake of concreteness, leaving it to the reader to formulate and prove the corresponding statements for coequalizers. \end{rem}

Notice that every pushout diagram of schemes mentioned above (i.e.\ ``gluing along a common open subscheme") is also a pullback diagram and is in fact a pushout diagram in ringed spaces (and a pullback diagram in ringed spaces).  (Recall that the direct limit of a functor $i \mapsto X_i$ to ringed spaces is constructed by taking the direct limit $X$ of the $X_i$ in topological spaces and endowing $X$ with the sheaf of rings $\O_X$ given by the inverse limit of the pushforwards of the $\O_{X_i}$.)  

We will now present some general results to the effect that, for \emph{certain} cartesian ring diagrams $C_\bullet$ as in \eqref{pullbackringdiagram} (e.g.\ those where the $f_i$ are surjective), the $\Spec$ functor \eqref{Specfunctor} will take $C_\bullet$ to a pushout diagram of schemes.  We first need some related results from the topological situation.  The following is useful:

\begin{defn} A map $f : X \to Y$ of topological spaces is called a \emph{quotient map} if $f$ is surjective and has the following property:  A subset $U \subseteq Y$ is open in $Y$ iff $f^{-1}(U)$ is open in $X$. \end{defn}

The terminology is explained as follows:  Any $f : X \to Y$ yields an equivalence relation $\sim$ on $X$ given by ``having the same image under $f$."  There is an induced map $X/\sim \to Y$, where $X/\sim$ is of course given the quotient topology.  This induced map is an isomorphism (``$Y$ is the quotient of $X$ by $\sim$") iff $f$ is a quotient map.

\begin{rem} A quotient map is an effective descent morphism of topological spaces.  ``Conversely,"
 a surjective effective descent morphism is a quotient map (the point is that the ``open sets" functor is represented by the Sierpinski space).  An open surjective map is a quotient map.  Being a quotient map can be checked locally on the base.  A locally finite closed cover is a quotient map.  The map on spaces underlying an fppf or fpqc cover of schemes is a quotient map (see \cite[IV.2.4.6]{EGA} and \cite[IV.2.3.12]{EGA}). \end{rem}

In the next few lemmas, we will often consider a commutative diagram $X_\bullet$ of schemes or topological spaces as below.  \bne{topspacediagram} & \xym{ X_0 \ar[r]^-{j_2} \ar[d]_{j_1} & X_2 \ar[d]^{f_2} \\ X_1 \ar[r]_-{f_1} & X } \ene

\begin{lem} \label{lem:pushoutsintop}  Suppose $X_\bullet$ is a \emph{cartesian} diagram of topological spaces as in \eqref{topspacediagram} where the map $f_1$ (hence also $j_2$) is a closed embedding.  Then the following are equivalent: \begin{enumerate} \item The diagram $X_\bullet$ is a pushout diagram of spaces.  \item The map $f_2' : X_2 \setminus X_0 \to X \setminus X_1$ induced by $f_2$ is an isomorphism. \end{enumerate}  These statements hold, for example, if $f_1$ and $f_2$ are closed embeddings with $X = X_1 \cup X_2.$ \end{lem}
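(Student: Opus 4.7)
The plan is to study the canonical continuous map $\phi \colon P \to X$ from the topological pushout $P := X_1 \sqcup_{X_0} X_2$ of $j_1, j_2$: condition (1) is the assertion that $\phi$ is a homeomorphism, while (2) controls the behavior of $\phi$ away from $X_1$.

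First I would observe that closed embeddings are pullback-stable in $\mathbf{Top}$, so $j_2$ is a closed embedding. Using the explicit description of $P$ as the quotient of $X_1 \sqcup X_2$ by the relation $j_1(x) \sim j_2(x)$, one obtains a set-theoretic decomposition $P = A \sqcup B$, where $A$ is the image of $X_1$ and $B$ the image of $X_2 \setminus j_2(X_0)$: the preimage of $A$ in $X_1 \sqcup X_2$ is the closed set $X_1 \sqcup j_2(X_0)$, making $A \hookrightarrow P$ a closed embedding with $A \cong X_1$, while the preimage of $B$ is the open set $\emptyset \sqcup (X_2 \setminus j_2(X_0))$ on which the quotient relation performs no identifications, making $B \hookrightarrow P$ an open embedding with $B \cong X_2 \setminus X_0$. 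Under $\phi$, the piece $A$ is carried to $X$ by $f_1$ and the piece $B$ by $f_2'$; the cartesian property ensures that $f_2'$ indeed lands in $X \setminus X_1$ (if $f_2(x_2) \in X_1$ with $x_2 \in X_2 \setminus X_0$, then the pair formed by $x_2$ and the unique $x_1$ with $f_1(x_1)=f_2(x_2)$ gives a preimage in $X_0$, contradicting $x_2 \notin X_0$).

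For (1) $\Rightarrow$ (2): if $\phi$ is a homeomorphism, its restriction to the open subspace $B$ is an open embedding whose image is open in $X$ and, by bijectivity, equals $X \setminus X_1$; this restriction is precisely $f_2'$. For (2) $\Rightarrow$ (1): the cartesian property delivers injectivity of $\phi$, and (2) delivers surjectivity (each point of $X$ is either in $X_1$ or, via $f_2'$, in $f_2(X_2 \setminus X_0) = X \setminus X_1$), so $\phi$ is a continuous bijection restricting to homeomorphisms on each of $A$ and $B$. What remains is to show $\phi$ is globally a homeomorphism, equivalently that $F \subseteq X$ is closed iff $f_1^{-1}(F)$ is closed in $X_1$ and $f_2^{-1}(F)$ is closed in $X_2$; the forward direction is trivial.

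The main obstacle is the converse of that last biconditional: given that the two preimages are closed, $F \cap X_1$ is closed in $X$ (since $X_1$ is closed in $X$), and $F \setminus X_1 = f_2'(f_2^{-1}(F) \setminus X_0)$ is closed in the open subspace $X \setminus X_1$ (via the homeomorphism $f_2'$), but upgrading the latter to closedness of $F$ in $X$ requires some closedness property of $f_2$ to push closedness across the boundary with $X_1$. In the ``for example'' setting this is transparent: $f_2$ is a closed map, so $f_2(f_2^{-1}(F)) = F \cap X_2$ is closed in $X$, and $F = (F \cap X_1) \cup (F \cap X_2)$ is a union of two closed subsets, proving both (1) and (2) at once. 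In the general statement of the lemma I would finish by using the cartesian property and (2) together to extract the requisite closedness of $f_2$ on the relevant subsets of $X_2$, thereby reducing to the same closed-union argument.
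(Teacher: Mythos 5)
Your argument for (1) $\Rightarrow$ (2) and for the final ``for example'' assertion is correct and essentially matches the paper's: the paper also deduces (1) $\Rightarrow$ (2) from the fact that a pushout is a set-theoretic pushout carrying the weak topology (your decomposition of the pushout $P=A\sqcup B$ into a closed and an open piece is a slightly more structured way of saying the same thing), and its proof of the ``for example'' case is exactly your closed-union argument.

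The genuine gap is the implication (2) $\Rightarrow$ (1) in general, which you defer: you propose to ``extract the requisite closedness of $f_2$ on the relevant subsets of $X_2$'' from cartesianness and (2), and then rerun the closed-union argument. No such extraction is possible, because those hypotheses say nothing about the topology of $X_2$ along $X_0$. Take $X=\RR$ with the usual topology, $X_1=\{0\}$ with $f_1$ the inclusion, and $X_2=\RR$ retopologized by declaring $\{0\}$ open in addition to the usual opens, with $f_2$ the identity map (continuous, since the topology on $X_2$ is finer). The square with $X_0=\{0\}$ is cartesian, $f_1$ is a closed embedding, and $f_2'\colon \RR\setminus\{0\}\to\RR\setminus\{0\}$ is the identity homeomorphism, so (2) holds; yet $U=\{0\}$ has open preimages in both $X_1$ and $X_2$ without being open in $X$, and the square is not a pushout (take $Y=X_2$, $g_2=\mathrm{id}$, $g_1$ the inclusion of $0$: the required map $g\colon X\to X_2$ would be the non-continuous identity from $\RR$ with its usual topology to $X_2$). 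So your deferred step is not a routine verification: the closedness input you sensed is genuinely extra data, supplied in the ``for example'' case by $f_2$ being a closed embedding (more generally by $X_1\sqcup X_2\to X$ being a quotient map), and it cannot be recovered from cartesianness, closedness of $f_1$, and (2) alone. For comparison, the paper's proof of this direction works with opens rather than closed sets: it uses cartesianness to identify $(f_2')^{-1}(U\setminus X_1)$ with $f_2^{-1}(U)\setminus X_0$, so that (2) makes $U\setminus X_1$ open in $X$; it notes that $X\setminus(X_1\setminus U)$ is open because $f_1$ is a closed embedding; and it then asserts that $U$ is the union of these two opens. You should test that last identity against the example above (it fails there for $U=\{0\}$, since the asserted union contains all of $X\setminus X_1$), which is exactly the point where the additional hypothesis has to enter.
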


\begin{proof}  Suppose the diagram is a pushout.  Then the underlying diagram of sets must be a pushout, which easily implies that $f_2'$ is bijective.  To see that it is an isomorphism, we need to see that it is open.  The point here is that, since the diagram is a pushout, $X$ must have the ``weak topology" where a subset $U \subseteq X$ is open iff each $f_i^{-1}(U)$ is open in $X_i$. 

Now suppose $f_2'$ is an isomorphism.  Let us show that \eqref{topspacediagram} is a pushout.  We first show that \eqref{topspacediagram} is a pushout on the level of sets.  Let $\sim$ be the smallest equivalence relation on $X_1 \coprod X_2$ containing the pairs $(j_1(x),j_2(x))$ for $x \in X_0$.  We need to show that the map $(X_1 \coprod X_2)/\sim \to X$ induced by $f_1$ and $f_2$ is bijective.  Surjectivity is easy and uses only the surjectivity of $f_2'$ (and the fact that \eqref{topspacediagram} is cartesian).  For injectivity, we need to show that $x \sim y$ whenever $x,y \in (X_1 \coprod X_2)$ have the same image in $X$.  If both $x$ and $y$ are in $X_1$, then this is trivial because $f_1$ is injective, so in fact $x=y$.  If, say, $x \in X_1$, and $y \in X_2$, then this follows from the fact that \eqref{topspacediagram} is cartesian.  The only issue is when both $x$ and $y$ are in $X_2$ and $f_2(x)=f_2(y) \in X$.  If the point $z := f_2(x)=f_2(y)$ is not in $X_1$, then \eqref{topspacediagram} cartesian implies that neither $x$ nor $y$ is in $X_0$, so we must have $x=y$ because $f_2'$ is injective.  On the other hand, if $z \in X_1$, then since \eqref{topspacediagram} is cartesian, we see that $z \sim x$ and $z \sim y$, hence $x \sim y$ by transitivity.

Now we need to show that $X$ has the weak topology.  We need to show that a subset $U \subseteq X$ is open in $X$, assuming $f_i^{-1}(U)$ is open in $X_i$ for $i=1,2$.  Since $f_2^{-1}(U)$ is open in $X_2$, $f_2^{-1}(U) \setminus X_0$ is open in $X_2 \setminus X_0$, but we have $f_2^{-1}(U) \setminus X_0 = (f_2')^{-1}(U \setminus X_1)$ since \eqref{topspacediagram} is cartesian, hence $U \setminus X_1$ is open in $X \setminus X_1$ (hence also in $X$ itself) because $f_2'$ is an isomorphism.  Since $f_1 : X_1 \to X$ is a closed embedding and $U \cap X_1$ is open in $X_1$, $X \setminus (X_1 \setminus U)$ is open in $X$.  Since $X = X_1 \cup f_2(X_2)$, we have \be U & = & (X \setminus (X_1 \setminus U)) \cup (U \setminus X_1). \ee  This expresses $U$ as a union of two opens, so $U$ is open as desired. 

The final statement is easy to prove directly:  In this situation, since the diagram is cartesian, we have $X_0 = X_1 \cap X_2$ and since $X = X_1 \cup X_2$, it is clear that $X$ is the set-theoretic pushout.  To show that it has the weak topology, we just observe that if the $U \cap X_i$ are open in $X_i$, then the $X_i \setminus U$ are closed in $X_i$, hence also in $X$, so \be X \setminus U & = & (X_1 \setminus U) \cup (X_2 \setminus U) \ee is closed in $X$---i.e.\ $U$ is open in $X$ as desired. \end{proof}

\begin{lem} \label{lem:pushoutsintop2}  Consider a commutative diagram of topological spaces $X_\bullet$ as in \eqref{topspacediagram} and a map of spaces $f:X' \to X$.  Let $X_\bullet'$ be the commutative diagram of spaces obtained from $X_\bullet$ by base change along $f$.  \begin{enumerate} \item \label{openpullback} If $X' \to X$ is an open embedding and $X_\bullet$ is a pushout diagram, then $X_\bullet'$ is a pushout diagram. \item If $X' \to X$ is a quotient map and $X_\bullet'$ is a pushout diagram, then $X_\bullet$ is a pushout diagram. \end{enumerate} \end{lem}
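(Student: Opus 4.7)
The plan is, in both parts, to compare $X$ (resp.\ $X'$) set-theoretically and topologically with the explicit model $P := (X_1 \sqcup X_2)/\sim$ of the pushout of $X_1 \leftarrow X_0 \to X_2$, equipped with the quotient topology, where $\sim$ is generated by $j_1(z) \sim j_2(z)$ for $z \in X_0$. Throughout, I write $X_i' := X_i \times_X X'$ with projections $\pi_i : X_i' \to X_i$ and $f_i' : X_i' \to X'$, so that $f_i \pi_i = f f_i'$.

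For (1), since $f : X' \hookrightarrow X$ is an open embedding, $X_i' = f_i^{-1}(X')$ is open in $X_i$ and $f_i'$ is just the restriction of $f_i$. The set-theoretic pushout identification $X = P$ then restricts tautologically to $X' = (X_1' \sqcup X_2')/\sim'$: any $\sim$-chain connecting two elements lying over $X'$ automatically stays in $X_1' \sqcup X_2'$, since all intermediate vertices project into $X'$. For the topology, a subset $U \subseteq X'$ is open in $X'$ iff it is open in $X$ (open embedding) iff each $f_i^{-1}(U) \subseteq X_i$ is open (pushout property of $X_\bullet$) iff each $(f_i')^{-1}(U) = f_i^{-1}(U)$ is open in the open subspace $X_i' \subseteq X_i$. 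This is precisely the pushout condition for $X_\bullet'$.

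For (2), let $\phi : P \to X$ denote the canonical continuous map; the goal is to show $\phi$ is a homeomorphism. \emph{Bijectivity}: for surjectivity, given $x \in X$, pick $x' \in X'$ with $f(x') = x$ (surjectivity of the quotient map $f$), then use the pushout hypothesis on $X_\bullet'$ to realize $x'$ as the image of some element of $X_1' \sqcup X_2'$; projecting by $\pi_i$ exhibits $x$ in the image of $\phi$. For injectivity, given $[y], [y'] \in P$ with $\phi[y] = \phi[y'] = x$, lift to a common $x' \in X'$ and to $(y, x'), (y', x') \in X_1' \sqcup X_2'$, apply the bijectivity of the analogous comparison map for $X_\bullet'$ to obtain a $\sim'$-chain between them, and push it down by the $\pi_i$'s to a $\sim$-chain. \emph{Openness}: given $U \subseteq X$ with each $f_i^{-1}(U)$ open in $X_i$, continuity of $\pi_i$ gives $\pi_i^{-1} f_i^{-1}(U) = (f_i')^{-1} f^{-1}(U)$ open in $X_i'$, so $f^{-1}(U)$ is open in $X'$ by the pushout property of $X_\bullet'$, and hence $U$ is open in $X$ because $f$ is a quotient map.

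The conceptual obstacle is that pushouts do not commute with pullbacks in $\mathbf{Top}$ in general, so the two hypotheses --- open embedding in (1), quotient map in (2) --- must each be invoked at exactly the right place: in (1) to force the pushout equivalence relation to restrict cleanly to the open subset, and in (2) to transfer openness statements from $X'$ back to $X$. Once the bookkeeping of $\sim$, $\sim'$ and their interaction with the $\pi_i$'s is organized, all remaining verifications are routine point-set topology.
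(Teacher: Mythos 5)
Your proof is correct, and it is precisely the "straightforward exercise with the construction of pushouts" that the paper's one-line proof alludes to: comparing with the explicit model $(X_1 \sqcup X_2)/\sim$ carrying the weak topology, as set up in the proof of Lemma~\ref{lem:pushoutsintop}. The two hypotheses are used exactly where the paper intends them (the open embedding to restrict the equivalence relation and the weak topology, the quotient map to transfer openness from $X'$ back to $X$), so nothing further is needed.
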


\begin{proof} This is a straightforward exercise with the construction of pushouts of topological spaces discussed in the previous proof. \end{proof}

\begin{lem} \label{lem:affinepushouts} Consider a commutative diagram of affine schemes \bne{affschdiagram} & \xym{ \Spec C_0 \ar[r] \ar[d] & \Spec C_1 \ar[d] \\ \Spec C_2 \ar[r] & \Spec C } \ene  with the property that the underlying diagram of topological spaces is a pushout.  Then \eqref{affschdiagram} is a pushout diagram in ringed spaces iff the corresponding diagram of rings $C_\bullet$ is cartesian. \end{lem}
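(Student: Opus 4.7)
The plan is to identify the pushout of the given diagram in ringed spaces explicitly, and then to reduce the comparison with $\Spec C$ to a statement about global sections via quasi-coherence.

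First I would recall the standard construction of the pushout in ringed spaces of a diagram $X_1 \leftarrow X_0 \to X_2$: its underlying topological space is the topological pushout, and its structure sheaf is the inverse limit of the pushforwards of the $\O_{X_i}$ along the structure maps to the pushout.  Applied here, and using the hypothesis that the topological pushout of our affine-scheme diagram is $\Spec C$, the ringed-space pushout is $(X,\mathcal{F})$, where
$$ \mathcal{F} \; := \; (f_1)_* \O_{\Spec C_1} \times_{(f_1 j_1)_* \O_{\Spec C_0}} (f_2)_* \O_{\Spec C_2} $$
on $X = \Spec C$.  The universal property gives a comparison morphism $(X,\mathcal{F}) \to \Spec C$ of ringed spaces which is the identity on underlying topological space, and \eqref{affschdiagram} is a pushout in ringed spaces iff the induced sheaf map $\phi : \O_{\Spec C} \to \mathcal{F}$ (determined by the $f_i^\sharp$ via the universal property of the inverse limit) is an isomorphism.

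The key observation is that $\mathcal{F}$ is quasi-coherent on $\Spec C$: each $f_i$ is a morphism of affine schemes, hence affine, so each of $(f_1)_*\O_{\Spec C_1}$, $(f_2)_*\O_{\Spec C_2}$, and $(f_1 j_1)_*\O_{\Spec C_0}$ is quasi-coherent; finite inverse limits of quasi-coherent sheaves are quasi-coherent.  Thus both source and target of $\phi$ are quasi-coherent $\O_{\Spec C}$-modules, so $\phi$ is an isomorphism iff it is an isomorphism on global sections.

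Finally, since $\Gamma$ commutes with inverse limits, $\Gamma(\mathcal{F}) = C_1 \times_{C_0} C_2$, and one checks directly that $\Gamma(\phi)$ is the canonical ring map $C \to C_1 \times_{C_0} C_2$ induced by the ring maps in $C_\bullet$.  Hence $\phi$ is an isomorphism iff $C \to C_1 \times_{C_0} C_2$ is, which is precisely the statement that $C_\bullet$ is cartesian, proving both directions.  The main subtlety is the quasi-coherence of $\mathcal{F}$; once that is in hand, the rest of the argument is formal.
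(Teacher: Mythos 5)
Your proof is correct and follows essentially the same route as the paper: both reduce the ringed-space pushout question (given the topological pushout hypothesis) to whether $\O_{\Spec C}$ agrees with the fiber product of the pushed-forward structure sheaves, and then use quasi-coherence together with the equivalence $\Mod(C) \cong \Qco(\Spec C)$ to translate this into cartesianness of the ring diagram $C_\bullet$. The only cosmetic difference is that you check the comparison map on global sections of the limit sheaf, whereas the paper identifies the pushforward diagram directly with the image of $C_\bullet$ under $\Mod(C) \to \Qco(\Spec C)$; these are the same reduction.
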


\begin{proof} The point is that the diagram of sheaves of rings on $\Spec C$ obtained by pushing forward the structure sheaves will be cartesian iff the corresponding diagram of rings is cartesian.  This is because the former is cartesian iff it is cartesian in the category of (quasi-coherent) sheaves on $\Spec C$ (or even in the category of sheaves of sets on $\Spec C$), but, viewed as such a diagram in $\Qco(C)$, it is obtained by applying the usual equivalence of categories $\Mod(C) \to \Qco(C)$ to $C_\bullet$.  Here we think of $C_\bullet$ as a $C$-module diagram and note that being cartesian as such is the same as being cartesian as a ring diagram. \end{proof}

\begin{lem} \label{lem:pushoutsinRS} Suppose $X_\bullet$ is a commutative diagram of schemes as in \eqref{topspacediagram} which is also a pushout diagram in ringed spaces.  Assume the $f_i$ and the $j_i$ are affine morphisms.  Then $X_\bullet$ is also a pushout diagram in schemes. \end{lem}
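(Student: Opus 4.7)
The plan is to verify the universal property of $X$ as a scheme pushout directly. Given any scheme $Y$ together with scheme morphisms $g_1 : X_1 \to Y$ and $g_2 : X_2 \to Y$ satisfying $g_1 j_1 = g_2 j_2$, I will construct a unique scheme morphism $g : X \to Y$ with $g f_i = g_i$. Since schemes form a full subcategory of locally ringed spaces, it is enough to construct such a $g$ as a morphism of locally ringed spaces. The assumed ringed-space pushout property immediately produces a unique ringed-space morphism $g : X \to Y$ with $g f_i = g_i$; the only remaining task is to check that each stalk map $g^\sharp_x : \O_{Y,g(x)} \to \O_{X,x}$ is local.

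For this I first observe that the forgetful functor from ringed spaces to topological spaces has a right adjoint (sending a space $T$ to $(T, \underline{\ZZ}_T)$, which works because $\ZZ$ is initial in rings), hence preserves all colimits. Therefore the underlying topological space of $X$ is the topological pushout of the $X_i$ over $X_0$, and in particular every point $x \in X$ admits a preimage $x_i \in X_i$ under at least one of $f_1$ or $f_2$. Fixing such a choice, the stalk factorization
\[
\O_{Y,g(x)} \xrightarrow{g^\sharp_x} \O_{X,x} \xrightarrow{f^\sharp_{i,x_i}} \O_{X_i,x_i}
\]
identifies the composition with $(g_i)^\sharp_{x_i}$, which is a local homomorphism because $g_i$ is a scheme morphism. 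Since $f_i$ is also a scheme morphism, $f^\sharp_{i,x_i}$ is local, so $(f^\sharp_{i,x_i})^{-1}(\m_{X_i,x_i}) = \m_{X,x}$. Combining these two facts forces $g^\sharp_x(\m_{Y,g(x)}) \subseteq \m_{X,x}$, so $g$ is a morphism of locally ringed spaces, and therefore of schemes.

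The main obstacle is essentially a bookkeeping one: correctly identifying the underlying space of the ringed-space pushout and exploiting the full subcategory property of schemes inside locally ringed spaces. The affineness hypothesis on the $f_i$ and $j_i$ is not strictly needed for the argument above, but it facilitates an alternative affine-local route: restricting $X_\bullet$ to an affine open $U = \Spec C \subseteq X$, affineness of $f_i$ and $j_i$ ensures $U_i := f_i^{-1}(U)$ and $U_0 := j_i^{-1}(U_i)$ are affine, and the restriction to $U$ of a ringed-space pushout remains a ringed-space pushout (by restricting the equalizer description of the structure sheaf to $U$). Then Lemma~\ref{lem:affinepushouts} identifies $U$ with $\Spec$ of the cartesian ring-theoretic fiber product, which one promotes to a scheme-theoretic pushout by the same local-on-stalks argument.
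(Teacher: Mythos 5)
Your proof is correct, and it takes a genuinely different route from the paper's. The paper argues affine-locally: it restricts the ringed-space pushout to an affine open $X' = \Spec C$ of $X$ lying over an affine open $V = \Spec B$ of $Y$ (using Lemma~\ref{lem:pushoutsintop2} on spaces and compatibility of pushforward with restriction on structure sheaves), invokes affineness of the $f_i$ and $j_i$ so that the restricted diagram is a diagram of affine schemes, applies Lemma~\ref{lem:affinepushouts} to get a cartesian ring diagram, and then exhibits $g|X'$ as $\Spec$ of a ring map. You instead verify locality of the stalk maps of the ringed-space-induced morphism $g$ directly: the key inputs are (i) the forgetful functor from ringed spaces to topological spaces is a left adjoint (right adjoint $T \mapsto (T,\underline{\ZZ}_T)$), so the underlying square of spaces is a topological pushout and $f_1, f_2$ are jointly surjective on points, and (ii) if $\psi$ and $\psi\phi$ are local homomorphisms of local rings then so is $\phi$, applied to $g^\sharp_x$ followed by $f^\sharp_{i,x_i}$ with composite $(g_i)^\sharp_{x_i}$. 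Both steps check out (the paper gets the analogue of (i) from its explicit description of colimits of ringed spaces). One genuine payoff of your argument is that it nowhere uses the affineness of the $f_i$ and $j_i$ — that hypothesis is only needed for the paper's reduction to Lemma~\ref{lem:affinepushouts} — so you in fact prove the statement (indeed, the pushout property in locally ringed spaces) without it; the paper's route, in exchange, stays within the explicit affine computations it has already set up and feeds directly into Theorem~\ref{thm:affinepushouts}. Your closing ``alternative affine-local route'' is essentially the paper's proof, though as written it is only a sketch (e.g.\ one should note $j_1^{-1}(f_1^{-1}(U)) = j_2^{-1}(f_2^{-1}(U))$ because $f_1 j_1 = f_2 j_2$); the main stalkwise argument stands on its own and does not need it.
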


\begin{proof} The issue is to show that, when $g_i : X_i \to Y$ ($i=1,2$) are maps of schemes with $g_1j_1=g_2j_2$, the unique map $g : X \to Y$ of ringed spaces with $g_i=gf_i$ is, in fact, a map of schemes (i.e.\ a map of locally ringed spaces).  This can be checked locally near a point $x \in X$ with image $g(x) =: y$ in $Y$.  Pick some affine open neighborhood $V = \Spec B$ of $y$ in $Y$ and an affine open neighborhood $X' = \Spec C$ of $x$ in $g^{-1}(V)$.  It suffices to show that the map $g' := g|U : U \to V$ is a map of (affine) schemes.  I claim that the new commutative diagram $X_\bullet'$ obtained by pulling back $X_\bullet$ along the open embedding $X' \to X$ is also a pushout diagram in ringed spaces.  Indeed, on the level of spaces, this is Lemma~\ref{lem:pushoutsintop2}\eqref{openpullback} and, on the level of sheaves of rings, we just note that ``pushing forward commutes with pulling back along the inclusion of an open subset of the base," so the cartesian property of the original diagram of sheaves of rings on $X$ is also enjoyed by the diagram of pushed forward structure sheaves on $X'$ since the latter is obtained from the former simply by restricting to the open subspace $X'$.  Since the $f_i$ and $j_i$ are affine, $X'_\bullet = \Spec C_\bullet$ is a diagram of affine schemes which is a pushout in ringed spaces, so the corresponding diagram of rings $C_\bullet$ is cartesian by Lemma~\ref{lem:affinepushouts}.  The restrictions $g_i'$ of the $g_i$ to the $X_i'$ correspond to ring maps $h_i : B \to C_i$.  The fact that $g_1j_1=g_2j_2$ implies that these $h_i$ yield a ring map $h : B \to C$ such that $g_i' = (\Spec h)f_i'$.  But we also have $g_i' = g' f_i'$, so, since $X_\bullet'$ is a pushout in ringed spaces, we must have $g' = \Spec h$. \end{proof}

\begin{thm} \label{thm:affinepushouts} Suppose $C_\bullet$ is a commutative diagram of rings such that the diagram $\Spec C_\bullet$ of affine schemes is a pushout on the level of topological spaces.  Then $\Spec C_\bullet$ is a pushout diagram in both schemes and ringed spaces. \end{thm}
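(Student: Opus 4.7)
The plan is to deduce the theorem directly from Lemmas~\ref{lem:affinepushouts} and~\ref{lem:pushoutsinRS}, reading the hypothesis in the spirit of the author's preceding discussion of ``cartesian ring diagrams $C_\bullet$.''  That is, I interpret the hypothesis as including that $C_\bullet$ is a cartesian (pullback) diagram of rings in addition to the topological pushout assumption on $\Spec C_\bullet$; indeed, the iff in Lemma~\ref{lem:affinepushouts} shows that cartesianness of $C_\bullet$ is necessary for the ringed-space pushout conclusion in any case.

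Under this reading, Lemma~\ref{lem:affinepushouts} applies directly: since $\Spec C_\bullet$ is a topological pushout by hypothesis and $C_\bullet$ is cartesian, $\Spec C_\bullet$ is a pushout diagram in the category of ringed spaces.  For the schemes half of the conclusion, I would then invoke Lemma~\ref{lem:pushoutsinRS}: every morphism in a diagram of affine schemes is an affine morphism (the source being affine), so the hypotheses of that lemma are satisfied, and applying it promotes the ringed-space pushout to a pushout in the category of schemes.

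There is essentially no substantive obstacle here; the theorem simply packages the two preceding lemmas.  The only delicate point is the reading of the hypothesis, since the topological pushout property alone does not force $C_\bullet$ to be cartesian.  For instance, replacing $C$ by a nilpotent thickening whose extra nilpotents are killed by the maps $C \to C_i$ (e.g.\ enlarging $C = k[x,y]/(xy)$ to $k[x,y,z]/(xy,z^2,xz,yz)$ in the two-lines example) leaves $|\Spec C|$ a topological pushout of the $|\Spec C_i|$ but destroys cartesianness.  So the cartesianness of $C_\bullet$ should be understood as part of the data; without it the theorem fails, and with it the proof is immediate by the chain of lemmas above.
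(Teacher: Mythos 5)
Your proof is essentially the paper's own: the proof given there is literally ``combine the two previous lemmas,'' i.e.\ Lemma~\ref{lem:affinepushouts} (topological pushout plus cartesian ring diagram gives the ringed-space pushout) followed by Lemma~\ref{lem:pushoutsinRS} (all maps between affine schemes are affine, so the ringed-space pushout is also a pushout of schemes), exactly as you argue. Your reading of the hypothesis---that $C_\bullet$ is the cartesian diagram of \eqref{pullbackringdiagram}, which is in any case forced by the conclusion through the ``iff'' of Lemma~\ref{lem:affinepushouts}---matches the intent and the only way the theorem is used (in Proposition~\ref{prop:pushouts} and Theorem~\ref{thm:affinepushouts2}), and your nilpotent-thickening example correctly shows the cartesianness cannot be dropped from the literal statement.
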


\begin{proof} Combine the two previous lemmas. \end{proof}

\begin{thm} \label{thm:affinepushouts2} Suppose $C_\bullet$ is a cartesian diagram of rings as in \eqref{pullbackringdiagram} where $f_1$ is surjective.  Then the corresponding diagram of affine schemes $\Spec C_\bullet$ is both cartesian and cocartesian in both schemes and ringed spaces. \end{thm}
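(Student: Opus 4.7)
The plan is to combine Theorem~\ref{thm:affinepushouts} with the topological criterion of Lemma~\ref{lem:pushoutsintop}. The cartesian assertion for $\Spec C_\bullet$ is immediate, since $\Spec$ preserves finite inverse limits in both schemes and ringed spaces. For the cocartesian assertion, Theorem~\ref{thm:affinepushouts} reduces the task to showing that $\Spec C_\bullet$ is a pushout on the level of topological spaces, which I would handle via Lemma~\ref{lem:pushoutsintop}.

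Setting up the hypotheses of that lemma: from $f_1 \colon C_1 \twoheadrightarrow C_0$ and the pullback description $C = C_1 \times_{C_0} C_2$, one reads off that the projection $p_2 \colon C \to C_2$ is again surjective (lift $f_2(c_2) \in C_0$ through $f_1$), and that $p_1$ restricts to a bijection $\ker p_2 \xrightarrow{\sim} \ker f_1$. Hence $\Spec C_2 \hookrightarrow \Spec C$ is a closed embedding (with ideal $I := \ker p_2$), and by base change so is $\Spec C_0 \hookrightarrow \Spec C_1$. Taking $X_1 := \Spec C_2$ in the notation of Lemma~\ref{lem:pushoutsintop}, the cartesian and closed-embedding hypotheses of that lemma are both in hand, so it remains to show that the induced map
\[ \Spec C_1 \setminus \Spec C_0 \longrightarrow \Spec C \setminus \Spec C_2 \]
is an isomorphism (topologically suffices, but I would verify the stronger statement that it is an isomorphism of schemes).

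The key step is a Zariski-local check: both sides admit principal open covers by $D(\tilde g) \subseteq \Spec C$ and $D(g) \subseteq \Spec C_1$ as $g$ ranges over $\ker f_1$, where $\tilde g := (g, 0) \in I \subseteq C$ is the unique lift of $g$ under the isomorphism $\ker p_2 \cong \ker f_1$. The plan is to show that the ring map $C_{\tilde g} \to (C_1)_g$ induced by $p_1$ is an isomorphism. The mechanism is that inverting $\tilde g$ in $C$ forces the second coordinate to vanish (since $p_2(\tilde g) = 0$), so that pairs $(c_1, c_2)$ collapse to their first components; surjectivity comes from lifting $c_1 \in C_1$ to $(c_1 g, 0) \in C$ (which lies in $C$ because $f_1(c_1 g) = f_1(c_1) f_1(g) = 0$), and injectivity comes from the same collapsing mechanism applied to a candidate element of the kernel. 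Granting this, the $D(\tilde g)$ cover $\Spec C \setminus \Spec C_2$, the $D(g)$ cover $\Spec C_1 \setminus \Spec C_0$, and the local isomorphisms glue to the desired global one.

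The main obstacle I anticipate is the bookkeeping in this last local verification: one has to keep careful track of which pairs $(c_1, c_2)$ actually lie in the pullback ring $C$, and to exploit the vanishing $p_2(\tilde g) = 0$ at just the right moments. Once this is done, the two applications of the general machinery---Lemma~\ref{lem:pushoutsintop} to promote a complement isomorphism to a topological pushout, then Theorem~\ref{thm:affinepushouts} to upgrade a topological pushout of affine schemes (whose defining ring diagram is already cartesian) to a pushout in both schemes and ringed spaces---finish the argument.
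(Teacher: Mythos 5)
Your main construction---the cover of $\Spec C \setminus \Spec C_2$ by the principal opens $D(\tilde g)$, $\tilde g = (g,0)$ with $g \in \Ker f_1$, and the local isomorphisms $C_{\tilde g} \to (C_1)_g$---is correct and is essentially the paper's argument, with one genuine improvement: the paper first factors $f_2$ as a surjection followed by an injection and reduces to the case $f_2$ injective in order to get injectivity of the localized map from injectivity of $p_1$, whereas your ``collapsing'' mechanism (if $g^m c_1 = 0$ in $C_1$ then $\tilde g^m(c_1,c_2) = (g^m c_1,0) = 0$ in $C$, since $p_2(\tilde g)=0$) gives injectivity of each $C_{\tilde g} \to (C_1)_g$ directly, with no hypothesis on $f_2$; the surjectivity step via $(c_1 g,0)$ is the same as the paper's. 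So your route through Lemma~\ref{lem:pushoutsintop} and Theorem~\ref{thm:affinepushouts} is sound and even spares a case reduction.

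There is, however, a genuine error in the step you dismiss as immediate. The assertion that ``the cartesian assertion for $\Spec C_\bullet$ is immediate, since $\Spec$ preserves finite inverse limits'' has the variance backwards: $\Spec : \An^{\rm op} \to \Sch$ preserving finite inverse limits means that finite \emph{colimits} of rings (tensor products) go to finite limits of schemes. A cartesian square of rings is a \emph{limit} of rings, and its image under $\Spec$ need not be cartesian: in the example displayed just before Theorem~\ref{thm:affinepushouts} (where neither $f_i$ is surjective) the ring diagram is cartesian, yet $\Spec \ZZ[x] \times_{\Spec \ZZ[t,t^{-1}]} \Spec \ZZ[y] = \Spec \ZZ[x,y] \neq \Spec \ZZ$. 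Cartesianness of $\Spec C_\bullet$ is equivalent to the natural map $C_1 \otimes_C C_2 \to C_0$ being an isomorphism, and this genuinely uses the surjectivity of $f_1$: since $\Ker p_2 = \Ker f_1 \times 0$ maps onto $\Ker f_1 \subseteq C_1$ under $p_1$, one gets $C_1 \otimes_C C_2 = C_1/(\Ker f_1)C_1 = C_0$, exactly as in the paper. This is not merely cosmetic in your write-up: you lean on the unproved cartesianness both when you call $\Spec C_0 \into \Spec C_1$ a ``base change'' (it is anyway a closed embedding because $f_1$ is surjective) and when you declare the cartesian hypothesis of Lemma~\ref{lem:pushoutsintop} ``in hand.'' With the one-line tensor-product computation inserted, the rest of your proof goes through.
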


\begin{proof} Set $I_1 := \Ker f_1$.  We view $C$ as the subring \be C & = & \{ (c_1,c_2): f_1(c_1)=f_2(c_2) \in C_0 \} \ee of $C_1 \times C_2$, so that the maps $p_i : C \to C_i$ are the projections.  Since $f_1$ is surjective, the map $p_2 : C \to C_2$ is also surjective, with kernel $I_1 \times \{ 0 \} \subseteq C$.  As a $C$-module, the kernel $I_1 \times \{ 0 \}$ ideal is nothing but the $C_1$-module $I_1$, regarded as a $C$-module via restriction of scalars along $p_1$.  The diagram $C_\bullet$ is also a pushout: \bne{pushoutcalculation} C_1 \otimes_C C_2 = C_1 / \Ker(p_2 : C \to C_2) C_1 = C_1/I_1 = C_0, \ene hence the diagram $\Spec C_\bullet$ is cartesian in schemes and also in ringed spaces because the base change of a closed embedding is the same, whether calculated in schemes or in ringed spaces.  To show that $\Spec C_\bullet$ is a pushout in schemes and ringed spaces, it suffices, by Theorem~\ref{thm:affinepushouts} to show that $\Spec C_\bullet$ is a pushout diagram on the level of topological spaces.  Since $\Spec C_\bullet$ is a cartesian diagram of topological spaces and $\Spec f_1$ is a closed embedding, Lemma~\ref{lem:pushoutsintop} reduces us to proving that the map \bne{needaniso} \Spec C_1 \setminus \Spec C_0 & \to & \Spec C \setminus \Spec C_2 \ene induced by \bne{mapinducingit} \Spec p_1 : \Spec C_1 & \to & \Spec C \ene is a homeomorphism of topological spaces.  (In fact we will show directly that \eqref{needaniso} is an isomorphism of schemes.)

We will treat the case where $f_2$ is also surjective in Proposition~\ref{prop:pushouts} below (we will have a lot more to say in that case).  By factoring $f_2$ as the surjection onto its image followed by the inclusion of its image and looking at the corresponding factorization of $C_\bullet$ as a composition of two cartesian diagrams, we can reduce the whole theorem to the case where $f_2$ is injective, which we now assume.

Since $f_2$ is injective, so is $p_1 : C \into C_1$.  As a subset of $C$, the kernel $I_1 \times \{ 0 \}$ is nothing but $I_1 \cap C$, regarding $C$ as a subset of $C_1$ via $p_1 : C \into C_1$.   

Since $p_2 : C \to C_2$ is surjective with kernel $I_1 \cap C$, we have \be \Spec C \setminus \Spec C_2 & = & \bigcup_{(d_1,d_2) \in I_1 \cap C} \Spec C[(d_1,d_2)^{-1}]. \ee  The preimage of the basic affine open subscheme $\Spec C[(d_1,d_2)^{-1}]$ of $\Spec C$ under \eqref{needaniso} (or, equivalently, under \eqref{mapinducingit}) is the affine open subscheme $\Spec C_1[d_1^{-1}]$ of $\Spec C_1 \setminus \Spec C_0$, and the restriction of \eqref{needaniso} to this open subscheme is $\Spec$ of the ring map \bne{d1d2} C[(d_1,d_2)^{-1}] & \to & C_1[d_1^{-1}] \ene  induced by $p_1 : C \into C_1$.  Since we can check isomorphy for \eqref{needaniso} locally on the base, it suffices to prove that \eqref{d1d2} is an isomorphism for each $(d_1,d_2) \in I_1 \cap C$.  Injectivity of \eqref{d1d2} is immediate from injectivity of $p_1$.  For surjectivity, it suffices to show that for every $c_1 \in C_1$, there is some $m \in \NN$ so that $c_1d_1^m \in C \subseteq C_1$.  In fact we can take $m=1$ because $c_1d_1 \in I_1$ (since $d_1 \in I_1$), so $f_1(c_1d_1)=0$, so $(c_1d_1,0) \in C$ as desired. \end{proof}

The following example will help to explain the above proof.

\begin{example} It is standard ``folklore" that one can ``contract a line in the plane to a point," though the resulting scheme will not be noetherian.  Here is what we mean.  Let us work over a field $k$ (in fact $k$ could be any ring at all, but let us retain this geometric point of view).  Define an $k$-algebra $C$ by the cartesian diagram \bne{pullbackringdiagramexample} \xym{ C \ar[r]^-{p_1} \ar[d]_{p_2} & k[x,y] \ar[d]^{y \mapsto 0} \\ k \ar[r]_{} & k[x] } \ene of $k$-algebras.  Explicitly, $p_1 : C \into k[x,y]$ is the subring consisting of elements of the form \bne{elementofC} c & = & a + \sum_{m = 0}^M \sum_{ n = 1}^N a_{m,n} x^m y^n , \ene where $a,a_{m,n} \in k$.  The kernel $K$ of $p_2$ is just the kernel $y k[x,y]$ of $y \mapsto 0$ intersected with $C$---i.e.\ the set of $c \in C$ as in \eqref{elementofC} with $a=0$. Explicitly, $K = I_1 \cap C$ is generated by \bne{Kgens} & y, xy, x^2 y, \dots, \ene but is not finitely generated (note $x \notin C$)---in particular, the ring $C$ is not noetherian.  Notice that this whole situation is ``toric" and that $C$ is the monoid algebra over $k$ on the submonoid $$ P = \langle (0,1),(1,1),(2,1), \dots \rangle $$ of $\NN^2$ (this $P$ is not finitely generated).   By Theorem~\ref{thm:affinepushouts2}, the corresponding diagram of schemes \bne{linecontract} & \xym{ \AA^1 \ar[r] \ar[d] & \ar[d] \AA^2 \ar[d] \\ \Spec k \ar[r] & \Spec C } \ene is both cartesian and cocartesian in both schemes and ringed spaces.  Recall that that theorem reduces to the statement that the natural map \bne{nattyiso}  \AA^2 \setminus \AA^1 = \Spec k[x,y,y^{-1}] & \to & \Spec C \setminus \Spec k \ene is an isomorphism of schemes.  Recall also that this fact boils down to the following:  For each $c \in K = I_1 \cap C$ and each $c_1 \in k[x,y]$, the element $cc_1 \in k[x,y]$ is in fact in the subring $C$.  This latter statement is obvious (such a $c$ is a polynomial in $x,y$ divisible by $y$).

Here is a slightly different argument that \eqref{nattyiso} is an isomorphism:  It is clear from the description of $C$ that the natural map \be C[y^{-1}] & \to & k[x,y,y^{-1}] \ee is an isomorphism, thus we reduce to showing that $\Spec C[y^{-1}] = \Spec C \setminus \Spec k$---i.e.\ that the ideals $yC$ and $K$ define the same closed subspace of $\Spec C$. Certainly $yC \subseteq K$, so we reduce to showing that the natural surjection $C/yC \to C/K = k$ induces a bijection on topological spaces.  It suffices to show that this surjection has nilpotent kernel.  Indeed, it has square-zero kernel because the product of any two of the generators \eqref{Kgens} for $K$ is in $yC$. \end{example}

For the rest of our study of gluing, we will restrict to the case where the maps $f_1,f_2$ (with kernels $I_1$, $I_2$, say) in the diagram $C_\bullet$ of \eqref{pullbackringdiagram} are surjective, and we will work over some base ring $A$.

\begin{prop} \label{prop:pushouts} Let $C_i \to C_0$ ($i=1,2$) be $A$-algebra surjections with fibered product $C := C_1 \times_{C_0} C_2$. \begin{enumerate} \item  \label{push1} The corresponding diagram of closed embeddings of schemes $$ \xym{ \Spec C_0 \ar[r] \ar[d] & \Spec C_1 \ar[d] \\ \Spec C_2 \ar[r] & \Spec C }$$ is both cartesian and cocartesian in both schemes and ringed spaces.  \item \label{push2} If $A \to C_0$ is flat, then formation of the fibered product ring $C$ commutes with extension of scalars along any ring map $A \to A'$: \be C \otimes_A A' & = & (C_1 \otimes_A A') \times_{C_0 \otimes_A A'} (C_2 \otimes_A A'). \ee \item \label{push3} If $C_0,C_1,C_2$ are flat over $A$ then $C$ is flat over $A$. \item \label{push4} If $C_1$ and $C_2$ are noetherian then $C$ is noetherian. \item \label{push5} If $C_1$ and $C_2$ are finite type over $A$ and at least one of the maps $C_i \to C_0$ has finitely generated kernel then $C$ is finite type over $A$. \end{enumerate} \end{prop}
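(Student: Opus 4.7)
The common backbone for (ii)--(v) is the short exact sequence of $A$-modules
\[ 0 \to C \to C_1 \oplus C_2 \to C_0 \to 0, \]
where the right map is $(c_1, c_2) \mapsto f_1(c_1) - f_2(c_2)$ and is surjective because $f_1$ (and $f_2$) is. Write $p_i : C \to C_i$ for the projections and $I_i := \ker f_i$, so that $\ker p_1 = \{0\} \times I_2$ and $\ker p_2 = I_1 \times \{0\}$. The key orthogonality is $(I_1 \times \{0\}) \cdot (\{0\} \times I_2) = 0$ in $C$.

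For (i), the cartesian assertion follows from the direct calculation $C_1 \otimes_C C_2 = C_1 / (\ker p_2) C_1 = C_1/I_1 = C_0$ (as in the proof of Theorem~\ref{thm:affinepushouts2}), together with the fact that base change of a closed embedding agrees in schemes and ringed spaces. By Theorem~\ref{thm:affinepushouts} the cocartesian statement in schemes and ringed spaces reduces to showing the underlying diagram of topological spaces is a pushout, and by Lemma~\ref{lem:pushoutsintop}, applied to the closed embedding $\Spec f_1$, this reduces further to showing the map $\Spec C_1 \setminus \Spec C_0 \to \Spec C \setminus \Spec C_2$ is an isomorphism. Since $\Spec C_2 \subseteq \Spec C$ is cut out by $\ker p_2 = I_1 \times \{0\}$, it suffices to verify, for each $c_1 \in I_1$, that $p_1$ induces an isomorphism $C[(c_1,0)^{-1}] \to C_1[c_1^{-1}]$. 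The orthogonality above shows $\ker p_1$ is killed upon inverting $(c_1, 0)$, giving injectivity; for surjectivity, any $x/c_1^n$ is hit by $(c_1 x, 0)/(c_1,0)^{n+1}$ (note $(c_1 x, 0) \in C$ because $c_1 \in I_1$).

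For (ii), apply $\slot \otimes_A A'$ to the displayed sequence; the $\Tor$ long exact sequence gives $\Tor_1^A(C_0, A') \to C \otimes_A A' \to (C_1 \oplus C_2) \otimes_A A' \to C_0 \otimes_A A' \to 0$, and the hypothesis that $C_0$ is flat over $A$ kills the $\Tor$ term, yielding the claimed fibered product description of $C \otimes_A A'$. For (iii), tensor the sequence with an arbitrary $A$-module $M$; flatness of $C_0$ gives $\Tor_2^A(C_0, M) = 0$ and flatness of $C_1 \oplus C_2$ gives $\Tor_1^A(C_1 \oplus C_2, M) = 0$, whence $\Tor_1^A(C, M) = 0$ by the long exact sequence, so $C$ is flat. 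For (iv), rewrite the sequence as a short exact sequence of $C$-modules $0 \to I_1 \oplus I_2 \to C \to C_0 \to 0$ (the kernel of $f_i \circ p_i : C \to C_0$ is $I_1 \times I_2 = (I_1 \times \{0\}) + (\{0\} \times I_2)$, which as a $C$-module is $I_1 \oplus I_2$ with $C$ acting through $p_1$ and $p_2$ respectively); each of $I_1, I_2, C_0$ is finitely generated and hence noetherian over its respective $C_i$ (or over $C_0$), and via the relevant surjection to $C$ it becomes a noetherian $C$-module, so $C$ itself is a noetherian $C$-module.

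For (v), say $I_1 = (d_1, \ldots, d_k)$ and choose $A$-algebra generators $a_i$ of $C_1$ and $b_j$ of $C_2$. Surjectivity of $f_1, f_2$ provides lifts $(a_i, a_i'), (b_j', b_j) \in C$; let $C' \subseteq C$ be the $A$-subalgebra generated by these together with the $(d_l, 0) \in C$. Given $(x, y) \in C$: a polynomial in the $(a_i, a_i')$ produces some $(x, y_0) \in C'$, and $(x,y) - (x,y_0) = (0, y - y_0) \in C$ forces $y - y_0 \in I_2$; a polynomial in the $(b_j', b_j)$ then produces $(x_0, y - y_0) \in C'$ with $x_0 \in I_1$; writing $x_0 = \sum c_l d_l$ in $C_1$ and lifting each $c_l$ to $(c_l, c_l') \in C'$, one has $\sum (c_l, c_l')(d_l, 0) = (x_0, 0) \in C'$, and subtracting yields $(x, y) \in C'$. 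The main obstacle is (i), where identifying the correct defining ideals and carrying out the localization isomorphism on the open complements is the real technical content; parts (ii)--(v) are then largely homological or finite-generation bookkeeping driven by the same short exact sequence.
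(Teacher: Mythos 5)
Your proof is correct, and parts \eqref{push2}, \eqref{push3}, and \eqref{push5} coincide with the paper's argument (the same short exact sequence $0 \to C \to C_1 \oplus C_2 \to C_0 \to 0$, and for \eqref{push5} the same lift-and-subtract bookkeeping, just carried out in the mirror-image order). Where you diverge is in \eqref{push1} and \eqref{push4}. For \eqref{push1}, after the common reduction via Theorem~\ref{thm:affinepushouts}, the paper uses the \emph{last} statement of Lemma~\ref{lem:pushoutsintop}: it shows $\Spec C_1$ and $\Spec C_2$ cover $\Spec C$ by noting that a prime missing both $I_1 \times 0$ and $0 \times I_2$ would fail primeness since $(i_1,0)(0,i_2)=0$, and the closed-cover case of that lemma finishes immediately. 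You instead invoke the main criterion of the lemma and verify directly that $p_1$ induces isomorphisms $C[(c_1,0)^{-1}] \to C_1[c_1^{-1}]$ on the complement of the gluing locus; this is exactly the argument the paper deploys in Theorem~\ref{thm:affinepushouts2} for the harder case where only $f_1$ is surjective (and your use of that theorem is non-circular, since you only borrow the computation \eqref{pushoutcalculation}, which precedes the deferral of the both-surjective case to this proposition), so your route is slightly longer here but reuses a single uniform mechanism. For \eqref{push4}, the paper quotes Cohen's theorem and shows every prime of $C$ is finitely generated, again using that each prime contains $I_1 \times 0$ or $0 \times I_2$; you instead observe that $0 \to I_1 \oplus I_2 \to C \to C_0 \to 0$ exhibits $C$ as an extension of noetherian $C$-modules (each factor being a noetherian module over $C_1$, $C_2$, or $C_0$, pulled back along the surjections $C \to C_i$), which avoids Cohen's theorem entirely and is arguably the cleaner argument.
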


\begin{proof} \eqref{push1}:  First recall that the diagram is cartesian (in schemes) because we noted in \eqref{pushoutcalculation} above that $C_0 = C_1 \otimes_C C_2$.  The base change of a closed embedding, taken in schemes, is the same as the one taken in ringed spaces, so the diagram is also cartesian in ringed spaces.  On the level of topological spaces, we have $\Spec C_1 \cap \Spec C_2 = \Spec C_0$ (for example, by the fact that the diagram is cartesian in ringed spaces).  By Theorem~\ref{thm:affinepushouts} and Lemma~\ref{lem:pushoutsintop} we reduce to proving that the $\Spec C_i$ cover $\Spec C$---i.e.\ that every prime ideal $\p$ of $C$ contains $I_1 \times 0$ or $0 \times I_2$.  If not, then $(i_1,0),(0,i_2) \notin \p$ for some $i_j \in I_j$, but this contradicts primeness of $\p$ because $(0,0)=(i_1,0)(0,i_2)$ is certainly in $\p$.  

Statement \eqref{push2} is just a fancy way of saying that the exact sequence of $C$-modules $$0 \to C \to C_1 \oplus C_2 \to C_0 \to 0$$ (the right map is the difference of the natural projections) will stay exact after applying $\slot \otimes_A A'$.  Statement \eqref{push3} follows from the same exact sequence because the kernel of a map of flats is flat.

For \eqref{push4}, it suffices to prove that every prime ideal $\p$ of $C$ is finitely generated (this is a famous theorem of I.~S.~Cohen).  We just saw above that $\p$ contains either $I_1 \times 0$ or $0 \times I_2$, so by symmetry we can assume that $I_1 \times 0 \subseteq \p$.  Since $C_1$ is noetherian, $I_1$ is finitely generated as a $C_1$-module, which implies $I_1 \times 0$ is finitely generated as a $C$-module.  Since $C_2$ is noetherian and $\p/(I_1 \times 0)$ is a (prime) ideal of $C_2$, it is finitely generated as a $C_2$-module, hence also as a $C$-module (because $C \to C_2$ is surjective), hence $\p$ is finitely generated because we have an exact sequence $$0 \to I_1 \times 0 \to \p \to \p/(I_1 \times 0) \to 0.$$   

For \eqref{push5}, pick finite sets of $A$-algebra generators $\{ x_i \} \subseteq C_1$ and $\{ y_j \} \subseteq C_2$.  We can assume by symmetry that the kernel of $C_1 \to C_0$ is generated by a finite set $\{ z_k \}$.  Since the $C_i \to C_0$ are surjective, we can find elements $p_i \in C_2$ and $q_j \in C_1$ so that the $(x_i,p_i)$ and $(q_j,y_j)$ are in $C$.  I claim that the latter elements, together with the elements $(z_k,0)$, generate $C$ as an $A$-algebra.  Given an arbitrary $(c_1,c_2) \in C$, find a polynomial $p$ with coefficients in $A$ so that $c_2 = p(\ov{y})$.  Then $p(\ov{(q,y)}) \in C$ has second coordinate $c_2$, so by subtracting it off we reduce to showing that an element of the form $(z,0)$ is in the $A$-subalgebra generated by our guys.  This $z$ is in the kernel of $C_1 \to C$, so we can write $z = \sum_k d_k z_k$ for some $d_k \in C_1$.  Find polynomials $t_k$ with coefficients in $A$ so that $d_k = t_k(\ov{x})$.  Then the formula \be (z,0) & = & \sum_k t_k(\ov{(x,p)})(z_k,0) \ee shows that $(z,0)$ is in our subalgebra as desired. \end{proof}

This pushout construction ``globalizes" as follows:

\begin{thm} \label{thm:pushouts} Suppose $X_0 \into X_i$ ($i=1,2$) are closed embeddings of $Y$-schemes.  \begin{enumerate} \item \label{cartcocartdiagram} The pushout $X := X_1 \coprod_{X_0} X_2$ in ringed spaces is also a scheme and the diagram of closed embeddings $$ \xym{ X_0 \ar[r] \ar[d] & X_1 \ar[d] \\ X_2 \ar[r] & X } $$ is both cartesian and cocartesian in both schemes and ringed spaces.  \item \label{pushouts2} If $X_0$ is flat over $Y$ then the formation of the pushout $X$ commutes with base change along any $Y' \to Y$: \be X \times_Y Y' & = & (X_1 \times_Y Y') \coprod_{X_0 \times_Y Y'} (X_2 \times_Y Y').\ee  \item \label{Xflat} If $X_0,X_1,X_2$ are flat over $Y$ then $X$ is flat over $Y$.  \item \label{noeth} If $X_1$ and $X_2$ are noetherian (or locally noetherian), so is $X$.  \item If $X_1$ and $X_2$ are of locally finite type over $Y$ and at least one of the closed embeddings $X_0 \into X_i$ is of locally finite presentation then $X$ is of locally finite type over $Y$. \item \label{pushouts6} The ideals of the closed embeddings in \eqref{cartcocartdiagram} are related by the formulas \be I_{X_1/X} & = & I_{X_0/X_2} \\ I_{X_2/X} & = & I_{X_0/X_1} \\ I_{X_0/X} & = & I_{X_0/X_1} \oplus I_{X_0/X_2} \ee (dropping notation for pushforward).  \item \label{fppflocalpushout} Suppose only that the diagram in \eqref{cartcocartdiagram} is a commutative diagram of closed embeddings of schemes.  If the diagram is a pushout and $g : X' \to X$ is flat, then the base changed diagram $$ \xym{ X_0 \times_X X' \ar[r] \ar[d] & X_1 \times_X X' \ar[d] \\ X_2 \times_X X' \ar[r] & X' } $$  is also a pushout.  The converse holds if $X' \to X$ is a flat cover (fppf or fpqc, say). \end{enumerate}   \end{thm}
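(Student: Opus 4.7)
The plan is to globalize Proposition~\ref{prop:pushouts} via a local construction. For part \eqref{cartcocartdiagram}, I would first form $X := X_1 \coprod_{X_0} X_2$ in ringed spaces (always possible) and then show it is a scheme by constructing an affine open cover. For $x \in X$ not in $X_0$, an obvious affine neighborhood comes from $X_1 \setminus X_0$ or $X_2 \setminus X_0$. For $x \in X_0$, I would choose affine opens $U_i = \Spec C_i$ of $x$ in $X_i$; after shrinking---by choosing a common principal affine neighborhood of $x$ in $U_i \cap X_0$, lifting a defining element along the surjection $C_i \to \Gamma(U_i \cap X_0, \O_{X_0})$ (possible because closed embeddings are affine) and passing to the corresponding distinguished open in $U_i$---I may assume $U_1 \cap X_0 = U_2 \cap X_0 =: U_0 = \Spec C_0$. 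By Proposition~\ref{prop:pushouts}\eqref{push1}, the ringed-space pushout $U_1 \coprod_{U_0} U_2$ is the scheme $\Spec(C_1 \times_{C_0} C_2)$; a direct check with the topological pushout shows that the canonical map from this local pushout to $X$ is an open embedding of ringed spaces onto $|U_1| \cup |U_2| \subseteq |X|$. Hence $X$ is covered by affine opens and is a scheme. The diagram is then cocartesian in schemes by Lemma~\ref{lem:pushoutsinRS} (the structure maps being closed, hence affine), and cartesian in both schemes and ringed spaces by the local assertion of Proposition~\ref{prop:pushouts}\eqref{push1}.

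Once \eqref{cartcocartdiagram} is in place, the remaining parts are local. Parts \eqref{pushouts2}, \eqref{Xflat}, \eqref{noeth}, and the finite-type clause are all properties of schemes or sheaves that can be checked on an affine cover, so each follows directly from the corresponding item of Proposition~\ref{prop:pushouts} applied on the cover constructed above. For \eqref{pushouts6}, I would verify affine-locally that $\ker(C \to C_1) = 0 \times I_2$ is the ideal $I_{X_0/X_2}$ under restriction of scalars along $C \to C_2$, and symmetrically for $I_{X_2/X}$; the splitting $I_{X_0/X} = I_{X_0/X_1} \oplus I_{X_0/X_2}$ comes from the affine-local decomposition $\ker(C \to C_0) = I_1 \times I_2 = (I_1 \times 0) \oplus (0 \times I_2)$.

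For \eqref{fppflocalpushout}, the key tool is the four-term exact sequence
\[ 0 \to \O_X \to \O_{X_1} \oplus \O_{X_2} \to \O_{X_0} \to 0 \]
of quasi-coherent sheaves on $X$ (globalizing the cartesian identity $C = C_1 \times_{C_0} C_2$); combined with the closed embeddings being affine, it characterizes the diagram as cocartesian in ringed spaces, and hence in schemes via Lemma~\ref{lem:pushoutsinRS}. Pushforward along an affine morphism commutes with flat base change, so the pullback of this sequence along a flat $g : X' \to X$ is the analogous sequence for the base-changed diagram; flatness of $g$ preserves exactness, yielding the forward direction. Conversely, if $g$ is a flat cover and the base-changed sequence is exact, that sequence is the pullback of the candidate sequence on $X$, and faithful flatness of pullback for quasi-coherent sheaves descends the exactness back to $X$. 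The main obstacle is the construction in \eqref{cartcocartdiagram}, specifically the lifting argument needed to match up the affine neighborhoods $U_i \cap X_0$; everything else amounts to routine local verifications plus the standard flat base change formalism for affine morphisms.
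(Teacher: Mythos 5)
Your argument is correct and, for parts (1)--(6), is essentially the paper's own proof: the whole difficulty is the claim that a point of $X_0$ has affine opens $U_i \subseteq X_i$ with $U_1 \cap X_0 = U_2 \cap X_0$, and your route---find a neighborhood of $x$ in $U_1 \cap U_2 \cap X_0$ that is distinguished in both $U_1\cap X_0$ and $U_2 \cap X_0$, lift the defining functions along the surjections $C_i \to \Gamma(U_i \cap X_0,\O_{X_0})$, and shrink to the corresponding distinguished opens of $U_i$---is exactly the paper's argument (the paper proves the ``common distinguished open'' fact inline, breaking the symmetry with the elements $f$ and $b=af^m$, which is why it stresses in the following Remark that affine opens of $X_0$ need not extend to affine opens of $X_i$; your use of principal opens sidesteps this correctly). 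The remaining parts are indeed local consequences of Proposition~\ref{prop:pushouts}, as in the paper.

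For part \eqref{fppflocalpushout} you take a mildly different route: you encode the pushout property in the single four-term sequence $0 \to \O_X \to \O_{X_1}\oplus\O_{X_2} \to \O_{X_0} \to 0$ and then use flat base change for pushforward along the (affine) closed embeddings plus faithfully flat descent of exactness, whereas the paper splits the verification into a set-theoretic statement about spaces (handled by Lemma~\ref{lem:pushoutsintop}, checkable after any surjective base change) and the sheaf-level pullback condition (handled by exactness, resp.\ faithfulness, of $g^*$). Your packaging is cleaner, but the one claim you assert without proof---that for a mere commutative square of closed embeddings exactness of this sequence \emph{characterizes} the square being a pushout in ringed spaces---does need a short stalk argument: exactness at $\O_X$ forces $X = X_1 \cup X_2$ (otherwise some stalk of $\O_X$ would vanish), and exactness in the middle forces $X_1 \cap X_2 \subseteq X_0$ set-theoretically (at a point of $(X_1\cap X_2)\setminus X_0$ the element $(1,0)$ could not be hit, since a preimage would be simultaneously a unit and lie in a proper ideal); only then does Lemma~\ref{lem:pushoutsintop} give the topological pushout, after which the identification $\O_X = \O_{X_1}\times_{\O_{X_0}}\O_{X_2}$ yields the ringed-space pushout and Lemma~\ref{lem:pushoutsinRS} the scheme-theoretic one. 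With that verification supplied, your descent argument for both directions of \eqref{fppflocalpushout} goes through and is equivalent to the paper's.
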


\begin{proof} It is clear from the construction of the ringed space pushout that the maps in \eqref{cartcocartdiagram} are closed embeddings of ringed spaces (this would be true for any closed embeddings $X_0 \into X_i$ of ringed spaces).  If $X$ is a scheme, then all the maps in the diagram of \eqref{cartcocartdiagram} are maps (in fact closed embeddings) of schemes (because they are closed embeddings of ringed spaces, which are maps of locally ringed spaces if the ringed spaces happen to be locally ringed).  For similar reasons (the fact that the maps involved are closed embeddings) the pushout property in ringed spaces implies the one in schemes or locally ringed spaces.  So the only issue in \eqref{cartcocartdiagram} is to prove that $X$ is a scheme.  This is clear away from the ``gluing" locus $X_0 \subseteq X$ because \be X \setminus X_0 & = & (X_1 \setminus X_0) \coprod (X_2 \setminus X_0). \ee  

The issue is to prove that a point $x \in X_0$ has an open neighborhood in $X$ isomorphic to an affine scheme.  I claim that there exist affine open neighborhoods $U_i$ of $x$ in $X_i$ ($i=1,2$) so that $U_1 \cap X_0 = U_2 \cap X_0$.  This common intersection $Z$ is then an affine open subscheme of $X_0$ and it is clear from the construction of the ringed space pushout that the ringed space pushout $U_1 \coprod_Z U_2$ is an open neighborhood of $x$ in $X$.  But $U_1 \coprod_Z U_2$ is an affine scheme by Proposition~\ref{prop:pushouts}.  Parts \eqref{pushouts2}-\eqref{pushouts6} then follow from the analogous statements in Proposition~\ref{prop:pushouts} by working locally.

The proof of the claim is shorter and easier if we ``break the symmetry" a little bit.  Start by picking any affine open neighborhood $U = \Spec A$ of $x$ in $X_1$ and any affine open neighborhood $V = \Spec B$ of $x$ in $X_2$.  Write \be U \cap X_0 & = & \Spec (A/I) \\ V \cap X_0 & =  & \Spec (B/J) \ee for some ideals $I \subseteq A$ and $J \subseteq B$.  Then $U \cap V \cap X_0$ is an open neighborhood of $x$ in $X_0$, though it may not be affine.  However, it is an open neighborhood of $x$ in the affine scheme $U \cap X_0$, so it is contained in one of the usual basic open neighborhoods of $x$.  Thus we can find some $f \in A$ such that $$\Spec (A/I)_f = \Spec (A_f/IA_f) = (\Spec A_f) \cap X_0 $$ is an affine open neighborhood of $x$ in $U \cap V \cap X_0$.  The affine open subscheme $\Spec (A/I)_f$ of the affine scheme $\Spec (B/J)$ need not be principal (of the form $\Spec (B/J)_b$), but it will \emph{contain} such a principal open, so we can find $b \in B$ such that \bne{maininclusion} \Spec (B/J)_b & \subseteq \Spec (A/I)_f . \ene  A moment's thought shows that this means we have an equality \be \Spec (B/J)_b & = & \Spec ((A/I)_f)_b \ee (both are characterized as the largest open subscheme of $\Spec (B/J)$ on which the global section $b$ is invertible, say), where, on the right, $b \in (A/I)_f$ is abuse of notation for the image of $b \in B/J$ under the ring map corresponding to the inclusion \be \Spec (A/I)_f & \subseteq & \Spec (B/J) .\ee  Now write $b \in (A/I)_f$ in the form $b = af^m$ for some $a \in A$ and some integer $m$.  Now observe that \be (B/J)_b & = & (A/I)_{f,b} \\ & = & (A/I)_{a,f} \\ & = & (A/I)_{af}. \ee  It follows easily that $U_1 := \Spec A_{af}$ and $U_2 := \Spec B_b$ will do the job.  

It remains to prove \eqref{fppflocalpushout}.  We first prove that the base changed diagram remains a pushout on the level of topological spaces (this doesn't require $g$ flat).  The base change of a closed embedding is the same as the one calculated in ringed spaces; in particular, on the level of spaces, the base changed diagram is just the obvious diagram of closed embeddings $$ \xym{ g^{-1}(X_0) \ar[r] \ar[d] & g^{-1}(X_2) \ar[d] \\ g^{-1}(X_1) \ar[r] & X' } $$ and it follows easily from Lemma~\ref{lem:pushoutsintop} that this is a pushout diagram.  To check that the base changed diagram is a pushout in ringed spaces, it remains to check that the obvious diagram of sheaves of rings on $X$ is a pullback (equivalently the underlying diagram of quasi-coherent $\O_X$-modules is a pullback), but this is clear because it is obtained from the analogous pullback diagram on $X$ by applying the exact functor $g^*$ (here we use $g$ flat).  For the converse when $g$ is a flat cover:  Since we assume from the outset that the diagram is a diagram of closed embeddings, the question of whether it is a pushout on spaces is set-theoretic in nature (Lemma~\ref{lem:pushoutsintop}) and could be checked after pulling back along any $g$ which is surjective on the level of spaces.  Once it is known that the diagram is a pushout on the level of spaces, all that remains to be checked is that the usual diagram of sheaves on rings on $X$ is a pullback, and this can be checked after pulling back along a flat cover. \end{proof}

\begin{rem} I chose to carefully isolate and prove the claim in the above proof because of the following subtlety:  For a closed embedding of schemes $X \into Y$, the map \be \{ {\rm \, affine \; open \; subschemes \; of \;} Y \, \} & \to & \{ \, {\rm affine \; open \; subschemes \; of \;} X \, \} \\ U & \mapsto & U \cap X \ee need not be surjective.  In fact $X$ itself can fail to be in the image of this map even when $X$ is a smooth affine divisor in a smooth complex variety $Y$. \end{rem}

\begin{rem} Given a diagram as in Theorem~\ref{thm:pushouts}\eqref{cartcocartdiagram}, one has \be \O_X  & = &  \O_{X_1} \times_{\O_{X_0}} \O_{X_2} \ee (dropping notation for pushforwards).  By Theorem~\ref{thm:pushouts}\eqref{fppflocalpushout}, this same equality holds even using the \'etale (or fppf) structure sheaves.  We will use this fact without further comment. \end{rem} 

\begin{rem} Diagrams as in Theorem~\ref{thm:pushouts}\eqref{cartcocartdiagram} are ubiquitous.  Suppose $X$ is a reduced scheme and $X_1,X_2$ are closed subschemes of $X$ covering $X$ on the level of topological spaces.  Then if we define $X_0 := X_1 \cap X_2$ by declaring that diagram to be \emph{cartesian}, then in fact it is also cocartesian \cite[6.2]{GG}. \end{rem}

\subsection{Descent scholium} \label{section:descentscholium}  We continue with the affine setup of \S\ref{section:gluingscholium}: $C_i \to C_0$ ($i=1,2$) are $A$-algebra surjections with fibered product $C = C_1 \times_{C_0} C_1$.  To ease notation, we let \be \Desc & := & \Mod(C_1) \times_{\Mod(C_0)} \Mod(C_1) \ee denoted ``the" $2$-fibered product taken in the $2$-category of abelian categories.  Objects of $\Desc$ are triples $(M_1,M_2,\phi)$ consisting of objects $M_i \in \Mod(C_i)$ ($i=1,2$) and a $\Mod(C_0)$ isomorphism $\phi : M_1/I_1M_1 \to M_2/I_2M_2$, called the \emph{clutching function}.  Morphisms \be (f_1,f_2) :(M_1,M_2,\phi) & \to & (N_1,N_2,\psi) \ee are pairs consisting of $\Mod(C_i)$-morphisms $f_i : M_i \to N_i$ ($i=1,2$) commuting with the clutching functions---i.e.\ making the $\Mod(C_0)$-diagram below commute.  $$ \xym{ M_1/I_1M_1 \ar[r]^{\phi}_{\cong} \ar[d]_{f_1/I_1} & M_2/I_2M_2 \ar[d]^{f_2/I_2} \\ N_1 /I_1N_1 \ar[r]^{\psi}_{\cong} & N_2/I_2N_2 } $$  If there is no chance of confusion we will often drop the $\phi$ from the notation and simply write $(M_1,M_2)$ for an object of $\Desc$.  For $(M_1,M_2) \in \Desc$, \emph{we often set} \be  M_0 & := & M_2/I_2,\ee and we refer to the composition of the natural projection $M_1 \to M_1/I_1$ and $\phi$ as ``the natural projection $M_1 \to M_0$."  

\begin{rem} The category $\Desc$ is nothing but the ``descent category" for the ``cover" of $X=\Spec C$ by \be U & := \Spec C_1 \coprod \Spec C_2.\ee  (There is no ``triple overlaps" condition on $\phi$ because all points of $U \times_X U \times_X U$ are degenerate.)  It would be more familiar if this were an open cover rather than a closed cover. \end{rem}

There is an obvious \emph{pullback functor} \be P : \Mod(C) & \to & \Desc \\ M & \mapsto & P(M) := (M/I_2M, M/I_1M). \ee  By abuse of notation, we are writing, say, $I_2$ instead of the more correct $0 \times I_2$ for the kernel of $C \to C_1$.  The implicit clutching function $\phi$ here is the canonical identification \be (M/I_2M)/I_1 (M/I_2M)  & = &  M / (I_1 \times I_2)M \\ & = &  (M/I_1M)/I_2 (M/I_1M). \ee  One might also use the notation \be P(M) & := & (M \otimes_C C_1, M \otimes_C C_2) \ee so that the implicit $\phi$ would be the canonical isomorphism \be (M \otimes_C C_1) \otimes_{C_1} C_0 & = & M \otimes_C C_0 \\ & = & (M \otimes_C C_2) \otimes_{C_2} C_0. \ee The pullback functor $P$ admits a right adjoint \emph{descent functor} \be D : \Desc & \to & \Mod(C) \\ (M_1,M_2) & \mapsto & D(M_1,M_2) := M_1 \times_{M_0} M_2 . \ee  The implicit maps $M_i \to M_0$ are the natural projections.  This set-theoretic fibered product $D(M_1,M_2)$ is of course an abelian group (the abelian group fibered product) and becomes a $C$-module via the scalar multiplication \be (c_1,c_2) \cdot (m_1,m_2) & := & (c_1m_1,c_2m_2). \ee  The $C$-module $D(M_1,M_2)$ can also be defined by the short exact sequence of $C$-modules \bne{Dsequence} & 0 \to D(M_1,M_2) \to M_1 \oplus M_2 \to M_0 \to 0, \ene where the right map is the difference of the natural projections (the $C_i$-module $M_i$ is of course regarded as a $C$-module by restriction of scalars along $C \to C_i$).

\begin{rem} \label{rem:finitegeneration} It is clear from the above description of $D(M_1,M_2)$ that when $C$ is noetherian and the $M_i \in \Mod(C_i)$ ($i=1,2$) are finitely generated, $D(M_1,M_2)$ is also finitely generated since it is contained in $M_1 \oplus M_2$. \end{rem}

The adjunction isomorphism is given by \be \Hom_{\Mod(C)}(M,D(M_1,M_2)) & \to & \Hom_{\Desc}(P(M),(M_1,M_2)) \\ f & \mapsto & (\ov{m} \mapsto \pi_1 f(m), \ov{m} \mapsto \pi_2 f(m)) \ee and its inverse is given by \be (f_1,f_2) & \mapsto & (m \mapsto (f_1(\ov{m}),f_2(\ov{m}))). \ee  It is clear that these maps are inverse; the issue is to check that they are well-defined, which is also fairly straightforward.  For example, to check that $\pi_1 f(m) \in M_1$ is independent of the chosen lift $m \in M$ of $\ov{m} \in M/(0 \times I_2)M$ for a $C$-module map $f : M \to D(M_1,M_2) = M_1 \times_{M_2/I_2M_2} M_2$, we need to check that when we write $f$ set-theoretically as $f = f_1 \times f_2$ (i.e.\ we set $f_i := \pi_i f$), the map $f_1 : M \to M_1$ kills $(0 \times I_2)M$.  This is an easy calculation with $C$-linearity of $f$ and the $C$-module structure on $D(M_1,M_2)$: \be (f_1((0,i)m),f_2((0,i)m)) & = & f((0,i)m) \\ & = & (0,i) \cdot f(m) \\ & = & (0,i) \cdot (f_1(m),f_2(m)) \\ & = & (0,if_2(m)) . \ee

For $(M_1,M_2) \in \Desc$ we natural $\Mod(C_1)$-isomorphisms \bne{calculation} D(M_1,M_2) / (0 \times I_2) D(M_1,M_2) & = & (M_1 \times_{M_2/I_2M_2} M_2) / (0 \times I_2M_2) \\ \nonumber & = & M_1 \times_{M_2/I_2M_2} M_2/I_2M_2 \\ \nonumber & = & M_1. \ene  There is a similar natural isomorphism of $C_2$-modules \be D(M_1,M_2) / (I_1 \times 0) & = & M_2. \ee  These last two isomorphisms combine to give a natural isomorphism \be PD(M_1,M_2) & \to & (M_1,M_2) \ee for all $(M_1,M_2) \in \Desc$.  Using the explicit formulas for the adjunction isomorphisms, one checks that this natural isomorphism is nothing but the adjunction morphism $PD \to \Id$ evaluated on $(M_1,M_2)$.  In other words, $PD \to \Id$ is an isomorphism of functors.

The other adjunction morphism $\Id \to DP$, however, is not generally an isomorphism; $P$ and $D$ are not generally equivalences.

\begin{example} \label{example:nodaldescent} Consider the case where $C = k[x,y]/(xy) = k[x] \times_k k[y]$, so $C_1 = k[x]$ and $C_2 = k[y]$ in the above setup.  The $C$-module $M := C/(x+y)$ has \be P(M) & = & (M/yM , M/xM) \\ & = & (k[x]/x, k[y]/y) \\ & = & (k,k) \in \Desc \ee (the last equality should be viewed as defining the $k[x]$- and $k[y]$-module structures on $k$).  But if we view $k$ as a $C$-module via the natural isomorphism $C/(x,y) = k$, then we also have $P(k) = (k,k)$ despite the fact that $M$ and $k$ are not isomorphic $C$-modules (they don't even have the same dimension as $k$ vector spaces).  Hence $P$ can't be faithful.  Also, $M$ can't be in the essential image of $D$, for if it were, then since we know $PD \to \Id$ is an isomorphism, $M$ would have to be isomorphic to $D(k,k)$, but $D(k,k)=k$ isn't isomorphic to $M$. \end{example}

In the global situation of a pushout diagram of closed embeddings of schemes as in Theorem~\ref{thm:pushouts}\eqref{cartcocartdiagram}, we again have a \emph{pullback functor} \bne{globalpullback} P : \Mod(X) & \to & \Mod(X_1) \times_{\Mod(X_0)} \Mod(X_2) =: \Desc \ene and a right adjoint \emph{descent functor} \bne{globaldescent} D : \Desc & \to & \Mod(X). \ene  The adjunction map $PD \to \Id$ is again an isomorphism---the constructions and arguments we made for a pullback diagram of rings make perfect sense for the pullback diagram $$ \xym{ \O_X \ar[r] \ar[d] & \O_{X_1} \ar[d] \\ \O_{X_2} \ar[r] & \O_{X_0} } $$ of sheaves of rings on $X$ (dropping notation for pushforwards along the various closed embeddings).  We can replace ``$\Mod$" everywhere by ``$\Qco$" if we wish because the exact sequence \eqref{Dsequence} defining $D(M_1,M_2)$ shows that $D(M_1,M_2)$ is a quasi-coherent $\O_X$-module when the $M_i$ are quasi-coherent $\O_{X_i}$-modules.  If $X_1$ and $X_2$ are locally noetherian (so $X$ is also locally noetherian by Theorem~\ref{thm:pushouts}\eqref{noeth}), we can replace ``$\Qco$" everywhere by ``$\Coh$" (c.f.\ Remark~\ref{rem:finitegeneration}).

\begin{thm} \label{thm:descent}  Consider a pushout diagram of closed embeddings of schemes as in Theorem~\ref{thm:pushouts}\eqref{cartcocartdiagram}.  Recall the pullback and descent functors \eqref{globalpullback} and \eqref{globaldescent}.  \begin{enumerate} \item \label{desc1} If $\Tor_1^X(M,\O_{X_0})=0$ for an $\O_X$-module $M$, then for $i=1,2$ we have \be \Tor_1^X(M,\O_{X_i}) & = & 0 \\ \Tor_1^{X_i}(M|X_i,\O_{X_0}) & = & 0. \ee \item \label{desc2} If $(M_1,M_2) \in \Desc$ has $\Tor_1^{X_i}(M_i,\O_{X_0})=0$ for $i=1,2$, then \be \Tor_1^X(D(M_1,M_2),\O_{X_0}) & = & 0.\ee \item \label{desc3} Let $\Mod(X)^\circ$ denote the full subcategory of $\Mod(X)$ consisting of those $\O_X$-modules $M$ with $\Tor_1^X(M,\O_{X_0})=0$.  Let $\Desc^\circ$ denote the full subcategory of $\Desc$ consisting of those $(M_1,M_2)$ with $\Tor_1^{X_i}(M_i,\O_{X_0})=0$ for $i=1,2$.  By \eqref{desc1} and \eqref{desc2}, $P$ and $D$ restrict to functors \be P : \Mod(X)^\circ & \to & \Desc^{\circ} \\ D : \Desc^{\circ} & \to & \Mod(X)^{\circ}. \ee  These functors are inverse equivalences of categories. \item \label{desc4} For $M,N \in \Mod(X)^\circ$, we have natural isomorphisms of abelian groups \be \Hom_X(M,N) & = & \Hom_{X_1}(M_1,N_1) \times_{\Hom_{X_0}(M_0,N_0)} \Hom_{X_2}(M_2,N_2) \\ \Ext^1_X(M,N) & = & \Ext^1_{X_1}(M_1,N_1) \times_{\Ext^1_{X_0}(M_0,N_0)} \Ext^1_{X_2}(M_2,N_2) \ee where $(M_1,M_2)$ and $(N_1,N_2)$ are the images of $M$ and $N$ under $P$. \end{enumerate} See Remark~\ref{rem:descent} below for variations. \end{thm}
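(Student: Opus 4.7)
The strategy is to exploit the \emph{Mayer--Vietoris} short exact sequence
\begin{equation*}
0 \to \O_X \to \O_{X_1} \oplus \O_{X_2} \to \O_{X_0} \to 0
\end{equation*}
of $\O_X$-modules extracted from Theorem~\ref{thm:pushouts}\eqref{cartcocartdiagram},\eqref{pushouts6}, together with the Grothendieck change-of-rings spectral sequence $E_2^{p,q} = \Tor_p^{X_i}(\Tor_q^X(M, \O_{X_i}), N) \Rightarrow \Tor_{p+q}^X(M, N)$ attached to $\O_X \to \O_{X_i}$. Parts \eqref{desc3} and \eqref{desc4} will then be deduced formally from \eqref{desc1}, \eqref{desc2}, and the adjunction $PD \cong \Id$ already in hand.

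For \eqref{desc1}, I would apply $M \otimes_{\O_X}^L \slot$ to Mayer--Vietoris: the resulting Tor long exact sequence sandwiches $\Tor_1^X(M, \O_{X_1}) \oplus \Tor_1^X(M, \O_{X_2})$ between $\Tor_1^X(M, \O_X) = 0$ and $\Tor_1^X(M, \O_{X_0}) = 0$, giving the first vanishing. Feeding this back into the change-of-rings spectral sequence (with $N$-slot filled by $\O_{X_0}$) makes the row $q = 1$ vanish, so the edge map embeds $\Tor_1^{X_i}(M|X_i, \O_{X_0}) = E_2^{1,0}$ into $\Tor_1^X(M, \O_{X_0}) = 0$, yielding the second.

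For \eqref{desc2} the plan is again to apply $D \otimes_{\O_X}^L \slot$ to Mayer--Vietoris. Using the canonical identifications $D \otimes_{\O_X} \O_{X_i} = M_i$ and $D \otimes_{\O_X} \O_{X_0} = M_0$ from \eqref{calculation} together with the injectivity of $D \hookrightarrow M_1 \oplus M_2$, the Tor long exact sequence collapses to an isomorphism
\begin{equation*}
\Tor_1^X(D, \O_{X_0}) \cong \Tor_1^X(D, \O_{X_1}) \oplus \Tor_1^X(D, \O_{X_2}).
\end{equation*}
The main obstacle---the step where the hypothesis on the $M_i$ actually enters---is to identify each summand $\Tor_1^X(D, \O_{X_i})$ with $\Tor_1^{X_j}(M_j, \O_{X_0})$ (for $\{i,j\} = \{1,2\}$), which vanishes by hypothesis. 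I would do this by computing $\Tor_1^X(D, \O_{X_i}) = \ker[D \otimes_{\O_X} I_{X_i/X} \to D]$ from the ideal SES, then using $I_{X_i/X} = I_{X_0/X_j}$ (Theorem~\ref{thm:pushouts}\eqref{pushouts6}) and the fact that this ideal acts on $D$ through the projection $\O_X \to \O_{X_j}$ to rewrite $D \otimes_{\O_X} I_{X_i/X} \cong M_j \otimes_{\O_{X_j}} I_{X_0/X_j}$; the map to $D$ then factors as $M_j \otimes_{\O_{X_j}} I_{X_0/X_j} \to M_j \hookrightarrow D$, whose kernel is exactly the target Tor via $0 \to I_{X_0/X_j} \to \O_{X_j} \to \O_{X_0} \to 0$.

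For \eqref{desc3}, the four-term exact sequence $0 \to \Tor_1^X(M, \O_{X_0}) \to M \to M|X_1 \oplus M|X_2 \to M|X_0 \to 0$ extracted from Mayer--Vietoris and the vanishings of \eqref{desc1} identifies the adjunction unit $M \to DP(M)$ (with $DP(M) = M|X_1 \times_{M|X_0} M|X_2$ sitting in its own defining SES) as an isomorphism when $M \in \Mod(X)^\circ$; combined with $PD \cong \Id$ and the stability of the subcategories under $P$ and $D$ (from \eqref{desc1}, \eqref{desc2}) this yields the inverse equivalence. For \eqref{desc4}, for $N \in \Mod(X)^\circ$ the Mayer--Vietoris sequence tensored with $N$ is itself short exact, so applying $R\Hom_X(M, \slot)$ and invoking the change-of-rings identification $\Ext^p_X(M, L) = \Ext^p_{X_i}(M_i, L)$ for $L$ an $\O_{X_i}$-module and $p \in \{0, 1\}$ (via degeneration of the analogous Ext spectral sequence at the edge, granted by $\Tor_1^X(M, \O_{X_i}) = 0$) produces a Mayer--Vietoris long exact sequence for Ext; the $\Hom$ formula is then immediate as a kernel, and the $\Ext^1$ formula follows by exactness at the pullback term together with vanishing of the connecting map $\Hom_{X_0}(M_0, N_0) \to \Ext^1_X(M, N)$, which can be verified by an explicit splitting/gluing argument using the equivalence \eqref{desc3} applied to the SES defining an extension.
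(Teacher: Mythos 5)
Parts \eqref{desc1}--\eqref{desc3} and the $\Hom$ half of \eqref{desc4} are correct and essentially reproduce the paper's own argument: the same sequence $0 \to \O_X \to \O_{X_1}\oplus\O_{X_2} \to \O_{X_0} \to 0$, the same identifications $I_{X_i/X}=I_{X_0/X_j}$ and $I_{X_i/X}\otimes_{\O_X}D = I_{X_0/X_j}\otimes_{\O_{X_j}}M_j$, and the same unit-of-adjunction computation for \eqref{desc3}; your reorganization of \eqref{desc2} through the isomorphism $\Tor_1^X(D,\O_{X_0})\cong\Tor_1^X(D,\O_{X_1})\oplus\Tor_1^X(D,\O_{X_2})$ is a harmless repackaging (only note that ``$M_j\hookrightarrow D$'' is not literally a map; what you mean, and what works, is to compose with $D\hookrightarrow M_1\oplus M_2$ and project to the $j$-th factor, exactly as in the paper).

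The $\Ext^1$ step, however, has a genuine gap. In your long exact sequence, the isomorphism you want is exactly the statement that $\Ext^1_X(M,N)\to\Ext^1_{X_1}(M_1,N_1)\oplus\Ext^1_{X_2}(M_2,N_2)$ is injective, and by that very sequence this is equivalent to the vanishing of the connecting map $\delta$, i.e.\ to surjectivity of the difference map $\Hom_{X_1}(M_1,N_1)\oplus\Hom_{X_2}(M_2,N_2)\to\Hom_{X_0}(M_0,N_0)$. Deferring this to ``an explicit splitting/gluing argument using \eqref{desc3}'' is circular: to glue the restricted extension data back via $D$ you must choose a clutching isomorphism over $X_0$ compatible with the identifications on $M_0$ and $N_0$; the admissible choices form a torsor under $\Hom_{X_0}(M_0,N_0)$, and changing the choice by $\beta$ changes the resulting class in $\Ext^1_X(M,N)$ by precisely $\delta(\beta)$ --- the thing you are trying to prove is zero. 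Moreover $\delta$ need not vanish in this generality: glue $X_1=\Spec k[s,t]$ and $X_2=\Spec k[u,t]$ along $X_0=\{s=0\}=\{u=0\}$ and take $M=N=D(M_1,M_2)$ with $M_1=(s,t)$, $M_2=(u,t)$ (the Tor hypotheses hold). Then $M_0\cong k[t]/(t)\oplus k[t]$, every endomorphism of $M_1$ or $M_2$ is multiplication by a ring element and hence restricts to a \emph{diagonal} endomorphism of $M_0$, while $\Hom_{k[t]}(k[t],k[t]/(t))\neq 0$ supplies an off-diagonal $\psi$; gluing the split extensions $M_i\oplus M_i$ by $\bigl(\begin{smallmatrix}1&\psi\\0&1\end{smallmatrix}\bigr)$ produces a self-extension of $M$ over $X$ that restricts to the split extension on each $X_i$ but is not split over $X$ (a global splitting would exhibit $\psi$ as a difference of diagonal maps). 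So the step as proposed fails. For comparison, the paper's proof goes the Yoneda route (restrict one way; glue via $D$, Snake and Five Lemmas the other way), and is silent on exactly this point --- the dependence of the glued class on the chosen clutching isomorphism --- so to complete either argument one needs an extra input guaranteeing surjectivity of the $\Hom$ difference map (true, e.g., when the restrictions to $X_0$ are free, as in the paper's applications); what your long exact sequence proves unconditionally is rather $0\to\Cok\left[\Hom_{X_1}(M_1,N_1)\oplus\Hom_{X_2}(M_2,N_2)\to\Hom_{X_0}(M_0,N_0)\right]\to\Ext^1_X(M,N)\to\Ext^1_{X_1}(M_1,N_1)\times_{\Ext^1_{X_0}(M_0,N_0)}\Ext^1_{X_2}(M_2,N_2)\to 0$.
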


\begin{proof} \eqref{desc1}: If $\Tor_1^X(M,\O_{X_0})=0$, the applying $\slot \otimes_{\O_X} M$ to the exact sequence $$0 \to \O_X \to \O_{X_1} \oplus \O_{X_2} \to \O_{X_0} \to 0 $$ shows that $\Tor_1^X(M,\O_{X_1} \oplus \O_{X_2})=0$; for the second vanishing in \eqref{desc1}, we note that $M \otimes_{\O_{X}} \O_{X_0} = M \otimes_{\O_X} \O_{X_i} \otimes_{\O_{X_i}} \O_{X_0}$, so the first vanishing implies \be \Tor_1^X(M,\O_{X_0}) & = & \Tor_1^{X_i}(M|X_i, \O_{X_0}), \ee which vanishes by hypothesis.  

\eqref{desc2}:  The hypothesized vanishing is equivalent to the exactness of the natural sequences \bne{natseqs} & 0 \to I_{X_0/X_i} \otimes_{\O_{X_i}} M_i \to M_i \to M_i|X_0 \to 0 \ene on $X_i$ for $i=1,2$.  Since $I_{X_0/X} = I_{X_0/X_1} \oplus I_{X_0/X_2}$ (Theorem~\ref{thm:pushouts}\eqref{pushouts6}), we have a short exact sequence $$ 0 \to I_{X_0/X_1} \oplus I_{X_0/X_2} \to \O_X \to \O_{X_0} \to 0 $$ and the vanishing we want to establish is equivalent to the injectivity of the natural map \bne{testermap} (I_{X_0/X_1} \otimes D(M_1,M_2)) \oplus (I_{X_0/X_2} \otimes D(M_1,M_2)) & \to & D(M_1,M_2). \ene But $I_{X_0/X_i}$ is an $\O_{X_i}$ module being regarded as an $\O_X$-module by restriction of scalars, so we have \be I_{X_0/X_i} \otimes D(M_1,M_2) & = & I_{X_0/X_i} \otimes_{\O_{X_i}} D(M_1,M_2)|X_i \\ & = & I_{X_0/X_i} \otimes_{\O_{X_i}} M_i \ee (making use of the computation \eqref{calculation}).  If we trace through these natural isomorphisms, we see that the composition of \eqref{testermap} and the natural inclusion $$D(M_1,M_2) \into M_1 \oplus M_2$$ (c.f. \eqref{Dsequence}) is nothing but the sum of the left maps in the sequences \eqref{natseqs}.  Hence \eqref{testermap} is injective and the desired $\Tor$-vanishing is established. 

For \eqref{desc3} we already know by the general discussion above that the adjunction map $PD \to \Id$ is an isomorphism, so the only issue is to check that the other adjunction map $M \to DPM$ is an isomorphism for $M \in \Mod(X)^{\circ}$.  The adjunction map in question is the natural map \bne{themapinquestion} M & \to & M|X_1 \times_{M|X_0} M|X_1. \ene  Isomorphy for this map is clearly equivalent to exactness of the natural sequence $$ 0 \to M \to M|X_1 \oplus M|X_2 \to M|X_0 \to 0.$$  But this sequence is obtained by tensoring $$0 \to \O_X \to \O_{X_1} \oplus \O_{X_2} \to \O_{X_0} \to 0 $$ with $M$, hence it is exact because $M \in \Mod(X)^{\circ}$ satisfies $\Tor_1^X(M,\O_{X_0})=0$.

The first isomorphism in \eqref{desc4} is immediate from the equivalence of categories in \eqref{desc3}.  For the $\Ext^1$ isomorphisms, we need to first explain the maps involved.  An element of $\Ext^1_X(M,N)$ can be viewed as an isomorphism class of short exact sequences \be \u{E} & = & (0 \to N \to E \to M \to 0) \ee in $\Mod(X)$.  The image of $\u{E}$ under the purported isomorphism will be the pair $(\u{E}|X_1,\u{E}|X_2)$.  Note that the sequences $\u{E}|X_i$ are still exact because of the first $\Tor$-vanishing property of $M$ in \eqref{desc1}.  The restriction maps $$\Ext^1_{X_i}(M_i,N_i) \to \Ext^1_{X_0}(M_0,N_0)$$ are defined similarly.  To define the map in the other direction, suppose we have exact sequences \be \u{E}_i & = & (0 \to N_i \to E_i \to M_i \to 0) \ee in $\Mod(X_i)$ with the same image in $\Ext^1_{X_0}(M_0,N_0)$.  This means we have an isomorphism $\phi : E_1|X_0 \to E_2|X_0$ making $$ \xym{ 0 \ar[r] & M_1|X_0 \ar@{=}[d] \ar[r] & E_1|X_0 \ar[r] \ar[d]^{\phi} & N_1|X_0 \ar@{=}[d] \ar[r] & 0 \\ 0 \ar[r] & M_2|X_0 \ar[r] & E_2|X_0 \ar[r] & N_2|X_0 \ar[r] & 0 } $$ commute.  We can then view $(E_1,E_2)$ as an object of $\Desc$ using this $\phi$.  We then check that \bne{newguy} & 0 \to M \to D(E_1,E_2) \to N \to 0 \ene is exact by applying the Snake Lemma to the diagram \bne{SnakeLemmadiagram} & \xym{ 0 \ar[r] & N_1 \oplus N_2 \ar[d] \ar[r] & E_1 \oplus E_2 \ar[d] \ar[r] &  M_1 \oplus M_2 \ar[r] \ar[d] & 0 \\ 0 \ar[r] & N_0 \ar[r] & E_0 \ar[r] & M_0 \ar[r] & 0 } \ene where the vertical arrows are the differences of the natural projections.  We identify the sequence (exact by the Snake Lemma) of kernels of the vertical surjections in \eqref{SnakeLemmadiagram} with \eqref{newguy} using the $\Tor$-vanishing properties of $M$ and $N$ in \eqref{desc1}.  It follows from the Five Lemma that the two constructions are inverse.  Another way of putting this is that when $M,N \in \Mod(X)^\circ$ we have $E \in \Mod(X)^{\circ}$ for any short exact sequence $\u{E}$, and so the short exact sequence $\u{E}$ is the image of an essentially unique sequence in $\Desc$ by \eqref{desc3}, which one then checks is exact.  \end{proof}

\begin{rem} \label{rem:descent}  There are many possible variations of the equivalence of categories in Theorem~\ref{thm:descent}\eqref{desc3}.  Let $\Mod(X_i)^{\circ} \subseteq \Mod(X_i)$ denote the full subcategory whose objects are $\O_{X_i}$-modules $M_i$ with $\Tor_1^{X_i}(M_i,\O_{X_0})=0$.  Suppose \be \E & \subseteq & \Mod(X)^{\circ} \\ \E_i & \subseteq & \Mod(X_i)^{\circ} \quad \quad (i=1,2) \\ \E_0 & \subseteq & \Mod(X_0) \ee are full subcategories satisfying the following: \begin{enumerate} \item For every $M \in \E$, $M|X_i \in \E_i$ for $i=1,2$. \item For $i=1,2$ and $M_i \in \E_i$, $M_i|X_0 \in \E_0$. \end{enumerate} These hypotheses that the usual restriction $M_i \mapsto M_i|\E_0$ defines functors $\E_i \to \E_0$ and that $P$ restricts to a functor $\E \to \E_1 \times_{\E_0} \E_2$.  If we further assume \begin{enumerate} \setcounter{enumi}{2} \item For any $(M_1,M_2) \in \E_1 \times_{\E_0} \E_2$, the object $D(M_1,M_2) \in \Mod(X)^{\circ}$ defined by the short exact sequence $$0 \to D(M_1,M_2) \to M_1 \oplus M_2 \to M_0 \to 0 $$ is in $\E$ \end{enumerate} then $D$ manifestly restricts to an inverse of $P$. 

Let us agree that a  \emph{thick subcategory} of an abelian category $\A$ is a full, additive subcategory $\B$ of $\A$ closed under extensions and kernels (in $\A$).  If we further assume that the full subcategories $\E$, $\E_i$ are thick, then the aforementioned equivalence of categories also yields an identification of $\Ext$ groups as in Theorem~\ref{thm:descent}\eqref{desc4}.

For example, we can replace ``$\Mod$" everywhere by ``$\Qco$" in Theorem~\ref{thm:descent}.  If $X_1$ and $X_2$ are locally noetherian (hence $X_0$ and $X$ are locally noetherian), then we can replace ``$\Mod$" everywhere by ``$\Coh$".  

As another example, suppose the diagram of closed embeddings is a diagram of $Y$-schemes.  Then we can take $\E \subseteq \Mod(X)^\circ$ to be the (thick!) subcategory of objects $M$ such that $M$ is flat over $Y$ and each $M|X_i$ is flat over $Y$, $\E_i \subseteq \Mod(X_i)^{\circ}$ to be the thick subcategory of objects $M_i$ such that $M_i$ and $M_i|X_0$ are flat over $Y$, and we can take $\E_0 \subseteq \Mod(X_0)$ to be the thick subcategory of $\O_{X_0}$-modules which are flat over $Y$.  The first two hypotheses are satisfied by definition and the third holds since a kernel of flats is flat.   \end{rem}

The equivalence of categories in Theorem~\ref{thm:descent} is also ``compatible with tensor product" in a sense we now make precise.  Notice that the abelian category $\Desc$ is equipped with a tensor product (symmetric monoidal structure compatible with the abelian category structure) $\boxtimes$, defined by \be (M_1,M_2) \boxtimes (N_1,N_2) & := & (M_1 \otimes N_1, M_2 \otimes N_2). \ee  The subcategory $\Mod(X)^\circ \subseteq \Mod(X)$ is not generally closed under the tensor product.  However:

\begin{lem} In the situation of Theorem~\ref{thm:descent}, suppose $M,N \in \Mod(X)^{\circ}$ satisfy \bne{vanishinghypothesis} \Tor_1^{X_0}(M|X_0,N|X_0) & = & 0. \ene  Then $M \otimes N \in \Mod(X)^{\circ}$.  \end{lem}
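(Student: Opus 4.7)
The plan is to pursue a diagram chase off a free presentation of $M$, using each of the three Tor-vanishing hypotheses exactly once; no feature of the pushout structure of $X$ will be needed beyond the closed embedding $X_0 \hookrightarrow X$, so in fact the same argument would handle that more general setting.

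First I will fix a short exact sequence $0 \to K \to F \to M \to 0$ in $\Mod(X)$ with $F$ a free $\O_X$-module. Tensoring with $\O_{X_0}$ preserves exactness because $M \in \Mod(X)^{\circ}$; tensoring the resulting sequence over $\O_{X_0}$ with $N|X_0$ and invoking $\Tor_1^{X_0}(M|X_0,N|X_0) = 0$ preserves exactness once more. After associativity identifications such as $(K|X_0) \otimes_{\O_{X_0}} (N|X_0) = K \otimes_{\O_X} N \otimes_{\O_X} \O_{X_0}$, this will produce the key short exact sequence
\begin{equation*}
0 \to K \otimes_{\O_X} N \otimes_{\O_X} \O_{X_0} \to F \otimes_{\O_X} N \otimes_{\O_X} \O_{X_0} \to M \otimes_{\O_X} N \otimes_{\O_X} \O_{X_0} \to 0. \tag{$*$}
\end{equation*}

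Separately, I will apply $\slot \otimes_{\O_X} N$ to the presentation to obtain the exact tail $\Tor_1^X(M,N) \to K \otimes N \to F \otimes N \to M \otimes N \to 0$, and set $L := \Im(K \otimes N \to F \otimes N)$. This yields both a short exact sequence $0 \to L \to F \otimes N \to M \otimes N \to 0$ and a surjection $K \otimes N \twoheadrightarrow L$ with kernel $\Tor_1^X(M,N)$. Tensoring the $L$-sequence over $\O_X$ with $\O_{X_0}$ and using $\Tor_1^X(F \otimes N, \O_{X_0}) = 0$ (from $F$ free and $N \in \Mod(X)^{\circ}$), the long exact sequence collapses to
\begin{equation*}
\Tor_1^X(M \otimes_{\O_X} N, \O_{X_0}) = \ker\bigl(L \otimes_{\O_X} \O_{X_0} \longrightarrow F \otimes_{\O_X} N \otimes_{\O_X} \O_{X_0}\bigr).
\end{equation*}

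To conclude, right-exactness applied to $0 \to \Tor_1^X(M,N) \to K \otimes N \to L \to 0$ will present $L \otimes \O_{X_0}$ as the cokernel of $\Tor_1^X(M,N) \otimes \O_{X_0} \to K \otimes N \otimes \O_{X_0}$. The further composition into $F \otimes N \otimes \O_{X_0}$ vanishes already at the level of $K \otimes N \to F \otimes N$, while the induced map $K \otimes N \otimes \O_{X_0} \to F \otimes N \otimes \O_{X_0}$ is injective by $(*)$; hence the image of $\Tor_1^X(M,N) \otimes \O_{X_0}$ in $K \otimes N \otimes \O_{X_0}$ must be zero. This identifies $L \otimes \O_{X_0}$ with $K \otimes N \otimes \O_{X_0}$, whose map into $F \otimes N \otimes \O_{X_0}$ is the injection of $(*)$, forcing $\Tor_1^X(M \otimes N, \O_{X_0}) = 0$. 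The main hurdle will really just be notational bookkeeping with repeated tensor associativities; conceptually, the injectivity in $(*)$ packages the hypotheses on $M$ and on the fiber, while $\Tor_1^X(F \otimes N, \O_{X_0}) = 0$ packages the hypothesis on $N$.
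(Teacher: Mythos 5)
Your argument is correct, and it takes a genuinely different route from the paper's. The paper proves the lemma by factoring functors: it writes $M \otimes N \otimes \slot$ as the composition of $N \otimes \slot$ followed by $M \otimes \slot$, reducing the claim to $\Tor_1^X(N,\O_{X_0})=0$ and $\Tor_1^X(M,N|X_0)=0$, and then factors $\slot \otimes (N|X_0)$ through restriction to $X_0$ to reduce the latter to $\Tor_1^X(M,\O_{X_0})=0$ and $\Tor_1^{X_0}(M|X_0,N|X_0)=0$ --- in effect a change-of-rings, composite-functor argument. You instead run an explicit chase off a presentation $0 \to K \to F \to M \to 0$, and the bookkeeping is sound: the sequence $(*)$ is exact for exactly the reasons you give, the identification $\Tor_1^X(M\otimes N,\O_{X_0}) = \ker(L\otimes\O_{X_0} \to F\otimes N\otimes\O_{X_0})$ is right, and the crucial observation that the image of $\Tor_1^X(M,N)\otimes\O_{X_0}$ must die because the map in $(*)$ is injective (so you never need $\Tor_1^X(M,N)$ itself to vanish) closes the loop. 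Like the paper, you use each of the three Tor-vanishing hypotheses exactly once, and neither proof uses the pushout structure beyond the map $\O_X \to \O_{X_0}$. Your version is more elementary, avoiding any appeal to composing right-exact functors and their derived functors, at the cost of more notation; the paper's is shorter but more implicit.

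One caveat: on a general scheme the category $\Mod(X)$ does not have enough \emph{free} objects (no free $\O_X$-module surjects onto $\O_{\PP^1}(-1)$, for instance), so ``fix $F$ free'' is not literally available. This is harmless and easily repaired: either observe that the vanishing of $\Tor_1^X(M\otimes N,\O_{X_0})$ can be checked on stalks and run your chase over the local rings, where free covers do exist, or take $F$ merely flat (say a direct sum of extensions by zero of structure sheaves of open subsets, which do surject onto any $\O_X$-module), noting that for flat $F$ one still has $\Tor_1^X(F\otimes N,\O_{X_0}) \cong F \otimes \Tor_1^X(N,\O_{X_0}) = 0$, which is the only property of $F\otimes N$ your argument uses.
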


\begin{proof}  We need to show that $\Tor_1^X(M \otimes N,\O_{X_0}) = 0$ when $M,N \in \Mod(X)^\circ$ satisfy \eqref{vanishinghypothesis}.  Since we can factor the tensor product $M \otimes N \otimes \slot$ as a composition of $N \otimes \slot$ followed by $M \otimes \slot$, it suffices to establish the vanishings \bne{need1} \Tor_1^X(N, \O_{X_0}) & = & 0 \\ \label{need2} \Tor_1^X(M, N \otimes \O_{X_0}) & = & 0. \ene  The vanishing \eqref{need1} holds since $N \in \Mod(X)^{\circ}$.  Since we can factor $\slot \otimes_X (N \otimes_{\O_{X_0}})$ as the composition of $\slot \otimes_X \O_{X_0} = \slot|X_0$ and \be \slot \otimes_{X_0} (N \otimes_X \O_{X_0}) & = & \slot \otimes (N|X_0), \ee the vanishing \eqref{need2} will follow from the vanishings \bne{need3} \Tor_1^X(M, \O_{X_0}) & = & 0 \\ \label{need4} \Tor_1^{X_0}(M|X_0,N|X_0) & = & 0. \ene The vanishing \eqref{need3} holds since $M \in \Mod(X)^{\circ}$ and \eqref{need4} holds by hypothesis. \end{proof}

It is clear that $P$ commutes with tensor products: \bne{Pcompatibility} P(M \otimes N) & = & P(M) \boxtimes P(N) \ene because restriction commutes with tensor porudcts.  (This is true regardless of whether $M,N$ are in $\Mod(X)^{\circ}$, or whether they satisfy the vanishing \eqref{vanishinghypothesis}.)  The corresponding formula \be D((M_1,M_2) \boxtimes D(N_1,N_2)) & = & D(M_1,M_2) \otimes D(N_1,N_2) \ee for $D$ will probably not hold in general.  However, this formula will hold if we assume that $M := D(M_1,M_2)$ and $N := D(N_1,N_2)$ are in $\Mod(X)^{\circ}$ and satisfy the vanishing \eqref{vanishinghypothesis}, or, perhaps more generally, as long as we know that $M$, $N$, and $M \otimes N$ are in $\Mod(X)^{\circ}$.  Indeed, in that case we can just make a ridiculous calculation like \be D(M_1,M_2) \otimes D(N_1,N_2) & = & DP \left ( D(M_1,M_2) \otimes D(N_1,N_2) \right ) \\ & = & D( PD(M_1,M_2) \boxtimes PD(N_1,N_2) ) \\ & = &  D( (M_1,M_2) \boxtimes (N_1,N_2) ), \ee using the known compatibility \eqref{Pcompatibility} of $P$ with tensor products and the fact that the appropriate adjunction maps are isomorphisms.

There are also variants of the equivalence of categories in Theorem~\ref{thm:descent} for subcategories of $\Mod(X)$ which are not full.  For example, for a scheme $X$, let $\u{\Phi}_r(X)$ denote the category of rank $r$ locally free $\O_X$-modules (``rank $r$ vector bundles") whose only morphisms are isomorphisms.  Thus $\u{\Phi}_r(X)$ is a groupoid whose set of isomorphism classes is the set of rank $r$ bundles on $X$ up to isomorphism.  In the situation of Theorem~\ref{thm:descent}, restriction clearly defines a functor \bne{Pr} P : \u{\Phi}_r(X) & \to & \u{\Phi}_r(X_1) \times_{\u{\Phi}_r(X_0)} \u{\Phi}_r(X_2) =: \Desc_r. \ene  Similarly, the descent functor $D$ defines a functor \bne{Dr} D : \Desc_r & \to & \u{\Phi}_r(X). \ene  Indeed, the question of whether $D(M_1,M_2)$ is a rank $r$ bundle when the $M_i$ are rank $r$ bundles is local in nature (as is the formation of $D(M_1,M_2)$), so we can assume there are isomorphisms $M_i \cong  \O_{X_i}^r$ for $r=1,2$.  Using these chosen isomorphisms, the clutching isomorphism $\phi : M_1|X_0 \to M_2|X_0$ can be viewed as a matrix $G \in \GL_r(X_0)$.  Since $X_0 \into X_1$ is a closed embedding, we can lift $G$ to $\ov{G} \in \GL_r(X_1)$, after possibly shrinking.  If we now appropriately adjust our choice of isomorphism $M_1 \cong \O_{X_1}^r$ using $\ov{G}$, then, in the ``new" bases, the clutching map $\phi$ becomes the identity matrix.  We have shown that, at least locally, every object of $\Desc_r$ is isomorphic to the object $(\O_{X_1}^r,\O_{X_2}^r)$ with the trivial clutching function, and it is clear that $D$ applied to this object is just $\O_X^r$.  Since the necessary $\Tor$-vanishing hypotheses in Theorem~\ref{thm:descent} hold trivially for bundles, we see that the functor \eqref{Pr} is an equivalence of categories with inverse \eqref{Dr}.

In particular, when $r=1$, $\u{\Phi}_1$ is the Picard stack and our equivalence (which is compatible with the tensor product) becomes an equivalence of Picard categories \be \u{\Pic}(X) & = & \u{\Pic}(X_1) \times_{\u{\Pic}(X_0)} \u{\Pic}(X_2). \ee  One should not be misled into thinking that this equivalence of categories yields an isomorphism of Picard \emph{groups} \be \Pic(X) & = & \Pic(X_1) \times_{\Pic(X_0)} \Pic(X_2). \ee  You can't ``commute" the formation of the fiber product of Picard categories with passing to their underlying groups of isomorphism classes.  Similar warnings of course apply to our equivalence between \emph{categories} of bundles.

\subsection{Gluing} \label{section:gluing}  In this section we will explain the role of log flatness in the descent constructions of the previous section.  We will use this to define gluing maps for moduli spaces of log quotients (\S\ref{section:logquotientspace}), and to show that these gluing maps are compatible with the natural deformation/obstruction theory for moduli spaces of quotients.

We begin with the following warmup, which is just a special case of Theorem~\ref{thm:descent} rephrased in the language of graded flatness.

\begin{thm} \label{thm:gluing} Let $k$ be a field.  Let $B := k[x,y]/(xy)$ graded by $\ZZ$ so that $|x|=1$, $|y|=-1$.  Let $B \to C$ be a ring homomorphism.  Set $C_1 := C/yC$, $C_2 := C/xC$, $C_0 := C/(x,y)C$.  Suppose that $C$ is graded flat over $(\ZZ,B)$.  Then: \begin{enumerate} \item \label{cartesiancocartesian} The diagram of ring surjections $$ \xym{ C \ar[r] \ar[d] & C_1 \ar[d] \\ C_2 \ar[r] & C_0 } $$ is both cartesian and cocartesian.  \item \label{pullbackrestricts} Let $\Mod(C)^\circ \subseteq \Mod(C)$ be the full subcategory of $C$-modules graded flat over $B$ and let $\Mod(C_i)^\circ \subseteq \Mod(C_i)$ be the full subcategory of $C_i$-modules graded flat over $k[x]$ ($i=1$) or $k[y]$ ($i=2$).  Then the pullback functor \be P : \Mod(C) & \to & \Mod(C_1) \times_{\Mod(C_0)} \Mod(C_2) \\ M & \mapsto & (M/yM,M/xM) \ee and the restriction functor \be D : \Mod(C_1) \times_{\Mod(C_0)} \Mod(C_2) & \to & \Mod(C) \\ (M_1,M_2) & \mapsto & D(M_1,M_2)=M_1 \times_{M_0} M_2 \ee descend to an equivalence of categories \be \Mod(C)^\circ & = & \Mod(C_1)^{\circ} \times_{\Mod(C_0)} \Mod(C_2)^{\circ}. \ee \item \label{defobcompatibility} For $M,N \in \Mod(C)^\circ$ we have natural isomorphisms of abelian groups \be \Hom_C(M,N) & = & \Hom_{C_1}(M_1,N_1) \times_{\Hom_{C_0}(M_0,N_0)} \Hom_{C_2}(M_2,N_2) \\ \Ext^1_C(M,N) & = & \Ext^1_{C_1}(M_1,N_1) \times_{\Ext^1_{C_0}(M_0,N_0)} \Ext^1_{C_2}(M_2,N_2) \ee where we set $M_1 := M/yM$, $M_2 := M/xM$, $M_0 := M/(x,y)M$, and similarly with $M$ replaced by $N$. \end{enumerate} \end{thm}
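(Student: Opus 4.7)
The plan is to bootstrap from the general descent machinery of Theorem~\ref{thm:descent} after using the graded flatness hypothesis to verify both the pushout structure of Part~\eqref{cartesiancocartesian} and the Tor-vanishing conditions needed to invoke that theorem. For \eqref{cartesiancocartesian}, I would first observe that $0 \to B \to k[x] \oplus k[y] \to k \to 0$ (with the first map $b \mapsto (b \bmod y,\, b \bmod x)$ and the second the difference of evaluations at $0$) is a short exact sequence of graded $B$-modules with the standard gradings. Since $C$ is graded flat over $(\ZZ,B)$, tensoring with $C$ over $B$ preserves exactness (yielding an ungraded sequence), producing $0 \to C \to C_1 \oplus C_2 \to C_0 \to 0$ and thus identifying $C$ with $C_1 \times_{C_0} C_2$. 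For the cocartesian statement I would then invoke Theorem~\ref{thm:affinepushouts2}: $C_\bullet$ is cartesian and $f_1 \colon C_1 \to C_0$ is surjective, so the corresponding diagram of affine schemes is automatically cocartesian in both schemes and ringed spaces.

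For \eqref{pullbackrestricts}, the strategy is to verify that $\Mod(C)^\circ$ and $\Mod(C_i)^\circ$ as defined here coincide with the Tor-vanishing subcategories appearing in Theorem~\ref{thm:descent}, after which the descent equivalence there applies verbatim. The key observation is that graded flatness of $C$ over $(\ZZ,B)$ forces $\Tor_q^B(C,N) = 0$ for every graded $B$-module $N$ and every $q > 0$: apply graded flatness of $C$ to any graded free $B$-resolution of $N$. Feeding this vanishing into the base-change spectral sequence
$$E_2^{p,q} = \Tor_p^C\bigl(M,\,\Tor_q^B(C,N)\bigr) \;\Rightarrow\; \Tor_{p+q}^B(M,N),$$
applied with $N = k = B/(x,y)$, collapses it and yields $\Tor_*^B(M,k) = \Tor_*^C(M,C_0)$ for every $C$-module $M$. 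Combined with the graded flatness criterion at a node (Corollary~\ref{cor:nodalflatness}), which says graded flatness over $(\ZZ,B)$ is equivalent to vanishing of $\Tor_1^B(-,k)$, this identifies $\Mod(C)^\circ$ with the subcategory $\{M : \Tor_1^C(M,C_0) = 0\}$ of Theorem~\ref{thm:descent}. The parallel argument---using that $C_1 = C \otimes_B k[x]$ inherits graded flatness over $k[x]$ (any graded $k[x]$-module is a graded $B$-module via the surjection $B \twoheadrightarrow k[x]$, and Corollary~\ref{cor:gradedflatnessoverA1} gives the corresponding Tor criterion)---does the same for $\Mod(C_1)^\circ$ and, symmetrically, $\Mod(C_2)^\circ$. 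With all three subcategories pinned down, Theorem~\ref{thm:descent}\eqref{desc3} directly produces the claimed equivalence.

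For \eqref{defobcompatibility}, once the categorical equivalence of \eqref{pullbackrestricts} is in place, the $\Hom$ and $\Ext^1$ formulas are an immediate citation of Theorem~\ref{thm:descent}\eqref{desc4}, with the subcategory identifications above ensuring the relevant $\Tor$-vanishings needed there are satisfied automatically. The subtlest point in the whole argument is the translation between graded flatness over $(\ZZ,B)$ and the ungraded $\Tor_1$-condition over $C$: graded flatness is strictly weaker than ungraded flatness, so one cannot simply quote flat base change, and the change-of-rings collapse crucially requires the auxiliary vanishing $\Tor_q^B(C,k) = 0$. Once that vanishing is extracted from graded flatness applied to a graded $B$-resolution of $k$, everything else is essentially bookkeeping---matching categories and citing Section~\ref{section:descentscholium}.
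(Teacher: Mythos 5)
Your proposal is essentially the paper's own proof: part (1) by tensoring $0 \to B \to B/yB \oplus B/xB \to B/(x,y)B \to 0$ with $C$ using graded flatness, then translating the graded flatness conditions into the $\Tor_1(-,C_0)$-vanishing conditions of Theorem~\ref{thm:descent} by a change of rings, and finally quoting Theorem~\ref{thm:descent}\eqref{desc3},\eqref{desc4}. Your two small divergences are both fine and arguably cleaner than the paper's write-up: you use the full collapse of the base-change spectral sequence (via $\Tor_q^B(C,N)=0$ for all $q>0$ and graded $N$, obtained from a graded free resolution) where the paper only needs the five-term sequence with $\Tor_1^B(C,k)=0$; and you get the needed input on the $C_1$-side ($C_1$ graded flat over $k[x]$, i.e.\ $x$ regular on $C_1$) by graded base change along the graded surjection $B \to k[x]$, which is more transparent than the paper's citation at that point. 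One caveat on part \eqref{cartesiancocartesian}: the cocartesian claim there is about the \emph{ring} diagram, i.e.\ $C_0 \cong C_1 \otimes_C C_2$, and your appeal to the conclusion of Theorem~\ref{thm:affinepushouts2} that $\Spec C_\bullet$ is cocartesian in schemes addresses the wrong side of the $\Spec$ correspondence (cocartesian in schemes corresponds to the cartesian-in-rings statement you already proved). The ring-level cocartesianness needs no hypotheses and is the one-line computation $C_1 \otimes_C C_2 = C_1/xC_1 = C/(x,y)C = C_0$, which is exactly how the paper handles it (and which also appears inside the proof of Theorem~\ref{thm:affinepushouts2}); with that substitution your argument is complete.
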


\begin{proof} For \eqref{cartesiancocartesian}: The diagram is clealy a pushout even without the graded flatness assumption on $C$: $$ C_1 \otimes_C C_2  =  C_1/yC_1 = C_0.$$ To see that the diagram is a pullback we need to check that the natural projections yield an isomorphism \be C & = & \{ (c_1,c_2) \in C_1 \times C_2 : \ov{c}_1 = \ov{c}_2 \in C_0 \}. \ee  This is equivalent to saying that the sequence of $C$-modules $$0 \to C \to C_1 \oplus C_2 \to C_0 \to 0 $$ is exact, where the right map is the difference of the natural projections.  But this sequence is exact since it is obtained by applying $\slot \otimes_B C$ to the analogous sequence \bne{Bseqin1} & 0 \to B \to B/xB \oplus B/yB \to B/(x,y)B \to 0 \ene for $B$ (which is exact since the analogous diagram for $B$ is a pullback) and we have $\Tor_1^B(C,B/(x,y)B)=0$ by graded flatness (Corollary~\ref{cor:nodalflatness}). 

Now that we know \eqref{cartesiancocartesian}, we can prove \eqref{pullbackrestricts} and \eqref{defobcompatibility} by applying Theorem~\ref{thm:descent} to the corresponding pushout diagram of closed embeddings of affine schemes (c.f.\ Proposition~\ref{prop:pushouts}\eqref{push1}).  We need to translate the graded flatness hypotheses in the present theorem into the $\Tor$-vanishing hypotheses of Theorem~\ref{thm:descent}.  For a $C$-module $M$, Corollary~\ref{cor:nodalflatness} says that graded flatness of $M$ over $B$ is equivalent to \bne{star} \Tor_1^B(M,B/(x,y)) & = & 0. \ene Since $C$ is graded flat over $B$ by hypothesis, we have the vanishing in \eqref{star} when $M=C$, as we mentioned above.  This fact and the computation \be M \otimes_B B/(x,y) & = & M \otimes_C (C \otimes_B B/(x,y)) \\ & = & M \otimes_C (C/(x,y)C) \\ & = & M \otimes_C C_0 \ee show that the vanishing \eqref{star} is equivalent to $\Tor_1^C(M,C_0)=0$.  Since we have this vanishing when $M=C$, Theorem~\ref{thm:descent}\eqref{desc1} gives the vanishings \bne{newstar} \Tor_1^{C_i}(C_i,C_0) & = & 0 . \ene  By Corollary~\ref{cor:gradedflatnessoverA1}, graded flatness of a $C_1$-module $M_1$ over $k[x]$ is equivalent to \bne{star2} \Tor_1^{k[x]}(M_1,k[x]/x) & = & 0. \ene  From the vanishing \eqref{newstar} and the computation \be M_1 \otimes_{k[x]} k[x]/x & = & M_1 \otimes_{C_1} (C_1 \otimes_{k[x]} k[x]/x) \\ & = & M_1 \otimes_{C_1} C_0 \ee we see that the vanishing \eqref{star2} is equivalent to \bne{star3} \Tor_1^{C_1}(M_1,C_0) & = & 0. \ene  Similarly, graded flatness of a $C_2$-module $M_2$ over $k[y]$ is equivalent to \bne{star4} \Tor_1^{C_2}(M_2,C_0) & = & 0. \ene  The results now follow Theorem~\ref{thm:descent} because we have shown that the category $\Mod(C)^{\circ}$ defined here is the same as the category that would have been denoted $\Qco(\Spec C)^{\circ}$ in Theorem~\ref{thm:descent} (or, rather, in Remark~\ref{rem:descent}) and that the category $\Mod(C_i)^{\circ}$ here is the same as the category $\Qco(\Spec C_i)^{\circ}$ from Theorem~\ref{thm:descent}. \end{proof}

The rest of this section is devoted to the globalization of Theorem~\ref{thm:gluing}.  

\noindent {\bf Setup:} Suppose $Y$ is a locally noetherian scheme and $X_0 \into X_i$ ($i=1,2$) are closed embeddings of flat, locally finite type $Y$-schemes.  Then $X := X_1 \coprod_{X_0} X_2$ is also flat and of locally finite type over $Y$ (Theorem~\ref{thm:pushouts}); in particular $X$ is noetherian.  For a geometric point $y$ of $Y$, we will use notation such as $X(y)$ to denote the fiber of $X \to Y$ over $y$ and notation such as $M(y)$ to denote the pullback of $M \in \Mod(X)$ to $X(y)$.  We will sometimes write $X_i$ when we mean $\O_{X_i}$ and $X(y)$ when we mean $\O_{X(y)}$.  Let $\E \subseteq \Coh(X)$ denote the full subcategory consisting of coherent sheaves $M$ on $X$ satisfying the following conditions: \begin{enumerate} \item $M$ is flat over $Y$. \item For each geometric point $y$ of $Y$, \be \Tor_1^{X(y)}(M(y),X_0(y)) & = & 0. \ee \end{enumerate} For $i=1,2$, let $\E_i \subseteq \Coh(X_i)$ denote the full subcategory consisting of coherent sheaves $M_i$ on $X_i$ satisfying the following conditions: \begin{enumerate} \item $M_i$ is flat over $Y$. \item For each geometric point $y$ of $Y$, \be \Tor_1^{X_i(y)}(M_i(y),X_0(y)) & = & 0. \ee \end{enumerate}  Let $\E_0$ denote the full subcategory of $\Coh(X_0)$ consisting of coherent sheaves on $X_0$ flat over $Y$. 

\begin{thm} \label{thm:gluing1} In the above Setup: \begin{enumerate} \item \label{gluingA} If $M \in \E$, then $M|X_i \in \E_i$ for $i=1,2$ and \be \Tor_1^X(M,X_0) & = & 0. \ee \item \label{gluingB} If $M_i \in \E_i$, then $M_i|X_0 \in \E_0$ and \be \Tor_1^{X_i}(M_i,X_0) & = & 0. \ee \item \label{gluingC} The full subcategories $\E$ and $\E_i$ are exact. \item \label{gluingD} The usual pullback and descent functors yield an equivalence of categories \be \E & = & \E_1 \times_{\E_0} \E_2 \ee identifying $\Hom$ and $\Ext^1$ groups as in Theorem~\ref{thm:descent}\eqref{desc4}. \end{enumerate} \end{thm}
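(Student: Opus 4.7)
Reduce every claim to a fiberwise application of Theorem~\ref{thm:descent}. By Theorem~\ref{thm:pushouts}\eqref{pushouts2}, flatness of $X_0$ over $Y$ ensures that the pushout $X=X_1\coprod_{X_0}X_2$ is preserved under base change, so each geometric fiber $X(y)=X_1(y)\coprod_{X_0(y)}X_2(y)$ is again a pushout of closed embeddings, and Theorem~\ref{thm:descent} applies on $X(y)$ verbatim. The bridge between fibers and total space is the following.

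\textbf{Key Lemma.} \emph{Let $M,N$ be coherent $\O_X$-modules, both flat over $Y$, such that $\Tor_1^{X(y)}(M(y),N(y))=0$ for every geometric point $y$ of $Y$. Then $M\otimes_{\O_X}N$ is flat over $Y$ and $\Tor_1^X(M,N)=0$.} To prove this, fix $x\in X$ over $y\in Y$, set $A:=\O_{Y,y}$ and $B:=\O_{X,x}$ (noetherian local, with $A\to B$ flat since $X/Y$ is flat by Theorem~\ref{thm:pushouts}\eqref{Xflat}), take a free resolution $P_\bullet\to M_x$ over $B$ and form $K_\bullet:=P_\bullet\otimes_B N_x$. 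Flatness of $N$ and of each $P_i$ over $A$ makes $K_\bullet$ a bounded-above complex of $A$-flat modules, and since $P_\bullet\otimes_A\kappa(y)$ is still a free resolution of $M(y)_x$ over $B\otimes_A\kappa(y)$, we obtain $H_\ast(K_\bullet\otimes_A\kappa(y))=\Tor_\ast^{X(y)}(M(y),N(y))_x$. The first-quadrant hyperTor spectral sequence
\[ E^2_{p,q}=\Tor_p^A\bigl(\Tor_q^B(M_x,N_x),\kappa(y)\bigr)\ \Longrightarrow\ H_{p+q}\bigl(K_\bullet\otimes_A\kappa(y)\bigr) \]
has no differential entering or leaving position $(1,0)$, so $E^2_{1,0}=\Tor_1^A((M\otimes_{\O_X}N)_x,\kappa(y))$ equals $E^\infty_{1,0}$, a subquotient of $H_1=0$. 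The relative local criterion of flatness applied to the finitely generated $B$-module $(M\otimes_{\O_X}N)_x$ now makes $M\otimes_{\O_X}N$ flat over $Y$; this annihilates $E^2_{2,0}$ and hence the only possibly nonzero differential $d_2\colon E^2_{2,0}\to E^2_{0,1}$, so $E^2_{0,1}=\Tor_1^B(M_x,N_x)\otimes_A\kappa(y)$ equals $E^\infty_{0,1}$, again a subquotient of $H_1=0$, and Nakayama applied to the finitely generated $B$-module $\Tor_1^B(M_x,N_x)$ (using $\m_yB\subseteq\m_x$) concludes $\Tor_1^B(M_x,N_x)=0$.

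\textbf{Deducing the theorem.} For \eqref{gluingA}, apply the Lemma with $N=\O_{X_0}$ to obtain $M|X_0$ flat over $Y$ and $\Tor_1^X(M,X_0)=0$; Theorem~\ref{thm:descent}\eqref{desc1} applied globally then delivers $\Tor_1^{X_i}(M|X_i,X_0)=0$, applied fiberwise it delivers the fiberwise Tor vanishing needed to place $M|X_i$ in $\E_i$, and a second application of the Lemma with $N=\O_{X_i}$ gives $M|X_i$ flat over $Y$. For \eqref{gluingB}, apply the Lemma on $X_i$ directly with $N=\O_{X_0}$. For \eqref{gluingC}, the long exact $\Tor$ sequence on each fiber is available because $Y$-flatness of the quotient preserves the short exact sequence under base change, making each of the three defining conditions stable under extensions. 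For \eqref{gluingD}, $D(M_1,M_2)=M_1\times_{M_0}M_2$ is coherent (Remark~\ref{rem:finitegeneration}) and $Y$-flat (kernel of a surjection of $Y$-flats); its base change to $y$ is $M_1(y)\times_{M_0(y)}M_2(y)$, so Theorem~\ref{thm:descent}\eqref{desc2} on $X(y)$ places $D(M_1,M_2)$ in $\E$. Since \eqref{gluingA} embeds $\E$ into $\Mod(X)^\circ$ of Theorem~\ref{thm:descent}, the equivalence of categories and the $\Hom/\Ext^1$ identifications follow from Theorem~\ref{thm:descent}\eqref{desc3} and \eqref{desc4}.

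The main obstacle is the Key Lemma---specifically its two-stage bootstrap, in which $Y$-flatness of $M\otimes_{\O_X}N$ must first be extracted from the $(1,0)$ edge before one can kill $d_2$ and then read off vanishing of $\Tor_1^X(M,N)$ at the $(0,1)$ edge.
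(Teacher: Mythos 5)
Your proof is correct, and its overall decomposition is the same as the paper's: run Theorem~\ref{thm:descent}\eqref{desc1} and \eqref{desc2} on each geometric fiber (valid because $X_0$ is $Y$-flat, so the pushout diagram base-changes to a pushout by Theorem~\ref{thm:pushouts}\eqref{pushouts2}), then upgrade the fiberwise statements to the total space; parts \eqref{gluingC} and \eqref{gluingD} are then dispatched at the same level of detail as in the paper. Where you genuinely differ is the engine for the fiberwise-to-global step in \eqref{gluingA} and \eqref{gluingB}. The paper rephrases the desired vanishing as injectivity of the natural maps $I_{X_0/X}\otimes M\to M$ (and $I_{X_i/X}\otimes M\to M$, $I_{X_0/X_i}\otimes M_i\to M_i$), checks injectivity on fibers via the fiberwise descent theorem, and then invokes \cite[IV.11.3.7]{EGA}, which gives global injectivity and $Y$-flatness of the cokernels $M|X_i$ in one stroke. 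You instead prove a self-contained Tor-and-base-change lemma via the spectral sequence $\Tor_p^A(\Tor_q^B(M_x,N_x),\kappa(y))\Rightarrow\Tor_{p+q}^{X(y)}(M(y),N(y))_x$, reading the $(1,0)$ corner first to get $Y$-flatness of $M\otimes N$ from the local criterion (the paper's Lemma~\ref{lem:Matsumuraflatness}) and only then the $(0,1)$ corner plus Nakayama to kill $\Tor_1^X(M,N)$; the order of those two steps is exactly right, and the inputs you use (noetherian local stalks, finite generation, flatness of $\O_{Y,y}\to\O_{X,x}$ so that the free resolution survives $\otimes_A\kappa(y)$) are all supplied by the Setup. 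In substance your Key Lemma is the standard local-criterion proof of the EGA statement the paper cites, so the two routes buy the same conclusions: yours is self-contained and makes the mechanism transparent, while the paper's is shorter by citation and organizes the bookkeeping around ideal sheaves. Two minor points: your fiberwise hypotheses are stated at geometric points while the stalkwise argument uses scheme-theoretic fibers---harmless, since the two vanishings are equivalent under the faithfully flat residue field extension; and in \eqref{gluingC} you only verify closure under extensions, whereas the paper's (equally terse) proof also asserts closure under kernels---this does not affect your \eqref{gluingD}, since you obtain the $\Hom$/$\Ext^1$ identifications directly from Theorem~\ref{thm:descent}\eqref{desc4} through the inclusion $\E\subseteq\Mod(X)^\circ$ furnished by \eqref{gluingA}.
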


\begin{proof} For \eqref{gluingA}, consider the natural map \bne{pottest} I_{X_i/X} \otimes M & \to & M \ene for $i=0,1,2$, whose injectivity when $i=0$ is equivalent to the desired $\Tor$-vanishing.  Since $X$ and $X_i$ are flat over $Y$, so is $I_{X_i/X}$ and $I_{X_i/X}(y) = I_{X_i(y)/X(y)}$ for each geometric point $y$ of $Y$.  The restriction of \eqref{pottest} to $X(y)$ is therefore the natural map \bne{pottest2} I_{X_i(y)/X(y)} \otimes M(y) & \to & M(y), \ene which is injective when $i=0$ by the assumption \bne{assumptionM} \Tor_1^{X(y)}(M(y),X_0(y)) & = & 0 \ene on $M \in \E$.  Since the diagram $$ \xym{ X_0(y) \ar[r] \ar[d] & X_1(y) \ar[d] \\ X_2(y) \ar[r] & X(y) } $$ is also a pushout diagram of closed embeddings (Theorem~\ref{thm:pushouts}\eqref{pushouts2}), Theorem~\ref{thm:descent}\eqref{desc1} (applied to this fiber diagram using \eqref{assumptionM}) says we also have the vanishings \bne{newvanishings} \Tor_1^{X(y)}(M(y),X_i(y)) & = & 0 \\ \label{2ndvan} \Tor_1^{X_i(y)}(M(y)|X_i(y),X_0(y))  & = & 0. \ene  The vanishings \eqref{newvanishings} says that the maps \eqref{pottest2} are also injective when $i=1,2$.  Since $M \in \E$ is also flat over $Y$ and the map $X \to Y$ and the sheaf $M$ are sufficiently nice, \cite[IV.11.3.7]{EGA} says that the established injectivity of the maps \eqref{pottest2} implies injectivity of all the maps \eqref{pottest} and the flatness of all their cokernels over $Y$.  So the $M|X_i$ are all flat over $Y$ and since \be (M|X_i)(y) & = & M(y)|X_i(y) \ee the vanishing \eqref{2ndvan} shows $M|X_i \in \E_i$ for $i=1,2$.

One proves \eqref{gluingB} by an almost identical consideration of the natural map $$I_{X_0/X_i} \otimes M_i \to M_i,$$ again using \cite[IV.11.3.7]{EGA} and the hypothesized flatness and fiberwise $\Tor$-vanishing on $M_i \in \E_i$.

For \eqref{gluingC}, one first notes that a kernel or extension of objects in $\E_i$ is flat over $Y$ because objects in $\E_i$ are flat over $Y$.  Then one uses the fact that the appropriate kernel or extension stays exact after restricting to the fiber over $y$ to establish the necessary fiberwise $\Tor$-vanishing.

In light of the first three parts, \eqref{gluingD} becomes a standard variant of Theorem~\ref{thm:descent} as discussed in Remark~\ref{rem:descent}. \end{proof}

\begin{rem} \label{rem:gluing1} There are many possible variations of Theorem~\ref{thm:gluing}.  For example, suppose one has some chosen closed subscheme $Z_i(y) \subseteq X_i(y)$ disjoint from $X_0(y)$ for each geometric point $y$.  Then one could add fiberwise $\Tor$-vanishing with $Z_1(y) \coprod Z_2(y)$ into the definition of $\E$ and fiberwise $\Tor$-vanishing with $Z_i(y)$ into the definition of $\E_i$ without changing the conclusion of Theorem~\ref{thm:gluing}. \end{rem}

On completely formal grounds, we can translate Theorem~\ref{thm:gluing} into the language of log flatness and use it to prove results about log quotient spaces.

\noindent {\bf Setup:} We go back to the world of log schemes.  Suppose $f_i : X_i \to Y$ ($i=1,2$) are nodal degenerations (\S\ref{section:semistabledegenerations}) with the same base and the same relative boundary $X_0 \to Y$.  Assume that the map of schemes \be \u{f} = \u{f}_1 \coprod \u{f}_2 : \u{X} := \u{X}_1 \coprod_{\u{X}_0} \u{X}_2 & \to & \u{Y} \ee lifts to a nodal degeneration of log schemes $f : X \to Y'$ (note that we allow the log structure on $\u{Y}$ to change).  Assume furthermore that there exists a cartesian diagram of log schemes  \bne{cartdiagram} \xym{ X \setminus \u{X}_0 \ar[r] \ar[d]_f & (X_1 \setminus \u{X}_0) \coprod (X_2 \setminus \u{X}_0) \ar[d]^{f_1 \coprod f_2} \\ Y' \ar[r] & Y } \ene where the horizontal arrows are the obvious isomorphisms on the level of underlying schemes. Assume $\u{Y}$ is locally noetherian, so that $X$ and the $X_i$ are also locally noetherian (the map of schemes underlying a nodal degeneration is locally of finite presentation as a matter of definitions) as in the setup of Theorem~\ref{thm:gluing1}.

Let $\E \subseteq \Coh(X)$ denote the full subcategory of coherent sheaves on $X$ log flat over $Y'$.  For $i=0,1,2$, let $\E_i \subseteq \Coh(X_i)$ denote the full subcategory of coherent sheaves on $X_i$ log flat over $Y$.  Note that we do not have morphisms of log schemes $X_0 \into X_i$ or $X_i \into X$, but we do have such closed embeddings on the level of underlying schemes, so it makes sense to speak of the restriction $M|X_i$ for a coherent sheaf $M$ on $X$, say.

\begin{thm} \label{thm:gluing2} In the above Setup: \begin{enumerate} \item \label{gluing2A} If $M \in \E$, then $M|X_i \in \E_i$ for $i=1,2$ and \be \Tor_1^X(M,X_0) & = & 0. \ee \item \label{gluing2B} If $M_i \in \E_i$, then $M_i|X_0 \in \E_0$ and \be \Tor_1^{X_i}(M_i,X_0) & = & 0. \ee \item \label{gluing2C} The full subcategories $\E$ and $\E_i$ are exact. \item \label{gluing2D} The usual pullback and descent functors yield an equivalence of categories \be \E & = & \E_1 \times_{\E_0} \E_2 \ee identifying $\Hom$ and $\Ext^1$ groups as in Theorem~\ref{thm:descent}\eqref{desc4}. \end{enumerate} \end{thm}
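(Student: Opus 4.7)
The plan is to reduce the statement to Theorem~\ref{thm:gluing1} (augmented by the variant from Remark~\ref{rem:gluing1}) via Corollary~\ref{cor:nodaldegeneration}, which translates log flatness into flatness plus a fiberwise $\Tor$-vanishing on the non-strict locus. To verify the scheme-theoretic setup hypotheses of Theorem~\ref{thm:gluing1}, I note that $\u{Y}=\u{Y'}$ is locally noetherian by assumption, that $X_0\hookrightarrow X_i$ are closed embeddings of flat, locally finitely presented $Y$-schemes by Remark~\ref{rem:relativeboundary} and Proposition~\ref{prop:nodaldegenerations}\eqref{nodaldegenflat}, and that the pushout description $\u{X}=\u{X_1}\coprod_{\u{X_0}}\u{X_2}$ is part of the given setup.

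The key geometric step is to identify the non-strict locus of each fiber $X(y)\to y$ for a geometric point $y$ of $\u{Y}$. By Proposition~\ref{prop:nodaldegenerations}, at each \'etale point of $X_i$ the map on characteristic monoids is either an isomorphism (smooth), a pushout of $\NN\to\NN^2$ (nodal), or a pushout of $0\to\NN$ (boundary), and the three types are mutually exclusive. Hence the nodal locus $Z'_i(y)\subseteq X_i(y)$ is automatically disjoint from the boundary $X_0(y)$. The cartesian diagram \eqref{cartdiagram} shows that $X\to Y'$ has exactly the same local type at a point of $X\setminus\u{X_0}$ as the corresponding point of the relevant $X_i\to Y$, while along $X_0$ the fact that $f$ is a nodal degeneration whose underlying scheme is the gluing $\u{X_1}\coprod_{\u{X_0}}\u{X_2}$ forces the local type to be the new nodal pattern (pushout of $\NN\to\NN^2$, cutting out $X_0(y)$). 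Thus the non-strict locus of $X(y)$ decomposes as a \emph{disjoint} union $Z(y)=Z'_1(y)\coprod Z'_2(y)\coprod X_0(y)$, and so $\O_{Z(y)}=\O_{Z'_1(y)}\oplus\O_{Z'_2(y)}\oplus\O_{X_0(y)}$.

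Given this disjoint decomposition, Corollary~\ref{cor:nodaldegeneration} identifies $\E$ with the subcategory of $\Coh(X)$ consisting of $M$ flat over $\u{Y}$ satisfying the three separate vanishings $\Tor_1^{X(y)}(M(y),\O_{X_0(y)})=0$ and $\Tor_1^{X(y)}(M(y),\O_{Z'_i(y)})=0$ for $i=1,2$; similarly $\E_i$ is identified with $M_i$ flat over $\u{Y}$ satisfying $\Tor_1^{X_i(y)}(M_i(y),\O_{X_0(y)})=0$ and $\Tor_1^{X_i(y)}(M_i(y),\O_{Z'_i(y)})=0$. Since the log structure on $X_0$ is pulled back from $Y$, the map $X_0\to Y$ is strict, so by Lemma~\ref{lem:strictlogflatness}, $\E_0$ is simply the subcategory of sheaves flat over $\u{Y}$. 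These three descriptions match verbatim the categories $\E$, $\E_i$, $\E_0$ of Theorem~\ref{thm:gluing1} augmented as in Remark~\ref{rem:gluing1} with the choice $Z_i(y):=Z'_i(y)$ (disjoint from $X_0(y)$ as required), and Theorem~\ref{thm:gluing1} then delivers all four conclusions \eqref{gluing2A}--\eqref{gluing2D} directly. The main obstacle throughout is the combinatorial/geometric identification of the non-strict locus of $X(y)$ as a disjoint union, which uses both the local structure theorem (Proposition~\ref{prop:nodaldegenerations}) and the cartesian log diagram \eqref{cartdiagram}; everything else is pure bookkeeping.
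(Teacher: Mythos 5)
Your proposal is correct and follows essentially the same route as the paper: it invokes Corollary~\ref{cor:nodaldegeneration} to translate the log flatness conditions into flatness over $\u{Y}$ plus fiberwise $\Tor$-vanishing against the non-strict loci, identifies (via Proposition~\ref{prop:nodaldegenerations} and the cartesian diagram \eqref{cartdiagram}) the non-strict locus of $X(y')$ as the disjoint union $X_0(y)\coprod Z_1(y)\coprod Z_2(y)$ with $Z_i(y)$ disjoint from $X_0(y)$, and then reduces everything to the variant of Theorem~\ref{thm:gluing1} described in Remark~\ref{rem:gluing1}. Your explicit use of strictness of $X_0\to Y$ and Lemma~\ref{lem:strictlogflatness} to identify $\E_0$ with the flat sheaves is a point the paper leaves implicit, but it is the same argument.
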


\begin{proof}  By Corollary~\ref{cor:nodaldegeneration}, a coherent sheaf $M$ on $X$ log flat over $Y'$ (i.e.\ is in $\E$) iff $M$ is flat over $Y$ in the usual sense and \be \Tor_1^{X(y)}(M(y),Z(y)) & = & 0, \ee where $Z(y) \subseteq X(y)$ is the non-strict locus of $f(y') : X(y') \to \{ y' \}$.  (We will write $y'$ to emphasize that the geometric point $\u{y}$ of $\u{Y}=\u{Y}'$ has log structure from $Y'$ as opposed to $Y$, but we won't worry too much about this distinction in any statement that has nothing to do with log structures.)  Since the $f_i$ are nodal degenerations, the local picture of $\u{f}_i$ near $\u{X}_0$ in Proposition~\ref{prop:nodaldegenerations} makes it clear that $\u{X}_0(y)$ is contained in the singular locus of $\u{f}(y) = \u{f}(y')$ and hence it must be contained in the non-strict locus of the log smoth map $f(y')$.  That same local picture makes it clear that any point of $\u{X_0}$ has neighborhoods in the $\u{X}_i$ on which the non-strict locus of $f_i$ is \emph{exactly} $\u{X}_0$.  Since the diagram \eqref{cartdiagram} is cartesian, the non-strict locus of $f$ must be the same as the non-strict locus $f_1 \coprod f_2$ away from $\u{X}_0$.  Hence there are closed subschemes $Z_i(y) \subseteq X_i(y)$, disjoint from $X_0(y)$, such that the non-strict locus of $f(y')$ is $$ X_0(y) \coprod Z_1(y) \coprod Z_2(y)$$ and the non-strict locus of $f_i(y)$ is $$X_0(y) \coprod Z_i(y).$$  So Corollary~\ref{cor:nodaldegeneration} also says that a coherent sheaf $M_i$ on $X_i$ is log flat over $Y$ (i.e.\ is in $\E_i$) iff $M_i$ is flat over $Y$ in the usual sense and \be \Tor_1^{X_i(y)}(M_i(y), X_0(y) \coprod Z_i(y)) & = & 0. \ee  It is now clear that the present theorem is just one of the variations of Theorem~\ref{thm:gluing1} discussed in Remark~\ref{rem:gluing1}.   \end{proof}

\begin{cor} In the setup of the theorem, suppose $E \in \E$ is a fixed coherent sheaf on $X$ log flat over $Y'$.  Set $E_i := E|X_i$.  Then the usual pullback and descent functors define a bijection between quotients of $E$ log flat over $Y'$ and the set of pairs consisting of quotients of the $E_i$ flat over $Y$ which determine the same quotient of $E_0$.  This bijection is compatible with pullback along any $\u{W} \to \u{Y}$ and hence yields an isomorphism of log quotient spaces \be \LQuot(E/X/Y') & = & \LQuot(E_1/X_1/Y) \times_{\Quot(E_0/X_0/Y)} \LQuot(E_2/X_2/Y). \ee  This isomorphism is compatible with the natural obstruction theories on these quotient spaces.  \end{cor}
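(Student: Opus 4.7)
The plan is to derive the corollary directly from Theorem~\ref{thm:gluing2} applied to kernels of quotients, combined with the stability of log flatness under base change (Theorem~\ref{thm:logflatnessandbasechange}) and the representability of $\LQuot$ (Theorem~\ref{thm:logquotientspace}).

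First I would establish the bijection on sections over a fixed base $\u{Y}$. Given a quotient $q : E \to N$ with $N$ log flat over $Y'$ (so $N \in \E$), the kernel $K := \ker q$ also lies in $\E$ by exactness of $\E \subseteq \Coh(X)$ (Theorem~\ref{thm:gluing2}\eqref{gluing2C}). Applying the pullback part of the equivalence Theorem~\ref{thm:gluing2}\eqref{gluing2D} to the short exact sequence $0 \to K \to E \to N \to 0$ yields exact sequences $0 \to K_i \to E_i \to N_i \to 0$ in $\E_i$ with matching restrictions to $X_0$, so the $q_i : E_i \to N_i := N|X_i$ are log flat quotients of $E_i$ agreeing on $E_0$. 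Conversely, given such a compatible pair $(q_1,q_2)$ together with an isomorphism of the induced quotients of $E_0$, the descent functor $D$ (exact on $\E_1 \times_{\E_0} \E_2$) sends the compatible short exact sequences $0 \to K_i \to E_i \to N_i \to 0$ to a short exact sequence $0 \to D(K_1,K_2) \to E \to D(N_1,N_2) \to 0$ in $\E$, using that $D(E_1,E_2) = E$ since $E \in \E$. These constructions are manifestly mutually inverse.

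For the base change statement I note that $X_0 \to Y$ is strict and log smooth (being the relative boundary of a nodal degeneration), so the underlying map $\u{X}_0 \to \u{Y}$ is flat by Proposition~\ref{prop:nodaldegenerations}\eqref{nodaldegenflat}. Thus Theorem~\ref{thm:pushouts}\eqref{pushouts2} guarantees that the pushout $\u{X} = \u{X}_1 \coprod_{\u{X}_0} \u{X}_2$ commutes with base change along any $\u{W} \to \u{Y}$, and the cartesian condition \eqref{cartdiagram} is inherited by the base change. Since log flatness is stable under arbitrary base change (Theorem~\ref{thm:logflatnessandbasechange}), the bijection constructed above is functorial in $\u{W}$. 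By Yoneda this identifies $\LQuot(E/X/Y')$ with the fiber product $\LQuot(E_1/X_1/Y) \times_{\Quot(E_0/X_0/Y)} \LQuot(E_2/X_2/Y)$ at the level of functors, and hence as algebraic spaces by Theorem~\ref{thm:logquotientspace}. For the compatibility of obstruction theories, recall that at a point $(q : E \to N)$ of $\Quot$ with kernel $K$, the tangent space is $\Hom(K,N)$ and the obstruction space is $\Ext^1(K,N)$; restriction to the open subscheme $\LQuot$ inherits this theory. The $\Hom$ and $\Ext^1$ identifications in Theorem~\ref{thm:gluing2}\eqref{gluing2D} translate directly into the required compatibility between the tangent-obstruction complex of $\LQuot(E/X/Y')$ and the tangent-obstruction complex of the fiber product on the right-hand side.

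The main obstacle I anticipate is the base-change step: one must ensure that the log structures, the cartesian hypothesis \eqref{cartdiagram}, and the subcategories $\E$, $\E_i$, $\E_0$ all behave well under arbitrary test maps $\u{W} \to \u{Y}$, not merely under the strict base changes where the descent equivalence is most transparent. The essential new input that makes the argument go through is that log flatness is preserved under arbitrary base change (Theorem~\ref{thm:logflatnessandbasechange}); once this is in place, everything else is a formal consequence of the exactness of the descent equivalence and the universal properties of the log quotient space.
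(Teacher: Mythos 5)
Your construction of the bijection and your treatment of the obstruction theories follow the paper's own proof: the bijection is exactly the equivalence $\E = \E_1 \times_{\E_0} \E_2$ of Theorem~\ref{thm:gluing2}\eqref{gluing2D} applied to the sequences $0 \to K \to E \to N \to 0$ (with $K \in \E$ by exactness of $\E$, equivalently because kernels of log flat sheaves are log flat), and the compatibility of obstruction theories is precisely the $\Hom$/$\Ext^1$ identification of that theorem applied with the (log flat) kernel in the first slot.

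The one step where your argument has a genuine gap is the compatibility with pullback along $\u{W} \to \u{Y}$.  Knowing that the glued geometric setup base changes well (pushout commutes with base change, the cartesian condition persists) and that log flatness is preserved under base change only tells you that the hypotheses of Theorem~\ref{thm:gluing2} and the moduli conditions persist over every test scheme; it does not, by itself, show that the bijection commutes with pullback.  The nontrivial point, which the paper isolates, is that the descent functor $D$ commutes with base change: $D(N_1,N_2)$ is the kernel of $N_1 \oplus N_2 \to N_0$, and since $N_0$ is flat over the base this kernel sequence stays exact after pullback.  Your appeal to Theorem~\ref{thm:logflatnessandbasechange} is both heavier than needed (the base changes occurring in the definition of $\LQuot$ are strict, so Lemma~\ref{lem:fppflocal} already suffices) and beside the point for this commutation, since right-exact pullback has no reason to commute with the formation of a kernel in general.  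The gap is easy to close within your own framework: the pullback functor $P$ obviously commutes with base change, and since $P$ and $D$ are mutually inverse bijections over every test scheme, compatibility of the inverse follows formally; alternatively, argue directly via flatness of $N_0$ over the base, as the paper does.
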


\begin{proof} The bijection is immediate from the equivalence of categories in the theorem.  The fact that is it compatible with base change results from the fact that the pullback functor $P$ is clearly compatible with base change and the descent functor $D$, which is defined by forming the kernel $$0 \to D(M_1,M_2) \to M_1 \oplus M_2 \to M_0 \to 0, $$ will also commute with base change when $M_0$ is flat over the base, as it is in our situation.  The usual deformation/obstruction theory for quotients is given by $\Hom$ and $\Ext$ from the kernel (which is log flat over $Y'$ when the quotient is log flat over $Y'$ because we assume $E$ is log flat over $Y'$) to the quotient, so the compatibility statement here is the identification of these groups in Theorem~\ref{thm:gluing2}\eqref{gluingD}. \end{proof}

\noindent {\bf Discussion.}  Of course the question now arises:  when is it possible to actually find a lifting $f$ of $\u{f}$ as in the setup of Theorem~\ref{thm:gluing2}?  Recall the setup:  $f_i : X_i \to Y$ ($i=1,2$) are nodal degenerations (\S\ref{section:semistabledegenerations}) with the same base and the same relative boundary $X_0 \to Y$.  Here we will be interested in lifting \be \u{f} = \u{f}_1 \coprod \u{f}_2 : \u{X} := \u{X}_1 \coprod_{\u{X}_0} \u{X}_2 & \to & \u{Y} \ee \emph{in a very particular way}, to a map of fine log schemes $f : X \to Y'$ so that $f$ is a nodal degeneration.  Basically, we will want to know that the log structures on $X$ and $Y'$ are related to those on the $X_i$ and $Y$ in a reasonable manner.  That is, we want our $f$ to have the following properties:

\noindent {\bf (A1)}  There is a map $\M_Y \to \M_{Y'}$ of log structures on $\u{Y}$.

\noindent {\bf (A2)} There is a log structure $\N$ on $\u{Y}$ locally isomorphic to the one associated to $0 : \NN \to \O_Y$ and a map of log structures $\N \to \M_{Y'}$.

\noindent {\bf (A3)} The maps in {\bf (A1)} and {\bf (A2)} induce a direct sum decomposition \be \M_{Y'} & = & \M_Y \oplus \N . \ee

\noindent {\bf (A4)}  For $i=1,2$, let $\M_{X_i'}$ denote the log structure on $\u{X}_i$ inherited from $X$.  Then there are maps $\M_{X_i} \to \M_{X_i}'$ of log structures on $\u{X}$ under $\u{f}_i^* \M_Y$. (Here $\M_{X_i}$ is of course a log structure under $\u{f}_i^* \M_Y$ via $f_i^\dagger$ and $\M_{X_i'}$ is viewed as a log structure under $\u{f}_i^* : \M_Y$ via the map in {\bf (A1)} and the restriction of $f^\dagger : f^* \M_{Y'} \to \M_X$ to $\u{X}_i$.)

\noindent {\bf (A5)} Away from $\u{X}_0$, $f : X \to Y'$ coincides with the base change of the disjoint union of the maps $f_i : X_i \setminus \u{X}_0  \to Y$ along the map $Y' \to Y$ which is the identity on underlying schemes and the map in {\bf (A1)} on log structures.

\noindent {\bf (A6)} Near any (\'etale) point $x$ of $\u{X}_0 \subseteq \u{X}$, any given charts $a : Q \to \M_Y(Y)$, $m : \NN \to \N(Y)$ for $\N$ and $\M_Y$ near $f(x)$ can be lifted (after possibly shrinking) to charts \bne{ficharts} & \xym@C+20pt{ Q \oplus \NN \ar[r]^-{(f^\dagger_i a,t_i)} & \M_{X_i}(X_i) \\ Q \ar[u]^{(\Id,0)} \ar[r]^-a & \M_Y(Y) \ar[u]_{f_i^\dagger} } \ene for the $f_i : X_i \to Y$ and a chart \bne{keychart} & \xym@C+30pt{ Q \oplus \NN^2 \ar[r]^-{(f^\dagger a,s_1,s_2)} &  \M_X(X) \\ Q \oplus \NN \ar[u]^-{(\Id,\Delta)} \ar[r]^-{(a,m)} & \M_{Y'}(Y) = (\M_Y \oplus \N)(Y) \ar[u]_{f^\dagger} } \ene for $f$ such that the induced maps \bne{theinducedmaps} (\u{f}_i, \alpha_{X_i}(t_i)) : \u{X}_i & \to & \u{Y} \times \u{\AA}^1 \\ \label{smoothq} (\u{f}, \alpha_X(s_1), \alpha_X(s_2)) : \u{X} & \to & \u{Y} \times_{\u{\AA}(Q \oplus \NN)} \u{\AA}(Q \oplus \NN^2) = \u{Y} \times_{\u{\AA}(\NN)} \u{\AA}(\NN^2) \ene are smooth and such that, for $i=1,2$, the restrition map $\M_X(X) \to \M_{X_i'}(X_i) = \M_X|_{\u{X}_i}(X_i)$ takes $s_i \in \M_X(X)$ to the image of $t_i \in \M_{X_i}(X_i)$ under the map $\M_{X_i}(X_i) \to \M_{X_i'}(X_i)$ from {\bf (A4)}. 

\begin{rem} A log structure $\N$ as in {\bf (A2)} is often called a \emph{log point}.  The characteristic monoid of such a log structure is $\ov{\N} = \u{\NN}$.  The data of such a log structure is the same as the data of a line bundle on $\u{Y}$ (one gets an $\O_Y^*$-torsor from such a log structure $\N$ by looking at the sheaf of liftings of $1 \in \ov{\NN} = \ov{\N}$ to $\N$). \end{rem}

\begin{rem} On the level of sheaves of monoids, the direct sum of log structures $\M \oplus \N$ on a scheme $X$ is given by $\M \oplus_{\O_X^*} \N$.  Though the category of log structures does have products, they are badly behaved and do not coincide with sums.  In particular, there is no projection map $\M \oplus \N \to \N$, say.  Formation of direct limits of log structures commutes with formation of characteristic monoids. \end{rem}

\begin{rem} \label{rem:sidescription} Assumption {\bf (A6)} implies that $\alpha_X(s_2)|X_1 \in \O_{X_1}(X_1)$ is zero (and similarly with the roles of $1,2$ reversed).  To see this, first note that commutativity of \eqref{keychart} implies $f^\dagger(m) = s_1+s_2 \in \M_X(X)$.  But $m \in \N(Y)$ maps to zero in $\O_Y(Y)$ by the assumption on $\N$ in {\bf (A2)} and the fact that $m : \NN \to \N(Y)$ is a chart, so, in particular, the image of $f^\dagger(m)$ in $\O_{X_1}(X_1)$ must also be zero, hence $\alpha_X(s_2)|X_1$ must be in the kernel of multiplication by $\alpha_X(s_1)|X_1$.  But the last assumption in {\bf (A6)} implies that $\alpha_X(s_1)|X_1 = \alpha_{X_1}(t_1)$, and $\alpha_{X_1}(t_1) : \O_{X_1} \to \O_{X_1}$ must be injective (i.e.\ $\alpha_{X_1}(t_1)$) must be regular) by smoothness of \eqref{theinducedmaps}.  In terms of the pullback description $\O_X = \O_{X_1} \times_{\O_{X_0}} \O_{X_2}$, this means that we have \be \alpha_X(s_1) & = & (\alpha_{X_1}(t_1),0) \\ \alpha_X(s_2) & = & (0,\alpha_{X_2}(t_2)).  \ee  Since $\alpha_{X_1}(t_1)$ generates the ideal of $\u{X}_0 \subseteq \u{X}_1$ (c.f.\ Remark~\ref{rem:relativeboundary}), the function $\alpha_X(s_1) \in \O_X$ generates the ideal of $\u{X}_2 \subseteq \u{X}$ (c.f.\ Theorem~\ref{thm:pushouts}\eqref{pushouts6}).  \end{rem}

\begin{rem} The map $\u{Y} \to \u{\AA}(\NN) = \AA^1$ used to define the fibered product in \eqref{smoothq} is the one determined by the image of $m \in \N(Y)$ in $\O_Y(Y)$, which is zero (see the above Remark).  The fiber of the other map $\u{\AA}(\Delta)$ used to define the fibered product in \eqref{smoothq} over the origin is the pushout $\AA^1 \coprod_{0} \AA^1$, so in fact we have \be \u{Y} \times_{\u{\AA}(\NN)} \u{\AA}(\NN^2) & = & \u{Y} \times (\AA^1 \coprod_0 \AA^1). \ee The assumptions at the end of {\bf (A6)} and the discussion in the above Remark then imply that the map \eqref{smoothq} is just the one determined by the maps \eqref{theinducedmaps}, the two obvious embeddings \bne{obviousembedding} \u{Y} \times \AA^1 & \into & \u{Y} \times (\AA^1 \coprod_0 \AA^1), \ene  and the universal property of the pushout $\u{X} = \u{X}_1 \coprod_{\u{X}_0} \u{X}_2$.   In other words, the composition of $\u{X}_i \into \u{X}$ and \eqref{smoothq} coincides with the composition of \eqref{theinducedmaps} and the appropriate choice of ``obvious embedding" \eqref{obviousembedding}. \end{rem}

\begin{rem} The map in {\bf (A4)} and the map $\u{f}_i^* \N \to \M_{X_i'}$ obtained by composing $f^*$ of the map in {\bf (A2)} and the restriction of $f^\dagger$ to $\u{X}_i$ do \emph{not} yield a direct sum decomposition \be \M_{X_i'} & = & \M_{X_i} \oplus \u{f}_i^* \N . \ee  If these maps did yield such a direct sum decomposition, then the relative characteristic of $f$ at a point of $\u{X}_0$ would be $\NN$, but the chart \eqref{keychart} shows that this relative characteristic monoid is actually $\ZZ$, as we know must be the case at a relative singular point of a nodal degeneration. \end{rem}

The following discussion may clarify the Setup.  Since a smooth map is \'etale locally the projection from a product with affine space, Proposition~\ref{prop:nodaldegenerations}\eqref{boundarypoint} says that near a point of $\u{X}_0$, the maps $f_1$ and $f_2$ look like $\Spec( \slot \to \slot)$ of diagrams of log rings as below  \be \xym{ Q \oplus \NN \ar[r]^-{(a,z_1)} & A[z_1,z_2,\dots,z_n] \\ Q \ar[u]^{(\Id,0)} \ar[r]^a & A \ar[u] } & \quad & \xym{ Q \oplus \NN \ar[r]^-{(a,w_1)} & A[w_1,z_2,\dots,z_n] \\ Q \ar[u]^{(\Id,0)} \ar[r]^a & A \ar[u] } \ee with $\u{D} = \Spec A[z_2,\dots,z_n]$ cut out in $\u{X}_1$ by $z_1$ and in $\u{X}_2$ by $w_1$.  The \'etale local picture of $\u{f}$ near such a point is hence $\Spec$ of the ring map \be A & \to & A[z_1,\dots,z_n] \times_{A[z_2,\dots,z_n]} A[w_1,z_2,\dots,z_n] = A[w_1,z_1,\dots,z_n]/(z_1w_1).\ee  We want our lifting $f$ of $\u{f}$ to a nodal degeneration to look, in this local picture, like $\Spec (\slot \to \slot)$ of the diagram of log rings: \bne{localpicture} & \xym@C+20pt{ Q \oplus \NN^2 \ar[r]^-{(a,z_1,w_1)} & A[w_1,z_1,\dots,z_n]/(z_1w_1) \\ Q \oplus \NN \ar[u]^{(\Id,\Delta)} \ar[r]^-{(a,0)} & A \ar[u] } \ene  

It will not always be possible to find a global lifting $f$ with all of our desired properties, as we managed to do in our local discussion.  Related issues are addressed in \cite[3.12-14]{O2}.  We will not attempt to address this issue here.  Roughly speaking, the obstruction to finding such a global $f$ with all the properties above should be the class of $N^{\lor}_{X_0/X_1} \otimes N^{\lor}_{X_0/X_2}$ in the relative Picard group $\Pic(X_0/Y)$; the choice of line bundle $L \in \Pic(Y)$ and the choice of an isomorphism \be f_0^* L & \cong & N^{\lor}_{X_0/X_1} \otimes N^{\lor}_{X_0/X_2} \ee should be closely related to the choice of a lifting $f$ of $\u{f}$ with the above properties (the log structure $\N$ should be the one corresponding to $L$, for example).  I have not thought through the details.

\begin{rem} Instead of trying to understand when we can glue two nodal degenerations along a common boundary, we could try to understand when we can ``take apart" a nodal degeneration with a ``universal node." \end{rem}

\begin{rem} In the above discussion we have, for simplicity, discussed the situation where one attempts to glue the entire boundary of one nodal degeneration to the entire boundary of another; of course one may work only with one component of the boundary at a time, or one may glue two pieces of the boundary in the same nodal degeneration, etc.\  It will be clear in what follows that all the action happens locally near the gluing locus; it isn't even necessary to assume anything about our $f_i$ other than that they look like nodal degenerations with boundary $\u{X}_0$ near a chosen $\u{X}_0 \subseteq X_i$. \end{rem}

\section{Modules Over Monoids} \label{section:modules}  This section is a brief review of the theory of modules over monoids, with a special emphasis on flat and free modules, which play a role in the theory of log flatness, especially in \S\ref{section:gradedflatnesscriteria}.  This overlaps considerably with the material in \cite{GM}, but it is included here as well to keep things self-contained.  The theory of flatness is new here.

\begin{defn} \label{defn:module} Let $P$ be a monoid.  Recall the a $P$-\emph{module} is a set $M$ equipped with an \emph{action map} $P \times M \to M$, written $(p,m) \mapsto p+m$, such that $0+m = m$ for all $m \in M$ and $(p_1+p_2)+m = p_1+(p_2+m)$ for all $p_1,p_2 \in P$, $m \in M$.\end{defn}

Alternatively, for a set $M$, the set $\Hom_{\Sets}(M,M)$ is a (non-commutative) monoid under composition and a $P$-module structure on $M$ is a homomorphism of monoids $P \to \Hom_{\Sets}(M,M)$.  If $P$ is a group, then a $P$-module structure on $M$ is an \emph{action} of $P$ on $M$ in the usual sense.  

If $h : Q \to P$ is a monoid homomorphism, $P$ becomes a $Q$-module with the action $q+p := h(q)+p$.  A subset $I \subseteq P$ such that addition takes $I \times P$ into $I$ is called an \emph{ideal}. An ideal is the same thing as a $P$-submodule of $P$.  $P$-modules form a category $\Mod(P)$ where a morphism is a morphism of sets respecting the actions.   The category $\Mod(P)$ has all direct and inverse limits and formation of inverse limits commutes with passing to the underlying set.  Finite products and finite sums \emph{do not coincide} in $\Mod(P)$.  The categorical direct sum (coproduct) of $P$-modules $M_i$ is their set-theoretic disjoint union $\coprod_i M_i$ with the obvious $P$-action respecting this coproduct decomposition.  In particular, the initial object of $\Mod(P)$ is the empty $P$-module $\emptyset$ and the terminal object of $\Mod(P)$ is the one-element set with (necessarily) trivial $P$-action.

If $i \mapsto M_i$ is a \emph{filtered} direct limit system of $P$-modules, its direct limit $M$ is constructed ``in the usual way" by putting a monoid structure on the set-theoretic filtered direct limit.  Despite the example of coproducts and filtered direct limits, the forgetful functor $\Mon \to \Sets$ does \emph{not} commute with general direct limits---it does not commute with coequalizers.

\begin{rem} \label{rem:nozeroobject} The category $\Mod(P)$ does not have a zero object so there are no ``zero morphisms" and it does not make sense to speak of the ``kernel" or ``cokernel" of a $\Mod(P)$ morphism $M \to N$. \end{rem}

\subsection{Flat and free modules} \label{section:freemodules} The forgetful functor from $\Mod(P)$ to sets has a left adjoint called the \emph{free module functor} which takes a set $S$ to the $P$-module $P \times S$ with action $p+(p',s) = (p+p',s)$.  A $P$-module in the essential image of the free module functor is called \emph{free} and a choice of subset $S \subseteq M$ such that the map $P \times S \to M$ given by $(p,s) \mapsto p+s$ is a $P$-module isomorphism (i.e.\ is bijective) is called a \emph{basis for} $M$.  Note that $M$ is free iff $M$ is isomorphic to a categorical direct sum of copies of $P$.

\begin{example} \label{example:groupflatness} If $P=A$ is an abelian group, then an $A$-module (set with $A$-action) $M$ is free iff the $A$-action is free (trivial stabilizers) and a choice of basis is the same thing as the choice of a point in each $A$-orbit.  In general, picking a point in each orbit shows that $M$ is a direct sum of $A$=modules of the form $A/B$, where $B$ is a subgroup of $A$.   Any \end{example}

\begin{defn} \label{defn:flat} A $P$-module $M$ is called \emph{flat} iff $M$ is a filtered direct limit of free $P$-modules. \end{defn}

Here is an indication that this notion of flatness has similar formal properties to the usual flatness notion for modules over rings:

\begin{prop} \label{prop:flatness} Let $P$ be a monoid. \begin{enumerate} \item Any free $P$-module if flat.  \item Any filtered direct limit of flat $P$-modules is flat. \item \label{projectiveimpliesfree} Any direct summand of a free $P$-module is free.  \item Any direct summand of a flat $P$-module is flat. \end{enumerate} \end{prop}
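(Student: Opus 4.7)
The plan is to handle the four parts in order, with the first two being formal and the real content lying in parts (3) and (4).

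Part (1) is immediate: the constant diagram on a free module is a filtered diagram with free terms, whose colimit is the module itself. Part (2) follows from the standard fact that a filtered colimit of filtered colimits is a filtered colimit; since each flat module is \emph{a priori} a filtered colimit of free modules, any filtered colimit of flat modules can be refined to a filtered colimit of free modules indexed by the Grothendieck construction of the two-stage diagram.

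For part (3), the key observation is that in $\Mod(P)$ the coproduct is just disjoint union with the inherited action, so writing $F = M \oplus N$ literally means $F = M \sqcup N$ as $P$-sets, with each of $M, N$ a $P$-invariant subset. Fix a basis $S \subseteq F$ and set $S_M := S \cap M$, $S_N := S \cap N$, so $S = S_M \sqcup S_N$. I would then verify that the map $P \times S_M \to M$, $(p,s) \mapsto p+s$, is bijective: surjectivity, because any $m \in M \subseteq F$ has a unique expression $m = p+s$ with $s \in S$, and $s$ must lie in $S_M$ since otherwise $s \in N$ would force $p+s \in N$ by $P$-invariance of $N$, contradicting $m \in M$; injectivity, because the map $P \times S \to F$ is already bijective. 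Thus $M$ is free on $S_M$.

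For part (4), suppose $L$ is flat with $L = M \oplus N$, and write $L = \operatorname*{colim}_{i \in I} F_i$ for some filtered diagram of free modules with structure maps $\phi_i : F_i \to L$ and transition maps $f_{ij} : F_i \to F_j$. I would pull back the decomposition of $L$: set $F_i^M := \phi_i^{-1}(M)$ and $F_i^N := \phi_i^{-1}(N)$, so that $F_i = F_i^M \sqcup F_i^N$ expresses $F_i$ as a direct sum of $P$-submodules. By part (3) applied to the free module $F_i$, both $F_i^M$ and $F_i^N$ are free. Since morphisms in $\Mod(P)$ are $P$-equivariant and hence preserve the partition into $M$ and $N$ (the transition maps satisfy $\phi_j \circ f_{ij} = \phi_i$, so $f_{ij}(F_i^M) \subseteq F_j^M$), the assignment $i \mapsto F_i^M$ forms a filtered subdiagram with values in free modules. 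The main step is then to identify $\operatorname*{colim}_i F_i^M$ with $M$: surjectivity onto $M$ follows because every $m \in M$ equals $\phi_i(x)$ for some $x \in F_i$, forcing $x \in F_i^M$; injectivity follows from the explicit construction of filtered colimits in $\Sets$, since if $x \in F_i^M$ and $y \in F_j^M$ have equal image in $L$, some $F_k$ identifies them, and that identification happens inside $F_k^M$.

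The mildly subtle step is the last one — verifying that forming the preimage $\phi_i^{-1}(M)$ commutes with the filtered colimit — but since filtered colimits in $\Mod(P)$ are computed on underlying sets (as noted just after Definition~\ref{defn:module}) and the partition $L = M \sqcup N$ is set-theoretic, this commutation is automatic. No other step is more than a direct unraveling of the definitions.
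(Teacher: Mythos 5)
Your proof is correct and follows essentially the same route as the paper's: parts (1)–(2) by unwinding the definition, part (3) by intersecting a basis with the summand, and part (4) by pulling the decomposition back along the structure maps of a filtered system of free modules and applying part (3). The only difference is that you spell out details the paper leaves implicit (e.g.\ the identification of $\operatorname*{colim}_i F_i^M$ with $M$), which is fine.
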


\begin{proof} The first two statements are immediate from the definition of ``flat" above.  For the third statement, suppose $M = M_1 \coprod M_2$ as $P$-modules and $S \subseteq M$ is a basis for $M$.  Then each $s \in S$ is contained in exactly one of the $M_i$ and if $s \in M_i$, then $p+s \in M_i$ for all $p \in P$.  Since every $m \in M$ can be uniquely written $m=s+p$ for $s \in S$, $p \in P$, each $m \in M_i$ can be uniquely written $m = s+p$ with $p \in P$ and $s \in S_i := \{s \in S : s \in S_i \}$, so $M_i$ is free with basis $S_i$ for $i=1,2$.  For the final statement, suppose $M = M_1 \coprod M_2$ (as $P$-modules) and $M$ is flat---so $M$ is the direct limit of a filtered system $j \mapsto M_j$ of free $P$-modules.  For $i \in \{ 1,2 \}$, let $M_{j,i} \subseteq M_j$ be the preimage of $M_i \subseteq M$ under the structure map $M_j \to M$ to the direct limit.  Since the latter structure maps are maps of $P$-modules we have $M_j = M_{j,1} \coprod M_{j,2}$ (as $P$-modules), naturally in $j$, and hence $M_i$ is the filtered direct limit of $j \mapsto M_{j,i}$.  Each $M_{j,i}$ free by the previous part since it is a direct summand of the free module $M_j$. \end{proof}

\begin{example} The $\NN$-module $\ZZ$ is not free, but it is flat because it is the filtered direct limit of the free $\NN$-submodules $$[0,\infty) \subseteq [-1,\infty) \subseteq [-2,\infty) \subseteq \cdots. $$   \end{example}

\begin{example} \label{example:localization} More generally, if $S \subseteq P$ is any submonoid of any monoid $P$, the localization $S^{-1}P$ is a flat $P$-module by the same argument one uses to show that the localization of a ring is a filtered direct limit of copies of the ring.  That is: regard $S$ as a category where the objects are elements of $S$ and where a morphism $u : s \to t$ is an element $u \in S$ such that $s+u = t$.  Composition is given by addition.  This category $S$ is filtered.  Consider the functor $F : S \to \Mod(P)$ defined on objects by $F(s) := P$ for all $s \in S$ and on morphisms by $F(u) := u + \slot : P \to P$.  If we define $f_s : F(s) \to S^{-1} P$ by $p \mapsto p-s$ then the $P$-module diagrams $$ \xym{ F(s) \ar[rd]_{f_s} \ar[rr]^{F(u)} & & F(t) \ar[ld]^{f_t} \\ & S^{-1} P } $$ commute and the induced map $\dirlim F \to S^{-1} P$ is an isomorphism of $P$-modules. \end{example}

There is a ``forgetful functor" from the category $\Mod(\ZZ[P])$ of modules over the ring $\ZZ[P]$ to $P$-modules which takes $M$ to $M$ regarded as a $P$-module with the action $p+m := [p] \cdot M$.  Here $[p]$ is the image of $p$ in $\ZZ[P]$ and $\cdot$ is the scalar multiplication of $\ZZ[P]$ on $M$.  This forgetful functor admits a left adjoint \bne{ZP} \ZZ[ \slot ] : \Mod(P) & \to & \Mod(\ZZ[P]) \\ \nonumber M & \mapsto & \ZZ[M], \ene where $\ZZ[M] = \oplus_{m \in M} \ZZ[m]$ with the action the action given by the unique $\ZZ$-linear extension of $[p] \cdot [m] := [p+m]$.

\begin{rem} Given a morphism of $P$-modules $M \to N$, one can form the kernel or cokernel of the $\ZZ[P]$-module morphism $\ZZ[M] \to \ZZ[N]$ (c.f.\ Remark~\ref{rem:nozeroobject}, but this $\ZZ[P]$-module will not, in general, be in the essential image of \eqref{ZP}.  \end{rem}

\begin{thm} \label{thm:freetofree} The functor \eqref{ZP} takes free (resp.\ flat) $P$-modules to free (resp.\ flat) $\ZZ[P]$-modules.  In fact, if $S$ is a basis for a $P$-module $M$, then $\{ [s] : s \in S \}$ is a basis for the $\ZZ[P]$-module $\ZZ[M]$.  \end{thm}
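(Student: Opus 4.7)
The plan is to handle the free and flat cases in turn, with the free case doing essentially all the work.

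For the free case, I would start with a $P$-module $M$ equipped with a basis $S$, so by definition the map $P \times S \to M$ given by $(p,s) \mapsto p+s$ is a bijection of $P$-modules (with the action $p'+(p,s) = (p'+p,s)$). Since $\ZZ[\slot]$ applied to a set is just the free abelian group on that set, as an abelian group we have $\ZZ[M] = \bigoplus_{(p,s) \in P \times S} \ZZ \cdot [(p,s)]$. I would then define a $\ZZ[P]$-linear map
\[ \Phi : \bigoplus_{s \in S} \ZZ[P] \cdot e_s \;\longrightarrow\; \ZZ[M], \qquad e_s \mapsto [s] \]
(identifying $s \in S$ with $(0,s) \in P \times S = M$). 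Unwinding the definition of the $\ZZ[P]$-action on $\ZZ[M]$ gives $\Phi([p] \cdot e_s) = [p]\cdot[s] = [(p,s)]$, so $\Phi$ carries the evident $\ZZ$-basis of the source bijectively to the $\ZZ$-basis $\{ [(p,s)] : (p,s) \in P \times S\}$ of the target. Hence $\Phi$ is an isomorphism, which is exactly the assertion that $\{[s] : s \in S\}$ is a $\ZZ[P]$-basis of $\ZZ[M]$.

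For the flat case, I would invoke the fact that $\ZZ[\slot] : \Mod(P) \to \Mod(\ZZ[P])$ is a left adjoint to the forgetful functor (as noted just before the theorem statement), hence preserves all direct limits, in particular filtered ones. If $M = \dirlim_i M_i$ is a filtered direct limit of free $P$-modules, then $\ZZ[M] = \dirlim_i \ZZ[M_i]$ as $\ZZ[P]$-modules, and each $\ZZ[M_i]$ is a free $\ZZ[P]$-module by the already-established free case. A filtered direct limit of free $\ZZ[P]$-modules is flat in the usual ring-theoretic sense, so $\ZZ[M]$ is flat.

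I don't expect any real obstacle here: once one writes down the map $\Phi$ in the free case, both its $\ZZ$-linearity and $\ZZ[P]$-equivariance are immediate from the construction of $\ZZ[M]$ as a free abelian group with $P$-action extended $\ZZ$-linearly, and the bijection on $\ZZ$-bases is a formality. The only mild subtlety worth flagging is the fact that the forgetful functor in question is genuinely a right adjoint (so that $\ZZ[\slot]$ commutes with direct limits), which is the standard free--forgetful adjunction and can be verified directly on a generating element.
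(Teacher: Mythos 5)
Your proof is correct and follows essentially the same route as the paper: establish the free case first, then deduce the flat case from the fact that $\ZZ[\slot]$, being a left adjoint, commutes with filtered direct limits, so that a filtered limit of free $\ZZ[P]$-modules is flat. The only (minor) difference is in the free case, where you verify the isomorphism $\bigoplus_{s \in S} \ZZ[P] \to \ZZ[M]$ explicitly by matching $\ZZ$-bases, whereas the paper gets the same conclusion by chaining the adjunction bijections $\Hom_{\ZZ[P]}(\ZZ[M],N) = \Hom_{P}(M,N) = \Hom_{\Sets}(S,N)$; both arguments are complete, and yours has the mild advantage of making the asserted basis $\{[s] : s \in S\}$ visible at once.
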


\begin{proof} If $M$ is a free $P$-module with basis $S$, then we use the adjointness relations mentioned above to establish a natural bijection \be \Hom_{\Mod(\ZZ[P])}(\ZZ[M],N) & = & \Hom_{\Mod(P)}(M,N) \\ & = & \Hom_{\Sets}(S,N). \ee  This shows that \eqref{ZP} takes free modules to free modules.  The functor \eqref{ZP} is a left adjoint so it commutes with direct limits.  In particular it commutes with filtered direct limits, so it also takes flat modules to flat modules because a filtered direct limit of free modules over a ring is flat. \end{proof}

\begin{cor} \label{cor:AGtoAHflat} Let $h : G \to H$ be an injective map of groups.  Then for any ring $A$, the map of group algebras $A[h]: A[G] \to A[H]$ makes $A[H]$ a free $A[G]$-module of rank $H/G$.  In particular, $A[h]$ is faithfully flat. \end{cor}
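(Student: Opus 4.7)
The plan is to reduce everything to the case $A = \ZZ$ via base change and then invoke Theorem~\ref{thm:freetofree} together with Example~\ref{example:groupflatness}.

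First I would show that $H$, viewed as a $G$-module via $h$, is free. This is exactly the content of Example~\ref{example:groupflatness}: an injective homomorphism of groups makes the target into a free module over the source, with a basis given by any choice of coset representatives for the image. So pick a set $S \subseteq H$ of representatives for the cosets $H/G$, so that $S$ is a $G$-basis for $H$ and $|S| = |H/G|$.

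Next I would apply Theorem~\ref{thm:freetofree} to conclude that $\{[s] : s \in S\}$ is a $\ZZ[G]$-basis for $\ZZ[H]$. Thus $\ZZ[H]$ is a free $\ZZ[G]$-module of rank $|H/G|$. Tensoring with $A$ over $\ZZ$ gives
\[
A[H] \;=\; A \otimes_{\ZZ} \ZZ[H] \;=\; A \otimes_{\ZZ} \bigoplus_{s \in S} \ZZ[G] \cdot [s] \;=\; \bigoplus_{s \in S} A[G] \cdot [s],
\]
exhibiting $A[H]$ as a free $A[G]$-module of rank $|H/G|$ with basis $\{[s] : s \in S\}$, as claimed.

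For the final assertion, note that $|H/G| \geq 1$ (the identity coset is always present), so $A[H]$ is a nonzero free $A[G]$-module, hence faithfully flat. This is the entire proof; there is no real obstacle beyond correctly invoking the two earlier results. The only subtlety worth mentioning is that the ``rank'' $|H/G|$ may be infinite, but this causes no trouble since a free module of any nonzero rank is faithfully flat.
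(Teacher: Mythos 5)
Your proof is correct and follows essentially the same route as the paper: $H$ is a free $G$-module with basis a set of coset representatives (Example~\ref{example:groupflatness}), and Theorem~\ref{thm:freetofree} then gives freeness of the group algebra. The only difference is that you make the passage from $\ZZ$ to a general ring $A$ explicit via the base change $A[H] = A \otimes_{\ZZ} \ZZ[H]$, a step the paper leaves implicit (the argument of Theorem~\ref{thm:freetofree} works verbatim over $A$), and your handling of the infinite-rank and faithful-flatness points is fine.
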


\begin{proof}  Since $h$ is injective, $H$ is a free $G$-module of rank $H/G$ (c.f.\ Example~\ref{example:groupflatness}) so the result is immediate from the theorem. \end{proof}

\subsection{Finiteness} \label{section:finiteness}

\begin{prop} \label{prop:finiteness} For a monoid $P$ and a $P$-module $M$, the following are equivalent: \begin{enumerate} \item \label{finitegen1} There exists a finite subset $S \subseteq M$ such that the map of $P$-modules $P \times S \to M$ defined by $(p,s) \mapsto p+s$ is surjective.  \item $\ZZ[M]$ is a finitely generated $\ZZ[P]$-module. \item \label{finitegen3} There exists a non-zero ring $A$ for which $A[M]$ is a finitely generated $A[P]$-module. \end{enumerate} \end{prop}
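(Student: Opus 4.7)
The plan is to prove the cycle of implications (\ref{finitegen1}) $\Rightarrow$ (2) $\Rightarrow$ (\ref{finitegen3}) $\Rightarrow$ (\ref{finitegen1}), with the first two implications being essentially formal and the last one being the only substantive step.

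For (\ref{finitegen1}) $\Rightarrow$ (2), I would observe that if $S \subseteq M$ is a finite $P$-module generating set, then every $m \in M$ can be written as $p+s$ for some $p \in P$ and $s \in S$, so $[m] = [p]\cdot [s]$ in $\ZZ[M]$. Thus the finite set $\{[s] : s \in S\}$ generates $\ZZ[M]$ as a $\ZZ[P]$-module. For (2) $\Rightarrow$ (\ref{finitegen3}), just take $A = \ZZ$.

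The main content is (\ref{finitegen3}) $\Rightarrow$ (\ref{finitegen1}). Assume $A \neq 0$ and that $f_1, \dots, f_n \in A[M]$ generate $A[M]$ as an $A[P]$-module. Each $f_j$ is a finite $A$-linear combination $f_j = \sum_{i} a_{ij}[m_{ij}]$ of basis elements indexed by elements $m_{ij} \in M$. Let $S \subseteq M$ be the finite set of all $m_{ij}$ appearing in these expressions. The plan is to show that $S$ generates $M$ as a $P$-module. Given any $m \in M$, we write
\[
[m] \;=\; \sum_j g_j f_j \;=\; \sum_{i,j,k} b_{jk}\, a_{ij}\,[p_{jk}+m_{ij}]
\]
where $g_j = \sum_k b_{jk}[p_{jk}]$ is a finite expansion in $A[P]$. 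Comparing coefficients of the basis element $[m]$ on both sides of this equality in $A[M] = \bigoplus_{m' \in M} A\cdot [m']$, the left side contributes $1 \in A$ while the right side contributes $\sum_{(i,j,k):\, p_{jk}+m_{ij}=m} b_{jk}a_{ij}$. Since $1 \neq 0$ in the non-zero ring $A$, this sum is not the empty sum, so there exists some triple $(i,j,k)$ with $p_{jk}+m_{ij} = m$. This exhibits $m$ in the image of $P \times S \to M$, proving (\ref{finitegen1}).

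I do not anticipate any serious obstacle: the only subtlety is making sure the argument for (\ref{finitegen3}) $\Rightarrow$ (\ref{finitegen1}) actually uses the hypothesis $A \neq 0$ (so that $1 \neq 0$ in $A$ and an empty sum cannot equal $1$), and this is what forces the monomial $[m]$ on the left to actually appear on the right. Everything else is bookkeeping with the standard direct sum decomposition $A[M] = \bigoplus_{m \in M} A \cdot [m]$.
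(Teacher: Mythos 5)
Your proposal is correct and follows essentially the same route as the paper: the only substantive implication is (\ref{finitegen3}) $\Rightarrow$ (\ref{finitegen1}), and both you and the paper prove it by taking $S$ to be the finite set of monomial exponents occurring in a finite set of $A[P]$-module generators of $A[M]$, then comparing the coefficient of $[m]$ (which is $1 \neq 0$ since $A \neq 0$) to force some expression $m = p + s$ with $s \in S$. The easy implications are handled the same way, so nothing further is needed.
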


\begin{proof} The only conceivably nontrivial implication is \eqref{finitegen3} implies \eqref{finitegen1}.  Suppose $g_1,\dots,g_n$ generate $A[M]$ as an $A[P]$-module.  Write $g_i = \sum_{m \in M} a_m^i [m]$ where the set $S(i) := \{ m \in M : a_m^i \neq 0 \}$ is finite.  To see that $S := \cup_{i=1}^n S(i)$ satisfies the condition in \eqref{finitegen1}, fix some $m \in M$ and use the fact that the $g_i$ generate $A[M]$ to write $[m] = \sum_{i=1}^n b_i g_i$ for $b_i \in A[P]$.  The fact that the coefficient of $[m]$ is $1 \neq 0$ when we expand out the sum on the right in particular means that there must be some $s \in S$ and some $p \in P$ for which $p+s = m$. \end{proof}

\begin{defn} A $P$-module $M$ is called \emph{finitely generated} iff it satisfies the equivalent conditions of Proposition~\ref{prop:finiteness}. A subset $S \subseteq M$ as in \eqref{finitegen1} is called a \emph{set of generators} for $M$. \end{defn}

\begin{cor} \label{cor:finiteness} If $P$ is a finitely generated monoid, $M$ is a finitely generated $P$-module and $N$ is a $P$-submodule of $M$, then $N$ is finitely generated. \end{cor}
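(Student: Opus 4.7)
The plan is to reduce to standard Noetherianity over commutative rings via the functor $M \mapsto \ZZ[M]$ of \eqref{ZP}, exploiting the equivalence \eqref{finitegen1}$\Leftrightarrow$\eqref{finitegen3} of Proposition~\ref{prop:finiteness}.

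First I would observe that since $P$ is finitely generated as a monoid, there is a surjection of monoids $\NN^n \to P$ for some $n$, which induces a surjection of rings $\ZZ[x_1,\dots,x_n] = \ZZ[\NN^n] \to \ZZ[P]$. Hence $\ZZ[P]$ is a quotient of a polynomial ring in finitely many variables over $\ZZ$, so by Hilbert's basis theorem it is Noetherian.

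Next I would apply Proposition~\ref{prop:finiteness} to the hypothesis that $M$ is finitely generated as a $P$-module to conclude that $\ZZ[M]$ is finitely generated as a $\ZZ[P]$-module. Now I need to see that $\ZZ[N]$ is a $\ZZ[P]$-submodule of $\ZZ[M]$. The inclusion $N \subseteq M$ gives a natural $\ZZ$-linear injection $\ZZ[N] = \bigoplus_{n \in N} \ZZ[n] \hookrightarrow \bigoplus_{m \in M} \ZZ[m] = \ZZ[M]$, and the image is $\ZZ[P]$-stable because for $p \in P$ and $n \in N$ we have $[p] \cdot [n] = [p+n]$ with $p+n \in N$ (since $N$ is a $P$-submodule of $M$).

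Finally, since $\ZZ[P]$ is Noetherian and $\ZZ[M]$ is a finitely generated $\ZZ[P]$-module, the submodule $\ZZ[N] \subseteq \ZZ[M]$ is also finitely generated as a $\ZZ[P]$-module. Applying the implication \eqref{finitegen3}$\Rightarrow$\eqref{finitegen1} of Proposition~\ref{prop:finiteness} (with $A = \ZZ$) yields that $N$ is finitely generated as a $P$-module. There is no substantive obstacle here; the only subtle point is verifying the $\ZZ[P]$-stability of $\ZZ[N]$ inside $\ZZ[M]$, but this is immediate from the definition of the $\ZZ[P]$-action on $\ZZ[M]$ given just before \eqref{ZP}.
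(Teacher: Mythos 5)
Your proposal is correct and follows essentially the same route as the paper's proof: pass to $\ZZ[P]$ via Proposition~\ref{prop:finiteness}, use that $\ZZ[P]$ is noetherian and $\ZZ[M]$ is finitely generated, and conclude for the submodule $\ZZ[N]$. You merely spell out the two points the paper leaves implicit (noetherianity of $\ZZ[P]$ via Hilbert's basis theorem and the $\ZZ[P]$-stability of $\ZZ[N]$ inside $\ZZ[M]$), which is fine.
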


\begin{proof} Since $\ZZ[M]$ is a finitely generated module over the noetherian ring $\ZZ[P]$ and $\ZZ[N]$ is a $\ZZ[P]$-submodule of it, $\ZZ[N]$ is also a finitely generated $\ZZ[P]$-module, hence $N$ is a finitely generated $P$-module. \end{proof}

\subsection{Tensor product of modules} \label{section:tensorproduct}  Let $P$ be a monoid and let $M,N,T$ be $P$-modules.  A function $f : M \times N \to T$ is called $P$-\emph{bilinear} iff \bne{bilinear} f(p + m, n) & = & p + f(m,n) \\ \nonumber f(m, p + n) & = & p + f(m,n) \ene for every $m \in M$, $n \in N$, $p \in P$.  Let $\Bilin_P(M \times N,T)$ denote the set of $P$-bilinear maps from $M \times N$ to $T$.  

\begin{prop} \label{prop:tensorproduct} For any $M,N \in \Mod(P)$, there is a $P$ module $M \otimes_P N$, unique up to unique isomorphism, with the following universal property:  There is a $P$-bilinear map $\tau : M \times N \to M \otimes_P N$ such that any $P$-bilinear map $f : M \times N \to T$ factors uniquely as $\ov{f} \tau$ for a $P$ module map $\ov{f} : M \otimes_P N \to T$. \end{prop}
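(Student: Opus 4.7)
The plan is to construct $M \otimes_P N$ as a quotient of a free $P$-module by a suitable congruence (i.e.\ an equivalence relation compatible with the $P$-action), rather than by a submodule. This detour is necessary because $\Mod(P)$ has no zero object and hence no sensible notion of quotient by a submodule (Remark~\ref{rem:nozeroobject}); the universal construction must be carried out set-theoretically.

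First I would form the free $P$-module $F := P \times (M \times N)$ with action $p' + (p,m,n) := (p'+p,\, m,\, n)$, and let ${\sim}$ be the smallest $P$-congruence on $F$ containing the pairs
\[
(0,\; p+m,\; n) \;\sim\; (p,\; m,\; n) \qquad \text{and} \qquad (0,\; m,\; p+n) \;\sim\; (p,\; m,\; n)
\]
for all $p \in P$, $m \in M$, $n \in N$. (A $P$-congruence is an equivalence relation on $F$ that is a sub-$P$-module of $F \times F$; arbitrary intersections of $P$-congruences are $P$-congruences, so the smallest one containing any given set of pairs exists.) Define
\[
M \otimes_P N \;:=\; F / {\sim},
\]
give it the induced $P$-action, and let $\tau(m,n)$ be the class of $(0,m,n)$; the generating relations force $\tau$ to be $P$-bilinear in the sense of~\eqref{bilinear}.

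Next I would verify the universal property. By the free-module adjunction (\S\ref{section:freemodules}), a $P$-module map $F \to T$ is the same datum as a function $f : M \times N \to T$, the correspondence being $(p,m,n) \mapsto p + f(m,n)$. Such a map descends through the quotient $F \to F/{\sim}$ iff it respects the generating pairs of ${\sim}$; inspection shows this is exactly the condition that $f$ be $P$-bilinear. This gives the required factorization $f = \ov{f}\circ\tau$, and $\ov{f}$ is unique because $\tau(M \times N)$ generates $M \otimes_P N$ as a $P$-module. Uniqueness of $(M \otimes_P N, \tau)$ up to unique isomorphism is then the standard formal consequence of the universal property.

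The main obstacle is purely bookkeeping around the congruence, since $\Mod(P)$ is not abelian and one cannot invoke ``the submodule generated by the bilinearity relations.'' I would handle this by defining the generated $P$-congruence as the intersection of all $P$-congruences containing the listed pairs, and checking abstractly that quotients by $P$-congruences exist in $\Mod(P)$ and enjoy the expected universal property with respect to $P$-equivariant maps that identify equivalent elements. Once this minor foundational point is settled, everything else is formal.
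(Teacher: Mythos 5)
Your construction is correct, and it is the same basic idea as the paper's --- build the tensor product as a quotient by the smallest relation forced by bilinearity, then check the universal property by observing that the relation induced by any bilinear map contains the generators --- but the scaffolding differs. The paper quotients the \emph{set} $M \times N$ directly by the smallest equivalence relation satisfying two conditions: the bilinearity identification $(p+m,n) \sim (m,p+n)$, plus a second condition (compatibility with adding $p$ in the first slot) whose only job is to make the post hoc definition of the $P$-action $p + (m \otimes n) := (p+m) \otimes n$ well defined; the universal property is then checked by noting that the relation $(m_1,n_1) \cong (m_2,n_2) \iff f(m_1,n_1)=f(m_2,n_2)$ satisfies both conditions, hence contains $\sim$, and uniqueness of $\ov{f}$ is immediate since $\tau$ is surjective. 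You instead present $M \otimes_P N$ as the free $P$-module on $M \times N$ modulo the generated $P$-congruence, which buys you the $P$-module structure on the quotient for free (your congruence plays exactly the role of the paper's second condition) and lets you derive the factorization from the free-module adjunction of \S\ref{section:freemodules}; the price is the small foundational check you flag, namely that quotients by $P$-congruences exist and that the kernel pair of a $P$-module map $F \to T$ is itself a congruence, so that respecting the generating pairs really does imply descent --- this is the exact analogue of the paper's step verifying that $\cong$ satisfies its two conditions, so nothing is lost or gained logically. Note also that in your model $\tau$ is in fact surjective (every class has a representative $(0,m,n)$ since $(p,m,n) \sim (0,p+m,n)$), so your uniqueness argument via generation could be stated in the same stronger form as the paper's. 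Both arguments rightly sidestep any attempt to quotient by a ``submodule of relations,'' which is unavailable by Remark~\ref{rem:nozeroobject}.
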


\begin{proof}  The uniqueness argument is standard.  For existence, define $M \otimes_P N$ to be the quotient of $M \times N$ by the smallest equivalence relation $\sim$ enjoying the following two properties: \begin{enumerate} \item \label{sim1} $(p + m,n) \sim (m, p + n)$ for every $p \in P$, $m \in M$, $n \in N$. \item \label{sim2} If $(m_1,n_1) \sim (m_2,n_2)$ for some $m_i \in M$, $n_i \in N$, then $(p + m_1,n_1) \sim (p + m_2,n_2)$ for every $p \in P$. \end{enumerate}  For $(m,n) \in M \times N$, let $m \otimes n$ denote the image of $(m,n)$ in $M \otimes_P N$.  Regard $M \otimes_P N$ as a $P$ module using the action $p + (m \otimes n) := (p + m) \otimes n$.  This is well-defined since $\sim$ satisfies \eqref{sim2} and clearly satisfies the requisite property \be (p_1+p_2) + (m \otimes n) & = & p_1 + (p_2 + (m \otimes n)) \ee for an action.  If we define $\tau : M \times N \to M \otimes_P N$ by $\tau(m,n) := m \otimes n$, then $\tau$ is $P$-bilinear because $\sim$ satisfies \eqref{sim1}.

Suppose $f : M \times N \to T$ is $P$-bilinear.  Define an equivalence relation $\cong$ on $M \times N$ by declaring $(m_1,n_1) \cong (m_2,n_2)$ iff $f(m_1,n_1) = f(m_2,n_2)$.  It is clear from bilinearity that $\cong$ satisfies \eqref{sim1} and \eqref{sim2}, so, since $\sim$ is the smallest equivalence relation satisfying these properties, we have $$(m_1,n_1) \sim (m_2,n_2) \implies f(m_1,n_1) = f(m_2,n_2) $$ and we can therefore define a function $\ov{f} : M \otimes_P N \to T$ by $\ov{f}(m \otimes n) := f(m,n)$.  It is clear that this $\ov{f}$ is a $P$-module map and that $f = \ov{f} \tau$.  The uniqueness of $\ov{f}$ is automatic because $\tau$ is surjective (this is one place where the tensor product of $P$-modules is a little easier than the tensor product of modules over a ring). \end{proof}

\begin{defn} \label{defn:tensorproduct} The module $M \otimes_P N$ of Proposition~\ref{prop:tensorproduct} is called the \emph{tensor product} of the $P$-modules $M$ and $N$. \end{defn}  

It is clear from the universal property of $M \otimes_P N$ that formation of $M \otimes_P N$ is functorial in both $M$ and $N$, so we can consider, for example, the functor \be \slot \otimes_P N : \Mod(P) & \to & \Mod(P) .\ee  Let $i \mapsto M_i$ be a direct limit system of $P$-modules and let $m_i \mapsto \ov{m}_i$ denote the natural map from $M_i$ to the direct limit.  It is easy to check that the natural map \bne{bilin2} \Bilin_P( (\dirlim M_i ) \times N, T) & = & \invlim \Bilin_P(M_i \times N, T) \\ \nonumber g & \mapsto & ((m_i,n) \mapsto g(\ov{m}_i,n)) \ene is an isomorphism with inverse $$(f_i) \mapsto ((\ov{m}_i,n) \mapsto f_i(m_i,n)),$$ and it follows formally from this using the universal property of tensor product that 

\begin{lem} \label{lem:tensorproductcommuteswithdirectlimits} $ \slot \otimes_P N$ preserves direct limits. \end{lem}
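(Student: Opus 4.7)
The plan is to prove the lemma by a standard Yoneda-style argument, converting the given bilinear-map identity into a statement about representing objects. Fix a direct limit system $i \mapsto M_i$ in $\Mod(P)$ with colimit $M := \dirlim_i M_i$, and fix $N \in \Mod(P)$. I will show, for every test object $T \in \Mod(P)$, a bijection
\[ \Hom_P(M \otimes_P N, T) \; = \; \Hom_P(\dirlim_i (M_i \otimes_P N), T), \]
natural in $T$, and then invoke the Yoneda lemma (in $\Mod(P)$) to conclude that $M \otimes_P N$ and $\dirlim_i (M_i \otimes_P N)$ are canonically isomorphic.

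To produce the bijection, first apply the universal property of the tensor product (Proposition~\ref{prop:tensorproduct}) to rewrite the left-hand side as $\Bilin_P(M \times N, T)$. Next apply the isomorphism \eqref{bilin2} already established in the excerpt to identify this with $\invlim_i \Bilin_P(M_i \times N, T)$. Apply the universal property of the tensor product once more, this time in each term of the inverse limit, to obtain $\invlim_i \Hom_P(M_i \otimes_P N, T)$. Finally, the universal property of the direct limit in $\Mod(P)$ gives $\invlim_i \Hom_P(M_i \otimes_P N, T) = \Hom_P(\dirlim_i (M_i \otimes_P N), T)$, which is the desired right-hand side.

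Each of the four identifications above is clearly natural in $T$, since each is a natural isomorphism of functors $\Mod(P)^{\rm op} \to \Sets$. Composing them yields a natural isomorphism of the representable functors $\Hom_P(M \otimes_P N, \slot)$ and $\Hom_P(\dirlim_i (M_i \otimes_P N), \slot)$, so Yoneda produces a canonical $P$-module isomorphism $M \otimes_P N \cong \dirlim_i (M_i \otimes_P N)$. One can further check, by tracing through the definitions, that this isomorphism agrees with the canonical map from the direct limit side induced by the tensor product of the structure maps $M_i \to M$ with $\Id_N$, so the functor $\slot \otimes_P N$ indeed preserves the direct limit in the strong sense.

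There is no real obstacle here; the argument is entirely formal, and the only mild subtlety is verifying the naturality claim at each step, which is immediate from the construction of the relevant bijections (in particular the explicit formula for the inverse of \eqref{bilin2} given just before the lemma). Note that we do not need to distinguish between filtered and arbitrary direct limits: \eqref{bilin2} and the argument above apply to direct limits of any shape.
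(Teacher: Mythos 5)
Your argument is correct and is essentially the paper's own proof: the paper derives the lemma "formally" from the isomorphism \eqref{bilin2} together with the universal property of the tensor product, which is exactly the Yoneda-style chain of identifications you spell out (the paper also notes the alternative adjunction $\Hom_P(M \otimes_P N, T) = \Hom_P(M, \Hom_P(N,T))$, which you do not need). Your added remark that the isomorphism agrees with the canonical comparison map, and that arbitrary (not just filtered) direct limits are covered, matches the intended reading.
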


In particular, $M \otimes_P \slot$ preserves coproducts (direct sums), and it is clear that $M \otimes_P P = M$, so \bne{tensorwithfreemodule} & M \otimes_P (\coprod_S P) = \coprod_S (M \otimes_P P) = M \times S. \ene

One can also prove Lemma~\ref{lem:tensorproductcommuteswithdirectlimits} by checking that $ \slot \otimes_P N$ is left adjoint to \be \Hom_P(N, \slot) : \Mod(P) & \to & \Mod(P). \ee  That is, we have a bijection \bne{tensorHom} \Hom_P(M \otimes_P N, T) & = & \Hom_P(M, \Hom_P(N,T)) \ene natural in $M,N,T$.

If $T$ is a $\ZZ[P]$ module, then it is clear that \bne{bilin} \Bilin_P(M \times N, T) & = & \Bilin_{\ZZ[P]}(\ZZ[M] \times \ZZ[N],T) , \ene where, on the left, $T$ is regarded as a $P$ module via the forgetful functor and the right side is the set of bilinear maps of $\ZZ[P]$ modules in the usual sense.  Using the universal properties of tensor products (for modules over rings and monoids), formula \eqref{bilin}, and the adjointness property of \eqref{ZP}, we obtain a natural isomorphism of $\ZZ[P]$ modules \bne{tensorformula} \ZZ[ M \otimes_P N ] & = & \ZZ[M] \otimes_{\ZZ[P]} \ZZ[N] \ene by showing that both sides have the same maps to any $\ZZ[P]$ module $T$.

\begin{prop} \label{prop:flatness} If $N$ is a flat $P$-module then the functor $\slot \otimes_P N$ commutes with equalizers. \end{prop}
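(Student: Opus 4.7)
The plan is to reduce to the case where $N$ is free, exploiting Lemma~\ref{lem:tensorproductcommuteswithdirectlimits} together with the classical fact that filtered direct limits commute with finite inverse limits in $\Sets$.

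First I would dispose of the free case. Suppose $F = \coprod_S P$ is a free $P$-module with basis $S$. By \eqref{tensorwithfreemodule}, the functor $\slot \otimes_P F$ is naturally isomorphic to $M \mapsto \coprod_S M$, whose underlying set-valued functor is $X \mapsto X \times S$. Equalizers in $\Mod(P)$ are computed on underlying sets: for $P$-linear maps $f, g : M_1 \to M_2$, the subset $E := \{ m \in M_1 : f(m) = g(m) \}$ is stable under the $P$-action (by $P$-equivariance of $f$ and $g$), and is evidently the equalizer of $f,g$ in $\Mod(P)$. Since the set-valued functor $X \mapsto X \times S$ trivially preserves equalizers in $\Sets$, and the $P$-action on $\coprod_S E$ agrees with the one inherited from $\coprod_S M_1$, we conclude that $\slot \otimes_P F$ preserves equalizers.

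Next I would handle the general case by writing $N = \dirlim_i N_i$ as a filtered direct limit of free $P$-modules $N_i$ (Definition~\ref{defn:flat}). Given a pair of $P$-module maps $f, g : M_1 \to M_2$ with equalizer $E$, the free case yields, for each $i$, that $E \otimes_P N_i$ is the equalizer of $f \otimes N_i$ and $g \otimes N_i$. Taking the filtered direct limit in $i$ and applying Lemma~\ref{lem:tensorproductcommuteswithdirectlimits} on both sides, the left-hand term becomes $E \otimes_P N$ and the right-hand term becomes the filtered direct limit of the parallel equalizers. To finish one interchanges this filtered direct limit with the equalizer, identifying the result with the equalizer of $f \otimes N$ and $g \otimes N$ (which is what the right-hand side now computes, again by Lemma~\ref{lem:tensorproductcommuteswithdirectlimits}).

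The final interchange is the only place requiring care. Both filtered direct limits and equalizers in $\Mod(P)$ are computed on underlying sets (the former by the construction of filtered direct limits in $\Mod(P)$ described after Definition~\ref{defn:module}, the latter by the explicit description above), so the interchange reduces to the classical statement that filtered colimits commute with finite limits in $\Sets$. The main obstacle, such as it is, is simply to confirm that the $P$-actions on both sides of each identification agree---but this is automatic since each relevant $P$-module structure is uniquely determined by the universal property it represents.
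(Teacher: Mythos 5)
Your proof is correct and follows essentially the same route as the paper: the free case via the identification of $\slot \otimes_P F$ with $M \mapsto M \times S$, followed by the reduction of the flat case to the commutation of filtered colimits with finite limits in $\Sets$, using that both filtered direct limits and equalizers in $\Mod(P)$ are computed on underlying sets. The only cosmetic point is that your appeal to Lemma~\ref{lem:tensorproductcommuteswithdirectlimits} is to direct limits in the \emph{second} tensor factor, which holds by the evident symmetry of $\otimes_P$; the paper uses the same fact implicitly.
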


\begin{proof} We first show that $\slot \otimes_P N$ commutes with equalizers when $N$ is free.  If we choose a basis $S$ for $N$, then the functor $\slot \otimes_P N$ is identified with the functor $M \mapsto M \times S$.  This functor commutes with equalizers (notice that this is just a trivial statement about inverse limits of sets).  If follows formally that a filtered direct limit of free modules preserves equalizers because filtered direct limits of sets commute with finite inverse limits. \end{proof}

\begin{rem} \label{rem:flatness} Even when $N$ is free, the functor $\slot \otimes_P N$ need not commute with \emph{products}.  Indeed, if $S$ is a basis for $N$, then $\slot \otimes_P N$ is identified with $M \mapsto M \times S$ and this functor certainly does not commute with products: \be (M \times S) \times (M \times S) & \neq & (M \times M) \times S \ee in general!  The converse of Proposition~\ref{prop:flatness} (``Lazard's Theorem") is probably true, but I have not thought through the details. \end{rem}

\subsection{Base change} 

\begin{defn} If $h : Q \to P$ is a monoid homomorphism and $M$ is a $Q$-module, then $M \otimes_Q P$ has a natural $P$-module structure via the action $p+(m \otimes p') := m \otimes (p+p')$.  The $P$-module $M \otimes_Q P$ is called the \emph{base change} or \emph{extension of scalars} of $M$ along $h$. \end{defn}

\begin{example} \label{example:sharpeningmodule} Let $P$ be a monoid, $M$ a $P$-module, $\ov{P} := P/P^*$ the sharpening of $P$.  Then $\ov{M} := M \otimes_P \ov{P}$ is just the quotient of $M$ by the equivalence relation $\sim$ where $m_1 \sim m_2$ iff there if a $u \in P^*$ such that $u \cdot m_1 = m_2$.  (One can check directly that this has the right universal property.)  The image of $m \in M$ in $\ov{M}$ is often denoted $\ov{m}$. \end{example}

The extension of scalars functor is left adjoint to the restriction of scalars functor $\Mod(P) \to \Mod(Q)$.  That is, we have \bne{extensionrestrictionadjointness} \Hom_P(M \otimes_Q P, N) & = & \Hom_Q(M,N), \ene where, on the right, the $P$ module $N$ is regarded as a $Q$ module by restriction of scalars along $h : Q \to P$.  Using \eqref{extensionrestrictionadjointness} and Lemma~\ref{lem:tensorproductcommuteswithdirectlimits} we obtain:

\begin{lem} \label{lem:basechange} Base change takes free (resp. flat) $Q$-modules to free (resp.\ flat) $P$-modules. \end{lem}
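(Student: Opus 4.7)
The plan is to exploit the adjunction \eqref{extensionrestrictionadjointness}, which identifies extension of scalars as the left adjoint to restriction of scalars, together with the fact (Lemma~\ref{lem:tensorproductcommuteswithdirectlimits}) that the tensor product commutes with direct limits on either factor. Being a left adjoint, the base change functor $\slot \otimes_Q P : \Mod(Q) \to \Mod(P)$ automatically preserves all direct limits, in particular coproducts and filtered colimits, and this is exactly what we need.

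First I would handle the free case. Suppose $M$ is a free $Q$-module with basis $S$, so that $M \cong \coprod_{s \in S} Q$ as $Q$-modules. For any $P$-module $N$, the adjunction \eqref{extensionrestrictionadjointness} gives
\[
\Hom_P(M \otimes_Q P, N) \;=\; \Hom_Q(M, N) \;=\; \Hom_{\Sets}(S, N),
\]
the second equality being the defining property of a free module with basis $S$. This exhibits $M \otimes_Q P$ as the free $P$-module on the set $S$ (with basis given by the image of $S$ under $m \mapsto m \otimes 0$). Equivalently, one may argue directly: since $\slot \otimes_Q P$ preserves coproducts (as a left adjoint) and $Q \otimes_Q P = P$, we get $M \otimes_Q P \cong \coprod_{s \in S} P$, which is free.

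Next I would handle the flat case. If $M$ is flat, then by Definition~\ref{defn:flat} we can write $M = \dirlim_i M_i$ as a filtered direct limit of free $Q$-modules $M_i$. Applying the left adjoint $\slot \otimes_Q P$ preserves this filtered direct limit, so
\[
M \otimes_Q P \;=\; \dirlim_i (M_i \otimes_Q P).
\]
By the already-established free case, each $M_i \otimes_Q P$ is a free $P$-module, so $M \otimes_Q P$ is a filtered direct limit of free $P$-modules, hence flat by Definition~\ref{defn:flat}. There is no real obstacle here; the entire argument is a formal consequence of the adjunction and the definition of flatness as a filtered colimit of frees, which is precisely why that definition was adopted.
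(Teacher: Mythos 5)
Your proof is correct and is essentially the paper's own argument: the paper likewise deduces the lemma from the adjunction \eqref{extensionrestrictionadjointness} and the preservation of direct limits (Lemma~\ref{lem:tensorproductcommuteswithdirectlimits}), handling frees via the Hom-set/coproduct identification and flats as filtered colimits of frees.
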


Similarly, if $P_1,P_2$ are monoids under a monoid $Q$, then $P_1 \otimes_Q P_2$ has a natural monoid structure with addition defined by \be p_1 \otimes p_2 + p_1' \otimes p_2' & := & (p_1+p_1') \otimes (p_2+p_2'). \ee There are also monoid homomorphisms $P_i \to P_1 \otimes_Q P_2$ given by $p_1 \mapsto p_1 \otimes 0$ and $p_2 \mapsto 0 \otimes p_2$ when $i=1,2$ respectively.  It is easy to see that the diagram of monoids $$ \xym{ Q \ar[r] \ar[d] & P_1 \ar[d] \\ P_2 \ar[r] & P_1 \otimes_Q P_2 } $$ is a pushout diagram, so one might also denote the monoid $P_1 \otimes_Q P_2$ by $P_1 \oplus_Q P_2$.

\subsection{Flat modules over integral monoids}  The purpose of this section is to characterize flat modules over integral monoids.  Recall that a monoid $P$ is called \emph{integral} iff it satisfies any/all of the following equivalent conditions: \begin{enumerate} \item $P \to P^{\rm gp}$ is injective \item $P$ is (isomorphic to) a submonoid of a group \item for all $p,p',p'' \in P$, the equality $p+p'=p+p''$ implies $p'=p''$.\end{enumerate}  The results of this section are related to \cite[4.1]{KK}.

\begin{defn} \label{defn:torsionfreeandcomparable}  Let $P$ be an integral monoid.  A $P$-module $M$ is called \emph{torsion-free} iff, for all $p,p' \in P$, $m\in M$, the equality $p+m = p'+m$ in $M$ implies $p=p'$.  $M$ is called \emph{comparable} iff, for all $p_1,p_2 \in P$, $m_1,m_2 \in M$, the equality $p_1+m_1=p_2+m_2$ in $M$ implies the existence of $m \in M$ and $\rho_1,\rho_2 \in P$ such that $\rho_1+m=m_1$ and $\rho_2+m=m_2$. \end{defn}

\begin{rem} The above definitions would make sense even when $P$ is not integral, but they are probably useless in that setting. \end{rem}

\begin{example} \label{example:torsionfree} If $h : Q \to P$ is a map of integral monoids, then it is clear that $P$ is torsion-free as a $Q$-module iff $h$ is injective. \end{example}

\begin{lem} \label{lem:flatPmodules} Let $P$ be an integral monoid. \begin{enumerate} \item \label{freeimplies} Any free $P$-module is torsion-free and comparable.  \item \label{filtereddirlim} A filtered direct limit of torsion-free (resp.\ comparable) $P$-modules is torsion-free (resp.\ comparable). \end{enumerate} \end{lem}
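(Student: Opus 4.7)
The plan is to verify each of the four assertions by direct unwinding of the definitions; none of them should require machinery beyond the construction of the tensor/free modules and the usual explicit description of filtered colimits of sets.

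For part \eqref{freeimplies}, I would pick a free $P$-module $M = P \times S$ in the standard form and a basis element $(q,s)$. For torsion-freeness, an equality $p + (q,s) = p' + (q,s)$ unpacks to $(p+q,s) = (p'+q,s)$, so $p+q = p'+q$ in $P$, and integrality of $P$ gives $p = p'$. For comparability, suppose $p_1 + m_1 = p_2 + m_2$ with $m_i = (q_i, s_i)$; then the $S$-coordinates must agree, $s_1 = s_2 = s$, and I would then simply take $m := (0,s)$ with $\rho_i := q_i$, which visibly satisfies $\rho_i + m = m_i$. (Note that this argument uses no information about $p_1, p_2$ beyond the fact that they force $s_1 = s_2$; that is all comparability as stated demands.)

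For part \eqref{filtereddirlim}, I would use the fact that a filtered colimit of sets can be computed set-theoretically, so every element of $M := \dirlim_i M_i$ lifts to some $M_j$ and every equality in $M$ is already witnessed in some $M_k$ further along in the system. For torsion-freeness, given $p+m = p'+m$ in $M$, lift $m$ to $\mu \in M_j$; the equality persists after advancing to some $M_k$ with $k \geq j$, giving $p + \phi_{jk}(\mu) = p' + \phi_{jk}(\mu)$, and torsion-freeness of $M_k$ yields $p = p'$. For comparability, given $p_1 + m_1 = p_2 + m_2$, lift both $m_1, m_2$ to a common stage $M_k$ using filteredness, then advance once more to a stage $M_l$ where the equality already holds on the nose; comparability of $M_l$ then produces $\nu \in M_l$ and $\rho_1, \rho_2 \in P$ with $\rho_i + \nu = \phi_{kl}(\mu_i)$, and pushing $\nu$ forward to $m \in M$ finishes the argument.

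There is essentially no obstacle here: both parts are formal consequences of the definitions together with the fact that $P$ is integral (needed only in part \eqref{freeimplies}) and the set-theoretic description of filtered colimits (needed only in part \eqref{filtereddirlim}). If anything, the only subtlety worth flagging is the bookkeeping in the second half of part \eqref{filtereddirlim}, where one must advance twice in the directed system---once to put $m_1$ and $m_2$ in the same stage, and once more to realize the equality $p_1 + m_1 = p_2 + m_2$ at a concrete stage---before invoking comparability of a single $M_l$.
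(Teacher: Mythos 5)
Your proof is correct and follows essentially the same route as the paper: part \eqref{freeimplies} is the same basis-coordinate argument (the paper works with an abstract basis $S \subseteq M$ rather than the standard form $P \times S$, which is equivalent), and part \eqref{filtereddirlim} is exactly the set-theoretic filtered-colimit bookkeeping the paper dismisses as trivial, spelled out in full.
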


\begin{proof} \eqref{freeimplies} Let $M$ be a free $P$-module with basis $S \subseteq M$.  Suppose $p+m=p'+m$ in $M$.  Since $S$ is a basis we can write $m = p''+s$ for some $s \in S$, $p'' \in P$ and we then have $p+p''+s=p'+p''+s$ in $M$, so since $S$ is a basis we have $p+p'' = p'+p''$ in $P$, hence $p=p'$ because $P$ is integral.  This shows that $M$ is torsion-free.  For ``$M$ comparable," suppose $p_1+m_1=p_2+m_2$.  Since $S$ is a basis, we can write $m_1 = \rho_1+s$, $m_2 = \rho_2+s'$ for some $\rho_1,\rho_2 \in P$, $s,s' \in S$.  The original equality then reads $p_1+\rho_1 + s = p_2+\rho_2+s'$, but $S$ is a basis so this implies $s=s'$ (and $p_1+\rho_1=p_2+\rho_2$, which is also clear from torsion-freeness). 

\eqref{filtereddirlim} is trivial.  The point is that, if one has an equality such as $p_1+m_1=p_2+m_2$ (involving only finitely many elements of $M$) in a filtered direct limit $M$ of $P$-modules $M_i$, then there is a large enough $i$ so that $m_1$ and $m_2$ are in the image of $M_i \to M$ and we already have the analogous equality in $M_i$. \end{proof}

\begin{thm} \label{thm:integralflatness} Let $P$ be an integral monoid.  A $P$-module is flat iff it is torsion-free and comparable. \end{thm}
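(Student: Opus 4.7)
The plan is to dispose of the forward direction immediately: by Lemma~\ref{lem:flatPmodules}, every free $P$-module is torsion-free and comparable, and both properties pass to filtered direct limits, so any flat $P$-module is torsion-free and comparable. So I focus entirely on the converse: given an integral monoid $P$ and a torsion-free, comparable $P$-module $M$, I want to realize $M$ as a filtered direct limit of finitely generated free $P$-modules.

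My candidate indexing category $\mathcal{I}$ has objects pairs $(F,\phi)$ with $F$ a finitely generated free $P$-module and $\phi : F \to M$ a $P$-module map, and morphisms given by commuting triangles. I need to check (a) that $\mathcal{I}$ is filtered, and (b) that the natural map $\dirlim_{\mathcal{I}} F \to M$ is an isomorphism. Surjectivity in (b) is trivial. For injectivity, given $x_i \in F_i$ $(i=1,2)$ with $\phi_1(x_1) = \phi_2(x_2)$, I would write $x_i = p_i + e_i$ for basis elements $e_i$ of $F_i$, so that $p_1 + \phi_1(e_1) = p_2 + \phi_2(e_2)$ in $M$; comparability yields $m \in M$ and $\rho_i \in P$ with $\rho_i + m = \phi_i(e_i)$. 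I then build $F_3$ free on a distinguished basis element $b_0$ (mapping to $m$) together with one basis element for each basis element of $F_1$ or $F_2$ other than $e_1,e_2$, and define maps $\psi_i : F_i \to F_3$ sending $e_i \mapsto \rho_i + b_0$ and all other basis elements to their copies in $F_3$. The image of $x_i$ in $F_3$ is $(p_i + \rho_i) + b_0$, and torsion-freeness of $M$ applied to $(p_1+\rho_1) + m = (p_2+\rho_2) + m$ gives $p_1+\rho_1 = p_2+\rho_2$ in $P$, so $\psi_1(x_1) = \psi_2(x_2)$ in the free module $F_3$, as needed.

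For filteredness, binary upper bounds are provided by coproducts $(F_1 \coprod F_2, \phi_1 \coprod \phi_2)$, so everything reduces to coequalizing parallel arrows $\alpha,\beta : (F_1,\phi_1) \to (F_2,\phi_2)$. Writing $\alpha(e_i) = p_i + e'_{j(i)}$ and $\beta(e_i) = q_i + e'_{k(i)}$ for each basis element $e_i$ of $F_1$, I form the equivalence relation $\sim$ on the basis of $F_2$ generated by $e'_{j(i)} \sim e'_{k(i)}$. For each equivalence class $C$, I claim I can produce a common ancestor: an element $c_C \in M$ together with $s_{e'} \in P$ for $e' \in C$ satisfying $s_{e'} + c_C = \phi_2(e')$. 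Granted this, I take $F_3$ free on a basis element $b_C$ for each class, set $\phi_3(b_C) := c_C$, and define $\gamma(e') := s_{e'} + b_C$; the required equality $\gamma\alpha = \gamma\beta$ reduces, after adding $c_C$ to both sides of each coordinate equation, to the given identity $p_i + \phi_2(e'_{j(i)}) = q_i + \phi_2(e'_{k(i)})$ in $M$, from which torsion-freeness of $M$ transfers the equality back to $P$ and hence to $F_3$.

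The main obstacle is proving existence of these common ancestors, which requires iterating comparability along the edge graph on $C$. I plan to induct on $|C|$ using a spanning tree: remove a leaf $e^*$ joined to some $e^{**} \in C' := C \setminus \{e^*\}$ via an edge coming from some index $i$; the inductive hypothesis gives $c_{C'}$ and $s_{e'}$ for $e' \in C'$, and substituting $\phi_2(e^{**}) = s_{e^{**}} + c_{C'}$ into the edge relation produces an equation of the form $(p_i + s_{e^{**}}) + c_{C'} = q_i + \phi_2(e^*)$. One application of comparability then yields a new common ancestor $m$ with $\rho_1 + m = c_{C'}$ and $\rho_2 + m = \phi_2(e^*)$; taking $c_C := m$, $s_{e^*} := \rho_2$, and replacing each prior $s_{e'}$ $(e' \in C')$ by $s_{e'} + \rho_1$ extends the data from $C'$ to $C$. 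The base case $|C|=1$ is trivial. With this inductive argument in hand, $\mathcal{I}$ is filtered, the colimit is $M$, and $M$ is flat.
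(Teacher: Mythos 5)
Your proof is correct, but it follows a genuinely different route from the paper's. The paper argues directly with submodules of $M$: it introduces the equivalence relation $m_1 \sim m_2$ iff $p_1+m_1=p_2+m_2$ for some $p_1,p_2 \in P$, picks a set $S$ of representatives, shows the submodule $M_S \subseteq M$ generated by $S$ is free (torsion-freeness kills the ambiguity in $P$, the choice of one representative per class kills it in $S$), and then uses comparability exactly once—to dominate any two representative sets $S_1,S_2$ by a third set $T$ of ``common ancestors''—to conclude that the $M_S$ form a filtered family of free submodules with union $M$. Your argument instead verifies that the tautological (Lazard-style) category of finitely generated free modules mapping to $M$ is filtered and has colimit $M$: comparability is what lets you coequalize parallel arrows (iterated along a spanning tree of each class of identified basis elements) and identify elements with equal image, while torsion-freeness transfers the resulting identities in $M$ back to identities in $P$, hence in the target free module. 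The paper's proof is shorter and exhibits $M$ concretely as a filtered union of free submodules (possibly of infinite rank); yours is longer and more bookkeeping-heavy, but it buys the Lazard-type refinement that a torsion-free comparable module is a filtered colimit of \emph{finitely generated} free modules, in the spirit of the classical theorem for rings alluded to in Remark~\ref{rem:flatness}. Two small presentational points, neither a gap: your spanning-tree induction is really a statement about \emph{connected} subsets of the basis of $F_2$ (the set $C'$ obtained by deleting a leaf is connected but need not be a full equivalence class), so the inductive claim should be phrased that way; and your colimit is indexed by a filtered category rather than a filtered poset as in Definition~\ref{defn:flat}, which is harmless since every filtered category admits a cofinal directed poset (or since the set-theoretic construction of the colimit is identical), but deserves a sentence.
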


\begin{proof}  The implication $\implies$ is clear from the definition of ``flat $P$-module" (Definition~\ref{defn:flat}) and Lemma~\ref{lem:flatPmodules}.  Now suppose $M$ is torsion-free and comparable and we want to show $M$ is flat.  Define an equivalence relation $\sim$ on (the set underlying) $M$ by saying $m_1 \sim m_2$ iff there are $p_1,p_2 \in P$ such that $p_1+m_1 = p_2 +m_2$.  (It is trivial to check that this is an equivalence relation; none of the hypotheses on $P$ are needed.)  Let $S \subseteq M$ be a subset containing exactly one element from each $\sim$-equivalence class.  Let $M_S \subseteq M$ be the $P$-submodule generated by $S$.  I claim $M_S$ is free.  It suffices to show that the map $P \times S \to M$ given by $(p,s) \mapsto p+s$ is injective, so suppose $p_1+s_1 = p_2 + s_2$.  Then $s_1=s_2$ because $s_1 \sim s_2$ and $S$ contains only one representative from each equivalence class, and then $p_1=p_2$ since $M$ is torsion-free.  To complete the proof it now suffices to show that the $M_S$ are a filtered collection of submodules of $M$ (it is clear that their union is $M$).  Suppose $S_1,S_2 \subseteq M$ each contain one element from each equivalence class.  Pick some equivalence class $[m]$ and let $s_1 = s_1([m]) \in S_1$, $s_2 = s_2([m]) \in S_2$ be the unique representatives of this equivalence class in $S_1$, $S_2$ (respectively).  Then we can write $p_1+s_1=p_2+s_2$ for some $p_1,p_2 \in P$.  Since $M$ is comparable, this means we can find some $t = t([m]) \in M$ and $\rho_1,\rho_2 \in P$ such that $\rho_1+t=s_1$ and $\rho_2+t=s_2$.  Let $T \subseteq M$ be the set of all the $t([m])$ constructed in this manner, so that $T$ manifestly contains one element from each $\sim$-equivalence class.  Since $\rho_1+t=s_1$ and $\rho_2+t=s_2$, the $P$-submodule of $M$ generated by $T$ contains $s_1$ and $s_2$ and this is true for each $\sim$-equivalence class, so $M_{S_1}, M_{S_2} \subseteq M_T$ as desired. \end{proof}

\begin{cor} \label{cor:flatnessoveragroup} If $G$ is a group, then the following conditions are equivalent for a $G$-module $M$ (set with $G$-action): \begin{enumerate} \item $M$ is a free $G$-module. \item The action of $G$ on $M$ is a free action. \item $M$ is a torsion-free $G$-module. \item $M$ is a flat $G$-module. \end{enumerate} \end{cor}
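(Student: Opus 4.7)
The plan is to deduce the corollary almost entirely from Theorem~\ref{thm:integralflatness}, since a group is integral; the only genuine observation needed beyond unwinding definitions is that every $G$-module is automatically comparable when $G$ is a group.

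First I would dispose of (1)$\iff$(2)$\iff$(3) as essentially tautological. For (2)$\iff$(3): by Definition~\ref{defn:torsionfreeandcomparable}, $M$ is torsion-free exactly when $g_1 + m = g_2 + m$ forces $g_1 = g_2$, which is literally the statement that every stabilizer is trivial, i.e.\ that the $G$-action is free. For (1)$\iff$(2), this is Example~\ref{example:groupflatness}: a basis of a free $G$-module is a choice of one point from each orbit, so picking one representative from each orbit of a free action yields a basis, and conversely the $G$-action on any free $G$-module is manifestly free.

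For the link with flatness, (1)$\Rightarrow$(4) is Proposition~\ref{prop:flatness}\eqref{projectiveimpliesfree}-style or, more directly, the first clause of that proposition (free implies flat by definition). The implication (4)$\Rightarrow$(3) comes straight from Theorem~\ref{thm:integralflatness}: a group is integral, so any flat $G$-module is torsion-free (and comparable, but we only need torsion-freeness here).

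The only not-entirely-formal step — and the one I expect to be the ``main obstacle,'' though it is really very mild — is closing the cycle with (3)$\Rightarrow$(4). By Theorem~\ref{thm:integralflatness} it suffices to show that a torsion-free $G$-module $M$ is also comparable. But in fact every $G$-module $M$ is comparable when $G$ is a group: given $g_1 + m_1 = g_2 + m_2$ in $M$, take $m := m_1$, $\rho_1 := 0$, and $\rho_2 := g_1 - g_2 \in G$; then $\rho_1 + m = m_1$ and $\rho_2 + m = (g_1 - g_2) + m_1 = m_2$, where the last equality uses invertibility of $g_2$ applied to the original relation. Combined with the assumed torsion-freeness, Theorem~\ref{thm:integralflatness} yields flatness of $M$, completing the chain of equivalences.
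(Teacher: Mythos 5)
Your proof is correct and follows essentially the same route as the paper: the equivalences among freeness, freeness of the action, and torsion-freeness come from the definitions and Example~\ref{example:groupflatness}, free trivially implies flat, and flat implies torsion-free by Theorem~\ref{thm:integralflatness}. Your extra comparability argument for (3)$\Rightarrow$(4) is valid (every $G$-module over a group is indeed comparable) but redundant, since your chain (3)$\Leftrightarrow$(2)$\Leftrightarrow$(1)$\Rightarrow$(4) already closes the cycle, which is exactly how the paper proceeds.
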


\begin{proof}  The only statement that is perhaps not obvious from the definitions or the discussion of Example~\ref{example:groupflatness} is the fact that a flat $G$-module is torsion-free, which is clear from the theorem. \end{proof}

\begin{cor} \label{cor:flatiffintegral} Let $h : Q \to P$ be a map of integral monoids.  The following are equivalent: \begin{enumerate} \item \label{hflat} $P$ is flat as a $Q$-module. \item $h$ is injective and $P$ is a comparable $Q$-module.  \item \label{Zhflat} $\ZZ[P]$ is flat over $\ZZ[Q]$. \end{enumerate} \end{cor}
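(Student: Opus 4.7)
The equivalence (1) $\Leftrightarrow$ (2) is immediate from Theorem~\ref{thm:integralflatness}, which characterizes flatness of a module over the integral monoid $Q$ as torsion-freeness together with comparability, and Example~\ref{example:torsionfree}, which identifies torsion-freeness of $P$ as a $Q$-module with injectivity of $h$. The implication (1) $\Rightarrow$ (3) is in turn immediate from Theorem~\ref{thm:freetofree} applied to the flat $Q$-module $P$.

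The substantive direction is therefore (3) $\Rightarrow$ (2). For injectivity of $h$, suppose $h(q_1) = h(q_2)$ in $P$, and consider the exact sequence $0 \to K \to \ZZ[Q] \to \ZZ[Q]$ whose right map is multiplication by $[q_1]-[q_2]$ and $K := \Ann_{\ZZ[Q]}([q_1]-[q_2])$. Tensoring with the flat module $\ZZ[P]$ identifies $K \otimes_{\ZZ[Q]} \ZZ[P]$ with the kernel of multiplication by $[h(q_1)]-[h(q_2)]$ on $\ZZ[P]$; but this element is zero, so the kernel is all of $\ZZ[P]$. Hence $K \cdot \ZZ[P] = \ZZ[P]$, and applying the augmentation $\epsilon \colon \ZZ[P] \to \ZZ$ produces an element $a \in K$ with $\epsilon(a) = 1$. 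Expanding $a = \sum_c n_c [c]$ in the $\ZZ$-basis of $\ZZ[Q]$, the relation $a([q_1]-[q_2]) = 0$ together with integrality of $Q$ forces the support of $a$ to be a finite union of complete $\langle q_1-q_2 \rangle$-orbits inside $Q \subseteq Q^{\rm gp}$ on each of which $a$ is constant. Finiteness of the support forces $q_1-q_2$ to have some order $n \geq 1$ in $Q^{\rm gp}$, each orbit then has size $n$, and so $\epsilon(a) \in n\ZZ$; compatibility with $\epsilon(a) = 1$ forces $n = 1$, i.e.\ $q_1 = q_2$.

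For comparability, given $h(q_1) + p_1 = h(q_2) + p_2$ in $P$ one has the relation $[q_1][p_1] - [q_2][p_2] = 0$ in $\ZZ[P]$. The equational criterion for flatness of $\ZZ[P]$ over $\ZZ[Q]$ yields $y_1, \ldots, y_m \in \ZZ[P]$ and $s_{ij} \in \ZZ[Q]$ satisfying $[p_i] = \sum_j s_{ij} y_j$ in $\ZZ[P]$ and $[q_1] s_{1j} = [q_2] s_{2j}$ in $\ZZ[Q]$ for each $j$. The plan is to extract from this ring-level data a single $p \in P$ and $\rho_1, \rho_2 \in Q$ with $p_i = h(\rho_i) + p$: using the just-established injectivity of $h$ and integrality of both monoids, each identity $[q_1] s_{1j} = [q_2] s_{2j}$ determines a support-level bijection between $s_{1j}$ and $s_{2j}$ witnessing equalities $q_1 + c = q_2 + c'$ in $Q$, and tracking these witnesses through the $\ZZ$-basis expansion of $[p_i] = \sum_j s_{ij} y_j$ in $\ZZ[P]$ should isolate the desired single-monomial witnesses.

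The main obstacle lies in this last extraction step: the equational criterion naturally delivers $\ZZ$-linear combinations, whereas comparability demands single monomials. Bridging this gap will require delicate combinatorial analysis of the supports of the $s_{ij}$ and $y_j$, leveraging integrality of $P$ and $Q$ to force enough cancellations to pin down individual witnesses $p$, $\rho_1$, $\rho_2$; I expect this bookkeeping is where the real technical work of the proof resides.
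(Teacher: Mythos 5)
Your treatment of (1)$\Leftrightarrow$(2) and of (1)$\Rightarrow$(3) coincides with the paper's: the former is Theorem~\ref{thm:integralflatness} together with Example~\ref{example:torsionfree}, the latter is Theorem~\ref{thm:freetofree}. Where you diverge is (3)$\Rightarrow$(2): the paper does not prove this direction at all---it simply cites Kato \cite[4.1]{KK} for the equivalence of (2) and (3)---whereas you attempt a self-contained argument. Your injectivity argument is in fact correct: from $K\cdot\ZZ[P]=\ZZ[P]$ one writes $1=\sum_i a_ib_i$ with $a_i\in K$, $b_i\in\ZZ[P]$ and takes $a:=\sum_i\varepsilon(b_i)a_i\in K$ to get $\varepsilon(a)=1$; and the coefficient function of $a$, extended by zero to $Q^{\rm gp}$, is invariant under translation by $q_1-q_2$, so finiteness of the support plus $\varepsilon(a)=1$ forces $q_1=q_2$ exactly as you say.

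The genuine gap is the comparability half of (3)$\Rightarrow$(2). You set up the equational criterion data $[p_i]=\sum_j s_{ij}y_j$, $[q_1]s_{1j}=[q_2]s_{2j}$, and then explicitly defer the extraction of monomial witnesses as ``the real technical work''; as submitted, this direction---which is precisely the content of Kato's result that the corollary rests on---is not proven. For what it is worth, the extraction is less delicate than you fear and your route can be completed in a few lines: comparing the coefficient of the single monomial $[p_1]$ on both sides of $[p_1]=\sum_j s_{1j}y_j$ yields some $j$, some $c\in Q$ in the support of $s_{1j}$ and some $v\in P$ in the support of $y_j$ with $h(c)+v=p_1$; since $Q$ is integral, the coefficient of $[q_1+c]$ in $[q_1]s_{1j}=[q_2]s_{2j}$ equals the nonzero coefficient of $[c]$ in $s_{1j}$, so there is a unique $c'$ in the support of $s_{2j}$ with $q_1+c=q_2+c'$; then $h(q_2)+h(c')+v=h(q_1)+h(c)+v=h(q_1)+p_1=h(q_2)+p_2$, and integrality of $P$ cancels $h(q_2)$ to give $p_2=h(c')+v$. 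Thus $m:=v$, $\rho_1:=c$, $\rho_2:=c'$ witness comparability (note that injectivity of $h$ is not even needed for this step). With that paragraph added your proposal would actually prove the direction the paper only cites; without it, the proof is incomplete at its key step.
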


\begin{proof} The equivalence of the first two statements is immediate from the theorem (c.f.\ Example~\ref{example:torsionfree}).  The equivalence of the last two statements is a foundational result of Kato---see \cite[4.1]{KK}. \end{proof}

Note that Theorem~\ref{thm:freetofree} shows that \eqref{hflat}$\implies$\eqref{Zhflat} in Corollary~\ref{cor:flatiffintegral} even without the assumption that $Q$ and $P$ are integral.  I don't know whether one has \eqref{Zhflat}$\implies$\eqref{hflat} in general.

\begin{lem} \label{lem:flatnessandsharpening} Let $P$ be an integral monoid.  A $P$-module $M$ is flat iff $M$ is flat as a $P^*$-module and $\ov{M}$ is a flat $\ov{P}$-module.  If $h : Q \to P$ is an injective map of integral monoids such that $\ov{h} : \ov{Q} \to \ov{P}$ is flat, then $h$ is flat. \end{lem}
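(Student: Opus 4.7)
The plan is to combine Theorem~\ref{thm:integralflatness} (flat iff torsion-free and comparable, for integral monoids) with Corollary~\ref{cor:flatnessoveragroup} (over a group, flat iff free iff torsion-free) and the base-change stability of flatness (Lemma~\ref{lem:basechange}). The forward direction of the first statement is almost immediate: if $M$ is flat over $P$, then $\ov{M}=M\otimes_P\ov{P}$ is flat over $\ov{P}$ by Lemma~\ref{lem:basechange}; and since flatness implies torsion-freeness (Theorem~\ref{thm:integralflatness}), $M$ is torsion-free as a $P^*$-module by restriction along $P^*\subseteq P$, hence flat over $P^*$ by Corollary~\ref{cor:flatnessoveragroup}.

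For the reverse direction of the first statement, I verify torsion-freeness and comparability of $M$ over $P$ via Theorem~\ref{thm:integralflatness}. Torsion-freeness: a relation $p_1+m=p_2+m$ projects to $\ov{p}_1+\ov{m}=\ov{p}_2+\ov{m}$ in $\ov{M}$, forcing $\ov{p}_1=\ov{p}_2$ by torsion-freeness of $\ov{M}$, so $p_2=p_1+u$ for some $u\in P^*$ and the equation becomes $p_1+m=p_1+(u+m)$. Since $M$ is free over $P^*$ (Corollary~\ref{cor:flatnessoveragroup}) and the $P$-action commutes with the free $P^*$-action, the map $n\mapsto p_1+n$ restricts to a $P^*$-equivariant map between the $P^*$-orbits of $m$ and $p_1+m$; any map of $P^*$-torsors is a bijection, so this map is injective, giving $m=u+m$ and hence $u=0$. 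Comparability: given $p_1+m_1=p_2+m_2$, apply comparability of $\ov{M}$ to find $\ov{m}\in\ov{M}$ and $\ov{\rho}_i\in\ov{P}$ with $\ov{\rho}_i+\ov{m}=\ov{m}_i$; lift to $m\in M$ and $\rho_i\in P$, and by the description of $\ov{M}$ in Example~\ref{example:sharpeningmodule} find $u_i\in P^*$ with $u_i+\rho_i+m=m_i$; then $m$ and $\rho'_i:=u_i+\rho_i\in P$ are the required witnesses $\rho'_i+m=m_i$.

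For the second statement, I factor $h$ as $Q\xrightarrow{\alpha}Q'\xrightarrow{\beta}P$, where $Q':=Q\oplus_{Q^*}P^*$ is the monoid pushout along the inclusion $Q^*\subseteq Q$ and the injective group map $h^*:Q^*\hookrightarrow P^*$, with $\beta[q,u]:=h(q)+u$. A direct check using the defining relations shows $Q'$ is integral with $(Q')^*=P^*$ and $\ov{Q'}=\ov{Q}$. The map $\alpha$ is flat because $Q'=Q\otimes_{Q^*}P^*$ is the base change along $Q^*\subseteq Q$ of the flat $Q^*$-module $P^*$ (flat by Corollary~\ref{cor:flatnessoveragroup}, since $h^*$ is an injective map of groups), and base change preserves flatness (Lemma~\ref{lem:basechange}). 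The map $\beta$ is flat by the first part of the lemma applied to $P$ as a $Q'$-module: $P$ is free (hence flat) over $(Q')^*=P^*$ by integrality of $P$ (choose representatives of the $P^*$-orbits in $P$), and $P\otimes_{Q'}\ov{Q'}=P/P^*=\ov{P}$, which is flat over $\ov{Q'}=\ov{Q}$ by hypothesis (the induced $\ov{Q}$-module structure being given by $\ov{h}$). Since flatness of maps of integral monoids is stable under composition (via Corollary~\ref{cor:flatiffintegral} and the corresponding fact for rings), $h=\beta\circ\alpha$ is flat.

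The main difficulty lies in choosing the right intermediate monoid $Q'$ in the second part: a direct application of the first part to $P$ as a $Q$-module reduces flatness of $h$ to flatness of $P\otimes_Q\ov{Q}$ over $\ov{Q}$, which is neither the hypothesis nor easily deducible from it. The pushout $Q'$ is engineered precisely to enlarge the unit part of the base from $Q^*$ to $P^*$ without changing the sharpening, so that the first part now identifies $P\otimes_{Q'}\ov{Q'}$ with $\ov{P}$ and the hypothesis on $\ov{h}$ applies directly.
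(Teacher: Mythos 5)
Your treatment of the first statement is essentially the paper's: both directions run through Theorem~\ref{thm:integralflatness}, with the same torsion-freeness and comparability checks (the paper gets $u=0$ directly from freeness of the $P^*$-action applied to $p_1+m$, so your torsor detour is just a cosmetic variant), and the forward direction is the same appeal to Lemma~\ref{lem:basechange} and Corollary~\ref{cor:flatnessoveragroup}. Your proof of the second statement, however, is genuinely different and correct. The paper stays inside $P$: torsion-freeness is Example~\ref{example:torsionfree}, and comparability of $P$ over $Q$ is verified by hand---push $q_1+p_1=q_2+p_2$ down to $\ov{P}$, use comparability of $\ov{P}$ over $\ov{Q}$ to get witnesses up to units of $P$, then use injectivity of $h$ and of $\ov{h}$ to show the ambiguous unit actually lies in $Q^*$ and can be absorbed. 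You instead factor $h$ through $Q'=Q\oplus_{Q^*}P^*$, prove $Q\to Q'$ flat as a base change of the free $Q^*$-module $P^*$, prove $Q'\to P$ flat by the already-established first statement over $Q'$, and compose; your diagnosis of why the naive application of part one fails (it produces $P/Q^*$ rather than $\ov{P}$) is exactly the point, and the pushout is the right fix. Two small remarks: the assertion that $Q'$ is integral with $(Q')^*=P^*$ and $\ov{Q'}=\ov{Q}$ is indeed a routine check (e.g.\ $Q'$ embeds in the group pushout $Q^{\rm gp}\oplus_{Q^*}P^*$), but it carries real weight---both the first statement and Corollary~\ref{cor:flatiffintegral} need integrality---so it should be written out; and your final composition step invokes Corollary~\ref{cor:flatiffintegral} (hence Kato's nontrivial equivalence with ring flatness) only to know that flatness of maps of integral monoids composes, which you could replace by the elementary observation that a free $Q'$-module is a coproduct of copies of $Q'$ and that coproducts and filtered colimits of flat $Q$-modules are flat. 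The trade-off is clear: the paper's argument is self-contained at the cost of unit bookkeeping; yours is more structural, reuses the first half of the lemma, and generalizes more readily, at the cost of the auxiliary monoid and (as written) a heavier citation for transitivity.
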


\begin{proof} We will use the criterion for flatness in Theorem~\ref{thm:integralflatness}.  Note that $\ov{P}$ is also integral.  Suppose $\ov{M}$ is flat over $\ov{P}$ and $M$ is flat over $P^*$ and let us prove $M$ is flat over $P$.  For ``torsion-free," if $p_1+m = p_2+m$ in $M$, then $\ov{p}_1+\ov{m} = \ov{p}_2+\ov{m}$ in $\ov{M}$, so since $\ov{M}$ is torsion-free $p_2=u+p_1$ for some $u \in P^*$, but we must have $u=0$ because the original equation says $p_1+m = u+p_1+m$ and $M$ is flat over $P^*$.  For ``comparable," suppose $p_1+m_1=p_2+m_2$ in $M$.  Then $\ov{p}_1+\ov{m}_1 = \ov{p}_2+\ov{m}_2$ in $\ov{M}$ so since $\ov{M}$ is flat hence comparable, there are $\rho_1', \rho_2' \in P$ and $m \in M$ such that $\ov{\rho}_1' + \ov{m} = \ov{m}_1$ and $\ov{\rho}_2' + \ov{m} = \ov{m}_2$ in $\ov{M}$, hence there are $u_1,u_2 \in P^*$ such that $u_1+\rho_1'+m = m_1$ and $u_2+\rho_2'+m=m_2$ in $M$.  Taking $\rho_i := \rho_i'+u_i$, we are done.  The other implication is clear from stability of flatness under base change (Lemma~\ref{lem:basechange}) and the fact that if $M$ is torsion-free then clearly its restriction to $P^*$ is torsion-free, which is the same as flat because $P^*$ is a group (Corollary~\ref{cor:flatnessoveragroup}). 

For the second statement, the only issue is to prove that $P$ is comparable as a $Q$-module (c.f.\ Example~\ref{example:torsionfree}).  Say \bne{qandp} q_1+p_1 & = & q_2+p_2 \ene in $P$ for some $q_1,q_2 \in Q$, $p_1,p_2 \in P$.  We need to find $r_1,r_2 \in Q$ and $p \in P$ so that $p_1=p+r_1$ and $p_2 = p+r_2$.  By looking and the image of \eqref{qandp} in $\ov{P}$ and using flatness of $\ov{h}$ (more precisely, using comparability of $\ov{P}$ and an $\ov{Q}$-module), we can find $r_1,r_2' \in Q$, $p' \in P$, and $u_1,u_2 \in P^*$ such that \bne{ovrandp} p_1 & = & p' + u_1 + r_1 \\ \nonumber p_2 & = & p' + u_2 + r_2' \ene in $P$.  Set $p := p'+u_1$, $u := u_2-u_1$, so we can rewrite \eqref{ovrandp} as \bne{randp} p_1 & = & p+r_1 \\ p_2 & = & p + u + r_2'. \ene If we can show that $u \in P^*$ is in fact in $Q^*$, then we can finish by taking $r_2 := u + r_2'$.  To do this, we plug the formulas \eqref{randp} for $p_1$ and $p_2$ back into \eqref{qandp} and use integrality of $P$ to cancel a $p$ to find $q_1+r_1 = q_2+u+r_2$.  But this means that $\ov{q}_1+\ov{r}_1 \in \ov{Q}$ and $\ov{q}_2+\ov{r}_2 \in \ov{Q}$ have the same image in $\ov{P}$, so by injectivity of $\ov{h}$, we have to have $q_1+r_1 = q_2+v+r_2$ for some $v \in Q^*$, but since $h$ is injective (and $P$ is integral), we must have $u=v$.    \end{proof}

\begin{rem} The hypothesis that $M$ is flat over $P^*$ cannot be dropped in the above lemma.  Any group action which is transitive but not free is a counterexample. \end{rem}

\begin{rem} I think Ogus uses ``quasi-integral" to mean that a $P$-module is flat as a module over $P^*$. \end{rem}

\begin{lem} \label{lem:sharpeningflatness} Suppose $h : Q \to P$ is a map of integral monoids inducing an isomorphism $h^* : Q^* \to P^*$ on units.  Then $P$ is torsion-free (resp.\ comparable, flat) as a $Q$-module iff $\ov{P}$ is torsion-free (resp.\ comparable, flat) as an $\ov{Q}$-module.  \end{lem}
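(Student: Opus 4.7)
By Theorem~\ref{thm:integralflatness}, flatness for $Q$-modules (resp.\ $\bar Q$-modules) amounts to torsion-freeness plus comparability, so the ``flat'' clause is a formal consequence of the ``torsion-free'' and ``comparable'' clauses and I will only address the latter two. The organizing observation is that since $h^* : Q^* \to P^*$ is an isomorphism, every unit of $P$ lifts uniquely to a unit of $Q$, and conversely $h(v) \in P^*$ with $v \in Q$ forces $v \in Q^*$ (indeed $v$ equals the preimage under $h^*$ of $h(v)$ plus any element of $\ker h$, but integrality and the $h^*$-iso assumption pin it down). In particular the natural map $P/h(Q^*)P \to P/P^*P = \bar P$ is an isomorphism, so $\bar P = \bar Q \otimes_Q P$ as $\bar Q$-modules.

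For the torsion-free equivalence, Example~\ref{example:torsionfree} translates the statement into ``$h$ is injective iff $\bar h$ is injective.'' The forward direction is immediate from the observation above: if $\bar h(\bar q_1) = \bar h(\bar q_2)$ then $h(q_1) = u + h(q_2)$ for some $u \in P^*$, write $u = h(v)$ for the unique $v \in Q^*$ supplied by $h^*$-iso, and conclude $q_1 = v + q_2$ by injectivity of $h$, hence $\bar q_1 = \bar q_2$. For the reverse direction, $h(q_1) = h(q_2)$ gives $\bar q_1 = \bar q_2$ by $\bar h$-injectivity, hence $q_1 = v + q_2$ for some $v \in Q^*$; integrality of $P$ forces $h(v) = 0 \in P^*$, and then $v = 0$ since $h^*$ is an iso.

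For the comparable equivalence, I would give a direct translation in both directions, modeled on the unit-juggling argument already carried out in the proof of the second clause of Lemma~\ref{lem:flatnessandsharpening}. For ``$\Rightarrow$,'' start with an equality $\bar q_1 + \bar p_1 = \bar q_2 + \bar p_2$ in $\bar P$, lift to $h(q_1) + p_1 = u + h(q_2) + p_2$ in $P$ for some $u \in P^*$, rewrite $u = h(v)$ with $v \in Q^*$, and apply comparability of $P$ over $Q$ to the equality $h(q_1) + p_1 = h(v+q_2) + p_2$; reducing modulo $P^*$ yields the required comparability data in $\bar P$ over $\bar Q$. For ``$\Leftarrow$,'' starting with $q_1 + p_1 = q_2 + p_2$ in $P$, pass to $\bar P$, use comparability of $\bar P$ over $\bar Q$ to produce $r_1, r_2' \in Q$ and $p' \in P$ satisfying the decomposition up to units in $P$, and then absorb the resulting units $u_1, u_2 \in P^*$ into $Q$ using that $h^*$ is surjective---this is exactly the ``$u \in Q^*$'' step that occupies the last paragraph of the proof of Lemma~\ref{lem:flatnessandsharpening}.

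The only real subtlety is the bookkeeping of units in the ``$\Leftarrow$'' direction for comparability: one must check that after absorbing units, the element $u := u_2 - u_1 \in P^*$ actually lies in $h(Q^*)$, which is precisely where surjectivity of $h^*$ is used. (As a sanity check, the flatness clause can alternatively be deduced in one shot by applying Lemma~\ref{lem:flatnessandsharpening} with monoid $Q$ and module $P$: the auxiliary hypothesis ``$P$ flat as $Q^*$-module'' is automatic because $P^*$ acts freely on $P$ by integrality, so $P$ is free over $P^* = h(Q^*)$ by Corollary~\ref{cor:flatnessoveragroup}, and the identification $P \otimes_Q \bar Q = \bar P$ was established in the first paragraph.)
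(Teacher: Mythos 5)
Your proof is correct and is precisely the direct unwinding of the definitions that the paper leaves as an exercise (its proof reads ``straightforward from the definitions''), with the reverse comparability direction handled by the same unit-absorption trick that appears at the end of the proof of Lemma~\ref{lem:flatnessandsharpening}. One small caveat: the aside in your opening paragraph claiming that $h(v)\in P^*$ forces $v\in Q^*$ is false in general (take $Q=\NN$, $P=0$), but nothing in your argument actually uses it---each step needs only surjectivity of $h^*$ (to absorb units of $P$ into $Q$) or injectivity of $h^*$ together with integrality of $P$ (in the torsion-free step)---so that remark should simply be deleted.
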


\begin{proof} This is straightforward from the definitions. \end{proof}

\subsection{Monoidal Quillen-Suslin} Recall that the Serre Conjecture (Quillen-Suslin Theorem) says that every finitely generated flat module over a polynomial ring is free.  Here we will prove a version of this result for modules over fine monoids.

For a $P$-module $M$ and $m,m' \in M$ declare $m < m'$ iff there is a $p \in P \setminus P^*$ such that $p+m = m'$.

\begin{lem} \label{lem:DCC} Let $P$ be a fine monoid, $M$ a finitely generated, torsion-free $P$-module.  Then there does not exist an infinite sequence $m_1,m_2,\dots$ of elements of $M$ with $ \cdots < m_2 < m_1.$ \end{lem}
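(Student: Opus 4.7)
The plan is to translate the question into a statement about the noetherian ring $\ZZ[P]$ acting on the finitely generated module $\ZZ[M]$, and then apply the Krull intersection theorem. Throughout, I will write $[m] \in \ZZ[M]$ for the basis element corresponding to $m \in M$, and similarly $[p] \in \ZZ[P]$.

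First I would observe that the elements in any infinite descending chain $\cdots < m_2 < m_1$ must all be distinct. Writing $m_i = p_i + m_{i+1}$ with $p_i \in P \setminus P^*$, an equality $m_i = m_j$ for some $i < j$ would give $(p_i + \cdots + p_{j-1}) + m_i = m_i$, and torsion-freeness would force $p_i + \cdots + p_{j-1} = 0$. But in an integral (in particular fine) monoid, a sum of non-units is a non-unit: if $p_i + q \in P^*$ with $q \in P$, then $-p_i \in P^{\mathrm{gp}}$ is represented by $q + (-(p_i+q)) \in P$, so $p_i \in P^*$, contradiction. So all $m_i$ are distinct, and in particular $[m_1], [m_2], \ldots$ are distinct basis vectors of $\ZZ[M]$.

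Next, set $R := \ZZ[P]$ (noetherian because $P$ is fine, via the Hilbert basis theorem), $N := \ZZ[M]$ (finitely generated over $R$ by Proposition~\ref{prop:finiteness}, since $M$ is finitely generated), and let $\m \subseteq R$ be the ideal generated by $\{[p] : p \in P \setminus P^*\}$. Telescoping the relations gives
\[ [m_1] \; = \; [p_1]\,[p_2]\,\cdots\,[p_n]\,[m_{n+1}] \; \in \; \m^n N \]
for every $n \geq 1$, so $[m_1] \in \bigcap_n \m^n N$. By the Krull intersection theorem applied to the noetherian ring $R$, the ideal $\m$, and the finitely generated module $N$, there exists $r \in 1 + \m$ with $r \cdot \bigcap_n \m^n N = 0$; in particular $r \cdot [m_1] = 0$ in $N$.

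Writing $r = 1 + \sum_{p \in \Sigma} a_p [p]$ for a finite set $\Sigma \subseteq P \setminus P^*$ and nonzero $a_p \in \ZZ$, expansion gives
\[ r \cdot [m_1] \; = \; [m_1] + \sum_{p \in \Sigma} a_p\,[p + m_1]. \]
Torsion-freeness of $M$ says that the map $P \to M$, $p \mapsto p + m_1$, is injective; since $0 \in P^*$ but $\Sigma \subseteq P \setminus P^*$, none of the $p + m_1$ for $p \in \Sigma$ equals $m_1$, and the various $p + m_1$ for $p \in \Sigma$ are pairwise distinct. Hence $r \cdot [m_1]$ is a $\ZZ$-linear combination of distinct basis elements of $N$ in which the coefficient of $[m_1]$ is $1$, so $r \cdot [m_1] \neq 0$. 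This contradicts $r \cdot [m_1] = 0$, completing the proof.

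The only nontrivial ingredient is the Krull intersection theorem; the main conceptual step is recognizing that finite generation of $M$ (giving finite generation of $\ZZ[M]$ over the noetherian ring $\ZZ[P]$) and torsion-freeness are exactly the two hypotheses needed to turn ``descending chain'' into the statement ``$[m_1] \in \bigcap_n \m^n N$ is nonzero modulo the Krull killer,'' which is impossible.
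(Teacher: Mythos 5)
Your proof is correct, but it takes a genuinely different route from the paper's. The paper argues directly at the level of $P$-modules: from the chain $\cdots < m_2 < m_1$ it extracts the ascending chain of cyclic submodules $P+m_1 \subseteq P+m_2 \subseteq \cdots$, uses torsion-freeness to show each inclusion is strict (if $m_{n+1} = p + m_n$ and also $m_n = p' + m_{n+1}$ with $p'$ a non-unit, then $p+p'=0$, forcing $p' \in P^*$), and then contradicts the ascending chain condition, which holds because the union of the chain is again a $P$-submodule of $M$ and submodules of finitely generated modules over a finitely generated monoid are finitely generated (Corollary~\ref{cor:finiteness}). You instead linearize at the outset, feeding the finitely generated $\ZZ[P]$-module $\ZZ[M]$ and the ideal $\m = \ZZ[P \setminus P^*]$ into the Krull intersection theorem, with torsion-freeness entering only at the end to show that an element of $1+\m$ cannot annihilate the basis vector $[m_1]$. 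Both arguments rest on the same finiteness input (noetherianity of $\ZZ[P]$ together with Proposition~\ref{prop:finiteness}), but the paper's mechanism is the bare ACC---essentially the same noetherian-ascent trick already packaged into Corollary~\ref{cor:finiteness}---whereas yours invokes the heavier Artin--Rees/Krull machinery; in exchange, your argument stays local to the single element $m_1$ and avoids discussing unions of chains of submodules. Two small remarks: your opening paragraph showing that all the $m_i$ are distinct is never used afterwards (the final step only needs that $m_1 \neq p+m_1$ and that $p \mapsto p+m_1$ is injective on $\Sigma$, both immediate from torsion-freeness), and the fact that a sum of non-units is a non-unit holds in any commutative monoid, so the appeal to $P^{\rm gp}$ there is unnecessary.
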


\begin{proof} Suppose, toward a contradiction, that $m_1,m_2,\dots$ is such a sequence.  Then we have a sequence $$ P + m_1 \subset P+m_2 \subset \cdots$$ of $P$-submodules of $M$, which I claim is \emph{strictly} increasing.  This will yield a contradiction because the union of this sequence is clearly a $P$-submodule of $M$, hence is a finitely generated $P$-module by Corollary~\ref{cor:finiteness}.  To prove the claim it suffices to show that $m_{n+1} \notin P+m_n$.  Suppose $m_{n+1} = p + m_n$ for some $p \in P$.  We also have $m_n = p' + m_{n+1}$ for some $p' \in P \setminus P^*$ since $m_{n+1} < m_n$.  We then find that $0 + m_{n+1} = p + p' + m_{n+1}$, hence $p+p' = 0$ because $M$ is torsion-free.  But then $p' \in P^*$, a contradiction. \end{proof}

\begin{thm} \label{thm:monoidalQuillenSuslin} Let $P$ be a fine monoid, $M$ a finitely generated, flat $P$-module.  Then $M$ is a free $P$-module with a finite basis $S \subseteq M$. \end{thm}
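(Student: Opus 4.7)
The plan is to identify a basis of $M$ as a set of representatives for the $<$-minimal elements of $M$ modulo the action of $P^*$. Since $P$ is fine, hence integral, Theorem~\ref{thm:integralflatness} implies that $M$ is torsion-free and comparable. Let $S_0 \subseteq M$ be the set of $<$-minimal elements---those $s \in M$ admitting no $m' \in M$ with $m' < s$. Note that $P^*$ acts on $S_0$: if $s \in S_0$ and $u \in P^*$ then $u+s \in S_0$, because any equation $p' + m' = u + s$ with $p' \notin P^*$ would yield $((-u)+p') + m' = s$ with $(-u)+p' \notin P^*$ (else $p' = u + ((-u)+p') \in P^*$), contradicting the minimality of $s$. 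I would then choose a subset $S \subseteq S_0$ containing exactly one representative of each $P^*$-orbit in $S_0$.

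Next I would show that the map $(p,s) \mapsto p + s$ defines a bijection $P \times S \to M$. For surjectivity, given $m \in M$, iterate the $<$ relation: if $m \notin S_0$, write $m = p_1 + m_1$ with $m_1 < m$ and $p_1 \notin P^*$; if $m_1 \notin S_0$, continue with $m_2 < m_1$, and so on. Lemma~\ref{lem:DCC} guarantees termination at some $s_0 \in S_0$ with $m = q + s_0$ for $q \in P$, and writing $s_0 = u + s$ with $u \in P^*$ and $s \in S$ gives $m = (q+u) + s$. For injectivity, suppose $p_1 + s_1 = p_2 + s_2$ with $s_i \in S$ and $p_i \in P$. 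Comparability produces $m \in M$ and $\rho_i \in P$ with $\rho_i + m = s_i$; minimality of $s_i$ forces $\rho_i \in P^*$, since otherwise $m < s_i$. Then $s_2 = (\rho_2 - \rho_1) + s_1$ with $\rho_2 - \rho_1 \in P^*$ shows that $s_1$ and $s_2$ lie in the same $P^*$-orbit, whence $s_1 = s_2$ by the choice of $S$; then $p_1 + s_1 = p_2 + s_1$ together with torsion-freeness yields $p_1 = p_2$.

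Finally, to bound $|S|$, I would fix a finite generating set $m_1, \dots, m_n$ for $M$ (Proposition~\ref{prop:finiteness}) and use the surjectivity argument above to write each $m_i = q_i + s_i^0$ with $s_i^0 \in S_0$, letting $s_i \in S$ be the orbit representative of $s_i^0$, say $s_i^0 = v_i + s_i$ with $v_i \in P^*$. Any $s \in S$ admits an expression $s = p + m_i = (p + q_i + v_i) + s_i$, and minimality of $s$ forces $p + q_i + v_i \in P^*$, so $s$ and $s_i$ lie in the same $P^*$-orbit, i.e.\ $s = s_i$. Thus $S \subseteq \{s_1, \dots, s_n\}$ is finite. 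The main delicate point throughout is the careful bookkeeping of the $P^*$-action: reducing $S_0$ to one representative per orbit is essential both for injectivity of $P \times S \to M$ and for the finiteness estimate whenever $P^*$ is nontrivial, and it is the interplay of minimality with comparability in the uniqueness step that makes the argument go through.
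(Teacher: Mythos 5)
Your proof is correct; every step checks out (the $P^*$-stability of the set $S_0$ of $<$-minimal elements, the DCC-based descent for surjectivity, the comparability-plus-minimality argument for injectivity, and the comparison with a finite generating set for finiteness of $S$). It uses the same basic toolkit as the paper --- Theorem~\ref{thm:integralflatness} to get torsion-freeness and comparability, Lemma~\ref{lem:DCC}, and the interplay of $<$-minimality with comparability --- but the organization is genuinely different. The paper's proof is an extremal argument: it first replaces a finite generating set by one consisting of $<$-minimal elements (via Lemma~\ref{lem:DCC}), then chooses among all such generating sets one of \emph{minimal cardinality}, and derives injectivity of $P \times S \to M$ by contradiction, using comparability to collapse two distinct basis elements $s_1,s_2$ into a single minimal element $m$ and thereby produce a strictly smaller generating set of minimal elements. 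You instead construct the basis canonically, as a set of representatives of the $P^*$-orbits on the set of all $<$-minimal elements, prove bijectivity of $P \times S \to M$ directly (surjectivity by descent, injectivity by comparability plus the one-representative-per-orbit choice plus torsion-freeness), and only afterwards deduce finiteness by comparing $S$ with an arbitrary finite generating set. What your route buys is a sharper structural statement hidden in the theorem: the basis is intrinsically characterized (up to the $P^*$-action) as the minimal elements of $M$, and in particular $M$ is generated by its minimal elements; the paper's extremal argument is shorter on bookkeeping but does not exhibit the basis canonically.
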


\begin{proof}  By Theorem~\ref{thm:integralflatness} $M$ is torsion-free and comparable.  Suppose $S'' \subseteq M$ is any finite subset of $M$ generating $M$ as a $P$-module and $s \in S''$.  By Lemma~\ref{lem:DCC} we can find some $s' \in M$ with $s' \leq s$ such that $s'$ is $<$-\emph{minimal} (i.e.\ there is no $m \in M$ with $m < s'$).  Then $S' := \{ s' : s \in S'' \}$ is a finite subset of $M$ generating $M$ and consisting solely of $<$-minimal elements.  Choose a subset $S \subseteq M$ of minimal cardinality among all subsets $S' \subseteq M$ with these properties.  I claim $S$ is a $P$-basis for $M$.  Since $S$ generates $M$, the map of sets $P \times S \to M$ given by $(p,s) \mapsto p+s$ is surjective; the issue is injectivity.  Suppose $p_1 + s_1 = p_2 + s_2$ for some $p_1,p_2 \in P$, $s_1+s_2 \in S$.  We want to show that $p_1=p_2$ and $s_1=s_2$.  If $s_1=s_2$ then $p_1=p_2$ since $M$ is torsion-free, so assume now that $s_1 \neq s_2$.  Since $M$ is comparable, we can find $m \in M$ and $\rho_1, \rho_2 \in P$ such that $\rho_1+m = s_1$, $\rho_2 + m =s_2$.  Now suppose $p+m' = m$ for some $p \in P$, $m' \in M$.  Then we find $(\rho_1 + p) +m' = \rho_1 + m = s_1$, hence $\rho_1+p \in P^*$ since $s_1$ is $<$-minimal, hence $p \in P^*$.  This proves that $m$ is $<$-minimal.  Since $s_1$ and $s_2$ are distinct and both are in the $P$-submodule of $M$ generated by $m$, the set $S' := \{ m \} \cup (S \setminus \{ s_1,s_2 \})$ generates $M$, consists solely of $<$-minimal elements, and has smaller cardinality than $S$, a contradiction. \end{proof}

\subsection{Free morphisms of monoids}  A monoid homomorphism $h : Q \to P$ will be called \emph{free} iff $P$ is free as a $Q$-module---i.e.\ there is a sub\emph{set} $S \subseteq P$ such that $(s,q) \mapsto s+h(Q)$ defines a bijection of sets $S \times Q \to P$.  In particular, this requires $h$ to be monic, so we will often drop $h$ from the notation.

Such a set $S$ will be called a \emph{basis (for $h$)}.  If $S$ any basis, then we can write $0 = s_0 + q_0$ for some $s_0 \in S$ and some $s_0 \in S$, so $s_0$ and $q$ are units and we can then replace $s_0$ by $0$ to obtain a new basis, thus we can---and will---assume that any basis contains $0$.

We again emphasize that a basis is not required to be a submonoid.  The notion of ``basis" is for $Q$-\emph{modules}, not monoids under $Q$.  If $q : P \to P/Q$ is the quotient of $h$ and $S$ is a basis for $h$, then $q|S : S \to P/Q$ is a bijection of sets.  One can find a basis for $h$ which is a sub\emph{monoid} iff $P$ splits as $Q \oplus P/Q$.

Notice that the free monoid homomorphism $h$ can be recovered from the functions $\alpha : S \times S \to S$ and $\beta : S \times S \to Q$ defined by \be s+t & = & \alpha(s,t) + \beta(s,t). \ee  The functions $\alpha$ and $\beta$ (we will call them the \emph{structure maps}) defined in this way clearly satisfy the conditions \bne{conditions} \alpha(s,t) & = & \alpha(t,s) \\ \nonumber \beta(s,t) & = & \beta(t,s) \\ \nonumber \alpha(s,0) & = & s \\ \nonumber \beta(s,0) & = & 0 \\ \nonumber \alpha(\alpha(r,s),t) & = & \alpha(r,\alpha(s,t)) \\ \nonumber \beta(\beta(r,s),t) & = & \beta(r,\beta(s,t)) \ene for all $r,s,t \in S$.  Conversely, given structure maps $\alpha$ and $\beta$ satisfying these conditions, one can define a monoid structure (addition law) on $S \times Q$ by setting \be (s,q)+(s',q') & := & (\alpha(s,s'),\beta(s,s')+q) \ee and a free morphism $Q \to S \times Q$ by $q \mapsto (0,q)$.  The constructions are inverse in the sense that, if $\alpha$ and $\beta$ are constructed from a free map $h : Q \to P$ with basis $S \subseteq P$, then the map $S \times Q \to P$ given by $(s,q) \mapsto s+q$ is an isomorphism of monoids under $Q$ when $S \times Q$ is given the monoid structure constructed from $\alpha,\beta$ as above.

\begin{rem} The data $\alpha,\beta$ and conditions \eqref{conditions} look like a ``$Q$-deformation" of the requirement that $\alpha$ define a monoid structure on $S$. \end{rem}

\begin{defn} \label{defn:product} If $h_i : Q_i \to P_i$ are maps of monoids, then their \emph{product} $h$ is the map of monoids from $Q := \prod_i Q_i$ to $P := \prod_i P_i$ defined by $h(q)_i := h_i(q_i)$. \end{defn}

\begin{lem} \label{lem:freemorphisms} \begin{enumerate} \item \label{product} A product of free morphisms is free. \item \label{composition} Free morphisms are closed under composition. \item \label{pushout} Free morphisms are closed under pushout. \item \label{group} Any injective map from a group to an integral monoid is free. \item \label{integralimpliesfree} Any injective integral morphism between fine, sharp monoids is free. \end{enumerate} \end{lem}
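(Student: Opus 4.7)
The plan is to dispatch (1)--(4) by direct construction and to reduce (5) to the descending chain lemma plus comparability.

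For (1), bases combine coordinate-wise. For (2), given $h : Q \to P$ with basis $S$ and $g : P \to R$ with basis $T$, the set $\{ t + g(s) : s \in S, \, t \in T \}$ is a basis for $gh$: to write $r \in R$ uniquely as $t + g(s) + (gh)(q)$, first apply the $T$-basis to get $r = t + g(p)$ uniquely, then the $S$-basis to get $p = s + h(q)$ uniquely. For (3), the underlying $Q'$-module of the monoid pushout $P \oplus_Q Q'$ coincides with the module-theoretic base change $P \otimes_Q Q'$ (they represent the same universal property for $Q'$-module maps out of $P$), so freeness propagates by Lemma~\ref{lem:basechange}. For (4), given $h : G \to P$ injective with $G$ a group and $P$ integral, the action $g \cdot p = h(g) + p$ is free, since $h(g) + p = p$ and cancellation give $h(g) = 0$, whence $g = 0$. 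The axiom of choice then produces one representative from each $G$-orbit, and this set serves as a basis.

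For (5), let $h : Q \to P$ be injective and integral between fine sharp monoids. Declare $p \sim p'$ iff $h(q_1) + p = h(q_2) + p'$ for some $q_1, q_2 \in Q$, and write $E(p)$ for the equivalence class. I claim each class has a unique element minimal in the $Q$-module order (where $p' \leq p$ means $p = h(q) + p'$). For existence, a strictly descending chain $p_0 > p_1 > \cdots$ in $E$ (with respect to the $Q$-module order) is also strictly descending when $P$ is regarded as a module over itself, since $Q$ sharp and $h$ injective force $h(q) \in P \setminus \{0\}$ for $q \neq 0$; hence it is finite by Lemma~\ref{lem:DCC} applied to the fine monoid $P$ and its finitely generated torsion-free module $P$. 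For uniqueness, if $s, s' \in E$ are both minimal, comparability of $P$ as a $Q$-module (Theorem~\ref{thm:integralflatness}, from integrality of $h$) applied to a witness $h(q_1) + s = h(q_2) + s'$ produces $p_0 \in P$ and $\rho_1, \rho_2 \in Q$ with $h(\rho_1) + p_0 = s$ and $h(\rho_2) + p_0 = s'$; then $p_0 \in E$, minimality of $s$ forces $\rho_1 = 0$, whence $p_0 = s$ and $s' = h(\rho_2) + s$, and minimality of $s'$ forces $\rho_2 = 0$, giving $s = s'$. Taking $S$ to be the collection of class-minima, $Q \times S \to P$ is surjective by existence and injective by uniqueness combined with cancellation in integral $P$ and injectivity of $h$; so $h$ is free with basis $S$.

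The main obstacle is the descending chain argument in (5), which requires recognizing that fineness plus sharpness of $P$ gives DCC via Lemma~\ref{lem:DCC}; everything else is either formal or an immediate appeal to Lemma~\ref{lem:basechange} and Theorem~\ref{thm:integralflatness}.
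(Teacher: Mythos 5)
Your proposal is correct. Parts (1), (2) and (4) are essentially what the paper does (coordinate-wise bases; the set $g(S)+T$ with the two-step uniqueness argument; free group action plus a choice of orbit representatives). For (3) you take the route the paper explicitly mentions but then declines ("we could quote Lemma~\ref{lem:basechange}"): identifying the underlying $Q'$-module of the pushout $P \oplus_Q Q'$ with the base change $P \otimes_Q Q'$ (an identification the paper itself records in the base-change subsection) and invoking Lemma~\ref{lem:basechange}; the paper instead gives a concrete proof via the structure maps $(\alpha, g\beta)$, which has the small advantage of exhibiting the pushout and its basis explicitly. The real divergence is (5): the paper does not prove it at all, but cites F.~Kato's Integral Splitting Lemma (and \cite{G1}), only sketching that the "primitive" elements form a basis. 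You supply a complete in-paper argument: termination of strictly descending chains via Lemma~\ref{lem:DCC} (applied to $P$ as a module over itself, using sharpness of $Q$ and $P$ and injectivity of $h$ to transfer strictness) gives, for each $p$, a minimal element $s$ with $p = h(q)+s$, and comparability of $P$ as a $Q$-module (flatness, via Theorem~\ref{thm:integralflatness} together with Corollary~\ref{cor:flatiffintegral}, which is where "integral" is translated into comparability) gives uniqueness of the minimum in each $\sim$-class; your class-minima are exactly the paper's primitive elements, so you have in effect written out the proof the author outsources, in the same style as the paper's proofs of Theorems~\ref{thm:integralflatness} and \ref{thm:monoidalQuillenSuslin}. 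The only cosmetic point worth tightening is the phrase "surjective by existence": what you actually use is that descent from a given $p$ terminates at a minimal $s$ with $p = h(q)+s$, not merely that each class contains a minimal element; your chain argument does deliver this, so it is a matter of wording rather than a gap.
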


\begin{proof} For \eqref{product} just note that if $h_i : Q_i \to P_i$ is free with basis $S_i \subseteq P_i$, then the product of the $h_i$ is free with basis $\prod_i S_i \subseteq \prod_i P_i$. For \eqref{composition}, if $h : Q \to P$ is free with basis $S \subseteq P$ and $g : P \to R$ is free with basis $T$, then $gh$ is free with basis $g(S) \times T$: the map $S \times T \times Q \to R$ given by $(s,t,q) \mapsto g(s)+t+gh(q)$ is a bijection of sets.  For \eqref{pushout}, we could quote Lemma~\ref{lem:basechange}, but let us give a very concrete proof:  Suppose $h : Q \to P$ is free with basis $S$, so we can identity $P$ as a monoid under $Q$ with $S \times Q$ with the monoid structure discussed above.  If $g : Q \to Q'$ is an arbitrary monoid homomorphism, then the set $S \times Q'$ becomes a free monoid under $Q'$ by giving it the addition law defined as above using the structure maps $\alpha, g \beta$.  One easily checks that the diagram $$ \xym{ Q \ar[r] \ar[d]_g & S \times Q \ar[d]^{(s,q) \mapsto (s,g(q))} \\ Q' \ar[r] & S \times Q' } $$ is a pushout diagram in monoids.  For \eqref{group}, if $h : G \to P$ is injective and $P$ is integral, then the $G$-action on the set $P$ is a free action and a basis $S$ can be obtained by choosing a point from each $G$-orbit.  The statement \eqref{integralimpliesfree} is the \emph{Integral Splitting Lemma} of F.~Kato: see \cite[\S1]{FK}, or my slightly different proof in \cite[Lemma~7]{G1}.  A basis for $h$ can be constructed as follows: Call an element $p \in P$ \emph{primitive (for $h$)} iff whenever we can write $p = p' + h(q)$ for $p' \in P$, $q \in Q$, we must have $q = 0$.  One can show that the primitive elements of $P$ form a basis. \end{proof}

\begin{example} \label{example:freemonoidhomomorphisms} For any monoids $P,Q$, the monoid homomorphism $Q \to Q \oplus P$ given by $q \mapsto (q,0)$ is free with basis $\{ 0 \} \times P$. \end{example}

\subsection{Partition morphisms}  \label{section:partitionmorphisms} In many ``real world" applications of logarithmic geometry (particularly in degeneration theory), the monoid homomorphisms involved are fairly simple.  In this section we introduce some typical types of monoid homomorphisms that are common in applications and establish their basic properties.

\begin{defn} \emph{Partition morphisms} (resp.\ \emph{partition morphisms with boundary}) are the smallest class of morphisms of fine monoids closed under composition, pushout, and finite products (Definition~\ref{defn:product}) and containing the small diagonal maps $\Delta : \NN \to \NN^m$ for all $m \geq 1$ (as well as all isomorphisms) (resp.\ as well as the map $0 \to \NN$). \end{defn}

It is clear from this definition that every partition morphism is, in particular, a partition morphism with boundary.

\begin{defn} \label{defn:vertical} A map of monoids is called \emph{vertical} iff its cokernel is a group. \end{defn}

\begin{prop} \label{prop:diagonalmaps} For every $m \geq 1$, the diagonal map $\Delta : \NN \to \NN^m$ is free with basis \be S & = & \{ s \in \NN^m : s_i=0 {\rm \; for \; some \; } i \} \ee and $\Delta$ is vertical with cokernel $\ZZ^{m-1}$. \end{prop}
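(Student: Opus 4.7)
The plan is to handle the two assertions separately: first the explicit freeness with basis $S$, then the verticality and cokernel computation.

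For the freeness, I would show directly that the map $\psi : S \times \NN \to \NN^m$ defined by $(s,n) \mapsto s + \Delta(n)$ is a bijection of sets. Given $p = (p_1,\dots,p_m) \in \NN^m$, set $n(p) := \min_i p_i \in \NN$ and $s(p) := p - \Delta(n(p))$. Every coordinate of $s(p)$ lies in $\NN$ by choice of $n(p)$, and at least one coordinate vanishes (any $i$ achieving the minimum), so $s(p) \in S$, giving $\psi(s(p),n(p)) = p$. For uniqueness, if $p = s + \Delta(n)$ with $s \in S$, pick any $i$ with $s_i = 0$, so $p_i = n$; since $s_j \geq 0$ for all $j$, this $n$ must equal $\min_j p_j = n(p)$, and then $s = s(p)$ by integrality of $\NN^m$. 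So $p \mapsto (s(p),n(p))$ is a two-sided inverse for $\psi$.

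For the verticality statement, I would compute the cokernel in the category of integral monoids directly. Since $\NN$ and $\NN^m$ are integral, the cokernel of $\Delta$ exists and is characterized as the image of $\NN^m$ inside $\ZZ^m / \Delta(\ZZ)$ (equivalently, the initial integral monoid under $\NN^m$ on which $\Delta(\NN)$ maps to $0$, which is computed by first groupifying and then taking the image of $\NN^m$). The standard isomorphism
\[
\ZZ^m / \Delta(\ZZ) \; \xrightarrow{\cong} \; \ZZ^{m-1}, \qquad (a_1,\dots,a_m) \mapsto (a_1-a_m,\, a_2-a_m,\, \dots,\, a_{m-1}-a_m),
\]
sends $\NN^m$ surjectively onto $\ZZ^{m-1}$: any target $(b_1,\dots,b_{m-1})$ is the image of $(b_1+N,\dots,b_{m-1}+N, N) \in \NN^m$ for $N \gg 0$. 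Hence the cokernel is $\ZZ^{m-1}$, which is a group, so $\Delta$ is vertical with the asserted cokernel.

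Neither step poses a real obstacle; the main thing to be careful about is clarifying that the cokernel is formed in integral (equivalently fine) monoids, since the naive set-theoretic quotient $\NN^m/\Delta(\NN)$ would not obviously be a group. Once one passes through the groupification, as above, the verticality falls out automatically from the surjectivity of $\NN^m \to \ZZ^{m-1}$.
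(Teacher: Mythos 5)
Your proof is correct, and the freeness half is essentially the paper's own argument: the same map $(s,n)\mapsto s+\Delta(n)$ with inverse given by subtracting off the minimum coordinate. For verticality the paper takes a slightly more direct route than yours: rather than identifying the cokernel with the image of $\NN^m$ in $\ZZ^m/\Delta(\ZZ)\cong\ZZ^{m-1}$, it notes that $\Cok\Delta^{\rm gp}=\ZZ^{m-1}$ and then shows the monoid quotient $\NN^m/\Delta(\NN)$ is itself a group by exhibiting inverses: for $a\in\NN^m$ take $b:=(t-a_1,\dots,t-a_m)$ with $t\geq\max_i a_i$, so $a+b\in\Delta(\NN)$. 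That sidesteps the caveat you flag about which category the cokernel is formed in; and in fact your caveat dissolves anyway, since $\NN^m$ is cancellative, so the plain monoid-theoretic quotient $\NN^m/\Delta(\NN)$ is already integral and coincides with the image of $\NN^m$ in the groupification $\ZZ^m/\Delta(\ZZ)$ --- your $N\gg 0$ surjectivity check and the paper's choice of $b$ are the same trick in two guises.
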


\begin{proof} Given $p \in \NN^m$, let $q := \min \{ p_1,\dots,p_m \}$.  Then we have $p  =  \Delta(q) + s$ where $s = (p_1-q,\dots,p_m-q)$ and $s \in S$ because $q=p_i$ for some $i$, so one of the coordinates of $s$ is zero.  If $p = \Delta(q')+s'$ is another such expression, then since each coordinate of $s'$ is $\geq 0$ and at least one coordinate of $s'$ is zero, the minimum of any of the coordinates of $p$ must be $q'$.  That is, $q=q'$.  But then we have $\Delta(q)+s = \Delta(q)+s'$, so $s=s'$.  Clearly the cokernel of $\Delta^{\rm gp}$ is $\ZZ^{m-1}$, so we need only show that $\Cok \Delta$ is a group.  It suffices to show that for any $a \in \NN^m$, there is $b \in \NN^m$ such that $a+b \in \Delta(\NN) \subseteq \NN^m$.  To arrange this, we need only take any integer $t$ at least as large as any of the $a_i$ and set $b := (t-a_1,t-a_2,\dots,t-a_m)$. \end{proof}

\begin{lem} \label{lem:verticalmorphisms} \begin{enumerate} \item \label{verticalproducts} A product of vertical morphisms is vertical. \item \label{verticalpushouts} Any pushout of a vertical morphism is vertical. \item \label{verticalcomposition} A composition of vertical morphisms is vertical. \end{enumerate} \end{lem}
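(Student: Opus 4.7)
The plan is to first establish a convenient reformulation of verticality, then use it to reduce each of the three assertions to a short computation. The key reformulation is: a map $h : Q \to P$ of monoids is vertical if and only if for every $p \in P$ there exist $p' \in P$ and $q \in Q$ with $p + p' = h(q)$. One direction is immediate from Definition~\ref{defn:vertical}: $P/h(Q)$ being a group means every class $[p]$ has an inverse, i.e.\ there exist $p' \in P$ and $q, q'' \in Q$ with $p + p' + h(q) = h(q'')$; absorbing $h(q)$ into $p'' := p' + h(q)$ gives the equation $p + p'' = h(q'')$. The other direction is equally routine. I would state this as a preliminary lemma and use it throughout.

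For \eqref{verticalcomposition} (composition), suppose $h : Q \to P$ and $g : P \to R$ are vertical, and fix $r \in R$. By verticality of $g$ there exist $r_1 \in R$ and $p \in P$ with $r + r_1 = g(p)$, and by verticality of $h$ there exist $p' \in P$ and $q \in Q$ with $p + p' = h(q)$. Adding $g(p')$ to the first equation gives
\[
r + (r_1 + g(p')) = g(p + p') = g(h(q)) = (gh)(q),
\]
so $gh$ is vertical by the reformulation. I expect this to be the cleanest of the three arguments.

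For \eqref{verticalpushouts} (pushout), suppose $h : Q \to P$ is vertical and $Q \to Q'$ is any map, with pushout $h' : Q' \to P' := P \oplus_Q Q'$. Every element of $P'$ is represented as a class $[p, q']$ with $p \in P$, $q' \in Q'$, subject to the relation $[p + h(q), q'] = [p, g(q) + q']$. Given such a class, choose $p' \in P$ and $q \in Q$ with $p + p' = h(q)$ (by verticality of $h$); then
\[
[p, q'] + [p', 0] = [p + p', q'] = [h(q), q'] = [0, g(q) + q'] = h'(g(q) + q'),
\]
which exhibits an ``inverse'' for $[p, q']$ modulo $h'(Q')$.

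For \eqref{verticalproducts} (product), if $h_i : Q_i \to P_i$ are vertical, then the cokernel of $\prod h_i$ is $\prod (P_i/h_i(Q_i))$, because both the product monoid structure and the defining congruence of the cokernel are componentwise; a product of groups is a group. The main obstacle is the pushout case: one has to pick the right concrete description of $P \oplus_Q Q'$ and a normal form for its elements so that the relation $[p + h(q), q'] = [p, g(q) + q']$ can be applied in the correct direction. Once the reformulation of verticality is in hand, everything else is a short computation.
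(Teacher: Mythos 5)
Your proof is correct, and it goes by a genuinely more element-theoretic route than the paper. Your key step is the reformulation of Definition~\ref{defn:vertical}: $h : Q \to P$ is vertical iff every $p \in P$ admits $p' \in P$ and $q \in Q$ with $p + p' = h(q)$; this is indeed equivalent, using the standard description of the cokernel congruence ($[p_1]=[p_2]$ in $P/Q$ iff $p_1 + h(q_1) = p_2 + h(q_2)$ for some $q_1,q_2 \in Q$), and it is in fact the same criterion the paper uses implicitly when proving verticality of the diagonal in Proposition~\ref{prop:diagonalmaps}. With it, your three computations all go through — in the pushout case you never actually need a normal form for $P \oplus_Q Q'$, since you only prove an equality of classes and the generating relation $[p+h(q),q'] = [p,g(q)+q']$ suffices for that. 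The paper instead argues categorically: for products it likewise notes that cokernels commute with products; for pushouts it simply observes that the cokernel of a pushout of $h$ equals the cokernel of $h$ (colimits commute with colimits), which is a one-line argument where you do an explicit computation; and for composition it forms the double pushout diagram \eqref{pushoutdiagramZ} and reduces to the auxiliary lemma that a monoid $P$ is a group whenever $Q$ and $P/Q$ are groups, whereas you give a direct two-line element argument. Your approach buys concreteness and avoids the auxiliary extension lemma; the paper's buys brevity in the pushout case and a structural statement (the extension lemma) that is reusable elsewhere. Either is acceptable; if you keep yours, state and verify the reformulation (including the explicit description of the cokernel congruence) as a preliminary lemma, since all three parts lean on it.
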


\begin{proof} For \eqref{verticalproducts} just note that the cokernel of a product is the product of the cokernels.  For \eqref{verticalpushouts} just note that the cokernel of any pushout of $h$ is equal to the coknerel of $h$ (direct limits commute amongst themselves).  For \eqref{verticalcomposition}, consider any monoid homomorphisms $P \to Q$ and $Q \to R$.  We have a pushout diagram as below: \bne{pushoutdiagramZ} & \xym{ P \ar[r] \ar[d] & Q \ar[r] \ar[d] & R \ar[d] \\ 0 \ar[r] & Q/P \ar[r] \ar[d] & R/P \ar[d] \\ & 0 \ar[r] & R/Q } \ene If $P \to Q$ and $Q \to R$ are vertical, then $Q/P$ is a group and the cokernel of $Q/P \to R/P$ is a group, so to show $P \to R$ is vertical (i.e.\ $R/P$ is a group) we reduce to the trivial lemma below. \end{proof}

\begin{lem} Let $Q \to P$ be a monoid homomorphism.  Suppose $Q$ and $P/Q$ are groups.  Then $P$ is a group. \end{lem}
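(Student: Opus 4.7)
The plan is to argue directly that every element of $P$ has an additive inverse, exploiting the group structure on $P/Q$ to produce an approximate inverse in $P$ and then correcting it using the group structure on $Q$.

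First I would make the cokernel $P/Q$ concrete. Since $Q$ is a group (so $h(Q) \subseteq P$ is closed under additive inverses up to elements of $h(Q)$), the relation $\sim$ on $P$ defined by $p_1 \sim p_2$ iff $p_2 = p_1 + h(q)$ for some $q \in Q$ is already an equivalence relation: reflexivity uses $q=0$, symmetry uses $-q \in Q$, and transitivity uses addition in $Q$. The quotient monoid $P/\!\sim$ is then the cokernel $P/Q$.

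Next, fix $p \in P$ and let $\bar p \in P/Q$ denote its class. Since $P/Q$ is a group by hypothesis, $\bar p$ has an inverse, which I can write as $\overline{p'}$ for some $p' \in P$. The relation $\overline{p + p'} = 0$ in $P/Q$ unpacks, by the description of $\sim$ in the previous step, to an equality $p + p' = h(q)$ in $P$ for some $q \in Q$. Now using that $Q$ is a group, there is $-q \in Q$ with $h(q) + h(-q) = h(0) = 0$. Adding $h(-q)$ to both sides of $p + p' = h(q)$ therefore yields
\[
p + \bigl(p' + h(-q)\bigr) \; = \; h(q) + h(-q) \; = \; 0
\]
in $P$. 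Hence $p' + h(-q)$ is an additive inverse for $p$ in $P$. Since $p \in P$ was arbitrary, every element of $P$ is invertible, so $P$ is a group.

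There is no real obstacle here: the key observation is that the hypothesis ``$Q$ is a group'' both simplifies the cokernel to an honest quotient by a translation action and provides the additive inverses needed to upgrade an inverse in $P/Q$ to an inverse in $P$. The only minor point worth checking is that the concrete description of $\sim$ really does compute the monoid cokernel when $Q$ is a group, which is immediate because the subset $h(Q) \subseteq P$ is stable under additive inverses in the weak sense above.
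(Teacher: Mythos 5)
Your proof is correct, and since the paper dismisses this lemma as a ``trivial exercise'' it is precisely the argument the reader is expected to supply: when $Q$ is a group the cokernel congruence is just the translation relation $p_1 \sim p_1 + h(q)$, an inverse class in $P/Q$ lifts to an element $p'$ with $p + p' = h(q)$, and adding $h(-q)$ (which makes sense because $Q$ is a group, so $h(Q) \subseteq P^*$) produces an honest inverse in $P$. Your justification that the directly defined relation is already a congruence computing the cokernel is the only point worth checking, and you handle it correctly.
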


\begin{proof} Trivial exercise. \end{proof}

\begin{prop} \label{prop:partitionmorphismsarefree} Let $h : Q \to P$ be a partition morphism with boundary.  Then $h$ is free and $(Q/P)^{\rm gp}$ is a finitely generated free abelian group.  If $h$ is a partition morphism, then $h$ is vertical $(Q/P = (Q/P)^{\rm gp})$. \end{prop}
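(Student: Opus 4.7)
The plan is to proceed by structural induction on the definition of partition morphism (with boundary). Since the class of partition morphisms with boundary is generated by the base cases (isomorphisms, $0 \to \NN$, and the diagonals $\Delta : \NN \to \NN^m$) under the operations of composition, pushout, and finite product, it suffices to show that the class $\mathcal{F}$ of monoid homomorphisms $h : Q \to P$ which are (i) free and (ii) have $(P/Q)^{\rm gp}$ finitely generated free abelian contains the base cases and is closed under these three operations. For the final statement, one runs the same induction with $\mathcal{F}$ replaced by the class $\mathcal{V}$ of vertical free morphisms.

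For the base cases: isomorphisms are trivially in $\mathcal{F}$ (take $S = \{0\}$, cokernel $0$); $0 \to \NN$ is free over the trivial monoid with basis $\NN$ and cokernel group $\ZZ$; and $\Delta : \NN \to \NN^m$ is in $\mathcal{F}$ by Proposition~\ref{prop:diagonalmaps}, which also shows it is vertical with cokernel $\ZZ^{m-1}$. Closure of free morphisms under products, pushouts, and compositions is Lemma~\ref{lem:freemorphisms}, and the corresponding closure for vertical morphisms is Lemma~\ref{lem:verticalmorphisms}; so verticality takes care of itself in the no-boundary case (noting that $0 \to \NN$, the only non-vertical base case, is absent there).

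The real content is tracking the cokernel group through the three operations. For products, $(\prod P_i / \prod Q_i)^{\rm gp} = \bigoplus_i (P_i/Q_i)^{\rm gp}$ since groupification and direct sum commute, so finite rank and freeness are preserved. For pushouts $h' : Q' \to P' = P \oplus_Q Q'$ of some $h \in \mathcal{F}$ along an arbitrary $Q \to Q'$, the cokernel $P'/Q' = P/Q$ is unchanged (direct limits commute), so $(P'/Q')^{\rm gp} = (P/Q)^{\rm gp}$ remains f.g.\ free. I should note that since free morphisms are integral (free implies flat by Lemma~\ref{lem:basechange}, and flat equals integral for maps of integral monoids by Corollary~\ref{cor:flatiffintegral}), such pushouts remain fine without needing any integralization step. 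The main obstacle is the composition case: given $h : Q \to P$ and $g : P \to R$ in $\mathcal{F}$, Lemma~\ref{lem:freemorphisms} yields freeness of $gh$, but one must show $(R/Q)^{\rm gp}$ is f.g.\ free.

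For this, I will use the fact that $h$, $g$, and hence $gh$ are all injective (free $\Rightarrow$ torsion-free $\Rightarrow$ injective by Example~\ref{example:torsionfree}), and moreover injective on groupifications (since free morphisms remain free after groupification, or alternatively since the three monoids in play are integral so that the groupified maps are the restrictions to integral submonoids of injective group maps). Consequently we get short exact sequences $0 \to Q^{\rm gp} \to P^{\rm gp} \to (P/Q)^{\rm gp} \to 0$ and similarly for $g$ and $gh$. Applying the snake lemma (or a direct diagram chase) to the resulting $3 \times 3$ diagram produces a short exact sequence
\[ 0 \to (P/Q)^{\rm gp} \to (R/Q)^{\rm gp} \to (R/P)^{\rm gp} \to 0. \]
Since $(R/P)^{\rm gp}$ is free abelian, this sequence splits, and $(R/Q)^{\rm gp}$ is a finitely generated free abelian group of rank equal to the sum of the ranks. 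This closes the induction, proving freeness, the cokernel-group claim, and (restricting to the no-boundary generators and invoking Lemma~\ref{lem:verticalmorphisms}) verticality.
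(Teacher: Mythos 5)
Your proposal is correct and follows essentially the same route as the paper: a structural induction over the generating operations, with the base cases handled by Proposition~\ref{prop:diagonalmaps}, freeness closed up via Lemma~\ref{lem:freemorphisms}, verticality via Lemma~\ref{lem:verticalmorphisms}, and the cokernel group tracked through products, pushouts, and compositions. The only divergence is the composition step for $(P/Q)^{\rm gp}$: the paper groupifies the pushout diagram \eqref{pushoutdiagramZ} and invokes the extension property of finitely generated free abelian groups, whereas you exploit injectivity of the groupified free maps to produce a genuinely short exact sequence $0 \to (P/Q)^{\rm gp} \to (R/Q)^{\rm gp} \to (R/P)^{\rm gp} \to 0$ and split it -- a minor variant that makes the left-exactness (and hence the freeness of the subgroup term) explicit where the paper leaves it implicit.
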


\begin{proof} To see that $h$ is free, just note that the diagonal maps $\Delta : \NN \to \NN^m$ are free (Proposition~\ref{prop:diagonalmaps}), $0 \to \NN$ is free, and free maps are closed under composition, pushout, and products (Lemma~\ref{lem:freemorphisms}).  The argument for verticality of partition maps goes the same way (replace Lemma~\ref{lem:freemorphisms} with Lemma~\ref{lem:verticalmorphisms}).  To see that $(Q/P)^{\rm gp}$ is a finitely generated free abelian group, we first note that this is true when $Q \to P$ is one of the diagonal maps $\Delta$ or $0 : 0 \to \NN$, then we check that the property of having $(Q/P)^{\rm gp}$ a finitely generated free abelian group is a property of monoid homomorphisms $h : Q  \to P$ which is stable under pushout and finite products (this is trivial) and composition.  For stability under composition, we argue as in the proof of Lemma~\ref{lem:verticalmorphisms}\eqref{verticalcomposition}, noting that the groupification of \eqref{pushoutdiagramZ} stays a pushout diagram (groupification preserves direct limits) and an extension of a finitely generated free abelian group by a finitely generated free abelian group is again a finitely generated free abelian group.  \end{proof}

Partition morphisms with boundary arise in nature as follows.  Suppose $X$ is a smooth variety with log structure from a simple normal crossings divisor $D = D_1 \cup \cdots \cup D_n$ and $Y$ is a smooth variety with log structure from a simple normal crossings divisor $E = E_1 \cup \cdots \cup E_m$.  Suppose $f : X \to Y$ is a map of varieties such that for each $i$ we have $f^*E_i = \sum_j a_i^j D_j$ where each $a^i_j$ is zero or one.  Then $f : X \to Y$ is a log smooth morphism such that each map $\ov{f}^\dagger_x : \ov{\M}_{Y,f(x)} \to \ov{\M}_{X,x}$ is a partition morphism with boundary (between free monoids).

\section{Flatness} \label{section:flatness}  This section is a collection of generalities concerning flatness in the context of algebraic geometry needed elsewhere in the text.  For general background, see \cite[IV.2]{EGA}, \cite[IV.6]{EGA}, \cite[IV.11]{EGA}, \cite[IV]{SGA1}, \cite[Chapters 2 and 8]{Mat}, \cite{RG}, \cite[\S\S7.94, 7.121-123, 34]{SP}.

\subsection{Flatness over stacks} \label{section:flatnessoverstacks}  Let $f : X \to Y$ be a morphism of algebraic stacks, $M$ an $\O_X$-module.  We briefly recall here how one defines ``$M$ is flat over $Y$."  In fact we start by recalling how one defines this for schemes.

\begin{defn} Let $f : X \to Y$ be a map of ringed spaces, $x$ a point of $X$ with image $y := f(x) \in Y$.  An $\O_X$-module $M$ is called \emph{flat over} $Y$ \emph{at} $x$ iff $F_x$ is flat when regarded as an $\O_{Y,y}$-module via restriction of scalars along $f_x : \O_{Y,y} \to \O_{X,x}$.  $M$ is called \emph{flat over} $Y$ iff it satisfies the equivalent conditions of the following: \end{defn}

\begin{lem} \label{lem:flatnessdefinition} Let $f : X \to Y$ be a map of ringed spaces, $M$ an $\O_X$-module.  The following are equivalent: \begin{enumerate} \item \label{exactA} The functor $ \slot \otimes_{f^{-1} \O_Y} M : \Mod(f^{-1} \O_Y)  \to  \Mod(\O_X) $ is exact.  \item \label{exactB} The functor $ f^* \slot \otimes M = f^{-1} \slot \otimes_{f^{-1} \O_Y} M : \Mod(\O_Y)  \to  \Mod(\O_X) $ is exact.  \item \label{exactC} $M$ is flat over $Y$ at $x$ for every $x \in X$. \end{enumerate}  If $Y$ is a scheme which is either quasi-separated or locally noetherian, then these conditions are also equivalent to \begin{enumerate} \setcounter{enumi}{3} \item \label{exactQco} The functor $f^* \slot \otimes M : \Qco(Y) \to \Mod(\O_X)$ is exact. \end{enumerate}  \end{lem}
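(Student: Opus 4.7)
The plan is to prove (1)$\iff$(2)$\iff$(3) for arbitrary ringed-space maps using the stalk-local nature of flatness, then handle (4) separately using the hypothesis on $Y$. The equivalence (1)$\iff$(3) is the standard fact that a sheaf of modules over a sheaf of rings is flat if and only if all its stalks are flat, applied to $M$ viewed as an $f^{-1}\O_Y$-module with the identifications $(f^{-1}\O_Y)_x = \O_{Y,f(x)}$ and stalk $M_x$. The implication (1)$\implies$(2) is immediate from the factorization of $f^*(\slot)\otimes M$ as $f^{-1}$ (always exact on sheaves) followed by $\slot \otimes_{f^{-1}\O_Y} M$ (exact by (1)).

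For (2)$\implies$(3), given $x \in X$ with $y := f(x)$ and an injection $A \hookrightarrow B$ of $\O_{Y,y}$-modules, the plan is to realize it as the stalk at $y$ of an injection of $\O_Y$-modules and then apply (2). The pushforward $i_{y,*}$ along the inclusion $\{y\} \hookrightarrow Y$ (with $\{y\}$ given the ringed-space structure with ring $\O_{Y,y}$) is exact and produces $\O_Y$-modules whose stalks at $y$ are $A$ and $B$ respectively; evaluating $f^*(i_{y,*}A)\otimes M$ at $x$ yields exactly $A \otimes_{\O_{Y,y}} M_x$, and applying (2) gives the required $\Tor_1$-vanishing.

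The implication (2)$\implies$(4) is trivial since $\Qco(Y) \subseteq \Mod(\O_Y)$. For the converse (4)$\implies$(3), I would again construct an injection of (now \emph{quasi-coherent}) $\O_Y$-modules whose stalks at $y$ recover a given injection $A \hookrightarrow B$ of $\O_{Y,y}$-modules. Choose an affine open $V = \Spec R$ of $Y$ containing $y$, with $y$ corresponding to a prime $\mathfrak{p}$ so that $\O_{Y,y} = R_\mathfrak{p}$. Viewing $A$ and $B$ as $R$-modules by restriction of scalars, the injection $A \hookrightarrow B$ is still $R$-linear and injective; the associated quasi-coherent sheaves $\tilde A \hookrightarrow \tilde B$ on $V$ have stalk $A_\mathfrak{p} = A$ and $B_\mathfrak{p} = B$ at $y$. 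Pushing forward along the open inclusion $j : V \hookrightarrow Y$ preserves stalks at points of $V$, hence recovers $A \hookrightarrow B$ on stalks at $y$, and then applying (4) gives flatness of $M_x$ over $\O_{Y,y}$ as before.

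The single nontrivial technical point, and the place where the hypothesis on $Y$ is actually used, is that $j_*$ should preserve quasi-coherence. This is the standard result that $j_*\tilde F$ is quasi-coherent when $j$ is quasi-compact and quasi-separated; open immersions are always separated, and an affine open of $Y$ is quasi-compact over $Y$ precisely when $Y$ is quasi-separated or locally noetherian. Once this is in hand the stalk argument is automatic, and the main obstacle of the proof is thus really just bookkeeping, concentrated in the extension step from $V$ to $Y$.
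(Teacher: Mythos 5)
Your proof is correct and follows essentially the same route as the paper's: the factorization through $f^{-1}$ for (1)$\implies$(2), the stalkwise flatness criterion via $(N\otimes_{f^{-1}\O_Y}M)_x = N_y\otimes_{\O_{Y,y}}M_x$, the skyscraper/pushforward-from-the-point device for (2)$\implies$(3), and for (4)$\implies$(3) the same construction of a quasi-coherent sheaf with prescribed stalk at $y$ (restrict scalars to $R$, apply $\widetilde{(\,\cdot\,)}$ on an affine open, push forward along the open immersion, which is separated and quasi-compact under either hypothesis on $Y$, so the pushforward stays quasi-coherent). The only cosmetic slip is the phrase ``precisely when'' in the last paragraph -- the hypotheses on $Y$ are sufficient for quasi-compactness of the inclusion, not equivalent to it -- but this does not affect the argument.
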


\begin{proof} \eqref{exactA} implies \eqref{exactB} because the functor in \eqref{exactB} is the composition of the functor in \eqref{exactA} and the exact functor $ f^{-1} : \Mod(\O_Y)  \to  \Mod(f^{-1} \O_Y). $  \eqref{exactB} implies \eqref{exactC} because there is an isomorphism \be N \otimes_{\O_{Y,y}} M_x & = & (f^{-1}(y_* N) \otimes_{f^{-1} \O_Y} M)_x \ee natural in $N \in \Mod(\O_{Y,y})$ and the right hand side is clearly exact under hypothesis \eqref{exactB}.  \eqref{exactC} implies \eqref{exactA} because \be (N \otimes_{f^{-1} \O_Y} M)_x & = & N_y \otimes_{\O_{Y,y}} M_x \ee and the exactness in \eqref{exactA} can be checked after composing with stalks at each point of $x$. 

Now suppose $Y$ is a scheme satisfying the indicated hypotheses.  Obviously \eqref{exactB} implies \eqref{exactQco}, so to add \eqref{exactQco} onto the list of equivalent conditions we need only show that \eqref{exactQco} implies \eqref{exactC}.  This is easy once we show that for every $y \in Y$ and every $\O_{Y,y}$-module $M$, there exists a quasi-coherent sheaf $\F$ on $Y$ with stalk $\F_y \cong M$.  Take an affine open neighborhood $U = \Spec A$ of $y$ in $Y$.  Let $f : U \into Y$ be the inclusion.  We can easily find such a quasi-coherent sheaf $\F$ on $U$ simply by taking $\F = M^{\sim}$, regarding $M$ as an $A$-module via the natural map $A \to \O_{Y,y} = A_y$, and we could then obtain the desired $\F$ on $Y$ by using the pushforward $f_* \F$, provided we know this pushforward is quasi-coherent.  This is \cite[II.5.8(c)]{H} provided we know the map $f$ is separated and quasi-compact.  Like any monomorphism $f$ is certainly separated because its diagonal is an isomorphism.  If $Y$ is quasi-separated then since $U$ is affine it is easy to see that $f$ is quasi-compact.  If $Y$ is locally noetherian, then we can take $A$ noetherian, hence $U$ is a noetherian space and $f$ is again easily seen to be quasi-compact because any open subspace of a noetherian space is quasi-compact.        \end{proof}

\begin{rem} If $f : X \to Y$ is a map of ringed topoi, one should define flatness using the first condition in Lemma~\ref{lem:flatnessdefinition}.  This condition is implied by the third condition provided $X$ has enough points.  In this generality, \eqref{exactB} implies \eqref{exactC} is not so clear to me.  In the above proof we used the fact that the adjunction morphism $y^{-1} y_* \to \Id$ is an isomorphism for a point $y$ of the topological space $Y$.  This map is not generally an isomorphism for an arbitrary point $y=(y^{-1},y_*)$ of an arbitrary topos $Y$ (in the sense of \cite[IV.6.1]{SGA4}).  For example, if $Y = BG$ is the classifying topos of a group $G$, then the only point of $Y$ is $y=(y^{-1},y_*)$ where $y^{-1} : BG \to \Sets$ is ``forget the $G$-action" \cite[IV.7.2]{SGA4} and $y_* : \Sets \to BG$ is given by $y_*T := \Hom_{\Sets}(G,T)$ where $y_*T$ is regarded as a $G$-set via the action $(g \cdot \gamma)(h) := \gamma(hg).$  The adjunction morphism $y^{-1} y_* \to \Id$ is given by $\gamma \mapsto \gamma(1)$; this is not an isomorphism unless $G$ is the trivial group.  \end{rem}

It is straightforward to check that flatness is ``stable under base change" in the following sense:  Given a cartesian diagram of ringed spaces or locally ringed spaces \bne{LRScartesian} & \xym{ X' \ar[r]^f \ar[d] & X \ar[d] \\ Y' \ar[r] & Y } \ene and an $\O_X$-module $F$ flat over $Y$, the $\O_{X'}$-module $F' := f^* F$ is flat over $Y'$.  This is clear for ringed spaces from the usual stability of flatness under base change and the fact that the stalk of $\O_{X'}$ at a point of $X'$ is the tensor product of the corresponding stalks of $\O_{Y'}$ and $\O_{X}$ over $\O_{Y}$.  The statement for locally ringed spaces then follows from the fact that the comparison map from the locally ringed space fibered product to the ringed space fibered product is flat.

\begin{defn} \label{defn:flatcover} A morphism $f : X \to Y$ of locally ringed spaces is called a \emph{flat cover} iff, for every $y \in Y$, there is an $x \in f^{-1}(y)$ such that $f_x : \O_{Y,y} \to \O_{X,x}$ is flat (equivalently faithfully flat since $f_x$ is a local map of local rings). \end{defn}

It is straightforward to see that flat covers are stable under composition and base change in locally ringed spaces.  It is also straightforward to see that in the cartesian $\LRS$ diagram \eqref{LRScartesian} where $Y' \to Y$ is a flat cover, an $\O_X$-module $M$ is flat over $Y$ iff the $\O_{X'}$-module $M' := f^*M$ is flat over $Y'$.

The finiteness hypotheses one often places on algebraic stacks are irrelevant here, so in this section, an \emph{algebraic stack} is a stack $X$ on schemes (or schemes over some base) in the \'etale topology with representable diagonal such that there is a scheme $X'$ and a (necessarily representable) flat cover $X' \to X$.

\begin{defn} \label{defn:flatnessoverstacks} Suppose $Y$ is an algebraic stack (in the above sense), $X$ is a scheme, $M$ is an $\O_X$-module, and $f : X \to Y$ is a map of algebraic stacks.  Choose a flat cover $Y' \to Y$.  We say that $M$ is \emph{flat over} $Y$ iff the pullback $M'$ of $M$ to $X' := X \times_Y Y'$ is flat over $Y'$.  \end{defn}

To see that this is a ``good definition," one checks that: \begin{enumerate} \item It is independent of the choice of flat cover $Y' \to Y$. \item It reduces to Definition~\ref{defn:flat} if $f$ is a map of schemes. \item \label{basechange} It has the same ``stability under base change" as the usual notion for locally ringed spaces.  \end{enumerate} Suppose $Y'' \to Y$ is another flat cover.  Then we can set $Y''' := Y' \times_Y Y''$ and note that $Y''' \to Y'$ and $Y''' \to Y''$ are flat covers, then note that flatness of $M'$ over $Y'$ is equivalent to flatness of $M''$ over $Y''$ because both are equivalent to flatness of $M'''$ over $Y'''$.  It is then clear that this notion reduces to the usual notion of flatness if $Y$  is a scheme, for then one can take $Y'=Y$, and it is similarly straightforward to establish \eqref{basechange}.

One can also allow $X$ to be an algebraic stack, then define flatness using a flat cover of $X'$ provided one has a reasonable notion of ``$\O_X$-module".  Since this is not needed in the present paper we leave it to the reader.  We have also assumed here, for simplicity, that ``representable" means ``representable by schemes," but one can make sense of this for algebraic stacks where the diagonal is only representable by algebraic spaces.

\subsection{Fppf stalks artifice}  One can alternatively define flatness for maps of algebraic stacks by using fppf stalks.  In this section, an \emph{algebraic stack} is a stack $X$ on schemes (or schemes over some base) in the \'etale topology with representable diagonal such that there is a scheme $X'$ and a (necessarily representable) fppf cover $X' \to X$.

Let $k$ be an algebraically closed field.  Any fppf cover of $\Spec k$ has a section, hence the fppf topos of $\Spec k$ is equivalent to the category of sets (via the global section functor).  Consequently any geometric point $\ov{x} : \Spec k \to X$ of a scheme $X$ yields a point of the fppf topos of $X$; the image of this point in the Zariski topos is just the point $x$ of $X$ given by the image of the map $\ov{x}$.  Every Zariski point of $X$ hence underlies some fppf point of $X$.  The fppf local ring $\O_{X,\ov{x}}^{fppf}$ of $X$ at $\ov{x}$ is the (filtered) direct limit of the local rings $\O_{U,u}$ where $(U,u)$ runs over neighborhoods of $\ov{x}$ in the fppf site of $X$ (fppf maps $U \to X$ equipped with a lift $u : \Spec k \to U$ of $\ov{x}$).  The map $\O_{X,x} \to \O_{X,\ov{x}}^{fppf}$ is hence a filtered direct limit of flat local maps of local rings, hence is itself a flat local map of local rings, hence it is faithfully flat.  If $X \to Y$ is a map of schemes and $\ov{x}$ is a geometric point of $X$ with image $\ov{y}$ in $Y$, then there is a commutative diagram of rings \bne{fppfstalks} & \xym{ \O_{X,x} \ar[r] &  \O_{X,\ov{x}}^{fppf} \\ \O_{Y,y} \ar[r] \ar[u] & \O_{Y,\ov{y}}^{fppf} \ar[u] } \ene where the horizontal arrows are faithfully flat and the map $$\O_{Y,\ov{y}}^{fppf} \otimes_{\O_{Y,y}} \O_{X,x} \to \O_{X,\ov{x}}^{fppf}$$ is flat.

\begin{lem} \label{lem:flatness2} Suppose $A \to A'$ is a flat ring map, $B \to B'$ is a faithfully flat ring map, and $$ \xym{ B \ar[r] & B' \\ A \ar[u] \ar[r] & A' \ar[u]  } $$ is a commutative diagram of rings.  If $M$ is a $B$-module such that $M \otimes_B B'$ is flat over $A'$, then $M$ is flat over $A$.  If the natural map $B \otimes_A A' \to B'$ is flat then the converse holds. \end{lem}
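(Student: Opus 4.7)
The plan is to use standard ``unwinding" arguments with flat tensor product, together with the observation that $M$ and $M\otimes_B B'$ carry two \emph{a priori} different $A$-module structures (one via $A\to B$, one via $A\to A'\to B'$) which agree by commutativity of the square. I will treat the two implications separately.

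For the forward implication, I would begin by noting that since $M\otimes_B B'$ is flat over $A'$ and $A\to A'$ is flat, the composition $A\to A'\to \operatorname{End}_{\mathbb{Z}}(M\otimes_B B')$ shows that $M\otimes_B B'$ is flat as an $A$-module (via the $A'$-action). By commutativity of the square, this $A$-module structure coincides with the one obtained via $A\to B\to B'$. Next, for any $A$-module $N$, the canonical natural isomorphism
\[
(N\otimes_A M)\otimes_B B' \;\cong\; N\otimes_A (M\otimes_B B')
\]
shows that the functor $(\slot\otimes_A M)\otimes_B B'$ is exact in $N$, since its target has just been identified with tensoring against the flat $A$-module $M\otimes_B B'$. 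Because $B\to B'$ is faithfully flat, the functor $\slot\otimes_B B'$ is exact and reflects exactness on $B$-modules, so $\slot\otimes_A M$ is itself exact, proving $M$ is flat over $A$.

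For the converse implication, assume $M$ is flat over $A$ and that the canonical map $B\otimes_A A'\to B'$ is flat. Base change flatness gives that $M\otimes_A A'$ is flat over $A'$. The associativity identification
\[
M\otimes_A A' \;=\; M\otimes_B (B\otimes_A A')
\]
exhibits $M\otimes_A A'$ as the base change of $M$ along $B\to B\otimes_A A'$, and we then base change further along the flat map $B\otimes_A A'\to B'$ to obtain
\[
M\otimes_B B' \;=\; (M\otimes_B (B\otimes_A A'))\otimes_{B\otimes_A A'} B'.
\]
Since $M\otimes_B(B\otimes_A A')$ is flat over $A'$ and $B'$ is flat as a $(B\otimes_A A')$-module, tensoring the first over $(B\otimes_A A')$ with the second preserves flatness over $A'$ (because the functor $\slot\otimes_{A'} (M\otimes_B B')$ factors as $\slot\otimes_{A'}(M\otimes_B(B\otimes_A A'))$ followed by $\slot\otimes_{B\otimes_A A'}B'$, each of which is exact). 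Hence $M\otimes_B B'$ is flat over $A'$, as desired.

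The argument is essentially formal; no step is a genuine obstacle. The only point requiring care is the verification that the two possible $A$-module (or $A'$-module) structures on the various tensor products agree, which follows immediately from commutativity of the given square, together with the clean bookkeeping of associativity isomorphisms for tensor products of bimodules.
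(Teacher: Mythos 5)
Your proof is correct and follows essentially the same route as the paper's: both directions reduce, via faithful flatness of $B \to B'$, to a re-bracketing of $(N\otimes_A M)\otimes_B B'$ as a tensor product against the flat module $M\otimes_B B'$ (the paper factors through $N\otimes_A A'$, while you absorb flatness of $A\to A'$ into flatness of $M\otimes_B B'$ over $A$ — an immaterial difference), and the converse via $M\otimes_A A' = M\otimes_B(B\otimes_A A')$ and flatness of $B\otimes_A A'\to B'$ is exactly the paper's argument.
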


\begin{proof} We want to check that $N \mapsto N \otimes_A M$ is an exact functor in $N \in \Mod(A)$.  Since $B \to B'$ is faithfully flat, it suffices to check that $N \mapsto (N \otimes_A M) \otimes_B B'$ is an exact functor.  But \be (N \otimes_A M) \otimes_B B' & = & (N \otimes_A A') \otimes_{A'} (M \otimes_B B') \ee is exact since $A \to A'$ is flat and $M \otimes_B B'$ is flat over $A'$.  Now suppose the natural map from $C := B \otimes_A A'$ to $B$ is flat and $M$ is flat over $A$; we want to show $M \otimes_B B'$ is flat over $A'$.  Stability of flatness under base change implies that the $C$-module $M' := M \otimes_A A'$ is flat over $A'$ and then $M \otimes_B B' = M' \otimes_C B'$ is flat over $A'$ since $C \to B'$ is flat. \end{proof}

\begin{lem} Let $f : X \to Y$ be a map of schemes, $M$ an $\O_X$-module.  The following are equivalent: \begin{enumerate} \item $M$ is flat over $Y$. \item For every geometric point $\ov{x}$ of $X$, the fppf stalk $M_{\ov{x}} := M_x \otimes_{\O_{X,x}} \O_{X,\ov{x}}^{fppf}$ is flat over $\O_{Y,\ov{y}}^{fppf}$. \end{enumerate} \end{lem}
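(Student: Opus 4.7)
The plan is to reduce everything to an application of Lemma~\ref{lem:flatness2} to the diagram \eqref{fppfstalks} at each Zariski point $x$ of $X$. Recall that flatness of $M$ over $Y$ is equivalent (Lemma~\ref{lem:flatnessdefinition}) to flatness of each Zariski stalk $M_x$ over $\O_{Y,y}$, where $y = f(x)$, so the whole statement can be checked one Zariski point of $X$ at a time. For any such $x$, every geometric point $\ov{x}$ of $X$ lying over $x$ induces the commutative square \eqref{fppfstalks}, and the discussion preceding that diagram records the two key facts we will need: the horizontal arrows $\O_{X,x} \to \O_{X,\ov{x}}^{\mathrm{fppf}}$ and $\O_{Y,y} \to \O_{Y,\ov{y}}^{\mathrm{fppf}}$ are faithfully flat, and the natural map $\O_{X,x} \otimes_{\O_{Y,y}} \O_{Y,\ov{y}}^{\mathrm{fppf}} \to \O_{X,\ov{x}}^{\mathrm{fppf}}$ is flat.

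For (1) $\Rightarrow$ (2), fix a geometric point $\ov{x}$ with Zariski image $x$ and corresponding image points $\ov{y}, y$ on $Y$. Apply the ``converse" half of Lemma~\ref{lem:flatness2} with $A = \O_{Y,y}$, $A' = \O_{Y,\ov{y}}^{\mathrm{fppf}}$, $B = \O_{X,x}$, $B' = \O_{X,\ov{x}}^{\mathrm{fppf}}$, and the $B$-module $M_x$: the hypothesis (1) gives that $M_x$ is flat over $A$, the vertical map $A \to A'$ is flat, and the induced map $B \otimes_A A' \to B'$ is flat by the remark just above, so the lemma concludes that $M_{\ov{x}} = M_x \otimes_B B'$ is flat over $A'$.

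For (2) $\Rightarrow$ (1), fix a Zariski point $x \in X$ and pick any geometric point $\ov{x}$ above it (e.g.\ the one corresponding to an algebraic closure of $k(x)$). Now apply the ``forward" half of Lemma~\ref{lem:flatness2} to the same square \eqref{fppfstalks} and the $B$-module $M_x$: we have $B \to B'$ faithfully flat and $A \to A'$ flat, and the hypothesis (2) says $M_x \otimes_B B' = M_{\ov{x}}$ is flat over $A'$, so the lemma yields flatness of $M_x$ over $A = \O_{Y,y}$. Since $x$ was arbitrary, this is precisely flatness of $M$ over $Y$.

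The only mild subtlety is to confirm the flatness of the comparison map $\O_{X,x} \otimes_{\O_{Y,y}} \O_{Y,\ov{y}}^{\mathrm{fppf}} \to \O_{X,\ov{x}}^{\mathrm{fppf}}$ used in the converse direction; this is exactly the flatness statement recorded in the paragraph containing \eqref{fppfstalks}, and one can see it directly by writing $\O_{Y,\ov{y}}^{\mathrm{fppf}}$ as a filtered colimit of local rings of fppf neighborhoods of $y$ in $Y$ and then base changing to $X$ (filtered colimits of flat modules are flat). Once that is in hand, the proof is essentially a formal unwinding of Lemma~\ref{lem:flatness2}; there is no serious obstacle.
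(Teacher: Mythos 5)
Your proposal is correct and follows exactly the paper's own argument: the paper's proof is literally "apply Lemma~\ref{lem:flatness2} to the diagram \eqref{fppfstalks}," and you have simply spelled out the pointwise reduction via Lemma~\ref{lem:flatnessdefinition} and the two halves of Lemma~\ref{lem:flatness2}, using the faithful flatness of the fppf-stalk maps and the flatness of the comparison map recorded just before \eqref{fppfstalks}. No gaps.
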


\begin{proof} Apply Lemma~\ref{lem:flatness2} to the diagram \eqref{fppfstalks}. \end{proof}

If $\ov{x}$ is a geometric point of an algebraic stack $X$, then one can define the fppf local ring $\O_{X,\ov{x}}^{fppf}$ of $X$ at $\ov{x}$ by using a fixed fppf cover $X' \to X$ and a chosen lift $\ov{x}'$ of $\ov{x}$ to $X'$; one then checks easily that $\O_{X,\ov{x}}^{fppf} := \O_{X',\ov{x}'}^{fppf}$ does not depend on these choices.  One can then define flatness using the second criterion in the lemma above.  It is straightforward to see that this notion of flatness coincides with the one from \S\ref{section:flatnessoverstacks}.

\subsection{Flatness and \'etale maps}  The main result of this section (Lemma~\ref{lem:etaleflatness}) can also be found in the \emph{Stacks Project} \cite[Lemma~34.3.3]{SP}\footnote{The hypothesis ``quasi-coherent" in the statement there is not necessary, as is made clear in Lemma~34.3.4.}.  The proof given here is different and (I think) simpler.

\begin{lem} \label{lem:etaleflatness1} Let $A \to B$ be a ring homomorphism.  Let $C := B \otimes_A B$.  Regard $B$ as a $C$-algebra via the ring homomorphism $m : C \to B$ given by $b_1 \otimes b_2 \mapsto b_1b_2$.  For any $B$-modules $M,N$, we have an isomorphism of $B$-modules \be M \otimes_B N & \to & (M \otimes_A N) \otimes_C B \\ m \otimes n & \mapsto & m \otimes n \otimes 1 \ee natural in $M,N$. \end{lem}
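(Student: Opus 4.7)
The plan is to exhibit maps in both directions and check they are mutually inverse. The key observation is that $M \otimes_A N$ carries a natural $C = B \otimes_A B$-module structure via $(b_1 \otimes b_2)(m \otimes n) := b_1 m \otimes b_2 n$, and tensoring over $C$ with $B$ (via the multiplication map $m$) has the effect of identifying these two $B$-actions. Identifying the two actions is also precisely what turns the $A$-tensor product $M \otimes_A N$ into the $B$-tensor product $M \otimes_B N$.

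First I would construct the forward map $\varphi : M \otimes_B N \to (M \otimes_A N) \otimes_C B$ by checking that the map of sets $M \times N \to (M \otimes_A N) \otimes_C B$, $(m,n) \mapsto m \otimes n \otimes 1$, is $B$-balanced: for $b \in B$, both $bm \otimes n \otimes 1$ and $m \otimes bn \otimes 1$ rewrite, using the $C$-action on the first factor and the $C$-linearity of $\slot \otimes_C B$, as $(m \otimes n) \otimes b$. This yields the well-defined $\varphi$ (which is even $B$-linear with respect to either of the coinciding $B$-actions on the target).

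Next I would construct the inverse $\psi : (M \otimes_A N) \otimes_C B \to M \otimes_B N$ starting from the $A$-bilinear map $M \times N \to M \otimes_B N$, $(m,n) \mapsto m \otimes_B n$, which induces an $A$-linear map $M \otimes_A N \to M \otimes_B N$. Combined with the $B$-action $b \cdot (x \otimes_B y) := bx \otimes_B y$, this gives a map $(M \otimes_A N) \times B \to M \otimes_B N$, $(m \otimes n, b) \mapsto bm \otimes_B n$. I would check this is $C$-balanced: $((b_1 \otimes b_2)(m \otimes n), b) \mapsto b b_1 m \otimes_B b_2 n = b b_1 b_2 (m \otimes_B n)$, while $(m \otimes n, m(b_1 \otimes b_2) b) = (m \otimes n, b_1 b_2 b) \mapsto b_1 b_2 b (m \otimes_B n)$; these agree. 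This produces the $B$-linear $\psi$.

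The composites $\psi\varphi$ and $\varphi\psi$ are the identity on elementary tensors (e.g. $\psi\varphi(m \otimes_B n) = \psi(m \otimes n \otimes 1) = m \otimes_B n$, and $\varphi\psi((m \otimes n) \otimes b) = \varphi(bm \otimes_B n) = bm \otimes n \otimes 1 = (m \otimes n) \otimes b$), and naturality in $M$ and $N$ is immediate from the construction. The only even mildly subtle step is checking $C$-balancedness for $\psi$; everything else is direct verification from the universal properties, so there is no substantive obstacle.
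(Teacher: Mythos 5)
Your proposal is correct and follows the same route as the paper: the paper's proof simply exhibits the inverse $m \otimes n \otimes b \mapsto bm \otimes n$ (your $\psi$) and remarks that well-definedness is a straightforward bilinearity check, which is exactly the verification you carry out in detail. Nothing is missing; you have merely written out the balancedness checks the paper leaves to the reader.
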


\begin{proof} The inverse of the map in question is given by $m \otimes n \otimes b \mapsto bm \otimes n$.  The only issue is to check that these maps are well-defined (check bilinearity), which is straightforward. \end{proof}

\begin{lem} \label{lem:etaleflatness2} Suppose $$ \xym{ A \ar[r]^f  \ar[d]_f & B \ar[d] \ar@/^1pc/[rdd]^{=} \\ B \ar[r] \ar@/_1pc/[rrd]_{=} & D \ar[rd] \\ & & B}$$ is a commutative diagram of flat ring homomorphisms and the induced map $C := B \otimes_A B \to D$ is also flat.  Then a $B$-module $M$ is flat iff it is flat as an $A$-module. \end{lem}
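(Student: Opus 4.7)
The plan is to split into the two implications. One direction is immediate: if $M$ is flat over $B$ and $A \to B$ is flat, then $M$ is flat over $A$ by composition of flat restrictions. For the converse I want to show that $N \mapsto M \otimes_B N$ is exact on $B$-modules, and I will use Lemma~\ref{lem:etaleflatness1} to rewrite this functor in a form where exactness is visible from the $A$-flatness of $M$.

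The key preliminary step is to extract from the diagram the flatness of the multiplication $m : C = B \otimes_A B \to B$. Let $i_1, i_2 : B \to D$ denote the two structure maps of $D$ as a $B$-algebra and let $r : D \to B$ denote the given retraction (so $r \circ i_1 = r \circ i_2 = \mathrm{id}_B$). The induced map $C \to D$ sends $b_1 \otimes b_2$ to $i_1(b_1) \cdot i_2(b_2)$, so $r$ applied to this product equals $b_1 b_2 = m(b_1 \otimes b_2)$. Thus $m$ factors as $C \to D \to B$, and since both factors are flat by hypothesis, $m : C \to B$ is flat. In other words, $B$ is flat as a $C$-module via the multiplication.

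Now let $M$ be a $B$-module that is flat over $A$, and let $0 \to N' \to N \to N'' \to 0$ be a short exact sequence of $B$-modules. Lemma~\ref{lem:etaleflatness1} supplies a natural identification $M \otimes_B N \cong (M \otimes_A N) \otimes_C B$, so it suffices to see that $N \mapsto (M \otimes_A N) \otimes_C B$ is exact. But tensoring the given sequence (viewed in $\Mod(A)$) with $M$ over $A$ remains exact because $M$ is $A$-flat, giving a short exact sequence of $C$-modules; further tensoring with $B$ over $C$ via $m$ remains exact by the flatness of $B$ over $C$ established above. Hence $M$ is $B$-flat, completing the proof. The only real obstacle is the observation that $m$ is flat, which is the whole content of the hypotheses on $D$; everything else is formal manipulation of the tensor-product identity in Lemma~\ref{lem:etaleflatness1}.
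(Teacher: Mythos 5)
Your proof is correct and follows essentially the same route as the paper: both directions are handled identically, with the key step being that the multiplication $m : C \to B$ factors as $C \to D \to B$ (hence is flat), after which Lemma~\ref{lem:etaleflatness1} rewrites $M \otimes_B \slot$ as the composition of the exact functors $M \otimes_A \slot$ and $\slot \otimes_C B$. Your explicit verification that $r$ composed with $C \to D$ equals $m$ just spells out what the paper summarizes as "the hypotheses ensure" the factorization.
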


\begin{proof} The ``only if" is standard and relies only on the flatness of $f$, so the issue is to prove $M$ is a flat $B$-module when it is flat as an $A$-module.  The hypotheses ensure that the multiplication map $m : C \to B$ factors as a composition $C \to D \to B$ of flat maps, hence is flat.  But then Lemma~\ref{lem:etaleflatness1} shows that the functor $M \otimes_ B \slot$ can be written as a composition of the functors $ M \otimes_A \slot$ and $\slot \otimes_C B$, both of which are exact since $M$ is flat over $A$ and $B$ is flat over $C$. \end{proof}

\begin{lem} \label{lem:etaleflatness} Let $X \to Y \to Z$ be maps of schemes with $Y \to Z$ \'etale.  Then an $\O_X$-module $F$ is flat over $Y$ iff it is flat over $Z$. \end{lem}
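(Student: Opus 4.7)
The plan is to reduce the statement to Lemma~\ref{lem:etaleflatness2} applied at stalks. Flatness of $F$ over $Y$ (respectively $Z$) can be checked at the stalk at each $x \in X$ by Lemma~\ref{lem:flatnessdefinition}\eqref{exactC}, so it suffices to show: for each $x \in X$ with images $y \in Y$ and $z \in Z$, the stalk $M := F_x$ is flat over $A := \O_{Z,z}$ if and only if it is flat over $B := \O_{Y,y}$, where $M$ is viewed as a $B$- or $A$-module by restriction of scalars along the structure maps of $X \to Y \to Z$. The direction ``flat over $Z$ implies flat over $Y$'' is standard and uses only that $A \to B$ is flat, so the real content is the other direction.

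I would apply Lemma~\ref{lem:etaleflatness2} with $D := B$ and both arrows $B \to D$ equal to the identity, so that the ``retraction'' $D \to B$ is also the identity and the whole diagram is trivial. The only hypotheses then requiring verification are that $A \to B$ is flat, and that the induced multiplication map $m : C = B \otimes_A B \to B$ is flat. The first is immediate since étale morphisms are flat, and this property descends to the induced local map $A \to B$.

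The only substantive point is flatness of $m$, and I would argue this by a local-to-global reduction: choose an affine open $\Spec A' \subseteq Z$ containing $z$ and an affine étale neighborhood of $y$ in $Y$ of the form $\Spec B' \to \Spec A'$ with $A' \to B'$ étale of finite presentation, so that $A = A'_{\mathfrak{p}}$ and $B = B'_{\mathfrak{q}}$ for appropriate primes. The classical fact that the diagonal of an étale morphism is an open immersion translates into flatness (in fact étaleness) of the multiplication $m' : B' \otimes_{A'} B' \to B'$. Now $B \otimes_A B$ is a localization of $B' \otimes_{A'} B'$ at a compatible multiplicative subset, $B$ is the compatible localization of $B'$, and $m$ is the induced map between these localizations; since localizations of flat maps are flat, $m$ is flat. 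The main (mild) obstacle is the careful bookkeeping with the essentially étale local map $A \to B$ when deducing flatness of $m$ from the geometric statement on $Y \to Z$; once that is in hand, Lemma~\ref{lem:etaleflatness2} delivers the conclusion formally.
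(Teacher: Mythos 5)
Your proof is correct and is essentially the paper's argument: reduce to stalks via Lemma~\ref{lem:flatnessdefinition}, feed the local rings $A = \O_{Z,z}$, $B = \O_{Y,y}$ into Lemma~\ref{lem:etaleflatness2} (hence ultimately Lemma~\ref{lem:etaleflatness1}), and observe that everything hinges on flatness of the multiplication map $B \otimes_A B \to B$. The only real difference is how that flatness is obtained: the paper keeps the auxiliary ring $D = \O_{Y \times_Z Y,\, \Delta(y)}$ and factors the multiplication as a localization $B \otimes_A B \to D$ followed by the flat map $\Delta_y : D \to B$ (flat because the diagonal, being a morphism of \'etale $Y$-schemes, is \'etale), whereas you take $D = B$ and prove flatness of the multiplication directly, quoting that the diagonal of an \'etale (unramified) morphism is an open immersion on finitely presented affine models $A' \to B'$ and then localizing; both versions use the same two ingredients (flatness/openness of the diagonal plus a localization step), so this is a repackaging rather than a different route. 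One cosmetic slip: you label ``flat over $Z$ implies flat over $Y$'' as the standard direction, but that is in fact the nontrivial implication --- the easy one is ``flat over $Y$ implies flat over $Z$,'' by transitivity of flatness along the flat map $A \to B$. This does not damage your argument, since Lemma~\ref{lem:etaleflatness2} delivers the equivalence in both directions and the work you do (flatness of $m$) is exactly what the hard direction requires, but the sentence should be corrected.
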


\begin{proof} The ``only if" is standard and relies only on the fact that the \'etale map $Y \to Z$ is flat.  For the ``if," first note that the diagonal map $\Delta : Y \to Y \times_Z Y =: W$ is \'etale because it is a map of \'etale $Y$-schemes [SGA1 I.4.8].  Let $x$ be a point of $X$, $y$ (resp.\ $z$) its image in $Y$ (resp.\ $Z$).  We want to show $F_x$ is a flat $\O_{Y,y}$ module assuming it is a flat $\O_{Z,z}$-module.  To do this, we just note that the commutative diagram of rings $$ \xym{ \O_{Z,y} \ar[r] \ar[d] & \O_{Y,y} \ar[d] \ar@/^1pc/[rdd]^{=} \\ \O_{Y,y} \ar@/_1pc/[rrd]_{=} \ar[r] & \O_{W,\Delta(y)} \ar[rd]^{\Delta_y} \\ & & \O_{Y,y} } $$ satisfies the hypotheses of Lemma~\ref{lem:etaleflatness2}: the map $\Delta_y$ is flat because $\Delta$ is \'etale, hence flat, and the map $$ \O_{Y,y} \otimes_{\O_{Z,z}} \O_{Y,y} \to \O_{W,\Delta(y)} $$ is flat because it is a localization by basic structure theory of inverse limits of schemes.  \end{proof}

\begin{rem} Taking $Z = \Spec \CC$, $Y=\AA^1_{\CC}$ (resp.\ the first infinitesimal neighborhood of the origin in $\AA^1_{\CC}$) $X=\Spec \CC$ regarded as the origin in $Y$, $F = \O_X$ we see that ``\'etale" cannot be weakend to ``smooth" (resp.\ ``finite flat") in Lemma~\ref{lem:etaleflatness} even when the maps are maps of finite-type $\CC$-schemes. \end{rem}

\subsection{Fiberwise flatness criteria}  For the sake of clarity we here recall the ``crit\`ere de platitude par fibres," refering mostly to \cite[7.121-122]{SP} for proofs.  The statement we want (Lemma~\ref{lem:fiberwiseflatness}) is basically \cite[7.122.8]{SP}, but the variant here is not explicitly stated there, so we will sketch the proof. 

\begin{lem} \label{lem:Matsumuraflatness} Let $(A,\m,k) \to (B,\n,l)$ be a local map of local rings.  Assume $B$ is noetherian and $M$ is finitely generated as a $B$-module.  Then $M$ is flat over $A$ iff $\Tor_1^A(M,k)=0$. \end{lem}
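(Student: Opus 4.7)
The ``only if'' direction is immediate from the definition of flatness: if $M$ is $A$-flat then $\Tor_i^A(M,N) = 0$ for every $A$-module $N$ and every $i \geq 1$.

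For the ``if'' direction, the plan is to execute the classical Local Criterion of Flatness. First I would show by induction on $n \geq 1$ that $\Tor_1^A(M, A/\m^n) = 0$. The case $n=1$ is the hypothesis. For the inductive step, consider the short exact sequence of $A$-modules
$$0 \to \m^n/\m^{n+1} \to A/\m^{n+1} \to A/\m^n \to 0.$$
The module $\m^n/\m^{n+1}$ is annihilated by $\m$, hence is a (possibly infinite) $k$-vector space, i.e.\ a direct sum of copies of $k$. Since $\Tor$ commutes with arbitrary direct sums in each variable, the hypothesis $\Tor_1^A(M,k) = 0$ gives $\Tor_1^A(M, \m^n/\m^{n+1}) = 0$, and the long exact sequence then forces $\Tor_1^A(M, A/\m^{n+1}) = 0$.

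Second, I would promote this to the vanishing $\Tor_1^A(M, N) = 0$ for every $A$-module $N$ of finite length (every composition factor is $k$), and then observe that this is equivalent to $M/\m^n M$ being flat over $A/\m^n$ for every $n$. Finally, to prove $M$ is flat over $A$, it suffices to show $M \otimes_A I \to M$ is injective for every finitely generated ideal $I \subseteq A$, since every ideal is a filtered colimit of its finitely generated sub-ideals and $\Tor$ commutes with filtered colimits. To this end I would use that $M$ is $\m$-adically ideal separated in a strong sense: for any finitely generated ideal $J \subseteq A$, the $B$-module $M \otimes_A J$ is a quotient of a finite direct sum of copies of $M$ hence is finitely generated over the noetherian ring $B$, so Krull's intersection theorem applied in $B$ (using $\m B \subseteq \n$) gives $\bigcap_n \m^n (M \otimes_A J) = 0$. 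Combining this separation property with the vanishing of $\Tor_1^A(M, A/\m^n)$ via the standard diagram chase on the long exact sequences associated to $0 \to I \to A \to A/I \to 0$ and $0 \to (I + \m^n)/I \to A/I \to A/(I+\m^n) \to 0$ yields the injectivity of $M \otimes_A I \to M$.

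The main obstacle, as usual in the local criterion, is the final step: bridging from vanishing against the ``finite-length approximations'' $A/\m^n$ to vanishing against an arbitrary finitely generated $A/I$. It is precisely here that the noetherian hypothesis on $B$ (rather than on $A$) enters, through the Krull intersection property of finitely generated $B$-modules, supplying the separation needed to pass from the successive quotients $M \otimes_A A/(I + \m^n)$ back to $M \otimes_A A/I$.
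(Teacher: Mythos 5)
Your opening moves are fine: the d\'evissage giving $\Tor_1^A(M,\m^n/\m^{n+1})=0$ and hence $\Tor_1^A(M,A/\m^n)=0$, the reduction of flatness to injectivity of $I\otimes_A M\to M$ for finitely generated ideals $I$, and the observation that $I\otimes_A M$ is a finitely generated $B$-module, so that $\bigcap_n \m^n(I\otimes_A M)\subseteq\bigcap_n\n^n(I\otimes_A M)=0$ by Krull intersection in $B$. That last observation is exactly the ``idealwise separated'' remark in the paper, whose proof otherwise just cites the local criterion of flatness (Matsumura 20.C Theorem 49, criterion (3'), or the Stacks Project). The gap is in your final ``standard diagram chase.'' What the chase actually gives, for each $n$, is that an element $\xi$ of $\ker(I\otimes_A M\to M)$ dies in $((I+\m^n)/\m^n)\otimes_A M$ (this module injects into $M/\m^n M$ because $\Tor_1^A(M,A/(I+\m^n))=0$), i.e.\ $\xi$ lies in the image of $(I\cap\m^n)\otimes_A M\to I\otimes_A M$. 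Your separation statement concerns a different filtration, namely $\m^n(I\otimes_A M)$; to pass from one to the other you need $I\cap\m^n\subseteq\m^{n-c}I$ for some constant $c$, i.e.\ the Artin--Rees lemma in $A$, and that requires $A$ to be noetherian --- a hypothesis the statement does not give you. Noetherianness of $B$ supplies separation but cannot substitute for Artin--Rees in $A$. (A smaller repair: when $A$ is non-noetherian, $A/(I+\m^n)$ need not have finite length, so the d\'evissage should run over modules killed by a power of $\m$ rather than over finite-length modules; that part is easily fixed.)

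Moreover, no chase can close this gap at the stated level of generality, because with only $B$ noetherian the statement is false. Take $A$ a rank-one non-discrete valuation ring, so $\m^2=\m$, and $B=M=k=A/\m$. Then $A\to B$ is local, $B$ is noetherian, $M$ is finite over $B$, and $\Tor_1^A(M,k)\cong\m/\m^2=0$, yet $k$ is not flat over $A$ (it is a nonzero torsion module over a domain: $\Tor_1^A(k,A/tA)\cong k\neq 0$ for any $0\neq t\in\m$). So the lemma really needs $A$ noetherian as well --- which is precisely what the references the paper cites do assume, and which holds in the paper's only application, since the fiberwise criterion reduces to the case where all the rings are noetherian. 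Once ``$A$ noetherian'' is added, your argument is correct and is essentially the standard proof of the cited criterion: finish the chase with Artin--Rees exactly as indicated above.
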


\begin{proof} This is \cite[7.94.7]{SP}---or apply \cite[20.C Theorem~49]{Mat}, using the criterion (3').  Note that $M$, as an $A$-module, is ``idealwise separated" for $\m$, as discussed in \cite[Page 145, Example~1]{Mat}.\footnote{Although this is clearly the main intended use of \cite[20.C]{Mat}, this is never quite made as explicit as one might expect.} \end{proof}

In what follows we need to make some ``noetherian approximation" arguments.  Let $\C$ denote the category whose objects are pairs $(A \to B,M)$ where $A \to B$ is a local map of local rings and $M$ is a $B$-module.  A $\C$-morphism from $(A \to B,M)$ to $(A' \to B',M')$ is a commutative square of local maps of local rings $$ \xym{ A \ar[r] \ar[d] & B \ar[d] \\ A' \ar[r] & B' } $$ together with a $B$-module map $M \to M'$ (regarding $M'$ as a $B$-module via the ring map $B \to B'$).

\begin{lem} \label{lem:noetherianapproximation} Let $A \to B$ be a local homomorphism of local rings, $M$ a finitely presented $B$-module.  Assume $B$ is of essentially finite presentation over $A$.  Then there exists a filtered partially ordered set $\Lambda$ and a functor $(A_\lambda \to B_{\lambda},M_\lambda)$ from $\Lambda$ to $\C$ with the following properties: \begin{enumerate} \item The direct limit of $(A_\lambda \to B_{\lambda},M_\lambda)$ is $(A \to B,M)$. \item For each $\lambda \in \Lambda$, the local rings $A_\lambda$ and $B_\lambda$ are noetherian.\footnote{One can even arrange that $A_\lambda$ is essentially of finite type over $\ZZ$ and $B_\lambda$ is essentially of finite type over $A_\lambda$, but these extra hypotheses are not necessary for anything that follows.}  \item Each $M_{\lambda}$ is a finitely generated $B_\lambda$-module. \item For each $\lambda \leq \mu$, the map $A_\lambda \otimes_{B_\lambda} A_\mu \to B_{\mu}$ presents $B_\mu$ as the localization of $A_\lambda \otimes_{B_\lambda} A_\mu$ at a prime ideal. \item For each $\lambda \leq \mu$, the map $M_\lambda \otimes_{B_\lambda} B_\mu \to M_\mu$ is an isomorphism. \end{enumerate}  Furthermore, if $M$ is flat over $A$ then we can also arrange that this direct limit system has the property: \begin{enumerate} \setcounter{enumi}{5} \item $M_\lambda$ is flat over $A_\lambda$ for each $\lambda \in \Lambda$.\footnote{In fact, when $M$ is flat over $A$, in \emph{any} direct limit system satisfying the first five properties, $M_\lambda$ will be flat over $A_\lambda$ for all sufficiently large $\lambda$.} \end{enumerate} \end{lem}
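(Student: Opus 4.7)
The plan is to build $\Lambda$ by the usual noetherian approximation, starting from $A$ and then propagating to $B$ and $M$. Since $B$ is essentially of finite presentation over $A$, I can fix once and for all a presentation $B = (A[x_1,\dots,x_n]/(f_1,\dots,f_m))_\mathfrak{P}$ where $\mathfrak{P} \subseteq A[x_1,\dots,x_n]/(f_1,\dots,f_m)$ is a prime whose contraction to $A$ is $\mathfrak{m}$, and a presentation $M = \operatorname{coker}(B^s \to B^t)$ determined by a matrix with finitely many entries $b_{ij} \in B$. Each $b_{ij}$ can be written as a fraction $c_{ij}/d_{ij}$ of elements of $A[x_1,\dots,x_n]/(f_1,\dots,f_m)$ with $d_{ij} \notin \mathfrak{P}$. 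All told, only finitely many elements of $A$ are needed to write down this entire collection of data.

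First I would let $\Lambda$ be the filtered system of pairs $(A', \mathfrak{p}')$ consisting of a finitely generated $\mathbb{Z}$-subalgebra $A' \subseteq A$ containing all these finitely many coefficients, together with the contraction $\mathfrak{p}' := \mathfrak{m} \cap A'$; set $A_\lambda := A'_{\mathfrak{p}'}$, which is noetherian because $A'$ is. Then $A = \varinjlim A_\lambda$ since $A$ is the filtered union of such $A'$ and localization commutes with filtered colimits. Now define $B_\lambda := (A_\lambda[x_1,\dots,x_n]/(f_1,\dots,f_m))_{\mathfrak{P}_\lambda}$, where $\mathfrak{P}_\lambda$ is the preimage of $\mathfrak{P}$; this is noetherian (essentially of finite type over $A_\lambda$), and the matrix of $b_{ij}$'s defines a finitely generated $B_\lambda$-module $M_\lambda := \operatorname{coker}(B_\lambda^s \to B_\lambda^t)$. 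Properties (1)--(3) are immediate, while (4) and (5) follow because localization, quotients by finitely generated ideals, and coker of maps between finite free modules all commute with filtered colimits and are preserved by the transition maps.

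For the flatness statement, I would argue as follows. Since $M$ is a filtered colimit of the $M_\lambda$ along flat base change (property 5 plus the fact that, by property 4, $B_\mu$ is a localization of $B_\lambda \otimes_{A_\lambda} A_\mu$), it suffices to prove the following lemma: given a filtered system $(A_\lambda \to B_\lambda, M_\lambda)$ satisfying (1)--(5) with each $M_\lambda$ finitely presented, if the colimit module $M$ is flat over $A$, then $M_\lambda$ is flat over $A_\lambda$ for all sufficiently large $\lambda$. By Lazard's criterion, a finitely presented module over any ring is flat iff it admits a factorization of every homomorphism from a finite free module through a finite free module with prescribed vanishing property for each syzygy; because $M_\lambda$ has only finitely many syzygies (as it is finitely presented over noetherian $B_\lambda$), such a factorization in $M$ lifts to one in $M_\mu$ for some $\mu \geq \lambda$, and this forces flatness of $M_\mu$ over $A_\mu$. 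Alternatively, and perhaps more cleanly, I can reduce to the case where the local rings are noetherian and use Lemma~\ref{lem:Matsumuraflatness}: flatness of a finitely generated module over a noetherian local ring is detected by a single Tor-vanishing $\operatorname{Tor}_1^{A_\lambda}(M_\lambda, k_\lambda) = 0$, and this Tor group is a finite module whose formation commutes (after going far enough out in the system) with the colimit.

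The main obstacle will be the flatness step. The purely formal noetherian approximation of $(A,B,M)$ is routine, but transferring flatness \emph{back} from the colimit to a finite stage is the delicate point. One has to be careful because ``flatness is preserved by filtered colimits," which is what we get for free, is the wrong direction; what we need is descent of flatness along the system, which requires either an application of Lazard (together with finite presentation) or a Tor-vanishing argument combined with the observation that for a finitely presented $M_\lambda$, the group $\operatorname{Tor}_1^{A_\lambda}(M_\lambda,k_\lambda)$ is finitely generated and its image in $\operatorname{Tor}_1^{A}(M,k) = 0$ can only vanish eventually if we work in a suitable compatible fashion with residue field extensions.
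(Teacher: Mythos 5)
The constructive half of your argument is fine: fixing a presentation $B=(A[x_1,\dots,x_n]/(f_1,\dots,f_m))_{\mathfrak{P}}$ and a finite presentation matrix for $M$, and running over the finitely generated $\ZZ$-subalgebras $A'\subseteq A$ (localized at $\mathfrak{m}\cap A'$) that contain the finitely many coefficients involved, does produce a system satisfying (1)--(5); this is essentially the content of the Stacks Project lemma the paper cites for this part, and the paper itself offers nothing beyond that citation. The problem is the flatness step, which is exactly where the real content of the lemma lies, and neither of your two proposed arguments establishes it.

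Your Lazard/equational-criterion argument conflates finiteness over $B_\lambda$ with finiteness over $A_\lambda$: the flatness in question is over $A_\lambda$, and $M_\lambda$ is in general not finitely generated, let alone finitely presented, as an $A_\lambda$-module, so there is no finite list of ``syzygies over $A_\lambda$'' to trivialize. Lifting the trivialization of a single $A$-linear relation to a finite stage only handles relations pulled back from $A$; at stage $\mu$ there are $A_\mu$-linear relations in $M_\mu$ that do not come from $A$, so the conclusion ``this forces flatness of $M_\mu$ over $A_\mu$'' is unjustified. Your Tor argument has the same defect in disguise: one does have $\Tor_1^A(M,k)=\varinjlim_\lambda \Tor_1^{A_\lambda}(M_\lambda,k_\lambda)$, and finite generation of $\Tor_1^{A_\lambda}(M_\lambda,k_\lambda)$ over $B_\lambda$ shows that for each $\lambda$ there is $\mu\geq\lambda$ killing the \emph{image} of this group in $\Tor_1^{A_\mu}(M_\mu,k_\mu)$; but since the base changes $A_\lambda\to A_\mu$ are not flat, the comparison maps are not surjective, new Tor classes appear at every stage, and vanishing of the colimit does not give vanishing at any finite stage. (Vanishing of the colimit is automatic from flatness of $M$ and is the trivial direction.) What you actually need is the nontrivial theorem that flatness descends to a finite stage in a filtered system of essentially-finitely-presented local situations --- EGA IV 11.2.6, or the Stacks Project result the paper cites --- whose proof proceeds by a genuinely different route (d\'evissage over a noetherian base together with the local flatness criterion, not a formal colimit manipulation). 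As written, your proposal either has to quote that theorem, in which case it reduces to the paper's citation, or supply its proof, which it does not.
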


\begin{proof} See \cite[7.121.11]{SP} and \cite[7.122.3]{SP}. \end{proof}

\begin{lem} \label{lem:fiberwiseflatness} Let $A \to B \to C$ be local map of local rings, $M$ a $C$-module.  Suppose $M$ is flat over $A$ and assume at least one of the following holds: \begin{enumerate} \item $B$ and $C$ are noetherian and $M$ is finitely generated as a $C$-module.  \item $A \to B$ and $A \to C$ are of essentially finite presentation and $M$ is finitely presented as a $C$-module.  \end{enumerate} Let $k$ be the residue field of $A$.  Then $M$ is flat over $B$ iff $M \otimes_A k$ is flat over $B \otimes_A k$. \end{lem}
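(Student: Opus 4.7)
\emph{Plan.} The ``only if'' direction is immediate from base change: if $M$ is $B$-flat then $\ov M := M\otimes_A k = M\otimes_B\ov B$ is $\ov B$-flat, where $\ov B:=B\otimes_A k$. For the converse my plan splits into three stages: reduce to the noetherian case (1); apply the local criterion for flatness to reduce to a single $\Tor$-vanishing; and extract that vanishing by transferring the analogous fiberwise vanishing with the help of $A$-flatness of $M$.

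For case (2), I will apply Lemma~\ref{lem:noetherianapproximation} to write $(A\to B\to C, M)$ as a filtered colimit of noetherian data $(A_\lambda\to B_\lambda\to C_\lambda, M_\lambda)$ with each $M_\lambda$ finitely generated over $C_\lambda$ and flat over $A_\lambda$. Since the transition maps are local, $k=\dirlim k_\lambda$, whence $\ov B=\dirlim\ov B_\lambda$ and $\ov M=\dirlim\ov M_\lambda$. By standard descent of flatness for finitely presented modules along colimits of rings (e.g.\ \cite[7.122.3]{SP}), the hypothesized flatness of $\ov M$ over $\ov B$ propagates to flatness of $\ov M_\lambda$ over $\ov B_\lambda$ for all sufficiently large $\lambda$; granting case (1), each such $M_\lambda$ is $B_\lambda$-flat, so $M=\dirlim M_\lambda$ is $B$-flat.

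In case (1), a mild variant of Lemma~\ref{lem:Matsumuraflatness} (extended to $M$ only finitely generated over $C$ via the Artin--Rees/ideal-separatedness trick, since for any finitely generated ideal $J\subseteq B$ the module $J\otimes_B M$ is a finitely generated $C$-module and hence $\n_B$-adically separated) reduces $B$-flatness of $M$ to $\Tor_1^B(M,l)=0$, where $l$ is the residue field of $B$. Since $\m_A B\subseteq\n_B$, the field $l$ is naturally a $\ov B$-module, and the hypothesis that $\ov M$ is $\ov B$-flat gives $\Tor_1^{\ov B}(\ov M,l)=0$. I will relate the two $\Tor_1$'s via the $\Tor$-long exact sequences (over $B$ and over $\ov B$) associated to the short exact sequence $0\to\n_{\ov B}\to\ov B\to l\to 0$: using the identifications $M\otimes_B\ov B=\ov M$ and $M\otimes_B\n_{\ov B}=\ov M\otimes_{\ov B}\n_{\ov B}$, and granted the auxiliary vanishing $\Tor_1^B(M,\ov B)=0$, the $B$-sequence identifies $\Tor_1^B(M,l)$ with $\ker(\ov M\otimes_{\ov B}\n_{\ov B}\to\ov M)$, while the $\ov B$-sequence identifies $\Tor_1^{\ov B}(\ov M,l)$ with the same kernel. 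Hence $\Tor_1^B(M,l)=\Tor_1^{\ov B}(\ov M,l)=0$, as needed.

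The main obstacle is the auxiliary input $\Tor_1^B(M,\ov B)=0$. The SES $0\to\m_A B\to B\to\ov B\to 0$ identifies this $\Tor_1$ with the kernel of the multiplication $\m_A B\otimes_B M\to M$. The natural surjection $\m_A\otimes_A B\twoheadrightarrow\m_A B$ (whose kernel is the image of $\Tor_1^A(k,B)$) becomes, upon tensoring over $B$ with $M$, a surjection $\m_A\otimes_A M = \m_A\otimes_A B\otimes_B M\twoheadrightarrow\m_A B\otimes_B M$, and its further composition with $\m_A B\otimes_B M\to M$ is the multiplication $\m_A\otimes_A M\to M$---which is injective by $A$-flatness of $M$ (tensor the SES $0\to\m_A\to A\to k\to 0$ over $A$ with $M$). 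A composition of a surjection followed by a second map being injective forces both maps to be injective, so $\m_A B\otimes_B M\to M$ is injective and $\Tor_1^B(M,\ov B)=0$. The moral is that the discrepancy $\Tor_1^A(k,B)$ between $\m_A\otimes_A B$ and $\m_A B$---which could a priori contribute to $\Tor_1^B(M,\ov B)$---is absorbed by the $A$-flat module $M$.
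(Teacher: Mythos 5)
Your proposal is correct and takes essentially the same route as the paper: the ``only if'' by base change, case (2) reduced to case (1) by the noetherian-approximation system (with flatness descending to finite level via \cite[7.122.3]{SP}), and the noetherian case handled by the local criterion of Lemma~\ref{lem:Matsumuraflatness} plus transferring the $\Tor$-vanishing through $\ov{B}$ using $A$-flatness of $M$ --- your explicit long-exact-sequence computation (including the auxiliary vanishing $\Tor_1^B(M,\ov{B})=0$) simply unpacks the paper's compressed identity $N\otimes_B K=(N\otimes_A k)\otimes_{\ov{B}}K$. One small remark: no ``variant'' of Lemma~\ref{lem:Matsumuraflatness} is needed, since applying it to the local map $B\to C$ with $M$ finitely generated over the noetherian ring $C$ (your ideal-separatedness observation is exactly why that lemma's hypotheses hold) already yields the reduction to $\Tor_1^B(M,l)=0$, which is how the paper invokes it.
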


\begin{proof} The implication $\implies$ is clear from stability of flatness under base change because $M \otimes_A k = M \otimes_B (B \otimes_A k)$ and requires none of the finiteness hypotheses.  For the other implication under the noetherian assumption: Set $\ov{B} := B \otimes_A k$.  Let $K$ be the residue field of $B$.  By Lemma~\ref{lem:Matsumuraflatness} it suffices to show $\Tor_1^B(M,K)=0$.  Since $N \otimes_B K = (N \otimes_A k) \otimes_{\ov{B}} K$ for any $C$-module $N$, it suffices to establish the vanishings \be \Tor_1^A(M,k) & = & 0 \\ \Tor_1^{\ov{B}}(M \otimes_A k,K) & = & 0, \ee which are clear from the hypotheses.

Under the other finiteness hypotheses, we find (by a variant of Lemma~\ref{lem:noetherianapproximation} as in the proof of \cite[7.122.8]{SP}) a filtered poset $\Lambda$ and a direct limit system $$(A_\lambda \to B_\lambda \to C_\lambda, M_\lambda)$$ indexed by $\Lambda$ in a category analogous to $\C$ above satisfying: \begin{enumerate} \item The direct limit of $(A_\lambda \to B_\lambda \to C_\lambda, M_\lambda)$ is $(A \to B \to C,M)$.  \item The maps $A_\lambda \to B_\lambda \to C_\lambda$ are local maps of local noetherian rings.  \item The direct limit system $(A_{\lambda} \to C_{\lambda},M_\lambda)$ in $\C$ satisfies all six properties of Lemma~\ref{lem:noetherianapproximation}.  \item Letting $k_\lambda$ denote the residue field of $A_{\lambda}$, the direct limit system $$(B_\lambda \otimes_{A_\lambda} k_\lambda \to C_\lambda \otimes_{A_\lambda} k_\lambda, M_\lambda \otimes_{A_\lambda} k_\lambda)$$ in $\C$ also satisfies all six properties of Lemma~\ref{lem:noetherianapproximation}. \end{enumerate}  Then by the noetherian case that we just handled, each $M_\lambda$ is flat over $B_\lambda$, hence the filtered direct limit $M$ is flat over $B$. \end{proof}

\begin{thm} \label{thm:fiberwiseflatness} Let $X \to Y \to Z$ be maps of schemes, $M$ an $\O_X$-module.  Suppose $M$ is flat over $Z$.  Assume at least one of the following holds: \begin{enumerate} \item $X$ and $Y$ are locally noetherian and each stalk $M_x$ is a finitely generated $\O_{X,x}$-module. \item \label{w} $X \to Z$ and $Y \to Z$ are of locally finite presentation and $M$ is of locally finite presentation as an $\O_X$-module. \item \label{ww} For each $x \in X$ (with image $y \in Y$, $z \in Z$), the local maps of local rings $\O_{Z,z} \to \O_{X,x}$ and $\O_{Z,z} \to \O_{Y,y}$ are of essentially finite presentation and $M_x$ is finitely presented as an $\O_{X,x}$-module. \end{enumerate} Then $M$ is flat over $Y$ iff $M|_{X_z}$ is flat over $Y_z$ for each point $z \in Z$, or, equivalently, for each geometric point $\ov{z}$ of $Z$. \end{thm}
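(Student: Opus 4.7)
The plan is to reduce the global statement to the already-established local statement (Lemma~\ref{lem:fiberwiseflatness}) by checking flatness stalkwise, and then to translate the conclusion of the local lemma into the fiberwise language via base change. Flatness of $M$ over $Y$ can be checked at each $x \in X$, where it means that $M_x$ is flat as an $\O_{Y,y}$-module (with $y$ the image of $x$). Similarly, flatness of $M|X_z$ over $Y_z$ can be checked at each point $x_z \in X_z$, and any such point comes from a point $x \in X$ with image $z$. So the theorem reduces to showing that for every $x \in X$ (with images $y \in Y$, $z \in Z$), the $\O_{Y,y}$-module $M_x$ is flat iff the $\O_{Y,y} \otimes_{\O_{Z,z}} k(z)$-module $M_x \otimes_{\O_{Z,z}} k(z)$ is flat.

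This is precisely the conclusion of Lemma~\ref{lem:fiberwiseflatness} applied to the diagram $\O_{Z,z} \to \O_{Y,y} \to \O_{X,x}$ and the module $M_x$. The next step is to verify, for each of the three global finiteness hypotheses, that the corresponding local hypothesis of that lemma holds. Under (1), the local rings $\O_{X,x}$ and $\O_{Y,y}$ are noetherian and $M_x$ is finitely generated over $\O_{X,x}$. Under (2), maps of locally finite presentation induce essentially finitely presented maps on stalks, and $M_x$ is finitely presented because $M$ is locally finitely presented. Hypothesis (3) is literally the local hypothesis. In all three cases $M_x$ is flat over $\O_{Z,z}$ (being a stalk of a sheaf flat over $Z$), so Lemma~\ref{lem:fiberwiseflatness} applies verbatim.

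The final step is to identify the RHS of the local criterion with fiberwise flatness. Since $X_z = X \times_{\Spec \O_{Z,z}} \Spec k(z)$ and similarly for $Y_z$, we have $\O_{X_z, x_z} = \O_{X,x} \otimes_{\O_{Z,z}} k(z)$ and $\O_{Y_z,y_z} = \O_{Y,y} \otimes_{\O_{Z,z}} k(z)$, with $(M|X_z)_{x_z} = M_x \otimes_{\O_{Z,z}} k(z)$. So the conclusion $M_x \otimes_{\O_{Z,z}} k(z)$ is flat over $\O_{Y,y} \otimes_{\O_{Z,z}} k(z)$ becomes exactly the statement that $M|X_z$ is flat over $Y_z$ at $x_z$. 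Ranging over all $x \in X$ (equivalently all $z \in Z$ and all $x_z \in X_z$) gives the equivalence between flatness of $M$ over $Y$ and flatness of every $M|X_z$ over $Y_z$.

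For the equivalence with geometric points, note that for $\bar z = \Spec \bar k \to z$ with $\bar k$ an algebraic closure of $k(z)$, the induced map $Y_{\bar z} \to Y_z$ is faithfully flat, and the pullback of $M|X_z$ to $X_{\bar z}$ is $M|X_{\bar z}$. Applying Lemma~\ref{lem:flatness2} to the diagram of local rings at a point of $X_{\bar z}$ and its image in $X_z$ (the map $\O_{Y_z,y_z} \to \O_{Y_{\bar z}, y_{\bar z}}$ is faithfully flat, and the natural map $\O_{Y_z,y_z} \otimes_{\O_{X_z,x_z}} \O_{X_{\bar z}, x_{\bar z}} \to \O_{Y_{\bar z}, y_{\bar z}}$ is flat being essentially a localization of a base change) gives that $M|X_z$ is flat over $Y_z$ iff $M|X_{\bar z}$ is flat over $Y_{\bar z}$. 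The only mild obstacle I anticipate is keeping the finiteness hypotheses aligned when descending to stalks under case (2)—but the standard fact that loc.\ fin.\ pres.\ passes to essentially finitely presented on stalks handles this cleanly.
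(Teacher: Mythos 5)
Your proposal is correct and follows essentially the same route as the paper: reduce to stalks, note that the fiber local rings are $\O_{Y_z,y} = \O_{Y,y}\otimes_{\O_{Z,z}} k(z)$ and $(M|X_z)_x = M_x\otimes_{\O_{Z,z}} k(z)$, apply Lemma~\ref{lem:fiberwiseflatness} to $\O_{Z,z}\to\O_{Y,y}\to\O_{X,x}$ (checking in each case that the local hypotheses hold, with (2) implying (3) on stalks), and handle geometric points via faithful flatness of field extensions. No gaps.
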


\begin{proof} The implication $\implies$ is just stability of flatness under base change.  For the other implication, consider a point $x \in X$ with images $y,z$ in $Y,Z$.  We need to prove that $M_x$ is flat over $\O_{Y,y}$.  The fibers $X_z$ and $Y_z$ are the same as the ones calculated in ringed spaces so we have a pushout diagram of local maps of local rings $$ \xym{ \O_{Z,z} \ar[d] \ar[r] & \O_{Y,y} \ar[r] \ar[d] & \O_{X,x} \ar[d] \\ k(z) \ar[r] & \O_{Y_z,y} \ar[r] & \O_{X_z,x} } $$ and the assumption that $M|_{X_z}$ is flat over $Y_z$ implies that the stalk $(M|_{X_z})_x$ is flat over $\O_{Y_z,y}$---i.e.\ $M_x \otimes_{\O_{Z,z}} k(z)$ is flat over $\O_{Y,y} \otimes_{\O_{Z,z}} k(z)$.  The result then follows by applying Lemma~\ref{lem:fiberwiseflatness} to $\O_{Z,z} \to \O_{Y,y} \to \O_{X,x}$ and $M_x$  (note that the assumptions in \eqref{w} implies the ones in \eqref{ww}).  It is straightforward to replace ``points" with ``geometric points" because any field extension is faithfully flat. \end{proof}

Here is another variant:

\begin{thm} \label{thm:fiberwiseflatness2}  Let $f : X \to Y$, $h : Y \to Z$ be morphisms of schemes with composition $g : X \to Z$ and let $M$ be a quasi-coherent sheaf on $X$.  Assume that $g$, $h$, and $M$ are of locally finite presentation and that $h$ is flat.  Then for a point $x \in X$ with images $y := f(x)$, $z := g(x)$ the following conditions are equivalent: \begin{enumerate} \item $M$ is flat over $Z$ at $x$ and $M|X_z$ is flat over $Y_z$ at $x$. \item $M$ is flat over $Y$ at $x$. \end{enumerate}  The set $U$ of $x \in X$ satisfying these equivalent conditions is open in $X$.  Assume, furthermore, that $M$ is flat over $Z$ and $\Supp M$ is proper over $Z$.  Then $V := Z \setminus g(X \setminus U)$ is open in $Z$ and is the terminal object in the category of $Z$ schemes $Z'$ for which the pullback $M'$ of $M$ to $X' := X \times_Z Z'$ is flat over $Y' = Y \times_Z Z'$ \end{thm}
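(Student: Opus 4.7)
The plan is to derive the pointwise equivalence from the fibrewise flatness criterion (Lemma~\ref{lem:fiberwiseflatness}), invoke the classical openness of the flat locus for finitely presented data, and then use properness of $\Supp M$ together with faithfully flat descent of flatness along residue field extensions to establish both the openness of $V$ and its universal property. For the equivalence \textbf{(1)$\Leftrightarrow$(2)}, I would pass to the stalks and apply Lemma~\ref{lem:fiberwiseflatness} to the local maps $\O_{Z,z} \to \O_{Y,y} \to \O_{X,x}$ and the finitely presented $\O_{X,x}$-module $M_x$; the essentially finite presentation hypotheses on these two local maps are inherited from the local finite presentation of $g$ and $h$. For (1)$\Rightarrow$(2), $M_x$ is flat over $\O_{Z,z}$ by the first half of (1), so the lemma converts the fibrewise half of (1) directly into flatness over $\O_{Y,y}$; for (2)$\Rightarrow$(1), flatness of $M_x$ over $\O_{Y,y}$ composed with flatness of $\O_{Y,y}$ over $\O_{Z,z}$ (from $h$ flat) gives the first half of (1), and the lemma again supplies the fibrewise half. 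The openness of $U$ in $X$ is then nothing but the classical openness of the flat locus of a finitely presented $\O_X$-module over a finitely presented morphism, combined with the equivalence just proved.

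For the final statement, assume $M$ flat over $Z$ with $\Supp M$ proper over $Z$. At any $x \notin \Supp M$ the stalk $M_x$ vanishes, so $x \in U$; hence $X \setminus U \subseteq \Supp M$, and $X \setminus U$ is closed in the proper $Z$-scheme $\Supp M$. Its image $g(X \setminus U)$ is therefore closed in $Z$, and $V := Z \setminus g(X \setminus U)$ is open. Over $V$ every fibre $X_v$ is contained in $U$, so by the equivalence (together with the standing flatness of $M$ over $Z$) the restriction of $M$ to $X \times_Z V$ is flat over $Y \times_Z V$; this shows $V$ itself lies in the category under consideration.

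For the universal property, let $w : Z' \to Z$ be a $Z$-scheme for which the pullback $M'$ is flat over $Y'$; since $V \to Z$ is an open embedding, factoring through $V$ is the same as the image of $w$ landing in $V$. Fix $z' \in Z'$ with image $z \in Z$. By base change, $g'$, $h'$, and $M'$ remain of locally finite presentation, $h'$ is flat, and $M'$ is flat over $Z'$; applying the equivalence (1)$\Leftrightarrow$(2) to the triple $X' \to Y' \to Z'$ and using flatness of $M'$ over $Y'$ forces $M'|X'_{z'}$ to be flat over $Y'_{z'}$. The key step is to descend this statement to show $M|X_z$ is flat over $Y_z$: since $\kappa(z) \to \kappa(z')$ is faithfully flat, the map $Y'_{z'} \to Y_z$ is a faithfully flat cover, and faithfully flat descent of flatness along it converts flatness of $M'|X'_{z'}$ over $Y'_{z'}$ into flatness of $M|X_z$ over $Y_z$. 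Combining this with the standing flatness of $M$ over $Z$ and applying the equivalence again, now to $X \to Y \to Z$ at each point of $X_z$, gives $X_z \subseteq U$, i.e., $z \in V$.

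The main obstacle is the descent step in the universal property; the rest is bookkeeping with the finite presentation hypotheses under base change and repeated application of the fibrewise criterion. The descent along $\kappa(z) \to \kappa(z')$ is a standard ingredient, but it is the one place where one genuinely leaves the realm of what is formally given by Lemma~\ref{lem:fiberwiseflatness}, and so it deserves explicit mention.
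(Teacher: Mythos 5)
Your proposal is correct and follows essentially the same route as the paper: for the first half the paper simply cites \cite[IV.11.3.10]{EGA}, and your assembly of the pointwise equivalence from Lemma~\ref{lem:fiberwiseflatness} together with the classical openness of the flat locus is just an unpacking of that citation (noting, which you should say explicitly, that $f$ is locally of finite presentation because $g$ is and $h$ is locally of finite type). Your treatment of the last two sentences---$X \setminus U \subseteq \Supp M$ and properness giving openness of $V$, then restriction of flatness to the fibers $X'_{z'} \to Y'_{z'}$ and faithfully flat descent along the base change of $k(z) \hookrightarrow k(z')$ to conclude $X_z \subseteq U$---is exactly the paper's argument, phrased directly instead of by contradiction.
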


\begin{proof}  Up to (but not including) the last two sentences this is \cite[IV.11.3.10]{EGA} (note that the hypothesis $\F_x \neq 0$ there is irrelevant since we assume $h$ is flat, hence both (a) and (b) there hold trivially if $\F_x = 0$).  Clearly $U$ contains the complement of the support of $M$, so we can write $V = Z \setminus g(\Supp M \setminus U)$, which makes it clear that $V$ is open because $g|\Supp M$ is a closed map on topological spaces by the assumption that $g$ is proper.  Clearly $M | g^{-1}(V)$ is flat over $h^{-1}(V) \subseteq Y$ because $g^{-1}(V) \subseteq U$.  For the final statement, we need to show that if $t : Z' \to Z$ is such that $M'$ is flat over $Y'$, then $t : Z' \to Z$ factors (necessarily uniquely) through $V \subseteq Z$.  It suffices to show that $t$ factors through $V$ on the level of topological spaces.  Suppose it doesn't.  Then there is some $z' \in Z'$ such that $z := t(z') \notin V$.  This means we can write $z = g(x)$ for some $x \in X \setminus U$.  Since $M'$ is flat over $Y'$, $M'|X'_{z'}$ is flat over $Y'_{z'}$ by stability of flatness under base change.  But $X'_{z'} \to Y'_{z'}$ is a faithfully flat base change of $X_z \to Y_z$ (along $\Spec$ of the field extension $k(z) \into k(z')$), so this implies $M|X_z$ is flat over $M|Y_z$.  But we assume $M$ is flat over $Z$, so by the first part of the theorem this implies that $M$ is flat over $Y$ at $x$---i.e.\ $x \in U$, a contradiction. \end{proof}

\begin{rem} \label{rem:fiberwiseflatness} Theorem~\ref{thm:fiberwiseflatness} also holds whenever each of $X$, $Y$, $Z$ is an algebraic stack with representable diagonal admitting an fppf cover by a scheme.  First one uses the fact that the hypotheses are fppf local in nature to even define these concepts for stacks (for example ``locally noetherian" is fppf local in nature: one implication is the Hilbert Basis Theorem and for the other implication one checks the ascending chain condition for ideals, say) to even define the concepts.  Basically one then applies the theorem for schemes to the top row of a diagram \bne{stackdig} & \xym{ X' \ar[r] \ar[d] & Y' \ar[r] \ar[d] & Z' \ar[d] \\ X \ar[r] & Y \ar[r] & Z } \ene where $Z'$ is an fppf cover of $Z$ by a scheme, $Y' \to Y \times_Z Z'$ is an fppf cover by a scheme, and $X' \to X \times_Y Y'$ is an fppf cover by a scheme.  All the vertical arrows in \eqref{stackdig} are then fppf covers by schemes and one translates the hypotheses and conclusions in Theorem~\ref{thm:fiberwiseflatness} back and forth between the top row and bottom row, which is not particularly hard because most of the hypotheses for the bottom row are basically defined by saying that they hold for the top row... (Really one first bootstraps up from schemes to algebraic spaces by this discussion, then from there to stacks.) \end{rem}

\section{Graded Modules} \label{section:gradedmodules}

\subsection{Graded rings} Let $G$ be an abelian group.  A $G$-\emph{grading} on a ring $A$ is a direct sum decomposition $A = \oplus_{g \in G} A_g$ (as an additive abelian group) such that the multiplication for $A$ takes $A_g \times A_h$ into $A_{g+h}$.  In particular, $A_0 \subseteq A$ is a subring of $A$.  An element $a \in A_g \subseteq A$ is called \emph{homogeneous of degree} $g$ and we sometimes write $|a|=g$ to indicate that $a$ is homogeneous of degree $g$.   A ring $A$ equipped with a $G$-grading is called a $G$-\emph{graded ring}.  The \emph{support} of a $G$-graded ring is the submonoid of $G$ generated by the set of $g \in G$ such that $A_g \neq 0$.  

A \emph{morphism} of $G$-graded rings $f : A \to B$ is a ring homomorphism such that $f$ takes $A_g \subseteq A$ into $B_g \subseteq B$ for every $g \in G$.  Let $\An(G)$ denote the category of $G$-graded rings.  The functor \bne{A0} \An(G) & \to & \An \\ \nonumber A & \mapsto & A_0 \ene has a left adjoint \bne{degreezero} \An & \to & \An(G) \\ \nonumber A & \to & A \ene given by viewing a ring $A$ as a $G$-graded ring supported in degree zero.  That is, we have a natural bijection \bne{A0degreezeroadjunction} \Hom_{\An(G)}(A,B) & = & \Hom_{\An}(A,B_0) \ene for each ring $A$ and each $G$-graded ring $B$.

A \emph{graded ring} is a pair $(G,A)$ consisting of an abelian group $G$ and a $G$-graded ring $A$.  A morphism of graded rings $(\gamma,f) : (G,A)  \to  (H,B)$ is a pair consisting of a group homomorphism $\gamma : G \to H$ and a ring homomorphism $f : A \to B$ such that $f$ takes $A_g \subseteq A$ into $B_{\gamma(g)} \subseteq B$ for every $g \in G$.  Equivalently, $f$ is a morphism of $H$-graded rings when $A$ is regarded as an $H$-graded ring via the decomposition $A = \oplus_{h \in H} ( \oplus_{g \in \gamma^{-1}(h)} A_g).$  Let $\GrAn$ denote the category of graded rings.

Every ring $A$ can be viewed as a $0$-graded ring.  This defines a functor \bne{0A} \An & \to & \GrAn \\ \nonumber A & \mapsto & (0,A) . \ene The functor \eqref{0A} is right adjoint to the functor \bne{pi2} \pi_2 : \GrAn & \to & \An \\ \nonumber (G,A) & \mapsto & A \ene and left adjoint to the functor \bne{GrA0} \GrAn & \to & \An \\ \nonumber (G,A) & \mapsto & A_0. \ene  That is, we have natural bijections \bne{0Arightadjointtopi2} \Hom_{\An}(A,B) & = & \Hom_{\GrAn}((G,A),(0,B)) \\ \Hom_{\GrAn}((0,A),(G,B)) & = & \Hom_{\An}(A,B_0). \ene

\subsection{Monoids to graded rings} \label{section:monoidstogradedrings} Let $P$ be a monoid.  The ring $\ZZ[P]$ is naturally equipped with a grading by $P^{\rm gp}$ by setting $(\ZZ[P])_g$ equal to the abelian group of formal sums $\sum_p a_p [p]$ where $p \in P$ runs over the preimage of $g$ under $P \to P^{\rm gp}$ and all but finitely many $a_p$ are zero.  This defines a functor \bne{ZPGrAn} \Mon & \to & \GrAn \\ \nonumber P & \mapsto & (P^{\rm gp},\ZZ[P]) \ene which factors the functor $\ZZ[ \slot ]$ in \eqref{ZP} through the forgetful functor $\pi_2$ in \eqref{pi2}.  Obviously we can replace $\ZZ$ with any ring $A$, regarding $A[P]$ as a $P^{\rm gp}$-graded $A$-algebra.  Whenever we speak of $A[P]$ as a graded ring, it is understood to be graded by $P^{\rm gp}$ in this manner.

The functor \eqref{ZPGrAn} preserves direct limits because $P \mapsto P^{\rm gp}$ preserves direct limits and $\ZZ[ \slot ]$ preserves direct limits.  Pushouts in $\GrAn$ are discussed briefly in \S\ref{section:gradedmodules2}.

\noindent {\bf Warning:} If $P$ is not integral, then an element like $[p]+[q]$ in $\ZZ[P]$ can be ``homogeneous" even if $p \neq q$ because $p,q$ may have the same image in $P^{\rm gp}$.  However, when $P$ is integral, $\ZZ[P] = \oplus_{p} \ZZ[p]$ \emph{is} the $P^{\rm gp}$-grading of $\ZZ[P]$ (suppressing notation for $P \into P^{\rm gp}$) and a homogeneous element of $\ZZ[P]$ is one of the form $a[p]$ for $a \in \ZZ$, $p \in P$.

\subsection{Graded modules} \label{section:gradedmodules2} For a graded ring $(G,A)$, a $(G,A)$-module (also called a $G$-\emph{graded} $A$-\emph{module} or just a \emph{graded} $A$-\emph{module}) is an $A$-module $M$ equipped with a direct sum decomposition $M = \oplus_{g \in G} M_g$ (as an additive abelian group) such that scalar multiplication $A \times M \to M$ takes $A_g \times M_h$ into $M_{g+h}$ for each $g,h \in G$.  An element $m \in M_g \subseteq M$ is called \emph{homogeneous of degree} $g$.  A \emph{morphism} of $(G,A)$-modules is a morphism of $A$-modules compatible with the direct sum decompositions.  Let $\Mod(G,A)$ denote the category of $(G,A)$-modules.

\noindent {\bf Warning:}  For $(G,A)$-modules $M,N$, the set \be \Hom_{G,A}(M,N) & := & \Hom_{\Mod(G,A)}(M,N) \ee does not have any natural $A$-module structure, though it does have a natural $A_0$-module structure.

The category $\Mod(G,A)$ is an abelian category.  The kernel (resp.\ cokernel) of a $\Mod(G,A)$-morphism $f : M \to N$ is just its kernel (resp.\ cokernel) as a map of $A$-modules equipped with the evident grading obtained from the fact that kernels and cokernels of abelian groups commute with direct sums (so, for example, $\Ker( f : M \to N) = \oplus_g \Ker(f_g : M_g \to N_g)$ defines the grading on the $(G,A)$-module $\Ker f$).  The forgetful functor \bne{ModGAtoModA} \Mod(G,A) & \to & \Mod(A) \ene is faithful but not full and is ``faithfully exact" in the sense that a sequence of $(G,A)$-modules $$ 0 \to M' \to M \to M'' \to 0$$ in exact iff its image under \eqref{ModGAtoModA} is exact (indeed, both exactness conditions are equivalent to exactness of the underlying sequence of abelian groups).

For $h \in G$, we have a shift functor \be \Mod(G,A) & \to & \Mod(G,A) \\ M & \mapsto & M \{ h \} \ee which is the identity on the underlying module, but shifts the grading according to the rule $(M \{ h \})_g := M_{g+h}$.  This shift functor is an isomorphism of categories with inverse $\{ - h \}$.  

\begin{prop} \label{prop:enoughprojectives} The abelian category $\Mod(G,A)$ has enough projectives and injectives.  The image of a projective $(G,A)$-module under the forgetful functor \eqref{ModGAtoModA} is a projective $A$-module. \end{prop}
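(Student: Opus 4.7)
The plan is to exhibit an explicit set of projective generators for $\Mod(G,A)$, use them to produce projective covers, then invoke a standard theorem for injectives. For each $h \in G$, I would consider the shifted free module $A\{h\}$. The key computation is that $\Hom_{G,A}(A\{h\}, M) = M_{-h}$ (or $M_h$ depending on shift conventions), via $f \mapsto f(1)$, where $1$ is viewed as an element of the degree $-h$ component of $A\{h\}$. Since the $g$-th component functor $M \mapsto M_g$ is exact (a direct sum decomposition is preserved by kernels and cokernels, as already noted in the text), each $A\{h\}$ is projective in $\Mod(G,A)$.

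Next, I would show $\Mod(G,A)$ has enough projectives: given $M \in \Mod(G,A)$, the $\Mod(G,A)$-morphism
\[ \bigoplus_{g \in G} \bigoplus_{m \in M_g} A\{-g\} \longrightarrow M \]
sending the generator $1$ in the $(g,m)$-summand to $m$ is surjective because every element of $M$ is a sum of homogeneous elements. For the underlying $A$-module claim: a projective $P \in \Mod(G,A)$ is a $\Mod(G,A)$-summand of some direct sum $F = \bigoplus_i A\{h_i\}$; forgetting the grading, $F$ is a free $A$-module (the generators of the summands form an $A$-basis, regardless of shift), and the splitting in $\Mod(G,A)$ is in particular a splitting in $\Mod(A)$ under the forgetful functor \eqref{ModGAtoModA}, so $P$ is a direct summand of a free $A$-module, hence projective in $\Mod(A)$.

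For enough injectives, the cleanest route is to verify that $\Mod(G,A)$ is a Grothendieck abelian category and invoke Grothendieck's theorem. The category has all (small) colimits computed componentwise, filtered colimits are exact (since filtered colimits of abelian groups are exact and the direct sum decomposition is preserved), and $U := \bigoplus_{g \in G} A\{g\}$ is a generator: a $\Mod(G,A)$-morphism $f : M \to N$ vanishes iff $\Hom_{G,A}(A\{g\},f) : M_{-g} \to N_{-g}$ vanishes for every $g$, iff each homogeneous component of $f$ vanishes, iff $f = 0$. Grothendieck's general existence theorem then gives enough injectives.

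No step here looks to be a genuine obstacle; the only mild subtlety is keeping the shift conventions straight in the identification $\Hom_{G,A}(A\{h\},M) \cong M_{\pm h}$ and in the surjection onto $M$. Everything else is formal: exactness of component functors, the fact that free graded modules are free when the grading is forgotten, and the standard Grothendieck argument for injectives.
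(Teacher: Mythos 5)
Your proof is correct and follows essentially the same route as the paper: free graded modules (direct sums of shifts of $A$) give enough projectives and yield the statement about the forgetful functor, while the injectives come from the general existence theorem of Grothendieck's Tohoku paper, whose hypotheses (well-behaved limits/colimits, generator) you simply verify explicitly where the paper invokes them as "general nonsense."
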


\begin{proof}  The first statement holds by general nonsense \cite[1.10]{Tohoku} because $\Mod(G,A)$ has all direct and inverse limits and these limits are well-behaved because they always coincide with the corresponding limits of abelian groups on the level of underlying abelian groups.  For the second statement, define a $(G,A)$-module to be \emph{free} iff it is a direct sum of shifts of copies of $A$.  It is clear that a free $(G,A)$-module is projective and that any $(G,A)$-module is a quotient of a free $(G,A)$-module, so this gives another proof of the existence of enough projectives and it shows that any projective $(G,A)$-module is a direct summand of a free $(G,A)$-module.  But the image of a free $(G,A)$-module under \eqref{ModGAtoModA} is clearly a free $A$-module, so the image of any projective $(G,A)$-module under \eqref{ModGAtoModA} is a summand of a free $A$-module and is hence a projective $A$-module. \end{proof}

\subsection{Graded tensor product} \label{section:gradedtensorproduct} If $M$ and $N$ are $(G,A)$-modules, the usual $A$-module tensor product $M \otimes_A N$ has a natural $(G,A)$-module structure given by the grading \be (M \otimes_A N)_g & := & \sum_{g_1+g_2=g} M_{g_1} \otimes_{\ZZ} N_{g_2} . \ee Note that this sum of $\ZZ$-modules (abelian groups) is hardly ever a \emph{direct} sum and we confuse $m \otimes n \in M_{g_1} \otimes_{\ZZ} N_{g_2}$ with its image in $M \otimes_A N$.  It takes a minute to see that \be M \otimes_A N & = & \bigoplus_{g \in G} (M \otimes_A N)_g. \ee  (We refer the reader to \cite[II.11.5]{Bourbakialgebra} for more on the basic notions of graded modules.)  

If $A \to B$ is a morphism of $G$-graded rings then $B$ becomes a $(G,A)$-module in an obvious manner.  Indeed, any $(G,B)$-module $M$ can be viewed as a $(G,A)$-module by \emph{restriction of scalars} in the usual way.  If $M$ is a $(G,A)$-module, the tensor product $M \otimes_A B$, as described above, has a natural $(G,B)$-module structure and, as such, is called the \emph{extension of scalars} of $M$ (along $A \to B$).  Extension of scalars is left adjoint to restriction of scalars as usual: \bne{extrest} \Hom_{G,B}(M \otimes_A B, N) & = & \Hom_{G,A}(M,N) . \ene

Now suppose $(\gamma,f) : (G,A) \to (H,B)$ is a morphism of graded rings and $N$ is an $(H,B)$-module.  Then the \emph{restriction of scalars} of $N$ is the $(G,A)$-module $N_A$ with decomposition given by $(N_A)_g := M_{\gamma(g)}$ and scalar multiplication given by $a \cdot m := f(a) m \in (M_A)_{g+g'}$ for homogeneous elements $a \in A_g$, $m \in (M_A)_{g'}$.  (This makes sense because $f(a) \in B_{\gamma(g)}$ so $f(a)m \in M_{\gamma(g+g')}$.)  Restriction of scalars defines a functor \bne{restscalars} \Mod(H,B) & \to & \Mod(G,A) \\ \nonumber N & \mapsto & N_A. \ene

\noindent {\bf Warning:} Unless $\gamma$ is an isomorphism, $N_A$ will \emph{not} generally coincide with the ``usual" restriction of scalars of $N$ along $A \to B$.  Indeed, $N_A$ will not generally be isomorphic as an abelian group to $N$ and $N_A$ will not even have any reasonable $B$-module structure.

There is also an extension of scalars \bne{extscalars} \slot \otimes_A B : \Mod(G,A) & \to & \Mod(H,B) \\ \nonumber M & \mapsto & M \otimes_A B, \ene which agrees with the usual extension of scalars on the level of $B$-modules.  We equip $M \otimes_A B$ with the grading \be (M \otimes_A B)_h & := & \sum_{\gamma(g) + h' = h} M_g \otimes_{\ZZ} B_{h'}. \ee  More generally, for $N \in \Mod(H,B)$ we have an extension of scalars functor \bne{slototimesN} \slot \otimes_A N : \Mod(G,A) & \to & \Mod(H,B) \\ \nonumber M & \mapsto & M \otimes_A N \ene which agrees with the usual tensor product on the level of underlying modules when $N$ is viewed as an $A$-module via the usual (ungraded) restriction of scalars.  We equip $M \otimes_A N$ with the grading \be (M \otimes_A N)_h & := & \sum_{\gamma(g) + h' = h} M_g \otimes_{\ZZ} N_{h'}. \ee

The extension of scalars functors \eqref{extscalars} have the usual transitivity property for a composition $$(G,A) \to (H,B) \to (K,C)$$ of maps of graded rings and the usual formula \be (M \otimes_A B) \otimes_B N & = & M \otimes_A N \ee for $M \in \Mod(G,A)$, $N \in \Mod(H,B)$ relating the functors \eqref{extscalars}, \eqref{slototimesN}, and the graded tensor product of $(H,B)$-modules holds in the graded setting.

Extension of scalars \eqref{extscalars} is left adjoint to restriction of scalars \eqref{restscalars}.  The adjunction isomorphism \bne{extrest2} \Hom_{H,B}(M \otimes_A B, N) & = & \Hom_{G,A}(M,N_A) \\ \nonumber k & \mapsto & (m \mapsto k(m \otimes 1)) \ene for $M \in \Mod(G,A)$, $N \in \Mod(H,B)$ requires some explanation.  If $k : M \otimes_A B \to N$ is an $(H,B)$-module morphism, then for a homogeneous element $m \in M_g$, $m \otimes 1 \in M \otimes_A B$ is homogeneous of degree $\gamma(g)$, so $k$ takes it into $N_{\gamma(g)} = (N_A)_g$, so ``$k(m \otimes 1)$" above is understood to lie in $(N_A)_g$.  That is, $m \mapsto k(m \otimes 1)$ is abuse of notation for  the sum over $g \in G$ of the maps \be M_g & \to & (N_A)_g = N_{\gamma(g)} \\ m & \mapsto & k(m \otimes 1). \ee  The inverse of \eqref{extrest2} takes an $(G,A)$-module map $l : M \to N_A$ to the $(H,B)$ module map $M \otimes_A B \to N$ which might abusively be written $m \otimes b \mapsto b l(m)$.  Really we write $l$ as the sum of maps $l_g : M_g \to (N_A)_g = N_{\gamma(g)}$ over $g \in G$ and for $m \in M_g$, $b \in B_h$ we let $l(m \otimes b) := b l_g(m)$, then we extend this recipe $\ZZ$-linearly.

\noindent {\bf Warning:}  The functors \eqref{slototimesN}, restriction of scalars \eqref{restscalars}, and the tensor product for $(G,A)$-modules are \emph{not} related in the way one might expect from the ungraded case.  For $M \in \Mod(G,A)$, $N \in \Mod(H,B)$, the $(H,B)$-module $M \otimes_A N$ (the image of $M$ under \eqref{slototimesN}) does \emph{not} coincide with the tensor product $M \otimes_A N_A$.  Indeed, the latter tensor product does not even carry any natural $B$-module structure.  See \S\ref{section:importantspecialcase} for further discussion.

\begin{lem} \label{lem:restrictionofscalarsexact} The restriction of scalars functor \eqref{restscalars} is exact. \end{lem}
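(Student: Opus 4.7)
The plan is to reduce exactness of the restriction of scalars functor \eqref{restscalars} to exactness on the level of underlying abelian groups, where the claim becomes trivial. Recall from \S\ref{section:gradedmodules2} that a sequence
\[ 0 \to N' \to N \to N'' \to 0 \]
in $\Mod(H,B)$ is exact iff it is exact on the level of underlying abelian groups, iff, for each $h \in H$, the sequence of abelian groups $0 \to N'_h \to N_h \to N''_h \to 0$ is exact (the three conditions are equivalent because all limits and colimits in $\Mod(H,B)$ coincide on underlying abelian groups with the corresponding limits and colimits of abelian groups, which in turn commute with the direct sum decomposition indexed by $H$). The same holds for $\Mod(G,A)$.

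Given such an exact sequence in $\Mod(H,B)$, restriction of scalars along $(\gamma,f) : (G,A) \to (H,B)$ produces the sequence $0 \to N'_A \to N_A \to N''_A \to 0$ in $\Mod(G,A)$ whose underlying sequence of abelian groups is identical to the underlying sequence of abelian groups of the original (since the underlying abelian group of $N_A$ is the same as that of $N$, and the $A$-module structure is defined by composition with $f$). Hence the restricted sequence is exact on the underlying abelian groups, and so is exact in $\Mod(G,A)$ by the criterion recalled above.

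There is no real obstacle here; the only point requiring any care is to notice that the degree $g$ piece $(N_A)_g = N_{\gamma(g)}$ of $N_A$ is just a relabelling of a graded piece of $N$, so the whole calculation reduces at each $g \in G$ to the exactness of $0 \to N'_{\gamma(g)} \to N_{\gamma(g)} \to N''_{\gamma(g)} \to 0$, which is part of the exactness hypothesis on the original sequence. One could alternatively argue more abstractly by noting that $(\slot)_A$ has a left adjoint $\slot \otimes_A B$ (so it automatically preserves finite limits) and showing directly that it preserves cokernels, but the direct check via underlying abelian groups is cleaner.
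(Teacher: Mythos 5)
Your final paragraph is, in substance, the paper's own proof: exactness in $\Mod(G,A)$ can be checked on underlying abelian groups, i.e.\ degree by degree (a direct sum of sequences of abelian groups is exact iff each summand sequence is), and in degree $g$ the restricted sequence $0 \to (N'_A)_g \to (N_A)_g \to (N''_A)_g \to 0$ is literally the degree-$\gamma(g)$ part of the original exact sequence, hence exact. However, the justification in your middle paragraph---that ``the underlying abelian group of $N_A$ is the same as that of $N$, and the $A$-module structure is defined by composition with $f$''---is false unless $\gamma$ is bijective, and the paper explicitly warns against exactly this confusion (see the Warning following \eqref{restscalars} in \S\ref{section:gradedtensorproduct}): by definition $N_A = \oplus_{g \in G} N_{\gamma(g)}$, so graded pieces $N_h$ with $h \notin \gamma(G)$ are discarded and each piece $N_{\gamma(g)}$ is repeated once for every $g$ in the corresponding fiber of $\gamma$; in particular $N_A$ is generally not isomorphic as an abelian group to $N$, is not the usual ungraded restriction of scalars along $f$, and carries no natural $B$-module structure. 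Fortunately your argument never actually needs that identification: the degreewise reduction in your last paragraph stands on its own and coincides with the paper's argument, so the proof is correct once the erroneous parenthetical is removed.
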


\begin{proof} This follows easily from the fact that we can check exactness on the level of underlying abelian groups and the fact that a direct sum of sequences of abelian groups \bne{seqi} 0 \to A_i' \to A_i \to A_i'' \to 0 \ene is exact iff each \eqref{seqi} is exact. \end{proof}

\begin{prop} \label{prop:tensortakesprojtoproj} Extension of scalars \eqref{slototimesN} takes projective $(G,A)$-modules to projective $(H,B)$-modules. \end{prop}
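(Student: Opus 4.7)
The plan is to invoke the standard categorical principle that a left adjoint of an exact functor preserves projective objects, applied to the extension of scalars $\slot \otimes_A B : \Mod(G,A) \to \Mod(H,B)$ associated to the map of graded rings $(\gamma,f) : (G,A) \to (H,B)$ (and then deducing the general case of \eqref{slototimesN} using the formula $M \otimes_A N = (M \otimes_A B) \otimes_B N$, provided $N$ itself is projective as an $(H,B)$-module).

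Let $M$ be a projective $(G,A)$-module. By the adjunction \eqref{extrest2}, there is a natural isomorphism
\[
\Hom_{H,B}(M \otimes_A B,\, N) \;=\; \Hom_{G,A}(M,\, N_A)
\]
for every $N \in \Mod(H,B)$. The restriction of scalars functor $N \mapsto N_A$ is exact by Lemma~\ref{lem:restrictionofscalarsexact}, and $\Hom_{G,A}(M,\slot)$ is exact because $M$ is projective. Thus the composite functor $N \mapsto \Hom_{H,B}(M \otimes_A B, N)$ is exact, which is exactly the assertion that $M \otimes_A B$ is projective in $\Mod(H,B)$.

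As a sanity check (and an alternative, more explicit argument) I would recall from the proof of Proposition~\ref{prop:enoughprojectives} that every projective $(G,A)$-module is a direct summand of a free $(G,A)$-module, i.e.\ of a direct sum of graded shifts of the form $\bigoplus_i A\{h_i\}$. Since $\slot \otimes_A B$ is a left adjoint it preserves direct sums and direct summands, and one checks directly from the definitions in \S\ref{section:gradedtensorproduct} that $A\{h\} \otimes_A B = B\{\gamma(h)\}$. Hence a free $(G,A)$-module is sent to a free $(H,B)$-module and a projective $(G,A)$-module is sent to a direct summand of a free $(H,B)$-module, i.e.\ to a projective $(H,B)$-module.

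I do not anticipate a real obstacle here; the only points requiring any care are the bookkeeping of the grading under $\gamma$ (so that the shift-compatibility $A\{h\} \otimes_A B = B\{\gamma(h)\}$ really holds) and the verification that the adjunction \eqref{extrest2} delivers a natural isomorphism of functors in $N$, rather than merely a bijection for each individual $N$. Both are routine unwindings of the definitions in \S\ref{section:gradedtensorproduct}.
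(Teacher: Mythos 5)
Your proposal is correct and is essentially the paper's own proof: the paper disposes of the proposition in one line by observing that extension of scalars has an exact right adjoint, namely restriction of scalars (Lemma~\ref{lem:restrictionofscalarsexact}), which is precisely your adjunction argument via \eqref{extrest2} (your second, free-module argument is the same alternative already implicit in Proposition~\ref{prop:enoughprojectives}). Your caveat about general $N$ is also well taken: although the statement cites \eqref{slototimesN}, the adjoint-functor argument (and the later application in Lemma~\ref{lem:gradedflatness}) really concerns the case $N=B$, i.e.\ the functor \eqref{extscalars}, exactly as you treat it.
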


\begin{proof} This follows formally from the fact that \eqref{slototimesN} has an exact right adjoint (Lemma~\ref{lem:restrictionofscalarsexact}). \end{proof}

\begin{rem} Proposition~\ref{prop:tensortakesprojtoproj} yields an alternative proof of the fact that the forgetful functor \bne{forgetG} \Mod(G,A) & \to & \Mod(A) \ene takes projectives to projectives (Proposition~\ref{prop:enoughprojectives}).  Indeed, the forgetful functor \eqref{forgetG} \emph{is} the extension of scalars functor for the map of graded rings $(0,\Id) : (G,A) \to (0,A)$. \end{rem}

\subsection{Important special case}  \label{section:importantspecialcase}  Here we consider the general constructions of \S\ref{section:gradedtensorproduct} in the important special case of a map of graded rings $(A,G) \to (B,H)$ where $H=0$, so that $\gamma : G \to H$ is also zero.  By the adjunction \eqref{0Arightadjointtopi2}, such a map of graded rings is the same thing as a map of rings $A \to B$ in the usual sense (ignoring the grading on $A$).  For any ring $B$, we have an obvious isomorphism of categories \bne{isocat0B} \Mod(B) & = & \Mod(0,B) \ene taking a $B$-module $N$ to $N$ with the only possible grading by the trivial group: $N = N_0$.  Suppressing this isomorphism, the restriction of scalars \eqref{restscalars}, extension of scalars \eqref{slototimesN}, and adjunction isomorphism \eqref{extrest2} can be viewed as functors \bne{restscalars2} \Mod(B) & \to & \Mod(G,A) \\ \nonumber N & \mapsto & N_A \\ \label{slototimesN2} \slot \otimes_A N : \Mod(G,A) & \to & \Mod(B)  \ene and a natural bijection \bne{extrest3} \Hom_B(M \otimes_A B,N) & = & \Hom_{G,A}(M,N_A). \ene  

For $N \in \Mod(B)$, the $(G,A)$-module $N_A$ is equipped with the grading $N_A = \oplus_{g \in G} N$.  This direct sum decomposition of abelian groups is not a direct sum decomposition of $(G,A)$-modules and is in fact not even a direct sum decomposition of $A$-modules because scalar multiplication ``mixes the summands".  Although $N_A$ has a natural $B$-module structure (making $N_A = \oplus_{g \in G} N$ a direct sum of $B$-modules), the $A$-module structure on $N_A$ underlying the $(G,A)$-module structure on $N_A$ is not the same as the $A$-module structure on $N_A$ obtained via restriction of scalars along $A \to B$ and the aforementioned $B$-module structure on $N_A$.  Indeed, there is no reason to believe that the former $A$-module structure on $N_A$ is even in the essential image of the usual restriction of scalars $\Mod(B) \to \Mod(A)$ and consequently there is no reason to believe that $M \otimes_A N_A$ has any reasonable $B$-module structure when $M$ is an $A$-module or graded $A$-module.

On the other hand, if $M$ is a graded $A$-module, then $M \otimes_A N$ (the image of $M$ under \eqref{slototimesN}) is just the usual tensor product $M \otimes_A N$ regarded as a $(0,B)$-module using the only possible grading.  In other words, the functor \eqref{slototimesN2} is just the restriction of the usual tensor product \be \slot \otimes_A N : \Mod(A) & \to & \Mod(B) \ee to the (faithful but not full) subcategory $\Mod(G,A) \subseteq \Mod(A)$.

The adjunction isomorphism \eqref{extrest3} can be described explicitly as follows:  Given a morphism of $B$-modules $f : M \otimes_A B \to N$, we obtain a corresponding morphism of $A$-modules also abusively denoted $f : M \to N$ via the usual adjunction isomorphism \bne{extrestusual} \Hom_B(M \otimes_A B,N) & = & \Hom_A(M,N), \ene where $N$ now denotes the (ungraded) $A$-module obtained by viewing $N \in \Mod(B)$ as an $A$-module via the usual restriction of scalars.  The isomorphism \eqref{extrest3} is the composition of the isomorphism \eqref{extrestusual} and the isomorphism \bne{taut} \Hom_A(M,N) & = & \Hom_{G,A}(M,N_A) \ene taking $f : M \to N$ to the map $M \to N_A$ given in degree $g$ by $f|M_g : M_g \to (N_A)_g = N$.  The inverse of the latter isomorphism takes a $(G,A)$-module morphism $k : M \to N_A$ to the $A$-module morphism $M \to N$ defined by $k(\sum_g m_g) := \sum_g k(m_g)$.  In other words, the composition of $k \to N_A$ and the natural map of abelian groups $\sum_g \Id : N_A \to N$ (this latter map of abelian groups is in fact a morphism of $A$-modules).

\subsection{The case of group algebras} \label{section:groupalgebras} Let $A$ be a ring, $G$ an abelian group, $A[G]$ the group algebra over $A$ on $G$, viewed as a ring graded by $G$ in the obvious manner.  We have a morphism of graded rings $(0,A) \to (G,A[G])$ taking $A$ onto $A[G]_0 = A$ on the level of rings.

\begin{prop} \label{prop:ModGAG} For a ring $A$ and an abelain group $G$, extension of scalars \be \slot \otimes_A A[G] : \Mod(A) & \to & \Mod(G,A) \ee is an equivalence of abelian categories with inverse given by taking a $(G,A[G])$-module $M$ to $M_0$. \end{prop}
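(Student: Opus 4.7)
The plan is to exhibit explicit natural isomorphisms $G \circ F \cong \Id_{\Mod(A)}$ and $F \circ G \cong \Id_{\Mod(G, A[G])}$, where $F := \slot \otimes_A A[G]$ and $G := (\slot)_0$. The first is essentially trivial and the second reduces to the observation that each $[g] \in A[G]$ is a unit (with inverse $[-g]$), hence acts by isomorphisms between graded pieces.

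First I would unravel $G \circ F$. For $N \in \Mod(A)$, the $G$-grading on $F(N) = N \otimes_A A[G]$ is given by $(N \otimes_A A[G])_g = N \otimes [g]$, so the degree-zero piece is $N \otimes [0]$, canonically identified with $N$ as an $A$-module. This identification is natural in $N$, giving a natural isomorphism $\eta_N : N \to G(F(N))$.

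Next I would handle $F \circ G$, which is the heart of the argument. For $M \in \Mod(G, A[G])$, define $\varepsilon_M : M_0 \otimes_A A[G] \to M$ by the $A[G]$-linear extension of $m \otimes [g] \mapsto [g] \cdot m$. This map respects grading because $[g] \cdot M_0 \subseteq M_g$, so in degree $g$ it is the map $M_0 \to M_g$, $m \mapsto [g] m$. The key point is that for every $g \in G$, the element $[g] \in A[G]$ is a unit with inverse $[-g]$; since the $A[G]$-module structure on $M$ is compatible with the grading in the sense that scalar multiplication by $[g]$ sends $M_h$ into $M_{g+h}$ (and similarly $[-g]$ sends $M_{g+h}$ into $M_h$), multiplication by $[g]$ is an $A$-module isomorphism $M_h \xrightarrow{\cong} M_{g+h}$ for every $h$. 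Taking $h = 0$ shows the degree-$g$ component of $\varepsilon_M$ is an isomorphism for every $g$, hence $\varepsilon_M$ itself is an isomorphism in $\Mod(G, A[G])$. Naturality in $M$ is immediate from $A[G]$-linearity.

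Finally I would check the triangle identities (or just verify that $F$ is faithful and essentially surjective with $G$ providing a right inverse on objects), but these follow routinely by tracing the definitions: on a module of the form $N \otimes_A A[G]$, the isomorphism $\varepsilon_{F(N)}$ and the extension of $\eta_N$ to a map $F(N) \to F(G(F(N)))$ are mutually inverse by construction. There is no real obstacle here; the only thing one must take care with is the bookkeeping between the grading on $F(N)$ and the $A[G]$-action, and the one substantive input — that every homogeneous unit $[g]$ trivializes the shift functor $\{g\}$ on $\Mod(G, A[G])$ — is precisely where the hypothesis that $G$ is a \emph{group} (as opposed to merely a monoid) is used.
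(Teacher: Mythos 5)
Your proposal is correct and follows essentially the same route as the paper: the degree-zero identification $(N\otimes_A A[G])_0 = N$ for one composite, and the counit $m\otimes a[g]\mapsto a[g]m$ for the other, where your observation that multiplication by the unit $[g]$ gives an isomorphism $M_0\to M_g$ is exactly the paper's explicit inverse $n\mapsto [g^{-1}]n\otimes[g]$ for $n\in N_g$. No substantive difference.
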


\begin{proof}  For an $A$-module $M$, we clearly have a natural isomorphism $(M \otimes_A A[G])_0 = M$ of $A$-modules.  For a $(G,A[G])$-module $N$, we have a natural isomorphism of $(G,A[G])$-modules \be N_0 \otimes_A A[G] & \to & N \\ n_0 \otimes a[g] & \mapsto & a[g]n_0 \ee with inverse given by the map $N \to N_0 \otimes_A A[G]$ taking a homogeneous element $n \in N_g$ to $[g^{-1}]n \otimes [g] \in (N_0 \otimes_A A[G])_g$.  \end{proof}

\begin{cor} \label{cor:gradedflatnessoverAG} Let $A$ be a ring, $G$ an abelian group, $A[G]$ the group algebra graded by $G$ as usual, $(G,A[G]) \to (H,B)$ a map of graded rings.  Then an $(H,B)$-module $N$ is graded flat over $(G,A)$ iff $N$ is flat as an $A$-module in the usual ungraded sense. \end{cor}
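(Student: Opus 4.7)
The plan is to deduce this corollary directly from Proposition~\ref{prop:ModGAG} together with the unraveling of the tensor product of Section~\ref{section:gradedtensorproduct}. The main task is to transport the functor whose exactness defines graded flatness across the equivalence of categories $\slot \otimes_A A[G] : \Mod(A) \to \Mod(G, A[G])$ and identify the result with the ordinary ungraded tensor product with $N$.

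First I would fix notation. An $(H,B)$-module $N$ is, in particular, a $B$-module in the usual ungraded sense, and hence an $A[G]$-module (and an $A$-module) via restriction of scalars along $A \to A[G] \to B$. Graded flatness of $N$ over $(G, A[G])$ means exactness of $\slot \otimes_{A[G]} N : \Mod(G, A[G]) \to \Mod(B)$. Next I would invoke Proposition~\ref{prop:ModGAG}, which gives an equivalence of abelian categories (in particular an exact functor) $\slot \otimes_A A[G] : \Mod(A) \to \Mod(G, A[G])$ whose inverse sends $M \mapsto M_0$. The key identification is that under this equivalence the tensor-with-$N$ functor transports to the usual ungraded tensor with $N$: for any $A$-module $P$, the standard transitivity of tensor products gives
\[
(P \otimes_A A[G]) \otimes_{A[G]} N \;=\; P \otimes_A N
\]
as $B$-modules, naturally in $P$. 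Hence the composition $\slot \otimes_{A[G]} N \circ (\slot \otimes_A A[G])$ is naturally isomorphic to $\slot \otimes_A N : \Mod(A) \to \Mod(B)$.

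Since $\slot \otimes_A A[G]$ is an equivalence, it is exact and reflects exactness, so $\slot \otimes_{A[G]} N$ is exact if and only if $\slot \otimes_A N$ is exact on $\Mod(A)$. Finally, exactness of $\slot \otimes_A N$ as a functor to $\Mod(B)$ is the same as exactness on the level of underlying abelian groups, which is by definition the condition that $N$ be a flat $A$-module. There isn't really a deep obstacle here; the only point requiring care is keeping track of which ring structure (on $N$) is being used at each step and verifying that the natural isomorphism $(P \otimes_A A[G]) \otimes_{A[G]} N = P \otimes_A N$ is compatible with the $(G, A[G])$-module grading coming from Proposition~\ref{prop:ModGAG}, but this is immediate from the definitions of the gradings in Section~\ref{section:gradedtensorproduct}.
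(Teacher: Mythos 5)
Your proof is correct and follows essentially the same route as the paper: both transport the graded extension-of-scalars functor across the equivalence $\slot \otimes_A A[G] : \Mod(A) \to \Mod(G,A[G])$ of Proposition~\ref{prop:ModGAG} (after passing to $\Mod(B)$ via the faithfully exact forgetful functor) and identify it with the ordinary $\slot \otimes_A N$, whose exactness is flatness of $N$ over $A$. Your extra remarks about transitivity of tensor products and reflection of exactness by the equivalence just spell out the identification the paper states in one line.
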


\begin{proof} Via the equivalence in Proposition~\ref{prop:ModGAG}, the graded extension of scalars functor whose exactness defines ``$N$ is graded flat over $(G,A)$" is identified, after composing with the faithfully exact forgetful functor $\Mod(H,B) \to \Mod(B)$, with the usual extension of scalars \be \slot \otimes_A N : \Mod(A) & \to & \Mod(B). \ee \end{proof}

\begin{cor} \label{cor:ModAG} For any ring $A$ and any map $\gamma: G \to H$ of abelian groups, extension of scalars \be \Mod(G,A[G]) & \to & \Mod(H,A[H]) \ee is an equivalence of categories. \end{cor}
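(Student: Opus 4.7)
The plan is to reduce Corollary~\ref{cor:ModAG} to Proposition~\ref{prop:ModGAG} by a two-out-of-three argument for equivalences of categories. First I would observe that the map of graded rings $(\gamma, A[\gamma]) : (G, A[G]) \to (H, A[H])$ factors the structure map $(0,A) \to (H, A[H])$ through $(0,A) \to (G, A[G])$, and that Proposition~\ref{prop:ModGAG}, applied separately to $G$ and to $H$, supplies equivalences
\[
\Phi_G := \slot \otimes_A A[G] : \Mod(A) \xrightarrow{\sim} \Mod(G, A[G]), \qquad \Phi_H := \slot \otimes_A A[H] : \Mod(A) \xrightarrow{\sim} \Mod(H, A[H]),
\]
each with quasi-inverse $N \mapsto N_0$.

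Next I would invoke the transitivity of extension of scalars from \S\ref{section:gradedtensorproduct}: for an $A$-module $M$, the composition of $\Phi_G$ with the extension of scalars $E := \slot \otimes_{A[G]} A[H] : \Mod(G, A[G]) \to \Mod(H, A[H])$ satisfies
\[
E(\Phi_G(M)) \; = \; (M \otimes_A A[G]) \otimes_{A[G]} A[H] \; = \; M \otimes_A A[H] \; = \; \Phi_H(M),
\]
and this identification is natural in $M$. Hence there is a canonical natural isomorphism $E \circ \Phi_G \cong \Phi_H$, and since both $\Phi_G$ and $\Phi_H$ are equivalences of categories, the functor $E$ must be an equivalence as well.

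The only substantive point to check is that the middle equality above is valid as an identification of $(H, A[H])$-modules, not merely as an identification of $A[H]$-modules or of abelian groups. This is where the bookkeeping of the $H$-grading on an extension of scalars along a map of graded rings (as laid out in \S\ref{section:gradedtensorproduct}) must be verified; it follows immediately from the formula for the grading on a tensor product along $(\gamma, A[\gamma])$ combined with the definition of the grading on $M \otimes_A A[G]$ and $M \otimes_A A[H]$. This is routine, so I expect no real obstacle; the content of the corollary really is essentially Proposition~\ref{prop:ModGAG} applied twice.
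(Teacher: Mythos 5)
Your argument is correct and is exactly the paper's proof: the extension of scalars functor sits in a commutative (up to natural isomorphism) triangle with the two equivalences $\slot \otimes_A A[G]$ and $\slot \otimes_A A[H]$ of Proposition~\ref{prop:ModGAG}, and is therefore an equivalence by two-out-of-three. Your extra check that the identification $(M \otimes_A A[G]) \otimes_{A[G]} A[H] = M \otimes_A A[H]$ respects the $H$-gradings is the right (routine) point to verify and is implicit in the paper's appeal to the transitivity of graded extension of scalars from \S\ref{section:gradedtensorproduct}.
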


\begin{proof} The functor in question sits in a commutative triangle with the equivalences from $\Mod(A)$ of Proposition~\ref{prop:ModGAG} and is hence an equivalence by ``two-out-of-three". \end{proof}

\subsection{Homogeneous ideals} \label{section:homogeneousideals} Let $A$ be a $G$-graded ring.  A \emph{homogeneous ideal} in $A$ is a $(G,A)$-submodule $I \subseteq A$ of $A$.  A homogeneous ideal is, in particular, an ideal of $A$ in the usual sense, and an ideal of $A$ is homogeneous iff it is generated by its homogeneous elements.  An ideal $I \subseteq A$ is homogeneous iff $a = \sum_g a_g \in I$ implies each homogeneous component $a_g$ of $a$ is also in $I$.  

If $I \subseteq A$ is a homogeneous ideal, then the quotient $A/I = \oplus_g A_g / I_g$ is a $G$-graded ring and the natural map $A \to A/I$ is a map of $G$-graded rings.

\begin{defn} \label{defn:semiprime} A homogeneous ideal $I$ in a graded ring $A$ is called \emph{prime} (resp. \emph{semiprime}) iff $ab \in I$ implies at least one of $a,b$ is in $I$ for all $a,b \in A$ (resp.\ whenever at least one of $a,b \in A$ is homogeneous).  We often say that an ideal $I$ of a graded ring is ``semiprime" to mean that it is homogeneous and semiprime. \end{defn}

Whenever we want to check that a homogeneous ideal is semiprime, we will use the following criterion:

\begin{lem} \label{lem:semiprime} A homogeneous ideal $I$ is semiprime iff $ab \in I$ implies at least one of $a,b$ is in $I$ for all homogeneous $a,b \in A$. \end{lem}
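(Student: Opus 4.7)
The forward direction is immediate from the definition of semiprime in Definition~\ref{defn:semiprime}: if the implication ``$ab \in I \Rightarrow a \in I$ or $b \in I$'' holds whenever at least one of $a, b$ is homogeneous, then in particular it holds when both are homogeneous, which is exactly the stated weaker condition. So the plan is to focus entirely on the reverse implication.

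For the nontrivial direction, I would start from a product $ab \in I$ with $a$ homogeneous (say of degree $g$) and $b \in A$ arbitrary, and reduce the problem to the purely homogeneous hypothesis by decomposing $b$ into its homogeneous components $b = \sum_{h \in G} b_h$ (a finite sum). The key observation is that $ab = \sum_h a b_h$, and each summand $a b_h$ is homogeneous of degree $g+h$, with these degrees all distinct; thus the $a b_h$ are precisely the homogeneous components of $ab$. Because $I$ is a homogeneous ideal, $ab \in I$ forces each $a b_h \in I$.

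Now I would apply the hypothesis componentwise: for each $h$, since $a$ and $b_h$ are both homogeneous and $a b_h \in I$, either $a \in I$ or $b_h \in I$. If $a \in I$ we are done; otherwise we must have $b_h \in I$ for every $h$, and since $I$ is closed under addition this gives $b = \sum_h b_h \in I$. This is the whole argument; there is no serious obstacle here, and the only ``care'' needed is the standard point that homogeneity of $I$ means it contains all homogeneous components of any of its elements, which is what lets us pass from $ab \in I$ to $a b_h \in I$ for each $h$.
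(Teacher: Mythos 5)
Your proof is correct and is essentially the same as the paper's: decompose $b$ into homogeneous components, use homogeneity of $I$ to conclude each $ab_h \in I$, and then apply the homogeneous-only hypothesis componentwise (the paper just phrases it by assuming $a \notin I$ from the start). No gap.
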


\begin{proof} Suppose $ab \in I$ and, say, $a$ is homogeneous.  We must prove that $a$ or $b$ is in $I$.  If $a \in I$ we're done so assume now that $a \notin I$.  Write $b = \sum_g b_g$ as a (finite) sum of homogeneous elements.  Then since $a$ is homogeneous, $ab = \sum_g ab_g$ is the decomposition of $ab$ into homogeneous components, so, since $I$ is homogeneous, each $ab_g$ is $I$, and since $a \notin I$ the hypothesis on $I$ ensures that $b_g \in I$ for each $g$, hence $b \in I$. \end{proof}

Evidently a homogeneous ideal is prime iff it is prime in the usual sense and a homogeneous prime ideal is also clearly semiprime.  A homogeneous ideal $I$ in a $G$-graded ring $A$ is prime (resp.\ semiprime) iff the $G$-graded quotient ring $A/I$ has no (nontrivial) zero divisors (resp.\ no \emph{homogeneous} zero divisors).

\begin{rem} \label{rem:semiprimeideals} In my opinion, ``semiprime ideal" is really the ``correct" analog of a ``prime ideal" in the graded setting.  I would have preferred to use ``prime" for what I ended up calling ``semiprime" and something like ``very prime" for what I ended up calling ``prime," but the terminology used here is so firmly entrenched that my preferred terminology would probably cause confusion.  The point is that almost all of the usual constructions for ungraded modules will go through in the ungraded setting if one replaces prime ideals with semiprime ideals.  The same constructions will not go through replacing prime ideals with homogeneous prime ideals and in general homogeneous prime ideals are not very useful except when they happen to coincide with semiprime ideals. \end{rem}

\begin{defn} \label{defn:totalordering} Let $G$ be an abelian group.  A \emph{total ordering} of $G$ is a total ordering of the set $G$ which is compatible with the group structure in the sense that for all $a,b,c,d \in G$ with $a \leq b$ and $c \leq d$ we have $a+c \leq b+d$ with equality iff $a=b$ and $c=d$. \end{defn}

The next two lemmas are basically taken from \cite[II.11]{Bourbakialgebra}.

\begin{lem} \label{lem:totalordering} An abelian group $G$ admits a total ordering iff it is torsion-free. \end{lem}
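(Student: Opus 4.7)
The implication ``totally ordered $\Rightarrow$ torsion-free'' is the easy direction. Given $g \in G$ with $g \neq 0$, I would assume $g > 0$ (replacing $g$ by $-g$ otherwise; note that $0 \leq g$ forces $-g \leq 0$ by adding $-g$ to both sides using compatibility). A trivial induction on $n$ using the strict-inequality part of Definition~\ref{defn:totalordering} then gives $ng > 0$ for every $n \geq 1$, so in particular $ng \neq 0$. Hence $G$ is torsion-free.

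For the converse, the plan is to reduce to the case of a $\QQ$-vector space and then choose a basis. Since $G$ is torsion-free, the canonical map $G \to V := G \otimes_{\ZZ} \QQ$ is injective; any compatible total ordering on the abelian group $V$ restricts to a compatible total ordering on $G$, so I will construct one on $V$. Using Zorn's Lemma, pick a $\QQ$-basis $B$ of $V$, and choose any total ordering $\leq$ on the set $B$ (again by Zorn, or more simply well-order $B$). Every nonzero $v \in V$ has a unique expression $v = \sum_{b \in B} q_b(v) \, b$ with $q_b(v) \in \QQ$ of finite support, and I can define $\lambda(v) \in B$ to be the maximum of $\{ b \in B : q_b(v) \neq 0 \}$. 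Declare $v > 0$ iff $q_{\lambda(v)}(v) > 0$ in $\QQ$ (with the usual ordering), and $v < 0$ iff $-v > 0$.

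The key verification is that this ``lexicographic from the top'' rule yields a compatible total ordering on $V$. Trichotomy is immediate from the definition. For compatibility with addition, given $v,w \in V$ with $v,w > 0$, I would observe that $\lambda(v+w) \leq \max\{\lambda(v),\lambda(w)\}$, with equality unless there is cancellation in the leading coefficient; in the equality case the leading coefficient of $v+w$ is either $q_{\lambda(v)}(v)$, $q_{\lambda(w)}(w)$, or their sum, all of which are positive, so $v+w > 0$. From this positive cone one recovers compatibility with addition for arbitrary comparisons $a \leq b$, $c \leq d$ by subtraction, and strictness follows by tracking when cancellation can occur.

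The main obstacle is really only notational/bookkeeping: carefully checking the strict-inequality clause of Definition~\ref{defn:totalordering} in the case where $\lambda(v) = \lambda(w)$ and cancellation occurs in the top coefficient of $v+w$, so that $\lambda(v+w)$ drops. Once one organizes the verification into the two cases ``leading terms do not cancel'' and ``leading terms cancel, recurse on the next largest index in the support,'' the check is routine. No appeal to anything beyond Zorn's Lemma is needed, and the construction is manifestly functorial enough to restrict from $V$ back to $G$.
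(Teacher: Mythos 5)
Your proposal is correct and follows essentially the same route as the paper: the easy direction by induction on $ng>0$, and the converse by embedding $G$ into the $\QQ$-vector space $G\otimes_{\ZZ}\QQ$, choosing a basis, and ordering lexicographically with respect to an ordering of the basis. The only difference is that you spell out the lexicographic-from-the-top comparison and the positive-cone verification (and, as you note, the cancellation worry never arises when adding two positives, since their leading coefficients at a common top index are both positive), details the paper leaves to the reader.
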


\begin{proof}  Suppose $G$ is totally ordered and $g \in G$ is torsion; let us show that $g=0$.  After possible replacing $g$ with $-g$, we can assume $g \geq 0$.  If $g=0$ we're done so we can assume $g > 0$.  Since $g$ is torsion there is a positive integer $n$ with $ng=0$.  But then the properties of a total ordering imply by induction on $n$ that $ng > 0$, which contradicts $ng=0$.  Now suppose $G$ is torsion-free and let us construct a total order on $G$.  Since $G$ is a torsion free it is a subgroup of a $\QQ$ vector space $V$, so it suffices to construct a total order on $V$.  This can be easily done by choosing a basis $B$ for $V$ and a total ordering of $B$, then ordering $V \cong \oplus_B \QQ$ via the usual total ordering of $\QQ$ and the lexicographic ordering with respect to the ordering of $B$. \end{proof}

\begin{lem} \label{lem:semiprimetorsionfreegrading} Let $A$ be a $G$-graded ring, $I \subseteq A$ a homogeneous ideal.  Suppose $G$ is torsion-free.  Then $I$ is prime iff it is semiprime. \end{lem}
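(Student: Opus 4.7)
The implication ``prime $\Rightarrow$ semiprime'' is immediate from the definitions (Definition~\ref{defn:semiprime}), so the plan is to prove the converse: assuming $I$ is homogeneous and semiprime, I will show $I$ is prime. The key tool is Lemma~\ref{lem:totalordering}, which lets me fix a total ordering $\leq$ on $G$ compatible with the group law, using the torsion-free hypothesis.

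Given $a, b \in A$ with $ab \in I$, I want to show $a \in I$ or $b \in I$. Suppose, toward a contradiction, that neither lies in $I$. Write $a = \sum_g a_g$ and $b = \sum_h b_h$ as finite sums of homogeneous components. Since $I$ is homogeneous, the assumption $a \notin I$ forces some component $a_g$ to lie outside $I$; let $g_0$ be the \emph{largest} such $g$ (with respect to $\leq$), and similarly pick $h_0$ the largest $h$ with $b_h \notin I$. These maxima exist because only finitely many components are nonzero.

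Now examine the homogeneous component of $ab$ in degree $g_0 + h_0$:
\[
(ab)_{g_0+h_0} \;=\; \sum_{g+h=g_0+h_0} a_g b_h.
\]
The strict monotonicity clause in Definition~\ref{defn:totalordering} ensures that for any term with $g+h=g_0+h_0$ other than $(g,h)=(g_0,h_0)$, one has either $g>g_0$ (forcing $a_g\in I$) or $g<g_0$, which by strict monotonicity of $\leq$ forces $h>h_0$ (and hence $b_h \in I$). In either case the product $a_g b_h$ lies in $I$. Since $ab \in I$ and $I$ is homogeneous, $(ab)_{g_0+h_0}\in I$ as well, and subtracting off the terms already known to lie in $I$ yields $a_{g_0} b_{h_0} \in I$.

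But $a_{g_0}$ and $b_{h_0}$ are both homogeneous, so by the semiprime hypothesis (applied via Lemma~\ref{lem:semiprime}) one of them must lie in $I$, contradicting the choice of $g_0$ or $h_0$. The only step with any subtlety is extracting the isolated term $a_{g_0}b_{h_0}$ from the sum, which is exactly where strict monotonicity of the total ordering -- not just monotonicity -- is used; this is the main (and really the only) obstacle, and it is precisely the point at which the torsion-freeness hypothesis enters through Lemma~\ref{lem:totalordering}.
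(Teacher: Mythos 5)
Your argument is correct, and it uses the same essential ingredients as the paper's proof: the total ordering on $G$ supplied by Lemma~\ref{lem:totalordering} (this is where torsion-freeness enters) and a leading-term argument feeding into the semiprime hypothesis. The route differs in one genuine way. The paper takes $g$, $h$ maximal with $a_g\neq 0$, $b_h\neq 0$, concludes $a_gb_h\in I$, and then must run an induction on the number of nonzero homogeneous components (replacing $a$ by $a-a_g$, etc.) to finish. You instead take $g_0$, $h_0$ maximal among degrees whose components lie \emph{outside} $I$; with that choice, in the degree-$(g_0+h_0)$ component of $ab$ every cross term $a_gb_h$ with $(g,h)\neq(g_0,h_0)$ already has one factor in $I$ (if $g>g_0$ then $a_g\in I$; if $g<g_0$ then strict monotonicity forces $h>h_0$, so $b_h\in I$), so $a_{g_0}b_{h_0}\in I$ and semiprimeness gives an immediate contradiction. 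What this buys is the elimination of the induction: the maximality condition ``not in $I$'' does the bookkeeping that the paper's inductive descent performs by hand. One cosmetic remark: the direction of semiprimeness you invoke at the end is just Definition~\ref{defn:semiprime} itself (both $a_{g_0}$ and $b_{h_0}$ are homogeneous); Lemma~\ref{lem:semiprime} is the converse criterion and is not actually needed there.
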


\begin{proof} Suppose $I$ is semiprime and $ab \in I$ for some $a,b \in I$; we claim that $a$ or $b$ is in $I$.  Let $a = \sum_g a_g$, $b = \sum_g b_g$ be the decompositions into homogeneous elements (all but finitely many $a_g$ and $b_g$ are zero).  We prove the claim by induction on the ordered pair $(M,N)$ of natural numbers, where $M$ (resp. $N$) is the number of nonzero $a_g$ (resp.\ $b_g$).  If $M$ or $N$ is zero, then $a$ or $b$ is zero hence in $I$, so we can assume now that $M,N > 0$ and that the claim is know for smaller ordered pairs (in the lexicographic ordering, say).  By Lemma~\ref{lem:totalordering} we can pick a total ordering on $G$.  Let $g \in G$ (resp.\ $h \in G$) be the maximum element with respect to this ordering so that $a_g$ (resp.\ $b_h$) is non-zero. By this maximality, the homogeneous degree $g+h$ component of $ab$ is $a_g b_h$, so, since $I$ is homogeneous, this component is also in $I$, so since $I$ is semiprime, either $a_g$ or $b_h$ is in $I$.  Say $a_g \in I$ (the argument when $b_h \in I$ is similar).  Then $(a-a_g)b = ab-a_gb$ is also in $I$ so by induction we know that either $b \in I$ (in which case we're done) or $a-a_g \in I$ (in which case we're also done because $a_g \in I$ hence $a \in I$).  \end{proof}

\begin{rem} \label{rem:homogeneousprimes1} The assumption in the above lemma that $G$ is torsion-free cannot be removed.  For example, let $A = \CC[\ZZ/2\ZZ] \cong \CC[x]/x^2-1$ with the obvious $\ZZ / 2 \ZZ$ grading.  Consider the homogeneous ideal $I = (0)$.  This ideal is not prime because $A$ has zero divisors---for example \be (1[0]+1[1])(1[0]-1[1]) & = & 0, \ee but if $a,b \in A$ are \emph{homogeneous} and $ab=0$, then it is easy to see that either $a$ or $b$ is zero, so $(0)$ is semiprime. \end{rem}

\begin{prop} \label{prop:homogeneousideals} Let $P$ be an integral monoid, $A$ a ring.  We have functions \bne{idealstohomogeneousideals} \{ {\rm \, ideals \; of \; } P \, \} & \to & \{ {\rm \; homogeneous \; ideals \; of \; } A[P] \, \} \\ \nonumber I & \mapsto & A[I] \\ \label{homogeneousidealstoideals} \{ {\rm \, homogeneous \; ideals \; of \, } A[P] \, \} & \to & \{ {\rm \, ideals \; of \; } P \, \} \\ \nonumber J & \mapsto & J \cap P := \{ p \in P : [p] \in J \}.  \ene  \begin{enumerate} \item \label{AIcapPisI} For any ideal $I \subseteq P$ we have $A[I] \cap P = I$.  \item \label{homogeneousideals2} For any homogeneous ideal $J \subseteq A[P]$ we have $A[J \cap P] \subseteq J$ with equality if $A$ is a field.  \item \label{homogeneousideals3} If $A$ is a field then \eqref{idealstohomogeneousideals} and \eqref{homogeneousidealstoideals} are inverse bijections. \item \label{semiprimeimpliesprime} For any ideal $I \subseteq P$, if the homogeneous ideal $A[I]$ is semiprime, then $I$ is prime.  \item \label{primeimpliessemiprime} The converse of \eqref{semiprimeimpliesprime} holds when $A$ is an integral domain.  \item \label{fieldbijection} If $A=k$ is a field, then $I \mapsto k[I]$ establishes a bijection between prime ideals of $P$ and semiprime ideals of $k[P]$. \item \label{lastbijection} If $P^{\rm gp}$ is torsion-free then every semiprime ideal of $A[P]$ is prime and hence if $k$ is a field, the bijection in \eqref{fieldbijection} can be viewed as a bijection between prime ideals of $P$ and homogeneous prime ideals of $k[P]$. \end{enumerate}  \end{prop}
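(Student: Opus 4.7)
The plan is to handle the proposition one item at a time, leaning heavily on the fact that integrality of $P$ makes the homogeneous decomposition of $A[P]$ especially simple: since $P \hookrightarrow P^{\rm gp}$ is injective, each homogeneous piece $A[P]_g$ is either zero (when $g$ is not in the image of $P$) or the free rank-one $A$-module $A\cdot[p]$ (for the unique $p \in P$ with image $g$). This observation is the engine for parts \eqref{AIcapPisI}-\eqref{homogeneousideals3}. For \eqref{AIcapPisI}, the elements $\{[p] : p \in P\}$ are $A$-linearly independent in $A[P]$, so an expression $\sum a_p [p] \in A[I]$ with all but one $a_p$ zero forces the one nonzero index into $I$. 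For \eqref{homogeneousideals2}, the inclusion $A[J \cap P] \subseteq J$ is immediate, and conversely if $A$ is a field, each homogeneous piece $J_g$ is an $A$-subspace of $A[P]_g \in \{0, A\cdot[p]\}$, hence equal to $0$ or to $A\cdot[p]$; so $J = \bigoplus_{[p]\in J} A\cdot[p] = A[J\cap P]$. Part \eqref{homogeneousideals3} is then the combination of \eqref{AIcapPisI} and \eqref{homogeneousideals2}.

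For \eqref{semiprimeimpliesprime}, the semiprimeness of $A[I]$ first forces $A[I] \neq A[P]$ (since proper is built into our definition of semiprime), hence $0 \notin I$; then for $p,q \in P$ with $p+q \in I$, we have the identity $[p+q] = [p][q]$ of \emph{homogeneous} elements, so semiprimeness yields $[p] \in A[I]$ or $[q] \in A[I]$, i.e.\ $p \in I$ or $q \in I$. For \eqref{primeimpliessemiprime}, by Lemma~\ref{lem:semiprime} it suffices to check the defining condition on homogeneous elements; integrality of $P$ means any nonzero homogeneous element is of the form $\alpha[p]$ with $\alpha \in A$, $p \in P$, so given $\alpha[p] \cdot \beta[q] = \alpha\beta[p+q] \in A[I]$ we either have $\alpha\beta = 0$, and $A$ being a domain gives $\alpha = 0$ or $\beta = 0$ (so one factor is zero hence in $A[I]$), or else $p+q \in I$ and primeness of $I$ gives $p \in I$ or $q \in I$; properness of $A[I]$ follows from $0 \notin I$.

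Part \eqref{fieldbijection} is then immediate: \eqref{homogeneousideals3} gives a bijection between ideals of $P$ and homogeneous ideals of $k[P]$, and \eqref{semiprimeimpliesprime}-\eqref{primeimpliessemiprime} (the latter applies since a field is a domain) show this bijection restricts to one between prime ideals of $P$ and semiprime homogeneous ideals of $k[P]$. Finally, \eqref{lastbijection} follows by invoking Lemma~\ref{lem:semiprimetorsionfreegrading}: when the grading group $P^{\rm gp}$ is torsion-free, semiprime homogeneous ideals coincide with homogeneous prime ideals, so the bijection of \eqref{fieldbijection} may be rephrased accordingly.

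There is no real obstacle here; the only step that merits care is \eqref{homogeneousideals2}, where one must remember both that integrality of $P$ is what reduces homogeneous pieces of $A[P]$ to free rank-one $A$-modules and that the field hypothesis on $A$ is what guarantees every $A$-submodule of such a piece is $0$ or the whole piece. (Remark~\ref{rem:homogeneousprimes1} shows that the torsion-free hypothesis in \eqref{lastbijection} is essential, which is why we must import Lemma~\ref{lem:semiprimetorsionfreegrading} rather than attempting to strengthen \eqref{primeimpliessemiprime} directly.)
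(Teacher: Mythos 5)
Your proposal is correct and follows essentially the same route as the paper: integrality of $P$ is used to reduce homogeneous elements of $A[P]$ to the form $a[p]$, the field hypothesis handles part \eqref{homogeneousideals2} (your ``submodule of a rank-one free module'' phrasing is just the paper's multiply-by-$a^{-1}$ argument in different words), the domain hypothesis gives \eqref{primeimpliessemiprime} via Lemma~\ref{lem:semiprime} exactly as in the paper, and \eqref{lastbijection} is the same appeal to Lemma~\ref{lem:semiprimetorsionfreegrading}. The only difference is that you make the properness bookkeeping ($0 \notin I$, $A[I] \neq A[P]$) explicit, which the paper leaves implicit.
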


\begin{proof} \eqref{AIcapPisI} For an ideal $I \subseteq P$, the ideal $A[I] = \oplus_{i \in I} A[i]$ is manifestly homogeneous and it is clear that $A[I] \cap P = I$.  For \eqref{homogeneousideals2}, suppose $J \subseteq A[P]$ is a homogeneous ideal.  Clearly $A[J \cap P] \subseteq J$ because every element of $A[J \cap P]$ is a finite sum of multiples of elements of $J$ and $J$ is an ideal (this is all true even without knowing $J$ is homogeneous).  To see that this last containment is an equality when $A$ is a field, the point is that when $J$ is homogeneous, we can check that this inclusion is an equality by checking that any \emph{homogeneous} element of $J$ is in $A[J \cap P]$.  A homogeneous element of $J$ is of the form $a[p]$ for some $p \in P$ and $a \in A$ (here it is important that $P$ be integral---see the Warning in \S\ref{section:monoidstogradedrings}).  If $a=0$, then certainly $0=a[p] \in A[J \cap P]$, so we can assume $a \in A^*$.  But then $a^{-1} a[p] = [p]$ is also in $J$ because $J$ is an ideal, so $p \in J \cap P$ and hence $a[p] \in A[J \cap P]$ as desired.  \eqref{homogeneousideals3} is obvious from \eqref{AIcapPisI} and \eqref{homogeneousideals2}.

For \eqref{semiprimeimpliesprime}, suppose $I \subseteq P$ is an ideal of $P$ such that the homogeneous ideal $A[I]$ of $A[P]$ is semiprime.  To see that $I$ is prime, suppose $p+q \in I$ for some $p,q \in P$.  Then $[p][q] = [p+q] \in A[I]$, so, since $A[I]$ is semiprime and $[p],[q]$ are homogeneous, either $[p]$ or $[q]$ is in $A[I]$ and hence either $p$ or $q$ is in $I$.  

For \eqref{primeimpliessemiprime}, suppose $I \subseteq P$ is a prime ideal and $A$ is an integral domain.  To show that $A[I] \subseteq A[P]$ is semiprime it suffices, by Lemma~\ref{lem:semiprime}, to show that either $a[p]$ or $b[q]$ is in $A[I]$ under the assumption that $a[p] \cdot b[q] = ab[p+q]$ is in $I$.  This is trivial if $a$ or $b$ is zero, so we can assume $a,b \neq 0$.  Since $A$ is a domain, $ab$ is nonzero, so it must be that $p+q \in I$, hence by primeness of $I$ either $p \in I$ (hence $a[p] \in A[I]$) or $q \in I$ (hence $b[q] \in A[I]$). 

For \eqref{fieldbijection} we use \eqref{semiprimeimpliesprime} and \eqref{primeimpliessemiprime} to see that the bijection in \eqref{homogeneousideals3} restricts to a bijection as desired and \eqref{lastbijection} then follows from Lemma~\ref{lem:semiprimetorsionfreegrading}.\end{proof}

\begin{rem} Even if $k=\CC$, $P$ is a finitely generated abelian group, and $I \subseteq P$ is prime, the semiprime ideal $k[I]$ of $k[P]$ need not be prime.  For example, if $P = \ZZ / 2 \ZZ$ and $I = \emptyset$ is the unique prime ideal of $P$, then the homogeneous ideal $\CC[I] = (0)$ in the $\ZZ/2\ZZ$-graded ring $\CC[P]$ is not prime (Remark~\ref{rem:homogeneousprimes1}).  

If $A$ isn't an integral domain, then $A[I]$ is \emph{never} a prime ideal of $A[P]$ when $I \subseteq P$ is a prime ideal: If $ab=0$ in $A$ for $a,b \neq 0$, then $a[0]$ and $b[0]$ are not in $A[I]$ because $0 \in P$ is not in the prime ideal $I$, but $a[0]b[0]=0$ is certainly in $A[I]$. \end{rem}

\noindent {\bf Warning:}  Although we have defined a map $I \mapsto A[I]$ from ideals of $P$ to homogeneous ideals of $A[I]$, there is not in general any reasonable functor from $\Mod(P)$ to $\Mod(P^{\rm gp},\ZZ[P])$.

\subsection{Filtrations}  \label{section:filtrations}  Here we briefly sketch the analog in the graded setting of the theory of associated primes as in \cite[Chapter 3]{Mat}.  Fix a graded ring $(G,A)$ and a $(G,A)$-module $M$.  If $x \in M$ is homogeneous, then \be \Ann x & := & \{ a \in A : ax = 0 \} \ee is a homogeneous ideal of $A$ because if $a = \sum_{g \in G} a_g$ annihilates $x$ then homogeneity of $x$ ensures that each $a_g x$ is zero, so each $a_g \in \Ann x$.

\begin{defn}    A semiprime ideal $I \subseteq A$ is called an \emph{associated semiprime} of $M$ iff the following equivalent conditions hold: \begin{enumerate} \item There exists a homogeneous element $x \in M$ such that $I = \Ann x$. \item $M$ contains a $(G,A)$-submodule isomorphic to a shift $(A/I) \{ g \}$ for some $g \in G$. \end{enumerate} \end{defn}

\begin{lem} \label{lem:associatedsemiprime} Any maximal element $I = \Ann x$ of the set of homogeneous ideals $$ \F  = \{ \Ann x : 0 \neq x \in M \; {\rm homogeneous} \}$$ is an associated semiprime of $M$. \end{lem}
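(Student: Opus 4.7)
The plan is to mimic the classical commutative-algebra argument that a maximal annihilator ideal is prime, adapted to the graded setting using Lemma~\ref{lem:semiprime}. Pick a maximal element $I = \Ann x \in \F$, where $x \in M$ is homogeneous and nonzero. Since $I$ is already an annihilator of a homogeneous element, it is a homogeneous ideal (by the discussion at the start of \S\ref{section:filtrations}), so to conclude that $I$ is an associated semiprime it suffices to check that $I$ is semiprime.

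By Lemma~\ref{lem:semiprime}, semiprimeness reduces to the following: for all homogeneous $a, b \in A$ with $ab \in I$ and $a \notin I$, we must show $b \in I$. First I would consider the homogeneous element $ax \in M$; since $a$ and $x$ are both homogeneous, $ax$ is homogeneous, and since $a \notin I = \Ann x$, we have $ax \neq 0$. Therefore $\Ann(ax)$ is an element of $\F$. Next I would observe the obvious containment $\Ann x \subseteq \Ann(ax)$ (anything killing $x$ kills $ax$), together with the fact that $b \in \Ann(ax)$ because $b(ax) = (ab)x = 0$ thanks to $ab \in I = \Ann x$. Maximality of $I$ in $\F$ then forces $\Ann(ax) = I$, so $b \in I$, as needed.

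I do not expect any serious obstacle; the only subtlety is ensuring we stay within the homogeneous framework, which is handled by the fact that $ax$ is automatically homogeneous whenever $a$ and $x$ are, and by the availability of Lemma~\ref{lem:semiprime}, which reduces the semiprimeness test to homogeneous witnesses. Once semiprimeness is in hand, the second clause of the definition of associated semiprime follows tautologically: the map $A \to M$, $a \mapsto ax$, descends through $A/I$ to an injective $(G,A)$-module morphism $(A/I)\{g\} \hookrightarrow M$, where $g = |x|$, exhibiting the required submodule.
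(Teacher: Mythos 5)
Your proof is correct and is essentially the same argument as the paper's: both reduce semiprimeness to homogeneous $a,b$ via Lemma~\ref{lem:semiprime} and then use maximality of $I$ against the annihilator of a nonzero homogeneous multiple of $x$ (you use $\Ann(ax)$, the paper symmetrically uses $\Ann(bx)$). No gaps.
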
 

\begin{proof} C.f.\ \cite[7.B]{Mat}.  Suppose $I = \Ann x$ is maximal and $ab \in I$ (i.e.\ $abx=0$ for homogeneous $a,b \in A$.  We must show that $a$ or $b$ is in $I$---i.e.\ $ax = 0$ or $bx=0$.  If $bx=0$ we're done so suppose $bx \neq 0$.  Note that $a$ is certainly in $\Ann (bx)$ and $\Ann (bx) \in \F$ because $bx$ is homogeneous and nonzero, but the obvious containment $I \subseteq \Ann (bx)$ must be equality by maximality of $I$, so $a \in I$. \end{proof}

\begin{lem} \label{lem:filtrations} Suppose $(G,A)$ is a graded ring with $A$ noetherian and $M \in \Mod(G,A)$ is finitely generated.  Then there is a finite filtration $$0 = M_0 \subseteq M_1 \subseteq \cdots \subseteq M_n = M$$ of $M$ by $(G,A)$-submodules such that each successive quotient $M_i/M_{i-1}$ $(i=1,\dots,n)$ is isomorphic to $(A/\P_i) \{ g_i \}$ for some semiprime ideal $\P_i \subseteq A$ and some $g_i \in G$. \end{lem}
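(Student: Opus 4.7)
The plan is to imitate the classical existence-of-composition-series argument (e.g., Matsumura Thm.~6.4) in the graded setting, with Lemma~\ref{lem:associatedsemiprime} playing the role of the usual existence of associated primes. First I would establish the following sub-lemma: any nonzero finitely generated $(G,A)$-module $N$ contains a $(G,A)$-submodule isomorphic to a shift $(A/\P)\{g\}$ for some semiprime ideal $\P \subseteq A$ and some $g \in G$. Indeed, since $N$ is graded and nonzero it contains a nonzero homogeneous element (any homogeneous component of any nonzero element works), so the set $\F = \{\Ann x : 0 \neq x \in N \text{ homogeneous}\}$ is nonempty. Noetherianness of $A$ gives a maximal element $\P = \Ann x \in \F$, which is semiprime by Lemma~\ref{lem:associatedsemiprime}. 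Letting $d := \deg x$ and $g := -d$, the map $A\{g\} \to N$ sending $a \mapsto ax$ is degree-preserving (its degree-$e$ component is the map $A_{e-d} \to N_e$, $a \mapsto ax$, which lands in $N_e$ since $x \in N_d$); its image is $Ax$ and its kernel is the homogeneous ideal $\P$, shifted, so $Ax \cong (A/\P)\{g\}$ sits inside $N$ as required.

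Next I would run the standard noetherian-maximality argument. Let $\mathcal{S}$ denote the collection of $(G,A)$-submodules $N \subseteq M$ admitting a finite filtration of the type described in the lemma; evidently $0 \in \mathcal{S}$. Because $M$ is finitely generated over the noetherian ring $A$ it is noetherian as an $A$-module, hence also noetherian as a $(G,A)$-module (every $(G,A)$-submodule is \emph{a fortiori} an $A$-submodule), so $\mathcal{S}$ contains a maximal element $N$.

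If $N = M$ the filtration of $N$ is the desired filtration of $M$ and we are done. Otherwise $M/N$ is a nonzero finitely generated $(G,A)$-module, so by the sub-lemma it contains a $(G,A)$-submodule isomorphic to $(A/\P)\{g\}$ for some semiprime $\P$ and some $g \in G$; its preimage $N' \subseteq M$ strictly contains $N$ and lies in $\mathcal{S}$ (prepend the filtration of $N$ with the step $N \subseteq N'$, whose quotient is the chosen shift of $A/\P$), contradicting maximality of $N$. The only (very mild) obstacle worth flagging is the shift bookkeeping in the sub-lemma: one must be careful that the cyclic submodule $Ax$ generated by a homogeneous $x$ of degree $d$ is isomorphic to $A/\P$ \emph{shifted by $-d$}, not to $A/\P$ on the nose, because the surjection $A \to Ax$ fails to be degree-preserving without a shift of the source.
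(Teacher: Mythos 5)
Your proof is correct and follows essentially the same route as the paper: produce a shifted copy of $A/\P$ inside any nonzero graded module via Lemma~\ref{lem:associatedsemiprime} (using noetherianness of $A$ to get a maximal annihilator), then build the filtration step by step, with termination guaranteed by the ACC on $(G,A)$-submodules of $M$. Your packaging via a maximal element of the set of submodules admitting such a filtration is just the usual Noetherian-induction rephrasing of the paper's "repeat until the ascending chain stops," and your shift bookkeeping is the correct extra detail the paper leaves implicit.
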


\begin{proof} C.f.\ \cite[7.E]{Mat}.  If $M=0$ we just take $M_0=M_n=0$.  If $M \neq 0$, then by Lemma~\ref{lem:associatedsemiprime} we can find a submodule $M_1 \subseteq M$ isomorphic to $(A/\P_1) \{ g_1 \}$ for some semiprime ideal $\P_1 \subseteq A$, $g_i \in G$.  If $M_1 \neq M$, then we can repeat the same procedure to $M/M_1$ to find $M_2$ and so on.  The process stops at some point because $M$ satisfies the ascending chain condition since an ascending chain of $(G,A)$-submodules is in particular an ascending chain of $A$-submodules and $M$ has the ACC since $A$ is noetherian and $M$ is finitely generated. \end{proof}

\begin{prop} \label{prop:filtrations} Let $M$ be a $(G,A)$-module, $M' \subseteq M$ a $(G,A)$-submodule.  Then there is a filtered partially ordered set $\Lambda$ and a functor $(M_{\lambda}) : \Lambda \to \Mod(G,A)$ such that each $M_\lambda$ is a $(G,A)$-submodule of $M$ containing $M'$ and finitely generated over $M'$.  In particular, every homogeneous ideal of $A$ is a filtered direct limit of finitely generated homogeneous ideals. \end{prop}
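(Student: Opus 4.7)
The plan is to produce $\Lambda$ as a tautological collection of subobjects. Specifically, I would let $\Lambda$ denote the set of all $(G,A)$-submodules $N$ of $M$ with $M' \subseteq N$ such that $N$ is generated, as a $(G,A)$-submodule of $M$, by $M'$ together with finitely many homogeneous elements of $M$. (Equivalently, $N/M'$ is a finitely generated $(G,A)$-module.) Partially order $\Lambda$ by inclusion, and take the functor $\Lambda \to \Mod(G,A)$ to be the tautological inclusion.

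First I would check that $\Lambda$ is filtered. Given $N_1, N_2 \in \Lambda$ with respective finite sets of homogeneous generators $S_1, S_2 \subseteq M$ (over $M'$), the $(G,A)$-submodule $N_1 + N_2 \subseteq M$ generated by $N_1$ and $N_2$ is again in $\Lambda$: it contains $M'$, it contains both $N_i$, and it is generated over $M'$ by $S_1 \cup S_2$, which is still finite and homogeneous. Since any finite subset of $\Lambda$ iteratively admits such an upper bound, $\Lambda$ is filtered.

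Next I would verify that the direct limit of this system, taken in $\Mod(G,A)$, is $M$ itself. Since filtered colimits in $\Mod(G,A)$ can be computed on underlying abelian groups, it suffices to show that every $m \in M$ lies in some $N \in \Lambda$. Writing $m = \sum_{g \in G} m_g$ as its (finite) homogeneous decomposition, the $(G,A)$-submodule
\[
 N \; := \; M' + \sum_g A \cdot m_g \; \subseteq \; M
\]
contains $M'$, contains $m$, and is generated over $M'$ by the finite homogeneous set $\{m_g\}$; hence $N \in \Lambda$. This proves the first statement.

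For the ``in particular,'' I would apply the result with $M' = 0$ and with $M$ taken to be a given homogeneous ideal $I \subseteq A$, viewed as a $(G,A)$-submodule of $A$. Then $\Lambda$ becomes the collection of finitely generated homogeneous subideals of $I$ (filtered by inclusion), whose direct limit is $I$. There is no real obstacle here; the only point requiring any care is the insistence on using \emph{homogeneous} generators, which is precisely what ensures that each $N \in \Lambda$ is a $(G,A)$-submodule rather than merely an ungraded $A$-submodule, and which is available because every element of $M$ has a finite homogeneous decomposition.
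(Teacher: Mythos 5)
Your proof is correct and follows essentially the same route as the paper: the paper indexes the system by finite sets $\lambda$ of homogeneous elements of $M$ and sets $M_\lambda$ to be the $(G,A)$-submodule generated by $M'$ and $\lambda$, which is just your $\Lambda$ with the indexing by generating sets rather than by the submodules themselves. Your additional verifications (filteredness via $N_1+N_2$, and that the colimit is $M$ using finite homogeneous decompositions) are exactly the points the paper leaves implicit.
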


\begin{proof} Take $\Lambda$ to be the set of all finite sets $\lambda$ of homogeneous elements of $M$ and let $M_{\lambda}$ be the $(G,A)$-submodule of $M$ generated by $M'$ and the elements of $\lambda$. \end{proof}

\subsection{Graded flatness} \label{section:gradedflatness} Let $A$ be a $G$-graded ring.  For fixed $M \in \Mod(G,A)$, the functor \bne{gradedtensor} M \otimes_A \slot : \Mod(G,A) & \to & \Mod(G,A) \ene is right exact (preserves cokernels):  This follows formally from the fact that the ungraded tensor product preserves direct limits together with the fact that the forgetful functor $\Mod(G,A) \to \Mod(A)$ is faithfully exact and commutes with tensor products.  

\begin{defn} \label{defn:gradedflat1} A $(G,A)$-module $M$ is called \emph{graded flat} (we avoid calling this ``flat" to avoid confusion with the notion of flatness of the underlying $A$-module) iff the functor \eqref{gradedtensor} is exact (i.e.\ left exact). \end{defn}
 
More generally, consider a $\GrAn$-morphism $(\gamma,f) : (G,A) \to (H,B)$ and an $(H,B)$-module $N$.  Recall the extension of scalars functor \eqref{slototimesN} from \S\ref{section:gradedtensorproduct}: \bne{slototimesNagain} \slot \otimes_A N : \Mod(G,A) & \to & \Mod(H,B) \ene This functor preserves direct limits for the same formal reasons that \eqref{gradedtensor} preserves direct limits.  

\begin{defn} \label{defn:gradedflat2} An $(H,B)$-module $N$ is called \emph{graded flat over} $(G,A)$ iff the functor \eqref{slototimesNagain} above is exact (i.e.\ left exact). \end{defn}

\begin{rem} \label{rem:gradedflatness} When $(\gamma,f) = (\Id,\Id)$, the notion of ``graded flat over $(G,A)$" specializes to the notion ``graded flat" in Definition~\ref{defn:gradedflat1}.  If we weren't dealing with graded modules, then we would just formulate the ``flat over $A$" in terms of ``flat" by noting that $N$ is flat over $A$ iff $N$ is flat when regarded as an $A$-module by restriction of scalars.  For graded modules, this does not make sense:  One can certainly ask whether the restriction of scalars $N_A$ is a graded flat $(G,A)$-module in the sense of Definition~\ref{defn:gradedflat1}, but it is not clear that this has any relationship with $N$ being graded flat over $(G,A)$ in the sense above (see the Warnings in \S\ref{section:gradedtensorproduct} and \S\ref{section:importantspecialcase}).  We will not discuss the former flatness notion in the present paper except perhaps when it happens to coincide with the above notion of ``graded flat over $(G,A)$".  It is important to understand that graded flatness is really a notion for maps of graded rings with a module on the codomain and not just a notion for modules on a fixed graded ring. \end{rem}

\begin{defn} \label{defn:Tor} Since $\Mod(G,A)$ has enough projectives (Proposition~\ref{prop:enoughprojectives}), we can form the left-derived functors \be \Tor_i^{G,A}(\slot,N) := L^i ( \slot \otimes_A N) : \Mod(G,A) & \to & \Mod(H,B) \ee of the right exact functors \eqref{slototimesNagain}. \end{defn}

\begin{prop} \label{prop:Tor}  For any $M \in \Mod(G,A)$, the $B$-module underlying the $(H,B)$-module $\Tor_i^{G,A}(M,N)$ coincides with $\Tor_i^A(M,N)$. \end{prop}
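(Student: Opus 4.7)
The plan is to compute $\Tor_i^{G,A}(M,N)$ via a projective resolution in $\Mod(G,A)$ and then observe that, after forgetting the grading, what remains is exactly the computation of the ordinary $\Tor_i^A(M,N)$. The essential ingredients are already in place: Proposition~\ref{prop:enoughprojectives} furnishes enough projectives in $\Mod(G,A)$ and also tells us that the forgetful functor $U \colon \Mod(G,A) \to \Mod(A)$ sends projectives to projectives, and the discussion in \S\ref{section:gradedtensorproduct} shows that the extension-of-scalars functor $\slot \otimes_A N \colon \Mod(G,A) \to \Mod(H,B)$ agrees, on underlying modules, with the usual (ungraded) tensor product of $A$-modules.

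First I would choose a projective resolution $P_\bullet \to M$ in $\Mod(G,A)$. Because $U$ is exact (kernels and cokernels in $\Mod(G,A)$ are computed on underlying abelian groups, hence on underlying $A$-modules) and carries each $P_i$ to a projective $A$-module, the complex $U(P_\bullet) \to M$ is an honest projective resolution of the underlying $A$-module $M$. Hence $\Tor_i^A(M,N)$ may be computed as $H_i(U(P_\bullet) \otimes_A N)$ in $\Mod(B)$, where on the right one regards $N$ as an $A$-module by the usual restriction of scalars along $f \colon A \to B$.

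On the other hand, by definition $\Tor_i^{G,A}(M,N) = H_i(P_\bullet \otimes_A N)$, computed in $\Mod(H,B)$. Applying the forgetful functor $V \colon \Mod(H,B) \to \Mod(B)$ commutes with taking homology because $V$ is exact (again kernels and cokernels in $\Mod(H,B)$ are computed on underlying abelian groups). Therefore
\[
V\bigl(\Tor_i^{G,A}(M,N)\bigr) \;=\; H_i\bigl(V(P_\bullet \otimes_A N)\bigr).
\]
The final step is to identify $V(P_\bullet \otimes_A N)$ with $U(P_\bullet) \otimes_A N$ as complexes of $B$-modules; this is the termwise statement (pointed out in \S\ref{section:gradedtensorproduct}) that the $B$-module underlying the graded extension of scalars agrees with the usual tensor product of the underlying $A$-modules, applied degreewise in the resolution. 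Granting this identification, $H_i(V(P_\bullet \otimes_A N)) = H_i(U(P_\bullet) \otimes_A N) = \Tor_i^A(M,N)$.

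The only subtle point, and the one I would pause on, is the termwise identification $V(P \otimes_A N) = U(P) \otimes_A N$ of $B$-modules. One has to be careful because, as the Warnings in \S\ref{section:gradedtensorproduct} and \S\ref{section:importantspecialcase} emphasize, there is no sensible graded restriction of scalars through $(\gamma, f)$ when $\gamma$ is nontrivial, and in particular $N_A$ does not underlie the usual $A$-module obtained from $N$ by restriction. The resolution is that the graded tensor product $P \otimes_A N$ is constructed, on the level of abelian groups, as the usual tensor product of $P$ (viewed as an ungraded $A$-module via $U$) with $N$ (viewed as an ungraded $A$-module by the composition with $f$); the $G$- or $H$-grading is then layered on top, but is discarded by $V$. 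This matches the formula in \S\ref{section:gradedtensorproduct} and is compatible with the $B$-action, so the identification is canonical and natural in $P$, giving the desired isomorphism of complexes and hence of homology.
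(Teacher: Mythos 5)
Your proof is correct and is essentially the paper's own argument: the paper's "more concrete" proof also resolves $M$ in $\Mod(G,A)$ (by frees, which underlie free $A$-modules — the same content as Proposition~\ref{prop:enoughprojectives}), notes that the graded tensor product agrees with the ordinary one on underlying modules, and identifies the two homology computations; your careful handling of the termwise identification $V(P\otimes_A N)=U(P)\otimes_A N$ is exactly the point the paper records in \S\ref{section:gradedtensorproduct}. The only cosmetic difference is that the paper also phrases the argument abstractly as a degenerating Grothendieck spectral sequence for the composition with the exact forgetful functor, which is equivalent to your exactness-commutes-with-homology step.
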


\begin{proof} The Grothendieck Spectral Sequence for the composition of \eqref{slototimesNagain} and the forgetful functor $\Mod(H,B) \to \Mod(B)$ degenerates to yield the desired isomorphism because this forgetful functor is exact.  More concretely: you can compute $\Tor_i^{G,A}(M,N)$ by taking a resolution of $N$ by free $(G,A)$-modules, tensoring it over $A$ with $N$ and taking homology.  But those free $(G,A)$-modules are in particular free $A$-modules (and the tensor product is the usual one on underlying modules), so you are just computing $\Tor_i^A(M,N)$ on the level of underlying $B$-modules. \end{proof}

\begin{lem} \label{lem:flatimpliesgradedflat} Let $(G,A) \to (H,B)$ be a $\GrAn$-morphism, $N$ an $(H,B)$-module.  Suppose the underlying $B$-module $N$ is flat over $A$ in the usual ungraded sense.  Then $N$ is graded flat over $(G,A)$. \end{lem}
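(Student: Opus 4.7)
The plan is to verify the exactness of the extension-of-scalars functor \eqref{slototimesNagain} by reducing it to the ungraded flatness of $N$ over $A$, which is the hypothesis. Since the functor is already known to be right exact (as noted just before Definition~\ref{defn:gradedflat1}), it suffices to establish left exactness—equivalently, to show that $\Tor_1^{G,A}(M,N) = 0$ for every $M \in \Mod(G,A)$.

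First I would invoke the fact that the forgetful functor $\Mod(H,B) \to \Mod(B)$ is faithfully exact: a sequence of $(H,B)$-modules is exact if and only if its underlying sequence of $B$-modules is exact. This reduces the problem to showing that, for any short exact sequence
\[
0 \to M' \to M \to M'' \to 0
\]
in $\Mod(G,A)$, the induced sequence of $B$-modules
\[
0 \to M' \otimes_A N \to M \otimes_A N \to M'' \otimes_A N \to 0
\]
is exact. Next I would use the analogous faithful exactness of the forgetful functor $\Mod(G,A) \to \Mod(A)$, so that the original short exact sequence is exact on the level of underlying $A$-modules. Then I would apply Proposition~\ref{prop:Tor} (and the explicit description accompanying it), which identifies the $B$-module underlying the graded tensor product $M \otimes_A N$ with the ordinary tensor product $M \otimes_A N$ formed by viewing $M$ as an $A$-module via the forgetful functor and $N$ as an $A$-module via restriction of scalars along $f : A \to B$.

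Having made these identifications, the desired exactness is exactly the statement that tensoring the $A$-linear short exact sequence against the (ungraded) $A$-module $N$ remains exact, which holds by the hypothesis that $N$ is flat over $A$. Equivalently, Proposition~\ref{prop:Tor} gives $\Tor_1^{G,A}(M,N) = \Tor_1^A(M,N) = 0$ as $B$-modules, and hence as $(H,B)$-modules by faithful exactness of the forgetful functor. I do not anticipate a real obstacle here: the whole content of the lemma is a bookkeeping assertion that ``graded flatness over $(G,A)$'' is measured by the same $\Tor$ groups as ordinary flatness over $A$, once one threads the statement through the two faithful exact forgetful functors at either end.
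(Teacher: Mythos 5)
Your argument is correct and is essentially the paper's own proof: the paper likewise deduces the lemma from Proposition~\ref{prop:Tor} (equivalently, from the commutative square of functors with the two forgetful functors) together with faithful exactness of the forgetful functor. No gap to report.
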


\begin{proof} This is immediate from the previous proposition or by using the commutative diagram of functors $$ \xym@C+20pt{ \Mod(G,A) \ar[d]_{\rm forget} \ar[r]^-{\slot \otimes_A N} & \Mod(H,B) \ar[d]^{\rm forget} \\ \Mod(A) \ar[r]^-{\slot \otimes_A N} & \Mod(B) } $$ and the faithful exactness of ``forget". \end{proof}

We will be particularly interested in graded flatness in the setting discussed in \S\ref{section:importantspecialcase}.  Recall that we considered a $G$-graded ring $A$ and an arbitrary map of (ungraded) rings $A \to B$, which is the same thing as a $\GrAn$-morphism $(G,A) \to (0,B)$.  Using the natural isomorphism of categories $\Mod(B) = \Mod(0,B)$, we saw that a $B$-module $N$ determines a (right exact) map of abelian categors \bne{slototimesNN} \slot \otimes_A N : \Mod(G,A) & \to & \Mod(B). \ene 

\begin{defn} \label{defn:gradedflat3} A $B$-module $N$ is called \emph{graded flat over} $(G,A)$ iff \eqref{slototimesNN} is exact.  In particular, when $A=B$, an (ungraded) $A$-module $N \in \Mod(A)$ is called \emph{graded flat over} $(G,A)$ (or just \emph{graded flat} if the $G$-grading on $A$ is clear from context) iff \be \slot \otimes_A N : \Mod(G,A) & \to & \Mod(A) \ee is exact. \end{defn}

\begin{rem} There is a slight potential for confusing the notion of ``graded flat" defined parenthetically above with the notion of ``graded flat" in Definition~\ref{defn:gradedflat1}.  In practice there should never be any ambiguity because the former notion is a notion for \emph{graded modules} with the latter is a notion for \emph{ungraded modules}.  In fact, we really only discuss the notion of ``graded flat" in Definition~\ref{defn:gradedflat1} implicitly through the fact that it is a specialization of the notion of ``graded flat over $(G,A)$" in Definition~\ref{defn:gradedflat2}. \end{rem}

\begin{example} \label{example:gradedflatnesswhenGiszero} If $G=0$, then, suppressing the isomorphism $\Mod(G,A) = \Mod(A)$, the functor \eqref{slototimesNN} is just the usual extension of scalars \be \slot \otimes_A N : \Mod(A) & \to & \Mod(B) \ee and hence a $B$-module is graded flat over $(0,A)$ iff it is flat over $A$ in the usual ungraded sense.  For a more general statement, see Corollary~\ref{cor:gradedflatness}. \end{example}

It is in some sense enough to consider the case $A=B$:

\begin{lem} \label{lem:gradedflatnessdefinition} For $(G,A) \in \GrAn$, $A \to B$ a map of rings and $N$ a $B$-module, $N$ is graded flat over $(G,A)$ iff the usual ungraded restriction of scalars $N \in \Mod(A)$ of $N$ is graded flat (over $(G,A)$). \end{lem}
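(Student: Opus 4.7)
The plan is to show that the two functors whose exactness defines the two graded flatness notions differ only by the restriction-of-scalars functor $\Mod(B) \to \Mod(A)$, which is faithfully exact.

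First I would unwind the definitions. Graded flatness of the $B$-module $N$ over $(G,A)$ means exactness of
\[ F_1 := \slot \otimes_A N : \Mod(G,A) \to \Mod(B), \]
as in Definition~\ref{defn:gradedflat3} applied to the graded ring map $(G,A) \to (0,B)$. Graded flatness of $N_A$ over $(G,A)$ means exactness of
\[ F_2 := \slot \otimes_A N_A : \Mod(G,A) \to \Mod(A), \]
viewing $A \to A$ as the identity and $N_A$ as an ungraded $A$-module.

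Next I would verify that $F_2$ is the composition of $F_1$ and the restriction of scalars functor $R : \Mod(B) \to \Mod(A)$ along $A \to B$. This is exactly the content of the discussion in \S\ref{section:importantspecialcase}: for $M \in \Mod(G,A)$, the underlying abelian group of $M \otimes_A N$ computed as in \eqref{slototimesN} (with $N$ regarded as an $(0,B)$-module) coincides with the ordinary tensor product $M \otimes_A N$ where $N$ is viewed as an $A$-module via $A \to B$, and the $A$-action obtained by restricting the $B$-action on the former agrees with the $A$-action on the latter. Thus $R \circ F_1 = F_2$ on the nose.

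Finally I would conclude using the elementary fact that $R$ is exact and conservative (a sequence of $B$-modules is exact iff the underlying sequence of $A$-modules is exact, since exactness of either is equivalent to exactness on underlying abelian groups). Consequently $F_1$ is exact if and only if $R \circ F_1 = F_2$ is exact, which is exactly the claim. There is no real obstacle here; the only thing to be careful about is the warning in \S\ref{section:importantspecialcase} that one should not conflate $N_A$ with the $(G,A)$-module structure coming from the shifted direct sum $\oplus_g N$ --- but since we only use $N_A$ as an ordinary $A$-module on the target side of $F_2$, this confusion does not arise.
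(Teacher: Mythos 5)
Your argument is correct and is essentially the paper's own proof: the paper's second commutative diagram is exactly your identity $F_2 = R\circ F_1$, and the conclusion in both cases rests on restriction of scalars being faithfully exact (exactness detected on underlying abelian groups). The only cosmetic point is that your symbol $N_A$ clashes with the paper's use of $N_A$ for the \emph{graded} restriction of scalars, but since you clearly mean the ordinary ungraded restriction, no mathematical issue arises.
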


\begin{proof} Use the two commutative diagrams \be  \xym@C+10pt{ \Mod(G,A) \ar[r]^-{\slot \otimes_A N} \ar[d]_{\rm forget} & \Mod(B) \\ \Mod(A) \ar[ru]_{\slot \otimes_A N} } & \quad \quad & \xym@C+10pt{ \Mod(G,A) \ar[r]^-{\slot \otimes_A N} \ar[d]_{\slot \otimes_A N} & \Mod(B) \ar[ld]^{\rm \; \; \; \; restrict \; scalars} \\ \Mod(A) }  \ee and the fact that ``forget" and ``restrict scalars" are exact. \end{proof}

Although it is a very special case of Definition~\ref{defn:gradedflat2}, the notion of graded flatness for a $G$-graded ring $(G,A)$ and an $A$-module in Definition~\ref{defn:gradedflat3} is most important to us as it is closely related to logarithmic flatness.  We must be careful to distinguish the notions ``flat" and ``graded flat".  Clearly a flat $A$-module is a graded-flat $A$-module (Lemma~\ref{lem:flatimpliesgradedflat}), but the converse is not true.

\begin{example} \label{example:gradedflat} Here is a simple example to keep in mind.  Let $k$ be a field.  If we view $k[x]$ as a $\ZZ$-graded $k$-algebra with $|x|=1$, then, as we will see later (Example~\ref{example:gradedflatnessoverkP}), $M$ is graded-flat iff $x$ is $M$-regular.  So, for example, $k[x] / x$ is not graded-flat (or flat) over $k[x]$, while $k[x]/(x-1)$ \emph{is} graded-flat (but not flat) over $k[x]$.  The latter graded flatness is reflected by the geometric fact that the composition of $1 : \Spec k \to \AA^1_k$ and the projection $\AA^1_k \to [\AA^1_k / \GG_m]$ is an open embedding, hence flat. \end{example}

\begin{prop} \label{prop:gradedflatnessandhomogeneousideals} Let $(G,A) \to (H,B)$ be a $\GrAn$-morphism, $N$ a $(H,B)$-module.  Then $N$ is graded flat over $(G,A)$ iff $$ (0 \to I \to A \to A/I \to 0) \otimes_A N $$ is exact in $\Mod(H,B)$ for every finitely generated homogeneous ideal $I \subseteq A$. \end{prop}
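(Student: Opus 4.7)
The plan is to adapt to the graded setting the classical proof that a module is flat iff $\Tor_1^A(A/I,\slot)$ vanishes on finitely generated ideals. The forward direction is immediate: by Definition~\ref{defn:gradedflat2}, graded flatness of $N$ over $(G,A)$ means $\slot \otimes_A N$ is an exact functor $\Mod(G,A) \to \Mod(H,B)$, which we simply apply to the given short exact sequence of $(G,A)$-modules.

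For the converse, I would first upgrade the hypothesis from finitely generated homogeneous ideals to all homogeneous ideals. Proposition~\ref{prop:filtrations} (applied with $M = A$, $M' = 0$) writes any homogeneous ideal $I \subseteq A$ as a filtered direct limit of finitely generated homogeneous sub-ideals; since $\slot \otimes_A N$ preserves filtered direct limits (indeed all direct limits), so does $\Tor_1^{G,A}(\slot, N)$, and filtered direct limits of exact sequences in $\Mod(H,B)$ are exact. Hence $\Tor_1^{G,A}(A/I, N) = 0$ for \emph{every} homogeneous ideal $I$. Applying Proposition~\ref{prop:filtrations} once more, this time to write an arbitrary $(G,A)$-module as the filtered direct limit of its finitely generated $(G,A)$-submodules, reduces the goal $\Tor_1^{G,A}(M,N) = 0$ to the case of finitely generated $M$.

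I would then induct on the minimum number $n$ of homogeneous generators of a finitely generated $M$. When $n=1$, with $M = Am$ and $m$ homogeneous of degree $g$, the kernel of the surjection $A\{g\} \twoheadrightarrow M$, $a \mapsto am$, is a shift of a homogeneous ideal $I \subseteq A$, so $M \cong (A/I)\{g\}$; compatibility of the shift functor $\{g\}$ with $\Tor_1^{G,A}(\slot, N)$ gives $\Tor_1^{G,A}(M,N) = \Tor_1^{G,A}(A/I,N)\{g\} = 0$ by the previous step. For $n \geq 2$, let $M' \subseteq M$ be the $(G,A)$-submodule generated by the first $n-1$ homogeneous generators, so $M/M'$ is generated by one homogeneous element; the long exact $\Tor$-sequence of $0 \to M' \to M \to M/M' \to 0$ (which lives in $\Mod(H,B)$, since $\Mod(G,A)$ has enough projectives by Proposition~\ref{prop:enoughprojectives}) reads
\[
\Tor_1^{G,A}(M',N) \;\to\; \Tor_1^{G,A}(M,N) \;\to\; \Tor_1^{G,A}(M/M',N),
\]
whose outer terms vanish by the inductive hypothesis and the $n=1$ case, forcing $\Tor_1^{G,A}(M,N) = 0$.

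The main thing to watch is bookkeeping rather than any real obstacle: Proposition~\ref{prop:filtrations} must be invoked so that the filtered system consists of genuine \emph{graded} submodules, the single generator of $M/M'$ in the induction must be verified to be homogeneous (which is what the shift $\{g\}$ compensates for), and the long exact sequence must be read in $\Mod(H,B)$ rather than merely in $\Mod(B)$ (with Proposition~\ref{prop:Tor} ensuring these interpretations agree on underlying modules). Notably, because only $\Tor_1$ terms flank $\Tor_1^{G,A}(M,N)$ in the long exact sequence, no higher-Tor vanishing is ever needed and the induction closes cleanly.
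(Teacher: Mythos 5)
Your proposal is correct and is essentially the paper's own argument: the paper's proof simply defers to the ``well-known'' proof of \cite[Theorem~7.7]{Mat2} with the word ``homogeneous'' inserted, using Proposition~\ref{prop:filtrations} and the compatibility of $\slot \otimes_A N$ with filtered direct limits, which is exactly the reduction chain (finitely generated homogeneous ideals $\to$ all homogeneous ideals $\to$ modules with one homogeneous generator $\to$ finitely generated graded modules by induction on generators $\to$ arbitrary graded modules) that you spelled out via graded $\Tor$ and Proposition~\ref{prop:Tor}.
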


\begin{proof} The ``well-known" argument Matsumura refers to \cite[Page 18]{Mat} carries over easily to the graded setting.   Indeed, that argument is carefully written out in the proof of \cite[Theorem~7.7]{Mat2}, and one need only insert the word ``homogeneous" at the obvious points, making use of Proposition~\ref{prop:filtrations} and the commutativity of the diagram of functors \bne{tensorfiltereddirectlimits} & \xym@C+20pt{ \Mod(G,A)^{\Lambda} \ar[d]_{\dirlim} \ar[r]^{\slot \otimes_A N} & \Mod(H,B)^\Lambda \ar[d]^{\dirlim} \\ \Mod(G,A) \ar[r]^-{\slot \otimes_A N} & \Mod(H,B) } \ene for each filtered category $\Lambda$.  \end{proof} 

\begin{prop} \label{prop:gradedflatnessgroupinjective} Consider a $\GrAn$-morphism of the form $(\gamma,\Id) : (S,A) \to (G,A)$ with $\gamma$ injective and an arbitrary $\GrAn$-morphism $(G,A) \to (H,B)$.  Then an $(H,B)$-module $N$ is graded flat over $(G,A)$ iff it is graded flat over $(S,A)$. \end{prop}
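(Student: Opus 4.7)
The plan is to exploit the fact that, since $\gamma$ is injective, the category $\Mod(G,A)$ decomposes as a product of copies of $\Mod(S,A)$ indexed by the cosets $G/\gamma(S)$, and to check that the extension-of-scalars functor $F := \slot \otimes_A N$ respects this decomposition. First I would unwind what the morphism $(\gamma,\Id)$ means for the gradings: the identity must send the $S$-homogeneous component $A_s$ into the $G$-homogeneous component $A_{\gamma(s)}$, and because $\gamma$ is injective and $\Id$ is a bijection a direct-sum argument forces $A_g = 0$ for $g \notin \gamma(S)$ and $A_s = A_{\gamma(s)}$ otherwise. Thus the $G$-grading on $A$ is simply the pushforward of the $S$-grading along $\gamma$.

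Next I would introduce the obvious exact, fully faithful embedding
\[
\iota : \Mod(S,A) \longrightarrow \Mod(G,A), \qquad (\iota M)_g = \begin{cases} M_s & g = \gamma(s), \\ 0 & g \notin \gamma(S), \end{cases}
\]
and verify by direct computation of gradings that $F_G \circ \iota = F_S$ as functors $\Mod(S,A) \to \Mod(H,B)$ (here I use that the group-part $\gamma_B : G \to H$ of $(G,A) \to (H,B)$ composes with $\gamma$ to give the group-part of $(S,A) \to (H,B)$). Since $\iota$ is exact and $F_G \circ \iota = F_S$, exactness of $F_G$ immediately gives exactness of $F_S$; this handles one implication.

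For the converse, I would exhibit a canonical direct-sum decomposition of an arbitrary $M \in \Mod(G,A)$ along $\gamma(S)$-cosets: setting $M^{(c)} := \bigoplus_{g \in c} M_g$ for $c \in G/\gamma(S)$, the fact that $A_g = 0$ for $g \notin \gamma(S)$ means each $M^{(c)}$ is a $(G,A)$-submodule, and $M = \bigoplus_c M^{(c)}$ as $(G,A)$-modules. Choosing a representative $g_c \in c$, the shift $M^{(c)}\{-g_c\}$ is supported in $\gamma(S)$ and hence lies in the essential image of $\iota$, say $M^{(c)}\{-g_c\} = \iota(\widetilde M^{(c)})$. The decomposition is visibly functorial (morphisms preserve grading hence preserve cosets), so exact sequences in $\Mod(G,A)$ decompose as direct sums of exact sequences of shifts of $\iota$-images.

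Finally I would combine these ingredients: $F_G$ commutes with arbitrary direct sums, and a straightforward regrading calculation shows $F_G(M\{g\}) = F_G(M)\{\gamma_B(g)\}$, i.e.\ $F_G$ commutes with shifts up to a shift in $\Mod(H,B)$. Shifts in $\Mod(H,B)$ are auto-equivalences, so the exactness of $F_G$ on $\Mod(G,A)$ is equivalent to exactness on each summand $\iota(\widetilde M^{(c)})\{g_c\}$, which is equivalent to exactness of $F_S$ on $\Mod(S,A)$. The only real bookkeeping obstacle is tracking the shifts consistently across the two grading groups and verifying the coset decomposition is functorial; neither of these is substantive once the grading constraint forced by $\gamma$ injective is made explicit.
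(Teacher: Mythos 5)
Your proof is correct, but it takes a genuinely different route from the paper's. The paper argues in two lines: since $\gamma$ is injective, an ideal of $A$ is homogeneous for the $S$-grading iff it is homogeneous for the $G$-grading, and then Proposition~\ref{prop:gradedflatnessandhomogeneousideals} (the graded Baer-type criterion), together with the faithful exactness of the forgetful functor $\Mod(H,B) \to \Mod(B)$, identifies both flatness conditions with exactness of $(0 \to I \to A \to A/I \to 0) \otimes_A N$ for every homogeneous ideal $I \subseteq A$. You instead work at the level of module categories: you observe that the $G$-grading is the pushforward of the $S$-grading (so $A_g = 0$ off $\gamma(S)$), decompose every $(G,A)$-module canonically along $G/\gamma(S)$-cosets, and check that $\slot \otimes_A N$ is compatible with the embedding $\iota$, with arbitrary direct sums, and with shifts (the regrading identity $F_G(M\{g\}) = F_G(M)\{\gamma_B(g)\}$), all of which are straightforward and which I verified go through. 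What the paper's route buys is brevity, since the ideal criterion and the coincidence of homogeneous ideals do all the work; what your route buys is independence from Proposition~\ref{prop:gradedflatnessandhomogeneousideals} (and hence from the filtered-limit bookkeeping behind it), plus a sharper structural statement: every exact sequence in $\Mod(G,A)$ is a direct sum of $\gamma_B$-shifted images under $\iota$ of exact sequences in $\Mod(S,A)$, which exhibits $\Mod(G,A)$ as built from coset-indexed copies of $\Mod(S,A)$ and makes the equivalence of the two flatness notions transparent. The only caution is the one you already flag — keeping the shifts in $G$ versus $H$ straight — and your use of $\gamma_B$ handles it correctly.
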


\begin{proof}  Since $\gamma$ is injective, an ideal $I \subseteq A$ is homogeneous for the $G$-grading iff it is homogeneous for the $S$-grading, so that $(S,A)$ and $(G,A)$ have the same homogeneous ideals and we may speak unambiguously of a ``homogeneous ideal of $A$."  By Proposition~\ref{prop:gradedflatnessandhomogeneousideals} and the faithful exactness of $\Mod(H,B) \to \Mod(B)$, both graded flatness conditions are equivalent to exactness of the sequence of $B$-modules $$ (0 \to I \to A \to A/I \to 0) \otimes_A N $$ for each homogeneous ideal $I$ of $A$. \end{proof}

\begin{cor} \label{cor:gradedflatness} Let $G$ be a group, $A$ a ring, viewed as a $G$-graded ring supported in degree zero, $A \to B$ a ring map.  Then a $B$-module $N$ is graded flat over $(G,A)$ iff it is flat over $A$ in the usual ungraded sense. \end{cor}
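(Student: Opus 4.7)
The plan is to derive this corollary as an immediate formal consequence of Proposition~\ref{prop:gradedflatnessgroupinjective} and Example~\ref{example:gradedflatnesswhenGiszero}. The strategy is simply to factor the hypothesis ``$N$ graded flat over $(G,A)$'' through the trivially graded ring $(0,A)$, where graded flatness has already been identified with ordinary flatness.

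First I would observe that the unique group homomorphism $\gamma : 0 \to G$ is (vacuously) injective, and since $A$ is $G$-graded supported in degree zero, the identity map on rings $A \to A$ respects the gradings: each element of $A$ is homogeneous of degree $0 \in 0$ and its image in $(G,A)$ is homogeneous of degree $\gamma(0) = 0 \in G$, which is the only nonzero graded component of $(G,A)$. Thus $(\gamma, \Id) : (0,A) \to (G,A)$ is a well-defined morphism of graded rings. Composing with the chosen map $(G,A) \to (0,B)$ corresponding to $A \to B$ produces the setup required by Proposition~\ref{prop:gradedflatnessgroupinjective}.

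Applying that proposition then gives the equivalence: a $B$-module $N$ is graded flat over $(G,A)$ if and only if it is graded flat over $(0,A)$. By Example~\ref{example:gradedflatnesswhenGiszero}, graded flatness over $(0,A)$ coincides with ordinary (ungraded) flatness over $A$, which completes the proof.

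There is really no obstacle here: the entire content of the corollary has already been packaged into the two results cited. The only thing to verify is the purely formal check that $(\gamma,\Id)$ genuinely defines a $\GrAn$-morphism, which is automatic from the fact that $A$ is supported in a single degree that matches the image of $\gamma$.
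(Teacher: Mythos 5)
Your argument is correct and is exactly the paper's own proof: apply Proposition~\ref{prop:gradedflatnessgroupinjective} to the injective inclusion $(0,A) \to (G,A)$ and then identify graded flatness over $(0,A)$ with ordinary flatness via Example~\ref{example:gradedflatnesswhenGiszero}. The formal check that $(\gamma,\Id)$ is a $\GrAn$-morphism is fine and needs no further elaboration.
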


\begin{proof} Apply Proposition~\ref{prop:gradedflatnessgroupinjective} to $(0,A) \to (G,A)$ and note that graded flatness over $(0,A)$ is certainly the same as flatness over $A$ in the usual ungraded sense (Example~\ref{example:gradedflatnesswhenGiszero}).  \end{proof}

\subsection{Graded flatness and base change}  \label{section:gradedflatnessandbasechange} The extent to which graded flatness is ``stable under base change" is somewhat unclear.

\begin{defn} \label{defn:goodrestrictionofscalars} Let $(\gamma,f) : (G,A) \to (H,B)$ be a $\GrAn$-morphism.  A \emph{good restriction of scalars} for $(\gamma,f)$ is a map of abelian categories \bne{goodscalars} R : \Mod(H,B) & \to & \Mod(G,A) \ene agreeing with the usual restriction of scalars on the level of underlying modules---i.e.\ making \bne{goodscalarsproperty} & \xym{ \Mod(H,B) \ar[d] \ar[r]^R & \Mod(G,A) \ar[d] \\ \Mod(B) \ar[r] & \Mod(A) } \ene commute when the bottom horizontal arrow is the usual ungraded restriction of scalars and the vertical arrows are the forgetful functors. \end{defn}

Note that a good restriction of scalars is automatically exact by commutativity of \eqref{goodscalarsproperty}, faithful exactness of $\Mod(G,A) \to \Mod(A)$, exactness of the forgetful functor $\Mod(H,B) \to \Mod(B)$, and exactness of the usual ungraded restriction of scalars.

\begin{example} \label{example:gammaiso} If $\gamma$ is an isomorphism, then the usual graded restriction of scalars functor $N \mapsto N_A$ of \eqref{restscalars} is a good restriction of scalars. \end{example}

\begin{example} \label{example:gammasplitting} Suppose $\gamma : G \to H$ has a section $s : H \to G$ such that $\oplus_{g \in G \setminus s(H)} A_g$ is contained in the kernel of $f : A \to B$.  Then one can define a good restriction of scalars by taking $M$ to $R(M)$, where $R(M)$ is $M$ regarded as an $A$-module equipped with the decomposition where $M_g := M_h$ if $g = s(h)$ and $M_g := 0$ otherwise. \end{example}

\begin{example} View $\ZZ[x,y]$ as a $\ZZ^2$ graded ring with $|x|=(1,0)$, $|y|=(0,1)$ and $\ZZ[x]$ as a $\ZZ$-graded ring with $|x|=1$.  The map $f : \ZZ[x,y] \to \ZZ[x]$ taking $x$ to $x$ and killing $y$ becomes a map of graded rings $(\pi_1,f)$ where $\pi_1 : \ZZ^2 \to \ZZ$ is projection on the first factor.  The section $s : \ZZ \to \ZZ^2$ taking $n$ to $(n,0)$ has the property that $\oplus_{(a,b) \in \ZZ^2 \setminus s(\ZZ)} \ZZ x^a y^b$ is in the kernel of $f$.  The map $(\pi_1,f)$ admits a good restriction of scalars by the previous example. \end{example}

Now we see how good restriction of scalars is related to stability of graded flatness under base change.  First note that the category $\GrAn$ has pushouts.  A pushout diagram takes the form \bne{pushoutinGrAn} & \xym{ (G,A) \ar[d] \ar[r] & (H_1,B_1) \ar[d] \\ (H_2,B_2) \ar[r] & (K,C)  } \ene where $(K,C) = (H_1 \oplus_G H_2,B_1 \otimes_A B_2)$ and $b_1 \otimes b_2 \in C$ has grading $[|b_1|,|b_2|] \in K$ for homogeneous elements $b_1 \in B_1$, $b_2 \in B_2$.  Suppose an $(H_2,B_2)$-module $N$ is graded flat over $(G,A)$ and let us try to prove that its extension of scalars $C \otimes_{B_2} N$ to a $(K,C)$-module is graded flat over $(H_1,B_1)$.  We want to show that the functor \be \slot \otimes_{B_1} (C \otimes_{B_2} N) : \Mod(H_1,B_1) & \to & \Mod(K,C) \ee is exact.  We can check this after composing with the faithfully exact forgetful functor $\Mod(C,K) \to \Mod(C)$.  Since the module underlying a graded tensor product is just the tensor product of the underlying modules, the resulting composition functor \bne{pqr} \slot \otimes_{B_1} (C \otimes_{B_2} N) : \Mod(H_1,B_1) & \to & \Mod(C) \ene is given by \be M & \mapsto & M \otimes_{B_1} (C \otimes_{B_2} N) \\ & = & M \otimes_{B_1} ( \otimes_{B_1} \otimes_A B_2 ) \otimes_{B_2} N \\ & = & M \otimes_A N. \ee  Given a good restriction of scalars $R$ for $(G,A) \to (H_1,B_1)$, we obtain a commutative diagram \bne{tr} & \xym@C+20pt{ \Mod(H_1,B_1) \ar[r]^-{\slot \otimes_A N} \ar[dd]_R & \Mod(C) \ar[d]^{\rm restrict \; scalars} \\ & \Mod(B_2)  \\ \Mod(G,A) \ar[r]^-{ \slot \otimes_A N} & \Mod(H_2,B_2) \ar[u]_{\rm forget} } \ene and we conclude that \eqref{pqr} is exact (and hence that $C \otimes_{B_2} N$ is graded flat over $(H_1,B_1)$) using faithful exactness of ``restrict scalars," exactness of $R$, exactness of $\slot \otimes_A N$ (since  $N$ is graded flat over $(G,A)$), and exactness of ``forget."  We have proved:

\begin{prop} \label{prop:gradedflatnessandbasechange} Graded flatness is stable under pushout along $\GrAn$-morphisms admitting a good restriction of scalars.  In particular it is stable under pushout along $\GrAn$-morphisms $(\gamma,f)$ where $\gamma$ is an isomorphism. \end{prop}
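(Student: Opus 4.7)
The plan is to reduce the graded flatness of $C \otimes_{B_2} N$ over $(H_1,B_1)$ to the hypothesized graded flatness of $N$ over $(G,A)$ by checking exactness after the faithfully exact forgetful functor to $\Mod(C)$ and then factoring through the good restriction of scalars. Concretely, given the pushout \eqref{pushoutinGrAn} and a good restriction of scalars $R:\Mod(H_1,B_1) \to \Mod(G,A)$ for $(G,A)\to(H_1,B_1)$, I want to show that $\slot \otimes_{B_1}(C \otimes_{B_2} N): \Mod(H_1,B_1) \to \Mod(K,C)$ is exact. Composing with $\Mod(K,C) \to \Mod(C)$, which is faithfully exact, and unwinding via associativity of the ordinary tensor product together with $C = B_1 \otimes_A B_2$, this composite becomes the functor $M \mapsto M \otimes_A N$ viewed as a $C$-module, where the $C$-action comes from the $B_1$-action on $M$ and the $B_2$-action on $N$.

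Next I would factor this functor by the route $M \mapsto R(M) \mapsto R(M) \otimes_A N$, the second map being the graded extension-of-scalars \eqref{slototimesN} landing in $\Mod(H_2,B_2)$, and then post-composing with the forgetful functor $\Mod(H_2,B_2) \to \Mod(B_2)$ followed by extension of scalars along $B_2 \to C$. The crucial input from Definition~\ref{defn:goodrestrictionofscalars} is that $R$ agrees with ordinary restriction of scalars on underlying modules, so this factorization is an equality of functors into $\Mod(C)$ rather than a mere natural isomorphism. Exactness of each factor then yields the result: $R$ is exact because its composition with the faithfully exact forgetful functor $\Mod(G,A) \to \Mod(A)$ is (by \eqref{goodscalarsproperty}) the exact ungraded restriction of scalars composed with a faithfully exact functor; $\slot \otimes_A N$ is exact by the graded flatness hypothesis on $N$; and the remaining functors are trivially exact. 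The ``in particular'' clause then follows immediately from Example~\ref{example:gammaiso}, which supplies a good restriction of scalars whenever $\gamma$ is an isomorphism.

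The main obstacle is not any deep algebra but rather the scalar bookkeeping: one must carefully verify that the proposed factorization is an equality of functors into $\Mod(C)$, matching up the $C=B_1\otimes_A B_2$-action obtained from the top route with the one obtained from the bottom route. Once this identification is in hand, exactness is a mechanical consequence. In fact, the whole point of the ``goodness'' requirement in Definition~\ref{defn:goodrestrictionofscalars} is to make precisely this diagram commute strictly, which suggests that the definition was engineered with exactly this base-change argument in mind and that there is little room to weaken the hypotheses without losing the conclusion.
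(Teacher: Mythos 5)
Your overall strategy is the paper's: check exactness of $\slot\otimes_{B_1}(C\otimes_{B_2}N)$ after the faithfully exact forgetful functor $\Mod(K,C)\to\Mod(C)$, identify the composite with $M\mapsto M\otimes_A N$, and feed in $R$ and the graded flatness of $N$ over $(G,A)$. But the last leg of your factorization is where the argument breaks. Post-composing $\mathrm{forget}\circ(\slot\otimes_A N)\circ R$ with \emph{extension} of scalars along $B_2\to C$ gives $M\mapsto (M\otimes_A N)\otimes_{B_2}C\cong M\otimes_A N\otimes_A B_1$, which is not the functor $M\mapsto M\otimes_A N$ you need to recover: already for $G=H_1=H_2=0$, $A=B_2=N=k$ a field and $B_1$ a nontrivial $k$-algebra (so $C=B_1$, and a good restriction of scalars exists since $\gamma$ is an isomorphism), the original functor sends $M=B_1$ to $B_1$ while your route sends it to $B_1\otimes_k B_1$. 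So the ``equality of functors into $\Mod(C)$'' that you correctly flag as the crux simply fails. Moreover, even if it held, $\slot\otimes_{B_2}C$ is not exact unless $C=B_1\otimes_A B_2$ is flat over $B_2$, which is not among the hypotheses, so exactness of the composite would not follow from your factorization anyway.

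The repair is to run the comparison in the opposite direction, which is what the paper's diagram \eqref{tr} does: since the ungraded restriction of scalars $\Mod(C)\to\Mod(B_2)$ is faithfully exact, it suffices to check exactness of the $\Mod(C)$-valued functor \eqref{pqr} after restricting from $C$ to $B_2$. On underlying $B_2$-modules that composite is literally equal (not just naturally isomorphic) to $\mathrm{forget}\circ(\slot\otimes_A N)\circ R$, and this strict equality is exactly what property \eqref{goodscalarsproperty} of a good restriction of scalars buys: $R(M)$ has the ungraded restriction of $M$ as its underlying $A$-module, and the graded extension of scalars \eqref{slototimesN} agrees with the usual tensor product on underlying modules. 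Each factor is exact ($R$ by \eqref{goodscalarsproperty} together with the faithful exactness of the forgetful functors, $\slot\otimes_A N$ by the graded flatness hypothesis on $N$, and the forgetful functor trivially), so the functor into $\Mod(K,C)$ is exact. With this one leg replaced, your argument coincides with the paper's proof, and the ``in particular'' clause via Example~\ref{example:gammaiso} is fine as you state it.
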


\subsection{Graded flatness criteria} \label{section:gradedflatnesscriteria}

\begin{lem} \label{lem:gradedflatnessoverkP} Let $k$ be a field, $P$ an integral monoid, $k[P]$ the associated $P^{\rm gp}$-graded ring as in \S\ref{section:monoidstogradedrings}, $N \in \Mod(k[P])$ a module over $k[P]$ in the usual ungraded sense.  Then $N$ is graded flat iff \bne{torvanishingyy} \Tor_1^{k[P]}(N,k[P]/k[I]) & = & 0 \ene for each finitely generated ideal $I \subseteq P$.  If $P$ is fine, then it is enough to check \eqref{torvanishingyy} for each prime ideal $I \subseteq P$. \end{lem}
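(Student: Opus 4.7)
The plan is to deduce both statements from the general graded-flatness criterion in Proposition~\ref{prop:gradedflatnessandhomogeneousideals} together with the ideal-matching statements of Proposition~\ref{prop:homogeneousideals}. For the first equivalence, I note that by Proposition~\ref{prop:gradedflatnessandhomogeneousideals}, graded flatness of $N$ over $(P^{\rm gp},k[P])$ is equivalent to the vanishing $\Tor_1^{k[P]}(N,k[P]/J)=0$ for every finitely generated homogeneous ideal $J\subseteq k[P]$. Since $k$ is a field and $P$ is integral, Proposition~\ref{prop:homogeneousideals}\eqref{homogeneousideals3} says that $J\mapsto J\cap P$ and $I\mapsto k[I]$ are inverse bijections between homogeneous ideals of $k[P]$ and ideals of $P$. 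So the whole issue is to check that this bijection matches ``finitely generated'' on both sides. One direction is immediate: if $I=\langle i_1,\dots,i_n\rangle$ in $P$, then $k[I]$ is generated by $[i_1],\dots,[i_n]$ in $k[P]$. For the converse, if $k[I]$ is generated by finitely many elements, we may split each into homogeneous components; these lie in $k[I]$, so we may assume they are of the form $a_j[p_j]$ with $p_j\in I$ (this uses integrality of $P$, as in the Warning of \S\ref{section:monoidstogradedrings}). Writing any $[q]\in k[I]$ as a $k[P]$-combination of the $[p_j]$ and comparing monomials (again using integrality) forces $q\in p_j+P$ for some $j$, so $I=\langle p_1,\dots,p_m\rangle$.

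For the second statement, I would use the filtration machinery of \S\ref{section:filtrations}. Since $P$ is fine, $k[P]$ is noetherian and $k[P]/k[I]$ is a finitely generated graded $k[P]$-module, so Lemma~\ref{lem:filtrations} produces a finite filtration
\[
0=M_0\subseteq M_1\subseteq\cdots\subseteq M_n=k[P]/k[I]
\]
whose successive quotients $M_i/M_{i-1}$ are isomorphic, after a grading shift, to $k[P]/\P_i$ for semiprime ideals $\P_i\subseteq k[P]$. By Proposition~\ref{prop:homogeneousideals}\eqref{fieldbijection}, every such $\P_i$ equals $k[J_i]$ for a prime ideal $J_i\subseteq P$. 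Since grading shifts do not change the underlying ungraded $k[P]$-module, the hypothesis gives $\Tor_1^{k[P]}(N,M_i/M_{i-1})=0$ for each $i$. Induction on $i$ via the long exact sequence
\[
\Tor_1^{k[P]}(N,M_{i-1})\to\Tor_1^{k[P]}(N,M_i)\to\Tor_1^{k[P]}(N,M_i/M_{i-1})
\]
then yields $\Tor_1^{k[P]}(N,k[P]/k[I])=0$, and since every ideal of a fine monoid is finitely generated (Corollary~\ref{cor:finiteness} applied to the finitely generated $P$-module $P$), the first part of the lemma concludes graded flatness.

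The only genuinely delicate point I foresee is the matching of finite generation under $I\leftrightarrow k[I]$: one really must use integrality of $P$ to conclude that the monomials appearing in a finite $k[P]$-generating set for $k[I]$ already generate $I$ as a $P$-ideal. Everything else is a routine combination of the filtration of a graded noetherian module by cyclic modules of the form $k[P]/(\text{semiprime})$ and the bijection between semiprime ideals of $k[P]$ and prime ideals of $P$ provided by Proposition~\ref{prop:homogeneousideals}.
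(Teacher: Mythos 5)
Your argument is correct, and the second half is essentially the paper's proof: when $P$ is fine you invoke noetherianness of $k[P]$, the filtration of Lemma~\ref{lem:filtrations} into shifted cyclic quotients $k[P]/\P$ with $\P$ semiprime, the bijection of Proposition~\ref{prop:homogeneousideals} between such $\P$ and prime ideals of $P$, and the long exact sequence of $\Tor$'s; the only cosmetic difference is that you filter $k[P]/k[I]$ and then quote the first statement, whereas the paper filters an arbitrary finitely generated graded module directly. Where you genuinely diverge is in the first equivalence. The paper never matches finite generation across the bijection $I \leftrightarrow k[I]$: it takes an arbitrary homogeneous ideal $J = k[I]$, writes $I$ as a filtered direct limit of finitely generated monoid ideals $I_i$, and commutes $\Tor_1^{k[P]}(N,k[P]/k[I])$ with the filtered colimit, so the hypothesis for finitely generated $I$ suffices. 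You instead prove the sharper combinatorial statement that $k[I]$ is a finitely generated ideal of $k[P]$ if and only if $I$ is a finitely generated ideal of $P$ (the monomial-comparison argument using integrality of $P$ is fine, and it is indeed the delicate point: without integrality the homogeneous elements of $k[P]$ need not be monomials), and then apply Proposition~\ref{prop:gradedflatnessandhomogeneousideals} verbatim. Both routes work; yours yields the extra fact that the bijection of Proposition~\ref{prop:homogeneousideals} preserves finite generation, while the paper's colimit trick avoids that lemma entirely and handles non-finitely-generated ideals of $P$ uniformly with no extra effort.
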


\begin{rem} A finitely generated monoid has only finitely many prime ideals. \end{rem}

\begin{proof} These $\Tor$-vanishing conditions are certainly necessary for graded flatness because $k[I]$ is a homogeneous ideal in $k[P]$ by Proposition~\ref{prop:homogeneousideals} for each ideal $I \subseteq P$, hence a $(P^{\rm gp},k[P])$-module.  For sufficiency, first note that Proposition~\ref{prop:gradedflatnessandhomogeneousideals} reduces us to show that $\Tor_1^{k[P]}(N,k[P]/J)=0$ for every homogeneous ideal $J \subseteq k[P]$.  By Proposition~\ref{prop:homogeneousideals}, $J = k[I]$ for some ideal $I \subseteq P$.  By writing $I$ as a filtered direct limit of finitely generated ideals $I_i$ and commuting $\Tor_1^{k[P]}(N,k[P]/k[I])$ with this filtered direct limit we obtain the first criterion.

Now suppose $P$ is fine.  Then $k[P]$ is noetherian.  To show that $N$ is graded flat it is enough (by Proposition~\ref{prop:gradedflatnessandhomogeneousideals}, say) to check that $\Tor_1^{k[P]}(N,M) = 0$ for every \emph{finitely generated} graded $k[P]$-module $M$.  By taking a filtration $M_\bullet$ of $M$ as in Lemma~\ref{lem:filtrations} and working our way up the filtration using the long exact sequence of $\Tor$'s associated to the short exact sequences $$0 \to M_{i-1} \to M_{i} \to M_{i} / M_{i-1} \to 0,$$ we reduce to proving $\Tor_1^{k[P]}(N,k[P]/\P) = 0$ for every semiprime ideal $\P \subseteq k[P]$.  By Proposition~\ref{prop:homogeneousideals}, each such $\P$ is of the form $k[I]$ for a prime ideal $I \subseteq P$. \end{proof}

\begin{example} \label{example:gradedflatnessoverkP} For example, when $P=\NN$, the only (non-empty) prime ideal of $\NN$ is $(1)$, so Lemma~\ref{lem:gradedflatnessoverkP} says that a $k[\NN] = k[x]$-module $N$ is graded flat iff \be \Tor_1^{k[x]}(N,k[x]/xk[x]) & = & 0.\ee  This is equivalent to saying that $\cdot x : N \to N$ is injective because $x \in k[x]$ is a regular element. \end{example}

\begin{lem} \label{lem:gradedflatness} Let $(\gamma,f) : (G,A) \to (H,B)$ be a $\GrAn$ morphism such that $B \in \Mod(H,B)$ is graded flat over $(G,A)$.  Let $\{ B_e : e \in E \}$ be a set of $H$-graded rings under $(H,B)$.  Suppose that every finitely generated graded $(H,B)$-module admits a finite filtration all of whose successive quotients are obtained by extension of scalars from a $(G,A)$-module or restriction of scalars from an $(H,B_e)$ module for some $e \in E$.  Then a $B$-module $N$ is graded flat (over $(H,B)$) iff $N$ is graded flat over $(G,A)$ and for every $e \in E$, the $B_e$-module $N_e := N \otimes_B B_e$ is graded flat (over $(H,B_e)$) and $\Tor_1^B(N,B_e)=0$. \end{lem}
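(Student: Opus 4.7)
The plan is to prove the two directions separately; the ``only if'' direction is essentially formal while the ``if'' direction carries the content. For ``only if,'' assume $N$ is graded flat over $(H,B)$. Factoring $\slot \otimes_A N$ as $(\slot \otimes_A B)\circ(\slot \otimes_B N)$ on $\Mod(G,A)$ and using the exactness of both factors (the first by the hypothesis on $B$, the second by the hypothesis on $N$) yields graded flatness of $N$ over $(G,A)$. For graded flatness of $N_e$ over $(H,B_e)$, note that for $M\in\Mod(H,B_e)$ one has $M\otimes_{B_e}N_e=M\otimes_B N$, then combine the exactness of the restriction of scalars functor $\Mod(H,B_e)\to\Mod(H,B)$ (a good restriction of scalars in the sense of Example~\ref{example:gammaiso}) with the exactness of $\slot\otimes_B N$ on $\Mod(H,B)$. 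Finally, $\Tor_1^B(N,B_e)=0$ because $B_e$ is a graded $(H,B)$-module and Proposition~\ref{prop:Tor} identifies $\Tor^B(N,\slot)$ with the derived functor of the exact functor $\slot\otimes_B N\colon\Mod(H,B)\to\Mod(B)$.

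For ``if,'' by Proposition~\ref{prop:gradedflatnessandhomogeneousideals} and the observation that $B/J$ for a finitely generated homogeneous ideal $J$ is a finitely generated graded $(H,B)$-module, it suffices to prove $\Tor_1^B(N,M)=0$ for every finitely generated graded $(H,B)$-module $M$. Fix such an $M$ together with a filtration $0=M_0\subseteq M_1\subseteq\cdots\subseteq M_n=M$ as in the hypothesis, with successive quotients $Q_i=M_i/M_{i-1}$ each of type (a) (extension of scalars $M'\otimes_A B$ with $M'\in\Mod(G,A)$) or type (b) (restriction of scalars from an $(H,B_e)$-module). I will show $\Tor_1^B(N,Q_i)=0$ for each $i$; then induction on $i$ using exactness at the middle of
\begin{equation*}
\Tor_1^B(N,M_{i-1})\to\Tor_1^B(N,M_i)\to\Tor_1^B(N,Q_i)
\end{equation*}
gives $\Tor_1^B(N,M)=0$ (note that no higher $\Tor$-vanishing on the $Q_i$ is required: the two outer groups vanishing squeezes the middle to zero).

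For a type (a) quotient $Q=M'\otimes_A B$, choose a projective resolution $P_\bullet\to M'$ in $\Mod(G,A)$; since $B$ is graded flat over $(G,A)$, $P_\bullet\otimes_A B\to Q$ is a resolution by projective $(H,B)$-modules (Proposition~\ref{prop:tensortakesprojtoproj}), so
\begin{equation*}
\Tor_i^B(N,Q)=H_i\bigl((P_\bullet\otimes_A B)\otimes_B N\bigr)=H_i(P_\bullet\otimes_A N),
\end{equation*}
which vanishes for $i\geq 1$ by the assumed graded flatness of $N$ over $(G,A)$. For a type (b) quotient $Q$, factor $\slot\otimes_B Q$ as $(\slot\otimes_B B_e)\otimes_{B_e}Q$ on $\Mod(B)$ and invoke the Grothendieck spectral sequence
\begin{equation*}
E^2_{p,q}=\Tor_p^{B_e}(\Tor_q^B(N,B_e),Q)\Rightarrow\Tor_{p+q}^B(N,Q);
\end{equation*}
its five-term exact sequence of low-degree terms reads
\begin{equation*}
\Tor_2^{B_e}(N_e,Q)\to\Tor_1^B(N,B_e)\otimes_{B_e}Q\to\Tor_1^B(N,Q)\to\Tor_1^{B_e}(N_e,Q)\to 0,
\end{equation*}
whose outer two terms vanish by graded flatness of $N_e$ over $(H,B_e)$ applied to the graded $(H,B_e)$-module $Q$, and whose second term vanishes by the hypothesis $\Tor_1^B(N,B_e)=0$, giving $\Tor_1^B(N,Q)=0$. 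The main obstacle I foresee is verifying that the spectral-sequence step truly succeeds with only $\Tor_1$-vanishing of $N$ against $B_e$ rather than all $\Tor_i$: this is the only place where all three ``if''-hypotheses are invoked simultaneously, and it is the graded flatness of $N_e$ that forces both $\Tor_1^{B_e}(N_e,Q)$ \emph{and} $\Tor_2^{B_e}(N_e,Q)$ to vanish, precisely compensating for our ignorance of $\Tor_q^B(N,B_e)$ at $q\geq 2$.
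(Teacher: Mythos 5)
Your proof is correct and follows essentially the same route as the paper's: reduction via Proposition~\ref{prop:gradedflatnessandhomogeneousideals} to finitely generated graded modules, climbing the filtration with the long exact sequence of $\Tor$, handling extension-of-scalars quotients by base-changing a projective resolution (the paper packages this as a degenerating Grothendieck spectral sequence, same content), and handling restriction-of-scalars quotients by the change-of-rings low-order-terms sequence, exactly as in the paper. One harmless remark: the vanishing of $\Tor_2^{B_e}(N_e,Q)$ you stress at the end is not actually needed, since exactness at $\Tor_1^B(N,Q)$ already forces it to vanish once $\Tor_1^B(N,B_e)=0$ and $\Tor_1^{B_e}(N_e,Q)=0$.
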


\begin{proof}  The conditions are necessary for graded flatness (with only the first hypothesis) because graded flatness is stable under base change along $(H,B) \to (H,B_e)$ (Proposition~\ref{prop:gradedflatnessandbasechange}), so we now assume the hypotheses on $N$ and prove that $N$ is graded flat over $(H,B)$.  It suffices to show that \bne{vanishingTT} \Tor_1^{H,B}(M,N) & = & 0 \ene for all finitely generated $(H,B)$-modules $M$ (by Proposition~\ref{prop:gradedflatnessandhomogeneousideals}, say).  By using the hypothesized filtration of $M$ and long exact sequences of $\Tor$'s to work our way up a finite filtration of $M$ we reduce to proving the vanishing \eqref{vanishingTT} in the following two cases (where $M$ might not be finitely generated):

\noindent {\bf 1.} $M=T \otimes_A B$ for some $(G,A)$-module $T$.  Since $B$ is graded flat over $(G,A)$ by hypotheses, the Grothendieck Spectral Sequence relating the derived functors for the composition $$ \xym@C+20pt{ \Mod(G,A) \ar[rd]_{\slot \otimes_A N} \ar[r]^-{\slot \otimes_A B} & \Mod(H,B) \ar[d]^{\slot \otimes B N} \\ & \Mod(B) } $$ evaluated on $T \in \Mod(G,A)$ degenerates to yield \be \Tor_1^{H,B}(T \otimes_A B, N) & = & \Tor_1^{G,A}(T,N),\ee but the right side vanishes since we assume $N$ is graded flat over $(G,A)$.  To check the hypotheses necessary to get the above Grothendieck Spectral Sequence, we use Proposition~\ref{prop:tensortakesprojtoproj}.

\noindent {\bf 2.} $M$ is the restriction of scalars of an $(H,B_e)$-module.  By Proposition~\ref{prop:Tor}, \be \Tor_1^{H,B}(M,N) & = & \Tor_1^B(M,N) \ee on the level of underlying $B$-modules, so it suffices to show that $\Tor_1^B(M,N)=0$.  For this we can consider the exact sequence of low order terms from the Grothendieck Spectral Sequence (evaluated at $N \in \Mod(B)$) relating the derived versions of the abelian category morphisms in the diagram $$ \xym@C+20pt{ \Mod(B) \ar[r]^-{\slot \otimes_B B_e} \ar[rd]_-{\slot \otimes_B M} & \Mod(B_e) \ar[d]^{\slot \otimes_{B_e} M} \\ & \Mod(B_e) } $$ (c.f.\ [SGA1 IV.5.2]) to reduce to establishing the vanishings \be \Tor_1^B(N,B_e) & = & 0 \\ \Tor_1^{B_e}(M,N_e) & = & 0. \ee  The first of these vanishes by hypothesis and the second vanishes because it equals $\Tor_1^{H,B_e}(M,N_e)$ (Proposition~\ref{prop:Tor} again), which vanishes by the hypotheses that $N_e$ is graded flat over $(H,B_e)$.  \end{proof}

\begin{defn} \label{defn:spawningset} Let $h : Q \to P$ be a free morphism of monoids with a chosen basis $S \subseteq P$.  A subset $E \subseteq P$ is called a \emph{spawning set} (for $S$) iff $S$ is contained in the submonoid of $P$ generated by $E$ and the units of $Q$. \end{defn}

\begin{example} If $E \subseteq P$ generates $P$ then $E$ is a spawning set for any basis $S$.  \end{example}

\noindent {\bf Setup:}  Let $h : Q \to P$ be a free morphism of monoids with basis $S \subseteq P$ and spawning set $E \subseteq P$.  Assume $P/Q$ is integral (this holds when $P$ is integral).  Let $A$ be a ring, $Q \to A$ a monoid homomorphism.  Let $B := A \otimes_{\ZZ[Q]} \ZZ[P]$, regarded as an $A$-algebra graded by $(P/Q)^{\rm gp}$.  By abuse of notation, we write $[p]$ for both the element of $\ZZ[P]$ and the element $1 \otimes [p]$ of $B$ corresponding to $p \in P$.  For $e \in E$, set $B_e := B / ([e])$.  The quotient map $B \to B_e$ is a morphism of $(P/Q)^{\rm gp}$-graded rings because $([e])$ is manifestly a homogeneous ideal of $B$ (\S\ref{section:homogeneousideals}).

\begin{thm} \label{thm:gradedflatness} In the above setup: \begin{enumerate} \item \label{free} As an $A$-module, $B$ is free with basis $\{ [s] : s \in S \}$.  \item \label{homogeneousprimes} If $\P \subseteq B$ is a semiprime ideal of $B$ not intersecting $\{ [e] : e \in E \}$, then $\P = \p \otimes_A B$ for a prime ideal $\p \subseteq A$. \end{enumerate} If $A$ is noetherian and $P$ is finitely generated, then a $B$-module $M$ (in the usual sense with no gradings) is graded flat (over $(P^{\rm gp},B)$) iff the following hold: \begin{enumerate} \item $M$ is flat as an $A$-module. \item For each $e \in E$, $\Tor_1^B(M,B_e) = 0$ and $M_e := M \otimes_B B_e$ is a graded flat $B_e$-module.\end{enumerate} \end{thm}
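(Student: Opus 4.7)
The plan is to deduce parts (1) and (2) as stand-alone algebraic facts, and then derive the flatness criterion in (3) by feeding them into Lemma~\ref{lem:gradedflatness}.

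For part (1), I would just apply Theorem~\ref{thm:freetofree}: since $S$ is a $Q$-module basis for $P$, the set $\{[s] : s \in S\}$ is a $\ZZ[Q]$-basis for $\ZZ[P]$, and base-changing along $\ZZ[Q] \to A$ shows $B = A \otimes_{\ZZ[Q]} \ZZ[P]$ is free over $A$ on $\{[s]\}$. For part (2), the key remark is that $\p := \P \cap A$ is prime in $A$ (since $a,b \in A \subseteq B_0$ are homogeneous, semiprimeness of $\P$ descends to primality of $\p$), and the containment $\p B \subseteq \P$ is automatic. Conversely, using the direct sum decomposition $B = \bigoplus_{s \in S} A[s]$ of part (1) and the fact that the map $S \hookrightarrow P/Q \hookrightarrow (P/Q)^{\mathrm{gp}}$ is injective (because $P/Q$ is integral), each homogeneous component $a[s]$ of an element of $\P$ again lies in $\P$. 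If $s = 0$ then $a[s] = a \in \P \cap A = \p$, and if $s \neq 0$, I observe that $s \notin Q$ (because $0$ is the unique element of $S$ lying in $Q$), so that the spawning decomposition $s = e_1 + \cdots + e_k + u$ with $u \in Q^*$ must have $k \geq 1$. Then $[e_1]\cdots[e_k] = [s][u]^{-1}$, and if $a \notin \p$ (so $a \notin \P$), semiprimeness forces $[s] \in \P$ and then $[e_i] \in \P$ for some $i$, contradicting the hypothesis on $\P$. Thus $a \in \p$ and $\P \subseteq \p B$.

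For part (3), necessity is formal: graded flatness of $M$ over $B$ together with the freeness of $B$ over $A$ from part (1) forces flatness of $M$ over $A$ (a short chase through $K \hookrightarrow L \implies K \otimes_A B \hookrightarrow L \otimes_A B$, followed by tensoring with $M$); the vanishing $\Tor_1^B(M, B_e) = 0$ is immediate since $B_e = B/([e])$ is a graded $B$-module and Proposition~\ref{prop:Tor} identifies graded and ungraded Tor on underlying modules; and graded flatness of $M_e$ over $B_e$ follows from Proposition~\ref{prop:gradedflatnessandbasechange} and Example~\ref{example:gammaiso}, since $((P/Q)^{\mathrm{gp}}, B) \to ((P/Q)^{\mathrm{gp}}, B_e)$ is the identity on grading groups.

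The main work is the sufficiency direction, which I would set up as an application of Lemma~\ref{lem:gradedflatness} with $(G,A) = (0,A)$, $(H,B)$ the graded ring of our setup, and the family $\{B_e\}_{e \in E}$. The first hypothesis of that lemma, graded flatness of $B$ over $(0,A)$, is part (1). The second hypothesis---existence, for every finitely generated graded $B$-module, of a finite filtration whose subquotients are either extended from $(0,A)$ or restricted from some $(H, B_e)$---is where parts (2) and our noetherian assumption do the work. Since $A$ is noetherian and $P$ is finitely generated, $B$ is a noetherian graded ring, so Lemma~\ref{lem:filtrations} yields such a filtration with subquotients of the form $(B/\P)\{g\}$ for semiprime $\P \subseteq B$. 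For each $\P$ arising, I apply the dichotomy: either $\P$ contains some $[e]$ with $e \in E$, in which case $B/\P$ is naturally a quotient of $B_e$ and hence a restriction from an $(H,B_e)$-module; or $\P$ meets none of the $[e]$, in which case part (2) gives $\P = \p \otimes_A B$ for a prime $\p \subseteq A$, so $B/\P = (A/\p) \otimes_A B$ is an extension of scalars from $A$. Shifts are absorbed on either side of the dichotomy without trouble. The hypotheses of Lemma~\ref{lem:gradedflatness} on $N = M$ then translate exactly to our assumptions (flatness over $A$ being equivalent to graded flatness over $(0,A)$ by Example~\ref{example:gradedflatnesswhenGiszero}), and the lemma delivers graded flatness of $M$. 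The step I expect to require the most care is the semiprime-ideal analysis in part (2), specifically controlling the homogeneous components via part (1) and chasing the spawning relation to reach the contradiction.
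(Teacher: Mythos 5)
Your proposal is correct, and for parts (1) and (3) it follows the paper's route: part (1) via Theorem~\ref{thm:freetofree}, and part (3) by feeding the filtration from Lemma~\ref{lem:filtrations} and the dichotomy on semiprime ideals into Lemma~\ref{lem:gradedflatness} with $(G,A)=(0,A)$ (the paper leaves the necessity direction to that lemma, whereas you spell it out; your justifications are fine). Where you genuinely diverge is part (2). The paper fixes $a \in A \setminus \p$ with $a[s] \in \P$ and runs an induction on the minimal spawning length $N(s)$: it peels off one element of $E$ at a time, using the basis property of $S$ to rewrite $s = s' + l + q$ with $s' \in S$, $q \in Q^*$, $N(s') < N(s)$, keeping the coefficient $a$ attached throughout, and contradicts minimality. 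You instead strip the coefficient immediately --- $a$ and $[s]$ are both homogeneous, so semiprimeness gives $[s] \in \P$ --- and then use the spawning relation in one shot: $s = h(u) + \sum_e a_e e$ with $u \in Q^*$ gives $[s] = t(u)\prod_e [e]^{a_e}$ in $B$ with $t(u) \in A^*$, and since $s \neq 0$ forces some $a_e > 0$ (by the convention $0 \in S$ together with the basis property), iterated semiprimeness on the homogeneous factors puts some $[e]$ in $\P$, a contradiction. This eliminates the $N(s)$-induction and the $s'$, $s''$ bookkeeping entirely; the paper's version is essentially a one-step-at-a-time form of the same decomposition and buys nothing extra here. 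One small point in part (3): Lemma~\ref{lem:filtrations} produces subquotients of the form $(B/\P)\{g\}$, and a shift of the extension of scalars $(A/\p)\otimes_A B$ need not literally be an extension of scalars from $(0,A)$ again; this is harmless (and is equally glossed over in the paper), because the $\Tor$-vanishing used in the proof of Lemma~\ref{lem:gradedflatness} depends only on the underlying $B$-module, which a shift does not change, while on the restriction-of-scalars side shifts are literally absorbed, as you say.
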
 

\begin{proof} For \eqref{free}, note that $Q$ free on $S$ implies $\ZZ[P]$ is a free $\ZZ[Q]$-module with basis $$\{ [s] : s \in S \}$$ (Theorem~\ref{thm:freetofree}), hence the pushout $B$ is also free on $\{ [s] : s \in S \}$.  The image of $a \in A$ in $B$ will be denoted $a[0]$.  To clarify: the grading of $[s] \in B$ is the image of $s$ under the natural bijection $S \to P/Q$ followed by the natural map $(P/Q) \into (P/Q)^{\rm gp}$.  We need to know $P/Q$ is integral to know this last map is injective, otherwise $B$ might have homogeneous elements of the form $a[s]+a'[t]$, which would cause difficulties in our argument.  Luckily we assume $P/Q$ is integral so $B = \oplus_s A[s]$ \emph{is} the decomposition of $B$ into its homogeneous pieces.

For \eqref{homogeneousprimes}, suppose $\P$ is such a homogeneous prime ideal.  Let \be \p & := & \{ a \in A : a[0] \in \P \}.\ee  Clearly $\p$ is a prime ideal of $A$.  Since $B = \oplus_s A[s]$, $\p \otimes_A B = \oplus_s \p[s]$ (as a graded $A$-module).  We certainly have $\p \otimes_A B \subseteq \P$ because $\P$ is an ideal so it is closed under multiplication by each element $[s]$, and $a[0][s] = a[s]$.  The issue is to prove that this containment is an equality and the key point is that $\P$ is homogeneous, so it suffices to check this for \emph{homogeneous} elements of $\P$, which we are going to do by a kind of induction.  Since $E$ spawns $S$, each $s \in S$ can be written in the form \be s & = & u + \sum_{e \in E} a_e e \ee where each $a_e \in \NN$, all but finitely many $a_e$ are zero, and $u \in Q^*$.  Let $N(s) \in \NN$ be the minimum value of $\sum_e a_e$ in any such expression.  Now suppose we have a strict containment \bne{strictcontainment} (\p \otimes_A B)_s = \p[s] & \subsetneq & \P_s \subseteq A[s] \ene in grading $s$, which we can assume is chosen with $N(s)$ minimal.  It cannot be the case that $N(s)=0$ because $N(s) = 0$ implies $s \in Q^* \subseteq Q$ which implies $s=0$ (because of our convention that a basis contains zero) and the very definition of $\p$ ensures that the containment $\p \otimes_A B \subseteq \P$ cannot be strict in degree $0$.  So we can assume that $N(s)>0$.  Choose an expression \be s & = & u + \sum_{e \in E} a_e e \ee of the form mentioned above with $\sum_e a_e = N(s)$.  Since $N(s)>0$, $a_l > 0$ for some $l \in E$, so the element \be t & := & u+ (a_l-1) l + \sum_{e \neq l} a_e e \ee is in $P$.  Since $S$ is a basis we can write $t=s'+q$ for some $s' \in S$, $q \in Q$.  Adding $l$ to both sides we have $s = s' + l +q$.  But we can also write $s'+l = s'' + r$ for some $s'' \in S$, $r \in Q$, and we then have $s = s'' + q + r$, which implies $s = s''$ and $q+r=0$ (because $S$ is a basis), so $q$ and $r= -q$ are units in $Q$.  We have \be s' & = & r+t \\ & = & (r+u) + (a_l-1)l + \sum_{e \neq l} a_e e \ee hence $N(s') < N(s)$ and we have $s  =  s'+l+q.$  The strict containment \eqref{strictcontainment} means we can find an element $a \in A \setminus \p$ such that $a[s] \in \P$.  In the ring $B$ we have $a[s] = a[s'][l][q]$ and $[l] \notin \P$ by assumption (since $l \in E$), and $[q]$ is certainly not in $\P$ because it is a unit, so we have $a[s'] \in \P$ since $\P$ is semiprime.  But this shows that we have strict containment in degree $s'$ as well, contradicting minimality of $N(s)$. 

For the graded flatness criterion we apply Lemma~\ref{lem:gradedflatness} (in the case $G=0$ so that graded flatness over $(G,A)$ is just usual flatness).  To demonstrate the existence of the filtrations necessary in Lemma~\ref{lem:gradedflatness}, we use Lemma~\ref{lem:filtrations} together with the description of the semiprime ideals in $B$ from the first part of the theorem.  Note that, if a semiprime ideal $\P$ of $B$ contains some $[e]$, then $B/\P$ is obtained by restriction of scalars along $B \to B_e = B/([e])$.  \end{proof}

Notice that there are no finiteness hypotheses on $M$ in the above theorem.

For example, in the case where $Q=0$, Theorem~\ref{thm:gradedflatness} yields:

\begin{cor} Let $A$ be a ring, $P$ an integral monoid, $E \subseteq P$ a subset generating $P$.  Then any semiprime ideal in $B := A[P]$ not containing any of the elements $\{ [e] : e \in E \}$ is of the form $\p \otimes_A B$ for a prime ideal $\p \subseteq A$.  For $e \in E$, set $B_e := B/([e])$.  If $A$ is noetherian and $P$ is fine, then a $B$-module $M$ (in the usual sense with no gradings) is graded flat over $B$ iff $M$ is flat over $A$ and, for each $e \in E$, $\Tor_1^B(M,B_e)=0$ (equivalently, $[e]$ is $M$-regular since $[e] \in A[P]$ is regular) and $M \otimes_B B_e$ is a graded flat $B_e$ module. \end{cor}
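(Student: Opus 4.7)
The plan is to derive this corollary as the special case $Q = 0$ of Theorem~\ref{thm:gradedflatness}, after verifying that the hypotheses of that theorem are satisfied. First I would take $h : Q = 0 \to P$, which is tautologically a free morphism of monoids with basis $S = P$ itself: the map $P \times \{0\} \to P$, $(p,0) \mapsto p$, is a bijection. Since $Q^* = \{0\}$, the condition that $E$ be a spawning set for $S = P$ reduces to the condition that $E$ generate $P$ as a monoid, which is the hypothesis of the corollary. The quotient $P/Q = P$ is integral since $P$ is integral. Finally $B = A \otimes_{\ZZ[Q]} \ZZ[P] = A[P]$ as a $(P/Q)^{\rm gp} = P^{\rm gp}$-graded ring.

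With these identifications, part \eqref{free} of Theorem~\ref{thm:gradedflatness} says $B = A[P]$ is free as an $A$-module with basis $\{[p] : p \in P\}$ (which is of course already manifest), and part \eqref{homogeneousprimes} gives the desired description of semiprime ideals not meeting $\{[e] : e \in E\}$. When $A$ is noetherian and $P$ is fine (hence finitely generated), the final clause of Theorem~\ref{thm:gradedflatness} specializes directly to the graded flatness criterion asserted: $M$ is graded flat over $B$ iff $M$ is flat over $A$ and, for each $e \in E$, $\Tor_1^B(M, B_e) = 0$ and $M \otimes_B B_e$ is graded flat over $B_e$. (The intermediate condition in Theorem~\ref{thm:gradedflatness} that $M$ be graded flat over $(G,A) = (0,A)$ unwinds to mean $M$ is flat over $A$ in the usual sense, as noted in Example~\ref{example:gradedflatnesswhenGiszero}.)

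The only remaining item is the parenthetical claim that $\Tor_1^B(M, B_e) = 0$ is equivalent to $M$-regularity of $[e]$. For this I would first observe that $[e] \in A[P]$ is a nonzerodivisor: if $[e] \cdot \sum_{p} a_p [p] = \sum_p a_p [e+p] = 0$, then integrality of $P$ makes $p \mapsto e+p$ injective, forcing each $a_p = 0$. Hence we have a short exact sequence $0 \to B \xrightarrow{\cdot [e]} B \to B_e \to 0$; tensoring with $M$ gives the exact sequence
\begin{equation*}
0 \to \Tor_1^B(M, B_e) \to M \xrightarrow{\cdot [e]} M \to M/[e]M \to 0,
\end{equation*}
from which the equivalence is immediate.

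I do not expect a serious obstacle: the corollary is purely a matter of matching the data $(h : Q \to P, S, E)$ to the universal setup of Theorem~\ref{thm:gradedflatness}. The one point that deserves a sentence of verification is that $E$ generating $P$ really does yield a spawning set for $S = P$ in the technical sense of Definition~\ref{defn:spawningset}, and the one independent calculation is the regularity of $[e]$ used to interpret the $\Tor$-vanishing.
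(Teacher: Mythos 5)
Your proposal is correct and is exactly the paper's route: the corollary is stated there as the specialization $Q=0$ of Theorem~\ref{thm:gradedflatness}, with $S=P$ as the basis and the generating set $E$ as the spawning set, just as you verify. Your extra check that $[e]$ is a nonzerodivisor in $A[P]$ (via integrality of $P$) and the resulting identification $\Tor_1^B(M,B_e)=\Ker(\cdot[e]:M\to M)$ correctly supplies the parenthetical claim, which the paper leaves implicit.
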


In particular, if $P=\NN$ and $E = \{ 1 \}$ we obtain:

\begin{cor} Let $A$ be a ring, $B := A[x]$ the $\ZZ$-graded $A$-algebra with $|x|=1$.  Let $\P \subseteq B$ be a homogeneous prime ideal and let $\p := \{ a \in A : ax^0 \in \P \}$.  If $x \in \P$, \be \P & = & \p \oplus Ax \oplus Ax^2 \oplus \cdots \ee  and otherwise \be \P & = & \p \oplus \p x \oplus \p x^2 \oplus  \cdots \\ & = & \p \otimes_A B.\ee  If $A$ is noetherian, then a $B$-module $M$ is graded flat over $B$ iff $M$ is flat over $A$, $x$ is $M$-regular, and $M/xM$ is a flat $A$-module. \end{cor}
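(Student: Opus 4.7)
This final corollary is a direct specialization of the preceding corollary, which in turn specializes Theorem~\ref{thm:gradedflatness}, to the case $Q = 0$, $P = \NN$, $E = \{1\}$. The plan is to handle the two assertions (the description of homogeneous prime ideals, and the graded flatness criterion) in that order, both by transcribing the general results into this concrete setting.

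For the description of homogeneous primes, I would split on whether $x \in \P$. If $x \in \P$, then because $\P$ is an ideal, $Ax^k \subseteq \P$ for every $k \geq 1$, while by definition the degree $0$ part of $\P$ is exactly $\p$; conversely any homogeneous element of $\P$ of positive degree $k$ has the form $ax^k$ and so lies in $Ax^k$. Hence $\P = \p \oplus Ax \oplus Ax^2 \oplus \cdots$. If $x \notin \P$, then the hypotheses of Theorem~\ref{thm:gradedflatness}\eqref{homogeneousprimes} (with $E=\{1\}$, so $[e]=x$) apply, giving $\P = \p \otimes_A B = \p \oplus \p x \oplus \p x^2 \oplus \cdots$ directly. (One should note in passing that since $\NN^{\rm gp} = \ZZ$ is torsion-free, Lemma~\ref{lem:semiprimetorsionfreegrading} ensures that ``semiprime'' and ``homogeneous prime'' coincide here, so that the cited corollary applies verbatim to the homogeneous prime $\P$.)

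For the flatness criterion, I would invoke the preceding corollary (the $Q = 0$ case of Theorem~\ref{thm:gradedflatness}) with $P = \NN$ and generating set $E = \{1\}$, so that $B_1 := B/(x) = A$. The three resulting conditions read: (i) $M$ is flat over $A$; (ii) $\Tor_1^B(M, B/(x)) = 0$; (iii) $M \otimes_B B/(x) = M/xM$ is graded flat over $B_1$. For (ii), since $x \in B = A[x]$ is a regular element, the short exact sequence $0 \to B \xrightarrow{x} B \to B/(x) \to 0$ identifies $\Tor_1^B(M, B/(x))$ with $\ker(x \colon M \to M)$, so (ii) is equivalent to $x$ being $M$-regular. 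For (iii), the quotient $B_1 = A$ is $\ZZ$-graded supported entirely in degree $0$, so Corollary~\ref{cor:gradedflatness} (or Example~\ref{example:gradedflatnesswhenGiszero}) says that graded flatness of the $A$-module $M/xM$ over $(\ZZ, A)$ coincides with ordinary flatness of $M/xM$ over $A$. Collecting these reductions yields exactly the stated criterion.

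There is no real obstacle: the substantive content is packed into Theorem~\ref{thm:gradedflatness} and the preceding corollary, and the only verifications needed here are the elementary identifications $B_1 = A$, $\Tor_1^B(M, B/(x)) = \ker(x \cdot)$, and the collapse of ``graded flat over a ring concentrated in degree $0$'' to ``flat.''
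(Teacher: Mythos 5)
Your proposal is correct and follows essentially the same route as the paper: specialize the preceding corollary (the $Q=0$ case of Theorem~\ref{thm:gradedflatness}) to $P=\NN$, $E=\{1\}$, and identify graded flatness over $A[x]/(x)$, a $\ZZ$-graded ring supported in degree zero, with ordinary flatness over $A$ via Corollary~\ref{cor:gradedflatness}. The only difference is that you spell out the easy $x\in\P$ case and the identification $\Tor_1^B(M,B/(x))=\Ker(x\cdot)$ explicitly, which the paper leaves implicit (the latter is already noted parenthetically in the preceding corollary).
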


\begin{proof} This follows from the previous corollary once we note that being graded flat over the $\ZZ$-graded ring $A = A[x]/(x)$ is the same thing as being flat over $A$ in the usual sense because $A[x]/(x)$ is a $\ZZ$-graded ring supported in degree zero (Corollary~\ref{cor:gradedflatness}). \end{proof}

\begin{cor} \label{cor:gradedflatnessoverA1} Let $k$ be a field, $k[x]$ the $\ZZ$-graded $k$-algebra with $|x|=1$.  For a $k[x]$-module $M$, the following are equivalent: \begin{enumerate} \item $M$ is graded flat. \item $x$ is $M$-regular. \item $x \in k[x]_{(x)}$ is $M_{(x)}$-regular. \end{enumerate} If $M$ is finitely generated, these conditions are equivalent to: \begin{enumerate} \setcounter{enumi}{2} \item \label{Mlocallyfree2} $M$ is locally free near the origin. \end{enumerate} \end{cor}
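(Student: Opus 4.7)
The plan is to deduce this from Theorem~\ref{thm:gradedflatness} together with some elementary commutative algebra. First I would apply Theorem~\ref{thm:gradedflatness} in the special case $A=k$, $Q=0$, $P=\NN$, $S=\NN$, $E=\{1\}$, so that $B = k[\NN] = k[x]$ and $B_1 = B/(x) = k$. The hypotheses of that theorem are satisfied: $k$ is noetherian and $\NN$ is finitely generated. Its conclusion reduces graded flatness of a $k[x]$-module $M$ to three conditions: $M$ is flat over $k$ (automatic, since $k$ is a field), $\Tor_1^{k[x]}(M,k)=0$, and $M/xM$ is graded flat over $(B_1)=(k)$ (also automatic, since $k$ is a field with trivial grading). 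The element $x \in k[x]$ is a regular element, so the free resolution $0 \to k[x] \xrightarrow{\cdot x} k[x] \to k \to 0$ shows that $\Tor_1^{k[x]}(M,k)$ is exactly the kernel of $\cdot x : M \to M$. This gives the equivalence of (1) and (2).

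For the equivalence of (2) and (3), I would simply note that the formation of $\ker(\cdot x : M \to M)$ commutes with localization, and that $x$ acts invertibly on $M_{\mathfrak{p}}$ for every prime $\mathfrak{p} \neq (x)$ of $k[x]$, so this kernel is automatically zero after localization at any such $\mathfrak{p}$. Hence $\cdot x$ is injective on $M$ iff it is injective after localizing at $(x)$.

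Now assume $M$ is finitely generated. The implication (4) $\Rightarrow$ (2) is immediate: a locally free module near $(x)$ is locally free over a neighborhood $U$ of $(x)$ in $\Spec k[x]$, and $x$ acts injectively on free modules (since $x$ is regular in $k[x]$), so $\ker(\cdot x)$ has empty support in $U$; away from $U$, $x$ is invertible, so again the kernel vanishes. For (3) $\Rightarrow$ (4), the ring $R := k[x]_{(x)}$ is a discrete valuation ring with uniformizer $x$. Every nonzero element of $R$ has the form $x^n u$ with $u$ a unit, so $x$-regularity of $M_{(x)}$ is equivalent to torsion-freeness over $R$. A finitely generated torsion-free module over a DVR is free, so $M_{(x)}$ is a free $R$-module. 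The main (and only mildly nontrivial) step is to promote freeness at the single point $(x)$ to local freeness on a neighborhood: picking a basis of $M_{(x)}$ and lifting to elements of $M$ gives a map $k[x]^n \to M$ that is an isomorphism after localizing at $(x)$, so its kernel and cokernel have support disjoint from $(x)$ and, being finitely generated, are annihilated by some $f \notin (x)$; inverting $f$ on a neighborhood of the origin yields local freeness, completing the chain of equivalences.
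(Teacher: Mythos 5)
Your proof is correct and takes essentially the same route as the paper: the paper likewise reduces (1)$\iff$(2) to Theorem~\ref{thm:gradedflatness} with $P=\NN$, $E=\{1\}$ (via the preceding corollary on $A[x]$, identifying the $\Tor_1$ condition with $x$-regularity), checks regularity only at the prime $(x)$ since $x$ is a unit elsewhere, and settles the finitely generated case by the same standard commutative algebra (flat, hence free, over the local ring $k[x]_{(x)}$, then free on a neighborhood by finite presentation/generation). The only difference is that you spell out the DVR and spreading-out arguments that the paper delegates to a citation of Matsumura.
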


\begin{proof}  The equivalence of the first two statements is immediate from the previous corollary.  The equivalence of the second two statements is clear since $M$-regularity of $x$ can be checked after localizing at each prime $\p \in \Spec k[x]$, but it holds trivially at any prime $\p$ other than $(x)$ since $x \in k[x]_{\p}$ is a unit.  Since $x$ is the maximal ideal corresponding to the origin, the equivalence of the last two statements results from standard commutative algebra: \cite[18.B, Lemma~4]{Mat} implies $M_{(x)}$ is a flat $k[x]_{(x)}$-module, but it is finitely presented, so it is free, and then it is free in a neighborhood, again by finite presentation. \end{proof}

If, in the Setup, we take $\Delta : \NN \to \NN^2$ as our monoid homomorphism, $A=k$ a field, and $\NN \to k$ given by $1 \mapsto 0$, then $B = k \otimes_{\ZZ[\NN]} \ZZ[\NN^2]$.  As a ring graded by $\ZZ^2/\ZZ = \ZZ$ in the usual way, $B = k[x,y]/(xy)$ with $|x|=1$, $|y|=-1$.  Using Theorem~\ref{thm:gradedflatness}, we can prove: 

\begin{cor} \label{cor:nodalflatness} Let $k$ be a field.  Grade the ring $B = k[x,y]/(xy)$ by $\ZZ$ so that $|x|=1$, $|y|=-1$.  Let $\m := (x,y) \subseteq B$ be the (homogeneous) maximal ideal of the singular point.  For any $B$-module $M$, the following are equivalent: \begin{enumerate} \item \label{Mgradedflat} $M$ is graded flat over $B$. \item \label{Mperfect} $\Tor_1^B(M,B/\m)=0$. \item \label{injectivemap}  The map $M/yM \oplus M/xM \to M$ given by $(m,n) \mapsto xm+yn$ is injective.  \item \label{Mhasnotor} $\Tor_1^B(M,B/xB)=0$, $y$ is $M/xM$-regular, and similarly with the roles of $x,y$ reversed. \item \label{Mhasnotorvariant} $\Tor_1^B(M,B/xB)=0$, $M/xM$ is graded flat over $B/xB=k[y]$, and similarly with the roles of $x$ and $y$ reversed.  \item \label{Mhaslittletor1} $\Tor_1^B(M,B/xB)=0$ and $y$ is $M/xM$-regular. \item \label{Mhaslittletor1variant} $\Tor_1^B(M,B/xB)=0$ and $M/xM$ is graded flat over $B/xB=k[y]$.  \item \label{Mhaslittletor2} $\Tor_1^B(M,B/yB)=0$ and $x$ is $M/yM$-regular. \item \label{Mhaslittletor2variant} $\Tor^1_B(M,B/yB)=0$ and $M/yM$ is graded flat over $B/yB=k[x]$.  \item \label{localversion}  Any/all of \eqref{Mperfect}-\eqref{Mhaslittletor2variant} hold after localizing at $\m$. \end{enumerate} If $M$ is finitely generated, these conditions are also equivalent to: \begin{enumerate} \setcounter{enumi}{10} \item \label{Mlocallyfree} $M$ is locally free near the origin.  \end{enumerate} \end{cor}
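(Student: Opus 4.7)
The plan is to reduce almost everything to Theorem~\ref{thm:gradedflatness} applied to the Setup $Q=\NN$, $P=\NN^2$, $h=\Delta$, $A=k$, with augmentation $\NN\to k$ sending $1\mapsto 0$. By Proposition~\ref{prop:diagonalmaps}, $S=\{s\in\NN^2:s_1=0\text{ or }s_2=0\}$ is a basis for $\Delta$, and $E=\{e_1,e_2\}$ is a spawning set; Theorem~\ref{thm:gradedflatness} then gives $B=k[x,y]/(xy)$ graded by $\ZZ=(\NN^2/\NN)^{\rm gp}$ exactly as in the statement, with $B_{e_1}=B/xB=k[y]$ and $B_{e_2}=B/yB=k[x]$. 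The theorem says (1) is equivalent to $\Tor_1^B(M,B/xB)=0$ together with graded flatness of $M/xM$ over $k[y]$, plus the same with the roles of $x$ and $y$ swapped. Applying Corollary~\ref{cor:gradedflatnessoverA1} (which identifies graded flatness over $k[t]$ with $t$-regularity, and with local freeness near the origin when the module is finitely generated) simultaneously collapses the equivalences $(1)\Leftrightarrow(5)\Leftrightarrow(7)\Leftrightarrow(9)$ into $(1)\Leftrightarrow(4)\Leftrightarrow(5)$, and will also give the final clause $(10)+(\text{f.g.})\Leftrightarrow(11)$ at the end.

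Next I would handle (2) and (3) using the well-known $2$-periodic free resolution of $k=B/\m$: since $\operatorname{ann}_B(x)=yB$ and $\operatorname{ann}_B(y)=xB$, one builds
\[
\cdots\to B^2\xrightarrow{(a,b)\mapsto(xa,yb)}B^2\xrightarrow{(a,b)\mapsto(ya,xb)}B^2\xrightarrow{(a,b)\mapsto xa+yb}B\to k\to 0.
\]
Tensoring with $M$ and computing homology at the right-most $B^2$ identifies $\Tor_1^B(M,k)$ with $\{(m,n)\in M^2:xm+yn=0\}/(yM\oplus xM)$, which is precisely the kernel of the map in~(3). Hence $(2)\Leftrightarrow(3)$ is immediate. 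The implication $(3)\Rightarrow(4)$ is a short direct chase: feeding $(m,0)$ with $xm=0$ into~(3) gives $\Tor_1^B(M,B/xB)=0$ via the formula $\Tor_1^B(M,B/xB)=\{m:xm=0\}/yM$ (obtained by tensoring $0\to B/yB\xrightarrow{\cdot x}B\to B/xB\to 0$ with $M$), and feeding $(-k,m)$ with $ym=xk$ into~(3) gives $y$-regularity of $M/xM$; the symmetric deductions give the other half of (4). The reverse $(4)\Rightarrow(3)$ is also short: if $xm+yn=0$ then $y(xm+yn)=y^2n=0$, so $yn\in\ker(y\colon M\to M)\subseteq xM$ (which follows from $y$-regularity of $M/xM$), and then $y$-regularity of $M/xM$ upgrades this to $n\in xM$; symmetrically $m\in yM$.

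The subtle part is showing the asymmetric conditions~(6) and~(8) imply the symmetric~(4)---this is the main obstacle. I would argue $(6)\Rightarrow(4)$ as follows: the hypothesis that $y$ is $M/xM$-regular already forces $\ker(y\colon M\to M)\subseteq xM$ (apply $y$-regularity to an element with $ym=0$), and by the formula above this is exactly $\Tor_1^B(M,B/yB)=0$. For $x$-regularity of $M/yM$: if $xm=yn$ for some $n$, then $y^2n=yxm=0$, so $yn\in\ker(y)\subseteq xM$, write $yn=xp$; now $yn\in xM$ together with $y$-regularity of $M/xM$ yields $n\in xM$, say $n=xq$, whence $xm=yn=yxq=0$, and finally $xm=0$ combined with $\Tor_1^B(M,B/xB)=0$ (the other hypothesis of (6)) gives $m\in yM$. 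The argument for $(8)\Rightarrow(4)$ is symmetric, and $(6)\Leftrightarrow(7)$, $(8)\Leftrightarrow(9)$ follow from Corollary~\ref{cor:gradedflatnessoverA1} as noted. This closes the loop among (1)--(9).

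Statement~(10) is essentially automatic: every one of the conditions (2)--(9) is the vanishing of an abelian group that is annihilated by a power of $\m$ (each $B/xB$, $B/yB$, $B/\m$, and the relevant $\Tor$'s have support in $V(\m)=\{\m\}$, using that $\Spec B\setminus\{\m\}$ is a disjoint union of two punctured affine lines on each of which $x$ or $y$ is a unit), so each condition is preserved and reflected by localization at $\m$. Finally, for finitely generated $M$ over the noetherian ring $B$, condition~(2) localized at $\m$ reads $\Tor_1^{B_\m}(M_\m,k)=0$; by the local criterion for flatness (\cite[18.B]{Mat}) this forces $M_\m$ to be flat, and finitely generated flat over a local noetherian ring is free, so $M_\m$ is free. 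Freeness then spreads to an open neighborhood of the origin by finite presentation, giving~(11); the converse is trivial since locally free implies flat implies all the Tor-vanishing conditions locally at $\m$.
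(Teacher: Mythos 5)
Your proposal is correct, and its skeleton largely coincides with the paper's: both rest on Theorem~\ref{thm:gradedflatness} (with the spawning set $E=\{e_1,e_2\}$, which is forced, since the units of $\NN$ are trivial) together with Corollary~\ref{cor:gradedflatnessoverA1} to identify (1) with (4) and (5); both identify (2) with (3) by exhibiting $\Tor_1^B(M,B/\m)$ as the kernel of the map in (3) (the paper tensors $0\to xB\oplus yB\to B\to B/\m\to 0$ using $xB\cong B/yB$, and your $2$-periodic resolution is the same computation carried one step further); both prove (3)$\Rightarrow$(4) by the same two substitutions; and both handle (10) by observing that at any prime other than $\m$ one of $x,y$ becomes a unit and the other zero, and (11) by the local criterion for flatness plus finite presentation, exactly as in Corollary~\ref{cor:gradedflatnessoverA1}. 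The genuine divergence is how the one-sided conditions (6)--(9) are brought into the equivalence. The paper asserts that (7) (or (9)) alone implies (1) ``by Theorem~\ref{thm:gradedflatness}''; but with the spawning set $\{e_1,e_2\}$ that theorem literally yields only the symmetric criterion (1)$\Leftrightarrow$(5), so an argument that the one-sided condition implies the two-sided one is still needed there. Your direct chase supplies exactly this: from $y$-regularity of $M/xM$ you get $\ker(y\colon M\to M)\subseteq xM$, i.e.\ $\Tor_1^B(M,B/yB)=0$, and then from $xm=yn$ you get $y^2n=0$, hence $yn\in xM$, hence $n\in xM$, hence $xm=0$, hence $m\in yM$ by $\Tor_1^B(M,B/xB)=0$; this proves (6)$\Rightarrow$(4) and closes the loop in an elementary, self-contained way precisely where the paper's citation is terse. (Your separate proof of (4)$\Rightarrow$(3) is logically redundant given the rest, but harmless.) One small inaccuracy: in your justification of (10), $B/xB$ and $B/yB$ are of course not supported at $\{\m\}$, and for general $M$ the relevant $\Tor$ groups need not be annihilated by a power of $\m$; the operative fact, which you also state and which is the paper's argument, is that the localization of each $\Tor$-vanishing or regularity condition at any prime other than $\m$ holds automatically, so each condition is detected by localizing at $\m$.
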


\begin{proof} Certainly \eqref{Mgradedflat} implies \eqref{Mperfect} because $\m$ is a homogeneous ideal, so $B/\m$ is a graded $B$-module.  

To see that \eqref{Mperfect} is equivalent to \eqref{injectivemap}, first consider the short exact sequence of $B$-modules \bne{Bmodules} & 0 \to xB \oplus yB \to B \to B/\m \to 0. \ene  The left map in \eqref{Bmodules} is given by $(xa,yb) \mapsto xa+yb$.  There is an isomorphism of $B$-modules $xB \cong B/yB$ given by $xb \mapsto b$, with inverse $b \mapsto xb$; there is a similar isomorphism with the roles of $x$ and $y$ exchanged.  If we use these isomorphisms to write \bne{funnycoincidence} M \otimes_{B} xB = M \otimes_B B/yB = M/yM \ene (and similarly with the roles of $x$ and $y$ reversed), then tensoring \eqref{Bmodules} over $B$ with $M$ yields an exact sequence $$0 \to \Tor_1^B(M,B/\m) \to M/yM \oplus M/xM \to M \to M/\m M \to 0$$ where the left map is the one in \eqref{injectivemap}.  The equivalence of \eqref{Mperfect} and \eqref{injectivemap} is clear from this exact sequence.

To see that \eqref{injectivemap} implies \eqref{Mhasnotor}, first note that the injectivity in \eqref{injectivemap} in particular implies that the map $M/yM \to M$ given by $m \mapsto xm$ must be injective.  We have an exact sequence of $B$-modules \bne{Bmodules2} & 0 \to xB \to B \to B/xB \to 0.\ene  Tensoring \eqref{Bmodules2} over $B$ with $M$ and using \eqref{funnycoincidence}, we obtain an exact sequence $$ 0 \to \Tor_1^B(M,B/xB) \to M/yM \to M \to M/xM \to 0, $$ where the second map is the injective map mentioned a moment ago, so we find $\Tor_1^B(M,B/xB)=0$.  Now let us prove that $y$ is $M/xM$-regular.  If not, there is an $m \in M \setminus xM$ such that $ym = xm'$ for some $m' \in M$.  But then the injective map in \eqref{injectivemap} would kill $(-m',m)$, a contradiction.  All of these arguments can be repeated with the roles of $x$ and $y$ reversed.

The equivalences ``\eqref{Mhasnotor} iff \eqref{Mhasnotorvariant}," ``\eqref{Mhaslittletor1} iff \eqref{Mhaslittletor1variant}," and ``\eqref{Mhaslittletor2} iff \eqref{Mhaslittletor2variant}" are immediate from Corollary~\ref{cor:gradedflatnessoverA1}.

Obviously \eqref{Mhasnotorvariant} implies \eqref{Mhaslittletor1variant} and \eqref{Mhaslittletor2variant}.  But either of the latter conditions implies \eqref{Mgradedflat} by Theorem~\ref{thm:gradedflatness}:  Take $E = \{ e_1,e_2 \} \subseteq \NN^2$ as the spawning set in the Setup.  

We have proved that \eqref{Mgradedflat}-\eqref{Mhaslittletor2variant} are equivalent.  Any of the conditions formulated in terms of $\Tor$-vanishing can be checked after localizing at each prime ideal.  But each of these conditions holds trivially at any prime other than $\m$, thus we can add \eqref{localversion} to our list of equivalent conditions.  

When $M$ is finitely generated, the condition $\Tor_1^{B_{\m}}(M_{\m},B_{\m}/\m B_{\m})$ is equivalent to \eqref{Mlocallyfree} by the same basic commutative algebra used in the proof of Corollary~\ref{cor:gradedflatnessoverA1}.  \end{proof}

\subsection{Stacks perspective} \label{section:stacksperspective} Let $(G,A)$ be a graded ring.  The group scheme $\Spec \ZZ[G]$ (which we will often abusively denote $G$) acts on $X := \Spec A$ via the action map $a : G \times X \to X$ given by $\Spec$ of the ring map \bne{actionmap} \alpha : A & \to & A[G] = A \otimes_{\ZZ} \ZZ[G] \\ \nonumber a_g & \mapsto & a_g[g] \ene (here $a_g \in A_g$ is homogeneous of degree $g$).  In fact one can show that every action of $\Spec \ZZ[G]$ on $\Spec A$ is of this form by recovering a grading on $A$ from the action map $\alpha$ via the formula \bne{actiontograding} A_g & := & \{ a \in A : \alpha(a) = a[g] \}. \ene We will denote the quotient stack (in the \'etale topology) \be \Spec (A/G) & := & [X / G] \ee so that we have a natural map \be \pi : \Spec A & \to & \Spec (A/G) \ee of stacks.  The map $\pi$ is---essentially by definition---a representable, \'etale-locally-trivial principal $G$-bundle.  In particular it is a flat, affine (hence quasi-compact), surjective morphism since $G$ is flat and affine (over $\Spec \ZZ$).  Formation of this quotient stack is contravariantly functorial in $(G,A) \in \GrAn$ because a $\GrAn$-morphism $(\gamma,f) : (G,A) \to (H,B)$ induces a map of schemes $\Spec f : \Spec B \to \Spec A$ which is $\Spec \ZZ[\gamma] : H \to G$ equivariant for the aforementioned action of $H$ on $\Spec B$ and $G$ on $\Spec A$.

\begin{prop} \label{prop:QcoModGA} Let $(G,A)$ be a graded ring.  There is a natural equivalence of categories \be \Mod(G,A) & = & \Qco(\Spec(A/G)) \ee making the following diagram of equivalences ``commute": \bne{qco} & \xym{ \Mod(G,A) \ar[d]_{\rm forget} \ar[r] & \Qco(\Spec(A/G)) \ar[d]^{\pi^*} \\ \Mod(A) \ar[r]^{M \mapsto M^\sim} & \Qco(\Spec A) } \ene \end{prop}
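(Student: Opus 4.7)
The plan is to identify $\Qco(\Spec(A/G))$ with the category of $G$-equivariant quasi-coherent sheaves on $\Spec A$ via descent along the flat cover $\pi:\Spec A\to\Spec(A/G)$, and then to identify this latter category with $\Mod(G,A)$ by the same bookkeeping that was used in \S\ref{section:stacksperspective} to pass between $G$-actions on $\Spec A$ and $G$-gradings on $A$ (cf.\ \eqref{actionmap}--\eqref{actiontograding}).

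First I would recall that $\pi$ is a representable, \'etale-locally-trivial principal $G$-bundle, hence an fpqc cover, so fpqc descent for quasi-coherent sheaves identifies $\Qco(\Spec(A/G))$ with the category of descent data along $\pi$. Spelling this out using the standard groupoid presentation $G\times\Spec A\rightrightarrows\Spec A$ (with the two maps being projection $\pi_2$ and the action $a$), a descent datum is a pair $(F,\varphi)$ consisting of $F\in\Qco(\Spec A)$ and an isomorphism $\varphi:a^*F\to\pi_2^*F$ on $G\times\Spec A$ satisfying the cocycle condition on $G\times G\times\Spec A$. Equivalently, writing $F=M^\sim$ for some $A$-module $M$, a descent datum is the data of an $A$-linear coaction $\rho:M\to M\otimes_{\ZZ}\ZZ[G]$ (where ``$A$-linear'' is understood with $M\otimes_\ZZ\ZZ[G]$ given its $A$-module structure via \eqref{actionmap}) satisfying the coassociativity and counitality axioms for a $\ZZ[G]$-comodule.

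Next I would show that giving such a coaction $\rho$ on an $A$-module $M$ is exactly the same data as a $G$-grading $M=\bigoplus_{g\in G}M_g$ making $M$ into an object of $\Mod(G,A)$. In one direction, given a grading, define $\rho(m_g):=m_g\otimes [g]$ for $m_g\in M_g$ and extend $\ZZ$-linearly; the coaction axioms are immediate, and the compatibility $\rho(am)=\alpha(a)\rho(m)$ amounts exactly to the requirement $A_g\cdot M_h\subseteq M_{g+h}$. In the other direction, given a coaction $\rho$, recover the grading by
\[ M_g \;:=\; \{\,m\in M : \rho(m)=m\otimes [g]\,\}, \]
and verify that $M=\bigoplus_g M_g$ as abelian groups; the verification is formally identical to the argument in \S\ref{section:stacksperspective} that recovers a grading on $A$ from an action of $\Spec\ZZ[G]$, via formula \eqref{actiontograding}, only now applied to $M$ rather than $A$ (the key point is that $\{[g]\}_{g\in G}$ is a $\ZZ$-basis of $\ZZ[G]$, so $\rho(m)$ has a unique expansion $\sum m_g\otimes [g]$, and coassociativity forces $\rho(m_g)=m_g\otimes[g]$). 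Finally, one checks that morphisms of descent data correspond exactly to morphisms of graded modules, and that the composition of the two equivalences $\Mod(G,A)\simeq\{\text{descent data}\}\simeq\Qco(\Spec(A/G))$ is natural in $(G,A)$.

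The commutativity of diagram \eqref{qco} is then tautological: both compositions send a graded module $M=\bigoplus M_g$ to the quasi-coherent sheaf $M^\sim$ on $\Spec A$, since the forgetful functor discards the grading while $\pi^*$ applied to the descended sheaf recovers the underlying quasi-coherent sheaf from which the descent datum was built. The main obstacle in the argument is the last half of the second step, namely patiently verifying that the coaction/grading dictionary is compatible with the $A$-module structure on both sides; the computation is routine but requires keeping careful track of the twisting introduced by \eqref{actionmap}, and it is here that one genuinely uses the graded-ring axiom $A_g\cdot A_h\subseteq A_{g+h}$ rather than a mere decomposition of $A$ as an abelian group.
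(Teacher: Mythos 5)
Your proposal is correct and follows essentially the same route as the paper's proof: fpqc descent along the principal $G$-bundle $\pi$ identifies $\Qco(\Spec(A/G))$ with descent data on $\Spec A$, and the descent datum is then unwound into a $G$-grading using the $\ZZ$-basis $\{[g]\}$ of $\ZZ[G]$ together with the cocycle (coassociativity) condition, exactly as in \eqref{gradingtodescentdatum}--\eqref{descentdatumtograding}. Your phrasing of the intermediate step in terms of $\ZZ[G]$-comodule coactions rather than isomorphisms $\phi : M\otimes_{A,\alpha}A[G]\to M\otimes_{A,p}A[G]$ is only a cosmetic repackaging of the same bookkeeping.
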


\begin{proof} This is standard; we sketch the details, putting them in a general context.  Since $\pi$ is an fpqc cover, a fundamental theorem of fpqc descent theory says that the functor \bne{fpqcdescent} \pi^* : \Qco(\Spec (A/G)) & \to & \Desc(\Qco(\Spec A),\pi) \\ \nonumber \F & \mapsto & (\pi^*\F,\phi_\tau) \ene taking $\F$ to $\pi^* \F$ equipped with the tautological $\pi$-descent datum $\phi_{\tau}$ is an equivalence between the category of quasi-coherent sheaves on $\Spec(A/G)$ (defined, for example, as in Definition~\ref{defn:QcoX} of \S\ref{section:logquotientspace}) and the category $\Desc(\Qco(\Spec A),\pi)$ of pairs $(\F,\phi)$ consisting of a quasi-coherent sheaf $\F$ on $\Spec A$ and a $\pi$-descent datum $\phi$ for $\F$.  Furthermore, this equivalence of categories identifies $\pi^* : \Qco(\Spec(A/G)) \to \Qco(\Spec A)$ with the functor \be \Desc(\Qco(\Spec A),\pi) & \to & \Qco(\Spec A) \\ (\F,\phi) & \mapsto & \F \ee given by forgetting the descent datum. Since $\pi$ is a principal bundle with structure group $G$, we have a $2$-cartesian diagram of stacks $$ \xym{ G \times \Spec A \ar[r]^-{a} \ar[d]_{p} & \Spec A \ar[d]^{\pi} \\ \Spec A \ar[r]_-{\pi} & \Spec (A/G) } $$ where $p$ is the projection and $a$ is the action, so a $\pi$-descent datum on $\F \in \Qco(\Spec A)$ is an isomorphism $\phi : a^* \F \to p^* \F$ satisfying the usual cocycle condition on the triple overlap \be \Spec A \times_{\Spec (A/G)} \Spec A \times_{\Spec (A/G)} \Spec A & = & \Spec A[G \times G]. \ee  Under the equivalence $M \mapsto M^{\sim}$ from $\Mod(A)$ to $\Qco(\Spec A)$, a $\pi$-descent datum on an $A$-module $M$ is hence an isomorphism \bne{descentdatum} \phi : M \otimes_{A,\alpha} A[G] & \to & M \otimes_{A,p} A[G] \ene of $A[G]$-modules whose various pullbacks make a certain diagram of isomorphisms of $A[G \times G]$-modules commute; here $\alpha$ is the ring map \eqref{actionmap} and $p$ is abusive notation for the ring map $p : A \to A[G]$ given by $p(a) = a[0]$.  

The key point is that the descent datum $\phi$ is the same thing as a structure of $G$-graded $A$-module on $M$.  Given a graded module structure on $M$, we get a descent datum $\phi$ by setting \bne{gradingtodescentdatum} \phi(m_g \otimes b[h]) & := & m_g \otimes b[h+g] \ene for $g,h \in G$, $m_g \in M_g$, $b \in A$.  Given a descent datum $\phi$ we obtain a grading on $M$ by setting \bne{descentdatumtograding} M_g & := & \{ m \in M : \phi(m \otimes 1[0]) = m \otimes 1[p] \}. \ene In order to show that $M = \oplus_g M_g$, we write $\phi(m \otimes 1[0]) = \sum_g m_g \otimes 1[g]$ for unique $m_g \in M$ (all but finitely many zero), and then we similarly write $m_g = \sum_h m_{g,h} \otimes 1[h]$ for unique $m_{g,h} \in M$.  Then we follow $m_g \otimes 1[0] \otimes 1[0,0]$ around the cocycle condition satisfied by $\phi$ to conclude that $m_{g,h} = 0$ for $g \neq h$ and $m_{g,g}=m_g$.  It follows that $m_g \in M_g$ and $m = \sum_g m_g$ is the unique expression for $m$ as a sum over elements in the subgroups $M_g$.   This yields an equivalence of categories \bne{gradingstodescent} \Mod(G,A) & \to & \Desc(\Qco(\Spec A),\pi) \ene whose composition with \eqref{fpqcdescent} is as desired. \end{proof}

\begin{prop} \label{prop:gradedflatiffstacksflat} Let $(\gamma,f) : (G,A) \to (H,B)$ be a map of graded rings, \be \Spec (f/\gamma) : \Spec(B/H) & \to & \Spec(A/G) \ee the associated map of stacks, $N \in \Mod(H,B)$.  Then the corresponding quasi-coherent sheaf ``$N^{\sim}/H$" on $\Spec(B/H)$ is flat over $\Spec (A/G)$ iff $N$ is graded flat over $(G,A)$ as in Definition~\ref{defn:gradedflat2}. \end{prop}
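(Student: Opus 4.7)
My plan would be to identify both conditions with exactness of the single functor
\[T := \slot \otimes_A N : \Mod(G,A) \to \Mod(H,B),\]
whose exactness is, by Definition~\ref{defn:gradedflat2}, exactly the condition that $N$ be graded flat over $(G,A)$. Via Proposition~\ref{prop:QcoModGA}, I would use the equivalences $\Qco(\Spec(A/G)) \simeq \Mod(G,A)$ and $\Qco(\Spec(B/H)) \simeq \Mod(H,B)$ that intertwine pullback to the atlases $\pi_A, \pi_B$ with the (faithfully exact) forgetful functors to $\Mod(A)$ and $\Mod(B)$. Under these equivalences, the pullback $\Spec(f/\gamma)^*$ is identified with graded extension of scalars $\slot \otimes_A B$, and the sheaf $N^\sim/H$ with $N \in \Mod(H,B)$, so the composite ``pull back and tensor with $N^\sim/H$'' from $\Qco(\Spec(A/G))$ to $\Qco(\Spec(B/H))$ is identified with $T$.

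Next I would verify that flatness of $N^\sim/H$ over $\Spec(A/G)$ in the sense of Definition~\ref{defn:flatnessoverstacks} is equivalent to exactness of this composite functor, in two stages. First, since $\pi_B : \Spec B \to \Spec(B/H)$ is a faithfully flat cover of algebraic stacks by a scheme (a principal $\GG(H)$-bundle), Definition~\ref{defn:flatnessoverstacks} together with faithful exactness of $\pi_B^*$ reduces the question to flatness of $\pi_B^*(N^\sim/H) = N^\sim$ over $\Spec(A/G)$. Second, pulling further back along $\pi_A: \Spec A \to \Spec(A/G)$ and forming the Cartesian square
\[\xymatrix{W \ar[r]^-{\pi_2} \ar[d]_-{\pi_1} & \Spec B \ar[d]^-{f} \\ \Spec A \ar[r]^-{\pi_A} & \Spec(A/G)}\]
with $W = \Spec(B[G])$ and $\pi_1$ corresponding to the action map $\alpha: A \to B[G]$, $a_g \mapsto f(a_g)[g]$, Lemma~\ref{lem:flatnessdefinition}\eqref{exactQco} identifies flatness of $\pi_2^* N^\sim$ over $\Spec A$ with exactness of $\pi_1^*(\slot) \otimes \pi_2^* N^\sim : \Qco(\Spec A) \to \Qco(W)$, i.e., with flatness of $N[G] := N \otimes_B B[G]$ as an $A$-module via $\alpha$. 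The base-change identity for pullbacks around the Cartesian square, together with faithful exactness of $\pi_2^*$ (the functor $\slot \otimes_B B[G]$, faithfully exact since $B \to B[G]$ is faithfully flat), then yields the identification of these two exactness conditions.

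The hard part will be the direction from graded flatness of $N$ to flatness of $N[G]$ as an $A$-module via $\alpha$: since the forgetful functor $\Mod(G,A) \to \Mod(A)$ is not essentially surjective, one cannot deduce ungraded flatness of $N[G]$ formally from exactness of $T$ on graded short exact sequences. The key input I would use is that $A[G]$, viewed as an $A$-module via $\alpha$, is \emph{free} with basis $\{[h] : h \in G\}$: a direct check using the grading $A = \bigoplus_g A_g$ shows that the $A$-submodule generated by $[h]$ equals $\bigoplus_{h'} A_{h'-h} \cdot [h']$, and that these submodules direct-sum to $A[G]$. Hence $\alpha$ is faithfully flat, and combined with the natural isomorphism $M \otimes_A N[G] \cong \bigoplus_{h \in G} M \otimes_A N$ of $B$-modules for $M \in \Mod(G,A)$ (using the $G$-grading on $M$ via $m_g \otimes (n \otimes [h]) \mapsto (m_g \otimes n)$ in coordinate $g+h$), the required equivalence would follow by an fpqc descent of flatness along $\alpha$.
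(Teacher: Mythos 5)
Your overall strategy is a legitimate, more hands-on version of the paper's own (two-sentence) proof, which simply invokes the quasi-coherent exactness criterion of Lemma~\ref{lem:flatnessdefinition} for the quasi-separated stack $\Spec(A/G)$ and then identifies the pullback--tensor functor with $\slot \otimes_A N$ via Proposition~\ref{prop:QcoModGA}.  Your unwinding of Definition~\ref{defn:flatnessoverstacks} through the atlases is correct: flatness of $N^\sim/H$ over $\Spec(A/G)$ is indeed equivalent to flatness of $N[G] = N \otimes_B B[G]$ over $A$ via $a_g \mapsto f(a_g)[g]$, and your isomorphism $M \otimes_A N[G] \cong \bigoplus_h M \otimes_A N$ for \emph{graded} $M$ correctly yields the implication from that flatness to graded flatness of $N$.

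The gap is in the converse, which you rightly isolate as the hard direction, but the mechanism you propose does not close it.  The direct-sum isomorphism is only available for graded $M$, so it contributes nothing here; and ``fpqc descent of flatness along $\alpha$'' would require knowing beforehand that $N[G] \otimes_{A,\alpha} A[G]$ is flat over $A[G]$ --- by the shearing automorphism $(g,x) \mapsto (g, g\cdot x)$ of $\GG(G) \times \Spec A$, that base-changed statement is equivalent to the one you are trying to prove, so the descent step is circular.  What is actually needed is a device that feeds \emph{arbitrary ungraded} $A$-modules into the exact functor $\slot \otimes_A N$: take the graded restriction of scalars $M \mapsto M_A$ along $(G,A) \to (0,A)$ (``$M$ placed in every degree,'' exact by Lemma~\ref{lem:restrictionofscalarsexact}; its underlying $A$-module is $\bigoplus_{h \in G} M$ with the degree-shifting action, which is where your freeness observation about $A[G]$ over $A$ via $\alpha$ really lives), and verify the natural isomorphism of $B$-modules $M \otimes_A N[G] \cong M_A \otimes_A N$ for every ungraded $M$, by sending $m \otimes (n \otimes [h])$ to $(m \text{ in degree } -h) \otimes n$ and checking that the relations imposed by $a_g \mapsto f(a_g)[g]$ match those of the graded tensor product.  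Granting this, an injection $M' \into M$ of ungraded $A$-modules gives an injection $M'_A \into M_A$ of graded modules, graded flatness of $N$ preserves injectivity after $\otimes_A N$, and the isomorphism transports this to injectivity of $M' \otimes_A N[G] \to M \otimes_A N[G]$, i.e.\ to flatness of $N[G]$ over $A$ --- the half your sketch leaves unproved.  (The paper sidesteps all of this by quoting the quasi-coherent criterion directly at the level of the stack; as you note, the scheme-level proof of that criterion does not transfer verbatim because $\pi_A^*$ is not essentially surjective, and the identity above is exactly the missing substitute.)
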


\begin{proof} To say that $N^\sim / H$ is flat over $\Spec (A/G)$ is equivalent to saying that the functor \bne{pullbacktensor} (\Spec f/\gamma)^* \slot \otimes N^{\sim}/H : \Qco(\Spec(A/G)) & \to & \Qco(\Spec(H/B)) \ene is exact (c.f.\ Lemma~\ref{lem:flatnessdefinition}, note that $\Spec(A/G)$ is quasi-separated since its diagonal is affine).  But this functor is identified, under the natural equivalence of categories of Proposition~\ref{prop:QcoModGA}, with the extension of scalars functor \eqref{slototimesN}, which is exact by definition iff $N$ is graded flat over $(G,A)$. \end{proof}

\begin{rem} The functor \bne{SpecAG} \Spec( \slot / \slot) : \GrAn^{\rm op} & \to & {\bf Stacks} \ene does not preserve general inverse limits.  This is why graded flatness has only a limited stability under base change. \end{rem}

\section{Stacks} \label{section:stacks} All technical results on stacks used in the rest of the paper are collected in this section.

\subsection{Definitions}  The following definitions are used throughout:

\begin{defn} \label{defn:separated} A map of schemes $X \to Y$ is called \emph{separated} (resp.\ \emph{locally separated}, \emph{quasi-separated}) iff the diagonal $X \to X \times_Y X$ is a closed embedding (resp.\ is a quasi-compact locally closed immersion, is quasi-compact). \end{defn}

\begin{defn} \label{defn:algebraicspace} Let $Y$ be a scheme.  A sheaf $X$ on $\Sch/Y$ in the \'etale topology is called a \emph{separated} (resp.\ \emph{locally separated}, \emph{quasi-separated}) \emph{algebraic space (of (locally) finite presentation)} over $Y$ iff the following hold: \begin{enumerate} \item The diagonal morphism $\Delta : X \to X \times_Y X$ is representable by closed embeddings (resp.\ quasi-compact locally closed immersions, quasi-compact maps) of schemes. \item There exists a scheme $X'$ (of (locally) finite presentation) over $Y$ and a $\Sch/Y$-morphism $X' \to X$ (necessarily representable by schemes by the first condition) which is \'etale and surjective. \end{enumerate} When used without additional qualification, ``algebraic space" in this paper is understood to mean ``locally separated algebraic space of finite presentation." \end{defn}

\begin{defn} \label{defn:representable} A morphism $X' \to X$ of categories fibered in groupoids over $\Sch/Y$ is called \emph{representable by separated} (resp.\ \emph{locally separated}, \emph{quasi-separated}) \emph{schemes} (resp. \emph{algebraic spaces}) \emph{(of (locally) finite presentation)} iff, for any $Y$-scheme $U$ and any map $U \to X$ of groupoid fibrations over $Y$-schemes, ``the" $2$-fibered product $U \times_X X'$ is equivalent to a separated (resp.\ locally separated, quasi-separated) $Y$-scheme (resp.\ algebraic space) (of (locally) finite presentation) over $U$.  When used without additional qualification ``representable" in this paper is understood to mean ``representable by locally separated algebraic spaces of finite presentation." \end{defn}

This is the meaning of ``representable" that Olsson intends in \cite[3.2]{Ols}, as he makes clear in the proof.

\begin{defn} \label{defn:algebraicstack} Let $Y$ be a scheme.  Unless explicitly mentioned to the contrary, an \emph{algebraic stack} in this paper is a stack $X$ over $\Sch/Y$ in the \'etale topology such that the following hold: \begin{enumerate} \item The diagonal morphism $\Delta : X \to X \times X$ is representable in the sense of Definition~\ref{defn:algebraicspace}.  \item There is a $Y$-scheme $X'$ of locally finite presentation over $Y$ and a morphism $X' \to X$ (necessarily representable by the first condition) which is smooth and surjective. \end{enumerate} \end{defn}

We are following Olsson \cite[1.2]{Ols} with the use of algebraic spaces with locally separated diagonal.  In \cite[4.1]{LM} they require the diagonal to be separated in the definition of an algebraic stack, so the Olsson definition above is more general in that sense.  However, the notion of algebraic stack in \cite[4.1]{LM} doesn't have as many finiteness conditions and their notion of ``algebraic space" is more relaxed.  Any algebraic stack used in practice ought to be an algebraic stack in the sense of Definition~\ref{defn:algebraicstack}.  

\subsection{Representability} \label{section:representability} The purpose of this section is to clear up any confusion about what is meant by ``representable" elsewhere in the text.  Nothing here is difficult.  The proofs will mostly be omitted. 

\begin{lem} \label{lem:faithful} If $F : \C \to \D$ is a faithful functor then the group homomorphism \bne{autgroup} F : \Aut_{\C}(c) & \to & \Aut_{\D}(Fc) \ene is injective for every $c \in \C$.  If $\C$ is a groupoid, the converse holds.  Similarly, if $F$ is full and $\C$ is a groupoid, then \eqref{autgroup} is surjective for every $c \in \C$.  \end{lem}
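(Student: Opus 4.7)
The plan is to prove each of the three assertions in turn; none of them requires anything beyond the definitions and the fact that, in a groupoid, every morphism is by definition invertible.

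For the first assertion (faithful implies injective on automorphism groups), this is essentially immediate. Since $\Aut_{\C}(c)$ is a subset of $\Hom_{\C}(c,c)$, and faithfulness of $F$ means that $F : \Hom_{\C}(c,c) \to \Hom_{\D}(Fc,Fc)$ is injective for every $c$, the restriction to automorphisms is still injective. I would just state this explicitly.

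For the converse, suppose $\C$ is a groupoid and assume that $F : \Aut_{\C}(c) \to \Aut_{\D}(Fc)$ is injective for every object $c$ of $\C$. To show $F$ is faithful, I would take $f, g : c \to c'$ in $\C$ with $F(f) = F(g)$ and argue that $f = g$. Here I exploit the groupoid hypothesis: since $g$ is invertible, $g^{-1} f$ is a well-defined automorphism of $c$, and applying $F$ gives $F(g^{-1} f) = F(g)^{-1} F(f) = \mathrm{id}_{Fc}$. The hypothesis on $\Aut_{\C}(c)$ then forces $g^{-1} f = \mathrm{id}_c$, i.e.\ $f = g$.

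For the third assertion, suppose $F$ is full and $\C$ is a groupoid, and fix $\phi \in \Aut_{\D}(Fc)$. By fullness I can lift $\phi$ to some morphism $f : c \to c$ in $\C$ with $F(f) = \phi$. The key observation is that because $\C$ is a groupoid, every morphism in $\C$ is automatically invertible, so $f$ already lies in $\Aut_{\C}(c)$ and surjects onto $\phi$. There is genuinely no obstacle to overcome here; the only point worth emphasizing in the writeup is that in the second and third parts the groupoid hypothesis is used in an essential way (without it, one can only lift $\phi$ to an endomorphism, not to an automorphism, and one cannot form the comparison automorphism $g^{-1} f$ in the faithfulness argument).
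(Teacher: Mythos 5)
Your proof is correct: the first part is immediate from faithfulness restricted to $\Hom_{\C}(c,c)$, the converse uses that $g^{-1}f$ is an automorphism mapping to the identity, and the surjectivity claim follows from fullness plus the fact that every endomorphism in a groupoid is an automorphism. The paper omits the proof of this lemma entirely (it is among the results declared routine in \S\ref{section:representability}), and your argument is exactly the standard one it intends.
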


Below, $\D$ is an arbitrary category and $\CFG / \D$ is the $2$-category of categories fibered in groupoids over $\D$.

\begin{prop} For an object $X$ in $\CFG / \D$, the following are equivalent: \begin{enumerate} \item $X$ is in the essential image of the functor from presheaves on $\D$ to $\CFG / \D$.  In other words, $X$ ``is" a presheaf. \item $\Aut_{X_d}(x)$ is trivial for every $d \in \D$ and every $x \in X_d$.\footnote{Here $X_d$ is the fiber category over $d$ whose objects are objects of $X$ mapped to $d$ via the structure map $X \to \D$ and whose morphisms are $X$-morphisms mapped to $\Id_d$.}  \item The structure map $X \to \D$ is a faithful functor.  \item The diagonal $\Delta : X \to X \times X$ is fully faithful. \end{enumerate} \end{prop}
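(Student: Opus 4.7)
The plan is to establish the cycle $(1)\iff(2)\iff(3)\iff(4)$, all of which follow by unraveling the definitions of CFGs, the shape of the diagonal in $\CFG/\D$, and the Grothendieck construction.

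First I would prove $(2)\iff(3)$. The key fact is that in a CFG $X\to\D$, the set of $X$-morphisms $x\to x'$ lying over a fixed $\D$-morphism $f:d\to d'$ is in bijection with $\Hom_{X_d}(x,f^*x')$, where $f^*x'$ denotes any chosen cartesian pullback; since $X_d$ is a groupoid, this $\Hom$-set is either empty or a torsor under $\Aut_{X_d}(x)$. Hence $X\to\D$ is faithful precisely when $\Aut_{X_d}(x)$ is trivial for every object $x$ in every fiber $X_d$, which is $(2)$. (Alternatively one invokes Lemma~\ref{lem:faithful} fiberwise.)

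For $(3)\iff(4)$, note that the diagonal $\Delta:X\to X\times X$ in $\CFG/\D$ admits a retraction via either projection, so is automatically faithful; the content of $(4)$ is therefore the fullness of $\Delta$. Objects of $X\times X$ over $d\in\D$ are pairs $(x_1,x_2)$ of objects of $X_d$, with morphisms being pairs of $X$-morphisms lying over a common $\D$-morphism. A morphism $(\phi,\psi):(x,x)\to(x',x')$ lies in the image of $\Delta$ iff $\phi=\psi$; thus $\Delta$ is full iff any two $X$-morphisms $x\to x'$ with the same image in $\D$ must coincide, which is exactly the faithfulness of $X\to\D$.

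Finally, for $(1)\iff(2)$, recall that the functor from presheaves on $\D$ to $\CFG/\D$ is the Grothendieck construction $F\mapsto\int F$, whose fibers are discrete categories; this immediately gives $(1)\Rightarrow(2)$. For the converse, given $(2)$, I would define a presheaf $F:\D^{\rm op}\to\Sets$ by $F(d):=X_d/{\cong}$ (isomorphism classes in the fiber over $d$), with restriction maps induced by choosing cartesian pullbacks---well-defined because such pullbacks are unique up to unique isomorphism. The triviality of automorphism groups in fibers then implies each $X_d$ is canonically equivalent to the discrete category on $F(d)$, and assembling these equivalences yields an equivalence $X\to\int F$ in $\CFG/\D$. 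The main obstacle, such as it is, lies in this last step: one must carefully verify that the pullback maps on $F$ are independent of choice of cartesian arrow and that the comparison $X\to\int F$ is essentially surjective and fully faithful---routine bookkeeping once one exploits the triviality of fiber automorphism groups together with the CFG axioms.
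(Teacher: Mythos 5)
Your proposal is correct, and since the paper deliberately omits this proof as routine "general nonsense," your argument is exactly the standard unwinding it has in mind: the torsor description of morphisms lying over a fixed $\D$-arrow gives $(2)\iff(3)$, the retraction-plus-fullness analysis of $\Delta$ gives $(3)\iff(4)$, and passing to isomorphism classes of fibers (which kills the pseudo-functor issues and yields an honest presheaf with discrete-fiber Grothendieck construction) gives $(1)\iff(2)$. Nothing essential is missing; the bookkeeping you flag (independence of the choice of cartesian lifts, full faithfulness and essential surjectivity of $X\to\int F$) goes through exactly as you indicate, using that in a groupoid fibration every arrow is cartesian.
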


\begin{prop} \label{prop:representablebypresheaves} Let $f : X \to Y$ be a morphism in $\CFG / \D$.  The following are equivalent: \begin{enumerate} \item $f$ is \emph{representable by presheaves} in the sense that for any presheaf $U$ on $\D$ and any $\CFG / \D$ morphism $U \to Y$, ``the" $2$-fibered product $U \times_{Y} X$ ``is" a presheaf. \item $f$ is a faithful functor. \item $F_d : X_d \to Y_d$ is faithful for all $d \in \D$. \item \label{autinjectivity} $f : \Aut_{X_d}(x) \to \Aut_{Y_d}(fx)$ is an injective group homomorphism for each $d \in \D$ and each $x \in X_d$. \item The diagonal $\Delta : \Aut_{X_d}(x)  \to  \Aut_{(X \times_{Y} X)_d}(x,x,\Id)$ is a surjection of groups for each $d \in \D$ and each $x \in \X_d$. \item The diagonal $\Delta : \Aut_{X_d}(x)  \to  \Aut_{(X \times_{Y} X)_d}(x,x,\Id)$ is an isomorphism of groups for each $d \in \D$ and each $x \in X_d$. \item The diagonal $\Delta_d : X_d \to (X \times_{Y} X)_d$ is fully faithful for each $d \in \D$. \item The diagonal $\Delta : X \to X \times_{Y} X$ is fully faithful. \end{enumerate} \end{prop}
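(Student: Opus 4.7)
I will group the eight conditions into three clusters: the faithfulness cluster $\{(2),(3),(4)\}$, the diagonal cluster $\{(5),(6),(7),(8)\}$, and the representability condition $(1)$. The proof is essentially formal, hinging on an explicit description of morphisms in a $2$-fibered product of groupoid fibrations.

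First I will establish $(2)\iff(3)\iff(4)$. Since $X$ and $Y$ are groupoid fibrations over $\D$, every morphism is cartesian, so any $\phi\colon x\to x'$ in $X$ lying over a fixed $\bar\phi\colon d\to d'$ in $\D$ is determined up to $\Aut_{X_{d'}}(x')$; this torsor reduction shows that faithfulness of $f$ on the whole category is equivalent to faithfulness of $f$ on every $\Hom$ in each fiber. Once one has faithfulness fiber-by-fiber, Lemma~\ref{lem:faithful} (applicable because the fiber $X_d$ of a groupoid fibration is a groupoid) yields the equivalence with injectivity of $f$ on automorphism groups, i.e.\ $(3)\iff(4)$.

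Next I will parse the diagonal cluster by writing down explicitly the morphisms of $X\times_Y X$. An object of $(X\times_Y X)_d$ is a triple $(x_1,x_2,\psi)$ with $\psi\colon fx_1\xrightarrow{\sim} fx_2$ in $Y_d$, and a morphism to $(x_1',x_2',\psi')$ is a pair $(\alpha_1,\alpha_2)$ with $\alpha_i\colon x_i\to x_i'$ in $X_d$ satisfying $\psi'\circ f(\alpha_1)=f(\alpha_2)\circ\psi$. Applied to $(x,x,\Id)$ this gives
\[
\Aut_{(X\times_Y X)_d}(x,x,\Id) \;=\; \{(\alpha_1,\alpha_2)\in\Aut_{X_d}(x)^{\times 2} : f(\alpha_1)=f(\alpha_2)\},
\]
with $\Delta(\gamma)=(\gamma,\gamma)$. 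Injectivity of $\Delta$ is automatic, so $(5)\iff(6)$; surjectivity of $\Delta$ says $f(\alpha_1)=f(\alpha_2)$ forces $\alpha_1=\alpha_2$, which via $\alpha_1\alpha_2^{-1}$ is precisely (4), so $(4)\iff(5)\iff(6)$. For $(6)\iff(7)$ one inspects $\Delta_d$: faithfulness of $\Delta_d$ is automatic (retract along $\pi_1$), and fullness of $\Delta_d$ says that any two $\alpha_1,\alpha_2\colon x\to x'$ in $X_d$ with $f(\alpha_1)=f(\alpha_2)$ coincide, which by Lemma~\ref{lem:faithful} applied to $F_d$ is equivalent to $(4)$. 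Finally $(7)\iff(8)$ is the same fiber-by-fiber reduction used for $(2)\iff(3)$: full faithfulness of $\Delta$ can be checked after decomposing morphisms by their projection to $\D$.

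It remains to connect $(1)$ to this block. For $(2)\implies(1)$, given a presheaf $U$ with a map $U\to Y$, an automorphism of $(u,x,\phi)\in(U\times_Y X)_d$ is a pair $(\alpha,\beta)$ with $\alpha\in\Aut(u)=\{\Id\}$ (since $U$ is a presheaf) and $\beta\in\Aut(x)$ satisfying $f(\beta)=\Id$; faithfulness of $f$ (in the form $(4)$) forces $\beta=\Id$, so $U\times_Y X$ has only trivial automorphisms and is a presheaf. For $(1)\implies(4)$, fix $x\in X_d$ and set $y:=fx$; by Yoneda the representable presheaf $\u d$ maps to $Y$ via $y$, and the object $(x,\Id)\in(\u d\times_Y X)_d$ has automorphism group equal to $\ker(f|_{\Aut_{X_d}(x)})$, which must be trivial by $(1)$.

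The main bookkeeping obstacle is the explicit description of morphisms in the $2$-fibered products $X\times_Y X$ and $U\times_Y X$, together with the reduction of ``faithful'' and ``fully faithful'' to fiberwise statements; once this is in hand each implication is essentially a direct calculation, with no deeper input than Lemma~\ref{lem:faithful}.
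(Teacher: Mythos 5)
Your proposal is correct and takes essentially the same route the paper intends: the paper's proof is only the remark that the equivalences follow from Lemma~\ref{lem:faithful} together with the general nonsense on $2$-fibered products, and your argument is exactly that unwinding (fiberwise reduction via cartesian lifts, the identification of $\Aut_{(X \times_Y X)_d}(x,x,\Id)$ with pairs of automorphisms having equal image under $f$, and the Yoneda/trivial-automorphism checks connecting condition (1) to condition (4)). No further comment is needed.
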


\begin{proof} These equivalences are mostly a matter of definitions; they are proved easily using Lemma~\ref{lem:faithful} and the general nonsense in \cite[\S2]{LM}. \end{proof}

\begin{defn} \label{defn:formallyrepresentable}  A morphism $f : X \to Y$ in $\CFG / \D$ satisfying the equivalent conditions of Proposition~\ref{prop:representablebypresheaves} is called \emph{formally representable}. \end{defn}

An alternative to ``formally representable" would be ``representable by presheaves".

\begin{prop} \label{prop:formallyrepresentableimpliesrepresentable} If $f : X \to Y$ is a formally representable map of locally separated algebraic stacks of finite presentation over a scheme $Y$, then $f$ is representable (Definition~\ref{defn:representable}). \end{prop}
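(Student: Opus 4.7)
Fix a scheme $U$ over $Y$ and a morphism $g \colon U \to Y$ in $\CFG/\Sch$, and form the $2$-fibered product $W := U \times_Y X$. I must show $W$ is a locally separated algebraic space of finite presentation over $U$. First I would observe that $W$ is (equivalent to) a presheaf on $\Sch/U$: formal representability of $f$ means $f$ is faithful (Proposition~\ref{prop:representablebypresheaves}), so the base change $W \to U$ is faithful, hence $W$ ``is'' a presheaf. Since $W$ is a $2$-fibered product of stacks it is a stack, and a stack which is a presheaf is a sheaf in the \'etale topology.

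Next I would verify that the diagonal $\Delta_{W/U} \colon W \to W \times_U W$ is representable by quasi-compact locally closed immersions of finite presentation. This follows because the diagonal is compatible with base change: $\Delta_{W/U}$ is the pullback of $\Delta_{X/Y} \colon X \to X \times_Y X$ along the natural morphism $W \times_U W \to X \times_Y X$, and by hypothesis $X$ is a locally separated algebraic stack of finite presentation over $Y$, so $\Delta_{X/Y}$ is representable by quasi-compact locally closed immersions of finite presentation (Definition~\ref{defn:algebraicstack}, Definition~\ref{defn:algebraicspace}), a property stable under base change.

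Then I would produce a smooth surjection onto $W$ from a scheme. Choose a smooth surjection $\pi \colon X' \to X$ from a scheme $X'$ of locally finite presentation over $Y$ (part of the data making $X$ an algebraic stack of finite presentation over $Y$). The base change $V := W \times_X X' \to W$ is smooth and surjective. Since $Y$ is a scheme, the identification $V = U \times_Y X'$ expresses $V$ as the ordinary scheme fiber product of the schemes $U$ and $X'$ over $Y$, so $V$ is a scheme, of locally finite presentation over $U$.

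The main obstacle will be upgrading the smooth cover $V \to W$ to an \'etale cover, which is what Definition~\ref{defn:algebraicspace} demands. The argument is the classical reduction from smooth presentations of algebraic spaces to \'etale ones: since $V \to W$ is smooth (and $W$ has representable diagonal, so it makes sense to ask when a morphism from a scheme to $W$ is \'etale), one can locally on $V$ choose a closed subscheme $V'_\alpha \subseteq V$ transverse to the fibers of $V \to W$ so that $V'_\alpha \to W$ is \'etale (the standard \'etale slicing: smooth morphisms between schemes factor Zariski-locally as $\mathbb{A}^n_S \to S$ composed with an \'etale map, so one reduces to the case $V = \mathbb{A}^n_W$ and takes the zero section, then transfers back via the \'etale factor). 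Gluing these slices gives a scheme $V''$ of locally finite presentation over $U$ and an \'etale surjection $V'' \to W$, completing the verification that $W$ is a locally separated algebraic space of finite presentation over $U$.
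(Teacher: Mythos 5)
Your route is genuinely different from the paper's, because the paper gives no argument at all: its proof is the citation \cite[8.1.2]{LM} (the criterion that a morphism of algebraic stacks is representable iff it is faithful, i.e.\ injective on automorphism groups), together with the remark that the separatedness hypothesis there can be relaxed to local separatedness. What you propose is essentially a self-contained reconstruction of the proof behind that citation: base change to $W=U\times_Y X$, note $W$ is a sheaf because $f$ is formally representable, check its diagonal over $U$ by base change from $\Delta_{X/Y}$, exhibit a smooth atlas, and slice the smooth atlas down to an \'etale one (the classical Artin/Knutson argument that a sheaf with representable quasi-compact-immersion diagonal and a smooth cover by a scheme is an algebraic space --- which is exactly where the ``separated $\rightsquigarrow$ locally separated'' weakening lives). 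As a plan this is sound, and the diagonal step and the reduction to the smooth-to-\'etale slicing are correct.

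There is, however, one step that fails in the generality the proposition is actually used in. You invoke ``since $Y$ is a scheme'' to conclude that $V=U\times_Y X'$ is a scheme. But in the paper's applications (Lemma~\ref{lem:AAAhrepresentable}, Lemma~\ref{lem:LhtoLogAQrepresentable}, and the discussion of $\Log(f)$ in \S\ref{section:tautologicalsection}) the target of $f$ is $\AAA(Q)$ or $\Log(\AA(Q))$ --- an algebraic stack, not a scheme; the phrase ``over a scheme $Y$'' in the statement refers to the base scheme of Definition~\ref{defn:algebraicstack}, and \cite[8.1.2]{LM} is precisely the statement for a morphism with stacky target. With $Y$ a stack, $U\times_Y X'$ is only an algebraic space: it is the pullback of the representable diagonal $\Delta_{Y}$ along the scheme $U\times_S X'$ ($S$ the base scheme), so it need not be a scheme. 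The repair is routine --- take an \'etale surjection onto this algebraic space from a scheme and use that as $V$ before slicing --- but as written the claim is wrong, and it is the one place your argument genuinely leans on the scheme-target reading. Relatedly, your assertion that $\Delta_{X/Y}$ is representable by quasi-compact locally closed immersions ``by hypothesis'' is only literally the hypothesis when $Y$ is the base; for stacky $Y$ you need the standard extra observation that $\Delta_{X/Y}$ is a base change of the absolute diagonal $\Delta_{X/S}$ along the monomorphism $X\times_Y X\to X\times_S X$ (itself a base change of $\Delta_{Y/S}$), so it inherits the property. With those two adjustments your direct proof goes through and is a reasonable substitute for the paper's citation.
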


\begin{proof} In \cite[8.1.2]{LM} they prove this for separated algebraic stacks, but one can weaken the ``separated" assumption to ``locally separated." \end{proof}

\begin{lem} \label{lem:stackificationpreservesrepresentability} Suppose $f : X \to Y$ is a formally representable morphism between prestacks (c.f.\ \cite[3.1]{LM}) for some topology $\tau$ on $\D$.  Then the stackification $f^+ : X^+ \to Y^+$ is also formally representable. \end{lem}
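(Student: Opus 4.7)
The plan is to apply the automorphism-group criterion for formal representability from Proposition~\ref{prop:representablebypresheaves}\eqref{autinjectivity}. That is, I will show that for every $d \in \D$ and every object $\tilde x \in X^+_d$, the induced group homomorphism $f^+ : \Aut_{X^+_d}(\tilde x) \to \Aut_{Y^+_d}(f^+ \tilde x)$ is injective, deducing this from the corresponding injectivity for $f$ together with two standard properties of stackification applied specifically to prestacks.

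The first property I would use is that when $X$ is a prestack, the canonical $2$-functor $\iota : X \to X^+$ induces isomorphisms on the $\Hom$ presheaves: for any $d \in \D$ and $x,y \in X_d$, the natural map $\underline{\Hom}_{X,d}(x,y) \to \underline{\Hom}_{X^+,d}(\iota x, \iota y)$ is an isomorphism of presheaves on $\D/d$. This is essentially the definition of being a prestack -- the $\Hom$ presheaves are already $\tau$-separated presheaves (in fact $\tau$-sheaves) and so are unaffected by stackification. The second property is that every object $\tilde x \in X^+_d$ is $\tau$-locally in the essential image of $\iota$: there exists a $\tau$-cover $\pi : U \to d$ and an object $x \in X_U$ such that $\pi^* \tilde x$ is isomorphic in $X^+_U$ to $\iota x$. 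The same two properties apply to $Y$ and $Y^+$, and by functoriality of stackification the object $fx \in Y_U$ serves as a local representative of $f^+ \tilde x$ along the same cover $\pi$.

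Given such a cover $\pi$ and such an $x$, I would assemble the commutative square
$$
\xym{
\Aut_{X^+_d}(\tilde x) \ar[r]^{f^+} \ar[d]_{\pi^*} & \Aut_{Y^+_d}(f^+\tilde x) \ar[d]^{\pi^*} \\
\Aut_{X_U}(x) \ar[r]^f & \Aut_{Y_U}(fx)
}
$$
in which the two vertical maps are the restrictions along $\pi$, identified via the first property above with the natural maps into $\Aut_{X^+_U}(\pi^* \tilde x) = \Aut_{X_U}(x)$ and $\Aut_{Y^+_U}(\pi^* f^+ \tilde x) = \Aut_{Y_U}(fx)$. Both vertical maps are injective because $\underline{\Aut}_{X^+,d}(\tilde x)$ and $\underline{\Aut}_{Y^+,d}(f^+\tilde x)$ are $\tau$-sheaves on $\D/d$ (since $X^+$ and $Y^+$ are stacks), hence separated presheaves, so sections are determined by their restriction to a cover. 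The bottom horizontal map is injective by the hypothesis that $f$ is formally representable, applied in the fiber over $U$. A trivial diagram chase then shows the top horizontal map is injective, completing the proof.

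The argument is not mathematically deep; the main obstacle is purely notational/foundational, namely pinning down the precise construction of the stackification of a prestack so that the two properties of $\iota : X \to X^+$ used above can be cited cleanly. Either the construction by descent data on covers (as in~\cite{Vis}) or the more abstract characterization by a universal property makes both properties transparent, so the choice is a matter of taste.
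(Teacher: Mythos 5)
Your argument is correct and is essentially the paper's own proof: both verify criterion \eqref{autinjectivity} of Proposition~\ref{prop:representablebypresheaves} for $f^+$ using the same three facts, namely that the stackification maps are fully faithful on a prestack, that objects of $X^+$ are $\tau$-locally in the image of $X$, and that automorphisms in the stack $Y^+$ (and $X^+$) can be compared $\tau$-locally. The explicit commutative square of automorphism groups is just a cleaner packaging of the same diagram chase, so nothing further is needed.
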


\begin{proof}  Check, say, that condition \eqref{autinjectivity} in Proposition~\ref{prop:representablebypresheaves} for $f$ implies the analogous condition for $f^+$ by noting that: \begin{enumerate} \item Equality of automorphisms of objects of the $\tau$ stack $Y^+$ can be checked locally in the $\tau$-topology.  \item Locally in the $\tau$ topology, any object in a fiber category of $X^+$ is the image of an object in the corresponding fiber category of $X$ under the stackification map $X \to X^+$. \item The stackification doesn't change the automorphism group of objects that come from $X$---the stackification maps $X \to X^+$ and $Y \to Y^+$ are fully faithful \cite[3.2.1]{LM}. \end{enumerate} \end{proof}

\subsection{Formal smoothness} \label{section:formalsmoothness} Consider a $2$-commutative diagram \bne{diatolift} & \xym{ T \ar[d] \ar[r] & X \ar[d]^f \\ T' \ar@{.>}[ru] \ar[r] & Y } \ene in a $2$-category $\C$.  The datum of such a diagram includes a fixed choice of homotopy (invertible $2$-morphism) $\eta : fa \to bi$.  By a \emph{lift} in such a diagram we mean a morphism (i.e.\ a ``$1$-morphism") $l : T' \to X$ together with homotopies $\alpha : a \to li$ and $\beta : fl \to b$ such that $\eta = (\beta * i )(f * \alpha)$.  Lifts form a category (in fact a groupoid) where a morphism from $l=(l,\alpha,\beta)$ to $l' = (l',\alpha',\beta')$ is a homotopy $\gamma : l \to l'$ compatible with the $\alpha$'s and $\beta$'s.  If $\C = \CFG / \D$ for some category $\D$ and $f$ is formally representable, then one can check that there is at most one morphism between any two lifts, so the groupoid of lifts is ``setlike".  This is the only case we will consider.  When we say that there is a \emph{unique} lift in such a $2$-commutative diagram we mean ``unique up to (necessarily unique) isomorphism in the category of lifts".

Let $\D$ denote the category of schemes (or schemes over some base) and let $I$ denote the class of square-zero closed embeddings in $\D$.

\begin{defn}  \label{defn:formallysmooth} A formally representable $\CFG / \D$ morphism $f : X \to Y$ is called \emph{formally smooth} (resp.\ \emph{formally \'etale}) iff, in any $2$-commutative diagram \eqref{diatolift} with $i \in I$ there exists a lift locally on $T'$ in the \'etale topology (resp.\ and given any two lifts in any such diagram, there exists, \'etale locally on $T'$, a (necessarily unique) isomorphism between them). \end{defn}

In particular, note that, in checking whether $f$ is formally smooth or \'etale we can always restrict our attention to the case where $T \into T'$ is a square-zero closed embedding of \emph{affine} schemes.

\begin{rem} It is clear from the local nature of the definition that a formally representable map of prestacks $f : X \to Y$ in the \'etale topology is formally \'etale / formally smooth iff its stackification in the \'etale topology is formally \'etale / formally smooth.  Furthermore, if $f$ is a formally representable map of \emph{stacks} in the \'etale topology, formal \'etaleness of $f$ is equivalent to the existence of a \emph{unique} completion in any diagram \eqref{diatolift} with $i \in I$.  (We will give a more general argument below.) \end{rem}

\begin{rem} \label{rem:formalsmoothness} If $f : X \to Y$ is representable by schemes (resp.\ algebraic spaces), then it is clear that $f$ formally smooth (resp.\ formally \'etale) in the sense of Definition~\ref{defn:formallysmooth} implies representability of $f$ by formally smooth (resp.\ formally \'etale) maps of schemes (resp.\ algebraic spaces).  This is simply because the notion of formal smoothness / \'etaleness above is clearly stable under $2$-base change and specializes to the usual notion for schemes or algebraic spaces.  In particular, if $f$ is representable by maps of schemes (or algebraic spaces) of locally finite presentation, then $f$ formally smooth (resp.\ formally \'etale) in the above sense implies that $f$ is smooth (resp.\ \'etale) in the usual sense. \end{rem}

For simplicity, let us now suppose $f : X \to Y$ is a map of \emph{schemes}.  I claim that $f$ is formally \'etale iff the following two conditions hold: \begin{enumerate} \item \label{localuniqueness} For any two lifts $l,l'$ in any diagram \eqref{diatolift}, there is an fppf cover $d : S' \to T'$ such that $ld=l'd$. \item \label{localexistence} For any such diagram there is an fppf cover $d : S' \to T$ such that there exists a lift in the diagram \bne{bigdiatolift} & \xym{ S \ar[r]^{\pi_2} \ar[d]_{\pi_1} & T \ar[d] \ar[r] & X \ar[d]^f \\ S' \ar@{.>}[rru] \ar[r]^d & T' \ar[r] & Y } \ene where $S := S' \times_{T'} T$. \end{enumerate}  (We may of course also assume that $T'$ is affine.)  Clearly the existence of unique lifts in all diagrams \eqref{diatolift} implies these two conditions because square zero closed embeddings are closed under base change, so the big square in \eqref{bigdiatolift} is again a diagram of the form \eqref{diatolift}.  Conversely, suppose the two conditions hold and we want to produce a lift.  Using the second condition we find an fppf cover $d : S' \to T'$ and a lift $l' : S' \to X$ as indicated in \eqref{bigdiatolift}.  We next check that this lift descends to $T'$ as follows: Let $R' := R' \times_{S'} R'$, $R := R' \times_{S'} S$ and let $\pi_1,\pi_2 : R' \rightrightarrows S'$ be the two projections.  Then $l' \pi_1$ and $l' \pi_2$ both furnish lifts in the big square of \bne{biggerdiatolift} & \xym{ R \ar@<.5ex>[r] \ar@<-.5ex>[r] \ar[d] & S \ar[r]^{\pi_2} \ar[d]_{\pi_1} & T \ar[d] \ar[r] & X \ar[d]^f \\ R' \ar@<.5ex>[r] \ar@<-.5ex>[r] \ar@{.>}[rrru] & S'  \ar[r]^d & T' \ar[r] & Y } \ene (the meaning of ``big square" doesn't depend on the choices of parallel arrows) and again this big square is of the form \eqref{diatolift} so the first condition implies that there is an fppf cover $e : Q' \to R'$ so that $\pi_1 l' e = \pi_2 l' e : Q' \to X$.  But $e$ is an fppf cover, so this implies $\pi_1 l' = \pi_2 l'$.  But $d$ is an fppf cover, so this implies $l' = ld$ for some (necessarily unique) $l : T' \to X$.  Now we check that this $l$ furnishes a lift in the original diagram \eqref{diatolift}.  To check that the lower triangle commutes we use the fact that $d$ is an fppf cover, so we can check equality after precomposing with $d$, which holds exactly because the lower triangle in \eqref{bigdiatolift} commutes.  To check that the upper triangle commutes it is enough to check after first applying the fppf cover $S \to T$ (a base change of $S' \to T'$) where we reduce to commutativity of the upper square in \eqref{bigdiatolift}.

Extracting the hypotheses that were actually necessary above, we have proved:

\begin{lem}  Let $I$ be a class of maps in a category $\C$ stable under base change and let $f : X \to Y$ be a map of sheaves on $\C$ in some topology $\tau$.  Then the \emph{unique} \emph{RLP of} $f$ \emph{with respect to} $I$ (i.e.\ the existence of a unique lift in every diagram \eqref{diatolift} with $i \in I$) is equivalent to the local uniqueness of such lifts in the $\tau$ topology plus the local existence of a lift in the $\tau$-topology (i.e.\ conditions \eqref{localuniqueness} and \eqref{localexistence} above with ``fppf" replaced by ``$\tau$"). \end{lem}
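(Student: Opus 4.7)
The implication from unique RLP to the conjunction of local uniqueness and local existence is immediate: take the cover to be the identity map of $T'$. All the work is in the converse, which proceeds exactly as in the author's preparatory discussion, with ``fppf'' replaced everywhere by ``$\tau$'' and ``schemes'' replaced by ``sheaves on $\C$ in the $\tau$ topology''. The only properties used are that $I$ is stable under base change (so that every base change of a diagram of the form \eqref{diatolift} is again of that form) and that $X$ and $Y$ are sheaves in the $\tau$ topology (so that equality of morphisms into them may be checked locally, and morphisms descend along $\tau$-covers).

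The plan is as follows. Fix a lifting problem \eqref{diatolift} with $i \in I$. First I would establish uniqueness of global lifts from local uniqueness: given two lifts $l,l' : T' \to X$, the local uniqueness hypothesis produces a $\tau$-cover $d : S' \to T'$ with $ld = l'd$, and the sheaf property of $X$ then forces $l = l'$. Next, for existence, I would invoke local existence to obtain a $\tau$-cover $d : S' \to T'$ and a lift $l' : S' \to X$ in the base-changed diagram \eqref{bigdiatolift} (this uses stability of $I$ under base change to know the outer square is still of the required form). Set $R' := S' \times_{T'} S'$, $R := R' \times_{T'} T$, with projections $\pi_1, \pi_2 : R' \rightrightarrows S'$. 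Then $l'\pi_1$ and $l'\pi_2$ are two lifts in the base-changed square to $R'$ (whose $I$-leg $R \to R'$ is again in $I$), so by local uniqueness there is a further $\tau$-cover $e : Q' \to R'$ with $l'\pi_1 e = l'\pi_2 e$; the sheaf property of $X$ then gives $l'\pi_1 = l'\pi_2$, which (again by the sheaf property applied to the $\tau$-cover $d$) means $l'$ descends to a unique morphism $l : T' \to X$ with $l' = ld$.

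Finally I would verify that $l$ actually solves the original lifting problem: the lower triangle $fl = b$ follows by precomposing with the $\tau$-cover $d$ and using the lower triangle in \eqref{bigdiatolift} together with the sheaf property of $Y$; the upper triangle $li = a$ follows by precomposing with the $\tau$-cover $S \to T$ (which is a base change of $d$, hence a cover since the class of $\tau$-covers is stable under base change as part of being a Grothendieck topology) and using the upper square of \eqref{bigdiatolift} together with the sheaf property of $X$.

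The ``main obstacle'' is really only bookkeeping: one must be careful that at every step the base-changed square still has its vertical map in $I$ (which is exactly the hypothesis on $I$) and that equality or descent claims are being checked against a genuine $\tau$-cover of the object into which one is mapping. No calculation is involved beyond the diagram chase already sketched by the author in the discussion preceding the lemma; the lemma is precisely the abstract formulation of that argument.
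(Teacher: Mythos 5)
Your proposal is correct and is essentially the paper's own proof: the paper simply declares the lemma to be the abstraction of the preceding scheme/fppf diagram chase ("extracting the hypotheses that were actually necessary above"), which is exactly the argument you reproduce with ``fppf'' replaced by $\tau$ and the sheaf property of $X$ and $Y$ doing the descent and separatedness work.
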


The advantage of the formulation in terms of local uniqueness and local existence of lifts is that the latter is clearly stable under sheafification:

\begin{lem} Let $I$ be a class of maps in a category $\C$ stable under base change and let $f : X \to Y$ be a map of presheaves on $\C$.  Let $\tau$ be a topology on $\C$.  Then the following are equivalent: \begin{enumerate} \item The sheafification $f^+$ of $f$ in the $\tau$ topology has the unique RLP with respect to $I$. \item $f$ has the local existence and uniqueness RLPs with respect to $I$ with respect to the topology $\tau$.  \end{enumerate} \end{lem}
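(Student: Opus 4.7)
The plan is to use the preceding lemma applied to $f^+$: it says that statement (1) is equivalent to $f^+$ having both local existence and local uniqueness of lifts with respect to $I$ in the $\tau$ topology. So the entire lemma reduces to checking that $f$ and $f^+$ share these two local properties; that is, $f$ has local existence (resp.\ uniqueness) of lifts against $I$ iff $f^+$ does.

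The key tool is that the sheafification map $X \to X^+$ is a ``local isomorphism" in the $\tau$ topology, in the sense that: (i) any section $s \in X^+(U)$ restricts, on some $\tau$-cover $U' \to U$, to the image of a section of $X(U')$, and (ii) any two sections of $X(U)$ mapping to the same section of $X^+(U)$ agree on some $\tau$-cover of $U$. The same holds for $Y \to Y^+$.

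For (1)$\Rightarrow$(2): a lifting problem for $f$ composes with $X \to X^+$ and $Y \to Y^+$ to a lifting problem for $f^+$. Local existence for $f^+$ gives a $\tau$-cover $S' \to T'$ and a lift $S' \to X^+$; after a further cover (using (i)) we obtain a lift into $X$; after yet another cover (using (ii)) the upper and lower triangles commute in $X$ and $Y$ respectively, yielding local existence for $f$. Local uniqueness is analogous: two lifts of an $f$-problem also lift the associated $f^+$-problem, hence agree locally in $X^+$, hence by (ii) agree locally in $X$.

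For (2)$\Rightarrow$(1): starting with a lifting problem for $f^+$, we successively refine $T'$ via $\tau$-covers so that first $b^+$ descends to a section $b: T'' \to Y$, then $a^+|_{T \times_{T'} T''}$ descends to a section in $X$ (possibly after further refinement, with $I$ stable under base change so the pulled-back map remains in $I$), and finally the square commutes in $Y$ rather than merely in $Y^+$. This yields a lifting problem for $f$; local existence for $f$ produces a lift over a cover, and composing with $X \to X^+$ gives local existence for $f^+$. Local uniqueness for $f^+$ is handled by the same recipe applied to a pair of lifts.

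The main obstacle is careful bookkeeping of successive refinements, in particular lifting $\tau$-covers of $T$ (or of base changes of $T$) to $\tau$-covers of $T'$. In the applications we care about---$I$ the class of square-zero closed embeddings and $\tau$ the \'etale, fppf, or fpqc topology---this is automatic from invariance of the relevant topoi under nilpotent thickenings, so the proof goes through cleanly; more generally it can be done by hand using that sections of the sheafification are locally representable by sections of the presheaf.
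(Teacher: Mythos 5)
Your proposal follows exactly the route the paper intends: the paper gives no written proof of this lemma beyond the remark that the local existence and uniqueness conditions are ``clearly stable under sheafification,'' so the content is precisely your reduction via the preceding lemma applied to $f^+$ together with the two standard properties of the plus construction (sections of $X^+$ are locally sections of $X$, and two sections of $X$ with equal image in $X^+$ agree on a cover). Your caveat about lifting covers of $T$ (or of $T \times_{T'} S'$) to covers of $T'$ is, if anything, more careful than the paper, which silently elides this point; as you observe, it is harmless in the intended applications, where $I$ consists of square-zero closed embeddings and the relevant sites of $T$ and $T'$ coincide.
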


The same argument (plus perhaps a little bookkeeping to keep track of homotopies) makes sense for representable maps of prestacks.  We will state the version relevant for formal \'etaleness:

\begin{lem} Let $\D$ be the category of schemes (or schemes over some base) equipped with some topology $\tau$.  Suppose $f : X \to Y$ is a formally representable map of stacks over $\D$ in a topology $\tau$ at least as fine as the \'etale topology.  Then to check that $f$ is formally \'etale, it suffices to check that \begin{enumerate} \item In any $2$-commutative diagram as in \eqref{diatolift} with $i$ a square-zero closed embedding of affine schemes, there exists a lift locally on $T'$ in the $\tau$ topology, and \item given two lifts in such a diagram, there exists an isomorphism between them locally on $T'$ in the $\tau$ topology. \end{enumerate} \end{lem}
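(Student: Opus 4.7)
The plan is to carry out, at the level of stacks in the $\tau$ topology, the same local-to-global descent argument that was given in the scheme case just before the lemma, being careful to keep track of the $2$-morphism (homotopy) data that comes with the definition of a lift. Throughout, we may assume $T'$ is affine and work with a fixed $2$-commutative square as in \eqref{diatolift}.

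\textbf{Existence of a lift locally in the étale topology.} First I would apply the local existence hypothesis to obtain a $\tau$-cover $d : S' \to T'$ together with a lift $l' : S' \to X$ in the outer square of the analogue of \eqref{bigdiatolift}, i.e.\ a morphism $l'$ equipped with homotopies $\alpha' : a\pi_2 \to l' i_S$ and $\beta' : fl' \to bd$ witnessing the two triangles (where $S := S' \times_{T'} T$ and $i_S : S \to S'$ is the base change of $i$, which is still a square-zero closed embedding because $I$ is stable under base change). Then I would set $R' := S' \times_{T'} S'$, $R := R' \times_{T'} T$, and consider the two pulled-back lifts $\pi_1^* l'$ and $\pi_2^* l'$ in the outer square of the analogue of \eqref{biggerdiatolift}; this outer square is again of the form \eqref{diatolift} with $R \hookrightarrow R'$ a square-zero closed embedding. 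The local uniqueness hypothesis gives a $\tau$-cover $e : Q' \to R'$ and a (necessarily unique, since $f$ is formally representable) homotopy $\gamma$ between $(\pi_1^* l')e$ and $(\pi_2^* l')e$ compatible with the $\alpha$'s and $\beta$'s.

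\textbf{Descent through the $\tau$ topology.} The uniqueness of $\gamma$ guarantees that the various pullbacks of $\gamma$ to $Q' \times_{R'} Q'$ agree, and hence that $\gamma$ itself descends along the $\tau$-cover $e$ to a homotopy $\pi_1^* l' \Rightarrow \pi_2^* l'$ on $R'$ (using that $X$ is a $\tau$-stack, so in particular Hom-groupoids satisfy $\tau$-descent). Checking the cocycle condition on $S' \times_{T'} S' \times_{T'} S'$ reduces, by formal representability of $f$ and uniqueness of lifts after a further $\tau$-cover, to an equality of homotopies which can again be detected $\tau$-locally and is automatic. Since $X$ is a stack for $\tau$, the descent datum $(l',\gamma)$ effectively descends to a morphism $l : T' \to X$ with $ld \cong l'$. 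The homotopies $\alpha',\beta'$ descend in the same way (again using $\tau$-descent for $X$ and $Y$ and the compatibility of $\gamma$ with the $\alpha,\beta$'s), yielding a genuine lift $(l,\alpha,\beta)$ in \eqref{diatolift}. Since $\tau$ is at least as fine as the étale topology, the existence of a lift on $T'$ certainly implies existence étale-locally on $T'$.

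\textbf{Local uniqueness in the étale topology.} For the uniqueness half of formal étaleness I would argue similarly: given two lifts $l_1, l_2$ of the same diagram, the local uniqueness hypothesis in $\tau$ gives a $\tau$-cover $d : S' \to T'$ and a homotopy $\gamma : l_1 d \Rightarrow l_2 d$ compatible with the structure homotopies. Applying local uniqueness again on $R' = S'\times_{T'}S'$ shows $\pi_1^*\gamma = \pi_2^*\gamma$ (after a further $\tau$-cover, then by faithfulness of $f$ and the vanishing of lift automorphisms, on $R'$ itself), so $\gamma$ descends through the $\tau$-stack structure of $X$ to a homotopy $l_1 \Rightarrow l_2$ on $T'$, which in particular exists étale-locally.

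The main obstacle I anticipate is purely bookkeeping: keeping the homotopies $\alpha,\beta,\gamma$ and all the compatibility constraints straight under repeated base change, and verifying that the cocycle condition for descending $\gamma$ follows from local uniqueness in $\tau$. The crucial structural inputs—formal representability (which makes the groupoid of lifts discrete and homotopies unique when they exist) and the fact that $X$ and $Y$ are already $\tau$-stacks (so one really does get effective descent for morphisms and $2$-morphisms)—do all the real work; the rest is a direct transcription of the scheme-level lemma proved just above in the paper.
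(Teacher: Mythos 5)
Your argument is correct and is essentially the paper's own proof: the paper simply refers back to the scheme/sheaf-level descent argument given just above and asserts that "the same argument plus a little bookkeeping to keep track of homotopies" works for formally representable maps of $\tau$-stacks, and your proposal carries out exactly that bookkeeping (uniqueness of morphisms of lifts from formal representability giving the cocycle condition, then effective descent of the lift and its structure homotopies through the $\tau$-stack property). The only detail worth noting is that when applying the affine-base hypotheses over $S'$, $R'$, etc.\ one should first refine by affine (Zariski, hence $\tau$) covers, which is harmless since the conclusions are $\tau$-local and glue by uniqueness.
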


We will use the following variant:

\begin{lem} \label{lem:formaletalenesscriterion} Let $\D$ be the category of schemes (or schemes over some base).  Suppose $f : X \to Y$ is a formally representable map of prestacks over $\D$ in the fppf topology such that the stackification $f^+$ of $f$ in the \'etale topology is a map of algebraic stacks.  Then to prove that $f^+$ is formally \'etale, it suffices to check that $f$ satisfies the following two conditions: \begin{enumerate} \item In any $2$-commutative diagram as in \eqref{diatolift} with $i$ a square-zero closed embedding of affine schemes, there exists a lift locally on $T'$ in the fppf topology. \item Given any two lifts in any such diagram, there exists an isomorphism between them locally on $T'$ in the fppf topology. \end{enumerate} \end{lem}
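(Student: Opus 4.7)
The overall strategy is to reduce the claimed formal \'etaleness of $f^+$ to the hypothesised fppf-local lifting properties of $f$, using Lemma~\ref{lem:stackificationpreservesrepresentability} to propagate formal representability through stackification, and using fppf descent (which algebraic stacks satisfy) to upgrade fppf-local data for $f$ to honest étale-local data for $f^+$.

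First I would note that $f^+$ is formally representable by Lemma~\ref{lem:stackificationpreservesrepresentability}. By the preceding Lemma (applied in the \'etale topology), to show $f^+$ is formally \'etale it then suffices to verify, for every $2$-commutative diagram as in \eqref{diatolift} with $i : T \into T'$ a square-zero closed embedding of affine schemes, that lifts exist and are unique \'etale-locally on $T'$. Fix such a diagram with top arrow $a : T \to X^+$ and bottom arrow $b : T' \to Y^+$. By definition of the \'etale stackification, \'etale-locally on $T$ (resp.\ $T'$) the map $a$ (resp.\ $b$) factors through the prestack $X$ (resp.\ $Y$); since $T \into T'$ is a nilimmersion, the small \'etale sites of $T$ and $T'$ are canonically equivalent (topological invariance of the \'etale site), so after refining I may choose a single \'etale cover $T_1' \to T'$ with $T_1 := T \times_{T'} T_1'$ through which $a$ and $b$ factor as maps to $X$ and $Y$ respectively. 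The induced square is a lifting problem for the prestack morphism $f$, to which the hypothesis applies: on some fppf cover $T_2' \to T_1'$ there is a lift $l : T_2' \to X$, and any two such lifts are fppf-locally isomorphic. Composing with $X \to X^+$ yields an fppf-local lift for the original $f^+$-diagram, on the composite fppf cover $T_2' \to T'$.

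The final and crucial step is to descend this fppf-local lift to a lift defined on $T'$ itself. Since $X^+$ is an algebraic stack it is in particular a stack in the fppf topology, so morphisms from a scheme into $X^+$ satisfy effective fppf descent. To produce the descent datum, compare the two pullbacks of $l$ along the projections $T_2' \times_{T'} T_2' \rightrightarrows T_2'$: both are lifts in the same $f$-diagram (obtained by pulling back the original $f^+$-diagram and the \'etale factorisations), so by fppf-local uniqueness (applied to $f$) they agree after a further fppf refinement. Because $f^+$ is formally representable, any ``isomorphism of lifts'' for $f^+$ is unique and hence determined; equivalently, lifts form a sheaf of sets rather than a stack, so local equality implies equality on $T_2' \times_{T'} T_2'$. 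This furnishes the trivial cocycle, and fppf descent yields a lift $T' \to X^+$, which is in particular an \'etale-local lift. The same descent argument applied to a pair of given lifts of an $f^+$-diagram shows they are equal whenever they are fppf-locally equal, giving \'etale-local (in fact global) uniqueness.

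\textbf{Main obstacle.} The technical heart is the last paragraph: one must check that the hypothesised fppf-local uniqueness of lifts for $f$ really produces effective descent data for a map from a scheme into the algebraic stack $X^+$. This ultimately rests on two facts simultaneously — the algebraicity of $X^+$ (to invoke fppf descent for morphisms into it) and the formal representability of $f^+$ (to ensure that descent data for lifts is automatically canonical, so the cocycle condition holds vacuously). Everything else in the argument is essentially bookkeeping about stackification and about compatible \'etale/fppf refinements of the thickening $T \into T'$.
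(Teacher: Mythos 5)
Your argument is correct in substance, but it takes a different route from the paper. The paper's proof is a five-line reduction: it introduces the stackification $f^{++}$ of $f$ in the \emph{fppf} topology, observes that your two hypotheses are fppf-local in nature and therefore hold for $f$ iff they hold for $f^{++}$, applies the immediately preceding lemma (with $\tau$ = fppf, to the map of fppf stacks $f^{++}$) to conclude that $f^{++}$ is formally \'etale, and then finishes with $f^+=f^{++}$ because an algebraic stack is a stack in the fppf topology \cite[10.7]{LM}. You instead bypass $f^{++}$ and carry out the descent by hand for $f^+$: factor the lifting problem \'etale-locally through the prestacks, produce an fppf-local lift from hypothesis (1), and glue it to a lift on $T'$ using that $X^+$ is an fppf stack together with formal representability (lifts form a setlike groupoid, so the descent isomorphisms are unique and the cocycle condition is vacuous). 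The two proofs rest on exactly the same two pillars ([LM, 10.7] and formal representability), but the paper's is shorter because it delegates the descent to the earlier lemma, whose proof is itself only sketched (``the same argument plus a little bookkeeping to keep track of homotopies''); your version is essentially that bookkeeping made explicit for this case, which buys a more self-contained argument at the cost of length. One point in your write-up deserves more care: over $T_2'\times_{T'}T_2'$ the two pullbacks of your prestack-level lift are \emph{a priori} lifts of two different prestack diagrams, since the two composites $T_2'\times_{T'}T_2'\rightrightarrows T_2'\to T_1'$ need not coincide; before invoking fppf-local uniqueness for $f$ you must identify these diagrams using the full faithfulness of the stackification maps $X\to X^+$ and $Y\to Y^+$ (the same fact the paper uses in Lemma~\ref{lem:stackificationpreservesrepresentability}), and likewise ``local equality implies equality'' should read ``the unique local isomorphisms agree on overlaps and hence glue, since Isom-presheaves of the fppf stack $X^+$ are fppf sheaves.'' These are repairable bookkeeping issues, not gaps in the strategy.
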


\begin{proof} Let $f^{++}$ denote the stackification of $f$ in the fppf topology.  Since $f$ is an fppf prestack it is \emph{a fortiori} an \'etale prestack, so both $f^+$ and $f^{++}$ are formally representable by Lemma~\ref{lem:stackificationpreservesrepresentability}, so it makes sense to ask whether they are formally \'etale.  The fppf local nature of the conditions on $f$ in the statement of the lemma makes it clear that they hold for $f^{++}$ iff they hold for $f$.  By the previous lemma, if these conditions hold for $f^{++}$, then $f^{++}$ is formally \'etale, so we have proved that the conditions imply formal \'etaleness of $f^{++}$.  But $f^{+} = f^{++}$ because an algebraic stack is a stack in the fppf topology \cite[10.7]{LM}. \end{proof}

\subsection{Lifting up to homotopy}  \label{section:liftinglemmas} The following technical results will be used in the proofs of Theorem~\ref{thm:AAAhetale} and Theorem~\ref{thm:LhtoLogAQetale}.  See Remark~\ref{rem:HomotopyLifting} for another application.

\begin{lem} \label{lem:lifting} {\bf (Lifting Lemma)} Consider a commutative diagram of monoids $$ \xym{ P \ar[r]^b & M \ar[r] & A \\ Q \ar[u]^h \ar[r]_a & M' \ar[u]_i  \ar[r] & A' \ar[u] }$$ where the right square is a strict map of integral log rings, $h : Q \into P$ is an injective map of fine monoids, and $A' \to A$ is a surjective ring map with square zero kernel $I$.  Let $q : P \to P/Q$ denote the cokernel of $h$.  Then there exist: \begin{enumerate} \item a finite faithfully flat ring map $A' \to B'$ \item a group homomorphism $\alpha : (P/Q)^{\rm gp} \to B^*$, where $B := B' \otimes_{A'} A$, and \item a lift in the diagram $$ \xym{ P \ar[r]^{\alpha q \cdot b} \ar@{.>}[rd]^l & N \ar[r] & B \\ Q \ar[u]^h \ar[r]_{ a} & N' \ar[u]_i \ar[r] & B' \ar[u] }$$ where the right square is the map of log rings obtained from the original map of log rings by pushing out along $A' \to B'$ and taking associated log structures and $$ (\alpha q \cdot b)(p)  =  [b(p),\alpha(q(p))] \in N = M \oplus_{A^*} B^*.$$ \end{enumerate}  If $(P/Q)^{\rm gp}$ is torsion-free or the order of the torsion subgroup of $(P/Q)^{\rm gp}$ is invertible in $A$, then we can take $A' = B'$ and $\alpha=1$.  \end{lem}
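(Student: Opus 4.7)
My plan is to reformulate the lifting problem as splitting a short exact sequence of abelian groups, identify the obstruction as a torsion class in $\mathrm{Ext}^{1}((P/Q)^{\mathrm{gp}},I)$, and kill it via a finite faithfully flat Kummer-type extension whose introduced roots of unity serve as the twist $\alpha$.

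First I would observe that since $A'\to A$ is a square-zero surjection with kernel $I$ and the right-hand square is strict, the log structure $M$ on $A$ equals $M'/(1+I)$, where $1+I\subseteq (A')^{*}\subseteq (M')^{*}$ acts by multiplication; indeed, $I^{2}=0$ makes $(A')^{*}\twoheadrightarrow A^{*}$ surjective with kernel $1+I$, and the log structure axiom identifies $(M')^{*}\cong (A')^{*}$. After groupification, giving a lift $l\colon P\to M'$ of $b$ extending $a$ is then equivalent to splitting the extension
\[0\to 1+I\to \bigl(P^{\mathrm{gp}}\times_{M^{\mathrm{gp}}}(M')^{\mathrm{gp}}\bigr)\big/a^{\mathrm{gp}}(Q^{\mathrm{gp}})\to (P/Q)^{\mathrm{gp}}\to 0,\]
whose class in $\mathrm{Ext}^{1}((P/Q)^{\mathrm{gp}},1+I)\cong \mathrm{Ext}^{1}((P/Q)^{\mathrm{gp}},I)$ (using $1+I\cong I$ via $1+i\mapsto i$, valid since $I^{2}=0$) is the sole obstruction. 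Writing $(P/Q)^{\mathrm{gp}}=\mathbb{Z}^{r}\oplus T$ with $T$ finite of exponent $N$, the free part contributes nothing, so the obstruction lives in $\mathrm{Ext}^{1}(T,I)=\bigoplus_{j}I/n_{j}I$ for a cyclic decomposition $T=\bigoplus_{j}\mathbb{Z}/n_{j}\mathbb{Z}$. This vanishes when $T=0$ (first ``furthermore'') or when $N\in A^{*}$ (second ``furthermore'', since then $N$ acts invertibly on the $A$-module $I$, forcing $n_{j}I=I$), and in both cases I take $B'=A'$, $\alpha=1$.

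In general, I lift cyclic generators of $T$ to $p_{j}\in P^{\mathrm{gp}}$ with $n_{j}p_{j}=h^{\mathrm{gp}}(q_{j})$ for some $q_{j}\in Q^{\mathrm{gp}}$. Choosing any preimages $\tilde b(p_{j})\in M'$ of $b^{\mathrm{gp}}(p_{j})$, I obtain unique $\varepsilon_{j}\in I$ with $a^{\mathrm{gp}}(q_{j})=(1+\varepsilon_{j})\cdot n_{j}\tilde b(p_{j})$, and set
\[B' := A'[v_{1},\ldots,v_{s}]\big/\bigl(v_{j}^{n_{j}}-(1+\varepsilon_{j})\bigr)_{j=1}^{s}.\]
Each factor is a Kummer extension $A'[v]/(v^{n}-u)$ with $u\in (A')^{*}$, free monic of rank $n$, so $B'$ is free of rank $\prod n_{j}$ over $A'$ and thus finite faithfully flat. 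The reductions $\bar v_{j}\in B^{*}$ satisfy $\bar v_{j}^{n_{j}}=1$, and I define $\alpha\colon (P/Q)^{\mathrm{gp}}\to B^{*}$ by $\alpha([t_{j}])=\bar v_{j}^{-1}$ on torsion generators and trivially on the free part. Setting $l^{\mathrm{gp}}(p_{j}):=v_{j}^{-1}\tilde b(p_{j})$ (with $l^{\mathrm{gp}}|_{Q^{\mathrm{gp}}}=a^{\mathrm{gp}}$ and arbitrary lifts on free generators) produces a group homomorphism $l^{\mathrm{gp}}\colon P^{\mathrm{gp}}\to (N')^{\mathrm{gp}}$ compatible with the torsion relations, since $n_{j}l^{\mathrm{gp}}(p_{j})=v_{j}^{-n_{j}}(1+\varepsilon_{j})a^{\mathrm{gp}}(q_{j})=a^{\mathrm{gp}}(q_{j})$. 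Because $N'$ is integral with $N'=\{n\in (N')^{\mathrm{gp}}:\bar n\in N\}$ (using $1+J\subseteq (N')^{*}$ for $J:=\ker(B'\to B)$), restricting $l^{\mathrm{gp}}$ to $P$ yields the monoid map $l$.

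The main obstacle will be the bookkeeping for non-cyclic $T$: ensuring that the independent Kummer extensions assembled in $B'$ simultaneously kill the contributions of every cyclic summand to $\mathrm{Ext}^{1}(T,I)$, and that the resulting twist $\alpha$ correctly matches $(\alpha q\cdot b)(p)=[b(p),\alpha(q(p))]\in N$ with the reduction of the constructed $l^{\mathrm{gp}}(p)$ for every $p\in P$, not merely on the chosen generators. This reduces to careful sign-tracking in the bijection between the Kummer relation $v_{j}^{n_{j}}=1+\varepsilon_{j}$ in $B'$ and the extension relation $a^{\mathrm{gp}}(q_{j})=(1+\varepsilon_{j})n_{j}\tilde b(p_{j})$ in $(M')^{\mathrm{gp}}$.
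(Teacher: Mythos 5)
Your proposal is correct in substance and follows essentially the same route as the paper's proof: reduce, via strictness and $I^2=0$, to a lifting problem for the exact sequence of abelian groups $0 \to I \to (M')^{\rm gp} \to M^{\rm gp} \to 0$, dispose of the free part of $(P/Q)^{\rm gp}$ (the paper does this by lifting over the preimage of the free summand rather than by quoting $\Ext^1$, but that is cosmetic), and kill the torsion obstruction by adjoining $n_j$-th roots of the units $1+\varepsilon_j$, the resulting roots of unity in $B$ supplying the twist $\alpha$; the "furthermore" cases are handled exactly as you indicate. The one repair needed is the sign you yourself flagged: with $\varepsilon_j$ defined by $a^{\rm gp}(q_j)=(1+\varepsilon_j)\,\tilde b(p_j)^{n_j}$ you must set $l^{\rm gp}(p_j)=v_j\,\tilde b(p_j)$ and $\alpha(\bar p_j)=\bar v_j$ (not their inverses), since $v_j^{-n_j}\tilde b(p_j)^{n_j}=(1+\varepsilon_j)^{-2}a^{\rm gp}(q_j)\neq a^{\rm gp}(q_j)$ in general; with the corrected sign your verification matches the paper's computation $n_j[m_j,x_j]=[a(n_jp_j),1]$.
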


\begin{proof} Since the map of log rings is strict, there is an ``exact sequence" of monoids $$ \xym{ 0 \ar[r] & I \ar[r] & M' \ar[r]^i & M \ar[r] & 0 } $$ where the left map is $i \mapsto (1+i) \in (A')^* \subseteq M'$ and $I$ is regarded as an abelian group under addition.  Groupifying this yields a diagram of groups with exact row \bne{exrow} & \xym{ 0 \ar[r] & I \ar[r] & (M')^{\rm gp} \ar[r]^i & M^{\rm gp} \ar[r] & 0 \\ &  & Q^{\rm gp} \ar[r] \ar[u]^a & P^{\rm gp} \ar@{.>}[lu]_l \ar[u]_b } \ene and one argues using integrality of all monoids involved (and the fact that $I$ is a group) that lifting as in the statement of the lemma (with $A'=B'$) is the same thing as lifting as indicated in this diagram.  If $(P/Q)^{\rm gp}$ is torsion free then it is free because $Q$ and $P$ are finitely generated, so one can choose a splitting $P^{\rm gp} = Q^{\rm gp} \oplus (P/Q)^{\rm gp}$ and a lift is easily constructed.  In general, we first let $G \subseteq P^{\rm gp}$ be the preimage of the free summand of \bne{PmodQ} (P/Q)^{\rm gp} & \cong & \ZZ^n \oplus \ZZ / n_1 \ZZ \oplus \cdots \oplus \ZZ / n_k \ZZ \ene under the quotient map $P^{\rm gp} \to (Q/P)^{\rm gp}$.  If we replace $P^{\rm gp}$ with $G$ and $b$ with $b|_G$, then we can find a lift $l : G \to (M')^{\rm gp}$ by the same argument because $G/Q^{\rm gp}$ is free.  Then by replacing $Q^{\rm gp}$ and $a$ by $G$ and $l$ respectively, we reduce to treating the case where $(P/Q)^{\rm gp}$ is torsion---i.e. $n=0$ in \eqref{PmodQ}.  

Choose $p_1,\dots,p_k \in P^{\rm gp}$ so that $p_i$ maps to a generator of the $i^{\rm th}$ summand of $(P/Q)^{\rm gp}$.  Then $n_ip_i \in Q$.  To produce a lift $l$ in \eqref{exrow} we must produce elements $l(p_1),\dots,l(p_k) \in (M')^{\rm gp}$ lying over $b(p_1),\dots,b(p_k) \in M^{\rm gp}$ (respectively) and satisfying $l(p_i)^{n_j} = a(n_jp_j)$ (we will write the binary operations on the monoids $M$ and $M'$ multiplicatively, and those of $Q$, $P$ additively).

Choose $m_j \in (M')^{\rm gp}$ such that $i(m_j) = b(p_j)$ in $M^{\rm gp}$ for each $j \in \{ 1, \dots, k \}$.  Then we have \be i((m_j)^{n_j}) & = & i(a(n_j p_j)) \ee so there is $i_j \in I$ such that \be (1+i_j) m_j^{n_j} & = & a(n_j p_j). \ee  If the order $n_1 \cdots n_k$ of $(Q/P)^{\rm gp}$ is invertible in $A$, then since $I$ is an $A$-module, there are $i_j' \in I$ such that $n_j i_j' = i_j$.  In this case we can take $l(p_i) := (1+i_j')m_j$ and obtain the desired lift because $1+i_j' \in (A')^*$ lies over $1 \in A^*$ so $(1+i_j')m_j$ lies over $b(p_j)$ and we have $$((1+i_j')m_j)^{n_j} = (1+n_ji_j')m_j^{n_j} = (1+i_j)m_j^{n_j} = a(n_j p_j)$$ because $I$ is square zero.  In general we set $u_j := 1 + i_j \in (A')^*$ and we consider the finite faithfully flat ring map \be A' & \to & B' := A'[x_1,\dots,x_k]/(x_1^{n_1} - u_1,\dots,x_k^{n_k}-u_k) \ee which pushes out along $A' \to A$ to \be A & \to & B := A[x_1,\dots,x_k]/(x_1^{n_1}-1,\dots,x_k^{n_k}-1) \ee because $u_j \in A'$ lies over $1 \in A$.  Since each $x_j$ is an $n_j^{\rm th}$ root of unity in $B$, we have a group homomorphism $\alpha : (P/Q)^{\rm gp} \to B^*$ taking the generator of the $j^{\rm th}$ cyclic summand to $x_j$.  

Set $I' := I \otimes_{A'} B'$.  Since the faithfully flat ring maps $A' \to B'$ and $A \to B$ are local (anything mapping to a unit was already a unit), note that we have \be N' & = & M' \oplus_{(A')^*} (B')^* \\ N & = & M \oplus_{A^*} B^* \\ (N')^{\rm gp} & = & (M')^{\rm gp} \oplus_{(A')^*} (B')^* \\ N^{\rm gp} & = & M^{\rm gp} \oplus_{A^*} B^*. \ee  We can construct a lift in \bne{exrow2} & \xym{ 0 \ar[r] & I' \ar[r] & (N')^{\rm gp} \ar[r] & N^{\rm gp} \ar[r] & 0 \\ &  & Q^{\rm gp} \ar[r] \ar[u]^{[a,1]} & P^{\rm gp} \ar@{.>}[lu]_l \ar[u]_{\alpha q \cdot b } } \ene by setting $l(p_j) = [m_j,x_j]$ because $$n_j[m_j,x_j] = [m_j^{n_j},x_j^{n_j}] = [u_j m_j^{n_j},1] = [a(n_j p_j),1]$$ and $[m_j,x_j] \in (N')^{\rm gp}$ lies over $[b(p_j),x_j]=[b(p_j),\alpha(q(p_j))] \in N^{\rm gp}$. \end{proof}

The above lifting lemma is really just a shadow of a more general statement (Theorem~\ref{thm:homotopylifting} below).  One should think of $\alpha q : P^{\rm gp} \to B^*$ in Lemma~\ref{lem:lifting} as a \emph{homotopy} from $b$ to $il$.  Starting with a diagram commuting on the nose, we produced a \emph{lift up to homotopy}.  The point is that, if we start with a diagram \emph{commuting up to homotopy} then we can find a \emph{lift up to homotopy}, even without assuming $h : Q \to P$ is injective.

\begin{thm} \label{thm:homotopylifting}  {\bf (Homotopy Lifting)}  Consider a diagram of monoids $$ \xym{ P \ar[r]^b \ar@{.>}[r] & M \ar[r] & A \\ Q \ar[u]^h \ar[r]_a & M' \ar[u]_i  \ar[r] & A' \ar[u] }$$ where the right square is a strict map of integral log rings, $h : Q \to P$ is a map of fine monoids, $A' \to A$ is a surjective ring map with square zero kernel $I$, and the left square \emph{commutes up to homotopy} in the sense that there is a fixed group homomorphism $\eta : Q^{\rm gp} \to A^*$ such that \bne{homotopy} \eta \cdot bh & = & ia.\ene  Then, after possibly replacing \be \xym{ M \ar[r] & A \\ M' \ar[u]^i  \ar[r] & A' \ar[u] } & \quad {\rm \emph{with}} \quad &  \xym{ N \ar[r] & B \\ N' \ar[u]^{``i"}  \ar[r] & B' \ar[u] }  \ee for some finite faithfully flat ring map $A' \to B'$ as in Lemma~\ref{lem:lifting}, there is a \emph{lift up to homotopy}: that is, a triple $(l,\alpha,\beta)$ consisting of a monoid homomorphism $l : P \to M'$, a group homomorphism $\alpha : P^{\rm gp} \to A^*$, and a group homomorphism $\beta : Q^{\rm gp} \to (A')^*$ such that the following hold: \bne{liftuptohomotopy} \alpha \cdot b & = & il \\ \beta \cdot lh & = & a  \\ \eta & = & i \beta \cdot \alpha h. \ene  If $h$ is injective then we can take $\beta=1$.  Furthermore, given two lifts up to homotopy $(l',\alpha',\beta')$, $(l,\alpha,\beta)$ there is a unique group homomorphism $\gamma : P^{\rm gp} \to (A')^*$ satisfying the conditions: \bne{homotopybetweenlifts} \gamma \cdot l' & = & l \\ i \gamma \cdot \alpha' & = & \alpha \\ \beta' & = & \beta \cdot \gamma h. \ene \end{thm}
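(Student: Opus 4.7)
The plan is to reduce to Lemma~\ref{lem:lifting} by factoring $h$ as $Q \xrightarrow{\pi} \bar Q \hookrightarrow P$, where $\bar Q := h(Q)$ is a fine submonoid of $P$, and by absorbing the homotopy $\eta$ into multiplicative modifications of both $a$ and $b$ so that the resulting diagram, now involving the injective map $\bar h : \bar Q \hookrightarrow P$, strictly commutes. The key is to construct $\beta$ first, so that $\beta^{-1}\cdot a$ descends through $\pi$; the residual homotopy along $\bar h$ is then extendable to $P^{\rm gp}$ in the style of Lemma~\ref{lem:lifting}, and the injective case of that lemma closes the argument.

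For the construction of $\beta$, define $a^\ast : \ker h^{\rm gp} \to (M')^{\rm gp}$ by $a^\ast(q_1-q_2) := a(q_1)a(q_2)^{-1}$ whenever $h(q_1)=h(q_2)$; this is well-defined by integrality. Applying $i$ gives $\eta(q_1-q_2) \in A^\ast$, which maps to $1$ in $\bar M^{\rm gp}$, so by strictness ($\bar{M'}=\bar M$) one sees $a^\ast$ in fact takes values in $(A')^\ast$, and $i\circ a^\ast = \eta|_{\ker h^{\rm gp}}$. Extending $a^\ast$ to a group homomorphism $\beta : Q^{\rm gp} \to (B')^\ast$ is solvable after a finite faithfully flat extension $A' \to B'$: the obstruction lives in $\Ext^1(\bar Q^{\rm gp}, (A')^\ast)$ and is killed by adjoining the requisite roots of units, exactly as in the last paragraph of the proof of Lemma~\ref{lem:lifting}. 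If $h$ is injective, $\ker h^{\rm gp}=0$ and one simply takes $\beta = 1$.

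Now replace all log rings by their pushouts to $B'$ (abusively retaining notation). Set $\tilde a := \beta^{-1}\cdot a : Q \to N'$; the relation $\beta|_{\ker h^{\rm gp}}=a^\ast$ forces $\tilde a(q_1)=\tilde a(q_2)$ whenever $h(q_1)=h(q_2)$, so $\tilde a$ descends along $\pi$ to $\bar a : \bar Q \to N'$. The residual homotopy $\eta' := (i\beta)^{-1}\eta$ vanishes on $\ker h^{\rm gp}$ and thus factors as $\bar\eta\circ\pi^{\rm gp}$ for a unique $\bar\eta : \bar Q^{\rm gp} \to B^\ast$. Since $\bar h^{\rm gp} : \bar Q^{\rm gp}\hookrightarrow P^{\rm gp}$ is injective, a further finite faithfully flat extension lets one extend $\bar\eta$ to $\alpha_0 : P^{\rm gp} \to B^\ast$ (same root-adjoining device). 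Set $b' := \alpha_0\cdot b$; by construction the diagram $(\bar a,\, b',\, \bar h)$ strictly commutes, and Lemma~\ref{lem:lifting} applied to this diagram (possibly after one last extension) produces $\alpha_1 : (P/\bar Q)^{\rm gp}\to B^\ast$ and $l : P \to N'$ with $il = (\alpha_1\circ q)\cdot b'$ and $l\bar h = \bar a$, where $q : P \to P/\bar Q$ is the quotient.

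Setting $\alpha := \alpha_0 \cdot (\alpha_1\circ q^{\rm gp})$ and keeping $\beta$, the three identities are routine: $\alpha\cdot b = (\alpha_1\circ q)\cdot b' = il$; $\beta\cdot lh = \beta\cdot \bar a\circ\pi = \beta\cdot\tilde a = a$; and, using $q^{\rm gp}\circ h = 0$, $\;i\beta\cdot\alpha h = i\beta\cdot \alpha_0 h = i\beta\cdot\bar\eta\circ\pi^{\rm gp} = i\beta\cdot(i\beta)^{-1}\eta = \eta$. For the uniqueness statement, given two lifts $(l,\alpha,\beta)$, $(l',\alpha',\beta')$ set $\gamma(p) := l(p)\,l'(p)^{-1} \in (M')^{\rm gp}$; then $i(\gamma(p)) = \alpha(p)/\alpha'(p) \in A^\ast$ is trivial in $\bar M^{\rm gp}$, so strictness again forces $\gamma(p)\in (A')^\ast$. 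The three required identities for $\gamma$ follow by direct substitution, and uniqueness is automatic since $\gamma$ is forced by $\gamma\cdot l' = l$ in the group $(M')^{\rm gp}$. The main obstacle in this approach is the non-injective case: one must carefully show that $a^\ast$ lands in $(A')^\ast$ (which is exactly where strictness enters) and then control both the extension problem for $\beta$ and the consequent factorization of the residual homotopy through $\bar Q^{\rm gp}$.
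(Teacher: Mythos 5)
Your proof is correct, but it follows a genuinely different route from the paper's. You factor $h$ through its image $\ov{Q}:=h(Q)$ and reduce to the already-proved strictly-commutative injective case: first extend $a^{\rm gp}|_{\ker h^{\rm gp}}$ (which lands in $(A')^*$ by strictness of $M'\to M$ plus the homotopy relation) to a character $\beta$ of $Q^{\rm gp}$ after a finite flat extension, use it to descend $\beta^{-1}\cdot a$ to $\ov{Q}$, absorb the residual homotopy into a twist $\alpha_0\cdot b$ by extending the induced character of $\ov{Q}^{\rm gp}$ to $P^{\rm gp}$, and then invoke Lemma~\ref{lem:lifting} for $\ov{Q}\hookrightarrow P$. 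The paper instead never quotes Lemma~\ref{lem:lifting}: it reduces to the groupified extension $0\to I\to (M')^{\rm gp}\to M^{\rm gp}\to 0$ and factors $Q^{\rm gp}\to P^{\rm gp}$ as a surjection, an injection with free cokernel, and an injection with torsion cokernel, constructing the homotopy lift by hand in each of the three cases. Your reduction buys modularity: the homotopy bookkeeping is packaged into two character-extension (torsion/$\Ext^1$) problems killed by adjoining roots of units, the injective case (``$\beta=1$'') is immediate since $\ker h^{\rm gp}=0$, and the ``furthermore'' is handled exactly as in the paper (strictness/locality forces $\gamma(p)=l(p)l'(p)^{-1}\in(A')^*$). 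The paper's case analysis buys an entirely explicit, self-contained construction at the group level, which is what its later arguments (e.g.\ the successive lifting in the proof of Theorem~\ref{thm:AAAhetale}) actually cite. One detail you should make explicit: when extending $\ov{\eta}$ to $\alpha_0$ the relevant obstruction units live in $B^*$, not $(B')^*$, and only extensions of $B'$ (equivalently of $A'$) are permitted; so first lift those units along the square-zero surjection $B'\to B$ (units lift) and adjoin their roots upstairs, exactly as the paper does in its Case~3 — with that sentence added, and noting that your several successive finite faithfully flat extensions compose to a single one, the argument is complete.
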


\begin{proof} We will address the ``furthermore" at the very end of the proof.  After groupifying we obtain a diagram of abelian groups $$ \xym{ 0 \ar[r] & I \ar[r] & (M')^{\rm gp} \ar[r]^i & M^{\rm gp} \ar[r] & 0 \\ & & Q^{\rm gp} \ar[u]^a \ar[r]^h & \ar@{.>}[lu] P^{\rm gp} \ar[u]_b } $$ where the row is exact and the square commutes up to the homotopy $\eta$.  One argues much as in the previous proof that it is enough to find a lift up to homotopy in this diagram of abelian groups.  We can factor $Q^{\rm gp} \to P^{\rm gp}$ as $$Q^{\rm gp} \to G \to H \to P^{\rm gp}$$ where the first map is surjective, the second is injective with torsion-free cokernel, and the third is injective with torsion cokernel.  By successively lifting (from left to right and up to homotopy, taking $\beta=1$ whenever possible) in the diagram  $$ \xym{ (M')^{\rm gp} \ar[rrr]^i & & & M^{\rm gp} \\ Q^{\rm gp} \ar[u]^a \ar[r] & G \ar@{.>}[lu] \ar[r] & H \ar@{.>}[llu] \ar[r] & \ar@{.>}[lllu] P^{\rm gp} \ar[u]_b } $$ we reduce to treating the Cases~1-3 below.  

\noindent {\bf Case 1.} $h^{\rm gp}$ is surjective.  Fix some isomorphism \be P^{\rm gp} & = & \ZZ^n \oplus \ZZ/n_1 \ZZ \oplus \cdots \oplus \ZZ/n_k \ZZ. \ee  For $j \in \{ 1, \dots, n \}$, let $e_j \in P^{\rm gp}$ be a generator of the $j^{\rm th}$ summand of $\ZZ^n \subseteq P^{\rm gp}$ and for $j \in \{ 1, \dots, k \}$, let $p_j$ be a generator of $\ZZ / n_j \ZZ \subseteq P^{\rm gp}$.  Choose $r_j \in Q^{\rm gp}$ such that $h(r_j) = e_j$ and $q_j \in Q^{\rm gp}$ such that $h(q_j) = p_j$.  Choose, for $j \in \{ 1, \dots, k \}$, $m_j \in (M')^{\rm gp}$ such that $i(m_j)=b(p_j)$.  Since $n_jp_j = 0$ in $P^{\rm gp}$, $b(p_j)$ must lie in $A^* \subseteq M^{\rm gp}$ and must in fact be an $n_j^{\rm th}$ root of unity, so we have $m_j^{n_j} = 1+i_j$ for some $i_j \in I$.  In particular $m_j$ is in $(A')^*$.  If $n_j$ is invertible in $A$, set $x_j := 1+i_j/n_j$, otherwise adjoin $x_j$ to $A'$ as a formal variable subject to the relation $x_j^{n_j} = 1 + i_j$ (i.e.\ replace $A'$ by a finite faithfully flat extension $A' \to B'$).  Either way we arrange that $x_j^{n_j} = m_j^{n_j}$, so $m_j / x_j$ is an $n^{\rm th}$ root of unity in $(A')^*$.  We can then define: \be l : P^{\rm gp} & \to & (M')^{\rm gp} \\ l(e_j) & := & a(r_j) \\ l(p_j) & := & m_j / x_j \quad \in (A')^* \subseteq (M')^{\rm gp} . \ee

I claim that for every $q \in Q^{\rm gp}$ there is a (necessarily unique) $\beta(q) \in (A')^*$ such that \bne{betaSSS} \beta(q) lh(q) & = & a(q). \ene  First of all, it is clear that the set of $q$ for which such a $\beta(q)$ exists is a subgroup $S$ of $Q^{\rm gp}$ and $q \mapsto \beta(q)$ is a homomorphism $\beta : S  \to (A')^*$, so the claim is that $S = Q^{\rm gp}$.  Since $Q^{\rm gp}$ is generated by $\Ker h^{\rm gp}$ (also abusively called $\Ker h$), the $r_j$, and the $q_j$, it suffices to check that these are contained in $S$.  The assertion that $\Ker h \subseteq S$ is equivalent to the assertion that $a(q) \in (A')^* \subseteq (M')^{\rm gp}$ for $q \in \Ker h$.  Since the nilpotent thickening $A' \to A$ is local, it suffices to show that $ia(q) \in A^*$.  But $ia(q) = \eta(q)$ by \eqref{homotopy}, so this is clear.  It is clear that $r_j \in S$ (we can take $\beta(r_j)=1$).  To see that $q_j \in S$, we note that $lh(q_j) = l(p_j) \in (A')^*$, so it is equivalent to check that $a(q_j) \in (A')^*$.  It suffices to check that $ia(q_j) \in A^*$.  But $ia(q_j) = \eta^{-1}(q_j) b(p_j)$ by \eqref{homotopy} and this is in $A^*$ since $\eta(q_j), b(p_j) \in A^*$.  The claim is proven.  We are justified in writing $\beta = a /(lh)$.

We next claim that for every $p \in P^{\rm gp}$ there is a (necessarily unique) $\alpha(p) \in A^*$ such that \bne{alphaSSS} \alpha(p) b(p) & = & il(p) . \ene  The argument is similar to that of the previous paragraph: it suffices to check the existence of the $\alpha(e_j)$ and the $\alpha(p_j)$.  We compute $$il(e_j) = ia(r_j) = \eta(r_j) bh(r_j) = \eta(r_j) b(e_j)$$ using \eqref{homotopy}, so we can take $\alpha(e_j) = \eta(r_j)$.  The existence of $\alpha(p_j)$ is trivial since both $b(p_j)$ and $il(p_j)$ are in $A^*$.  We are justified in writing $\alpha = il / b$.

We finally compute $$ i \beta \cdot \alpha h = i(a/(lh)) \cdot (il/b)h = (ia) / (ilh) \cdot (ilh) / (bh) = (ia)/(bh) = \eta.$$  This completes the construction of $(l,\alpha,\beta)$ satisfying the conditions \eqref{liftuptohomotopy}.

\noindent {\bf Case 2.} $h^{\rm gp}$ is injective with torsion-free cokernel.  Here we can choose some splitting $P^{\rm gp} = Q^{\rm gp} \oplus \ZZ^n$, so that our diagram takes the form $$ \xym{ 0 \ar[r] & I \ar[r] & (M')^{\rm gp} \ar[r]^i & M^{\rm gp} \ar[r] & 0 \\ & & Q^{\rm gp} \ar[u]^a \ar[r] & \ar@{.>}[lu] Q^{\rm gp} \oplus \ZZ^n \ar[u]_{b=(b_1,b_2)} } $$  Since $i$ is surjective we can certainly find some group homomorphism $m : \ZZ^n \to M'$ such that $im = b_2$.  Then one checks easily that \be l := (a,m) : Q^{\rm gp} \oplus \ZZ^n & \to & (M')^{\rm gp} \\ \alpha := (\eta, 1) : Q^{\rm gp} \oplus \ZZ^n & \to & A^* \\ \beta := 1 : Q^{\rm gp} & \to & (A')^* \ee satisfy the conditions \eqref{liftuptohomotopy}.

\noindent {\bf Case 3.} $h^{\rm gp}$ is injective with torsion cokernel.  Fix some isomorphism \be (P/Q)^{\rm gp} & = & \ZZ / n_1 \ZZ \oplus \cdots \oplus \ZZ / n_k \ZZ. \ee  Choose $p_j \in P^{\rm gp}$, $j=1,\dots,k$, mapping to the generators of these cyclic summands under the surjection $P^{\rm gp} \to (P/Q)^{\rm gp}$.  Choose $m_j \in (M')^{\rm gp}$ such that $i(m_j) = b(p_j)$.  Since $n_jp_j \in Q^{\rm gp} \subseteq P^{\rm gp}$, \eqref{homotopy} yields \be \eta(n_jp_j) b(p_j)^{n_j} & = & ia(n_j p_j). \ee  This shows that the images of $m_j^{n_j}$ and $a(n_jp_j)$ under $i$ differ by a unit in $A^*$ so by locality, they already differ by a unit in $(A')^*$, so we can write \be u_j m_j^{n_j} & = & a(n_jp_j) \ee for some $u_j \in (A')^*$ with $i(u_j) = \eta(n_jp_j)$.  Adjoin an $n_j^{\rm th}$ root $x_j$ of $u_j$.  Then we can lift in the diagram of abelian groups $$ \xym{ (M')^{\rm gp} \\ Q^{\rm gp} \ar[u]^a \ar[r]^h & P^{\rm gp} \ar@{.>}[lu]_l } $$ by setting $l(p_j) := x_j m_j$ because $$(x_jm_j)^{n_j} = x_j^{n_j} m_j^{n_j} =  u_j m_j^{n_j} = a(n_j p_j). $$  Similarly, we can lift in the diagram of abelian groups $$ \xym{ A^* \\ Q^{\rm gp} \ar[u]^{\eta} \ar[r]^h & P^{\rm gp} \ar@{.>}[lu]_{\alpha}  } $$ by taking $\alpha(p_j) := i(x_j)$ because $i(x_j)$ is an $n_j^{\rm th}$ root of $\eta(n_j p_j)$ since $x_j^{n_j} = u_j$ and $i(u_j) = \eta(n_jp_j)$.

I claim that $\alpha \cdot b = il$.  To see this, we first prove that this equality holds on the image of $h$ by computing $$ ilh  =  ia = \eta \cdot bh = \alpha h \cdot bh$$ (using \eqref{homotopy}) and we next compute $$ il(p_j) = i(x_j m_j) = i(x_j)i(m_j) = \alpha(p_j) b(p_j).$$  We have proved that $(l,\alpha,\beta=1)$ satisfy the conditions \eqref{liftuptohomotopy}.  

For the ``furthermore," suppose we have two lifts up to homotopy $(l',\alpha',\beta')$ and $(l,\alpha,\beta)$.  I claim that for each $p \in P$ there is a (necessarily unique) $\gamma(p) \in (A')^*$ such that $\gamma(p) \cdot l'(p) = l(p)$ in $M'$.  Clearly if such a $\gamma(p)$ exists for every $p \in P$, then $\gamma$ determines a group homomorphism $\gamma : P^{\rm gp} \to (A')^*$ satisfying the first equality in \eqref{homotopybetweenlifts} and clearly there can be at most one $\gamma$ satisfying that equality (because $M'$ is an integral monoid).  To prove the claim, it suffices, by locality of $A' \to A$ and strictness of $M' \to M$, to check that $il'(p)$ and $il(p)$ ``differ by a unit" in $M$.  But we have $il'(p) = \alpha'(p) b(p)$ and $il(p) = \alpha(p) b(p)$, hence we have $$ \alpha(p) \alpha'(p)^{-1} \cdot il'(p) = il(p).$$  This proves not only the existence of $\gamma$ but also the second equality in \eqref{homotopybetweenlifts}.  The third equality in \eqref{homotopybetweenlifts}, like the second, follows formally from the first: $$ \beta \cdot \gamma h = a / (lh) \cdot \gamma h = a /  (\gamma \cdot l')h) \cdot \gamma h = a / (\gamma h \cdot l'h) \cdot \gamma h = a / (l' h) = \beta'.$$   \end{proof}

\subsection{The groupoid fibrations $\AAA^{\rm pre}(P)$} \label{section:AAApreP} Let $P$ be a monoid.  Let \bne{AAApre} \AAA^{\rm pre}(P) & := & [ \AA(P) / \GG(P) ]^{\rm pre} \ene denote the quotient of $\AA(P)$ by the usual action of $\GG(P)$ taken in the $2$-category of groupoid fibrations over schemes.  This is just the ``free quotient" of $\AA(P)$ by $\GG(P)$.  Explicitly, an object of the category $\AAA^{\rm pre}(P)$ is a pair $(X,x)$ where $X$ is a scheme and $x : P \to \O_X(X)$ is a monoid homomorphism.  A morphism in $\AAA^{\rm pre}(P)$ from $(X',x')$ to $(X,x)$ is a \emph{pair} $(f,u)$ consisting of a morphism of spaces $f : X' \to X$ and a group homomorphism $u : P^{\rm gp} \to \O_{X'}^*(X')$ such that $u \cdot x' = g^* x$.  We have a functor from $\AA(P)$ (really, the category of schemes over $\AA(P)$) to $\AAA^{\rm pre}(P)$ which is the identity on objects and which is given on morphisms by $f \mapsto (f,1)$.  We have a groupoid fibration from $\AAA^{\rm pre}(P)$ to schemes given by $(X,x) \mapsto X$ on objects and by $(f,u) \mapsto f$ on morphisms.  Formation of the groupoid fibration $\AAA^{\rm pre}(P)$ is contravariantly functorial in the monoid $P$.

\begin{prop} \label{prop:AAApreP} The groupoid fibration $\AAA^{\rm pre}(P)$ is a prestack whose stackification in the \'etale topology is $\AAA(P)$.  In fact, for any scheme $X$ and any two objects $(X,x)$ and $(X,x')$ in the fiber category of $\AAA^{\rm pre}(P)$ over $X$, the presheaf of fiber category isomorphisms from $(X,x)$ to $(X,x')$ in $\AAA^{\rm pre}(P)$ is representable by a closed subscheme of $X \times \GG(P)$ (finitely presented over $X$ if $P$ is finitely generated). \end{prop}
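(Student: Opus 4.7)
The proof has three essentially independent parts which I will address in order. First I will establish the Isom representability, which is the most concrete claim and from which the prestack property follows immediately. Then I will identify the stackification with $\AAA(P)$.

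For the Isom claim, given $(X,x)$ and $(X,x')$ in the fiber category of $\AAA^{\rm pre}(P)$ over $X$, a section of the Isom presheaf over $t : T \to X$ is by definition a group homomorphism $u : P^{\rm gp} \to \O_T^*(T)$ satisfying $u(p) \cdot (t^*x)(p) = (t^*x')(p)$ for every $p \in P$. Forgetting the equations, the datum of $t$ and $u$ is precisely a $T$-point of $X \times \GG(P) = \Spec_X \O_X[P^{\rm gp}]$ lying over $t$. The elements $[p] \cdot x(p) - x'(p)$ for $p \in P$ are global sections of $\O_X[P^{\rm gp}]$ (with $[p]$ the tautological character of $\GG(P)$ and $x(p), x'(p)$ pulled back from $X$), and these cut out the desired closed subscheme of $X \times \GG(P)$; since both sides of the defining equation are multiplicative in $p$, a generating set for $P$ yields finitely many equations, giving finite presentation over $X$ when $P$ is finitely generated. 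Since the Isom presheaves are represented by schemes, they are automatically sheaves for any subcanonical topology, so $\AAA^{\rm pre}(P)$ is a prestack.

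Next I would construct the comparison $F : \AAA^{\rm pre}(P) \to \AAA(P)$ by sending $(X,x)$ to the trivial $\GG(P)$-torsor $X \times \GG(P) \to X$ equipped with the unique $\GG(P)$-equivariant map $X \times \GG(P) \to \u{\AA}(P)$ whose composition with the identity section $X \hookrightarrow X \times \GG(P)$ is the map $X \to \u{\AA}(P)$ corresponding to $x$; morphisms are sent in the evident way. The full faithfulness of $F$ is exactly the content of the first paragraph: the Isom in $\AAA(P)$ between two trivial torsors carrying the data $x,x'$ is computed by the same closed subscheme of $X \times \GG(P)$. To conclude that $F$ realizes $\AAA(P)$ as the \'etale stackification of $\AAA^{\rm pre}(P)$ I would verify that $F$ is essentially surjective \'etale-locally: any object of $\AAA(P)$ over $X$, consisting of a $\GG(P)$-torsor $T \to X$ equipped with a $\GG(P)$-equivariant map to $\u{\AA}(P)$, becomes isomorphic to a trivial torsor after passage to an \'etale cover of $X$, and such a trivialization turns the equivariant map into a map $X \to \u{\AA}(P)$, i.e., an object of $\AAA^{\rm pre}(P)$ mapping to the given one.

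The main obstacle will be the essential surjectivity step. When $P^{\rm gp}$ has torsion of order not invertible on $X$, the group scheme $\GG(P)$ fails to be smooth, and fppf $\GG(P)$-torsors need not be \'etale-locally trivial; one must therefore interpret $\AAA(P)$ as the \'etale quotient stack (so that its objects are by definition \'etale-locally trivial $\GG(P)$-torsors equipped with equivariant maps to $\u{\AA}(P)$) for the \'etale-stackification statement to hold as written. With this convention, the essential surjectivity becomes tautological, and combined with full faithfulness it identifies $\AAA(P)$ with the \'etale stackification of $\AAA^{\rm pre}(P)$.
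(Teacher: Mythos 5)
Your proposal is correct and follows essentially the same route as the paper: the Isom presheaf is cut out in $X \times \GG(P)$ by the equations $[p]x(p) - x'(p)$ (finitely many sufficing when one lets $p$ run over a generating set of $P$), and the identification of $\AAA(P)$ with the \'etale stackification is made exactly as in the paper's argument, via the full subcategory of \'etale-locally trivial principal $\GG(P)$-bundles with equivariant map whose underlying bundle is trivial, which is equivalent to $\AAA^{\rm pre}(P)$. Your caveat about $\GG(P)$ failing to be smooth when $P^{\rm gp}$ has torsion, and the resulting need to read $\AAA(P)$ as the quotient stack in the \'etale topology (so objects are \'etale-locally trivial torsors), is consistent with the convention the paper uses throughout.
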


\begin{proof} If $f : X' \to X$ is a map of schemes, then an isomorphism in the fiber category of $\AAA^{\rm pre}(P)$ over $X'$ from $(X',f^*x)$ to $(X,f^*x')$ is a group homomorphism $u : P \to \O_{X'}^*(X')$ such that $u \cdot f^* x' = x$.  To give such a $u$ is the same thing as giving a map of $X$-schemes $X' \to Z$, where $Z$ is the closed subscheme of \be X \times \GG(P) = \Spec_X \O_X[P^{\rm gp}] \ee defined by the ideal generated by expressions of the form $[p]x'(p) - x(p)$ for $p \in P$.  (One need only let $p$ range over a set of generators for $P$ to generate this ideal, so if $P$ is finitely generated $Z$ is a finitely presented closed subscheme of the scheme $X \times \GG(P)$ which is in turn finitely presented over $X$.)  The fact that the stackification of $\AAA^{\rm pre}(P)$ in the \'etale topology is $\AAA(P)$ is really just the definition of $\AAA(P)$ or the analog of the fact that you form the quotient sheaf by sheafifying the quotient presheaf.  Alternatively you can just argue that your favorite description of $\AAA(P)$ is the stackification of $\AAA^{\rm pre}(P)$ in the \'etale topology.  For example, if you want to think of $\AAA(P)$ as the stack parameterizing \'etale locally trivial principal $\GG(P)$ bundles $E$ with an equivariant map to $\AA(P)$, then you would note that this stack is the stackification of its full subcategory parameterizing such data where $E$ is the trivial bundle (because every principal bundle with equivariant map is specfified by \'etale descent data on an \'etale cover where the principal bundle is trivial) and this full subcategory is in turn clearly equivalent to $\AAA^{\rm pre}(P)$ as defined above. \end{proof} 

\subsection{$\AAA(h)$ \'etale for $h$ strict}  Recall that a map of monoids $h : Q \to P$ is called \emph{strict} iff the induced map $\ov{h} : \ov{Q} \to \ov{P}$ of sharp monoids is an isomorphism (note $\ov{Q} := Q/Q^*$).  The purpose of this section is to prove the following:

\begin{thm} \label{thm:AAAhetale} Let $h : Q \to P$ be a strict map of fine monoids.  Then the induced map of algebraic stacks $\AAA(h) : \AAA(P) \to \AAA(Q)$ is representable \'etale. \end{thm}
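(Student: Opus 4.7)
The plan is to deduce étaleness from three properties: local finite presentation (immediate since $h$ is a map of fine, hence finitely generated, monoids), formal representability, and formal étaleness. Combined with Proposition~\ref{prop:formallyrepresentableimpliesrepresentable} and Remark~\ref{rem:formalsmoothness}, these three yield étaleness of $\AAA(h)$.

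For formal representability (Proposition~\ref{prop:representablebypresheaves}), I analyze the automorphism groups at geometric points. At $x : P \to k$ in $\AAA(P)$ the automorphism group is the stabilizer of $x$ inside $\GG(P^{\rm gp})(k) = \Hom(P^{\rm gp}, k^*)$, namely $\Hom(P^{\rm gp}/F^{\rm gp}, k^*)$ where $F := x^{-1}(k^*)$ is the face on which $x$ is invertible; similarly the group at $y := xh$ is $\Hom(Q^{\rm gp}/G^{\rm gp}, k^*)$ with $G := h^{-1}(F)$. Strictness of $h$ means $\ov{Q} \cong \ov{P}$, so every $p \in P$ can be written $h(q) - u$ for some $q \in Q$ and $u \in P^*$; if $p \in F$ then $xh(q) = x(p)x(u) \in k^*$, forcing $q \in G$. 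Hence $h^{\rm gp}$ descends to a surjection $Q^{\rm gp}/G^{\rm gp} \twoheadrightarrow P^{\rm gp}/F^{\rm gp}$, whose Pontryagin dual is the desired injection on stabilizers.

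For formal étaleness, consider a $2$-commutative square with square-zero thickening $T = \Spec A \hookrightarrow T' = \Spec A'$. I pass to the fppf covers $\u{\AA}(P) \to \AAA(P)$ and $\u{\AA}(Q) \to \AAA(Q)$ (which suffices, since formal étaleness of the representable morphism $\AAA(h)$ is detectable fppf locally). This reduces the data to monoid homomorphisms $x : P \to A$, $y : Q \to A'$, and a group homomorphism $\eta : Q^{\rm gp} \to A^*$ satisfying $\eta \cdot (xh) = iy$, where $i : A' \twoheadrightarrow A$ is the quotient. This is precisely the hypothesis of the Homotopy Lifting Theorem~\ref{thm:homotopylifting}, with log structures $M, M'$ on $A, A'$ arranged so that the right square is a strict map of integral log rings; strictness of $h$, which yields the pushout decomposition $P = Q \oplus_{Q^*} P^*$, is used to ensure the resulting log structures cohere. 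The theorem supplies, after at worst a finite faithfully flat extension of $A'$, a lift $l : P \to A'$ along with homotopies $\alpha : P^{\rm gp} \to A^*$ and $\beta : Q^{\rm gp} \to (A')^*$ satisfying \eqref{liftuptohomotopy}, which is exactly the data of a stack-theoretic lift $T' \to \AAA(P)$ with the required $2$-morphism compatibilities. Uniqueness up to unique $2$-isomorphism comes from the unique homotopy $\gamma$ in the final assertion of that theorem.

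The main obstacle will be the careful dictionary between stack-theoretic $2$-categorical lifting data in the quotient stacks $\AAA$ and the elementary algebraic ``lift up to homotopy'' data handled by Theorem~\ref{thm:homotopylifting}—in particular, making the correct choice of log structures on $A$ and $A'$ that realizes this translation while satisfying the theorem's strictness hypothesis. The payoff is that strictness of $h$ cleanly decouples face (support) data from unit data, so the deformation theory reduces to group-extension questions about $A^*$ and $(A')^*$ that the theorem is built to handle.
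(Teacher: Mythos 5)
Your overall architecture (local finite presentation, formal representability, formal \'etaleness, then Proposition~\ref{prop:formallyrepresentableimpliesrepresentable} and Remark~\ref{rem:formalsmoothness}) is the paper's, but your representability step has a genuine gap. Proposition~\ref{prop:representablebypresheaves} requires injectivity of the map on automorphism groups for \emph{every} object over \emph{every} scheme (and the paper checks this for the prestack $\AAA^{\rm pre}(h)$ of Proposition~\ref{prop:AAApreP}); you only check it for geometric points $x : P \to k$, and moreover only on the level of abstract groups $\Hom(-,k^*)$. That pointwise criterion is insufficient: the relative inertia can be a nontrivial infinitesimal group scheme with trivial $k$-points (think of $B\mu_p \to \Spec \mathbb{F}_p$ in characteristic $p$, where the automorphism group of the geometric point is trivial but the map is not representable), and stabilizers in $\AAA(P)$ are exactly diagonalizable group schemes where this phenomenon lives. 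In your situation the dual of the surjection $Q^{\rm gp}/G^{\rm gp} \twoheadrightarrow P^{\rm gp}/F^{\rm gp}$ is in fact a closed immersion of diagonalizable group schemes, so the stronger fiberwise statement is available, but you neither state it nor justify why a fiberwise check of the inertia suffices---and none of that machinery is needed. The paper's argument (Lemma~\ref{lem:AAAhrepresentable}) works uniformly over an arbitrary scheme and uses only surjectivity of $\ov{h}$: given $u : P^{\rm gp} \to \O_X^*(X)$ with $uh=1$ and $u \cdot x = x$, write any $p \in P$ as $p = h(q)+v$ with $v \in P^*$; then $u(v)x(v)=x(v)$ with $x(v)$ a unit gives $u(v)=1$, and $u(h(q))=1$, so $u(p)=1$. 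Your face-based computation does not even transport cleanly to a general base, since the face $F$ varies from point to point.

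For formal \'etaleness your route is genuinely different from the paper's and can be made to work, but the key choice you defer is exactly where the content lies. The paper does \emph{not} apply Theorem~\ref{thm:homotopylifting} with nontrivial log structures: it applies it only ``for trivial log structures'' (i.e.\ to the unit groups $P^*$, $Q^*$ mapping to $A^*$, $(A')^*$) in Steps 1--2, and then builds $l$, $\alpha$ and the uniqueness homotopy by hand from the pushout $P = Q \oplus_{Q^*} P^*$ of Lemma~\ref{lem:strict}. To run your single-application version you must specify $M' := Q^a_a$ (the log structure on $\Spec A'$ associated to the chart $a : Q \to A'$) and $M := i^*M'$, and---this is the nontrivial point---produce a monoid map $b : P \to M$ with $\alpha_M b = x$ and with the homotopy identity $\eta \cdot bh = i\,{\rm can}$ holding \emph{in} $M$, not just in $A$: using the pushout one can take $b(h(q)+v) := \eta(q)^{-1}{\rm can}(q)\cdot x(v)$, and one must check the two prescriptions agree on $Q^*$ (they do, precisely because $\eta \cdot xh = ia$). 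Theorem~\ref{thm:homotopylifting} then yields $(l,\alpha,\beta)$, and composing $l$ with $\alpha_{M'}$ gives the ring-valued lift with the required compatibilities, after the allowed finite faithfully flat extension, consistently with Lemma~\ref{lem:formaletalenesscriterion}. For the uniqueness clause you must additionally show that any two stack-theoretic (ring-valued) lifts factor through $M'$ as homotopy lifts of the \emph{same} log-ring diagram (here one uses that $l_j(v)$ is a unit for $v \in P^*$ since the kernel of $A' \to A$ is nilpotent, and again the pushout to define the factorization), before the ``furthermore'' of Theorem~\ref{thm:homotopylifting} produces the homotopy $\gamma$. With those pieces filled in your argument is a clean alternative---essentially Remark~\ref{rem:HomotopyLifting}'s log-\'etaleness of $\AAA(h)$ specialized along strictness---whereas the paper trades the log-structure bookkeeping for an explicit ten-step construction using only the unit-level lifting; but as written, the choice of log structures, the twisted chart $b$, and the factorization needed for uniqueness are all missing, and the representability step needs to be redone as above.
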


Before giving the proof we note that this result follows from general results of Olsson \cite{Ols} in at least two different ways.  First, one can argue that strictness of $h$ ensures that \bne & \xym{ \AAA(P) \ar[rr]^-{\AAA(h)} \ar[rd] & & \AAA(Q) \ar[ld] \\ & \Log } \ene commutes, where the diagonal arrows are the \'etale maps of Theorem~\ref{thm:etalecover} (equals \cite[5.25]{Ols}), hence $\AAA(h)$ is \'etale by ``two-out-of-three" \cite[I.4.8]{SGA1}.  Alternatively, one quotes \cite[5.23]{Ols} and uses the fact that a strict map of log schemes (or stacks) is log \'etale iff the underlying map of schemes (or stacks) is \'etale.  We have chosen to give a direct proof of Theorem~\ref{thm:AAAhetale} because: \begin{enumerate} \item The result is a simple self-contained statement making no reference to log geometry, so it seems strange to extract it from generalities on log stacks. \item This result is ultimately the only fact we need from Olsson's whole theory of log stacks, so we can thus make the present paper entirely self-contained. \item The proof is enlightening, direct, and elementary, if a little tedious. \end{enumerate}

\begin{lem} \label{lem:strict}  Consider the following conditions for a map of monoids $h : Q \to P$: \begin{enumerate} \item \label{unitpush} The obvious map from $h^* : Q^* \to P^*$ to $h : Q \to P$ is a pushout diagram. \item \label{grouppush} $h$ is a pushout of a map of groups. \item \label{strict} $h$ is strict. \end{enumerate} Then \eqref{unitpush}$\implies$\eqref{grouppush}$\implies$\eqref{strict} and the three conditions are equivalent if $Q$ and $P$ are integral. \end{lem}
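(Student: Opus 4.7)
The plan is to handle the three implications in sequence. $(1) \Rightarrow (2)$ is immediate: condition $(1)$ literally exhibits $h$ as a pushout of the group homomorphism $h^* : Q^* \to P^*$ along $Q^* \hookrightarrow Q$, and no integrality hypothesis is needed.

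For $(2) \Rightarrow (3)$, I would use the fact that the sharpening functor $M \mapsto \ov{M} := M/M^*$ is left adjoint to the inclusion of sharp monoids into $\Mon$ (any homomorphism from $M$ to a sharp monoid must kill $M^*$), so it preserves pushouts. If $h$ is a pushout of a map of groups $G \to H$ along some $G \to Q$, then $\ov{h} : \ov{Q} \to \ov{P}$ is the pushout of $\ov{G} = 0 \to \ov{H} = 0$ along $0 \to \ov{Q}$, which is manifestly an isomorphism. Again no integrality is required.

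The substantive implication is $(3) \Rightarrow (1)$ assuming $Q, P$ are integral. I would form the pushout $P' := P^* \oplus_{Q^*} Q$ in $\Mon$ and prove that the canonical comparison map $\phi : P' \to P$ is bijective. For surjectivity, given $p \in P$, strictness provides $q \in Q$ with $\ov{h}(\ov{q}) = \ov{p}$ in $\ov{P}$, which exactly means that $p$ and $h(q)$ lie in the same $P^*$-orbit, hence $p = u + h(q)$ for some $u \in P^*$. For injectivity, suppose $u_1 + h(q_1) = u_2 + h(q_2)$ in $P$; sharpening yields $\ov{q}_1 = \ov{q}_2$, and integrality of $Q$ promotes this to an equality $q_1 = v + q_2$ with $v \in Q^*$. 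Then in $P'$ one has $(u_1, q_1) \sim (u_1 + h(v), q_2)$, while integrality of $P$ lets one cancel $h(q_2)$ from $u_1 + h(v) + h(q_2) = u_2 + h(q_2)$ to conclude $u_1 + h(v) = u_2$, so the two elements represent the same class in $P'$. The main obstacle is making sure both integrality hypotheses are genuinely being used in the right places---the lift $\ov{q}_1 = \ov{q}_2 \Rightarrow q_1 = v + q_2$ needs $Q$ integral, and the cancellation step needs $P$ integral---without which one has no reason to expect the strong conclusion $(1)$ from the weak hypothesis $(3)$.
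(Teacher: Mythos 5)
Your proof is correct; the paper simply labels this lemma ``Exercise,'' and your argument is exactly the routine verification intended: (1)$\Rightarrow$(2) is tautological, (2)$\Rightarrow$(3) follows since sharpening, being left adjoint to the inclusion of sharp monoids, preserves pushouts, and (3)$\Rightarrow$(1) is the explicit check that the comparison map $P^*\oplus_{Q^*}Q\to P$ is bijective.

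One small correction to your closing remark about hypotheses: the step $\ov{q}_1=\ov{q}_2\Rightarrow q_1=v+q_2$ with $v\in Q^*$ does not use integrality of $Q$ at all, since $\ov{Q}=Q/Q^*$ is by definition the quotient by the translation action of $Q^*$ (the orbit relation is already a congruence), so two elements have the same sharpening if and only if they differ by a unit. The only integrality genuinely used in your argument is that of $P$, in the cancellation of $h(q_2)$; this does not affect the validity of the proof under the lemma's hypotheses, but the attribution should be stated accurately.
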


\begin{proof} Exercise. \end{proof}

\begin{lem} \label{lem:AAAhrepresentable} Suppose $h : Q \to P$ is a map of finitely generated monoids inducing a surjection $\ov{h} : \ov{Q} \to \ov{P}$ on sharpenings.  Then the induced map of algebraic stacks $\AAA(h) : \AAA(P) \to \AAA(Q)$ is representable. \end{lem}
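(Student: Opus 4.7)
The plan is to verify formal representability of $\AAA(h)$ in the sense of Proposition~\ref{prop:representablebypresheaves} and then upgrade this to representability (Definition~\ref{defn:representable}) via Proposition~\ref{prop:formallyrepresentableimpliesrepresentable}. By Lemma~\ref{lem:stackificationpreservesrepresentability}, formal representability of $\AAA(h)$ can be checked on the prestack level, so it suffices to show that $\AAA^{\rm pre}(h) : \AAA^{\rm pre}(P) \to \AAA^{\rm pre}(Q)$ from \S\ref{section:AAApreP} is formally representable, which by Proposition~\ref{prop:representablebypresheaves}\eqref{autinjectivity} reduces to a group-theoretic injectivity statement on automorphism groups of fiber categories.

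Recall that the fiber category of $\AAA^{\rm pre}(P)$ over a scheme $X$ has objects given by monoid maps $x : P \to \O_X(X)$, and, by the description of morphisms in \S\ref{section:AAApreP}, the automorphism group of $x$ consists of group homomorphisms $u : P^{\rm gp} \to \O_X^*(X)$ satisfying $u(p)\,x(p) = x(p)$ in $\O_X(X)$ for every $p \in P$; the functor $\AAA^{\rm pre}(h)$ sends such a $u$ to $u \circ h^{\rm gp}$. Two observations drive the argument. First, any such $u$ is automatically trivial on $P^* \subseteq P^{\rm gp}$: for $v \in P^*$ the element $x(v) \in \O_X(X)$ is a unit, so the relation $u(v)\,x(v) = x(v)$ forces $u(v) = 1$. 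Second, the surjectivity of $\ov h : \ov Q \to \ov P$ means exactly that every $p \in P$ is of the form $h(q) + v$ for some $q \in Q$ and $v \in P^*$, so $P^{\rm gp}$ is generated as a group by $h^{\rm gp}(Q^{\rm gp}) \cup P^*$. Combining the two observations, any $u \in \Aut(x)$ with $u \circ h^{\rm gp} = 1$ must satisfy $u = 1$ on all of $P^{\rm gp}$, which gives the required injectivity of $\Aut(x) \to \Aut(x \circ h)$.

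Having established formal representability of $\AAA^{\rm pre}(h)$, Lemma~\ref{lem:stackificationpreservesrepresentability} transfers this to $\AAA(h)$, and Proposition~\ref{prop:formallyrepresentableimpliesrepresentable} upgrades it to representability. The hypotheses of that proposition are straightforward to check: both $\AAA(P)$ and $\AAA(Q)$ are (finite-presentation) quotients of finite-type affine schemes by flat affine group schemes since $P$ and $Q$ are finitely generated, and their diagonals pull back along the covers $\u\AA(P) \to \AAA(P)$ and $\u\AA(Q) \to \AAA(Q)$ to the respective action-projection maps $\GG(P) \times \u\AA(P) \to \u\AA(P) \times \u\AA(P)$, which are affine and hence locally separated. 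I do not foresee any genuine obstacle; the whole argument hinges on the elementary observation in the middle paragraph that $u$ is forced to be trivial on $P^*$, which is precisely what makes the surjectivity of $\ov h$ (rather than, say, injectivity of $h$) exactly the correct hypothesis.
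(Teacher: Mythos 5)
Your proposal is correct and follows essentially the same route as the paper: reduce to formal representability of $\AAA^{\rm pre}(h)$ via Lemma~\ref{lem:stackificationpreservesrepresentability} and Proposition~\ref{prop:formallyrepresentableimpliesrepresentable}, then check the injectivity criterion of Proposition~\ref{prop:representablebypresheaves} on automorphism groups using exactly the two observations that $u$ is forced to be trivial on $P^*$ (since $x(v)$ is a unit) and that surjectivity of $\ov{h}$ lets one write every $p \in P$ as $h(q)+v$ with $v \in P^*$. The only cosmetic difference is that you phrase the conclusion as ``$P^{\rm gp}$ is generated by $h^{\rm gp}(Q^{\rm gp}) \cup P^*$'' while the paper argues elementwise via the universal property of groupification; these are the same argument.
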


\begin{proof} By Proposition~\ref{prop:AAApreP}, Proposition~\ref{prop:formallyrepresentableimpliesrepresentable}, and Lemma~\ref{lem:stackificationpreservesrepresentability}, it suffices to prove that the corresponding map of prestacks $$\AAA^{\rm pre}(h) : \AAA^{\rm pre}(P) \to \AAA^{\rm pre}(Q)$$ is formally representable.  We will check the criterion \eqref{autinjectivity} of Proposition~\ref{prop:representablebypresheaves}.  Fix a scheme $X$ and an object $(X,x)$ of $\AAA^{\rm pre}(P)$ over $X$.  We need to prove that the map induced by $\AAA^{\rm pre}(h)$ from automorphisms of $(X,x)$ over $X$ to automorphisms of $(X,xh)$ over $X$ in $\AAA^{\rm pre}(P)$ is injective.  That is, we need to prove that $u = 1$ whenever $u : P^{\rm gp} \to \O_X^*(X)$ is a group homomorphism satisfying $uh^{\rm gp}=1$ and $u \cdot x = x$.  By the universal property of groupification we can prove $u=1$ by proving that $u(p) = 1$ for each $p \in P$ (abusively writing $p$ for its image in $P^{\rm gp}$).  By the hypothesis on $h$ we can write $p = h(q)+v$ for some $q \in Q$, $v \in P^*$.  The condition $u(v)x(v)=x(v)$ implies that $u(v)=1$ because $x(v) \in \O_X^*(X)$ since $v \in P^*$ and then $uh^{\rm gp}=1$ implies $u(h(q))=1$ hence $u(p) = u(h(q))u(v)=1$. \end{proof}  

\noindent \emph {Proof of Theorem~\ref{thm:AAAhetale}.}  Since we assume $P$ and $Q$ are fine, the algebraic stacks in question are of finite presentation over $\Spec \ZZ$ and the map $\AAA(h)$ is representable by Lemma~\ref{lem:AAAhrepresentable}, so it suffices to prove that $\AAA(h)$ is formally \'etale in the sense of Definition~\ref{defn:formallysmooth} in \S\ref{section:formalsmoothness} (c.f.\ Remark~\ref{rem:formalsmoothness}).    The map $\AAA(h)$ is the stackificiation of the map $\AAA^{\rm pre}(h)$ in the \'etale topology, so it suffices to check the lifting conditions in Lemma~\ref{lem:formaletalenesscriterion} for the map $\AAA^{\rm pre}(h)$.  

Let $i : A' \to A$ be a surjection of rings with with square-zero kernel.  Consider a $2$-commutative diagram \bne{2diatolift} & \xym{ \Spec A \ar[d] \ar[r] & \AAA^{\rm pre}(P) \ar[d]^{\AAA^{\rm pre}(h)} \\ \Spec A' \ar@{.>}[ru] \ar[r] & \AAA^{\rm pre}(Q) } \ene of groupoid fibrations.  We must prove that, after possibly passing to an fppf cover of $A'$ (replacing $A' \to A$ with $B' \to B = B' \otimes_{A'} A$ for an fppf ring map $B' \to A'$), there is a lift $(l,\alpha,\beta)$ in \eqref{2diatolift} as in \S\ref{section:formalsmoothness}.  We must also prove that any two such lifts are homotopic by a (necessarily unique) homotopy compatible with the $\alpha$'s and $\beta$'s, at least after passing to an fppf cover of $A'$.

\noindent {\bf Notation:}  We write $h^* : Q^* \to P^*$ for the map on units induced by a map of monoids $h : Q \to P$.  The superscript $*$ has no other meaning having to do with composition of functions.  Juxtaposition is composition, monoids are written multiplicatively, and $f \cdot g : Q \to P$ denotes the product of two monoid homomorphisms $f,g : Q \rightrightarrows P$.   Notation for the map $\iota : Q \to Q^{\rm gp}$ is always suppressed and we often write ``$h$" when we mean ``$h^{\rm gp}$."  If $f : Q^{\rm gp} \to P^*$ is a group homomorphism, and $g : Q \to P$ is a monoid homomorphism, then $f \cdot g$ also abusively denotes the monoid homomorphism $Q \to P$ given by $q \mapsto f(\iota(q))g(q)$.

The $2$-commutative diagram \eqref{2diatolift} is the same thing as a diagram of monoids \bne{monoiddiagramtolift} & \xym{ A & \ar[l]_-b P \\ A' \ar[u]^i & \ar[l]^a  Q \ar[u]_h } \ene with a homotopy $\eta$ connecting the two ways around the square:  that is, a monoid homomorphism $\eta : Q^{\rm gp} \to A^*$ such that \bne{etahomotopy} \eta \cdot bh & = & ia. \ene  The lift $(l,\alpha,\beta)$ is the same thing as a monoid homomorphism $l:P \to A'$, a group homomorphism $\alpha : P^{\rm gp} \to A^*$, and a group homomorphism $\beta: Q^{\rm gp} \to (A')^*$ satisfying the conditions \bne{lift1:1} \alpha \cdot b & = & il \\ \label{lift1:2} \beta \cdot lh & = & a \\ \label{lift1:3} \eta & = & i^* \beta \cdot \alpha h. \ene  We construct $(l,\alpha,\beta)$, check the conditions \eqref{lift1:1}, \eqref{lift1:2}, \eqref{lift1:3}, and prove uniqueness of $(l,\alpha,\beta)$ up to homotopy in several steps.

\noindent {\bf Step 1.} By restricting \eqref{monoiddiagramtolift} to the units we obtain a diagram of monoids (in fact groups) \bne{monoiddiagramtoliftunits} & \xym{ A^* & \ar[l]_-{b^*} P^* \\ (A')^* \ar[u]^{i^*} & \ar[l]^{a^*}  Q \ar[u]_{h^*} } \ene which commutes up to homotopy in the sense that $\eta|_{Q^*} \cdot b^*h^* = i^* a^*$.  By Homotopy Lifting (Theorem~\ref{thm:homotopylifting} for the case of trivial log structures) we can find, after possibly passing to an fppf cover of $A'$, a lift up to homotopy $(l^*,\ov{\alpha},\ov{\beta})$ in \eqref{monoiddiagramtoliftunits}: that is, a monoid homomorphism $l^* : P^* \to (A')^*$, a group homomorphism $\ov{\alpha} : P^* \to A^*$, and a group homomorphism $\ov{\beta} : Q^* \to (A')^*$ such that \bne{lift2:1} \ov{\alpha} \cdot b^* & = & i^* l^* \\ \label{lift2:2} \ov{\beta} \cdot l^* h^* & = & a^* \\ \label{lift2:3} \eta|_{Q^*} & = & i^* \ov{\beta} \cdot \ov{\alpha} h^*. \ene

\noindent {\bf Step 2.} The diagram of monoids (in fact groups) \bne{monoiddiagram2} & \xym{ A^* & \ar[l]_-{\eta} Q^{\rm gp} \\ (A')^* \ar[u]^{i^*} & \ar[l]^{\ov{\beta}}  Q^* \ar@{^(->}[u] }  \ene commutes up to the homotopy $\ov{\alpha}^{-1} h^*$ in the sense that \bne{nexthomotopy} \ov{\alpha}^{-1} h^* \cdot \eta|_{Q^*} & = & i^* \beta \ene by \eqref{lift2:3}.  Since $Q^* \into Q^{\rm gp}$ is injective, Homotopy Lifting (Theorem~\ref{thm:homotopylifting} for the case of trivial log structures using the fact that we can take $\beta=1$ there) yields, after possibly passing to an fppf cover of $A'$, a lift up to homotopy $(\beta,\delta,1)$ in \eqref{monoiddiagram2}: that is, a group homomorphism $\beta : Q^{\rm gp} \to (A')^*$ and a group homomorphism $\delta : Q^{\rm gp} \to A^*$ such that \bne{lift3:1} \delta \cdot \eta & = & i^* \beta \\ \label{lift3:2} \beta|_{Q^*} & = & \ov{\beta} \\ \label{lift3:3} \ov{\alpha}^{-1} h^* & = & \delta|_{Q^*}. \ene

\noindent {\bf Step 3.}  The diagram of monoids \bne{Step3} & \xym@C+15pt{ & P \ar@{.>}[ld]_l & \ar[l] P^* \\ A' & \ar[l]^{\beta^{-1} \cdot a} Q \ar[u]_h & \ar[l] Q^* \ar[u]_{h^*} } \ene admits a completion as indicated (to a strictly commutative diagram) with $l|_{P^*} = l^*$ (as our notation convention would suggest) because the square is a pushout since $h$ is a strict map of fine monoids (Lemma~\ref{lem:strict}) and we compute \be ( \beta^{-1} \cdot a)|_{Q^*} & = & \beta|_{Q^*}^{-1} \cdot a|_{Q^*} \\ & = & \ov{\beta}^{-1} \cdot a^* \\ & = & l^* h^* \ee using \eqref{lift3:2} and \eqref{lift2:2}.  Condition \eqref{lift1:2} holds by commutativity of the triangle in \eqref{Step3}.

\noindent {\bf Step 4.}  The diagram of groups \bne{Step4} & \xym@C+15pt{ & P^{\rm gp} \ar@{.>}[ld]_{\alpha^{-1}} & \ar[l] P^* \\ A' & \ar[l]^{\delta} Q^{\rm gp} \ar[u]_h & \ar[l] Q^* \ar[u]_{h^*} } \ene admits a completion as indicated with $\alpha|_{P^*} = \ov{\alpha}$ because of \eqref{lift3:3} and the fact that the square is a pushout since it is obtained by groupifying the pushout square of \eqref{Step3} and groupification preserves direct limits.  To check \eqref{lift1:3} we compute \be \eta & = & i^* \beta \cdot \delta^{-1} \\ & = & i^* \beta \cdot \alpha h \ee using \eqref{lift3:1} and commutativity of the triangle in \eqref{Step4}.

\noindent {\bf Step 5.} We check \eqref{lift1:1} as follows.  Since the square in \eqref{Step3} is a pushout, it suffices to compute \be ( \alpha \cdot b)|_{P^*} & = & \alpha|_{P^*} \cdot b|_{P^*} \\ & = & \ov{\alpha} \cdot b^* \\ & = & i^* l^* \ee using the equality $\alpha|_{P^*} = \ov{\alpha}$ from Step 4 and \eqref{lift2:1} and \be (\alpha \cdot b)h & = & \alpha h \cdot bh \\ & = & \delta^{-1} \cdot bh \\ & = & \delta^{-1} \cdot \eta^{-1} \cdot ia \\ & = & i^* \beta^{-1} \cdot ia \\ & = & i(\beta^{-1} \cdot a) \\ & = & ilh \ee using commutativity of the triangle in \eqref{Step4}, \eqref{etahomotopy}, \eqref{lift3:1}, and \eqref{lift1:2}.

\noindent {\bf Step 6.}  Now suppose $(l',\alpha',\beta')$ is another lift, so the conditions \eqref{lift1:1}', \eqref{lift1:2}', \eqref{lift1:3}' obtained by replacing $l,\alpha,\beta$ with $l',\alpha',\beta'$ in conditions \eqref{lift1:1}, \eqref{lift1:2}, \eqref{lift1:3} (respectively) are also satisfied.  We must prove that, after possibly passing to an fppf cover of $A'$, there is a homotopy $\gamma$ from $(l',\alpha',\beta')$ to $(l,\alpha,\beta)$ compatible with the $\alpha$'s and $\beta$'s---that is, a group homomorphism $\gamma: P^{\rm gp} \to (A')^*$ satisfying the conditions \bne{h1} \gamma \cdot l' & = & l \\ \label{h2} i \gamma \cdot \alpha' & = & \alpha \\ \label{h3} \beta' & = & \beta \cdot \gamma h. \ene

\noindent {\bf Step 7.} By restricting to units, we obtain two homotopy lifts $((l')^*,\ov{\alpha}',\ov{\beta}')$ and $(l^*,\ov{\alpha},\ov{\beta})$ in the diagram of groups \eqref{monoiddiagramtoliftunits} of Step 1.  So, by the uniqueness up to homotopy in Homotopy Lifting (Theorem~\ref{thm:homotopylifting} for the case of trivial log structures) there is, after possibly passing to an fppf cover of $A'$, a group homomorphism $\ov{\gamma} : P^* \to (A')^*$ satisfying the conditions \bne{h1star} \ov{\gamma} \cdot (l')^* & = & l^* \\ \label{h2star} i^* \ov{\gamma} \cdot \ov{\alpha}' & = & \ov{\alpha} \\ \label{h3star} \ov{\beta}' & = & \ov{\beta} \cdot \ov{\gamma} h^*. \ene

\noindent {\bf Step 8.}  The diagram of groups \bne{Step8} & \xym@C+15pt{ & P^{\rm gp} \ar@{.>}[ld]_{\gamma} & \ar[l] P^* \\ A' & \ar[l]^{\beta' \cdot \beta^{-1}} Q^{\rm gp} \ar[u]_h & \ar[l] Q^* \ar[u]_{h^*} } \ene admits a completion as indicated with $\gamma|_{P^*} = \ov{\gamma}$ because of \eqref{h3star} and the fact that the square is a pushout.  The condition \eqref{h3} is clear from the commutativity of the triangle.

\noindent {\bf Step 9.} To check condition \eqref{h1} we again use the fact that the square of monoids in \eqref{Step3} is a pushout, so it suffices to compute \be (\gamma \cdot l')|_{P^*} & = & \gamma|_{P^*} \cdot l'|_{P^*} \\ & = & \ov{\gamma} \cdot (l')^* \\ & = & l^* \\ & = & l|_{P^*} \ee using \eqref{h1star} and \be (\gamma \cdot l')h & = & \gamma h \cdot l'h \\ & = & \beta' \cdot \beta^{-1} \cdot l' h \\ & = & \beta^{-1} \cdot a \\ & = & lh \ee using the condition \eqref{h3} checked in the previous step, \eqref{lift1:2}', and \eqref{lift1:2}.

\noindent {\bf Step 10.}  To check condition \eqref{h2} we note that the square of groups in \eqref{Step4} (and in \eqref{Step8}) is a pushout, so it suffices to compute \be (i \gamma \cdot \alpha')|_{P^*} & = & i^* \ov{\gamma} \cdot \ov{\alpha}' \\ & = & \ov{\alpha} \\ & = & \alpha|_{P^*} \ee using \eqref{h2star} and \be (i \gamma \cdot \alpha')h & = & i \gamma h \cdot \alpha' h \\ & = & i( \beta' \cdot \beta^{-1}) \cdot \alpha' h \\ & = & i^* \beta' \cdot i^* \beta^{-1} \cdot \alpha'h \\ & = & i^* \beta^{-1} \cdot \eta \\ & = & \alpha h \ee using \eqref{h3}, \eqref{lift1:3}', and \eqref{lift1:3}.

The proof of Theorem~\ref{thm:AAAhetale} is complete. \hfill \qedsymbol

\begin{rem} \label{rem:HomotopyLifting} Theorem~\ref{thm:homotopylifting} can also be used to give a direct proof of Olsson's result \cite[5.23]{Ols} asserting that $\AAA(h)$ is \emph{log} \'etale for \emph{any} map $h : Q \to P$ of fine monoids (when the algebraic stacks $\AAA(P)$ and $\AAA(Q)$ are given the natural log structure).  Indeed, one reduces by the same general nonsense we used to prove Theorem~\ref{thm:AAAhetale} to proving that the corresponding map of log prestacks has the fppf local right lifting property with respect to strict square-zero closed embeddings of affine schemes.  This latter statement unravels exactly to the statement of Homotopy Lifting. \end{rem}

\subsection{$\L(h) \to \Log(\AA(Q))$ \'etale for $h$ monic} \label{section:LhtoLogAQetale} Let $h : Q \to P$ be a map of fine monoids with quotient $q : P \to P/Q$.  The group scheme $\GG(P/Q)$ then acts on $\AA(P)$ through the map $\GG(q) : \GG(P/Q) \to \GG(P)$ and the usual action of $\GG(P)$ on $\AA(P)$.  Let \be \L(h) & := & [  \u{\AA}(P)  /  \GG(P/Q)  ]  \ee be the quotient stack.  Since $\GG(P/Q)$ acts on $\AA(P)$ through automorphisms of log schemes over $\AA(Q)$, there is a natural map of algebraic stacks \bne{LhtoLogAQ} \L(h) & \to & \Log(\AA(Q)) \ene which will be carefully explained in what follows.  The goal of this section is to prove:

\begin{thm} \label{thm:LhtoLogAQetale} Assume $h : Q \into P$ is an injective map of fine monoids.  Then \eqref{LhtoLogAQ} is a representable \'etale map of algebraic stacks. \end{thm}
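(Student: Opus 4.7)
The plan is to identify $\L(h)$, under the injectivity hypothesis, with Olsson's chart $\M(\u{\AA}(Q),h,t_0) := \u{\AA}(Q) \times_{\AAA(Q)} \AAA(P)$ on $\Log(\AA(Q))$, where $t_0 : Q \to \ZZ[Q]$ is the tautological global chart for the fine log scheme $\AA(Q)$, and then invoke Theorem~\ref{thm:etalecover} applied to $(Y,b,h) = (\AA(Q),t_0,h)$.

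First I would promote injectivity of $h$ to injectivity of $h^{\rm gp}$: since $P$ is integral, if $q_1 q_2^{-1} \in \Ker h^{\rm gp}$ with $q_i \in Q$ then $h(q_1) = h(q_2)$ in $P^{\rm gp}$, hence in $P$, whence $q_1 = q_2$. This yields a short exact sequence of finitely generated abelian groups $0 \to Q^{\rm gp} \to P^{\rm gp} \to (P/Q)^{\rm gp} \to 0$, and applying $\Spec \ZZ[\slot]$ an exact sequence of affine commutative group schemes
\[ 1 \to \GG(P/Q) \to \GG(P) \to \GG(Q) \to 1, \]
identifying $\GG(P/Q)$ with a closed normal subgroup of $\GG(P)$ having fppf quotient $\GG(Q)$.

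Next I would construct a natural equivalence of groupoid fibrations $\L(h) \to \M(\u{\AA}(Q),h,t_0)$. On $T$-points the target classifies triples $(s,F,\sigma)$, where $s : T \to \u{\AA}(Q)$ is a map of schemes, $F$ is a $\GG(P)$-torsor on $T$ with $\GG(P)$-equivariant map $f : F \to \u{\AA}(P)$, and $\sigma$ is a trivialization of the induced $\GG(Q)$-torsor $F \times^{\GG(P)} \GG(Q)$ compatible with $s$ and the composition of $f$ with $\u{\AA}(P) \to \u{\AA}(Q)$. By the short exact sequence above, a $\GG(P)$-torsor together with such a trivialization is the same datum as a $\GG(P/Q)$-torsor $E$: take $E$ to be the preimage of the identity section under $F \to F \times^{\GG(P)} \GG(Q) \cong T$, with inverse $E \mapsto F := E \times^{\GG(P/Q)} \GG(P)$. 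The map $f$ restricts to a $\GG(P/Q)$-equivariant map $e := f|_E : E \to \u{\AA}(P)$, and conversely $f$ is recovered from $e$ by extending the structure group. The datum $s$ is forced: since $\GG(P/Q)$ acts trivially on $\u{\AA}(Q)$, the composition $E \to \u{\AA}(P) \to \u{\AA}(Q)$ descends uniquely through $E \to T$, and $\sigma$ becomes tautological. These constructions are functorial in $T$ and define mutually quasi-inverse equivalences of groupoid fibrations $\L(h) \simeq \M(\u{\AA}(Q),h,t_0)$.

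It remains to check compatibility with the structure maps to $\Log(\AA(Q))$: on either side the $T$-point is sent to the fine log structure on $T$ obtained by descending the pullback of $\M_{\AA(P)}$ along $e : E \to \u{\AA}(P)$ (equivalently, along $f : F \to \u{\AA}(P)$) through the appropriate torsor, and these two descents agree because of the relation $\GG(P)/\GG(P/Q) = \GG(Q)$. Thus $\L(h) \to \Log(\AA(Q))$ factors as the equivalence $\L(h) \simeq \M(\u{\AA}(Q),h,t_0)$ followed by the representable \'etale map of Theorem~\ref{thm:etalecover}, proving the claim. The main technical obstacle will be the $2$-categorical bookkeeping in the torsor-kernel/torsor-extension construction: one must verify it genuinely defines quasi-inverse functors of fibered categories (not merely bijections on isomorphism classes of $T$-points) and that it intertwines the log-structure-valued maps to $\Log(\AA(Q))$. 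The injectivity of $h$ (equivalently, of $h^{\rm gp}$) is used essentially here: without it one has neither the identification $\GG(P)/\GG(P/Q) = \GG(Q)$ nor a way to pass back and forth between a $\GG(P/Q)$-torsor and a $\GG(P)$-torsor with trivialized associated $\GG(Q)$-torsor.
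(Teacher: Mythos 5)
Your argument stands or falls on the asserted identification $\L(h) \simeq \u{\AA}(Q) \times_{\AAA(Q)} \AAA(P)$, and that identification—not the $2$-categorical bookkeeping you flag at the end—is the genuinely delicate point. In the paper this comparison map is exactly \eqref{I} (specialized to $\u{Y} = \u{\AA}(Q)$ with its tautological chart), and it is proved to be an equivalence only under the additional hypothesis that $0 \to Q^{\rm gp} \to P^{\rm gp} \to (P/Q)^{\rm gp} \to 0$ splits (Theorem~\ref{thm:equivalence}, Corollaries~\ref{cor:equivalence}, \ref{cor:equivalence2}); for general injective $h$ the paper only shows \eqref{I} is representable \'etale, and it \emph{deduces} that from Theorem~\ref{thm:LhtoLogAQetale} by two-out-of-three—the opposite logical direction from yours. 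The obstruction is torsion in $(P/Q)^{\rm gp}$, which injectivity does not exclude (e.g.\ $h = 2 : \NN \into \NN$ gives $(P/Q)^{\rm gp} = \ZZ/2\ZZ$). Then $\GG(P/Q)$ has $\mu_n$-factors and is not smooth, so the pseudo-torsor $E \subseteq F$ produced by your reduction of structure group is only \emph{fppf}-locally trivial, whereas $\L(h)$ and $\AAA(P)$ are defined in the paper as \'etale stackifications, i.e.\ classify \'etale-locally trivial torsors (Propositions~\ref{prop:AAApreP}, \ref{prop:Lhtpre}). With those definitions your ``same datum'' claim fails: for $h = 2 : \NN \to \NN$ and $T = \Spec \mathbb{F}_2(t)$, the object of $\u{\AA}(Q) \times_{\AAA(Q)} \AAA(P)$ given in prestack coordinates by $x(1)=1$, $u(1)=t^{-1}$, $s=t$ lies in the image of $\L(h)$ only after $t$ becomes a square, which never happens over any \'etale cover of $T$ (a square root of $t$ is inseparable over $\mathbb{F}_2(t)$). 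So the comparison functor is not essentially surjective, and your quasi-inverse does not land in $\L(h)$ as defined.

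The identification \emph{is} correct if you work throughout with fppf torsors, i.e.\ take $\L(h)$, $\AAA(P)$, $\AAA(Q)$ to be fppf quotient stacks (arguably the intended meaning of these symbols, and the reading that the paper's own lifting machinery effectively uses): fppf-local essential surjectivity of \eqref{I} amounts to extending a character $Q^{\rm gp} \to \O_T^*$ along $h^{\rm gp}$, which requires extracting roots of units and is available fppf- but not \'etale-locally. But then you still owe (i) a proof that this fppf incarnation is the map \eqref{LhtoLogAQ} the theorem is about, (ii) fppf descent of fine log structures to compare the two maps to $\Log(\AA(Q))$, and (iii) the verification that your two constructions are quasi-inverse as fibered categories. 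Once that is done honestly, the labor is comparable to the paper's proof, which avoids the comparison altogether: it checks formal representability of $\L(h)^{\rm pre} \to \Log(\AA(Q))$ directly and then verifies the fppf-local infinitesimal lifting criterion of Lemma~\ref{lem:formaletalenesscriterion}, injectivity of $h$ entering through the Lifting Lemma~\ref{lem:lifting}, whose finite faithfully flat extension $A' \to B'$ is precisely the device that absorbs the torsion your sketch glosses over. Finally, while invoking Theorem~\ref{thm:etalecover} (Olsson's theorem, quoted without proof) is not circular, it does run against the paper's aim of proving these \'etaleness statements by hand.
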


The proof of Theorem~\ref{thm:LhtoLogAQetale} is similar to that of Theorem~\ref{thm:AAAhetale}.  We first let \be \L(h,t)^{\rm pre} & := & [ \u{\AA}(P) / \GG(P/Q) ]^{\rm pre} \ee be the quotient of $\AA(P)$ by $\GG(P/Q)$ in the $2$-category of groupoid fibrations over schemes (c.f.\ the analogous description of $\AAA^{\rm pre}(P)$ in \S\ref{section:AAApreP}).  Explicitly, an object of $\L(h)^{\rm pre}$ is a pair $(X,x)$ where $X$ is a scheme and $x : P \to \O_X(X)$ is a monoid homomorphism.  A morphism $(f,w) : (X',x') \to (X,x)$ is a pair consisting of a morphism of schemes $f :X' \to X$ and a group homomorphism $w : P/Q \to \O_{X'}^*(X')$ such that $wq \cdot x' = f^*x$.  Note that this latter condition implies that \bne{killQ} f^*xh & = & (wq \cdot x')h  \\ \nonumber & = & (wqh) \cdot (x'h) \\ \nonumber & = & x'h \ene since $qh=1$.  The groupoid fibration from $\L(h,t)^{\rm pre}$ to schemes is given by $(X,x) \mapsto X$ on objects and $(f,w) \mapsto f$ on morphisms.  In fact, \eqref{killQ} shows that we also have a groupoid fibration from $\L(h)^{\rm pre}$ to schemes over $\u{\AA}(Q)$ given by $(X,x) \mapsto (X,xh)$ on objects and $(f,w) \mapsto f$ on morphisms, viewing a scheme over $\u{\AA}(Q)$ as pair $(X,y)$ consisting of a scheme $X$ and a monoid homomorphism $y : Q \to \O_X(X)$.

\begin{prop} \label{prop:Lhtpre} The groupoid fibration $\L(h)^{\rm pre}(P)$ is a prestack whose stackification in the \'etale topology is $\L(h)$.  In fact, for any scheme $X$ and any two objects $(X,x)$ and $(X,x')$ in the fiber category of $\L(h)^{\rm pre}$ over $X$, the presheaf of fiber category isomorphisms from $(X,x)$ to $(X,x')$ in $\L(h)^{\rm pre}$ is representable by a closed subscheme of $X \times \GG(P/Q)$ finitely presented over $X$. \end{prop}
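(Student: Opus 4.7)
The plan is to mirror the argument used for Proposition~\ref{prop:AAApreP} essentially verbatim, replacing $\GG(P)$ by $\GG(P/Q)$ and recording the extra relation coming from the quotient map $q: P \to P/Q$. First I would unravel what a fiber-category isomorphism $(X',f^*x) \to (X',f^*x')$ in $\L(h)^{\rm pre}$ really is: by the definition of morphisms in $\L(h)^{\rm pre}$, it is a group homomorphism $w : (P/Q)^{\rm gp} \to \O^*_{X'}(X')$ such that $(wq) \cdot f^*x' = f^*x$, where I regard $wq$ as a monoid homomorphism $P \to \O^*_{X'}(X')$ via the quotient $q : P \to P/Q$. Equivalently, such a $w$ is a morphism of $X$-schemes $X' \to Z$, where
\[
Z \;\subseteq\; X \times \GG(P/Q) \;=\; \Spec_X \O_X[(P/Q)^{\rm gp}]
\]
is the closed subscheme defined by the ideal generated by the global sections $[q(p)]\, x'(p) - x(p)$ for $p \in P$. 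Because $P$ is finitely generated, it suffices to impose these relations as $p$ ranges over a finite generating set, so $Z$ is a finitely presented closed subscheme of $X \times \GG(P/Q)$, which is itself finitely presented over $X$. This proves the ``In fact'' clause.

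Next I would note that representability of the Hom-presheaves (in particular by schemes) implies in the standard way that $\L(h)^{\rm pre}$ is a prestack for any subcanonical topology, and in particular for the \'etale topology: representable presheaves are \'etale sheaves, so the isomorphism presheaves between any two objects are sheaves. The only mild subtlety here is bookkeeping with the fibered-category structure, but since the Hom-presheaf over $X$ between $(X,x),(X,x')$ already represents fiber-category isomorphisms for \emph{all} base changes along $f : X' \to X$ (by construction $Z$ is defined so its $X'$-points are exactly these isomorphisms), the sheaf axiom is immediate.

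For the stackification statement, I would appeal to the same ``quotient stack = stackification of quotient prestack'' principle invoked in the proof of Proposition~\ref{prop:AAApreP}: by definition $\L(h) = [\u{\AA}(P)/\GG(P/Q)]$ parameterizes \'etale-locally trivial principal $\GG(P/Q)$-bundles $E$ equipped with a $\GG(P/Q)$-equivariant map to $\u{\AA}(P)$, and the full subcategory parameterizing those with $E$ trivial is visibly equivalent to $\L(h)^{\rm pre}$; any such $(E,\varphi)$ is \'etale-locally in this subcategory, so $\L(h)$ is the \'etale stackification of $\L(h)^{\rm pre}$.

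The main obstacle, such as it is, is purely bureaucratic: one must verify that the monoid equation $(wq) \cdot f^*x' = f^*x$ translating an abstract $\L(h)^{\rm pre}$-morphism indeed matches the ideal cutting out $Z$ (including checking compatibility of the convention that $[q(p)]$ lies in $\O_X[(P/Q)^{\rm gp}]$). No injectivity hypothesis on $h$ is needed here --- that comes into play only in Theorem~\ref{thm:LhtoLogAQetale}, where one wants to use the Homotopy Lifting results of \S\ref{section:liftinglemmas} to verify formal \'etaleness of the map to $\Log(\AA(Q))$.
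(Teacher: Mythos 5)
Your proposal is correct and follows the paper's own argument essentially verbatim: the Isom presheaf is cut out in $X \times \GG(P/Q) = \Spec_X \O_X[(P/Q)^{\rm gp}]$ by finitely many equations indexed by a finite generating set of $P$, and the stackification claim is the same ``quotient stack is the stackification of the quotient prestack'' observation as in Proposition~\ref{prop:AAApreP}. The only (immaterial) difference is a direction convention --- the paper's morphism convention makes an isomorphism $(X,x)\to(X,x')$ satisfy $wq\cdot x = x'$, so your equations $[q(p)]x'(p)-x(p)$ describe the inverse isomorphism, which defines a canonically isomorphic presheaf.
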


\begin{proof} The proof is essentially the same as that of Proposition~\ref{prop:AAApreP}.  To give an isomorphism from $(X,x)$ to $(X,x')$ over $X$ is to give a group homomorphism $w : (P/Q)^{\rm gp} \to \O_X^*(X)$ such that $wq \cdot x = x'$ as monoid homomorphims $P \to \O_X(X)$.  It is enough to check this last equality on some finite set $p_1,\dots,p_k$ generating $P$, so we see easily that the ``Isom presheaf" in question is represented by the closed subscheme of \be X \times \GG(P/Q) & = & \Spec_X \O_X[(P/Q)^{\rm gp}] \ee defined by the equations $[q(p_i)]x(p_i)-x'(p_i)$ in the ring of global sections of $X \times \GG(P/Q)$. \end{proof}

For a log scheme $X$, a morphism a log schemes $X \to \AA(Q)$ is the same thing as a monoid homomorphism $x : Q \to \M_X(X)$.  We can therefore view the stack $\Log(\AA(Q))$ as the category whose objects are triples $(X,\M_X,x)$ where $X$ is a scheme, $\M_X$ is a fine log structure on $X$, and $x : Q \to \M_X(X)$ is a monoid homomorphism.  A $\Log(\AA(Q))$ morphism $f : (X',\M_{X'},x') \to (X,\M_X,x)$ is a strict morphism of fine log schemes $(f,f^\dagger) : (X',\M_{X'}) \to (X,\M_X)$ such that $f^\dagger x = x'$.  We often refer to $f^\dagger : f^* \M_X \to \M_{X'}$ as an \emph{isomorphism of log structures under} $Q$ in this situation.  There is an obvious forgetful functor from $\Log(\AA(Q))$ to schemes over $\u{\AA}(Q)$ taking $(X,\M_X,x)$ to $(X,\alpha_X x)$.  This is just the usual structure map $\Log(\AA(Q)) \to \u{\AA}(Q)$ for the stack $\Log(\AA(Q))$ of \S\ref{section:Log}.

We now construct a functor \bne{LhpretoLogAQ} \L(h)^{\rm pre} & \to & \Log(\AA(Q)) \ene commuting (strictly) with the functors to schemes over $\u{\AA}(Q)$.  On objects, we map $(X,x)$ to $(X,P^a_x,h)$ where $x^a : P^a_x \to \O_X(X)$ is the (manifestly fine) log structure associated to the prelog structure $x : P \to \O_X(X)$, and $P \to P^a_x(X)$ is the map induced by the canonical map $\underline{P} \to P^a_x$ of prelog structures on $X$, and $h$ is abuse of notation for the monoid homomorphism obtained by precomposing $P \to P^a_x(X)$ with $h : Q \to P$.  Here $\underline{P}$ is the constant sheaf on $X$ associated to $P$ and we are making use of the natural map $P \to \underline{P}(X)$.  Recall that: \begin{enumerate} \item $P^a_x = \underline{P} \oplus_{x^{-1}\O_X^*} \O_X^*$, where $x$ abusively denotes the morphism $\underline{P} \to \O_X$ corresponding to $x :P \to \O_X(X)$. \item The structure map $x^a : P^a_x \to \O_X$ for the log structure $P^a_x$ is given by $[p,u] \mapsto x(p)u$. \item The canonical map $\underline{P} \to P^a_x$ is given by $p \mapsto [p,1]$. \end{enumerate}   Notice that the composition of the map $P \to P^a_x(X)$ induced by the canonical map of prelog structures and the map $x^a : P^a_x(X) \to \O_X(X)$ is the original monoid homomorphism $x$.  The image under \eqref{LhpretoLogAQ} of a morphism $(f,w) : (X',x') \to (X,x)$ is defined as follows.  First note that formation of associated log structures commutes with pullback, so we have a canonical identification $f^* (P_x^a) = P_{f^*x}^a$.  The formula \bne{formulaforwq} \cdot wq : P^a_{f^*x} & \to & P^a_{x'} \\ \nonumber [p,u] & \mapsto & [p,(wq)(p)u] \ene defines a morphism of log structures on $X'$ (i.e.\ respects the structure maps to $\O_X$) because we compute \be ((f^*x)^a)[p,u] & = & f^*x(p)u \\ & = & x'(p)(wq)(p)u \\ & = & (x')^a[p,(wq)(p)u] \ee using $wq \cdot x' = f^*x$.  In fact, \eqref{formulaforwq} is clearly an \emph{iso}morphism of log structures, so it defines a lifting of $f : X' \to X$ to a \emph{strict} morphism of log schemes $(f,\cdot wq)$.  A computation much life \eqref{killQ} (using $qh=1$), shows that in fact \eqref{formulaforwq} is an isomorphism of log structures under $Q$---that is, a morphism in $\Log(\AA(Q))$ lifting $f : X' \to X$.  This completes the construction of \eqref{LhpretoLogAQ}.  We declare \eqref{LhtoLogAQ} to be the stackification of \eqref{LhpretoLogAQ} in the \'etale topology.

\begin{lem} \label{lem:LhtoLogAQrepresentable} For any map $h : Q \to P$ of fine monoids, the morphism \eqref{LhtoLogAQ} is representable by algebraic spaces. \end{lem}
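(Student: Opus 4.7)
The plan is to check that the prestack morphism \eqref{LhpretoLogAQ} is formally representable in the sense of Definition~\ref{defn:formallyrepresentable}; then Lemma~\ref{lem:stackificationpreservesrepresentability} transfers this to the stackification \eqref{LhtoLogAQ}, and Proposition~\ref{prop:formallyrepresentableimpliesrepresentable} promotes formal representability to representability by algebraic spaces. Here $\L(h) = [\u{\AA}(P)/\GG(P/Q)]$ is a locally separated algebraic stack of finite presentation as the quotient of a finitely presented affine scheme by a finitely presented affine group scheme, and $\Log(\AA(Q))$ is a locally separated algebraic stack of locally finite presentation by Olsson's theorem.

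By Proposition~\ref{prop:representablebypresheaves}, formal representability of \eqref{LhpretoLogAQ} amounts to faithfulness of the functor, and since both $\L(h)^{\rm pre}$ and $\Log(\AA(Q))$ are groupoid fibrations over $\Sch$ this reduces to injectivity on fiber automorphism groups. So I would fix a scheme $X$ and an object $(X, x)$ of $\L(h)^{\rm pre}$. The fiber automorphism group at $(X, x)$ consists of group homomorphisms $w : (P/Q)^{\rm gp} \to \O_X^*(X)$ satisfying $wq \cdot x = x$, and the functor sends $w$ to the log automorphism $\cdot wq : P_x^a \to P_x^a$ given by $[p,u] \mapsto [p, wq(p)u]$. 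Supposing $w_1, w_2$ induce the same log automorphism, the specialization $u = 1$ gives $[p, w_1 q(p)] = [p, w_2 q(p)]$ in $P_x^a$ for every $p \in P$.

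I would then pass to stalks at a geometric point $\bar x$ of $X$. The stalk $(P_x^a)_{\bar x}$ is the integral monoid pushout $P \oplus_{S_{\bar x}} \O_{X, \bar x}^*$, where $S_{\bar x} := x^{-1}(\O_{X, \bar x}^*)$ is a face of the fine monoid $P$, hence a submonoid whose groupification $S_{\bar x}^{\rm gp}$ embeds into $P^{\rm gp}$ by integrality of $P$. This integral monoid pushout sits inside the group pushout $P^{\rm gp} \oplus_{S_{\bar x}^{\rm gp}} \O_{X, \bar x}^*$, which is $P^{\rm gp} \oplus \O_{X, \bar x}^*$ modulo the subgroup $\{(s, x(s)^{-1}) : s \in S_{\bar x}^{\rm gp}\}$. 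An identity $[p, a] = [p, b]$ in that quotient forces $(0, a b^{-1}) = (s, x(s)^{-1})$ for some $s \in S_{\bar x}^{\rm gp}$, and the injectivity of $S_{\bar x}^{\rm gp} \to P^{\rm gp}$ forces $s = 0$, whence $a = b$. Applied to $a = w_1 q(p)_{\bar x}$ and $b = w_2 q(p)_{\bar x}$, I obtain $w_1 q = w_2 q$ stalkwise on $X$, hence globally; since $q : P \to P/Q$ is a surjection of monoids, its image generates $(P/Q)^{\rm gp}$ as a group, so $w_1 = w_2$.

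The main obstacle is bookkeeping around the integral pushout description of $(P_x^a)_{\bar x}$ and extracting equality of units from equality of classes using $S_{\bar x}^{\rm gp} \hookrightarrow P^{\rm gp}$; once this is in hand, the faithfulness check collapses to a one-line pushout computation and the general machinery of \S\ref{section:representability} does the rest.
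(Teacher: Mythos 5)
Your proposal is correct and follows essentially the same route as the paper: reduce via Proposition~\ref{prop:Lhtpre}, Lemma~\ref{lem:stackificationpreservesrepresentability} and Proposition~\ref{prop:formallyrepresentableimpliesrepresentable} to formal representability of the prestack map \eqref{LhpretoLogAQ}, checked as injectivity on fiber automorphism groups. Your stalkwise pushout/groupification computation is just an explicit verification of the fact the paper cites directly, namely that $P$ integral makes $P^a_x$ quasi-integral (the $\O_X^*$-action on it is free), so $[p,a]=[p,b]$ forces $a=b$ and hence $wq=1$, whence $w=1$ since $q$ is an epimorphism of monoids.
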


\begin{proof}  The proof is much like that of Lemma~\ref{lem:AAAhrepresentable}.  By Proposition~\ref{prop:Lhtpre}, Proposition~\ref{prop:formallyrepresentableimpliesrepresentable}, and Lemma~\ref{lem:stackificationpreservesrepresentability}, it suffices to prove that the map of prestacks \eqref{LhpretoLogAQ} is formally representable.  We will check the criterion \eqref{autinjectivity} of Proposition~\ref{prop:representablebypresheaves}. 

Fix an object $(X,x)$ of the fiber category of $\L(h)^{\rm pre}$ over a scheme $X$.  We must show that \eqref{LhpretoLogAQ} induces an injection from the group of automorphisms of $(X,x)$ over $X$ to the group of automorphisms of $(X,P^a_x)$ over $X$.  The former group is the subgroup of $\Hom_{\Ab}((P/Q)^{\rm gp},\O_X^*(X))$ consisting of those $w$ for which $wq \cdot x = x$.  The image of such a $w$ under \eqref{LhpretoLogAQ} is the automorphism $\cdot wq : P^a_x \to P^a_x$ of the log structure $P^a_x$ under $Q$.  We can rewrite this automorphism of log structures as $[p,u] \mapsto wq(p) \cdot [p,u]$, where the $\cdot$ is the action of $\O_X^*$ on $P^a_x$.  But $P$ integral certainly implies that $P^a_x$ is a ``quasi-integral" log structure in the sense that the action of $\O_X^*$ on $P^a_x$ is free, so the only way $[p,u] \mapsto wq(p) \cdot [p,u]$ can be the identity map is if $wq(p) = 1$ for all $p \in P$, which happens iff $w=1$ because $q : P \to (P/Q)^{\rm gp}$ is an epimorphism in the category of monoids. \end{proof}

\noindent \emph{Proof of Theorem~\ref{thm:LhtoLogAQetale}.}  Since \eqref{LhtoLogAQ} is a representable map between algebraic stacks of locally finite presentation over $\Spec \ZZ$, we reduce exactly as in the proof of Theorem~\ref{thm:AAAhetale} to checking the lifting conditions in Lemma~\ref{lem:formaletalenesscriterion} for the map of prestacks \eqref{LhpretoLogAQ}.

Consider a $2$-commutative diagram \bne{liftingdiagram} & \xym@C+20pt{ T \ar[d] \ar[r]^-x & \L(h)^{\rm pre} \ar[d]^I{\eqref{LhpretoLogAQ}} \\ T' \ar[r]_-{(\M',a)} \ar@{.>}[ru]^-{x'} & \Log(\AA(Q)) } \ene where $T \to T' = \Spec( i : A' \to A)$ for a surjection of rings $i : A' \to A$ with square-zero kernel.  We must show that, after possibly passing to an fppf cover of $T'$, there is a lift $(x',w,l)$ up to homotopy compatible with the homotopy relating the two ways around the square and that any two such homotopy lifts are homotopic (by a homotopy compatible with the ones making the triangles commute), after possibly passing to an fppf cover of $T'$.

We unravel this as follows.  The top horizontal arrow $x$ in \eqref{liftingdiagram} corresponds to a monoid homomorphism $x : P \to A$, and the bottom horizontal arrow $(\M',a)$ corresponds to a fine log structure $\alpha' : \M' \to \O_{T'}$ on $T'$ together with a monoid homomorphism $a : Q \to M'$, where $M' := \M'(T')$.   Let $\alpha : \M \to \O_T$ be the restriction of $\M'$ to $T$ and let $M := \M(T)$.  We abusively denote the natural map $i : M' \to M$.  We are implicitly given a homotopy $b$ relating the two ways around the square \eqref{liftingdiagram}.  This $b$ is an isomorphism $b : P_x^a \to \M$ of log structures on $T$ under $Q$.  That is, the diagram \bne{D1} & \xym{ P \ar@/^2pc/[rr]^x \ar[r] & P^a_x(T) \ar[r] \ar[d]_{\cong}^b & A \\ Q \ar[u]^h \ar[r]_{i a} & M \ar[ru]_{\alpha} } \ene commutes and hence, suppressing notation for $P \to P^a_x(T)$ we have a commutative diagram \bne{lrdiagram} & \xym{ P \ar[r]^b & M \ar[r]^{\alpha} & A \\ Q \ar[u]^h \ar[r]_a & M' \ar[r]_{\alpha'} \ar[u]_{i} & A' \ar[u]_i } \ene where the square on the right is a strict map of fine log rings.

A lift up to homotopy $(x',w,l)$ in \eqref{liftingdiagram} is a triple where $x' : P' \to A'$ is a monoid homomorphism, $w : (P/Q)^{\rm gp} \to A^*$ is a group homomorphism such that \bne{wcondition} wq \cdot x' & = & x, \ene and $l^a : P^a_{x'} \to \M'$ is an isomorphism of log structures on $T'$ under $Q$ making the following diagram of isomorphisms of log structures on $T$ under $Q$ commute: \bne{lcondition} \xym{ P^a_{ix'} \ar[d]_{ \cdot wq} \ar[r]^{l^a|_T} & \M \\ P^a_x \ar[ru]_b } \ene  The monoid homomorphism $x'$ corresponds to the morphism $x'$ in \eqref{liftingdiagram}, the group homomorphism $w$ satisfying \eqref{wcondition} is the data of a homotopy between the two ways around the upper triangle of \eqref{liftingdiagram}, the isomorphism $l$ is a homotopy between the two ways around the bottom triangle of \eqref{liftingdiagram}, and the commutativity condition \eqref{lcondition} is the condition that $w$ and $l$ should be compatible with the given homotopy $b$ (note that $P^a_{ix'} = i^* P^a_{x'}$).  The condition that $l^a$ is an isomorphism of log structures under $Q$ means the following diagram of monoids commutes: \bne{D2} & \xym{ P \ar@/^2pc/[rr]^{x'} \ar[r] & P^a_{x'}(T') \ar[r] \ar[d]_{\cong}^{l^a} & A' \\ Q \ar[u]^h \ar[r]_{ a} & M' \ar[ru]_{\alpha'} } \ene

The next step is to unravel even further.  I claim that the data $(x',w,l^a)$ as in the above paragraph is the same thing as a pair $(l,w)$ consisting of a monoid homomorphism $l : P \to M'$ and a group homomorphism $w : (P/Q)^{\rm gp} \to A^*$ making \bne{lrdiagram2} & \xym{ P \ar[r]^{wq \cdot b} \ar@{.>}[rd]^l & M \ar[r]^{\alpha} & A \\ Q \ar[u]^h \ar[r]_a & M' \ar[r]_{\alpha'} \ar[u]_{i} & A' \ar[u]_i } \ene commute (i.e.\ making the left square commute).  Given $(x',w,l^a)$ as in the previous paragraph, we obtain the $(l,w)$ as in this paragraph by letting $l$ be the map $l : P \to M'$ given by the composition in \eqref{D2}.   This choice of $l$ clearly makes the lower triangle in \eqref{lrdiagram2} commute, and one sees that the upper triangle commutes using the commutativity of \eqref{lcondition}.  Given $(l,w)$ as in this paragraph, we obtain $(x',w,l^a)$ as in the previous paragraph by setting $x' := \alpha' l$ and letting $l^a : P^a_{x'} \to M'$ be the map on associated log structures induced by $l$.  The commutativity of \eqref{lrdiagram2} ensures the commutativity of \eqref{lcondition}, which ensures that $l^a|_T$ is an isomorphism (by ``two-out-of-three" because $b$ and $\cdot wq$ are isomorphisms), which in turn ensures that $l^a$ itself is an isomorphism because this can be checked on characteristics since all log structures in question are integral and the map on characteristics induced by $l^a$ is the same as the one induced by $l^a|_T$ since $T$ and $T'$ have the same \'etale topos and the characteristic of the pullback is always the inverse image of the characteristic.

To prove the existence of such a pair $(l,w)$ (after possibly passing to an fppf cover of $T'$), we need only apply Lemma~\ref{lem:lifting} to the diagram \eqref{lrdiagram}.  For uniqueness up to homotopy, suppose $(l_1,w_1)$ and $(l_2,w_2)$ are both as in the previous paragraph.  For any $p \in P$, I claim there is a (necessarily unique) unit $v(p) \in (A')^*$ such that $v(p) \cdot l_1(p) = l_2(p)$ in $M'$.  Indeed, it suffices to show that $l_1(p)$ and $l_2(p)$ have the same image in the characteristic of $\M'$ and since $\M' \to \M$ induces an isomorphism on characteristics by strictness, it suffices to show that $i l_1(p)$ and $il_2(p)$ differ by a unit in $M$, and in fact the commutativity conditions \eqref{lrdiagram2} for $(l_1,w_1)$ and $(l_2,w_2)$ ensure that \be (w_2q \cdot  w_1^{-1}q)(p) \cdot l_1i(p) & = & l_2 i (p). \ee  The same commutativity condition shows that $vh=1$, so we can view $v$ as a group homomorphism $v : (P/Q)^{\rm gp} \to (A')^*$ satisfying \bne{vconditions} vq \cdot l_1 & = & l_2 \\ w_1 \cdot i^* v & = & w_2, \ene where $i^* : (A')^* \to A^*$ is the map on units induced by $i : A' \to A$.  Such a $v$ is a homotopy from $l_1$ to $l_2$ compatible with $w_1$ and $w_2$.  The proof of Theorem~\ref{thm:LhtoLogAQetale} is complete. \hfill \qedsymbol

\subsection{An equivalence of stacks} \label{section:equivalence} Fix a homomorphism $h : Q \to P$ of fine monoids, a scheme $\u{Y}$, and a monoid homomorphism $t : Q \to \O_Y(Y)$.  View $Y$ as a fine log scheme by pulling back the log structure on $\AA(Q)$ along $t$, so $t : Y \to \AA(Q)$ is a strict map of fine log schemes.  Define an algebraic stack $\L(\u{Y},h,t)$ by the $2$-cartesian diagram \bne{D10} & \xym{ \L(\u{Y},h,t) \ar[r] \ar[d] & \L(h) \ar[d] \\ \Log(Y) \ar[r]^-{\Log(t)} \ar[d] & \Log(\AA(Q)) \ar[d] \\ \u{Y} \ar[r]^-{\u{t}} & \u{\AA}(Q) } \ene where $\L(h) \to \Log(\AA(Q))$ is the map \eqref{LhtoLogAQ} discussed in \S\ref{section:LhtoLogAQetale}.  Note that the bottom square of \eqref{D10} is cartesian because $t : Y \to \AA(Q)$ is strict (\S\ref{section:limitpreservation}).  Composing $\u{t}$ with the natural map $\u{\AA}(Q) \to \AAA(Q)$ of algebraic stacks, we have a map $\u{Y} \to \AAA(Q)$.  Define $\M(\u{Y},h,t)$ by the $2$-cartesian diagram of algebraic stacks: $$ \xym{ \M(\u{Y},h,t) \ar[r] \ar[d] & \AAA(P) \ar[d]^{\AAA(Q)} \\ \u{Y} \ar[r] & \AAA(Q) } $$  There is a map of algebraic stacks \bne{I} \L(\u{Y},h,t) & \to & \M(\u{Y},h,t) \ene which we will explain carefully in a moment.  The map \eqref{I} fits in a $2$-commutative diagram \bne{LMLog} & \xym{ \L(\u{Y},h,t) \ar[rr] \ar[rd] & & \M(\u{Y},h,t) \ar[ld] \\ & \Log(Y) } \ene where $\M(\u{Y},h,t) \to \Log(Y)$ is Olsson's \'etale map of Theorem~\ref{thm:etalecover}.  The map $\L(\u{Y},h,t) \to \Log(Y)$ is representable (and \'etale if $h$ is injective) since it is a base change of $\L(h) \to \Log(\AA(Q))$, which is representable (Lemma~\ref{lem:LhtoLogAQrepresentable}) and \'etale if $h$ is injective (Theorem~\ref{thm:LhtoLogAQetale}).  We conclude that \eqref{I} is representable \'etale by ``two-out-of-three" \cite[I.4.8]{SGA1} when $h$ is injective.

The purpose of this section is to prove that if $h^{\rm gp}$ is injective with split cokernel sequence, then the representable \'etale map \eqref{I} is in fact an equivalence of stacks.  In fact, a slightly stronger statement is true.

\begin{defn} \label{defn:deformationretract} Let $\C$ be a $2$-category, $i : X \to Y$ a morphism ($1$-morphism) in $\C$.  A morphism $r : Y \to X$ is called a \emph{deformation retract} of $i$ iff $r$ is a retract of $i$ (i.e.\ $ri = \Id_X$) and there is a homotopy (invertible $2$-morphism) $\eta : ir \to \Id_Y$ which ``restricts to the identity on $X$".  This latter condition means that the $2$-morphism $\eta * i : iri=i \to i$ is the identity.  \end{defn}  

If $\C$ is the $2$-category $\CFG / \D$ of categories fibered in groupoids over a category $\D$, then the last condition in the above definition means that the natural transformation $\eta : ir \to \Id_Y$ has $\eta(i(x)) : i(x) \to i(x)$ equal to the identity for each object $x$ of $X$.

The results of this section do not have anything in particular to do with schemes: there is no use of the fppf topology as in the previous sections, and there is no real need to assume $Q$ and $P$ are finitely generated.  The discussion of this section is just ``general nonsense."  For the sake of variety and clarifying generality, let us in fact fix some category $\Esp$ of ``spaces" (take $\Esp$ equal to schemes if you like).  For a monoid $P$, let $\AA(P)$ denote the presheaf $$X \mapsto \Hom_{\Mon}(P,\O_X(X))$$ on spaces.  We are assuming here that each ``space" $X$ comes with a ring $\O_X(X)$ which is contravariantly functorial in $X$.  We won't underline out ``spaces" here because we won't have any notion of ``log space."  Since we want to work throughout in the $2$-category $\CFG / \Esp$, we think of the presheaf $\AA(P)$ as a category whose objects are pairs $(X,x)$ consisting of a space $X$ and a monoid homomorphism $x : P \to \O_X(X)$ and where a morphism $f : (X',x') \to (X,x)$ is a map of spaces $f : X' \to X$ such that $x' = f^* x$.  In any reasonable category of spaces, the presheaf $\AA(P)$ is representable (at least when $P$ is finitely generated), but that is not at all relevant right now.  Let $\GG(P)$ denote the presheaf of (abelian) groups $$ X \mapsto \Hom_{\Ab}(P^{\rm gp},\O_X^*(X)).$$  There is an obvious action of $\GG(P)$ on $\AA(P)$ denoted $(u,x) \mapsto u \cdot x$, where $(u \cdot x)(p) := u(p)x(p)$ for $p \in P$.  This action is equivariant with respect to pullback along a morphism of spaces $f$ in the sense that $f^*(u \cdot x) = f^*u \cdot f^*x$.  We can then form the quotient category \bne{AAA} \AAA(P) & := & [ \AA(P) / \GG(P) ]. \ene Keep in mind that this is just the quotient in $\CFG / \Esp$: we do not yet assume that $\Esp$ has any topology, so one might prefer to call this $\AAA^{\rm pre}(P)$ if one has a topology in mind in which one plans to stackify $\AAA(P)$, but we will not adopt this notation in this section.  The category $\AAA(P)$ has the same objects as the category $\AA(P)$, but in $\AAA(P)$ a morphism from $(X',x')$ to $(X,x)$ is a \emph{pair} $(f,u)$ consisting of a morphism of spaces $f : X' \to X$ and a group homomorphism $u : P^{\rm gp} \to \O_{X'}^*(X')$ such that $u \cdot x' = g^* x$.  We have a functor from $\AA(P)$ to $\AAA(P)$ which is the identity on objects and is given on morphisms by $f \mapsto (f,1)$.  We have a groupoid fibration from $\AAA(P)$ to spaces given by $(X,x) \mapsto X$ on objects and by $(f,u) \mapsto f$ on morphisms.

\noindent {\bf Notation:} We will use the same notation and abuse thereof as in the proof of Theorem~\ref{thm:AAAhetale}. 

Formation of the presheaves $\AA(P)$ and $\GG(P)$ and the quotient category $\AAA(P)$ is contravariantly functorial in $P$.  It is helpful to make this explicit, so let $h : Q \to P$ be a monoid homomorphism.  The we have a $\CFG / \Esp$ morphism \be \AAA(h) : \AAA(P) & \to & \AAA(Q) \ee  given on objects by $(X,x) \mapsto (X,xh)$ and on morphisms by taking $(f,u) : (X',x') \to (X,x)$ to $(f,uh) : (X',x'h) \to (X,xh)$.  (There is an example of the abusive ``$h$" notation here.)  The map $(f,uh)$ is well-defined because we compute \be (uh) \cdot (x'h) & = & (u \cdot x')h \\ & = & xh. \ee 

Much as in the beginning of this section, we now fix a space $Y$, a monoid homomorphism $h : Q \to P$ (for the moment, we make no assumptions at all on $Q$ and $P$), and a monoid homomorphism $t : Q \to \O_Y(Y)$ (a morphism of presheaves from $Y$ to $\AA(Q)$).  We define objects of $\CFG / \Esp$ by \be \L(Y,h,t) & := & [Y \times_{\AA(Q)} \AA(P) \, / \, \GG(P/Q) ] \\ \M(Y,h,t) & := & Y \times_{\AAA(Q)} \AAA(P). \ee

Explicitly: Objects of $\L(Y,h,t)$ are pairs $(f,x)$ where $f : X \to Y$ is a $Y$-space and $x : P \to \O_X(X)$ is a monoid homomorphism such that \bne{A4} f^*t & = & xh.\ene  Of course, this is the same thing as an object of the category $Y \times_{\AA(Q)} \AA(P)$, but in $\L(Y,h,t)$ a \emph{morphism} from $(f',x') \to (f,x)$ is a pair $(g,w)$ where $g : X' \to X$ is a map of $Y$-spaces and $w : (P/Q)^{\rm gp} \to \O_{X'}^*(X')$ is a group homomorphism such that \bne{A5} (wq) \cdot x' & = & g^* x. \ene 

Objects of $\M(Y,h,t)$ are triples $(f,x,u)$ where $f : X \to Y$ is a map of spaces to $Y$, $x : P \to \Gamma_X(X)$ is a monoid homomorphism, and $u : Q^{\rm gp} \to \O_X^*(X)$ is a group homomorphism satisfying \bne{A1} u \cdot f^*t & = & xh. \ene  An $\M(Y,h,t)$ morphism $$(g,v) : (f',x',u') \to (f,x,u)$$ is a pair $(g,v)$ where $g : X' \to X$ is a map of $Y$-spaces (an $\Esp$ morphism satisfying $f' = fg$) and $v : P^{\rm gp} \to \O_{X'}^*(X')$ is a monoid homomorphism satisfying the conditions: \bne{A2} v \cdot x' & = & g^* x \\ \label{A3} g^* u & = & (vh) \cdot u'. \ene  The datum of $u$ satisfying \eqref{A1} is precisely the datum of an isomorphism in the fiber category of $\AAA(Q)$ over the space $X$ from $f^* t$ to $xh$, as in the usual construction of $2$-cartesian products of groupoid fibrations.  The datum of $v$ in the above definition of a morphism is the datum of a morphism in $\AAA(P)$ from $(X',x')$ to $(X,x)$ lying over the morphism of spaces $g : X' \to X$.  The condition \eqref{A3} is precisely the commutativity condition one demands when defining morphisms in the usual construction of $2$-cartesian products of groupoid fibrations.

We define a $\CFG / \Esp$ morphism \bne{Ipre} I : \L(Y,h,t) & \to & \M(Y,h,t) \ene on objects by $(f,x) \mapsto (f,x,1)$.  This is well-defined because the condition \eqref{A4} satisfied by $(f,x)$ implies $(f,x,1)$ satisfies \eqref{A1}.  On morphisms, $I$ takes $(g,w) : (f',x') \to (f,x)$ to $$(g,wq) : (f',x',1) \to (f,x,1).$$  To see that this is well-defined, we first note that \eqref{A5} for $(g,w)$ obviously implies condition \eqref{A2} for $(g,wq)$.  To show that $(g,wq)$ satisfies condition \eqref{A3}, we need to check that $g^*1 = (wqh) \cdot 1$.  This holds because $qh=1$ since $q$ is the quotient of $h$.  It is clear that \eqref{Ipre} is in fact a morphism of categories fibered in groupoids over $Y$ (i.e.\ over $\Esp / Y$).

\begin{thm} \label{thm:equivalence} Assume $h : Q \to P$ is an injective monoid homomorphism (and that $h^{\rm gp}$ is also injective, which is automatic if $P$ is integral) and that the short exact sequence $$ 1 \to Q^{\rm gp} \to P^{\rm gp} \to (P/Q)^{\rm gp} \to 1 $$ splits.  A choice of splitting of this sequence gives rise to a deformation retract (Definition~\ref{defn:deformationretract}) $R$ of $I$ \eqref{Ipre} in $\CFG / Y$.  \end{thm}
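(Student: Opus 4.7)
The plan is to construct $R$ directly from the splitting data and read off the homotopy. Writing $P^{\rm gp}$ multiplicatively, a splitting is equivalent to a retraction $r : P^{\rm gp} \to Q^{\rm gp}$ of $h^{\rm gp}$ together with a section $s : (P/Q)^{\rm gp} \to P^{\rm gp}$ of $q^{\rm gp}$ satisfying $rh = 1_{Q^{\rm gp}}$, $qs = 1_{(P/Q)^{\rm gp}}$, and $(hr)(p) \cdot (sq)(p) = p$ for all $p \in P^{\rm gp}$. The upshot is the following single identity, which will drive every verification: for any group homomorphism $v : P^{\rm gp} \to A$ into an abelian group,
\begin{equation*} v \;=\; (vhr) \cdot (vsq). \end{equation*}

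On objects, define $R(f,x,u) := (f, \tilde x)$, where $\tilde x := (ur)^{-1} \cdot x$ is the monoid homomorphism $p \mapsto u(r(p))^{-1} x(p)$. This lies in $\L(Y,h,t)$ because $\tilde x h = (ur)^{-1}h \cdot xh = u^{-1}(rh) \cdot (u \cdot f^*t) = f^*t$, using \eqref{A1}. On morphisms, define $R(g,v) := (g, vs)$, which has the correct type since $vs : (P/Q)^{\rm gp} \to \O_{X'}^*(X')$. The $\L$-morphism condition $(vsq) \cdot (u'r)^{-1} \cdot x' = g^*((ur)^{-1} \cdot x)$ unpacks, after substituting $g^*x = v \cdot x'$ and $g^*u = (vh) \cdot u'$ from \eqref{A2}, \eqref{A3}, to the equality $vhr \cdot vsq = v$, which is the displayed identity. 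Functoriality of $R$ follows formally since $s$ is a fixed group homomorphism.

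The retraction identity $RI = \Id_{\L}$ is immediate: on objects, $R(f,x,1) = (f,(1 \cdot r)^{-1} \cdot x) = (f,x)$, and on morphisms, $R(g,wq) = (g, wqs) = (g, w)$ since $qs = 1$.

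For the deformation retract structure, set
\begin{equation*} \eta_{(f,x,u)} \;:=\; (\Id_X,\; ur)\colon (f, (ur)^{-1} x, 1) \longrightarrow (f, x, u). \end{equation*}
This is a well-defined $\M$-morphism: $ur \cdot (ur)^{-1} x = x$ verifies \eqref{A2}, while $(ur)h \cdot 1 = u(rh) = u$ verifies \eqref{A3}. Naturality of $\eta$ with respect to an $\M$-morphism $(g,v) : (f',x',u') \to (f,x,u)$ reduces, after expanding both compositions in $\M$ and substituting $g^*u = (vh)u'$, to the equation $vhr \cdot u'r \cdot vsq = v \cdot u'r$, which after canceling the unit $u'r$ is again the splitting identity $vhr \cdot vsq = v$. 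Finally, at an $\L$-object $(f,x)$ one has $\eta_{I(f,x)} = \eta_{(f,x,1)} = (\Id_X, 1 \cdot r) = (\Id_X, 1)$, which is the identity morphism, so $\eta * I = \Id_I$ as required by Definition~\ref{defn:deformationretract}. The entire argument rests on the single identity $v = vhr \cdot vsq$; the main obstacle is not conceptual but notational, namely keeping the conventions (juxtaposition as composition, $\cdot$ as pointwise product, pullback commuting with the fixed maps $h, r, s, q$) in order.
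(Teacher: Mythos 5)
Your proposal is correct and follows essentially the same route as the paper: the same retraction $R(f,x,u)=(f,(ur)^{-1}\cdot x)$, $R(g,v)=(g,vs)$, the same homotopy $\eta_{(f,x,u)}=(\Id,ur)$, and every verification reduced to the splitting identity $v=(vhr)\cdot(vsq)$. The only difference is that you spell out the naturality of $\eta$ explicitly, which the paper leaves as a routine check.
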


\begin{proof}  It is convenient to package the choice of splitting as a pair $(r,s)$ consisting of a group homomorphism $r : P^{\rm gp} \to Q^{\rm gp}$ and a group homomorphism $s : (P/Q)^{\rm gp} \to P^{\rm gp}$ such that: \bne{splitting} rh & = & \Id \\ rs & = & 1 \\ qs & = & \Id \\ \label{identityexpression} hr \cdot sq & = & \Id. \ene 

The map $R$ determined by our splitting $(r,s)$ is given on objects by $(f,x,u) \mapsto (f,u^{-1} r \cdot x)$.  This is well-defined because we compute \be f^* t & = & u^{-1} \cdot xh \\ & = & u^{-1}rh \cdot xh \\ & = & (u^{-1} r \cdot x) h \ee using \eqref{A1} for $(f,x,u)$; this is the required condition \eqref{A4} for the pair $(f,u^{-1} r \cdot x)$.  On morphisms, $R$ takes $$(g,v) : (f',x',u') \to (f,x,u) $$ to $$(g,vs) : (f',(u')^{-1} r \cdot x') \to (f,u^{-1} r \cdot).$$  To check that this makes sense, we must show that $(g,vs)$ satisfies the condition \eqref{A5}.  That is, we need to see that \bne{A5prime} vsq \cdot (u')^{-1}r \cdot x' & = & g^*(u^{-1}r \cdot x). \ene  We first substitute the expression in \eqref{identityexpression} for the identity of $P^{\rm gp}$ into the condition \eqref{A2} satisfied by $(g,v)$ to find \be g^* x & = & v \cdot x' \\ & = & v(hr+sq) \cdot x' \\ & = & vhr \cdot vsq \cdot x'. \ee Now we solve \eqref{A3} for $vh$ and substitute into the above to find \be g^* x & = & ((u')^{-1} \cdot g^*u)r \cdot vsq \cdot x', \ee which is easily rearranged to look like \eqref{A5prime}.

Thus we see that $R$ is a well-defined $\CFG/Y$ morphism.  It is clear that $RI$ is the identity.  It remains to construct a homotopy $\eta : IR \to I$ in $\CFG/Y$ restricting to the identity on $\M(Y,h,t)$.  Such an $\eta$ is a choice of $\M(Y,h,t)$ isomorphism $$ (g, v) : (f,u^{-1}r \cdot x, 1) \to (f,x,u) $$ where $g = \Id$, natural in $(f,x,u)$, and equal to the identity map when $u=1$.  The conditions \eqref{A2}, \eqref{A3} for $(\Id,v)$ to determine such an isomorphism are: \be v \cdot (u^{-1}r \cdot x) & = & x \\ u & = & (vh) \cdot 1. \ee  It is trivial to check that we can arrange this by taking $v = ur$.  \end{proof}

\begin{rem} We leave it to the reader to check that a different choice of splitting yields a different deformation retract $R'$ of $I$ which is homotopic to $R$ relative to $\L(Y,h,t)$. \end{rem}

\begin{cor} \label{cor:equivalence} Suppose $h: Q \to P$ is an injective map of finitely generated monoids such that the cokernel sequence for $h^{\rm gp}$ splits.  Then the map \eqref{I} admits a deformation retract and is hence, in particular, an equivalence of algebraic stacks. \end{cor}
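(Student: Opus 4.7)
The plan is to deduce the corollary from Theorem~\ref{thm:equivalence} by specializing the category $\Esp$ of spaces to schemes and then invoking \'etale stackification. First I would identify the algebraic stacks $\L(\u{Y},h,t)$ and $\M(\u{Y},h,t)$ defined at the start of \S\ref{section:equivalence} with the \'etale stackifications of the $\CFG/\Esp$ objects $\L(Y,h,t)$ and $\M(Y,h,t)$ appearing in Theorem~\ref{thm:equivalence}. For $\M$, this reduces to Proposition~\ref{prop:AAApreP} (which identifies $\AAA(P)$ as the \'etale stackification of the presheaf quotient), combined with the fact that only the factor $\AAA(P)$ in the fiber product $\u{Y}\times_{\AAA(Q)}\AAA(P)$ requires stackification, since $\u{Y}$ and $\AAA(Q)$ are already stacks. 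For $\L$, the analogous unwinding uses the construction $\L(h)=[\u{\AA}(P)/\GG(P/Q)]$ and the $2$-cartesian definition of $\L(\u{Y},h,t)$ from diagram~\eqref{D10}.

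Next I would verify the hypotheses of Theorem~\ref{thm:equivalence}: since $Q$ and $P$ are fine, hence integral, injectivity of $h$ implies injectivity of $h^{\rm gp}$, and the splitting of the cokernel sequence for $h^{\rm gp}$ is given. The theorem then produces a deformation retract $R : \M(Y,h,t)\to\L(Y,h,t)$ of the prestack morphism $I$ in \eqref{Ipre}, together with a homotopy $\eta : IR\to\Id_{\M(Y,h,t)}$ satisfying $\eta*I=\Id_I$.

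Finally, since \'etale stackification is a $2$-functor (the left adjoint of the inclusion of stacks into prestacks), it preserves composition of $1$-morphisms, identity $2$-morphisms, and invertible $2$-morphisms. Thus the stackification $R^+$ of $R$ satisfies $R^+\circ I^+=\Id$ and the stackification of $\eta$ is an invertible $2$-morphism $I^+R^+\to\Id$ whose whiskering with $I^+$ is still the identity, exhibiting $R^+$ as a deformation retract of the map $I^+=\eqref{I}$. Any morphism in a $2$-category admitting a deformation retract is an equivalence, since both compositions are isomorphic to identities; hence \eqref{I} is an equivalence of algebraic stacks. The only real obstacle, and it is a minor bookkeeping one, lies in the first step's verification that the two descriptions of $\L(\u{Y},h,t)$ and $\M(\u{Y},h,t)$ really do coincide up to canonical equivalence with the stackifications of the $\CFG/\Esp$ objects from Theorem~\ref{thm:equivalence}; the remaining steps are purely formal consequences of $2$-functoriality.
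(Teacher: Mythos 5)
Your proposal is correct and follows the same route as the paper: the map \eqref{I} is by definition the \'etale stackification of the prestack morphism of Theorem~\ref{thm:equivalence} (with $\Esp=\Sch$), so the deformation retract passes across stackification by $2$-functoriality, which is exactly the paper's (much terser) proof. One small slip: the corollary assumes only finitely generated (not fine) monoids, so injectivity of $h^{\rm gp}$ should be read off from the hypothesis that the cokernel sequence is a split short exact sequence rather than from integrality of $P$.
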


\begin{proof} The map \eqref{I} in question is obtained by stackifying (in the \'etale topology) the map of prestacks of Theorem~\ref{thm:equivalence} (with $\Esp = \Sch$), hence the result is immediate from that theorem in light of the functoriality of stackification. \end{proof}

\begin{rem} The assumption that $Q$ and $P$ are finitely generated in Corollary~\ref{cor:equivalence} is only necessary to ensure that \eqref{I} is a map of \emph{algebraic} stacks in the sense of Definition~\ref{defn:algebraicstack}.  The statement of Corollary~\ref{cor:equivalence} continues to hold, by the same proof, when the words ``finitely generated" and ``algebraic" are deleted (or the finiteness conditions in the definition of ``algebraic stack" are removed). \end{rem}

\begin{cor} \label{cor:equivalence2} Suppose $h: Q \to P$ is an injective map of fine monoids such that $\Cok h^{\rm gp}$ is torsion-free.  Then the map \eqref{I} admits a deformation retract and is hence, in particular, an equivalence of algebraic stacks. \end{cor}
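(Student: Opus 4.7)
The plan is to reduce Corollary~\ref{cor:equivalence2} directly to Corollary~\ref{cor:equivalence} by verifying that the hypothesis ``$\Cok h^{\rm gp}$ is torsion-free'' forces the cokernel sequence to split. Since $Q$ and $P$ are fine, they are in particular finitely generated and integral, so $P^{\rm gp}$ is a finitely generated abelian group. First I would record that the monoid cokernel sequence groupifies to the usual cokernel sequence of abelian groups: since groupification preserves direct limits, $(P/Q)^{\rm gp} = P^{\rm gp}/h^{\rm gp}(Q^{\rm gp}) = \Cok h^{\rm gp}$, so the hypothesis says $(P/Q)^{\rm gp}$ is torsion-free.

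Next I would verify that $h^{\rm gp}$ is injective, as indicated in the parenthetical remark of Theorem~\ref{thm:equivalence}. If $q_1,q_2 \in Q^{\rm gp}$ have the same image in $P^{\rm gp}$, we may (after writing each as a difference of elements of $Q$ and clearing denominators) reduce to showing that $h(q_1) = h(q_2)$ in $P$ implies $q_1 = q_2$ in $Q$ for $q_i \in Q$, and further that $h(q_1) + p = h(q_2) + p$ in $P$ implies the same. But $P$ integral allows us to cancel the $p$, and then $h$ injective gives $q_1 = q_2$; this yields injectivity of $h^{\rm gp}$.

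Now $\Cok h^{\rm gp} = (P/Q)^{\rm gp}$ is a quotient of the finitely generated abelian group $P^{\rm gp}$, hence finitely generated, and it is torsion-free by assumption. A finitely generated torsion-free abelian group is free, so $(P/Q)^{\rm gp}$ is projective in the category of abelian groups. Consequently the short exact sequence
\[
1 \to Q^{\rm gp} \xrightarrow{h^{\rm gp}} P^{\rm gp} \to (P/Q)^{\rm gp} \to 1
\]
splits. This is exactly the hypothesis of Corollary~\ref{cor:equivalence} (the additional assumption that $Q, P$ are finitely generated already holds since they are fine), so the theorem yields that \eqref{I} admits a deformation retract and is in particular an equivalence of algebraic stacks. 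No step here looks hard; the only conceivable subtlety is verifying injectivity of $h^{\rm gp}$, which is a short exercise once one uses integrality of $P$ to cancel common summands.
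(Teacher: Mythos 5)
Your argument is correct and is exactly the (implicit) route the paper intends: since $Q,P$ are fine, $\Cok h^{\rm gp}=(P/Q)^{\rm gp}$ is a finitely generated torsion-free, hence free, abelian group, so the cokernel sequence splits and Corollary~\ref{cor:equivalence} applies; the injectivity of $h^{\rm gp}$ is indeed automatic from integrality of $P$, as noted parenthetically in Theorem~\ref{thm:equivalence}. Nothing is missing.
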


\begin{cor} \label{cor:equivalence3} Suppose $h: Q \to P$ is a partition morphism with boundary.  Then the map \eqref{I} admits a deformation retract and is hence, in particular, an equivalence of algebraic stacks. \end{cor}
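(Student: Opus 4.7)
My plan is to reduce this immediately to Corollary~\ref{cor:equivalence2} by verifying its two hypotheses directly from the structural properties of partition morphisms with boundary established earlier. Since $h : Q \to P$ is a partition morphism with boundary, Proposition~\ref{prop:partitionmorphismsarefree} supplies two facts at once: $h$ is a free morphism of fine monoids (so in particular $Q$ and $P$ are fine), and $(P/Q)^{\rm gp}$ is a finitely generated free abelian group.

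From freeness, I would extract injectivity of $h$ as follows. A basis $S \subseteq P$ for $h$ gives a bijection $Q \times S \to P$ sending $(q,s) \mapsto h(q)+s$, and (by the normalization convention that $0 \in S$) the restriction of this bijection to $Q \times \{0\}$ is $h$ itself, which is therefore injective. Separately, any finitely generated free abelian group is torsion-free, so $\Cok h^{\rm gp} = (P/Q)^{\rm gp}$ is torsion-free.

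With $h$ an injective map of fine monoids and $\Cok h^{\rm gp}$ torsion-free, the hypotheses of Corollary~\ref{cor:equivalence2} are met, and that result directly gives the desired deformation retract of \eqref{I} (hence an equivalence of algebraic stacks). In this layered chain of corollaries — cor:equivalence $\Rightarrow$ cor:equivalence2 $\Rightarrow$ cor:equivalence3 — all the substantive content sits in Theorem~\ref{thm:equivalence}, where the deformation retract is built explicitly from a splitting $(r,s)$ of $h^{\rm gp}$; there is no genuine obstacle here, only the bookkeeping of invoking Proposition~\ref{prop:partitionmorphismsarefree} to translate the combinatorial hypothesis ``partition morphism with boundary'' into the hypotheses ``$h$ injective'' and ``$\Cok h^{\rm gp}$ free (and hence torsion-free, and hence the cokernel sequence splits).''
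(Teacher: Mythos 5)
Your proposal is correct and follows exactly the paper's route: the paper's own proof of this corollary is the one-line reduction to Corollary~\ref{cor:equivalence2} via Proposition~\ref{prop:partitionmorphismsarefree} (torsion-freeness of $(P/Q)^{\rm gp}$, with injectivity of $h$ implicit in freeness). Your spelling out of why freeness gives injectivity and why a finitely generated free group is torsion-free is just the bookkeeping the paper leaves tacit.
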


\begin{proof} $\Cok h^{\rm gp}$ is torsion-free by Proposition~\ref{prop:partitionmorphismsarefree}. \end{proof}

\begin{example} When $h : Q \to P$ is the diagonal map $\Delta : \NN \into \NN^2$, the quotient of $h=\Delta$ is the map $q : \NN^2 \to \ZZ$ defined by $q(a,b) := a-b$.  The map $\GG(q) : \GG_m \to \GG_m^2$ is given by $t \mapsto (t,t^{-1})$.  Since $\Cok \Delta^{\rm gp} \cong \ZZ$, Corollary~\ref{cor:equivalence2} yields an equivalence of algebraic stacks \be [ (Y \times_{\AA^1} \AA^2) \, / \, \GG_m] & \cong &  Y \times_{\AAA^1} \AAA^2. \ee \end{example}

\subsection{On the \'etale cover} \label{section:ontheetalecover} Let $Y$ be a fine log scheme, $Q \to \M_Y(Y)$ a global chart for $Y$ whose composition with $\alpha_Y : \M_Y(Y) \to \O_X(Y)$ we denote $t : Q \to \O_Y(Y)$.  Let $h : Q \to P$ be a map of fine monoids.  Recall (\S\ref{section:convenientetalecover}) that we set \be \M(\u{Y},h,t) & := & \u{Y} \times_{\AAA(Q)} \AAA(P). \ee  In this section we give a careful description of Olsson's representable \'etale map $\M(\u{Y},h,t) \to \Log(Y)$ from Theorem~\ref{thm:etalecover}.  Combined with the description of $\L(h) \to \Log(\AA(Q))$ in \S\ref{section:LhtoLogAQetale}, this will make the $2$-commutativity of \eqref{LMLog} clear.  

As is our usual \emph{modus operandi}, we will construct a $\CFG/Y$-morphism \bne{fun} \M(\u{Y},h,t)^{\rm pre} & \to & \Log(Y) \ene and declare $\M(\u{Y},h,t) \to \Log(Y)$ to be the associated map of algebraic stacks obtained by stackifying in the \'etale topology.  Here $\M(\u{Y},h,t)^{\rm pre}$ denotes the groupoid fibration described explicitly in the previous section, where it was called $\M(Y,h,t)$.  We will refer to this description in what follows.  We will suppress the isomorphism of log structures $Q_t^a \cong \M_Y$ from our chart $Q \to \M_Y(Y)$ and simply view the log structure on $Y$ as $Q_t^a$.  We view as log scheme as a pair $(X,\M_X)$ consisting of a scheme $X$ and a log structure $\M_X$ on $X$.  We view a morphism of log schemes as a pair $(f,f^\dagger)$ consisting of a morphism $f : X \to Y$ of schemes and a morphism $f^\dagger : f^* \M_Y \to \M_X$ of log structures on $X$  (we don't use the underlining convention in this section).

On objects, \eqref{fun} takes $(f,x,u)$ to $(f, u^{-1} \cdot h : Q_{f^*t}^a \to P^a_x)$.  Here $P^a_x$ is the log structure on $X$ associated to the prelog structure $x : P \to \O_X(X)$, $Q_{f^*t}^a = f^* Q^a_t$ is the pullback of the log structure $Q^a_t$ on $Y$ along the map of schemes $f : X \to Y$ (pullback of prelog structures commutes with formation of associated log structures), and $u^{-1} \cdot h$ is an abuse of notation for the map of log structures on $X$ given by \be u^{-1} \cdot h : Q^a_{f^*t} & \to & P_x^a \\ \nonumber [q, \zeta] & \mapsto & [h(q),u^{-1}(q) \zeta]. \ee  To see that this is a well-defined map of log structures we must check that it respects the structures maps $Q^a_{f^*t} \to \O_X$ and $P_x^a \to \O_X$.  That is, we must check that \be (f^*t)(q) \zeta & = & xh(q) u^{-1}(q) \zeta \ee for each local section $[q,\zeta]$ of $Q^a_{f^*t}$.  This is clear from the condition \eqref{A1} satisfied by $(f,x,u)$.  Our functor \eqref{fun} takes an $\M(\u{Y},h,t)^{\rm pre}$-morphism $$(g,v) : (f',x',u') \to (f,x,u)$$ to the map $$(g,v) : (X',P_{x'}^a) \to (X,P_x^a) $$ of log schemes over $Y$ given by $g : X' \to X$ on spaces and on log structures by \be v : P^a_{g^* x} & \to & P^a_{x'} \\ \nonumber [p,\zeta] & \mapsto & [p,v(p) \zeta]. \ee The fact that this map respects the structure maps to $\O_{X'}$ is immediate from the condition \eqref{A2} on $(g,v)$.  The fact that this is a map of log structures under $(f')^* Q^a_t = Q^a_{(f')^*t}$---i.e.\ the diagram $$ \xym{ & \ar[ld]_{g^*(u^{-1} \cdot h} Q^a_{(f')^*t} \ar[rd]^{(u')^{-1} \cdot h} \\ P^a_{g^*x} \ar[rr]^-v  & & P^a_{x'} } $$ commutes is immediate from condition \eqref{A3} for $(g,v)$.

\end{document}